\documentclass[9pt]{extarticle}
\pdfoutput=1
\usepackage{graphicx}
\usepackage{hyperref}
\usepackage[utf8]{inputenc}
\usepackage{array,epsfig}
\usepackage{amsmath}
\usepackage{amsfonts}
\usepackage{amssymb}
\usepackage{dsfont}
\usepackage{amsxtra}
\usepackage{amsthm}
\usepackage{leftidx}
\usepackage{mathrsfs}
\usepackage{color}
\let\originallhook\lhook
\usepackage{MnSymbol}
\let\lhook\originallhook
\usepackage{tikz-cd}
\usepackage{enumerate}
\usepackage{todonotes}
\usepackage[backend=biber,style=alphabetic]{biblatex}
\addbibresource{sample.bib}
\theoremstyle{definition}
\newtheorem{defn}{Definition}[subsection]
\theoremstyle{definition}

\theoremstyle{definition}

\theoremstyle{definition}
\newtheorem{cons}[defn]{Construction}
\theoremstyle{definition}

\theoremstyle{definition}

\theoremstyle{definition}

\theoremstyle{definition}
\newtheorem{warn}[defn]{Warning}
\theoremstyle{definition}

\theoremstyle{plain}
\newtheorem{thm}[defn]{Theorem}
\newtheorem*{thm*}{Theorem}
\theoremstyle{plain}

\theoremstyle{definition}

\theoremstyle{plain}
\newtheorem{cor}[defn]{Corollary}
\theoremstyle{definition}
\newtheorem{rmk}[defn]{Remark}
\theoremstyle{definition}

\theoremstyle{plain}
\newtheorem{lem}[defn]{Lemma}
\theoremstyle{plain}

\theoremstyle{definition}
\newtheorem{ex}[defn]{Example}
\theoremstyle{plain}

\theoremstyle{plain}
\newtheorem{prop}[defn]{Proposition}
\theoremstyle{plain}

\newcommand{\bb}[1]{\mathbb{#1}}
\newcommand{\Z}{\bb{Z}}
\newcommand{\Q}{\bb{Q}}
\newcommand{\R}{\bb{R}}

\newcommand{\C}{\bb{C}}

\newcommand{\LS}{\mathcal{L}}

\newcommand{\einfty}{\bb{E}_{\infty}}
\newcommand{\lieg}{\mathfrak{g}}

\newcommand{\Of}{\mathcal{O}}

\newcommand{\del}{\partial}

\newcommand{\hooklongrightarrow}{\lhook\joinrel\longrightarrow}

\newcommand{\simp}{\boldsymbol{\Delta}}
\newcommand{\sset}{\mathsf{Set}_{\simp}}
\newcommand{\set}{\mathsf{Set}}
\newcommand{\spa}{\mathcal{S}}
\newcommand{\map}{\mathsf{Map}}
\newcommand{\Hom}{\mathrm{Hom}}

\newcommand{\adj}{\dashv}
\newcommand{\colim}{\mathrm{colim}\,}

\newcommand{\spec}{\mathsf{Spec}}

\newcommand{\loc}{\mathrm{loc}}

\newcommand{\sring}{\mathsf{d}C^{\infty}\mathsf{Alg}}

\newcommand{\scring}{\mathsf{dCAlg}}
\newcommand{\Mod}{\mathsf{Mod}}

\newcommand{\cotan}{\mathbb{L}}

\newcommand{\icat}{\mathcal{C}}
\newcommand{\icatd}{\mathcal{D}}
\newcommand{\icate}{\mathcal{E}}

\newcommand{\oinfty}{\otimes^{\infty}}
\newcommand{\cinfty}{C^{\infty}}

\newcommand{\fun}{\mathrm{Fun}}

\newcommand{\simpop}{\boldsymbol{\Delta}^{op}}
\newcommand{\ner}{\mathbf{N}}
\newcommand{\simpopplus}{\simp^{op}_+}

\newcommand{\xtop}{\mathcal{X}}
\newcommand{\ytop}{\mathcal{Y}}

\newcommand{\sch}{\mathsf{Sch}}
\newcommand{\dsch}{\mathsf{d}\cinfty\mathsf{Sch}}

\newcommand{\pshv}{\mathsf{PShv}}
\newcommand{\shv}{\mathsf{Shv}}
\newcommand{\pr}{\mathsf{Pr}}

\newcommand{\prr}{\mathsf{Pr}^{\mathrm{R}}}
\newcommand{\prl}{\mathsf{Pr}^{\mathrm{L}}}

\newcommand{\infcat}{$\infty$-category }

\newcommand{\infcats}{$\infty$-categories }

\newcommand{\infcatt}{$\infty$-category}

\newcommand{\infcatst}{$\infty$-categories}

\newcommand{\inftop}{$\infty$-topos }
\newcommand{\inftopt}{$\infty$-topos}
\newcommand{\inftopoi}{$\infty$-topoi }
\newcommand{\inftopoit}{$\infty$-topoi}

\newcommand{\catinf}{\mathsf{Cat}_{\infty}}
\newcommand{\catinfh}{\widehat{\mathsf{Cat}}_{\infty}}

\newcommand{\topo}{\mathsf{Top}}
\newcommand{\cartsp}{\mathsf{CartSp}}
\newcommand{\alg}{\mathsf{Alg}}
\newcommand{\calg}{\mathsf{CAlg}}

\newcommand{\sym}{\mathrm{Sym}}

\newcommand{\daff}{\mathsf{d}C^{\infty}\mathsf{Aff}}

\newcommand{\dstack}{\mathsf{d}C^{\infty}\mathsf{St}}

\newcommand{\smst}{\mathsf{SmSt}}

\newcommand{\ev}{\mathrm{ev}}
\newcommand{\et}{\mathrm{\acute{e}t}}

\newcommand{\fp}{\mathrm{fp}}

\newcommand{\gmt}{\mathrm{gmt}}

\newcommand{\dR}{\mathrm{dR}}
\newcommand{\rmalg}{\mathrm{alg}}
\setlength{\topmargin}{-.3 in}
\setlength{\oddsidemargin}{0in}
\setlength{\evensidemargin}{0in}
\setlength{\textheight}{9.in}
\setlength{\textwidth}{6.5in}
\pagestyle{plain}
\pagenumbering{arabic}
\usepackage{subfiles}

\title{Representability of Elliptic Moduli Problems in Derived $\cinfty$-Geometry}
\author{Pelle Steffens\thanks{pelle.steffens@tum.de} \\ Technical University of Munich}
\date{April 11, 2024}

\begin{document}
\setcounter{tocdepth}{2}
\setcounter{page}{1}

\maketitle
\begin{abstract}
We study moduli spaces of solutions of nonlinear Partial Differential Equations on manifolds in the framework of derived $\cinfty$-geometry. For an arbitrary smooth stack $S$, we define $S$-families of nonlinear PDEs acting between $S$-families of submersions over an $S$-family of manifolds and show that in case the family of PDEs is elliptic and the base family of manifolds is proper over $S$, then the moduli stack of solutions is relatively representable by quasi-smooth derived $\cinfty$-schemes over $S$. Along the way, we develop tools to analyse the local structure of families of mapping stacks between manifolds and explain how to compare mapping stacks in smooth and in derived geometry. To access the notion of a family of PDEs over an arbitrary smooth base stack, we introduce a formalism of stacks of relative jets. Finally, we show how natural ideas from (higher) topos theory can be leveraged to facilitate the application of nonlinear Fredholm analysis to derived stacks of solutions of elliptic PDEs.
\end{abstract}

\tableofcontents
\newpage

\section{Introduction}
The purpose of this paper is to study moduli spaces of solutions of nonlinear Partial Differential Equations in the framework of derived $\cinfty$-geometry. Our primary goal is twofold.
\begin{enumerate}[$(1)$]
    \item Show that the \emph{topological space} $\mathsf{Sol}(P)$ of solutions of an arbitrary nonlinear partial differential operator $P$ acting between sections of submersions over a fixed manifold can be given in a natural manner the structure of a \emph{derived $\cinfty$-stack}, a functor 
    \[  \mathsf{Sol}(P):\daff^{op}\longrightarrow \spa \]
    from the \infcat of affine derived $\cinfty$-schemes introduced in, for instance, \cite{cinftyI}, to the \infcat of \emph{spaces} (or \emph{$\infty$-groupoids}) satisfying a sheaf condition. In this article, we let $\dstack$ denote the \infcat of derived $\cinfty$-stacks. As an \infcat of sheaves, it is an \emph{\inftopt}.
    \item Prove the \emph{elliptic representability theorem}, a result that was announced in our antecedent work \cite{cinftyI}: if the underlying manifold is compact and the operator $P$ is elliptic, then the stack $\mathsf{Sol}(P)$ is representable by a (possibly non-affine) derived $\cinfty$-scheme which is locally of finite presentation and quasi-smooth.
\end{enumerate}
\begin{rmk}
An affine derived $\cinfty$-scheme $(X,\Of_X)$ (which we will here represent as a ringed space) is \emph{of finite presentation} and \emph{quasi-smooth} precisely if we can find a triple 
   \[(M,p:E\rightarrow M,s)\]
   of a smooth manifold $M$, a finite rank vector bundle $p:E\rightarrow M$ and a smooth section $s$ of $p$ such that $(X,\Of_X)$ fits into a pullback diagram 
    \[
    \begin{tikzcd}
    (X,\Of_X) \ar[d]\ar[r] & (M,\cinfty_M)\ar[d,"s"] \\
    (M,\cinfty_M)\ar[r,"0"] & (E,\cinfty_E)
    \end{tikzcd}
    \]
    in the \infcat $\daff$, where the lower horizontal map is the zero section of $p$. For those readers familiar with this terminology, a triple $(M,p:E\rightarrow M,s)$ is known in the literature on moduli spaces of elliptic PDEs as a \emph{local} or \emph{affine Kuranishi model} (without isotropy, that is, we do not allow $M$ to be an orbifold and $E$ an orbifold vector bundle). A \emph{non}-affine quasi-smooth derived $\cinfty$-scheme locally of finite presentation then, is a \emph{homotopy coherent gluing together} of such local Kuranishi models; that is, a \emph{Kuranishi space} (in a sense close to, but more general than, that of Fukaya-Ono).      
\end{rmk}
To set the stage, let $M$ be a compact manifold and let $X\rightarrow M$ and $Y\rightarrow M$ be submersions. Recall that for $k\geq 1$ an integer, a \emph{$k$'th order partial differential equation} acting between sections of $Y$ and $X$ is smooth map $J^k_M(Y)\rightarrow X$ over $M$, where $J^k_M(Y)$ is the $k$'th order jet space of $Y$. The jet space comes equipped with a map of derived $\cinfty$-stacks
\[  j^k:\map_M(M,Y) \longrightarrow\map_M(M,J_M^k(Y)), \]
the \emph{jet prolongation map}. Here $\map_M(M,Y)$ denotes the stack of sections of $Y\rightarrow M$. Thus, composing the PDE with the jet prolongation determines a map $P:\map_M(M,Y)\longrightarrow \map_M(M,X)$ of derived $\cinfty$-stacks. A \emph{differential moduli problem} over $M$ is a quintuple $\icate=(X,Y_1,Y_2,P_1,P_2)$ where $X$, $Y_1$ and $Y_2$ denote submersions over $M$ and $P_1$ and $P_2$ are nonlinear PDEs
\[ P_1:\map_M(M,Y_1)\longrightarrow \map_M(M,X),\quad\quad  P_2:\map_M(M,Y_2)\longrightarrow \map_M(M,X) \]
acting between sections of $Y_1$ and $X$ and $Y_2$ and $X$. We will say that $\icate$ is \emph{elliptic} if at each solution $(\sigma_1,\sigma_2)$ (that is, a pair $(\sigma_1,\sigma_2)\in \map_M(M,Y_1\times_MY_2)$ such that $P_1(\sigma_1)\simeq \sigma\simeq P_2(\sigma_2)$) the linearizations of $P_1$ and $P_2$ determine a linear elliptic PDE
\[  J_M^k(\sigma_1^*TY_1/M\times_M \sigma_2^*TY_2/M) \longrightarrow \sigma^*TX/M . \]
Here, $TX/M$ (and similarly $TY_1/M$ and $TY_2/M$) denotes the vector bundle of vertical tangent vectors with respect to the submersion $Y\rightarrow M$. 
\begin{thm}[Elliptic representability]
Let $M$ be a compact manifold and let $\icate=(X,Y_1,Y_2,,P_1,P_2)$ be an elliptic moduli problem over $M$. Then the solution stack $\mathsf{Sol}(\icate)$ defined as the cone in the pullback diagram
\[
\begin{tikzcd}
\mathsf{Sol}(\icate)\ar[d]\ar[r] & \map_M(M,Y_1)\ar[d,"P_1"]\\
\map_M(M,Y_2)\ar[r,"P_2"] \ar[r] & \map_M(M,X)
\end{tikzcd}
\]
among derived $\cinfty$-stacks, is representable by a 1-quasi-smooth derived $\cinfty$-scheme locally of finite presentation. 
\end{thm}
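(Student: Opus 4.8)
The plan is to deduce the theorem from the construction of local Kuranishi models, in the precise sense of the Remark above, together with a gluing argument. Since $\dstack$ is an \inftopt and the property of being representable by a derived $\cinfty$-scheme that is locally of finite presentation and quasi-smooth is local on the source, it suffices to produce, for each solution $\sigma=(\sigma_1,\sigma_2)$ (so that $P_1(\sigma_1)\simeq\sigma\simeq P_2(\sigma_2)$), an open substack $\mathcal{U}_\sigma\subseteq\mathsf{Sol}(\icate)$ containing the associated point and representable by an affine, finite-presentation, $1$-quasi-smooth derived $\cinfty$-scheme, and to check that the $\mathcal{U}_\sigma$ assemble into a cover. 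The first step is to trivialise the ambient mapping stacks near $\sigma$: fixing tubular neighbourhoods of the images of $\sigma_1,\sigma_2,\sigma$ inside the submersions $Y_1\to M$, $Y_2\to M$, $X\to M$, the local structure theory for (derived) mapping stacks developed earlier in the paper identifies suitable open substacks of $\map_M(M,Y_i)$ and $\map_M(M,X)$ with the linear derived $\cinfty$-stacks of sections $\Gamma(W_i):=\map_M(M,W_i)$ and $\Gamma(V):=\map_M(M,V)$ of the vector bundles $W_i:=\sigma_i^{*}TY_i/M$ and $V:=\sigma^{*}TX/M$, sending $\sigma_i\mapsto 0$ and $\sigma\mapsto 0$. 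Using that $\Gamma(V)$ is a group object in $\dstack$, the fibre product defining $\mathsf{Sol}(\icate)$ becomes, over this chart, the derived zero locus of the single nonlinear differential operator $Q\colon\Gamma(W_1\oplus W_2)\to\Gamma(V)$, $Q(w_1,w_2):=P_1(w_1)-P_2(w_2)$, with $Q(0)=0$, whose linearisation $D:=d_0Q$ is, by the ellipticity hypothesis on $\icate$, a linear elliptic differential operator over the compact manifold $M$.

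The analytic core is then a Fredholm (Kuranishi / Lyapunov--Schmidt) reduction of $Q$. I would pass to Sobolev completions $Q\colon H^{s+k}(W_1\oplus W_2)\to H^{s}(V)$ ($s\gg0$, $k=\mathrm{ord}\,Q$), so that $D$ becomes Fredholm by elliptic theory on the closed manifold $M$, with $\ker D$ and $\coker D$ finite-dimensional, spanned by smooth sections (elliptic regularity for $D$ and its formal adjoint) and independent of $s$. Choosing a finite-dimensional subspace $C\subseteq\Gamma(V)$ complementing the image of $D$, the augmented operator $\widetilde Q(w,c):=Q(w)+c$ on $H^{s+k}(W_1\oplus W_2)\oplus C$ has surjective linearisation at $0$ with kernel $\ker D$; the implicit function theorem realises $\mathbb{K}:=\widetilde Q^{-1}(0)$, near $0$, as a finite-dimensional smooth manifold --- the graph over a neighbourhood of $0$ in $\ker D$ of a smooth map --- with every point of $\mathbb{K}$ consisting of smooth sections (elliptic regularity again) and $\mathbb{K}$ independent of $s$. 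The projection $\kappa\colon\mathbb{K}\to C$ is a smooth section of the trivial finite-rank bundle $\mathbb{K}\times C\to\mathbb{K}$ whose classical zero set is exactly the set of solutions of $Q=0$ near $\sigma$. Thus $(\mathbb{K},\mathbb{K}\times C,\kappa)$ is an affine Kuranishi model as in the Remark, and its derived zero locus is an affine, finite-presentation derived $\cinfty$-scheme which is $1$-quasi-smooth (a single obstruction bundle $C$; cotangent complex the two-term complex $[\,C^{\vee}\to(\ker D)^{\vee}\,]$ at $\sigma$).

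It remains to identify this derived zero locus with an open substack $\mathcal{U}_\sigma\subseteq\mathsf{Sol}(\icate)$ --- as derived $\cinfty$-stacks, not merely on points --- and to glue. For the identification, the chart of the first step already presents $\mathsf{Sol}(\icate)$ near $\sigma$ as the derived zero locus of $Q$; passing from $Q$ to $\widetilde Q$ does not change the derived zero locus (an explicit ``shear'' equivalence of $\cinfty$-rings, since $(\widetilde Q,\mathrm{pr}_C)$ is the composite of $Q\times\mathrm{id}_C$ with the automorphism $(v,c)\mapsto(v+c,c)$ of $\Gamma(V)\times C$), and since $\widetilde Q$ is a submersion its derived zero locus is classical and, by the comparison between smooth and derived mapping stacks developed earlier, coincides with the analytic manifold $\mathbb{K}$; cutting down by $\kappa$ then yields $\mathcal{U}_\sigma\simeq\mathbb{K}\times_{\mathbb{K}\times C}\mathbb{K}$. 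For the gluing: the $\mathcal{U}_\sigma$ are open substacks, their pairwise intersections are open and hence representable by open subschemes, and being a derived $\cinfty$-scheme (locally of finite presentation, $1$-quasi-smooth) is local on $\dstack$, so $\bigcup_\sigma\mathcal{U}_\sigma$ is such a scheme; finally this union is all of $\mathsf{Sol}(\icate)$ because every $\R$-point of $\mathsf{Sol}(\icate)$ is a solution and thus lies in some $\mathcal{U}_\sigma$ (if there are no solutions, $\mathsf{Sol}(\icate)$ is the empty scheme).

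The main obstacle I anticipate is the compatibility invoked in the last paragraph: reconciling the functorially defined homotopy fibre product in $\dstack$ with the hands-on Banach-analytic Kuranishi picture --- concretely, that the Sobolev completions used for the implicit function theorem are irrelevant to the outcome, that elliptic regularity upgrades the Banach-manifold chart to the correct $\cinfty$-structure on $\mathbb{K}$, and that the full derived (in particular non-reduced) structure of $\mathsf{Sol}(\icate)$ near $\sigma$ is faithfully recorded by the finite-dimensional section $\kappa$. A secondary, more bookkeeping, difficulty is verifying that the local structure theory for mapping stacks is strong enough to make the $\mathcal{U}_\sigma$ honest open substacks covering all of $\mathsf{Sol}(\icate)$, rather than merely formal or germ-level neighbourhoods.
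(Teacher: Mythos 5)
Your outline follows the same route as the paper's proof (the paper proves the relative statement and your argument is its specialization to $S=*$): reduce to a neighbourhood of each solution via local additions, rewrite the two equations as a single difference operator $Q$ between sections of vector bundles, stabilize by a finite-dimensional complement $C$ of the image of the linearization, perform a Sobolev--Fredholm reduction to a finite-dimensional Kuranishi chart $(\mathbb{K},\mathbb{K}\times C,\kappa)$, and glue the resulting affine models by the representability criterion (open substack inclusions forming an effective epimorphism, checked on $\R$-points). All of that is sound and matches the paper.

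However, the step you yourself flag as ``the main obstacle'' is a genuine gap, and it is exactly where the content of the proof lies: the assertion that ``since $\widetilde Q$ is a submersion its derived zero locus is classical and coincides with $\mathbb{K}$.'' First, $\widetilde Q$ is a map between (open subsets of) Fr\'echet spaces, and surjectivity of its differential does not make it a submersion of convenient manifolds in any sense to which an inverse function theorem applies -- this is problem $(P1)$ of the paper. Second, even granting the Banach-level implicit function theorem at each Sobolev stage together with elliptic bootstrapping, what you obtain is that the pullback of the zero section along $\widetilde Q$ \emph{computed in} $\smst$ is a finite-dimensional manifold; since the Sobolev scale is a limit diagram in $\smst$ but \emph{not} in $\dstack$, and $\smst\subset\dstack$ does not preserve general pullbacks, this does not yet identify the derived fibre product defining $\mathsf{Sol}(\icate)$ near $\sigma$ -- this is problem $(P2)$. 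The missing idea, supplied in the paper by Section 2.2, is to prove that $\widetilde Q$ (there, $P+\iota$ restricted to the open set where its differential is onto) is \emph{submersive as a map of derived stacks}, using the criterion of Proposition \ref{prop:criterion}: one must run the Sobolev implicit-function-theorem argument not only at the zero section but for an \emph{arbitrary} map $Z\rightarrow \Gamma(V)$ from a finite-dimensional manifold, using elliptic regularity (with the smooth family parametrized by $Z$ as right-hand side) to show the tower of Banach-level pullbacks stabilizes to a submersion of finite-dimensional manifolds over $Z$. Only then does Proposition \ref{prop:stackysubmersive} guarantee that the pullback along $\widetilde Q$ is preserved by $\smst\subset\dstack$, so that the derived zero locus is the classical manifold $\mathbb{K}$ and the remaining cut by $\kappa$ produces the quasi-smooth affine model. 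Without this locality-in-the-topos argument (or a substitute for it), the identification of $\mathcal{U}_\sigma$ with the derived zero locus of $\kappa$ is unproved; the shear trick and the gluing are fine once it is in place.
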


In fact, we will prove a more general \emph{relative} representability result. If $S$ is a smooth stack, $M\rightarrow S$ is an \emph{$S$-family of manifolds} and $Y\rightarrow M$ is an \emph{$S$-family of submersions}, we have for each integer $k\geq 1$ an $S$-family $J^k_{M/S}(Y)$ of $k$'th order jet spaces together with a jet prolongation map
\[ j^k:\mathsf{Sections}_{/S}(Y\rightarrow M)\longrightarrow  \mathsf{Sections}_{/S}(J^k_{M/S}(Y)\rightarrow M). \]
Here $\mathsf{Sections}_{/S}(Y\rightarrow M)$ is the stack of \emph{$S$-parametrized sections of $Y\rightarrow M$}. All these objects will be introduced and explained in the body of this article; for now, we only remark that these notions are stable under base change along arbitrary maps $S\rightarrow S'$ of smooth stacks, and return the usual notions for $S=*$. An \emph{$S$-family of elliptic moduli problems} over an $S$-family of manifolds $M\rightarrow S$ is a quintuple $\icate=(X,Y_1,Y_2,P_1,P_2)$ for $X\rightarrow M$, $Y_1\rightarrow M$ and $Y_1\rightarrow M$ all $S$-families of submersions and $P_1$ and $P_2$ are $S$-families of nonlinear PDEs acting between families of sections of $Y_1$ and $X$ and $Y_2$ and $X$ respectively, that are elliptic for \emph{each point $s:*\rightarrow S$}. The derived $\cinfty$-stack of solutions of $\icate$ is the cone in the pullback diagram 
\[
\begin{tikzcd}
\mathsf{Sol}(\icate)\ar[d]\ar[r] & \mathsf{Sections}_{/S}(Y_1\rightarrow M) \ar[d,"P_1"] \\
\mathsf{Sections}_{/S}(Y_2\rightarrow M)\ar[r,"P_2"] &  \mathsf{Sections}_{/S}(X \rightarrow M)
\end{tikzcd}
\]
in the \inftop $\dstack_{/S}$. The main result of this work may be stated as follows.
\begin{thm}[Relative elliptic representability]\label{thm:ellipticrepresentabilityfamily}
Let $S$ be a smooth stack and let $M\rightarrow S$ be a \emph{proper} $S$-family of manifolds. Let $\icate=(X,Y_1,Y_2,P_1,P_2)$ be an $S$-family of elliptic moduli problems over $M$. Then the solution morphism 
\[ \mathsf{Sol}(\icate) \longrightarrow S \]
is representable by quasi-smooth derived $\cinfty$-schemes locally of finite presentation. 
\end{thm}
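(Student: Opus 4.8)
The plan is to reduce the geometric assertion to a finite list of deformation-theoretic conditions by means of a representability criterion in the style of Artin--Lurie (available in derived $\cinfty$-geometry; cf.\ \cite{cinftyI}), and then to verify those conditions using nonlinear elliptic (Fredholm) analysis. First I would observe that all of the data in play---the $S$-family $M\to S$, the families of submersions $X,Y_{1},Y_{2}$, the relative jet stacks $J^{k}_{M/S}$ and the jet prolongations, and hence the section stacks $\mathsf{Sections}_{/S}$ and the morphisms $P_{1},P_{2}$---are stable under arbitrary base change $T\to S$, and that $\mathsf{Sol}(\icate)\times_{S}T\simeq \mathsf{Sol}(\icate_{T})$ for the pulled-back family $\icate_{T}$ over $M_{T}=M\times_{S}T$, which is again proper over $T$ and elliptic at every point of $T$ (ellipticity at $s\colon *\to S$ pulls back along $*\to T\to S$). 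Since representability by quasi-smooth derived $\cinfty$-schemes locally of finite presentation is local on the base and may be tested against such $T$, we reduce to the case where $S$ is derived affine---which also disposes of any stackiness of the original $S$---and it then suffices to work locally on $\mathsf{Sol}(\icate)$ near a solution.

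The infinitesimal hypotheses of the criterion---nilcompleteness, infinitesimal cohesiveness, and the existence of a cotangent complex---hold for $\mathsf{Sol}(\icate)$ because it is a finite limit of the section stacks $\mathsf{Sections}_{/S}(Y\to M)$ of families of submersions, which I would show enjoy these properties once and for all, using the comparison between mapping stacks in smooth and in derived $\cinfty$-geometry developed earlier (this comparison also guarantees that no spurious higher structure intervenes, and that $\cotan_{\mathsf{Sections}_{/S}(Y\to M)}$ at a section $\sigma$ is the pushforward along $M\to S$ of $\sigma^{*}\cotan_{Y/M}$, suitably twisted). Feeding this into the pullback square defining $\mathsf{Sol}(\icate)$, one finds that at a $T$-point $(\sigma_{1},\sigma_{2})$ with common image $\sigma$ (so $P_{1}(\sigma_{1})\simeq\sigma\simeq P_{2}(\sigma_{2})$) the relative tangent complex $\mathbb{T}_{\mathsf{Sol}(\icate)/S}$ is the fiber of
\[
D\colon\ \Gamma_{M_{T}/T}\!\big(\sigma_{1}^{*}TY_{1}/M\big)\ \oplus\ \Gamma_{M_{T}/T}\!\big(\sigma_{2}^{*}TY_{2}/M\big)\ \longrightarrow\ \Gamma_{M_{T}/T}\!\big(\sigma^{*}TX/M\big),
\]
where $\Gamma_{M_{T}/T}$ denotes the relative pushforward along $M_{T}\to T$ and $D$ is the combined linearization of $P_{1}$ and $P_{2}$ from the definition of ellipticity of $\icate$; by construction of the jet prolongations this is an honest linear differential operator of order $\le k$, fibrewise over $T$, on the compact manifolds that are the fibres of the proper family $M_{T}\to T$.

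The crux is that ellipticity makes $D$ fibrewise Fredholm, with finite-dimensional kernel and cokernel, whence $\mathbb{T}_{\mathsf{Sol}(\icate)/S}$---equivalently $\cotan_{\mathsf{Sol}(\icate)/S}$---is perfect of Tor-amplitude in $[-1,0]$: on geometric points this is the classical Fredholm alternative on a compact manifold, and the passage from the pointwise statement to perfectness and the amplitude bound in families is part of what must be argued. To make the passage to Fredholm theory rigorous I would work inside an $\infty$-topos of Banach (Sobolev) derived $\cinfty$-stacks: model $D$ as a bounded operator between $W^{k,p}$-completions, apply a parametrized Fredholm alternative over $T$, and verify that the resulting finite-dimensional data are independent of $(k,p)$ and base-change correctly---this is the topos-theoretic bookkeeping announced in the abstract, which is what keeps infinite-dimensional analysis compatible with the finitary algebra. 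For the remaining condition of the criterion---that the classical truncation of $\mathsf{Sol}(\icate)$ is a $\cinfty$-scheme locally of finite presentation over that of $S$---I would run the classical Kuranishi construction: near a solution, the implicit function theorem on the $W^{k,p}$-completions together with elliptic regularity (so that Sobolev solutions are smooth and the cut-out locus is independent of $(k,p)$) presents the classical truncation, locally and with parameters in the classical truncation of $S$, as the zero locus of a smooth section $\mathfrak{s}$ of a finite-rank vector bundle $p\colon V\to N$ over a finite-dimensional manifold $N$, i.e.\ of a local Kuranishi model in the sense of the Remark.

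With all hypotheses of the criterion verified, $\mathsf{Sol}(\icate)\to S$ is representable by quasi-smooth derived $\cinfty$-schemes locally of finite presentation; concretely, on an open neighbourhood of each solution $\mathsf{Sol}(\icate)$ is equivalent over $S$ to the affine derived $\cinfty$-scheme $N\times_{V}N$ formed from $\mathfrak{s}$ and the zero section of $p$ as in the Remark---the equivalence being pinned down by matching classical truncations (classical Kuranishi theory) and cotangent complexes (the Fredholm index computation) and then propagating along square-zero extensions by the infinitesimal cohesiveness and nilcompleteness already secured---and these local Kuranishi models glue in the \inftopt $\dstack_{/S}$ to the desired (possibly non-affine) quasi-smooth derived $\cinfty$-scheme over $S$. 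The step I expect to be the main obstacle is exactly the middle one: executing the passage to Banach models and back in families over a derived base---both the uniform Fredholm estimates and the independence of all auxiliary analytic choices---since it is there that infinite-dimensional Fredholm analysis must be reconciled with the finitary notion of local finite presentation.
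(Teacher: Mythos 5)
There is a genuine gap, and it sits at the very foundation of your plan: the Artin--Lurie-type representability criterion (sheaf $+$ nilcomplete $+$ infinitesimally cohesive $+$ cotangent complex $+$ classical truncation a scheme) that you propose to ``feed'' is not established in derived $\cinfty$-geometry, neither in this paper nor in \cite{cinftyI}; the only representability criterion available here is the open-cover criterion of Proposition \ref{prop:representablebyscheme} (of which \cite{cinftyI} Theorem 1.1.72 is a special case), which is of a completely different nature. It is far from clear that a deformation-theoretic criterion of Lurie's type even holds for derived $\cinfty$-schemes: $\cinfty$-rings are not Noetherian, there is no integrability/formal-completion input in this setting, and ``$\cotan$ perfect of Tor-amplitude $[-1,0]$ plus classical truncation lfp'' is not known to force the existence of local Kuranishi presentations without actually constructing them. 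So the reduction of the theorem to ``a finite list of deformation-theoretic conditions'' is itself the unproven main step. Relatedly, your closing move --- identifying $\mathsf{Sol}(\icate)$ locally with a Kuranishi model ``by matching classical truncations and cotangent complexes and then propagating along square-zero extensions'' --- does not produce an equivalence: one needs an actual morphism to compare, and in the paper's argument this comes for free because the Kuranishi chart is manufactured as an \emph{open substack} of $\mathsf{Sol}(\icate)$ (a pullback of open substack inclusions, Lemma \ref{lem:pullbackopeninclusion}), after which Proposition \ref{prop:representablebyscheme} glues the charts.

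The second serious problem is the analytic middle step. You propose to work ``inside an $\infty$-topos of Banach (Sobolev) derived $\cinfty$-stacks,'' but this is exactly where the paper flags the obstruction: the Sobolev tower is a limit diagram in \emph{smooth} stacks, not in derived $\cinfty$-stacks, and convenient/Banach manifolds embedded via $j_{\mathsf{Con}}$ are not represented by their associated affine derived $\cinfty$-schemes, so pullbacks computed at finite regularity do not automatically compute the derived pullback defining $\mathsf{Sol}(\icate)$; moreover smooth $S$-dependence at fixed finite regularity is genuinely problematic (reparametrization actions are not smooth on $H^{l}$). The paper's resolution is not a Banach-topos bookkeeping device but the stacky-\'{e}tale/stacky-submersive machinery of Section 2.2: after adding the obstruction bundle one shows $P+\iota$ is \emph{submersive as a map of derived stacks} by checking submersiveness against finite-dimensional test manifolds in $\smst$ (Banach implicit function theorem plus elliptic bootstrapping to stabilize the tower), and then Propositions \ref{prop:criterion} and \ref{prop:stackysubmersive} guarantee that the smooth-stack pullback agrees with the derived one. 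Your outline contains the right analytic ingredients (Fredholm alternative, obstruction bundle, elliptic regularity, implicit function theorem), but without either the unproved representability criterion or a substitute for the pullback-preservation results, the passage from this analysis to representability of the derived stack does not go through. Also note a smaller slip: the paper's local charts for the section stacks are built from tubular-neighbourhood/local-addition arguments (Lemmas \ref{lem:staceyroberts}, \ref{lem:localstructureweil}), not from a computation of $\cotan_{\mathsf{Sections}_{/S}(Y\rightarrow M)}$, which is never needed and whose base-change properties you would otherwise have to establish.
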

This theorem states that for any manifold $N$ and any map $N\rightarrow S$, the pullback $N\times_S\mathsf{Sol}(\icate)$ is (representable by) a quasi-smooth derived $\cinfty$-scheme locally of finite presentation.
\begin{rmk}[Relation to other work]
The study of moduli spaces of geometric elliptic PDEs is a vast and multifaceted field, and we could not possibly hope to provide the reader with a representative sample of all the literature that has relevant intersection with this work. Let us instead focus narrowly on approaches to PDE moduli spaces that, like ours, aim to describe these via universal constructions involving sheaves on a category of `derived $\cinfty$ objects'.\\ Joyce has developed a very substantial theory of derived differential geometry \cite{Joy1,Joykura,Joykurasymp1,Joykurasymp2}, which one could roughly think of as 2-categorical truncation of the formalism of this article. On his website and in various talks (see for instance \cite{JoyTalkSimons}), Joyce has explained his intent to describe elliptic PDE moduli spaces using only universal constructions as we do here, by writing down a functor
\[  \mathsf{Sol}_{\mathrm{Joyce}}(\icate):\mathsf{Kur}_{\mathrm{Joyce}}\longrightarrow \tau_{\leq 2}\spa \]
on his $2$-category of Kuranishi spaces (which is equivalent to his category of $d$-orbifolds) which presumably takes values in the $3$-category of 2-groupoids, and showing it is representable. As far as we understand, Joyce's approach differs from ours in that his 2-functor $\mathsf{Sol}_{\mathrm{Joyce}}(\icate)$ is supposed to arise as the unstraightening of a right fibration $\mathsf{Kur}_{\mathrm{Joyce}}(\icate)\rightarrow \mathsf{Kur}_{\mathrm{Joyce}}$ whose fibre over a Kuranishi space $\mathbf{K}$ is a groupoid of families of solutions parametrized by $\mathbf{K}$ which one writes down explicitly. While this may well be feasible in Joyce's 2-categorical setting, writing down the homotopy type of the fully derived solution stack at each affine derived $\cinfty$-scheme by hand is impractical, which is why we work with families of mapping stacks and jet stacks instead.\\
Very recently, Pardon \cite{PardonFredholm} has independently given an outline of an argument for the representability of the moduli stack of nonsingular pseudo-holomorphic curves in an almost complex manifold as a sheaf on an \infcat of derived manifolds which coincides with our \infcat of derived $\cinfty$-schemes locally of finite presentation. We thank him for making us aware of his work and for his interest in ours. 
\end{rmk}

\subsubsection{A sketch of the proof of elliptic representability}
Let us give an indication of the proof of Theorem \ref{thm:ellipticrepresentabilityfamily}. Firstly, by some general yoga of higher topos theory, the notion of an $S$-family of elliptic moduli problems satisfies \emph{descent}, which in so many words means that it suffices to prove the result for $S$ an ordinary smooth manifold instead of an arbitrary smooth stack (see also the appendix). It follows from the results of Section 3.2 that the stack $\mathsf{Sections}_{/S}(Y\rightarrow M)$ locally around a section $\sigma:M_s\rightarrow Y_s$ (here $M_s$ and $Y_s$ are the fibres $M\times_S\{s\}$ and $Y\times_S\{s\}$) looks like the stack $S\times \map_{M_s}(M_s,\sigma^*TY_s/M_s)$, the product of $S$ with a stack of sections of a vector bundle over the manifold $M_s$ (specifically, the pullback along the section $\sigma$ of the vertical tangent bundle of $Y_s$ with respect to the projection $Y_s\rightarrow M_s$). We deduce this, and other results of this kind, by methods very similar to the ones used to endow infinite dimensional manifolds of mappings with smooth (Fr\'{e}chet) manifold structures. By this maneuver, we are reduced to studying pullback diagrams
\[
\begin{tikzcd}
\mathsf{Sol}(\icate)\ar[d]\ar[r] & S\times\map_M(M,F) \ar[d,"P"] \\
S\ar[r,"\mathrm{id}\times 0"] & S\times \map_M(M,E)
\end{tikzcd}
\]
where $P$ is an $S$-parametrized nonlinear elliptic PDE (of order $k$, say) acting between sections of vector bundles $E$ and $F$ over a compact manifold $M$, and the lower horizontal map is the $S$-parametrized zero section. Now we wish to apply Kuranishi's method of local finite dimensional reduction of solution spaces of nonlinear elliptic PDEs. To employ analytic tools, we need to know how our derived mapping stacks relate to the infinite dimensional manifolds of mappings studied by global analysis. In Sections 3.1 and 3.2, we embed the category of convenient manifolds of Fr\"{o}licher-Kriegl-Michor \cite{krieglmichor,frolicherkriegl} into the \infcat of derived $\cinfty$-stacks and show that the derived stack $\map_M(M,Y)$ of sections of a (surjective) submersion $Y\rightarrow M$ is represented by the manifold $\Gamma(Y;M)$ of sections (which is an infinite dimensional manifold modelled on nuclear Fr\'{e}chet spaces). For the \emph{smooth} mapping stack, this is immediate from the Fr\"{o}licher-Kriegl-Michor calculus; for derived mapping stacks, a bit more work is required. For $(s,\sigma)$ a solution, let $T_{(s,\sigma)}P$ be the linearization of $P$ at $(s,\sigma)$ and choose a section $\iota$ of the projection $\Gamma(E;M)\rightarrow \mathrm{coker}\,T_{(s,\sigma)}P$. By ellipticity, $\mathrm{coker}\,T_{(s,\sigma)}P$ is a finite dimensional vector space. Consider the pullback 
\[
\begin{tikzcd}
Z\ar[d]\ar[r] & S\times\map_M(M,F)\times\mathrm{coker}\,T_{(s,\sigma)}P \ar[d,"P+\iota"] \\
S\ar[r,"\mathrm{id}\times 0"] & S\times \map_M(M,E),
\end{tikzcd}
\]
then the pullback $\mathsf{Sol}(\icate)$ can locally around $(s,\sigma)$ be identified with the pullback $Z\times_{S\times \mathrm{coker}\,T_{(s,\sigma)}P}S$, so if $Z$ is (representable by) an ordinary finite dimensional manifold, we are done. Since the differential of $P+\iota$ is surjective at $(s,\sigma)$ and has finite dimensional kernel (by ellipticity), this should follow from an infinite dimensional implicit function theorem. At this point, there are two problems to overcome.
\begin{enumerate}
\item[$(P1)$] The map $P+\iota:S\times\Gamma(F;M)\times \mathrm{coker}\,T_{(s,\sigma)}P\rightarrow S\times \Gamma(E;M)$ (which represents the corresponding map between derived $\cinfty$-stacks) is a map between Fr\'{e}chet spaces, so the Banach space implicit function theorem does not apply.
\end{enumerate}
This issue is dealt with by standard nonlinear elliptic theory: the map $P+\iota$ is a limit (in the \infcat of smooth, but not derived, stacks) of a tower $\{P_l+\iota: S\times H^{k+l}(F;M)\times\mathrm{coker}\,T_{(s,\sigma)}P\rightarrow S\times H^l(E;M)\}_{l>>1}$ of completions of $P+\iota$ with respect to the various of Sobolev-Hilbert norms. At each finite stage of this tower, we can take our pullback of $P_l+\iota$ with the zero section $S\rightarrow S\times H^l(E;M)$ in the category of Banach manifolds and obtain a finite dimensional manifold $Z_l$ equipped with a submersion $Z_l\rightarrow S$. By an easy elliptic bootstrapping argument, the tower $\{Z_l\}_{l>>1}$ becomes stationary for $l$ large enough (alternatively, one could apply the hard inverse function theorem of Nash and Moser directly to the map $P+\iota$). However, this does not yet conclude the proof.
\begin{enumerate}
\item[$(P2)$] The argument sketched above only shows that the pullback
\[
\begin{tikzcd}
Z_{\mathsf{Sm}}\ar[d]\ar[r] & S\times\map_M(M,F)\times\mathrm{coker}\,T_{(s,\sigma)}P \ar[d,"P+\iota"] \\
S\ar[r,"\mathrm{id}\times 0"] & S\times \map_M(M,E),
\end{tikzcd}
\]
\emph{taken in the \infcat of smooth stacks} is a manifold, that is, $Z_{\mathsf{Sm}}=Z_l$ for $l$ large enough.    
\end{enumerate}  
We circumvent this problem with a few basic features of (higher) topos theory we expose in Section 2.2: if a map of smooth stacks $X\rightarrow Y$ has the property that for \emph{any} manifold $N$ and any map $N\rightarrow Y$, the map $N\times_YX\rightarrow N$ is (representable by) a submersion, then any pullback in $\smst$ along $X\rightarrow Y$ is preserved by the inclusion of smooth stacks into derived $\cinfty$-stacks. The map $P+\iota$ evidently has this property: we just repeat the argument below $(P1)$ for an arbitrary map $N\rightarrow S\times H^l(E;M)$ from an ordinary manifold instead of the zero section $S\rightarrow S\times H^l(E;M)$. 
\begin{rmk}
While the framework of derived $\cinfty$-geometry necessitates the use of a lot of relatively sophisticated modern homotopy theory, the argument sketched above shows that we use a rather modest collection of tools from global analysis and elliptic theory, especially compared to the polyfold approach \cite{HWZbook}. There's only a few places in this work where we employ nontrivial analysis (apart from a few results from \cite{cinftyI} which also do not rely on analysis more advanced than, say, Taylor expansions), such as they are:
\begin{enumerate}[$(1)$]
\item Somewhat elaborate tubular neighbourhood theorems, to describe the local structure of mapping stacks of sections.
\item Linear elliptic theory on closed manifolds, that is, the package of the standard pseudodifferential calculus acting acting on $L^2$ Sobolev scales.  
\item The implicit function theorem for Banach manifolds (see e.g. \cite{Lang}).
\item The \emph{regularizing property of nonlinear elliptic PDEs} (a.k.a elliptic bootstrapping): if $u\in H^{k+l}(F;M)$ is a solution of the nonlinear elliptic equation $Pu=f$ with $P:H^{k+l}(F;M)\rightarrow H^l(E;M)$, $f$ is smooth and $u$ is sufficiently regular (i.e. $l$ is sufficiently large), then $u$ is smooth. There are many versions of such results; the less regularity assumed on $u$, the harder the analysis. Since we may assume arbitrarily high regularity on $u$, the simplest regularity results suffice for our purposes (see for instance Theorem 1.2.D of \cite{taylorpseudopde}).
\end{enumerate}
\end{rmk}

\subsubsection{Moduli spaces as Kuranishi spaces}
Since there are very many elliptic moduli problems, the elliptic representability theorem establishes the existence of a huge variety of interesting moduli spaces as derived $\cinfty$-schemes and stacks. In this work, we give only one example: the moduli spaces of nonsingular genus $g$ pseudo-holomorphic curves in some closed symplectic manifold. This family of moduli spaces (rather, their Deligne-Mumford compactifications) is of central importance to symplectic topology, particularly Gromov-Witten theory. Since the seminal work of Fukaya-Ono \cite{FO}, traditionally, these pseudo-holomorphic curve moduli spaces are represented as \emph{Kuranishi spaces}. The original definition has been reworked and improved upon by by many authors (see e.g. \cite{fooo1,mcduffwehrheim,Joykura,Yangkura,foookura} and references therein). To give at this point a technical overview of all the variants of the notion of a Kuranihsi structure that exist in the literature would be distracting and unproductive. For definiteness, we include here the definition given in the introduction of \cite{cinftyI} (for the sake of exposition, we only treat here Kuranishi spaces without isotropy). 
\begin{defn}[Kuranishi atlas]\label{defn:specialkuranishi}
An \emph{affine Kuranishi model} is a triple $\mathbf{K}=(M,p:E\rightarrow M,s)$ with $M$ a smooth manifold (the \emph{domain}), $p:E\rightarrow M$ a finite rank vector bundle (the \emph{obstruction bundle}) and $s:M\rightarrow E$ a section of $p$. Given two affine Kuranishi models $(M,E,s)$ and $(Y,F,t)$, a morphism $f:(M,E,s)\rightarrow (Y,F,t)$ is a commuting diagram 
\[
\begin{tikzcd}
E\ar[d,"p"] \ar[r,"f_v"] & F \ar[d,"q"] \\
M\ar[r,"f_b"] & Y 
\end{tikzcd}
\]
where $f_v$ is fibrewise linear such that $f_v\circ s=t\circ f_b$. Affine Kuranishi models and morphisms between them form a category that we denote $\mathsf{AffKur}$. We will write $Z(\mathbf{K})\subset M$ for the zero set $Z(s)$ of $s$ endowed with the subspace topology; this determines a functor $Z:\mathsf{AffKur}\rightarrow \mathsf{Top}$. A morphism $f:(M,E,s)\rightarrow (Y,F,t)$ is a \emph{weak equivalence} if $f$ induces a homeomorphism $Z(s)\cong Z(t)$ and for each $x\in Z(s)$, $f$ induces a quasi-isomorphism between the complexes 
\[ \ldots\longrightarrow 0\longrightarrow T_xM\overset{Ts_x}{\longrightarrow} E_x \longrightarrow 0\ldots,\quad\text{and}\quad   \ldots\longrightarrow 0\longrightarrow T_{f_b(x)}Y\overset{Tt_{f_b(x)}}{\longrightarrow} F_{f_b(x)} \longrightarrow 0\ldots. \]
Let $X$ be a paracompact Hausdorff topological space. A \emph{Kuranishi atlas on $X$} consists of the following data.
\begin{enumerate}[$(a)$]
\item An open cover $\{U_i\subset X\}_i$.
\item For each nonempty finite subset $J\subset I$, an affine Kuranishi model $\mathbf{K}_J$.
\item A collection of homeomorphisms $\psi_J:Z(\mathbf{K}_J)\rightarrow U_J$ with $U_J=\cap_{j\in J}U_j$.
\item For each inclusion $J\subset J'$, there is an open set $V_{J'}$ of the domain $M$ of $\mathbf{K}_J=(M,E,s)$ such that $Z(\mathbf{K}_J)\cap V_{J'}=\psi^{-1}_J(U_{J}')$ and a weak equivalence $\phi_{JJ'}:\mathbf{K}_J|_{V_{J'}}\rightarrow \mathbf{K}_{J'}$, where the restriction $\mathbf{K}_J|_{V_{J'}}$ is the affine Kuranishi model $(V_{J'},E|_{V_{J'}},s|_{V_{J'}})$.
\end{enumerate}
These data are required to satisfy the following conditions.
\begin{enumerate}[$(1)$]
\item The transition maps are compatible with the footprint maps: for $J\subset J'$, the diagram
\[
\begin{tikzcd}
Z(\mathbf{K}_J|_{V_{J'}}) = \psi^{-1}_J(U_{J}') \ar[rr,"Z(\phi_{JJ'})"] \ar[dr,"\psi_J|_{Z(\mathbf{K}_J)\cap V_{J'}}"'] && Z(\mathbf{K}_{J'}) \ar[dl,"\psi_{J'}"] \\
& U_{J'}
\end{tikzcd}
\]
commutes.
\item The cocycle condition holds: for every composition $J\subset J'\subset J''$, we have \[\phi_{J'J''}\circ \phi_{JJ'}=\phi_{JJ''}\]
on the open set $\phi_{J'J''}^{-1} (V_{J''})\cap V_{J'}$.
\end{enumerate}
\end{defn}
This definition is almost identical to one proposed by McDuff-Wehrheim \cite[Definition 6.2.1]{mcduffwehrheim} (McDuff and Wehrheim as well as as most other authors require the transition maps $\phi_{JJ'}$ to not only be weak equivalences but also to be embeddings on the domain and on the obstruction bundle. These requirements are automatically satisfied for Kuranishi atlases constructed via the aforementioned references) and is very close to what Fukaya-Oh-Ohta-Ono call a `good coordinate system'; when one attempts to glue together local Kuranishi models on elliptic PDE moduli spaces `by hand' it is precisely this structure one encounters naturally (see for instance \cite[Theorem 4.3.1]{mcduffwehrheim} and \cite[Construction 1.1.18]{cinftyI}). In \cite{cinftyI}, we call Definition \ref{defn:specialkuranishi} a `special Kuranishi atlas' to contrast with a more general notion of a Kuranishi atlas we introduced there to improve on Definition \ref{defn:specialkuranishi}. To see why this is necessary, observe that the data of an ordinary $\cinfty$-atlas on a paracompact Hausdorff space $X$ may be packaged as follows: suppose we are given a collection of maps $\{U_i\rightarrow X\}_{i\in I}$ of open embeddings that cover $X$ with each $U_i$ an open subset of some Cartesian space, then this cover induces a functor $f:P_I\rightarrow\mathsf{Top}^{\mathsf{Open}}$ from the poset $P_I$ of finite nonempty subsets of the indexing set $I$ (partially ordered by \emph{reverse} inclusion) which carries a finite $J\subset I$ to the intersection of all $U_i$ in $X$ for $i\in J$. Then a $\cinfty$-atlas for this cover is a dotted lift $\tilde{f}$ as in the diagram 
\[
\begin{tikzcd}
& \mathsf{CartSp}^{\mathsf{Open}}\ar[d] \\
P_I\ar[r,"f"]\ar[ur,dotted,"\tilde{f}"] & \mathsf{Top}^{\mathsf{Open}},
\end{tikzcd}
\]
where $\mathsf{CartSp}^{\mathsf{Open}}$ is the category of Cartesian spaces and smooth open embeddings among them. An (overly) abstract definition of the category of smooth manifolds then runs as follows: every such atlas $\tilde{f}$ determines by composition a functor
\[   P_I\longrightarrow \mathsf{CartSp} \overset{j}{\longrightarrow}\shv(\mathsf{CartSp}) \]
to the category of sheaves on Cartesian spaces and the colimit of this diagram determines the smooth manifold structure on $X$. The full subcategory of $\shv(\mathsf{CartSp})$ spanned by all such colimits for all $\cinfty$-atlases on paracompact Hausdorff spaces is precisely the category of manifolds. In particular, maps between manifolds are maps between sheaves on Cartesian spaces. Now let $\mathsf{AffKur}^{\mathsf{Open}}\subset \mathsf{AffKur}$ be the subcategory whose morphisms are open embeddings. In contrast to a $\cinfty$-atlas giving a space $X$ the structure of a manifold, a Kuranishi atlas cannot be written as a functor 
\begin{align}\label{eq:noatlas}
P_I\longrightarrow \mathsf{AffKur}^{\mathsf{Open}}
\end{align}  
from the poset of subsets of $I$, so it is not clear how a Kuranishi atlas determines a sheaf on $\mathsf{AffKur}$, nor what morphisms between spaces equipped with Kuranishi atlases should be. If we wish to define moduli spaces via universal construction instead of by ad-hoc formulae, we certainly need to be able to access morphism sets (or spaces) of Kuranishi spaces. Parsing Definition \ref{defn:specialkuranishi} we observe that the reason for the nonexistence of a functor \eqref{eq:noatlas} is that the transition maps $\phi_{IJ}$ in a Kuranishi atlas `go in the wrong direction': from a restriction of an affine Kuranishi model on the larger open to an affine Kuranishi model on the smaller one (simply defining a Kuranishi atlas as a functor \eqref{eq:noatlas} does not work; moduli spaces cannot be given such a structure in general). In the introduction to \cite{cinftyI}, we argue that this problem can be fixed by replacing Kuranishi atlases with a weaker notion whereby the cocycle condition is not satisfied on the nose but \emph{up to coherent homotopy}. These generalized atlases will still satisfy all desiderata of enumerative geometry and have much better formal properties. Passing to homotopy coherent atlases amounts to \emph{inverting} the weak equivalences in $\mathsf{AffKur}$ (in the $\infty$-categorical sense) so that, morally speaking, the transition maps $\phi_{IJ}$ of Definition \ref{defn:specialkuranishi} will point in the right direction after all and we can expect to describe such atlases via a functor of type \eqref{eq:noatlas}. This requires some technology to set up: replace the ordinary category $\mathsf{AffKur}$ with a Kan complex-enriched version $\mathsf{AffKur}_{\simp}$ (this implements the simplicial, or $\infty$-categorical, localization at the weak equivalences of affine Kuranishi models) and the poset $P_I$ with its simplicial `thickening' $\mathfrak{C}[P_I]$ (see \cite[Section 1.1.5]{HTT}), then a \emph{homotopy coherent Kuranishi atlas} (which is just called a `Kuranishi atlas' in \cite{cinftyI}) is a functor of simplicial categories $\tilde{f}:\mathfrak{C}[P_I]\rightarrow \mathsf{AffKur}^{\mathsf{Open}}$ fitting into a commuting diagram  
\[
\begin{tikzcd}
& \mathsf{AffKur}^{\mathsf{Open}}_{\simp}\ar[d] \\
\mathfrak{C}[P_I]\ar[r]\ar[ur,dotted,"\tilde{f}"] & \mathsf{Top}^{\mathsf{Open}},
\end{tikzcd}
\]
of simplicial categories (here the lower horizontal map factors as $\mathfrak{C}[P_I]\rightarrow P_I\overset{f}\rightarrow \mathsf{Top}^{\mathsf{Open}}$ where the first functor `forgets the thickening'). By design (see \cite[Section 1.1.8]{cinftyI}, the coherent nerve $\mathbf{N}(\mathsf{AffKur})$ can be identified with the \infcat of quasi-smooth affine derived $\cinfty$-schemes of finite presentation (see Definition \ref{defn:affinederived}), so passing to \infcatst, a homotopy coherent Kuranishi atlaes is simply a functor $P_I \rightarrow\daff$ of \infcats taking values in the full subcategory spanned by quasi-smooth objects, and the colimit of the composition $P_I\rightarrow \daff\overset{j}{\hookrightarrow}\dstack$ is representable by a quasi-smooth derived $\cinfty$-scheme (we prove this below in Proposition 
\ref{prop:representablebyscheme}), so every homotopy coherent Kuranishi atlas determines a derived $\cinfty$-scheme. Conversely, choosing a cover by affines for a derived $\cinfty$-scheme $(X,\Of_X)$ of course yields a functor $P_I\rightarrow \daff$, that is, a homotopy coherent Kuranishi atlas. Thus we deduce from the elliptic representability theorem that homotopy coherent Kuranishi atlases exist on PDE moduli spaces. In this work, we of course take the point of view that a derived $\cinfty$-scheme or stack is the correct notion for the geometry of differential geometric moduli spaces, however for those interested in classical Kuranishi atlases (which satisfy the cocycle condition strictly) the question remains whether Theorem \ref{thm:ellipticrepresentabilityfamily} implies the existence of a Kuranishi atlas on a PDE moduli space: can a homotopy coherent Kuranishi atlas on a PDE moduli space be strictified? The answer to this question is affirmative; in fact it is possible to show the following.
\begin{enumerate}[$(1)$]
\item Suppose on a space $X$ with cover $\{U_i\subset X\}_{i\in I}$ a Kuranishi atlas $\{\mathbf{K}_J\}_{J\subset I\,\mathrm{finite}}$ on a topological space $X$ is given, then one can construct a homotopy coherent Kuranishi atlas on $X$ that assigns $\mathbf{K}_J$ to the open set $U_J$ (so that homotopy coherent Kuranishi atlases indeed constitute a generalization). In particular, a Kuranishi atlas determines a derived $\cinfty$-scheme, which must be locally of finite presentation and quasi-smooth.
\item Suppose that two Kuranishi atlases on a space are contained in a third (are \emph{commensurate} in the terminology of \cite{mcduffwehrheim}), then the derived $\cinfty$-schemes they determine are equivalent. 
\item Let $(X,\Of_X)$ be a quasi-smooth derived $\cinfty$-scheme locally of finite present and suppose that $X$ is a paracompact Hausdorff space. Then there exists a Kuranishi atlas on $X$ that determines (by $(1)$) a derived $\cinfty$-scheme equivalent to $(X,\Of_X)$.
\end{enumerate}
We will not prove these assertions in this work.

\subsubsection{Differential moduli problems in context}
Let $(X,E\rightarrow X,s)$ be a Kuranishi model. Such a triple specifies a $\cinfty$-structure on the zero set $Z(s)$: we should remember that $Z(s)$ was obtained by intersecting a smooth section of a vector bundle with the zero section. Since we only care about $(X,E\rightarrow X,s)$ up to weak equivalence of Kuranishi models, we conclude that derived zero locus of $s$ as an object of the \infcat $\daff$ captures precisely the amount of intersection theoretic information that the triple $(X,E\rightarrow X,s)$ endows the topological space $Z(s)$ with, in an intrinsic manner. Our notion of a differential moduli problem is similar to that of a Kuranishi model in that it specifies an intersection problem, which overdetermines the derived geometry of the solution space. A more satisfying approach would be to view PDEs themselves as geometric objects in their own right. This calls for a derived version of Vinogradov's `secondary geometry' \cite{PDEvinogradov}, or a differential geometric version of Beilinson and Drinfeld's `D-geometry' \cite{BDchiralalgebras}. Such a theory broadly has the following features (an algebro-geometric variant is also under development, see \cite{Kryczka}).
\begin{enumerate}[$(1)$]
\item Let $M$ be a manifold, then we will consider PDEs over $M$. The derived geometry of such is controlled by the \infcat of algebras $\mathsf{PDE}_M$ for a Lawvere theory whose finitely generated free objects are (smooth functions on) \emph{infinite jet bundles} $\{J^{\infty}_M(V)\}_{V\rightarrow M}$ over finite rank vector bundles $V\rightarrow M$, and whose morphisms are infinitely prolonged PDEs. 
\item Just as the \infcat $\sring$ of derived $\cinfty$-rings has an underlying algebra $(\_)^{\rmalg}:\sring\rightarrow\scring_{\R}$, so too does the \infcat $\mathsf{PDE}_M$ of derived PDEs over $M$ admit a forgetful underlying algebra functor to the \infcat $\mathsf{CAlg}(\mathcal{D}\Mod^{\geq 0}_{\cinfty(M)})$ of commutative algebra objects in the \infcat of connective \emph{left D-modules} on $M$. 
\item As is done by Beilinson-Drinfeld, scheme theory can be developed for objects in the \infcat $\mathsf{PDE})_M$ so that we have \infcats $\mathsf{d}\mathcal{D}\cinfty\mathsf{Sch}_M$ and $\mathsf{d}\mathcal{D}\cinfty\mathsf{St}_M$ of \emph{derived D$\cinfty$-schemes} and \emph{derived D$\cinfty$-stacks}. Now a differential moduli problem $(X,Y_1,Y_2,P_1,P_2)$ over $M$ determines a quasi-smooth derived D$\cinfty$-scheme $\icate$ in the same manner that a Kuranishi model $(X,E\rightarrow X,s)$ determines a derived $\cinfty$-scheme. Consequently, the \infcat $\mathsf{d}\mathcal{D}\cinfty\mathsf{Sch}_M$ captures precisely the relevant sense in which two quintuples $(X,Y_1,Y_2,P_1,P_2)$, $(X',Y'_1,Y'_2,P'_1,P'_2)$ of differential moduli problems are equivalent.
\item Ellipticity for differential moduli problems can be intrinsically formulated (that is, as objects in the \infcat $\mathsf{d}\mathcal{D}\cinfty\mathsf{Sch}_M$, without reference to a representing quintuple $(X,Y_1,Y_2,P_1,P_2)$): objects of $\mathsf{d}\mathcal{D}\cinfty\mathsf{Sch}_M$ have a \emph{cotangent complex} (see \cite[Section 7.3]{HA}). For a `free PDE', that is, a jet bundle $J^{\infty}_M(V)$ on a vector bundle, this cotangent complex $\cotan_{J^{\infty}_M(V)}$ is the \emph{variational bicomplex} of \cite{Andersonbicomplex}. At a given solution $\sigma$, the cotangent complex $\cotan_{\icate,\sigma}$ of a differential moduli problem $\icate=(X,Y_1,Y_2,P_1,P_2)$ at $\sigma$ is an object of Tor-amplitude $\leq 1$ in the \infcat of D-modules on $M$. This $D$-module is represented by the linear PDE obtained from linearizing $P_1$ and $P_2$ at $\sigma$, and it makes sense to ask this D-module to be elliptic.
\item There is a solution stack functor
\[ \mathsf{Sol}: \mathsf{d}\mathcal{D}\cinfty\mathsf{St}_M \longrightarrow \dstack \]
which carries quasi-smooth D$\cinfty$-scheme defined by a quintuple $\icate:(X,Y_1,Y_2,P_1,P_2)$ to $\mathsf{Sol}(\icate)$. In particular, the elliptic representability theorem states that for $M$ compact, quasi-smooth derived \emph{elliptic} D$\cinfty$-schemes are carried to quasi-smooth derived $\cinfty$-schemes.
\item The theory gestured at here may be described by means of the \emph{de Rham sheaf} of Simpson \cite{Simpsonderham}. As in algebraic geometry, one can establish an equivalence $\icatd\Mod_M\simeq \Mod_{M_{\dR}}$ between the \infcat of left D-modules on $M$ and the \infcat of sheaves of modules on the de Rham sheaf of $M$. A nonlinear variant demonstrates an equivalence $\mathsf{d}\mathcal{D}\cinfty\mathsf{St}_M\simeq \dstack_{/M_{\dR}}$, so that we may interpret differential moduli problems as derived stacks $\icate\rightarrow M_{\dR}$ over the de Rham sheaf. Under these identifications, the solution stack functor above is simply the functor $\map_{M_{\dR}}(M_{\dR},\_)$ taking sections over the de Rham sheaf.
\end{enumerate}
The program sketched above is meant to aid the investigation of more sophisticated geometric data often present on PDE moduli spaces. For instance, in situations pertaining to mathematical physics PDEs are obtained via a variational principle. In such cases, one expects the moduli space of solutions to be equipped with a \emph{derived $(-1)$-shifted symplectic structure} \cite{PTVV1}, which one could hope to quantize. The \emph{formal} geometry of this situation at a given solution is studied by the mathematical BV-formalism \cite{CMRBV,Costello,factalg1,factalg2}. In the global setting of the framework above, a variational elliptic moduli problem endows $\icate$ with a $(-1)$-shifted symplectic structure $\omega$ \emph{as an object of $\mathsf{d}\mathcal{D}\cinfty\mathsf{Sch}_M$}. Given an orientation on $M$, this symplectic structure determines a symplectic structure on the solution stack by a version of the \emph{transgression} procedure invented in \cite{AKSZ} and implemented in derived algebraic geometry by \cite{PTVV1}: we pull-push the symplectic structure $\omega$ on $\icate$ along the span 
\[
\begin{tikzcd}
& \map_{M_{\dR}}(M_{\dR},\icate) \times M_{\dR} \ar[dl,"\pi_1"']\ar[dr,"\ev"] \\
\map_{M_{\dR}}(M_{\dR},\icate) && \icate.
\end{tikzcd}
\]  
In this sense, every `classical field theory' on $M$ interpreted as a derived D$\cinfty$-scheme or stack equipped with a $(-1)$-shifted symplectic structure is an AKSZ theory, over the de Rham sheaf of $M$.

\subsubsection{Generalizations}
The representability theorem proven in this work is somewhat bare-bones and does not cover most moduli spaces of elliptic PDEs on manifolds of interest in the literature. Nevertheless, we posit that the methods developed in this work can serve as a blueprint for handling the interaction between derived geometry and nonlinear global analysis in geometric elliptic PDE moduli problems broadly. To illustrate this point, we offer two examples that generalize our setup in differing directions. These are relatively minor extensions and could have been included in this paper if not for lack of space.
\begin{ex}[Local elliptic boundary conditions]
Let $M$ be a compact manifold with boundary $\del M$. We will for simplicity only be considering PDEs among vector bundles on $M$ but this example goes through in the same general setting that we consider for moduli problems without boundary. Let $E,F$ be vector bundles on $M$ and let $H$ be a vector bundle on $\del M$. Suppose we are given a PDE $\mathcal{P}:J^k_M(F)\rightarrow E$ over $M$ and a boundary condition $\mathcal{B}:J^r_M(F)\times_M\del M\rightarrow H$ over $\del M$ with $r\leq k$. This determines a pullback diagram
\[
\begin{tikzcd}
\mathsf{Sol}\ar[d] \ar[r] & \map_M(M,F) \ar[d,"P\times B"] \\
0\ar[r] & \map_M(M,E)\times\map_{\del M}(\del M,E)
\end{tikzcd}
\]
among derived $\cinfty$-stacks. If this \emph{differential moduli problem with boundary} is elliptic in the sense that its linearization at each solution determines an elliptic boundary problem (see \cite[Definition 20.1.1]{HormanderIII}, basic cases include Dirichlet and Neumann boundary conditions, while a more elaborate example is observed in the Lagrangian boundary conditions for pseudo-holomorphic strips in Lagrangian Floer theory), then $\mathsf{Sol}$ is representable by a quasi-smooth derived $\cinfty$-scheme.
\end{ex}
\begin{ex}[Gauge theory]
Let $M$ be a compact manifold and let $G$ be a compact Lie group. Let $P$ be a principal $G$-bundle on $M$ and let $\mathsf{Conn}_P(M)$ be the stack of principal $G$-connections on $P$; this is the stack of sections of the first principal jet bundle of $P$. The gauge group $\mathsf{Gauge}(P)$ acts on $\mathsf{Conn}_P(M)$; let $[\mathsf{Conn}_MP(M)/\mathsf{Gauge}(P)]$ be its quotient stack. The curvature map $\mathrm{curv}:\mathsf{Conn}_P(M)\rightarrow \Omega^2(\lieg;M)=\map_M(M,\Lambda^2T^{\vee}M\otimes \lieg)$, where $\lieg$ is the bundle associated to the adjoint representation of $G$ on its Lie algebra, is $\mathsf{Gauge}(P)$-equivariant so we may consider the pullback diagram 
\[
\begin{tikzcd}
\mathsf{Flat}_P(M) \ar[d] \ar[r] & {[}\mathsf{Conn}_MP(M)/\mathsf{Gauge}(P){]}\ar[d,"\mathrm{curv}"] \\
0\ar[r] & \Omega^2(\lieg;M)
\end{tikzcd}
\]
among derived $\cinfty$-stacks. Then $\mathsf{Flat}_P(M)$ is a quasi-smooth derived \emph{Artin} $\cinfty$-stack. The curvature map is only elliptic transversally to the orbits of the gauge group, so a proof of the representability of the moduli stack of flat connections involves constructing a slice for the gauge action. 
\end{ex}
\begin{rmk}
We will note that specifically in dimension 3, the stack $\mathsf{Flat}_P(M)$ is not the most interesting derived enhancement of the ordinary moduli space of flat connections to consider. In case $M$ is 3-dimensional, we should expect $\mathsf{Flat}_P(M)$ to admit a $(-1)$-shifted symplectic structure as this space is supposed to be the derived critical locus of the Chern-Simons functional, but this is not so; if $P$ is the trivial bundle, then at the trivial flat connection, such a $(-1)$-shifted symplectic structure should yield in particular a nondegenerate pairing between $H^0_{\dR}(M)$ and $H^3_{\dR}(M)$, but the stack $\mathsf{Flat}_P(M)$ knows nothing about 3-forms on $M$. The stack we should be entertaining (in dimension 3) is the pullback 
\[
\begin{tikzcd}
\mathsf{Flat}'_P(M) \ar[d] \ar[r] & {[}\mathsf{Conn}_MP(M)/\mathsf{Gauge}(P){]}\ar[d,"dS_{CS}"] \\
{[}\mathsf{Conn}_MP(M)/\mathsf{Gauge}(P){]}\ar[r,"0"] & T^{\vee}{[}\mathsf{Conn}_MP(M)/\mathsf{Gauge}(P){]}
\end{tikzcd}
\]
where the right vertical map is the differential of the Chern-Simons functional and the lower horizontal map is the zero section. This version of the moduli stack of flat connections is `derived enough' to admit a shifted symplectic pairing. Note that it is not a priori clear what the cotangent stack to the space of connections moduli gauge equivalences should be. To give meaning to this object, prove the representability of $\mathsf{Flat}'_P(M)$ and endow it with its shifted symplectic structure, we have to resort to the `global BV formalism' sketched in the previous subsection (this is a common feature of all gauge theories of which the moduli problem considered here is a paradigmatic but relatively simple example) which is beyond the scope of the current work.
\end{rmk}
In applications to Morse and Floer theory, one is urged to compactify moduli spaces of gradient flow lines, or moduli spaces of pseudo-holomorphic curves/strips/polygons to encompass analytic limiting phenomena like `bubbling', 'node formation' and `trajectory/strip-breaking' from which extraordinarily rich algebraic structures may be extracted. Endowing these compactified moduli spaces with their appropriate derived geometric $\cinfty$-structures (often \emph{with corners}) is a subtle undertaking, both geometrically and analytically. We will refrain from commenting further on this problem for the moment, except for noting that in order to give the relevant moduli spaces derived $\cinfty$-structures in a canonical fashion (as is done for moduli spaces without corners in this work), \emph{gluing} of elliptic PDE solutions dictates that one must consider a nonstandard notion of smoothness in the presence of boundaries for which a function $f:(0,\infty]\rightarrow\R$ is required to decompose into $f=\alpha+g$ with $\alpha\in\R$ and $g$ a function on $(0,\infty)$ all iterated derivatives of which decay uniformly on compact sets to $0$ as $x\rightarrow \infty$. We refer to the work of Parker \cite{ParkerExploded,ParkerLog} and more recent work of Joyce and Pardon \cite{Joyanalyticcorners,PardonFredholm} for an indication of how such a theory might be set up.

\subsection{Overview of contents}
We give a section-by-section overview of the material covered in this article.
\subsubsection*{Section 2}
The first section starts with a recollection of the affine theory of derived $\cinfty$-geometry as developed in \cite{cinftyI} and assemble the tools for gluing affines along equivalences. We prove a general gluing theorem for derived locally $\cinfty$-ringed spaces (whose precise derived algebro-geometric analogue seems curiously absent in the literature), which we use to show that the restricted Yoneda functor
\[  \mathsf{d}\cinfty\mathsf{Sch} \longrightarrow \dstack \]
is fully faithful. Another corollary of the fact that derived locally $\cinfty$-ringed spaces glue along equivalences is a useful representability criterion for derived $\cinfty$-stacks: if a derived stack $X$ admits an open cover by derived $\cinfty$-schemes, then it is (representable by) a derived $\cinfty$-scheme. We show that colimits in $\dstack$ can be detected at the level of what are sometimes called the \emph{petit} topoi of $\dstack$: the \infcats of sheaves on all spaces $X$ for $(X,\Of_X)$ an affine derived $\cinfty$-scheme. We use this observation to prove that the functor $\shv(\mathsf{Mfd})\rightarrow\dstack$ is a fully faithful embedding. At the end of the first subsection, several local properties of morphisms of derived $\cinfty$-schemes are introduced, including the properties of being an \'{e}tale, submersive and proper map. By general principles, these determine corresponding local properties of morphisms of stacks. \\
The second subsection deals with classes of generalized \'{e}tale and submersive maps between derived stacks, which can be detected locally. In fact, because pullbacks along these maps are preserved by the embedding $\shv(\mathsf{Mfd})\rightarrow\dstack$, one can recognize a map of smooth stacks as belonging to one of these classes (as a map of derived stacks) \emph{locally in the \infcat of smooth stacks}. A consequence of this observation is the following fact on which the representability theorem relies: a nonlinear elliptic PDE with surjective differential determines a submersive map of \emph{derived} stacks. 
\subsubsection*{Section 3}
This section develops tools for setting up differential moduli problems, recollects background from global analysis and proves the representability theorem. We identify derived stacks of families of sections as Weil restrictions. Using some hand-crafted tubular neighbourhoods, we show that derived stacks of families of submersions are actually smooth, and that maps between submersions of finite dimensional manifolds induce submersions of (infinite dimensional) stacks of sections. To recognize PDEs, we construct stacks of jets over an arbitrary smooth family of manifolds, which satisfy many natural functorialities. In the last subsection, we assemble these ingredients to prove the main theorem.
\subsubsection*{Section 4}
In this very short final section we demonstrate that our methods efficiently black-box the representability of the moduli stack of nonsingular pseudo-holomorphic curves in an almost complex manifold over the smooth moduli stack of surfaces equipped with complex structures. Since the moduli stack of surfaces with complex structures $\mathsf{Surf}_{\C}$ is a \emph{$1$-stack} (that is, for any manifold, the space $\mathsf{Surf}_{\C}(M)$ is a 1-type, an ordinary groupoid), constructing the relevant elliptic moduli problem can easily be done by hand. 
\subsubsection*{Appendix}
The appendix is but a minor elaboration of certain results from \cite[Section 6.1.3]{HTT} on local properties of morphisms. We show that the sheafification of a property that is merely stable under pullbacks is easy to construct explicitly and extend the characterization of local properties of morphisms in terms of Cartesian transformations to local properties of arbitrary diagrams.

\subsection*{Notation and conventions}
\addcontentsline{toc}{subsection}{\protect\numberline{}Notation and conventions}
\begin{itemize}
    \item We handle the interplay between small and large categories via the usual device of Grothendieck universes, i.e. we assume Tarski-Grothendieck set theory. For any cardinal $\kappa$, we denote by $\mathcal{U}(\kappa)$ the collection of sets of rank $<\kappa$. We fix once and for all three strongly inaccessible cardinals $\kappa_s<\kappa_l<\kappa_{vl}$; then we call the sets in $\mathcal{U}(\kappa_s)$ \emph{small}, those in $\mathcal{U}(\kappa_l)$ \emph{large}, and those in $\mathcal{U}(\kappa_{vl})$ \emph{very large}.
    \item The ordinary category of (small) sets is denoted as $\set$. The ordinary category of (small) simplicial sets is denoted as $\sset$.
    \item An $\infty$-category or $(\infty,1)$-category is a \emph{weak Kan complex}, also known as a \emph{quasi-category}. Our reference on the foundations of such higher categories is Lurie's book \emph{Higher Topos Theory} \cite{HTT}. The large \infcat of small \infcats is denoted $\catinf$; it is the homotopy coherent nerve of the simplicial category of fibrant cofibrant objects of the simplicial model category of marked simplicial sets. The very large \infcat of large \infcats is denoted $\catinfh$. 
     \item For $\icat$ an \infcat and $f:K\rightarrow \icat$ a diagram, we let $\icat_{/f}$ and $\icat_{f/}$ denote the slice \infcats defined by the existence of a bijection between the set of maps $S\rightarrow \icat_{/f}$ of simplicial sets and the set of maps $S\star K\rightarrow \icat$, where $\star$ is the join of simplicial sets, and $\icat_{f/}$ is defined similarly. If $K=\Delta^0$ and $f$ classifies an object $C\in \icat$, we have slices $\icat_{/C}$ and $\icat_{C/}$. For $f:\Delta^1\rightarrow\icat$ a morphism $C\rightarrow C'$, we will also write $\icat_{/C\rightarrow C'}$ instead of $\icat_{/f}$. The joins $S\star\Delta^0$ and $\Delta^0\star S$ are denoted $S^{\rhd}$ and $S^{\lhd}$ respectively.
    \item For $\icat$ an $\infty$-category, the Kan complex of morphisms between two objects $X$ and $Y$ is denoted by $\Hom_{\icat}(X,Y)$.
    	\item We will not distinguish between an ordinary category $\icat$ and its nerve $\ner(\icat)$, that is, we view every category as a 1-category. 
     \item For $\icat$ a simplicial set (usually an $\infty$-category) and $\icatd$ an $\infty$-category, the simplicial set of morphism from $\icat$ to $\icatd$ is denoted as $\fun(\icat,\icatd)$. It is an $\infty$-category and it is called \emph{the $\infty$-category of functors from $\icat$ to $\icatd$}. When $\icat=\spa$, the \infcat of spaces, we write $\pshv(\icatd)$ for $\fun(\icatd^{op},\spa)$, and call it the \emph{\infcat of presheaves on $\icatd$}.   
     \item For $\icat$ an \infcatt, we will freely use the straightening-unstraightening equivalences 
     \[ \mathsf{coCart}_{\icat}\simeq\fun(\icat,\catinf),\quad  \mathsf{Cart}_{\icat}\simeq\fun(\icat^{op},\catinf) \]
     and 
      \[ \mathsf{LFib}_{\icat}\simeq\fun(\icat,\spa),\quad  \mathsf{RFib}_{\icat}\simeq\fun(\icat^{op},\spa)=\pshv(\icat) \]
   of Sections 2.3 and 3.2 of \cite{HTT}. Here $\mathsf{coCart}_{\icat}$ is the \infcat of coCartesian fibrations over $\icat$ whose morphisms are functor preserving coCartesian morphisms and $\mathsf{LFib}\subset \mathsf{coCart}_{\icat}$ is the full subcategory spanned by those $p:\icate\rightarrow\icat$ for which every edge of $\icate$ is $p$-coCartesian; $\mathsf{Cart}_{\icat}$ and $\mathsf{RFib}_{\icat}$ are defined similarly. 
\item We let $\prl\subset\catinf$ denote the subcategory of presentable \infcats and functors admitting right adjoints among them. Similarly, we let $\prr\subset\catinf$ denote the subcategory of presentable \infcats and functors admitting left adjoints between them. 
    \item We let $\topo_{\infty}\subset\catinfh$ denote the subcategory whose objects are \inftopoit, and whose morphisms are functors that admit a left exact left adjoint. Morphisms in $\topo_{\infty}$ will be called \emph{geometric morphisms}, while their left adjoints will be called \emph{algebraic morphisms}. An algebraic morphism $f^*:\xtop\rightarrow \ytop$ is \'{e}tale if $f^*$ is equivalent to the right adjoint of a projection $\xtop_{/X}\rightarrow \xtop$ for some $X\in\xtop$. 
\item If $\icat$ is an \infcat equipped with a Grothendieck topology, we will write $\shv(\icat)\subset\pshv(\icat)$ for the \inftop of sheaves on $\icat$. If this topology is \emph{subcanonical}, that is, the Yoneda embedding $j:\icat\hookrightarrow\pshv(\icat)$ factors through $\shv(\icat)$, we will generally abuse notation by conflating objects of $\icat$ with their image under $j$.
\item For $\icat$ an \infcat that admit finite products and $h:X\rightarrow Y$ a morphism in $\icat$, we will write $\check{C}(h)_{\bullet}^+:\simpopplus\rightarrow\icat$ for the augmented \v{C}ech nerve of $h$ and $\check{C}(h)_{\bullet}$ for its restriction to $\simpop$.
 \item A \emph{manifold} is a second countable, Hausdorff topological manifold without boundary whose topological dimension is globally bounded, equipped with a maximal $C^{\infty}$-atlas. The category of manifolds is denoted $\mathsf{Mfd}$. A manifold in our sense may have connected components of differing dimensions, as long as there is not a (countable) sequence of connected components whose dimensions grow to infinity. An \emph{$n$-manifold} is a manifold each connected component of which has dimension $n$.
\end{itemize}

\subsection*{Acknowledgements}
\addcontentsline{toc}{subsection}{\protect\numberline{}Acknowledgements}
This paper is based in large part on results obtained during my PhD under Damien Calaque's supervision. It is gratifying to finally see the many hours we spent talking about derived geometry and PDE moduli crystallized in this work. I'd like to thank John Pardon and Alexander Schmeding for productive discussions on the representability theorem and Kuranishi structures, and on local additions compatible with a submersion respectively. Lastly, I'm very grateful to Claudia Scheimbauer for her support and advice during the writing of this article.\\
\emph{This work was supported by the Simons Collaboration on Global Categorical Symmetries (1013836). This project has also received funding from the European Research Council (ERC) under the European Union’s Horizon 2020 research and innovation programme (grant agreement No 768679).}
\newpage
\section{Preliminaries on derived $\cinfty$-geometry}
While this section is named `preliminaries', most of the theory we develop here has not appeared in the literature before (at least as far as we are aware); rather the material below is either well known to experts or is familiar from derived algebraic geometry. Specifically, in Section 2.1 we set up the basic theory of derived $\cinfty$-schemes and stacks, and in Section 2.2 we introduce classes of generalized \'{e}tale and submersive maps of stacks and demonstrate how such classes of morphisms facilitate the interaction between smooth and derived stacks.

\subsection{Derived $\cinfty$-schemes}
In this section we introduce and recall the basic geometric objects in derived $\cinfty$-geometry, drawing heavily on results in \cite{cinftyI}. Our theory is paradigmatically algebro-geometric, that is, we work with \emph{structured spaces}: pairs $(X,\Of_X)$ where $X$ is a topological space and $\Of_X$ is a sheaf of `rings' of some kind. Here we specify the kind of rings that we will work with.
\begin{defn}
Let $\cartsp\subset \mathsf{Mfd}$ be the full subcategory spanned by objects of the form $\R^n$ for some $n\geq 0$. The \infcat of \emph{derived $\cinfty$-rings} is the \infcat of finite product preserving functors
\[ \cartsp\longrightarrow \spa. \]
We will let $\sring\subset\fun(\cartsp,\spa)$ denote the full subcategory spanned by derived $\cinfty$-rings. 
\end{defn}
In much greater generality, if $\mathrm{T}$ is a (small) \infcat category that admits finite limits, the \infcat $\mathsf{d}\mathrm{T}\alg$ of \emph{(derived) $\mathrm{T}$-algebras} is the full subcategory $\fun^{\pi}(\mathrm{T},\spa)\subset\fun(\mathrm{T},\spa)$ spanned by functors preserving finite products. Such \infcats $\mathrm{T}$ are \emph{algebraic theories}, or \emph{Lawvere theories}. The assignment $\mathrm{T}\mapsto \mathsf{d}\mathrm{T}\alg$ determines an equivalence of \infcats (really, of $(\infty,2)$-categories) between the \infcat $\mathsf{LawThy}$ of (idempotent complete) Lawvere theories whose morphisms are product preserving functors and the \infcat $\prl_{\mathrm{Proj}}$ of projectively generated presentable \infcats whose morphism are left adjoints whose right adjoint preserves sifted colimits. We will refer to \cite[Section 5.5.6]{HTT} and \cite[Section 2.4]{cinftyI} for more background on algebraic theories and projective generation. The \infcat of \emph{derived commutative $\R$-algebras} (also known as the \infcat of \emph{animated commutative $\R$-algebras}) is the \infcat of derived algebras for the algebraic theory $\mathsf{Poly}_{\R}\subset\mathsf{CartSp}$ on the polynomial maps. This inclusion then yields an adjunction 
\[ \begin{tikzcd}\scring_{\R} \ar[r,shift left,"F^{\cinfty}"] &[2em] \sring.\ar[l,shift left,"(\_)^{\rmalg}"]\end{tikzcd} \]
Here, $(\_)^{\rmalg}$ is the underlying algebra functor and its left adjoint is the free derived $\cinfty$-ring functor (see \cite[Section 4]{cinftyI} for further discussion on the free $\cinfty$-ring functor). The \infcat $\scring_{\R}$ admits the following operadic description: let $\calg$ be the \infcat of commutative algebras in spectra, that is, algebras for the $\einfty$-operad. The \infcat of $\calg_{\R}^{\geq 0}:=\calg^{\geq 0}_{\R/}$ of connective commutative $\R$-algebras is projectively generated (as all \infcats of operad algebras in projectively generated presentably symmetric monoidal \infcats are), and because $\Q\subset\R$, we can identify the full subcategory of $\calg^{\geq 0}_{\R/}$ spanned by compact projective objects with the opposite of the category $\mathsf{Poly}_{\R}$, which determines an equivalence $\scring_{\R}\simeq \calg_{\R}^{\geq 0}$. \\
The \infcat $\sring$ was not arbitrarily chosen. Let $\sring_{\fp}
\subset \sring$ spanned by compact objects, that is, those $A\in \sring$ for which $\Hom_{\sring}(A,\_):\sring\rightarrow\spa$ preserves filtered colimits. We will call such objects \emph{finitely presented}. Then it can be shown (\cite[Section 3.1]{cinftyI}) that the functor $\cinfty(\_):\mathsf{Mfd}\rightarrow \sring^{op}$ factors through $\sring_{\fp}^{op}$ and exhibits the \infcat $\sring_{\fp}^{op}$ as the \emph{universal derived geometry} associated with the category $\mathsf{Mfd}$ (see \cite{univprop} and \cite{cinftyI}; the latter reference establishes a similar result for manifolds with corners). We regard the objects of $\sring_{\fp}$ and more generally those of $\sring$ as \emph{affine}. We will glue such objects together using a derived version of the theory of $\cinfty$-schemes, due originally to Dubuc \cite{dubucschemes} (see also the work of Joyce \cite{Joy2}). For a comprehensive account of algebro-geometric variants, we refer to Lurie's work \cite{dagv,sag}.
\begin{defn}[Derived $\cinfty$-ringed spaces]
Let $\mathsf{Top}(\sring)$ be the \infcat defined as follows.
\begin{enumerate}
    \item[$(O)$] Objects are pairs $(X,\Of_X)$ for $X$ a topological space and $\Of_X$ a Postnikov complete (see the remark below) sheaf of derived $\cinfty$-rings on $X$. We will call such objects \emph{derived $\cinfty$-ringed spaces}.
    \item[$(M)$] For any pair $(X,\Of_X)$, $(Y,\Of_Y)$ of $\cinfty$-ringed spaces, the space of morphisms between them is given by 
    \[ \coprod_{f\in \Hom_{\mathsf{Top}}(X,Y)}\Hom_{\widehat{\shv}_{\sring}(X)}(f^*\Of_Y,\Of_X).\]
\end{enumerate}
More precisely, let $\mathsf{Top}_{\infty}^{\mathrm{Pc}}\subset \mathsf{Top}_{\infty}$ be the full subcategory spanned by Postnikov complete \inftopoi (see the remark below) and consider the functor 
\[ \begin{tikzcd}\widehat{\shv}_{\sring}(\_)^{op}: \mathsf{Top}\ar[r,"X\mapsto \widehat{\shv}(X)"] &[3em]\mathsf{Top}_{\infty}^{\mathrm{Pc}}\subset \topo_{\infty}\subset(\prl)^{op}\ar[r,"\fun^{\mathrm{R}}((\_)^{op}{,}\sring)"]&[6em] (\prl)^{op}\subset \catinfh^{op}\ar[r,"(\_)^{op}"]&\catinfh^{op}\end{tikzcd}\]
carrying a topological space $X$ to the opposite of the presentable \infcat of Postnikov complete $\sring$-valued sheaves on $X$. Then we define the \infcat $\mathsf{Top}(\sring)$ as a Cartesian fibration
\[ \mathsf{Top}(\sring)\longrightarrow \mathsf{Top} \]
associated to the functor $\widehat{\shv}_{\sring}(\_)^{op}$.
\end{defn}
\begin{rmk}
An \inftop is \emph{Postnikov complete} if every object $X\in \xtop$ is a limit of its Postnikov tower; equivalently, if $\xtop$ is a limit (in the \infcat of large \infcats) of the tower 
\[ \xtop\longrightarrow \ldots\longrightarrow \tau_{\leq n}\xtop\longrightarrow \tau_{\leq (n-1)}\xtop\longrightarrow \ldots\longrightarrow \tau_{\leq 0}\xtop. \]
The inclusion $\mathsf{Top}_{\infty}^{\mathrm{Pc}}\subset \mathsf{Top}_{\infty}$ of the full subcategory spanned by Postnikov complete \inftopoi admits a right adjoint, the  \emph{Posntikov completion}, which takes an \inftop $\xtop$ that is not necessarily Postnikov complete to the limit of the tower above (one has to prove this is again an \inftop and that the map $\xtop\rightarrow \lim_n\tau_{\leq n}\xtop$ is an algebraic morphism; see \cite[A.2.7]{sag}). We let $\widehat{\xtop}$ denote the Postnikov completion. By passing to the Postnikov completion, we consider only those sheaves which are \emph{well approximated by their truncations}. In the \infcat $(X,\Of_X)$ we thus consider only sheaves $\Of_X:\mathrm{Open}(X)^{op}\rightarrow \sring$ which are limits of their Postnikov towers, that is, $\Of_X\simeq \lim_n\tau_{\leq n}\Of_X$. For most applications, this is not a restriction: whenever the topological space $X$ is finite dimensional in a suitable sense, $\shv(X)$ will automatically be Postnikov complete. 
\end{rmk}
\begin{rmk}
Let $\icat$ be an $n$-category for $n<\infty$ equipped with a Grothendieck topology that is subcanonical (such as the $(-1)$-category of opens of a topological space). The Yoneda embedding $j:\icat\hookrightarrow\pshv(\icat)$ factors fully faithfully through $\tau_{\leq n}\shv(\icat)$. Since for each $k\in \Z_{\geq -1}$, the Postnikov completion $\shv(\icat)\rightarrow\widehat{\shv}(\icat)$ determines an equivalence $
\tau_{\leq k}\shv(\icat)\simeq\tau_{\leq k}\widehat{\shv}(\icat)$, the composition $\icat\hookrightarrow\shv(\icat)\rightarrow\widehat{\shv}(\icat)$ is also fully faithful. We will abusively denote this Postnikov completed Yoneda embedding also by $j$.  
\end{rmk}
\begin{rmk}
The functor $q_{\sring}:\mathsf{Top}(\sring)\rightarrow\mathsf{Top}$ is by construction a Cartesian fibration. Since for each map of spaces $f:X\rightarrow Y$, the functor $f^*:\widehat{\shv}(Y)\rightarrow\widehat{\shv}(X)$ admits right adjoint, $q$ is also a coCartesian fibration; in fact $q$ is the opposite of a presentable fibration. This property is useful for computing limits and colimits in $\mathsf{Top}(\sring)$ (see for instance \cite[Corollary 4.3.1.11]{HTT}. To compute the limit of a diagram $f:K\rightarrow \mathsf{Top}(\sring)$, take a limit of the composition $q_{\sring}f:K\rightarrow\mathsf{Top}$ and then take the limit in the fibre $q_{\sring}^{-1}(\lim_Kq_{\sring}f)\simeq\widehat{\shv}(\lim_Kq_{\sring}f)^{op}$ of the `Cartesian pullback' of $f$ to this fibre (colimits are computed via the formally dual procedure). For example, a diagram 
\[
\begin{tikzcd}
(Q,\Of_Q)\ar[d,"k"]\ar[r,"h"] & (Y,\Of_Y)\ar[d,"f"] \\
(Z,\Of_Z)\ar[r,"g"] & (X,\Of_X)
\end{tikzcd}
\]
is a pullback of derived $\cinfty$-ringed spaces if and only if the underlying diagram of spaces is a pullback and the diagram 
\[
\begin{tikzcd}
(fh)^*\Of_X\simeq (gk)^*\Of_X\ar[d]\ar[r] & h^*\Of_Y\ar[d] \\
k^*\Of_Z\ar[r] & \Of_Q
\end{tikzcd}
\]
of (Postnikov complete) sheaves of derived $\cinfty$-rings is a pushout. In particular, the functor $q_{\sring}$ preserves limits and colimits.
\end{rmk}
\begin{defn}[Derived locally $\cinfty$-ringed spaces]
A pair $(X,\Of_X)$ is a \emph{derived locally $\cinfty$-ringed space} if for each $x\in X$, the stalk $x^*\Of_X\in\sring$ is an \emph{Archimedean local} derived $\cinfty$-ring: there is a (unique) morphism of derived $\cinfty$-rings $x^*\Of_X\rightarrow \R$ and the underlying commutative $\R$-algebra $\pi_0(x^*\Of_X)^{\rmalg}$ is a local ring (in particular, the map $\pi_0(x^*\Of_X)^{\rmalg}\rightarrow\R$ is the projection onto the residue field). We denote by $\mathsf{Top}^{\mathrm{loc}}(\sring)\subset \mathsf{Top}(\sring)$ the full subcategory spanned by derived locally $\cinfty$-ringed spaces. Abusing notation, we will also write $q_{\sring}$ for the forgetful functor
\[ \mathsf{Top}^{\mathrm{loc}}(\sring)\subset \mathsf{Top}(\sring)\overset{q_{\sring}}{\longrightarrow} \mathsf{Top}.\]
\end{defn}
\begin{rmk}
In general, the subcategory of locally ringed spaces excludes morphisms of ringed spaces that do not preserve the maximal ideals at all stalks. Since the residue field of an Archimedean local $\cinfty$-ring is $\R$, every morphism between such preserves maximal ideals.    
\end{rmk}
\begin{rmk}
Let $(f,\alpha):(X,\Of_X)\rightarrow (Y,\Of_Y)$ be a morphism of derived $\cinfty$-ringed spaces. Suppose that $\alpha$ is an equivalence, that is, $(f,\alpha)$ is $q_{\sring}$-Cartesian, then $(X,\Of_X)$ is also locally $\cinfty$-ringed. It follows that the functor $q_{\sring}:\mathsf{Top}^{\loc}(\sring)\rightarrow \mathsf{Top}$ is also a Cartesian fibration and the full subcategory inclusion $\mathsf{Top}^{\loc}(\sring)\subset \mathsf{Top}(\sring)$ preserves Cartesian morphisms. Note however that the functor $\mathsf{Top}^{\loc}(\sring)\rightarrow \mathsf{Top}$ is \emph{not} a coCartesian fibration: if $(X,\Of_X)$ is locally $\cinfty$-ringed, the pushforward $f_!\Of_X$ is usually not locally $\cinfty$-ringed.
\end{rmk}
\begin{rmk}
One can show (see \cite[Proposition 3.2.11]{cinftyI}) that the full subcategory inclusion $\mathsf{Top}^{\mathrm{loc}}(\sring)\subset \mathsf{Top}(\sring)$ is stable under limits. In particular, $\mathsf{Top}^{\mathrm{loc}}(\sring)$ admits all limits, and the forgetful functor $q_{\sring}:\mathsf{Top}^{\mathrm{loc}}(\sring)\rightarrow\mathsf{Top}$ preserves them.    
\end{rmk}
The \infcat of derived $\cinfty$-ringed spaces admits a natural topology 

\begin{defn}\label{defn:etalemap}
A morphism $(f,\alpha):(X,\Of_X)\rightarrow (Y,\Of_Y)$ of derived $\cinfty$-ringed spaces is \emph{\'{e}tale} if $f$ is a local homeomorphism and $\alpha:f^*\Of_Y\rightarrow \Of_X$ is an equivalence of sheaves of derived $\cinfty$-rings. If $f$ is a also injective, we say that $(f,\alpha)$ is an \emph{open embedding}. We let $\mathsf{Top}(\sring)^{\mathsf{Open}}$ and $\mathsf{Top}(\sring)^{\text{\'{e}t}}$ be the subcategories of $\mathsf{Top}(\sring)$ on the open embeddings and the \'{e}tale morphisms respectively. Similarly, we have evident subcategories $\mathsf{Top}^{\mathrm{loc}}(\sring)^{\mathsf{Open}}$ and $\mathsf{Top}^{\mathrm{loc}}(\sring)^{\text{\'{e}t}}$.
\end{defn}
\begin{rmk}
Let $\mathsf{Top}^{\mathsf{Open}}$ and $\mathsf{Top}^{\text{\'{e}t}}$ be the subcategories of $\mathsf{Top}$ on the open topological embeddings and the local homeomorphisms respectively, then $q_{\sring}$ restricts to right fibrations $\mathsf{Top}(\sring)^{\mathsf{Open}}\rightarrow  \mathsf{Top}^{\mathsf{Open}}$ and $\mathsf{Top}(\sring)^{\text{\'{e}t}}\rightarrow \mathsf{Top}^{\text{\'{e}t}}$. Since a right fibration $p:\icat\rightarrow\icatd$ of \infcats determines for each $C\in\icat$ a trivial fibration $\icat_{/C}\rightarrow \icatd_{/p(C)}$, we have in particular for each derived locally $\cinfty$-ringed space an equivalence $\mathsf{Top}(\sring)^{\mathsf{Open}}_{/(X,\Of_X)}\rightarrow \mathsf{Open}(X)$. Similarly, the Cartesian fibration $\mathsf{Top}^{\loc}(\sring)\rightarrow \mathsf{Top}$ determines for each derived locally $\cinfty$-ringed space $(X,\Of_X)$ an equivalence $\mathsf{Top}^{\mathrm{loc}}(\sring)^{\mathsf{Open}}_{/(X,\Of_X)}\rightarrow \mathsf{Open}(X)$.
\end{rmk}
\begin{rmk}
The classes of \'{e}tale morphisms and open embeddings are stable under pullback by any morphism of derived $\cinfty$-ringed spaces. In particular, the subcategory inclusions $\topo^{\loc}(\sring)^{\et}\subset \topo^{\loc}(\sring)$ preserve pullbacks.  
\end{rmk}
\begin{defn}[\'{E}tale topology]
Let $(X,\Of_X)\in\mathsf{Top}(\sring)$, then a small collection $\{(f_i,\alpha_i):(U_i,\Of_{U_i})\rightarrow(X,\Of_X)\}_{i\in I}$ of morphisms in $\mathsf{Top}^{\mathrm{loc}}(\sring)$ is a covering family just in case $(f_i,\alpha_i)$ is an open embedding for all $i\in I$ and the collection $\{U_i\rightarrow X\}_{i\in I}$ of maps of topological spaces is jointly surjective. We call the Grothendieck topology determined by this collection of covering families the \emph{\'{e}tale topology} on $\mathsf{Top}(\sring)$.
\end{defn}
\begin{rmk}
We may replace $(1)$ above with the condition that $(f_i,\alpha_i)$ is \'{e}tale.
\end{rmk}
\begin{rmk}\label{rmk:subsite}
Let $\LS\subset \mathsf{Top}(\sring)$ be a full subcategory such that the following condition is satisfied.
\begin{enumerate}
    \item[$(*)$] If $(U,\Of_U)\rightarrow (X,\Of_X)$ is an open embedding in $\LS$, then for any map $(Y,\Of_Y)\rightarrow (X,\Of_X)$ in $\LS$, the derived $\cinfty$-ringed space $(U,\Of_X|_U)\times_{(X,\Of_X)}(Y,\Of_Y)$ lies in $\LS$.
\end{enumerate} 
Then the \'{e}tale topology restricts to a topology on $\LS$ (that we also call the \emph{\'{e}tale topology}). The subcategory inclusion $\topo^{\loc}(\sring)\subset\topo(\sring)$ evidently has this property.
\end{rmk}
\begin{defn}
Let $\LS\subset \mathsf{Top}(\sring)$ be a property of objects of derived $\cinfty$-ringed spaces.
\begin{enumerate}[$(1)$]
    \item If $\LS$ satisfies condition $(*)$ of Remark \ref{rmk:subsite}, we say that $\LS$ is \emph{stable under open pullbacks}.   
    \item If $\LS$ satisfies the condition that for any $(X,\Of_X)\in \LS$ and any open subset $U\subset X$, the open embedding $(U,\Of_X|_U)\rightarrow (X,\Of_X)$ lies in $\LS$, then we say that $\LS$ is \emph{stable under open subspaces}. Note that this implies that $\LS$ is stable under open pullbacks.
    \item Suppose that $\LS$ is stable under open subspaces, then we say that $\LS$ is \emph{local} if for any $(X,\Of_X)\in \mathsf{Top}(\sring)$, should there exists an open cover $\{U_i\subset X\}_{i}$ such that $(U_i,\Of_X|_{U_i})$ lies in $\LS$, then $(X,\Of_X)\in \LS$. 
\end{enumerate}
\end{defn}
\begin{ex}
Being a derived \emph{locally} $\cinfty$-ringed space is a local property of derived $\cinfty$-ringed spaces.
\end{ex}
The preceding definition provides the structure of a \emph{site} on each full subcategory $\LS\subset \mathsf{Top}(\sring)$ stable under open pullbacks and we will investigate the \infcats of sheaves they give rise to shortly. We now show that derived locally $\cinfty$-ringed spaces may be glued along equivalences.
\begin{prop}\label{prop:gluinglocringsp}
The following hold true.
\begin{enumerate}[$(1)$]
\item The \infcat $\mathsf{Top}^{\mathrm{loc}}(\sring)^{\text{\'{e}t}}$ admits small coproducts.
\item Let $U_{\bullet}$ be a groupoid object of $\mathsf{Top}^{\mathrm{loc}}(\sring)^{\text{\'{e}t}}$ such that the map
\[ q_{\sring}(U_1)\rightarrow q_{\sring}(U_0)\times q_{\sring}(U_0)  \]
of topological spaces is injective. Then the diagram $U_{\bullet}$ admits a colimit in $\mathsf{Top}^{\mathrm{loc}}(\sring)^{\text{\'{e}t}}$.
\end{enumerate}
Furthermore, the subcategory inclusions $\mathsf{Top}^{\mathrm{loc}}(\sring)^{\text{\'{e}t}}\subset \mathsf{Top}^{\mathrm{loc}}(\sring)\subset\topo(\sring)$ preserve the colimits specified in $(1)$ and $(2)$ above.
\end{prop}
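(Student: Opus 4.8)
The plan is to exploit the structure of the forgetful functor $q_{\sring}:\topo^{\loc}(\sring)^{\et}\to\topo^{\et}$ as a right fibration, together with the fact that the target \infcat $\topo^{\et}$ of topological spaces with local homeomorphisms has the requisite colimits, and that these colimits are computed by straightforward point-set topology (disjoint unions for $(1)$, and for $(2)$ the quotient of $q_{\sring}(U_0)$ by the open equivalence relation $q_{\sring}(U_1)\hookrightarrow q_{\sring}(U_0)\times q_{\sring}(U_0)$, which is again a space admitting local homeomorphisms from the $U_i$). The key structural input is that for a right fibration $p:\icat\to\icatd$, the straightening is a functor $\icatd^{op}\to\spa$, i.e. $\topo^{\loc}(\sring)^{\et}$ is classified by a presheaf of spaces on $\topo^{\et}$; concretely, over a fixed base space $X$ the fibre is the space (not just set) of \'etale sheaves of derived $\cinfty$-rings, which is $\widehat{\shv}_{\sring}(X)^{\simeq}$ restricted appropriately, but the essential point is only that $p$ is a right fibration with the colimits in question existing downstairs.

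First I would prove $(1)$: given a small family $\{(U_i,\Of_{U_i})\}_{i\in I}$, set $X=\coprod_i q_{\sring}(U_i)$ in $\topo^{\et}$; since the $q_{\sring}(U_i)\hookrightarrow X$ are local homeomorphisms (in fact open embeddings), and the \'etale topos $\widehat{\shv}(X)$ decomposes as the product $\prod_i\widehat{\shv}(q_{\sring}(U_i))$, the sheaves $\Of_{U_i}$ assemble to a sheaf $\Of_X$ on $X$ whose restriction to each summand is $\Of_{U_i}$; this $(X,\Of_X)$ is locally $\cinfty$-ringed because the property is local (the Example just above the Proposition), and it is the coproduct in $\topo^{\loc}(\sring)^{\et}$ by the universal property, checked summand-by-summand via the right fibration description of mapping spaces $\coprod_{f}\Hom_{\widehat{\shv}(-)}(f^*(-),-)$. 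For $(2)$ I would argue as follows: let $X=\colim q_{\sring}(U_\bullet)$ in $\topo^{\et}$, which exists and is an \'etale-locally-$U_0$ space because the injectivity hypothesis makes $q_{\sring}(U_\bullet)$ the \v{C}ech nerve of the open surjection $q_{\sring}(U_0)\to X$; then descent for the \inftop-valued functor $\widehat{\shv}(-)$ along open covers (equivalently, \'etale descent in $\topo_\infty$) identifies $\widehat{\shv}(X)$ with the limit of the cosimplicial \infcat $\widehat{\shv}(q_{\sring}(U_\bullet))$, and the structure sheaves $\Of_{U_\bullet}$, being a morphism of groupoid objects over this, descend to a sheaf $\Of_X$ on $X$ with $f_0^*\Of_X\simeq\Of_{U_0}$; locality of the Archimedean-local condition again gives that $(X,\Of_X)\in\topo^{\loc}(\sring)$, and one checks it is the colimit of $U_\bullet$ there. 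The ``furthermore'' clause — that the inclusions into $\topo^{\loc}(\sring)$ and then $\topo(\sring)$ preserve these colimits — follows because the larger \infcats are also (op-)presentable fibrations over $\topo$, so colimits in them are computed by the same recipe (take the colimit downstairs, then a colimit of sheaves upstairs), and the \'etale hypothesis forces the structure-sheaf maps to be equivalences so that this recipe reduces to the descent computation just performed; one uses here the earlier remark that $\topo^{\loc}(\sring)\subset\topo(\sring)$ preserves Cartesian morphisms.

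The main obstacle I anticipate is $(2)$, specifically verifying that the colimit of the groupoid object actually computes correctly both topologically and sheaf-theoretically under the \emph{injectivity} hypothesis on $q_{\sring}(U_1)\to q_{\sring}(U_0)^{\times 2}$ — this is precisely the hypothesis that rules out stacky quotients and forces $X$ to be a genuine topological space rather than a topological stack, and it is what allows $\widehat{\shv}(X)$ to be recovered by descent. One must be careful that ``$U_\bullet$ is a groupoid object in $\topo^{\loc}(\sring)^{\et}$'' combined with injectivity really does imply that $q_{\sring}(U_\bullet)$ is (equivalent to) the \v{C}ech nerve of $q_{\sring}(U_0)\to X$ in $\topo^{\et}$, using that in $\topo$ — or rather in the \inftop of spaces underlying it — every groupoid object is effective, and then transporting effectivity back along $q_{\sring}$ using that it is a right fibration. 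Assembling the structure sheaf via \'etale descent is then formal, but bookkeeping the coherences (that the descended $\Of_X$ is Postnikov complete, which holds since each $\Of_{U_i}$ is and Postnikov completeness is local, and that the comparison maps are the right ones) requires some care with the $\catinfh$-valued functor $\widehat{\shv}_{\sring}(-)$.
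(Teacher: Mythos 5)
Your proposal is correct and follows essentially the same route as the paper: compute the colimit of underlying spaces, use the injectivity hypothesis to see that the underlying groupoid of spaces is 1-efficient hence effective (so $U_\bullet$ is the \v{C}ech nerve of an \'etale surjection), glue the structure sheaves by \'etale descent of $\widehat{\shv}(-)$, invoke locality of the Archimedean-local condition, and finally check that a cocone with \'etale legs induces an \'etale map out of the colimit. The only step you treat as ``formal'' that the paper handles explicitly is upgrading descent of the underlying $\infty$-topoi to descent of $\sring$-valued sheaf categories, which it does via relative colimits over the Cartesian fibration $q_{\sring}$ together with the lemma that tensoring with a presentable $\infty$-category preserves limits of diagrams in $\pr^{\mathrm{L},\mathrm{R}}$ (the transition functors being \'etale, hence both left and right adjoints).
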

\begin{rmk}
We explain how the preceding result allows one to glue derived locally $\cinfty$-ringed spaces by equivalences. Suppose we are given a collection $(X_i,\Of_{X_i})$ of derived locally $\cinfty$-ringed spaces together with, for each pair $i,j$, a pair of open embeddings $(U_{ij},\Of_{U_{ij}})\rightarrow (X_i,\Of_{X_i})$ and $(U_{ij},\Of_{U_{ij}})\rightarrow (X_i,\Of_{X_j})$. This determines (by $(1)$ of Proposition \ref{prop:gluinglocringsp}) a diagram $C_{\bullet}^{1}:\simpop_{\leq 1}\rightarrow \topo^{\loc}(\sring)$ like so
\[  \begin{tikzcd}
 \coprod_{ij} (U_{ij},\Of_{U_{ij}}) \ar[r,shift left]\ar[r,shift right] & \coprod_i (X_i,\Of_{X_i}).
\end{tikzcd} \]
where both maps are \'{e}tale. Note that the map $C^1_1\rightarrow C^1_0\times C^1_0$ is injective on the underlying topological spaces so that a groupoid object $C_{\bullet}$ extending $C^1_{\bullet}$ satisfies the condition of Proposition \ref{prop:gluinglocringsp}. Write $V_{ij}\subset X_i$ and $W_{ij}\subset X_j$ for the images of the open embeddings $U_{ij}\rightarrow X_i$ and $U_{ij}\rightarrow X_j$ respectively, then to extend $C^1_{\bullet}$ to a diagram $C^2_{\bullet}:\simpop_{\leq 2}\rightarrow \topo^{\loc}(\sring)$ with $C^2_{\bullet}= U^1_0\times_{U^1_0}U^1_0$ (a \emph{1-groupoid} in $\topo^{\loc}(\sring)$) is to provide homotopies $\phi_{ik}\overset{\alpha_{ijk}}{\simeq} \phi_{jk}\circ\phi_{ij}$ where $\phi_{ij}$ is the map 
\[ (V_{ij},\Of_{X_i}|_{V_{ij}}) \overset{\simeq}{\longrightarrow} (U_{ij},\Of_{U_{ij}}) \overset{\simeq}{\longrightarrow} (W_{ij},\Of_{X_j}|_{W_{ij}}), \]
that is, homotopies $\alpha_{ijk}$ witnessing a cocycle condition for the transition functions $\{\phi_{ij}\}$. To extend $C^2_{\bullet}$ to a \emph{$2$-groupoid} $C^3_{\bullet}:\simpop_{\leq 3}\rightarrow\topo^{\loc}(\sring)$ is to provide a collection of two dimensional homotopies witnessing coherences among the homotopies $\{\alpha_{ijk}\}$. In general, we see that to give an extension of $C^1_{\bullet}$ to a groupoid object $C_{\bullet}$ is to provide a collection of homotopies witnessing a cocycle condition together with an infinite tower of higher coherences for these homotopies. Proposition \ref{prop:gluinglocringsp} guarantees that in case such an extension $C_{\bullet}$ is provided, we can glue the collection $\{(X_i,\Of_{X_i})\}_i$ to obtain a derived locally $\cinfty$-ringed space $(X,\Of_X)$ such that
\begin{enumerate}[$(a)$]
\item The space $X=\coprod_i X_i/\sim $ is a gluing of the spaces $\{X_i\}$ so that all maps $f_i:X_i\hookrightarrow X$ are open topological embeddings.
\item The maps $(f_i,\alpha_i):(X_i,\Of_{X_i})\hookrightarrow (X,\Of_X)$ are open embeddings of derived locally $\cinfty$-ringed spaces.
\item The structure sheaf $\Of_X$ is a limit of the cosimplicial diagram obtained by `pushing forward' the diagram $C_{\bullet}$ to the fibre over $X$.
\end{enumerate} 
It may seem prohibitively complicated to construct in examples of interest a groupoid object $C_{\bullet}$ as above encoding all coherences, but this is not the case; the machinery of higher topos theory will do this job for us.
\end{rmk}
In the 1-categorical setting, (locally) ringed spaces may be glued  by constructing the sheaf $\Of_X$ on the space $X=\coprod_i X_i/\sim$ by hand. As we are in the derived setting, the proof of Proposition \ref{prop:gluinglocringsp} is somewhat more elaborate, involving colimits of \inftopoit. We recommend the reader unfamiliar with the arguments involved skip it on first reading. We first need an elementary lemma about tensor products of presentable \infcatst.
\begin{lem}\label{lem:tensoringprl}
Let $\pr^{\mathrm{L},\mathrm{R}}=\prl\cap\prr\subset\catinfh$ be the subcategory whose objects are presentable \infcats and whose morphisms are left \emph{and} right adjoints, then the subcategory inclusions $\pr^{\mathrm{L},\mathrm{R}}\subset\prl$ and $\pr^{\mathrm{L},\mathrm{R}}$ both preserve and reflect limits and colimits. Moreover, if $\icat$ be a presentable \infcatt, then the composition
\[\begin{tikzcd}\pr^{\mathrm{L},\mathrm{R}}\subset \prl\ar[r,"\_\otimes \icat"]&\prl \end{tikzcd}\]
preserves limits, where $\otimes$ denotes the Lurie tensor product of presentable \infcats of \cite[Section 4.8]{HA} which admits an equivalence $\icat\otimes\icatd=\fun^{\mathrm{R}}(\icat^{op},\icatd)$ natural in $\icat$ (using the obvious functoriality of $\fun^{\mathrm{R}}(\icat^{op},\icatd)$ in the domain by composing with the \emph{opposites} of left adjoint functors between presentable \infcatst).
\end{lem}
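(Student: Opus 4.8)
The plan is to first describe the morphisms of $\pr^{\mathrm{L},\mathrm{R}}$ concretely and then exploit that the forgetful functors $\prl\to\catinfh$ and $\prr\to\catinfh$ preserve small limits (\cite[\S 5.5.3]{HTT}, the statement for $\prr$ via the equivalence $\prl\simeq(\prr)^{op}$ and the formula expressing colimits in $\prl$ as limits in $\catinfh$). By the adjoint functor theorem for presentable \infcats, a functor between presentable \infcats is a left adjoint iff it preserves small colimits, and a right adjoint iff it preserves small limits and is accessible; since colimit-preserving functors are automatically accessible, the morphisms of $\pr^{\mathrm{L},\mathrm{R}}$ are exactly the functors preserving all small limits \emph{and} all small colimits. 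Hence, for a small diagram $p\colon K\to\pr^{\mathrm{L},\mathrm{R}}$ with values $\icatd_\alpha$, every transition functor preserves all limits and colimits, so the limit $L$ of $p$ formed in $\catinfh$ is presentable, has all small limits and colimits computed componentwise, and the projections $p_\alpha\colon L\to\icatd_\alpha$ preserve and jointly detect them. Moreover $L$ with this projection cone simultaneously computes $\lim_{\prl}p$ and $\lim_{\prr}p$, because the underlying diagram in $\catinfh$ is the same and both inclusions preserve limits; so each $p_\alpha$ is at once a left and a right adjoint, and the cone lies in $\pr^{\mathrm{L},\mathrm{R}}$.

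To see that it is a limit cone there, fix a presentable $\icate$. Since limits and $\kappa$-filtered colimits in $L$ are computed componentwise and $K$ is small, a colimit-preserving functor $G\colon\icate\to L$ preserves small limits, resp.\ is accessible, iff each $p_\alpha G$ does, resp.\ is; thus $G$ lies in $\pr^{\mathrm{L},\mathrm{R}}$ iff every $p_\alpha G$ does. Now $\Hom_{\pr^{\mathrm{L},\mathrm{R}}}(\icate,\icatd_\alpha)\hookrightarrow\Hom_{\prl}(\icate,\icatd_\alpha)$ is the inclusion of the union of connected components consisting of bi-adjoint functors, and post-composition with the (bi-adjoint) transition functors of $p$ preserves this property, giving a levelwise $(-1)$-truncated map of $K$-diagrams of spaces; passing to limits (monomorphisms of spaces are stable under limits) and using the preceding sentence identifies $\Hom_{\pr^{\mathrm{L},\mathrm{R}}}(\icate,L)\hookrightarrow\lim_\alpha\Hom_{\prl}(\icate,\icatd_\alpha)=\Hom_{\prl}(\icate,L)$ with $\lim_\alpha\Hom_{\pr^{\mathrm{L},\mathrm{R}}}(\icate,\icatd_\alpha)\hookrightarrow\lim_\alpha\Hom_{\prl}(\icate,\icatd_\alpha)$. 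Hence $L$ is a limit in $\pr^{\mathrm{L},\mathrm{R}}$ and $\pr^{\mathrm{L},\mathrm{R}}\hookrightarrow\prl$ preserves it; since an equivalence of presentable \infcats is canonically bi-adjoint, this inclusion is conservative and therefore also reflects limits. The case of $\pr^{\mathrm{L},\mathrm{R}}\subset\prr$ is obtained by interchanging the roles of left and right adjoints throughout, and the colimit assertions by the dual argument, using that $\prl$ and $\prr$ also have all small colimits together with $\prl\simeq(\prr)^{op}$.

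For the last assertion, use the natural equivalence $\_\otimes\icat\simeq\fun^{\mathrm{R}}(\icat^{op},\_)$. Given a small $p\colon K\to\pr^{\mathrm{L},\mathrm{R}}$ with values $\icatd_\alpha$, the first part shows its limit $L=\lim_{\prl}p$ is computed in $\catinfh$ with bi-adjoint projections. The cotensor $\fun(\icat^{op},\_)\colon\catinfh\to\catinfh$ preserves limits, so $\fun(\icat^{op},L)\simeq\lim_\alpha\fun(\icat^{op},\icatd_\alpha)$ in $\catinfh$; and a functor $\icat^{op}\to L$ preserves limits iff its composite with every projection $L\to\icatd_\alpha$ does, which is exactly the point at which one needs $p$ to take values in $\pr^{\mathrm{L},\mathrm{R}}$ so that limits in $L$ are detected componentwise. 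The union-of-components bookkeeping of the previous paragraph then identifies $\fun^{\mathrm{R}}(\icat^{op},L)$ with $\lim_\alpha\fun^{\mathrm{R}}(\icat^{op},\icatd_\alpha)$, and chasing the comparison map shows this is the canonical one, i.e. $(\lim_\alpha\icatd_\alpha)\otimes\icat\simeq\lim_\alpha(\icatd_\alpha\otimes\icat)$. I expect the main obstacle to be precisely this bookkeeping: reducing preservation of limits, colimits and accessibility for a functor mapping into a componentwise-computed limit \infcatt\ to the same properties of its components, and establishing the stability of ``being a union of connected components'' under limits of mapping spaces — which is the whole reason one restricts to the bi-adjoint transition maps that define $\pr^{\mathrm{L},\mathrm{R}}$.
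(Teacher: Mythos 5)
Your proof is correct. For the first assertion you follow essentially the same route as the paper — compute the limit in $\catinfh$ using that both $\prl\subset\catinfh$ and $\prr\subset\catinfh$ preserve and reflect limits, observe the cone is simultaneously a $\prl$- and a $\prr$-cone, and dualize via $\prl\simeq(\prr)^{op}$ for colimits — except that you spell out the universal-property bookkeeping (mapping spaces in $\pr^{\mathrm{L},\mathrm{R}}$ as unions of connected components, componentwise detection of limits, colimits and accessibility in $L$) that the paper compresses into ``it follows right away.'' For the tensor product assertion your argument genuinely diverges: you prove $(\lim_\alpha\icatd_\alpha)\otimes\icat\simeq\lim_\alpha(\icatd_\alpha\otimes\icat)$ directly, by identifying both sides as the same full subcategory of $\fun(\icat^{op},\lim_\alpha\icatd_\alpha)\simeq\lim_\alpha\fun(\icat^{op},\icatd_\alpha)$, using that limits in the limit \infcat are detected componentwise precisely because the transition functors lie in $\pr^{\mathrm{L},\mathrm{R}}$. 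The paper instead uses a duality trick: $\_\otimes\icat$ restricts to an endofunctor of $\pr^{\mathrm{L},\mathrm{R}}$, the self-equivalence $\pr^{\mathrm{L},\mathrm{R}}\simeq(\pr^{\mathrm{L},\mathrm{R}})^{op}$ is symmetric monoidal, so limit preservation is equivalent to colimit preservation, and the latter is the known bilinearity of the Lurie tensor product on $\prl$. Your route is more hands-on and makes visible exactly where the hypothesis ``values in $\pr^{\mathrm{L},\mathrm{R}}$'' is used, at the cost of the component-detection bookkeeping (and of checking accessibility componentwise, which you do address); the paper's route is shorter but silently relies on the nontrivial facts that $\_\otimes\icat$ preserves bi-adjoints and that the passage-to-adjoints equivalence is symmetric monoidal. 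Both are sound.
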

\begin{proof}
It follows right away from the fact that both inclusions $\prl\subset\catinfh$ and $\prr\subset\catinfh$ preserve and reflect limits that both inclusions $\pr^{\mathrm{L},\mathrm{R}}\subset \prl$ and $\pr^{\mathrm{L},\mathrm{R}}\subset \prr$ preserve and reflect limits. Since the inclusion $\pr^{\mathrm{L},\mathrm{R}}\subset \prl$ is obtained from the inclusion $\pr^{\mathrm{L},\mathrm{R}}\subset \prr$ by taking opposites, we deduce that the inclusion $\pr^{\mathrm{L},\mathrm{R}}\subset \prl$ also preserves and reflects colimits, and similarly for $\pr^{\mathrm{L},\mathrm{R}}\subset \prr$. For the second assertion, we note that the composition 
\[\begin{tikzcd}\pr^{\mathrm{L},\mathrm{R}}\subset \prl\ar[r,"\_\otimes \icat"]&\prl \end{tikzcd}\]
factors through $\pr^{\mathrm{L},\mathrm{R}}$. By what we have just shown, it suffices to show that the opposite of the functor $\_\otimes\icat:\pr^{\mathrm{L},\mathrm{R}}\rightarrow \pr^{\mathrm{L},\mathrm{R}}$ preserves colimits, but this is the same functor $\_\otimes\icat$ (the equivalence $\pr^{\mathrm{L},\mathrm{R}}\simeq (\pr^{\mathrm{L},\mathrm{R}})^{op}$ is symmetric monoidal). Using the first assertion just proven again and the fact that the Lurie tensor product $\prl\times\prl\rightarrow\prl$ preserves colimits separately in both variables, we conclude. 
\end{proof}
We also require the following result which asserts that the construction $X\mapsto\widehat{\shv}(X)$ is sufficiently well behaved.
\begin{prop}\label{prop:colimcomparetop}
The following hold true.
\begin{enumerate}[$(1)$]
\item Suppose that $X\rightarrow Y$ is a local homeomorphism of topological spaces. Then the induced geometric morphism $\widehat{\shv}(X)\rightarrow\widehat{\shv}(Y)$ is \'{e}tale.
\item Suppose that 
\[
\begin{tikzcd}
Q\ar[d]\ar[r,"g"] & Y\ar[d] \\
X\ar[r,"f"] & Z
\end{tikzcd}
\]  
is a pullback diagram of topological spaces where $f$ and $g$ are local homeomorphisms. Then the diagram
\[
\begin{tikzcd}
\widehat{\shv}(Q)\ar[d]\ar[r] & \widehat{\shv}(Y)\ar[d] \\
\widehat{\shv}(X)\ar[r] & \widehat{\shv}(Z)
\end{tikzcd}
\]
of \inftopoi and geometric morphisms between them is a pullback diagram. 
\item Let $\{X_i\}_i$ be a small collection of topological spaces, then the collection $\{\widehat{\shv}(X_i)\rightarrow \widehat{\shv}(\coprod_i X_i)\}_i$ exhibits $\widehat{\shv}(\coprod_i X_i)$ as a coproduct of the collection $\{\widehat{\shv}(X_i)\}_i$ in the \infcat of (Postnikov complete) \inftopoi (and geometric morphisms between them).
\item Let $U^+_{\bullet}$ be an augmented simplicial colimit diagram in $\topo^{\text{\'{e}t}}$ such that $U_1\rightarrow U_0\times U_0$ is an injection and $U^+_{\bullet}|_{\simpop}$ is a groupoid object, then $\widehat{\shv}(U^+_{\bullet})$ is a colimit diagram of (Postnikov complete) \inftopoit. 
\end{enumerate}
\end{prop}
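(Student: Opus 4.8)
The plan is to prove the four assertions in order: $(3)$ is independent, while $(2)$ and $(4)$ build on $(1)$. Throughout I write $j_Y$ for the (Postnikov-completed) Yoneda embedding of the étale site of a space $Y$ into $\widehat{\shv}(Y)$ (using the standard identification of sheaves on the étale site of $Y$ with $\shv(Y)$).

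\emph{Assertion $(1)$.} Start from the classical description of étale geometric morphisms of ordinary sheaf topoi (\cite[\S 6.3.5]{HTT}): for a local homeomorphism $f\colon X\to Y$, the morphism $\shv(X)\to\shv(Y)$ is the projection $\shv(Y)_{/j_Y(X)}\to\shv(Y)$, with $j_Y(X)$ the $0$-truncated sheaf of local sections of $f$. To pass to Postnikov completions I would use that a slice of a Postnikov-complete $\infty$-topos is again Postnikov complete (\cite[Appendix A]{sag}). Since $j_Y(X)$ is $0$-truncated, slicing by it commutes with $k$-truncation, so for all $k\geq 0$ one has $\tau_{\leq k}(\widehat{\shv}(Y)_{/j_Y(X)})\simeq(\tau_{\leq k}\shv(Y))_{/j_Y(X)}\simeq\tau_{\leq k}\shv(X)\simeq\tau_{\leq k}\widehat{\shv}(X)$; both $\widehat{\shv}(Y)_{/j_Y(X)}$ and $\widehat{\shv}(X)$ are Postnikov complete, hence recovered as the limits of these towers, which yields an equivalence over $\widehat{\shv}(Y)$, i.e.\ the étaleness claim.

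\emph{Assertions $(2)$ and $(3)$.} For $(2)$: $Q=X\times_Z Y\to Z$ is a composite of pullbacks of local homeomorphisms, hence itself one, so by $(1)$ the topoi $\widehat{\shv}(Q),\widehat{\shv}(X),\widehat{\shv}(Y)$ are étale over $\widehat{\shv}(Z)$, classified by $j_Z(Q),j_Z(X),j_Z(Y)$. As $j_Z$ is left exact (Yoneda, sheafification and Postnikov completion all are) and the fibre product is over the terminal object $j_Z(Z)$, we get $j_Z(Q)\simeq j_Z(X)\times j_Z(Y)$. Stability of étale morphisms under pullback and composition gives the topos-theoretic identity $\widehat{\shv}(Z)_{/A\times B}\simeq\widehat{\shv}(Z)_{/A}\times_{\widehat{\shv}(Z)}\widehat{\shv}(Z)_{/B}$ for $A,B\in\widehat{\shv}(Z)$; with $A=j_Z(X)$, $B=j_Z(Y)$ this is the asserted pullback square in $\topo_\infty$, and since all four vertices are Postnikov complete and $\topo_\infty^{\mathrm{Pc}}\subset\topo_\infty$ is full, it is one in $\topo_\infty^{\mathrm{Pc}}$. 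For $(3)$: restriction along the open inclusions $X_i\hookrightarrow\coprod_jX_j$ identifies $\shv(\coprod_jX_j)\simeq\prod_j\shv(X_j)$, and this exhibits the target as the coproduct of $\{\shv(X_j)\}$ in $\topo_\infty$, since coproducts of $\infty$-topoi are the products of the underlying $\infty$-categories (\cite[\S 6.3.4]{HTT}). Truncation in a product being factorwise, $\widehat{\shv}(\coprod_jX_j)\simeq\prod_j\widehat{\shv}(X_j)$, and a product of Postnikov-complete topoi is Postnikov complete; the same full-subcategory argument as in $(2)$ then promotes this coproduct to one in $\topo_\infty^{\mathrm{Pc}}$.

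\emph{Assertion $(4)$, the main obstacle.} The hypotheses on $U^+_\bullet$ make $U_0\to U_{-1}$ a surjective local homeomorphism and identify $U_\bullet$ with its Čech nerve $\check{C}(U_0\to U_{-1})_\bullet$ — a standard fact about étale equivalence relations of topological spaces: the injectivity of $U_1\to U_0\times U_0$ together with the groupoid condition makes $U_1\subset U_0\times U_0$ such a relation and $U_{-1}$ its quotient. Put $W:=j_{U_{-1}}(U_0)\in\widehat{\shv}(U_{-1})$; it is $0$-truncated, and surjectivity of $U_0\to U_{-1}$ makes $W\to\ast$ an effective epimorphism, so $\check{C}(W\to\ast)_\bullet^{+}$ is a colimit diagram in the $\infty$-topos $\widehat{\shv}(U_{-1})$. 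Left-exactness of $j_{U_{-1}}$ carries $U_\bullet$ to $\check{C}(W\to\ast)_\bullet$, so $(1)$ applied degreewise (and to the augmentation) identifies $\widehat{\shv}(U^+_\bullet)$ with the diagram $n\mapsto\widehat{\shv}(U_{-1})_{/\check{C}(W\to\ast)_n^{+}}$. Now invoke étale descent in $\topo_\infty$: the functor $A\mapsto\widehat{\shv}(U_{-1})_{/A}$, $\widehat{\shv}(U_{-1})\to\topo_\infty$, preserves colimits (\cite[\S 6.3.5]{HTT}), so this augmented diagram is a colimit diagram in $\topo_\infty$ with colimit $\widehat{\shv}(U_{-1})_{/\ast}\simeq\widehat{\shv}(U_{-1})$. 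Every term being Postnikov complete (slices again), the colimit cocone lies in $\topo_\infty^{\mathrm{Pc}}$ and, by fullness, is a colimit diagram there.

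The consistently delicate point is that Postnikov completion is a \emph{right} adjoint, hence does not commute with colimits: one cannot prove the $\shv$-version of $(4)$ and then complete, but must run the étale-descent argument \emph{inside} the already-Postnikov-complete topos $\widehat{\shv}(U_{-1})$ — which is exactly why the input ``a slice of a Postnikov-complete $\infty$-topos is Postnikov complete'' is the load-bearing fact in $(1)$ and $(4)$.
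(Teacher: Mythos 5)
Your treatments of $(1)$, $(3)$ and $(4)$ are essentially sound, and they take a genuinely different route from the paper's: you invoke the classical identification $\shv(X)\simeq\shv(Y)_{/j_Y(X)}$ via the sheaf of sections and transport it to Postnikov completions through the truncation towers, whereas the paper never uses that classification -- it covers $X$ by opens embedding into $Y$ and exhibits $\widehat{\shv}(X)$ as the colimit of a \v{C}ech nerve inside $(\topo^{\text{\'{e}t}}_\infty)_{/\widehat{\shv}(Y)}$, reducing everything to the open-embedding case. Your $(4)$ runs descent for $A\mapsto\widehat{\shv}(U_{-1})_{/A}$, which is the same tool as the paper's identification $\widehat{\shv}(U_{-1})\simeq(\topo_\infty^{\text{\'{e}t}})_{/\widehat{\shv}(U_{-1})}$, reached through $W=j_{U_{-1}}(U_0)$ rather than through part $(2)$. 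Two cautions: the fact you lean on -- slices of Postnikov-complete \inftopoi are Postnikov complete -- is only ever needed by you over $0$-truncated objects, where it is easy because relative and absolute truncations agree (this is also exactly what legitimizes your computation $\tau_{\leq k}(\widehat{\shv}(Y)_{/j_Y(X)})\simeq(\tau_{\leq k}\shv(Y))_{/j_Y(X)}$); you should say so rather than cite an unlocalized general statement. And in $(1)$ and $(4)$ you should address naturality: \'{e}taleness of the \emph{induced} geometric morphism, and the identification of the whole diagram $\widehat{\shv}(U^+_\bullet)$ with $n\mapsto\widehat{\shv}(U_{-1})_{/\check{C}(W\to\ast)^+_n}$, require the equivalence of $(1)$ to be compatible with the geometric morphisms induced by maps of spaces \'{e}tale over the base.

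There is, however, a genuine gap in your proof of $(2)$. In the statement only the horizontal maps $f\colon X\to Z$ and $g\colon Q\to Y$ are local homeomorphisms (and $g$ is automatic, being a pullback of $f$); the vertical map $Y\to Z$ is an \emph{arbitrary} continuous map. Your argument begins by asserting that $Q\to Z$ is a local homeomorphism and that $\widehat{\shv}(Y)$ is \'{e}tale over $\widehat{\shv}(Z)$, classified by $j_Z(Y)$ -- both claims require $Y\to Z$ to be a local homeomorphism, which is not assumed, so as written you only prove the special case in which both legs over $Z$ are \'{e}tale. The general case is what the proposition asserts and what the paper proves (it covers only $X$ by opens embedding into $Z$ and leaves $Y\to Z$ untouched). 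The repair stays inside your framework: use \cite[Remark 6.3.5.8]{HTT} in its base-change form -- for the arbitrary geometric morphism $\pi\colon\widehat{\shv}(Y)\to\widehat{\shv}(Z)$ and any $V\in\widehat{\shv}(Z)$ one has $\widehat{\shv}(Y)_{/\pi^{*}V}\simeq\widehat{\shv}(Y)\times_{\widehat{\shv}(Z)}\widehat{\shv}(Z)_{/V}$ -- applied to $V=j_Z(X)$, together with the observation that $\pi^{*}j_Z(X)\simeq j_Y(Q)$ (the inverse image of the sheaf of sections of $f$ is the sheaf of sections of its pullback $g$; note $\pi^*$ is left exact, hence preserves $0$-truncated objects, where it agrees with the classical inverse image), and then your $(1)$ applied to $f$ and to $g$ to identify the two remaining corners.
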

For the following proof, recall that a groupoid object $U_{\bullet}$ in an \infcat $\icat$ that admits finite limits is \emph{$n$-efficient} for $n\geq 0$ an integer if the map $U_1\rightarrow U_0\times U_0$ is $(n-2)$-truncated. It is a consequence of the $n$-categorical version of Giraud's theorem (\cite[Proposition 6.4.3.6, Proposition 6.4.4.9]{HTT}) that in an $n$-topos, $n$-efficient groupoids are effective.

\begin{proof}
Let $\topo_{\infty}^{\text{\'{e}t}}\subset\topo_{\infty}$ be the subcategory on the \'{e}tale geometric morphisms and note that by \cite[Corollary 6.3.5.9]{HTT} the functor $(\topo_{\infty}^{\text{\'{e}t}})_{/\xtop}\subset (\topo_{\infty})_{/\xtop}$ is a full subcategory for any \inftop $\xtop$. It is a consequence of \cite[Remark 6.3.5.10, Proposition 6.3.5.14]{HTT} that the unstraightening of the codomain evaluation $\fun(\Delta^1,\xtop)\rightarrow\xtop$ determines an equivalence $\xtop\simeq (\topo^{\text{\'{e}t}}_{\infty})_{/\xtop}$ and that the corresponding fully faithful functor $\xtop \hookrightarrow (\topo_{\infty})_{/\xtop}$ is stable under colimits. We now prove the assertions in the proposition.
\begin{enumerate}[$(1)$]
\item We first note that in case $f:X\rightarrow Y$ is an open embedding of topological spaces, the functor $\mathsf{Open}(Y)\simeq \mathsf{Open}(X)_{/f(Y)}\rightarrow\mathsf{Open}(X)$ left adjoint to the pullback along $f$ induces the left adjoint $\widehat{\shv}(Y)\simeq\widehat{\shv}(X)_{/jf(Y)}\rightarrow\widehat{\shv}(X)$ to the algebraic morphism $\widehat{\shv}(Y)\rightarrow \widehat{\shv}(X)$. In the general case, choose an open cover $\{U_i\rightarrow X\}_i$ such that each map $U_i\rightarrow Y$ is an open embedding of topological spaces and consider the \v{C}ech nerve of the map $h:\coprod_i\widehat{\shv}(U_i)\rightarrow\widehat{\shv}(X)$ in $\topo_{\infty}$. Since \'{e}tale geometric morphisms are stable under pullbacks in $\topo_{\infty}$, this \v{C}ech nerve lies in $(\topo_{\infty}^{\text{\'{e}t}})_{/\widehat{\shv}(X)}$. Under the equivalence $\widehat{\shv}(X)\simeq  (\topo_{\infty}^{\text{\'{e}t}})_{/\widehat{\shv}(X)}$, the map $\coprod_i\widehat{\shv}(U_i)\rightarrow\widehat{\shv}(X)$ corresponds to the map $\coprod_i j(U_i)\rightarrow 1_{\widehat{\shv}(X)}$ which is an effective epimorphism so we conclude that $\check{C}(h)_{\bullet}^+$ is a colimit diagram. Since the diagram $\check{C}(h)_{\bullet}$ lies in $(\topo_{\infty}^{\text{\'{e}t}})_{/\widehat{\shv}(Y)}$, we conclude that $\widehat{\shv}(X)\rightarrow\widehat{\shv}(Y)$ is \'{e}tale.
\item First assume that $X\rightarrow Z$ is an open embedding, then it follows from \cite[Remark 6.3.5.8]{HTT} that the diagram 
\[
\begin{tikzcd}
\widehat{\shv}(Q)\simeq \widehat{\shv}(Y)_{/jf(Q)}\ar[d]\ar[r] & \widehat{\shv}(Y)\ar[d] \\
\widehat{\shv}(X)\simeq \widehat{\shv}(Z)_{/jf(X)}\ar[r] & \widehat{\shv}(Z)
\end{tikzcd}
\]
is a pullback. Now choose an open cover $\{U_i\rightarrow X\}_{i\in I}$ such that $U_i\rightarrow Z$ is an open embedding for each $i$. Since we have already proven the result for pullbacks along open embeddings, we deduce that the \v{C}ech nerve of the map $\coprod_i\widehat{\shv}(U_i)\rightarrow \widehat{\shv}(X)$ is in each level a coproduct of \inftopoi of the form $\widehat{\shv}(U_{i_1}\cap \ldots\cap U_{i_n})$ for $i_1,\ldots,i_n \in I$. For each such each finite tuple, all squares in the diagram
\[
\begin{tikzcd}
 \widehat{\shv}(U_{i_1}\cap \ldots\cap U_{i_n}\times_ZQ) \ar[d,equal]\ar[r] & \widehat{\shv}(Q)\ar[d] \\
 \widehat{\shv}(U_{i_1}\cap \ldots\cap U_{i_n}\times_ZQ) \ar[d]\ar[r] & \widehat{\shv}(X)\times_{\widehat{\shv}(Z)}\widehat{\shv}(Y) \ar[d] \ar[r] & \widehat{\shv}(Y) \ar[d] \\
 \widehat{\shv}(U_{i_1}\cap \ldots\cap U_{i_n})\ar[r] & \widehat{\shv}(X) \ar[r]& \widehat{\shv}(Z) 
\end{tikzcd}
\]  
of \inftopoi are pullbacks by virtue of \cite[Remark 6.3.5.8]{HTT}. It follows that amalgamating the \v{C}ech nerve of the map $h:\coprod_i \widehat{\shv}(U_i\times_XQ)\rightarrow\widehat{\shv}(Q)$ with the map $\widehat{\shv}(Q)\rightarrow\widehat{\shv}(X)\times_{\widehat{\shv}(Z)}\widehat{\shv}(Y)$ is again a \v{C}ech nerve. During the proof of $(1)$ we verified that the \v{C}ech nerve of the map $h$ is a colimit diagram, so it suffices to show that under the equivalence $\widehat{\shv}(X)\times_{\widehat{\shv}(Z)}\widehat{\shv}(Y)\simeq (\topo_{\infty}^{\text{\'{e}t}})_{/\widehat{\shv}(X)\times_{\widehat{\shv}(Z)}\widehat{\shv}(Y)}$, the \'{e}tale geometric morphism $h':\coprod_i \widehat{\shv}(U_i\times_XQ)\rightarrow \widehat{\shv}(X)\times_{\widehat{\shv}(Z)}\widehat{\shv}(Y)$ corresponds to an effective epimorphism $V\rightarrow 1$ to the final object. This follows because $h'$ is a pullback of the \'{e}tale geometric morphims $\coprod_i \widehat{\shv}(U_i)\rightarrow\widehat{\shv}(X)$ which has this property as the $U_i$ cover $X$.
\item We have to show that the open sets $\{X_i\subset \coprod_i X_i\}$ determine an equivalence $\coprod_i X_i \simeq 1_{\widehat{\shv}(\coprod_iX_i)}$ (where we identify $X_i$ with its image under the Postnikov completed Yoneda embedding $\mathsf{Open}(\coprod_i X_i)\hookrightarrow\widehat{\shv}(\coprod_iX_i)$) which is easy to see as the sets $X_i$ cover the coproduct and are pairwise disjoint.
\item We first show that the diagram $\widehat{\shv}(U^+_{\bullet})$ is a \v{C}ech nerve in $(\topo^{\text{\'{e}t}}_{\infty})_{/\widehat{\shv}(U^+_{-1})}$. In view of $(2)$ it suffices to show that the diagram $U_{\bullet}^+$ is a \v{C}ech nerve in the category of topological spaces. It is standard that the subcategory inclusion $\topo^{\text{\'{e}t}}\subset\topo$ preserves pullbacks and colimits and the forgetful functor $\mathsf{Top}\rightarrow\mathsf{Set}$ is conservative and preserves limits and colimits, so it follows from the fact that 1-efficient groupoid objects are effective in $\mathsf{Set}$ that $U_{\bullet}^+$ is a \v{C}ech nerve (note that the injectivity of the map $U_1\rightarrow U_0\times U_0$ simply asserts the 1-efficiency of the groupoid of sets $U_{\bullet}$). As the diagram $\widehat{\shv}(U^{+}_{\bullet})$ of \inftopoi and \'{e}tale geometric morphisms between them is a \v{C}ech nerve, it suffices to show that the functor $f_!:\widehat{\shv}(U_{0})\rightarrow\widehat{\shv}(U_{-1})$ left adjoint to the \'{e}tale algebraic morphism $\widehat{\shv}(U_{-1})\rightarrow \widehat{\shv}(U_{0})$ determines an effective epimorphism $f_!(1_{\widehat{\shv}(U_{0})})\rightarrow 1_{\widehat{\shv}(U_{-1})}$. This follows because $U_0\rightarrow U_{-1}$ is an \'{e}tale covering.
\end{enumerate}
\end{proof}
\begin{proof}[Proof of Proposition \ref{prop:gluinglocringsp}]
We will first establish the following assertion.
\begin{enumerate}
\item[$(*)$] Let $K$ be a (small) simplicial set and let $f:K\rightarrow \mathsf{Top}^{\mathrm{loc}}(\sring)$ be a diagram that carries each edge of $K$ to an \'{e}tale morphism. Consider a colimit diagram $K^{\rhd}\rightarrow \mathsf{Top}$ extending the composition $q_{\sring}f$ and  suppose that the diagram 
\[  (K^{op})^{\lhd}\longrightarrow \mathsf{Top}^{op}\longrightarrow (\mathsf{Top}^{\mathrm{Pc}}_{\infty})^{op}\subset\catinfh \]
is a limit diagram of \infcats (and thus of Postnikov complete \inftopoi as the inclusions $(\topo_{\infty}^{\mathrm{Pc}})^{op}\subset\topo_{\infty}^{op}\subset\prl\subset \catinfh$ all preserve limits). Then there is an extension $\overline{f}:K^{\rhd}\rightarrow \mathsf{Top}^{\mathrm{loc}}(\sring)$ fitting into a commuting diagram 
\[
\begin{tikzcd}
K\ar[d,hook]\ar[r,"f"] & \mathsf{Top}^{\mathrm{loc}}(\sring)\ar[d,"q_{\sring}"] \\
K^{\rhd} \ar[r]\ar[ur,"\overline{f}"]& \mathsf{Top} 
\end{tikzcd}
\]
of simplicial sets satsifying the following conditions.
\begin{enumerate}[$(a)$]
\item The functor $\overline{f}$ carries each edge of $K^{\rhd}$ to an \'{e}tale morphism (in particular, $\overline{f}$ carries each edge to a $q_{\sring}$-Cartesian morphism).
\item The diagram $\overline{f}$ is a colimit diagram. 
\end{enumerate}
\end{enumerate}
We prove $(*)$. The functor $q_{\sring}:\mathsf{Top}(\sring)\rightarrow\mathsf{Top}$ is a Cartesian fibration associated to the functor 
\[ \begin{tikzcd}\widehat{\shv}_{\sring}(\_)^{op}: \mathsf{Top}^{op}\ar[r,"X\mapsto \widehat{\shv}(X)"] &[3em](\mathsf{Top}_{\infty}^{\mathrm{Pc}})^{op}\subset \mathsf{Top}_{\infty}^{op}\subset \prl \ar[r,"\fun^{\mathrm{R}}((\_)^{op}{,}\sring)"]&[6em] \prl\subset\catinfh\ar[r,"(\_)^{op}"]&\catinfh\end{tikzcd}\]
We will first show that the composition
\[\begin{tikzcd} (K^{op})^{\lhd}\ar[r,"f"]& \topo^{\loc}(\sring)^{op}\ar[r,"q_{\sring}"]&[2em]  \topo^{op}\ar[r,"X\mapsto \widehat{\shv}_{\sring}(X)"]&[5em]\catinfh\end{tikzcd}\]  
is a limit diagram. By assumption on the functor $f$, the composition $(K^{op})^{\lhd}\rightarrow \prl$ carrying $k\in K^{\rhd}$ to $\widehat{\shv}(q_{\sring}f(k))$ is a limit diagram. Since $f$ carries each edge $k\rightarrow k'$ to an \'{e}tale morphism by $(1)$ of Proposition \ref{prop:colimcomparetop}, the algebraic morphism of \inftopoi $\widehat{\shv}(q_{\sring}f(k'))\rightarrow\widehat{\shv}(q_{\sring}f(k))$ is \'{e}tale and therefore admits a left adjoint. It follows that the functor $(K^{op})^{\lhd}\rightarrow \prl$ factors through $\pr^{\mathrm{L},\mathrm{R}}$. Since for any presentable \infcat $\icat$ the functor
\[ \begin{tikzcd}\pr^{\mathrm{L},\mathrm{R}}\subset\prl \ar[r,"\_\otimes\icat"] & \prl  \subset\catinfh \end{tikzcd}\]
preserves limits by Lemma \ref{lem:tensoringprl}, we conclude that the diagram $(K^{op})^{\lhd}\rightarrow \catinfh$ carrying $k\in K^{\rhd}$ to $\widehat{\shv}_{\sring}(q_{\sring}f(k))$ is a limit diagram. Combining this result with the fact that taking opposites is an equivalence of $\catinfh$ (and in particular preserves limits), it follows from \cite[Proposition 5.2.2.36]{HA} that there exists an extension $K^{\rhd}\rightarrow \mathsf{Top}(\sring)$ of the composition $K\overset{f}{\rightarrow}\mathsf{Top}^{\mathrm{loc}}(\sring)\subset\mathsf{Top}(\sring)$ fitting into a commuting diagram
\[
\begin{tikzcd}
K\ar[d,hook]\ar[r,"f"] & \mathsf{Top}(\sring)\ar[d,"q_{\sring}"] \\
K^{\rhd} \ar[r]\ar[ur,"\overline{f}"]& \mathsf{Top}, 
\end{tikzcd}
\] 
and that $\overline{f}$ is a $q_{\sring}$-colimit diagram and carries every edge of $K^{\rhd}$ to a $q_{\sring}$-Cartesian edge. Since the diagram $K^{\rhd}\rightarrow\mathsf{Top}$ is a colimit diagram, $\overline{f}$ is a colimit diagram (\cite[Lemma 4.3.1.5]{HTT}). Recall that the subcategory $\mathsf{Top}^{\text{\'{e}t}}\subset\topo$ on the local homeomorphisms preserves colimits, so to prove $(*)$, it suffices to show that the diagram $\overline{f}$ lies in $\mathsf{Top}^{\loc}(\sring)$, the full subcategory of derived \emph{locally} $\cinfty$-ringed spaces. To see this, we note that we have an \'{e}tale covering $\{f(k)\rightarrow \overline{f}(\infty)\}_{k\in K}$ of the colimit $\overline{f}(\infty)$ so we conclude as being locally ringed is a local property of derived $\cinfty$-ringed spaces. It follows from Proposition \ref{prop:colimcomparetop} that in case $f:K\rightarrow \topo^{\loc}(\sring)$ is a diagram of the kind specified in $(1)$ and $(2)$ above (so that $K$ is either a discrete simplicial set or the category $\simpop$), then $f$ satisfies the condition given in $(*)$. To complete the proof, it suffices to show that the resulting extension $\overline{f}$ satisfying $(a)$ and $(b)$ above is also a colimit diagram in $\topo^{\loc}(\sring)^{\text{\'{e}t}}$, in other words, that for a morphism $g:\colim_K f\rightarrow (Y,\Of_Y)$ of locally ringed spaces, if for each $k\in K$ the composition $f(k)\rightarrow (Y,\Of_Y)$ is \'{e}tale, then $g$ is \'{e}tale. A map $Q\rightarrow Z$ of topological spaces such that $Q$ admits a cover $\{Q_i\rightarrow Q\}_i$ by local homeomorphisms such that the compositions $Q_i\rightarrow  Z$ are local homeomorphisms is itself a local homeomorphism, so it suffices to show that the map $g$ is $q_{\sring}$-Cartesian. Write $(X,\Of_X)$ for $\colim_Kf$ and $(X_k,\Of_{X_k})$ for $f(k)$ so that we have a collection of maps $\{h_k:X_k\rightarrow X\}_k$ of topological spaces. Choose a $q_{\sring}$-Cartesian lift $\tilde{g}:(X,\Of_X')\rightarrow (Y,\Of_Y)$ of $q_{\sring}(g)$ terminating at $(Y,\Of_Y)$, then we wish to show that the map $(X,\Of_X)\rightarrow (X,\Of_X')$ is an equivalence. Since the functors
\[ \fun^{\mathrm{Cart}}_{\mathsf{Top}}(K,\topo(\sring)) \longleftarrow \fun_{\topo}^{\mathrm{Cart}}(K^{\rhd},\topo(\sring))\overset{\ev_{\infty}}{\longrightarrow} \widehat{\shv}(X)^{op} \]
(where `Cart' denotes we are taking full subcategories spanned by Cartesian sections) are equivalences, it suffices to argue that for every $k\in K$, the left vertical map in the diagram 
\[
\begin{tikzcd}
(X_k,\Of_{X_k}) \ar[r]\ar[d] & (X,\Of_X) \ar[d] \ar[dr,"g"]& \\
(X_k,h_k^*\Of_X') \ar[r] & (X,\Of_X') \ar[r,"\tilde{g}"'] & (Y,\Of_Y)
\end{tikzcd}
\]
where both horizontal maps in the square are $q_{\sring}$-Cartesian, is an equivalence. This follows since by assumption the upper composition is $q_{\sring}$-Cartesian and by construction the lower composition is also $q_{\sring}$-Cartesian.
\end{proof}

\begin{rmk}
The preceding result only establishes the existence of a certain class of colimit diagrams, which corresponds to the fact that in the \infcat of derived locally $\cinfty$-ringed spaces, we are only allowed to glue along \emph{equivalences}. This inconvenient feature (the lack of \'{e}tale colimits) indicates that the \infcat $\mathsf{Top}^{\loc}(\sring)$ is a somewhat artificial construct; a much more natural (albeit less familiar) \infcat of structured spaces is the \infcat of derived locally $\cinfty$-ringed \emph{\inftopoit,} where we can take arbitrary colimits of diagrams with \'{e}tale transition maps.   
\end{rmk}
\begin{rmk}(\cite[Remark 3.2.14]{cinftyI}).
Every derived locally $\cinfty$-ringed space $(X,\Of_X)$ has an \emph{underlying} locally $\cinfty$-ringed space $(X,\pi_0(X))$ obtained as follows: view $\Of_X$ as a product preserving functor
\[ \Of_X:\cartsp\longrightarrow \widehat{\shv}(X),  \]
then $\pi_0(\Of_X)$ is the composition
\[ \cartsp\overset{\Of_X}{\longrightarrow} \widehat{\shv}(X)\overset{\tau_{\leq 0}}{\longrightarrow}  \widehat{\shv}(X),  \]
which preserves finite products as $\tau_{\leq 0}$ does. Since the $\pi_0(\Of_X)$ takes $0$-truncated values, the adjoint functor $\widehat{\shv}(X)\rightarrow \sring$ takes $0$-truncated values (\cite[Lemma 2.2.36]{cinftyI}). The operation 
\[ (X,\Of_X)\longmapsto (X,\pi_0(\Of_X)) \]
preserves locality and determines a right adjoint to the inclusion $\mathsf{Top}^{\mathrm{loc}}(\tau_{\leq 0}\sring)\subset\mathsf{Top}^{\mathrm{loc}}(\sring)$ (\cite[Proposition 2.2.44]{cinftyI}). More generally, for every $n\geq 0$, we have a truncation functor
\[ (X,\Of_X)\longmapsto (X,\tau_{\leq n}(\Of_X)) \]
right adjoint to the inclusion $\mathsf{Top}^{\mathrm{loc}}(\tau_{\leq n}\sring)\subset\mathsf{Top}^{\mathrm{loc}}(\sring)$.
\end{rmk}
We now summarize some results of \cite{cinftyI}, which are derived, $\infty$-categorical versions of similar results of Joyce \cite{Joy2}.
\begin{thm}\label{thm:spectrumglobalsections} The global sections functor
\[ \Gamma:\mathsf{Top}(\sring)\longrightarrow \sring^{op}\]
admits a right adjoint. Let $\spec$ denote this right adjoint, then the following hold true.
\begin{enumerate}[$(1)$]
    \item Let $M$ be a manifold (possibly with boundary or corners), then $\spec$ carries the $\cinfty$-ring of functions on $M$ to the pair $(M,\cinfty_M)$.
    \item There exists a large full subcategory $\sring_{\mathsf{gmt}}\subset\sring$ of \emph{geometric} derived $\cinfty$-rings (see \cite[Defintion 3.2.16]{cinftyI}) on which $\spec$ is fully faithful. The full subcategory $\sring_{\mathsf{gmt}}$ contains $\sring_{\fp}$, the finitely presented derived $\cinfty$-rings (but much more general objects as well, see below).
    \item An object $(X,\Of_X)\in \mathsf{Top}^{\mathrm{loc}}(\sring)$ is equivalent to an object of the form $\spec\,A$ for $A$ geometric if and only if the following conditions are satisfied.
    \begin{enumerate}[$(i)$]
        \item The space $X$ is Hausdorff and Lindel\"{o}f (that is, every open cover of $X$ admits a countable refinement).
        \item The pair $(X,\Of_X)$ is \emph{$\cinfty$-regular}: consider the set $S\subset \Hom_{\mathsf{Top}}(X,\R) $ defined as the image of the map 
        \[ \Hom_{\mathsf{Top}^{\mathrm{loc}}(\sring)}((X,\Of_X),(\R,\cinfty_{\R}))\longrightarrow\Hom_{\mathsf{Top}}(X,\R). \]
        Then the collection $\{f^{-1}(U)\}_{f\in S,U\in \mathsf{Open}(\R)}\subset\mathsf{Open}(X)$
        is a basis for the topology on $X$.
    \end{enumerate}
\end{enumerate}
\end{thm}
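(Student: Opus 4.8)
The plan is to carry out, at the level of derived $\cinfty$-rings with Postnikov-complete structure sheaves, the construction of the $\cinfty$-spectrum of Dubuc \cite{dubucschemes} and Joyce \cite{Joy2}; in the derived setting the only genuine content is point-set topological, the homotopy theory entering merely as bookkeeping for the structure sheaf. Concretely, given $A\in\sring$ I would let $\spec A$ have as underlying set the $\R$-points of $A$ — the $\R$-algebra maps $\pi_0(A)^{\rmalg}\to\R$, equivalently the $\cinfty$-ring maps $\pi_0(A)\to\R$ — topologised so that the sets $U_a=\{x:x(a)\neq 0\}$, $a\in\pi_0(A)$, form a basis; on this basis I would take the presheaf $U_a\mapsto A[a^{-1}]$, with $A[a^{-1}]$ the homotopy $\cinfty$-localisation inverting $a$, and sheafify and Postnikov-complete to obtain $\Of_X$. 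The stalk at $x$ is the filtered colimit of the $A[a^{-1}]$ with $a\notin\liem_x$; its $\pi_0$ is a local $\R$-algebra with residue field $\R$, so $\spec A\in\mathsf{Top}^{\loc}(\sring)$. The adjunction $\Gamma\dashv\spec$ is then formal from the universal properties of $\cinfty$-localisation and sheafification: a morphism $(Y,\Of_Y)\to\spec A$ is a compatible system of maps out of the $A[a^{-1}]$, hence a single map $A\to\Gamma(Y,\Of_Y)$, the comparison of stalks being meaningful because every stalk of $\spec A$ has residue field $\R$. (Alternatively, one may extract the adjunction from Lurie's spectrum construction for structured spaces applied to the geometry built in \cite{cinftyI}.)

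For $(1)$ I would compute $\spec\cinfty(M)$ directly. For $M$ a manifold in the sense fixed in the conventions, every $\R$-algebra map $\cinfty(M)\to\R$ is evaluation at a point (by the classical exhaustion and partition-of-unity argument), the sets $U_a=\{a\neq 0\}$ with $a\in\cinfty(M)$ form a basis of the manifold topology since $M$ has bump functions, and $\cinfty(M)[a^{-1}]\simeq\cinfty(\{a\neq 0\})$ for the same reason; so the localisation presheaf is already the sheaf $\cinfty_M$, which is discrete (hence Postnikov-complete) and soft (hence equal to its sheafification), whence $\spec\cinfty(M)\simeq(M,\cinfty_M)$. For $(2)$ I would define $\sring_{\gmt}$ to be the full subcategory on which the unit $A\to\Gamma\spec A$ is an equivalence — equivalently on which $\spec$ is fully faithful, $\spec$ being a right adjoint — using the intrinsic description of \cite{cinftyI}, the content being that $\sring_{\fp}\subseteq\sring_{\gmt}$. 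A finitely presented derived $\cinfty$-ring is a finite colimit of copies of the $\cinfty(\R^n)$ — typically a Koszul-type pushout $\cinfty(\R^n)\otimes_{\cinfty(\R^k)}\R$ whose $\spec$ is the derived zero locus of a section $s=(f_1,\ldots,f_k)$ of the trivial bundle $\R^k$ over $\R^n$, a Kuranishi model — and $\spec$, being a right adjoint, carries such pushouts to fibre products of affines, whose underlying space is the fibre product of the underlying spaces by the explicit description of limits in $\mathsf{Top}(\sring)$ recalled above. One then inducts over the colimit: the base case $\cinfty(\R^n)$ is $(1)$, and the inductive step uses that homotopy $\cinfty$-localisation is flat enough for derived base change along these maps, so that the descent spectral sequence computing $\Gamma$ of the fibre product degenerates to the expected derived tensor product.

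For $(3)$ the forward implication is immediate from the construction: $\spec A$ carries the basis $U_a=a^{-1}(\R\setminus\{0\})$ with each $a\in\pi_0(A)$ arising from a morphism $(X,\Of_X)\to(\R,\cinfty_{\R})$, which is exactly the statement of $\cinfty$-regularity; points are separated by such functions, so $X$ is Hausdorff; and Lindel\"{o}fness is built into the notion of geometric ring (and is automatic for $\sring_{\fp}$, where $\spec A$ is a closed subspace of a Euclidean space). For the converse, suppose $(X,\Of_X)\in\mathsf{Top}^{\loc}(\sring)$ is Hausdorff, Lindel\"{o}f and $\cinfty$-regular; set $A=\Gamma(X,\Of_X)$ and form the canonical map $\eta:(X,\Of_X)\to\spec A$. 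On underlying spaces $\eta$ sends $x$ to the evaluation $\pi_0(A)\to\R$ (using that residue fields are $\R$); it is injective by $\cinfty$-regularity, and a homeomorphism onto its image since both topologies have the $U_a$ as a basis, and on structure sheaves $\eta$ restricts over $U_a$ to a map $A[a^{-1}]\to\Gamma(\eta^{-1}U_a,\Of_X)$.

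The hard part is to show $\eta$ is an equivalence, and I expect the two delicate points to be surjectivity on underlying spaces and the structure-sheaf comparison. Surjectivity says that every $\R$-algebra map $\chi:\pi_0(A)\to\R$ is a point evaluation — the $\cinfty$-analogue of the fact that maximal ideals of $\cinfty(M)$ live at points — and the Lindel\"{o}f hypothesis enters decisively: if $\ker\chi$ had empty common zero set, a countable subcover drawn from a $\cinfty$-regular cover would produce, via a suitably convergent $\cinfty$-combination of squares of elements of $\ker\chi$, an everywhere-nonvanishing element of $\ker\chi$, hence a unit, contradicting $\ker\chi\neq\pi_0(A)$. The structure-sheaf comparison requires showing that the localisation presheaf $U_a\mapsto A[a^{-1}]$ is already a sheaf on $\spec A$ with vanishing higher cohomology, and that $\Gamma(\eta^{-1}U_a,\Of_X)\simeq A[a^{-1}]$ — the $\cinfty$-incarnation of the identity $\Gamma(D(f),\Of_{\spec R})\simeq R_f$ — which are partition-of-unity arguments using $\cinfty$-regularity to manufacture the required smooth cutoffs, Lindel\"{o}fness to keep the partitions countable, and softness of $\Of_X$ as a sheaf of $\cinfty$-modules. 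Finally, having shown $\Gamma\spec A\simeq A$, this $A$ is geometric, and one verifies that the conjunction of Hausdorff, Lindel\"{o}f and $\cinfty$-regular coincides with the intrinsic description of $\sring_{\gmt}$ from \cite{cinftyI}. Everything outside this converse is formal or classical; the converse is where the analysis of $\cinfty$-rings on topological spaces is genuinely used.
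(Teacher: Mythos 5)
The paper never proves this theorem: it is introduced with ``We now summarize some results of \cite{cinftyI}'', so there is no in-paper argument to compare you against, and your sketch has to be judged against the Dubuc--Joyce-style construction that \cite{cinftyI} derives. In outline you reproduce that route correctly (points of $\spec A$ are the $\R$-points of $\pi_0(A)$, basic opens $U_a$, structure sheaf obtained from the $\cinfty$-localisations, and the converse of $(3)$ correctly singled out as the place where actual $\cinfty$-analysis happens).

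As a proof, however, there are concrete gaps. First, the adjunction. Taken literally over all of $\mathsf{Top}(\sring)$ the right adjoint to $\Gamma$ is computed by the one-point object $(\ast,A)$: since $f^*\dashv f_*$ and $f_*\Of_Y=\Gamma(Y,\Of_Y)$, one has $\Hom_{\mathsf{Top}(\sring)}((Y,\Of_Y),(\ast,A))\simeq\Hom_{\sring}(A,\Gamma(Y,\Of_Y))$, which is incompatible with $(1)$; the statement only makes sense over $\mathsf{Top}^{\mathrm{loc}}(\sring)$ (as in Joyce and in \cite{cinftyI}), and the verification of the adjunction is exactly where locality of the stalks of the \emph{source} $(Y,\Of_Y)$ is used, to turn a map $A\to\Gamma(Y,\Of_Y)$ into a continuous map $Y\to\spec A$ by passing to residue fields of $\Of_{Y,y}$. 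Your one-line ``formal'' justification appeals instead to the stalks of $\spec A$, so it misses the one nonformal ingredient and would ``prove'' an adjunction that is false as stated. Second, the substantive computational inputs are asserted rather than proved: that $A\to\Gamma\spec A$ is an equivalence for finitely presented $A$ is precisely the claim hiding behind ``localisation is flat enough \dots the descent spectral sequence degenerates'' (it requires the softness/partition-of-unity and base-change analysis that the cited paper carries out, since $\Gamma$ of a fibre product of affines is not formally the pushout of global sections); that your ad hoc class (``counit an equivalence'') coincides with $\sring_{\gmt}$ of \cite[Definition 3.2.16]{cinftyI}; and that $\spec A$ is Lindel\"{o}f for geometric $A$ (``built into the notion'') --- with these taken on faith, $(2)$ and the forward half of $(3)$ become tautologies modulo unverified identifications. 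For the converse of $(3)$ your surjectivity sketch (countable subcover plus a sum of squares producing a unit in $\ker\chi$) is the right classical idea, but you still need to explain how the infinite sum is formed inside the abstract ring $\Gamma(\Of_X)$ (locally finite refinement and partition of unity, using regularity and Lindel\"{o}f) and why pointwise nonvanishing implies invertibility (locality of stalks), and the sheaf comparison $\Gamma(\eta^{-1}U_a,\Of_X)\simeq A\{a^{-1}\}$ is left entirely to ``partition-of-unity arguments''. So: right architecture, but the domain/locality issue must be fixed and the key computations remain to be done.
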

\begin{rmk}\label{rmk:lindelofunity}
Conditions $(i)$ and $(ii)$ above guarantee in particular that the sheaf of $\cinfty$-rings $\pi_0(\Of_X)$ admits partitions of unity, which is crucial in order to control the homotopy theory of sheaves of modules over $\Of_X$.     
\end{rmk}
It now stands to reason to make the following definition
\begin{enumerate}
\item[$(\bullet)$] A derived locally $\cinfty$-ringed space $(X,\Of_X)$ is an \emph{affine derived $\cinfty$-scheme} if there is a derived $\cinfty$-ring $A$ and an equivalence $\spec\,A\simeq (X,\Of_X)$. 
\end{enumerate}
However, Theorem \ref{thm:spectrumglobalsections} and Remark \ref{rmk:lindelofunity} indicate that the class of \emph{all} such affine derived $\cinfty$-schemes is not as well behaved as we would like; it is better to restrict to objects of the form $\spec\,A$ for $A$ geometric. This still comprises a very large subcategory containing all the examples of geometric interest. In fact, it's convenient to impose even further restrictions on the collection of affine objects to guarantee better behaviour: note that the property of being Lindel\"{o}f is not stable under the formation of products or open subspaces; following Joyce (\cite[Corollary 4.42]{Joy2}), we will instead consider the smaller class affine derived $\cinfty$-schemes $(X,\Of_X)$ for which $X$ is \emph{second countable} (since $X$ is regular -as implied by $\cinfty$-regularity- this assumption implies that $X$ must be metrizable). The main \infcat of interest in this paper is the \infcat of sheaves on $\daff$. Since the \infcat of objects of the form $(X,\Of_X)=\spec\,A$ with $X$ second countable is not small, we will also impose a cardinality bound on the size of our affines; this is a technical issue the reader may safely ignore if she or he so wishes.
\begin{defn}\label{defn:affinederived}
Let $\kappa$ be an uncountable regular cardinal (sufficiently large for all future purposes). Let $(X,\Of_X)$ be a derived locally $\cinfty$-ringed space.
\begin{enumerate}[$(1)$]
    \item $(X,\Of_X)$ is an \emph{affine derived $\cinfty$-scheme} if $X$ is second countable and there is a $\kappa$-compact geometric derived $\cinfty$-ring $A$ and an equivalence $\spec\,A\simeq (X,\Of_X)$; equivalently, if $X$ is second countable, $(X,\Of_X)$ satisfies conditions $(i)$ and $(ii)$ of $(3)$ of Theorem \ref{thm:spectrumglobalsections} and $\Gamma(\Of_X)$ is $\kappa$-compact. We let $\daff$ denote the full subcategory spanned by affine derived $\cinfty$-schemes. 
    \item $(X,\Of_X)$ is an \emph{affine derived $\cinfty$-scheme of finite presentation} if there is a derived $\cinfty$-ring $A$ of finite presentation and an equivalence $\spec\,A\simeq (X,\Of_X)$. We let $\daff_{\fp}$ denote the full subcategory spanned by affine derived $\cinfty$-schemes of finite presentation. 
\end{enumerate}
We denote by $\iota_{\mathsf{Sm}}$ and $\iota_{\fp}$ the full subcategory inclusions $\mathsf{Mfd}\subset\daff$ and $\daff_{\fp}\subset\daff$ respectively.
\begin{rmk}
Note that the following properties of derived locally $\cinfty$-ringed spaces $(X,\Of_X)$ are stable under open subspaces.
\begin{enumerate}[$(1)$]
\item The property that $X$ is second countable.
\item The property that $X$ is Hausdorff.
\item The property that $(X,\Of_X)$ is $\cinfty$-regular. 
\end{enumerate}
It follows that the full subcategory of $\topo^{\loc}(\sring)$ spanned by objects $(X,\Of_X)$ that satisfy $(1)$ through $(3)$ is stable under open subspaces. Choosing a regular cardinal $\kappa>>\omega_1$ (where $\omega_1$ is the first uncountable regular cardinal) one can show that the property that $\Gamma(\Of_X)$ is $\kappa$-compact is also stable under open subspaces, so that the property of being an affine derived $\cinfty$-scheme is stable under open subspaces for a sufficiently large regular cardinal $\kappa$.
\end{rmk}
\end{defn}
\begin{rmk}
We make a terminological comment. In the preceding definition, we have not opted to call any subclass of affine derived $\cinfty$-schemes `derived manifolds'. Even though the term `derived manifold' has been standardized to an extent, we would prefer to avoid it since
\begin{enumerate}[$(1)$]
    \item taken literally, it is oxymoronic (or trivial at best) in the same way `derived smooth variety' is. 
    \item the term does not transparently convey the class of geometric objects under consideration, and different authors mean different things by `derived manifolds'. For Spivak \cite{spivak}, a derived manifold is what we would call a quasi-smooth derived $\cinfty$-scheme locally of finite presentation, whereas for Behrend-Liao-Xu \cite{BLX} a derived manifold is what we would call an \emph{affine} derived $\cinfty$-scheme of finite presentation (which need not be quasi-smooth). 
\end{enumerate}
\end{rmk}
\begin{prop}
The full subcategory $\daff\subset \mathsf{Top}^{\mathrm{loc}}(\sring)$ is stable under countable limits. The full subcategory $\daff_{\fp}\subset \mathsf{Top}^{\mathrm{loc}}(\sring)$ is stable under finite limits.     
\end{prop}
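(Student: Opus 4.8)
The plan is to compute the limit in $\mathsf{Top}^{\mathrm{loc}}(\sring)$ --- which has all limits, and in which both the inclusion $\mathsf{Top}^{\mathrm{loc}}(\sring)\subset\mathsf{Top}(\sring)$ and the forgetful functor $q_{\sring}$ preserve limits --- and then to recognize the result as affine (resp.\ of finite presentation) by separately controlling its structure sheaf and its underlying topological space.

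First I would reduce to a statement about derived $\cinfty$-rings. Let $F\colon K\to\daff$ be a diagram with $K$ a small \emph{countable} simplicial set, so that in particular its set $K_0$ of vertices is countable; write $A_k:=\Gamma(F(k))$, which by definition of $\daff$ is a $\kappa$-compact geometric derived $\cinfty$-ring with $F(k)\simeq\spec A_k$. Since $\spec$ restricts to a fully faithful functor $\sring_{\mathsf{gmt}}^{op}\hookrightarrow\mathsf{Top}^{\mathrm{loc}}(\sring)$ by Theorem~\ref{thm:spectrumglobalsections}, the diagram $F$ is, up to equivalence, the image under $\spec$ of a diagram $\tilde F\colon K\to\sring_{\mathsf{gmt}}^{op}$; as $\spec$ is a right adjoint it preserves limits, so $\lim_K F\simeq\spec(\colim_{K^{op}}A_\bullet)$, the colimit now being formed in $\sring$ (and the resulting object lies in $\mathsf{Top}^{\mathrm{loc}}(\sring)$ since $\spec$ takes values in derived locally $\cinfty$-ringed spaces). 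This step is formal modulo the already-cited facts.

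Next I would control $B:=\colim_{K^{op}}A_\bullet$. Since $\kappa$ is an uncountable regular cardinal, $K$ is a $\kappa$-small diagram, so $B$ is $\kappa$-compact, being a $\kappa$-small colimit of $\kappa$-compact objects (standard properties of compact objects, \cite[\S 5.3.4]{HTT}). Moreover $\sring_{\mathsf{gmt}}\subset\sring$ is closed under countable colimits \cite{cinftyI}, so $B$ is again geometric; hence the unit $B\to\Gamma\spec B$ is an equivalence, and therefore $\Gamma(\lim_K F)\simeq B$ is a $\kappa$-compact geometric derived $\cinfty$-ring. For the finitely presented variant one replaces this with the fact that a finite colimit of compact objects is compact, together with $\sring_{\fp}\subset\sring_{\mathsf{gmt}}$: a finite colimit of finitely presented derived $\cinfty$-rings is finitely presented, and this already exhibits $\lim_K F$ as an object of $\daff_{\fp}$, settling that case.

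It remains to check, for the $\daff$-statement, that the underlying space of $\lim_K F$ is second countable. As $q_{\sring}$ preserves limits, this space is $\lim_K(q_{\sring}\circ F)$ computed in $\mathsf{Top}$, hence a subspace of the product $\prod_{k\in K_0}q_{\sring}F(k)$; a countable product of second countable (a fortiori Hausdorff) spaces is second countable (and Hausdorff), and both properties pass to subspaces, so $\lim_K F$ has second countable underlying space. Combined with the previous paragraph, $\lim_K F\simeq\spec B$ with $B$ a $\kappa$-compact geometric derived $\cinfty$-ring and second countable underlying space, so $\lim_K F\in\daff$ by Definition~\ref{defn:affinederived}. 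The one input that is not pure bookkeeping is the closure of geometric derived $\cinfty$-rings under countable colimits: this is the point to be careful about, and one can either extract it from the structure theory of geometric rings in \cite{cinftyI}, or prove it directly by verifying that $\spec B$ satisfies the Hausdorff, Lindel\"{o}f and $\cinfty$-regularity conditions of Theorem~\ref{thm:spectrumglobalsections}(3) --- the last of which follows by pulling back $\cinfty$-regular functions along the limit projections and using that a finite intersection of $\cinfty$-regular basic opens is again one, via composition with smooth maps $\R^n\to\R$ and the identity $(\R^n,\cinfty_{\R^n})\simeq(\R,\cinfty_{\R})^n$.
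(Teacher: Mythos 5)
Your treatment of the finitely presented case is essentially the paper's own argument: transport the diagram along $\spec$ using Theorem \ref{thm:spectrumglobalsections}, use that $\spec$ converts finite colimits in $\sring$ into limits of ringed spaces, and use that $\sring_{\fp}\subset\sring$ is closed under finite colimits. No issues there.

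For the general case your main route diverges from the paper, and the divergence is exactly where the gap sits: you reduce everything to the ring $B=\colim_{K^{op}}A_\bullet$ and then invoke "$\sring_{\gmt}\subset\sring$ is closed under countable colimits", citing \cite{cinftyI} without a specific statement. No such closure result is quoted or used anywhere in this paper, and it is precisely the kind of assertion one should distrust: already in the classical $\cinfty$-setting, germ-determinedness (the $0$-truncated shadow of geometricity) is not preserved by coproducts or pushouts, which is why the characterization in Theorem \ref{thm:spectrumglobalsections}(3) is phrased in terms of the ringed space $(X,\Of_X)$ rather than a presenting ring. The paper's proof deliberately avoids your step: it computes the limit in $\mathsf{Top}^{\mathrm{loc}}(\sring)$ (whose underlying space is the limit of spaces) and checks directly that Hausdorffness, $\cinfty$-regularity and second countability (hence the Lindel\"{o}f condition) are stable under the relevant limits, so that the limit is $\spec$ of \emph{some} geometric ring by Theorem \ref{thm:spectrumglobalsections}(3) --- no claim about $B$ itself being geometric is ever needed. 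Your fallback ("verify Hausdorff, Lindel\"{o}f and $\cinfty$-regularity directly on $\spec B$") is in effect the paper's argument, but note that once you abandon geometricity of $B$, your earlier identification $\Gamma(\lim_K F)\simeq B$, which is what you used to get $\kappa$-compactness of the global sections, also collapses: $\Gamma\spec B$ need not agree with $B$, so the cardinality bound would have to be handled separately (the paper is silent on this point, leaning on $\kappa$ being "sufficiently large", so you are in good company, but your write-up presents it as following from a step that is not secured). In short: fix the argument either by removing the appeal to closure of $\sring_{\gmt}$ under countable colimits and arguing as the paper does on the ringed-space side, or by actually proving that closure statement, which is a nontrivial claim you currently only assert.
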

\begin{proof}
Using that the projection $\mathsf{Top}^{\mathrm{loc}}(\sring)\rightarrow \mathsf{Top}$ preserves limits, it is not hard to see that the condition of $\cinfty$-regularity is stable under arbitrary small limits in $\mathsf{Top}^{\mathrm{loc}}(\sring)$. Since the collection of Hausdorff spaces is similarly stable under limits in $\mathsf{Top}$, it suffices to argue that the collection of second countable spaces are stable under countable limits. This follows from the fact that the collection of second countable spaces is stable under countable products and passing to subspaces. For the claim about affine derived $\cinfty$-schemes of finite presentation, suppose that $f:K\rightarrow \daff_{\fp}$ is a finite diagram, then by Theorem \ref{thm:spectrumglobalsections}, $f$ is equivalent to a diagram $K\overset{f'}{\rightarrow} \sring_{\fp}\rightarrow \mathsf{Top}^{\mathrm{loc}}(\sring)^{op}$ where the second functor is the spectrum. Since $\spec$ (as a left adjoint) preserves colimits and $\sring_{\fp}\subset\sring$ is stable under finite colimits, we conclude. 
\end{proof}

Since our affine derived objects are \infcats of structured spaces, they inherit a natural topology from the \infcat of structured spaces.

\begin{defn}[Derived $\cinfty$-scheme]
Let $(X,\Of_X)$ be a derived locally $\cinfty$-ringed space.
\begin{enumerate}[$(1)$]
    \item $(X,\Of_X)$ is a \emph{derived $\cinfty$-scheme} if there is a covering family $\{(U_i,\Of_X|_{U_i})\rightarrow (X,\Of_X)\}_i$ such that $(X,\Of_X|_{U_i})$ is an affine derived $\cinfty$-scheme for all $i$. We let $\dsch\subset\mathsf{Top}^{\mathrm{loc}}(\sring)$ be the full subcategory spanned by derived $\cinfty$-schemes
    \item $(X,\Of_X)$ is a \emph{derived $\cinfty$-scheme locally of finite presentation} if there is a covering family $\{(U_i,\Of_X|_{U_i})\rightarrow (X,\Of_X)\}_i$ such that $(X,\Of_X|_{U_i})$ is an affine derived $\cinfty$-scheme locally of finite presentation for all $i$. We let $\dsch_{\fp}\subset \dsch$ denote the full subcategory spanned by derived $\cinfty$-schemes locally of finite presentation.
    \item More generally, suppose that $\LS\subset\daff$ is a property of affine derived $\cinfty$-schemes stable under open subspaces, then $(X,\Of_X)$ is a \emph{derived $\cinfty$-scheme locally having the property $\LS$} if there is a covering family $\{(U_i,\Of_X|_{U_i})\rightarrow (X,\Of_X)\}_i$ such that $(X,\Of_X|_{U_i})\in \LS$ for all $i$. We let $\dsch_{\LS}\subset \dsch$ denote the full subcategory spanned by derived $\cinfty$-schemes that locally have the property $\LS$.
\end{enumerate}
\end{defn}

We establish some closure properties of the class of derived $\cinfty$-schemes under limits and colimits. We will write $\dsch^{\et}_{\LS}$ for the subcategory $\dsch_{\LS}\cap \topo^{\loc}(\sring)^{\et}\subset \dsch_{\LS}$ on the \'{e}tale morphisms of derived $\cinfty$-schemes locally having the property $\LS$. Similarly, we have the subcategory $\dsch^{\mathsf{Open}}_{\LS}\subset \dsch_{\LS}$ on the open embeddings.
\begin{prop}\label{prop:schemescolimstable}
Let $\LS\subset\daff$ be a property of affine derived $\cinfty$-schemes stable under open subspaces. Then the following hold true.
\begin{enumerate}[$(1)$]
\item If $(X,\Of_X)$ is a derived locally $\cinfty$-ringed space that admits an \'{e}tale cover $\{(X_i,\Of_{X_i})\rightarrow (X,\Of_X)\}_i$ such that each $(X_i,\Of_{X_i})$ is a derived $\cinfty$-scheme locally having the property $\LS$, then $(X,\Of_X)$ is a derived $\cinfty$-scheme locally having the property $\LS$.
\item If $(X,\Of_X)$ is a derived $\cinfty$-scheme that locally has the property $\LS$ and $(Y,\Of)\rightarrow (X,\Of_X)$ is an \'{e}tale morphism of derived locally $\cinfty$-ringed spaces, then $(Y,\Of_Y)$ is a derived $\cinfty$-scheme that locally has the property $\LS$.
\item Consider a pullback diagram 
\[
\begin{tikzcd}
(Q,\Of_Q)\ar[d]\ar[r] & (Y,\Of_Y)\ar[d] \\
(Z,\Of_Z)\ar[r] & (X,\Of_X)
\end{tikzcd}
\]
of derived locally $\cinfty$-ringed spaces where the lower horizontal map is \'{e}tale and the $(Y,\Of_Y)$ is a derived $\cinfty$-scheme locally having the property $\LS$. Then $(Q,\Of_Q)$ is a derived $\cinfty$-scheme locally having the property $\LS$. In particular, the full subcategory $ \dsch_{\LS}^{\text{\'{e}t}}\subset \mathsf{Top}^{\mathrm{loc}}(\sring)^{\et}$ is stable under pullbacks.
\item The \infcats $\dsch_{\LS}^{\text{\'{e}t}}$ and $\dsch_{\LS}$ admit small coproducts and the inclusions $\dsch_{\LS}^{\text{\'{e}t}}\subset \dsch_{\LS}$ and $\dsch_{\LS}^{\text{\'{e}t}}\subset\topo^{\loc}(\sring)^{\et}$ preserve small coproducts.
\item Let $U_{\bullet}$ be a groupoid object of $\dsch_{\LS}^{\text{\'{e}t}}$. If the map 
\[  q_{\sring}(U_1)\longrightarrow q_{\sring}(U_0)\times q_{\sring}(U_0)  \]
of topological spaces is injective, then $U_{\bullet}$ admits a colimit in $\dsch_{\LS}^{\et}$ which is preserved by the inclusions $\dsch_{\LS}^{\et}\subset \dsch_{\LS}$ and $\dsch_{\LS}^{\text{\'{e}t}}\subset\topo^{\loc}(\sring)^{\et}$. Moreover, the resulting augmented simplicial object $U^+_{\bullet}$ is a \v{C}ech nerve in both $\dsch_{\LS}^{\et}$ and $\dsch_{\LS}$ (that is, $U^{+}_{\bullet}$ is an \emph{effective groupoid}). 
\item Let $U^+_{\bullet}$ be a \v{C}ech nerve in $\dsch_{\LS}^{\et}$. Suppose that the map
$q_{\sring}(U^+_1)\rightarrow q_{\sring}(U^+_0)\times q_{\sring}(U^+_0)$ is an injective map of topological spaces and that the map $q_{\sring}(U^+_0)\rightarrow q_{\sring}(U^+_{-1})$ is a surjective map of topological spaces. Then the diagram $U_{\bullet}^+$ is a colimit diagram which is preserved by the inclusions $\dsch_{\LS}^{\et}\subset \dsch_{\LS}$ and $\dsch_{\LS}^{\text{\'{e}t}}\subset\topo^{\loc}(\sring)^{\et}$.
\end{enumerate}
\end{prop}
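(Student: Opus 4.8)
The plan is to prove the six assertions in order, leaning throughout on two facts available from the preceding material: \'{e}tale morphisms of derived $\cinfty$-ringed spaces are stable under pullback and are \'{e}tale-local on the source (the topological half — that a map covered by local homeomorphisms over the target is a local homeomorphism — is recorded in the proof of Proposition \ref{prop:gluinglocringsp}, and the structure-sheaf half holds because the condition that $\alpha$ be an equivalence may be tested after pulling back along an \'{e}tale surjection, where the pullback functor is conservative), and the properties ``$\LS$'' and ``affine'' of objects of $\topo^{\loc}(\sring)$ are stable under open subspaces. I would first record the elementary observation that an open subspace of a derived $\cinfty$-scheme locally having $\LS$ is again one: intersect an affine $\LS$-cover with the open set.

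For $(1)$, given an \'{e}tale cover $\{(X_i,\Of_{X_i})\to (X,\Of_X)\}_i$ by derived $\cinfty$-schemes locally having $\LS$, refine each $X_i\to X$ to opens $V_{i\alpha}\subseteq X_i$ that embed as open subspaces of $X$, shrink them so that each lies inside a member of an affine $\LS$-cover of $X_i$, and conclude — using stability of $\LS$ and of affineness under open subspaces — that the $V_{i\alpha}$ constitute an affine $\LS$-cover of $X$. Part $(2)$ then follows from $(1)$ applied to the cover of $(Y,\Of_Y)$ by its opens that embed into $(X,\Of_X)$, each of which is a derived $\cinfty$-scheme locally having $\LS$ by the elementary observation. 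Part $(3)$ follows from $(2)$: since \'{e}tale maps are stable under pullback, $(Q,\Of_Q)\to(Y,\Of_Y)$ is \'{e}tale with $(Y,\Of_Y)\in\dsch_\LS$, so $(Q,\Of_Q)\in\dsch_\LS$; the ``stable under pullbacks'' clause is the same statement together with the fact that both projections off the pullback are \'{e}tale.

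For $(4)$ I would form the coproduct $\coprod_i(X_i,\Of_{X_i})$ in $\topo^{\loc}(\sring)^{\et}$ (which exists and is preserved by the inclusion into $\topo^{\loc}(\sring)$ by Proposition \ref{prop:gluinglocringsp}$(1)$); its underlying space carries the $X_i$ as a disjoint open cover, so concatenating their affine $\LS$-covers exhibits it as a derived $\cinfty$-scheme locally having $\LS$, and being a colimit in $\topo^{\loc}(\sring)^{\et}$ landing in the full subcategory $\dsch_\LS^{\et}$ it is a coproduct there, and likewise in $\dsch_\LS$. For $(5)$, Proposition \ref{prop:gluinglocringsp}$(2)$ furnishes the colimit $U_{-1}$ of $U_\bullet$ in $\topo^{\loc}(\sring)^{\et}$; its construction makes $U_0\to U_{-1}$ \'{e}tale, and on underlying spaces $q_{\sring}(U_\bullet)$ is an equivalence relation (by the injectivity hypothesis) with quotient $q_{\sring}(U_{-1})$, so $q_{\sring}(U_0)\to q_{\sring}(U_{-1})$ is surjective. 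Hence $U_0\to U_{-1}$ is an \'{e}tale cover by an object of $\dsch_\LS^{\et}$, so $U_{-1}\in\dsch_\LS$ by $(1)$, and the colimit lies in $\dsch_\LS^{\et}$, preserved by the inclusions exactly as in $(4)$. That $U^+_\bullet$ is a \v{C}ech nerve I would check levelwise: its underlying simplicial space is $\check{C}(q_{\sring}(U_0)\to q_{\sring}(U_{-1}))$ (an equivalence relation and its quotient), and since every edge of $U^+_\bullet$ is $q_{\sring}$-Cartesian the structure sheaf at each level is the pullback of $\Of_{U_{-1}}$, matching $\check{C}(U_0\to U_{-1})$ level by level.

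Finally for $(6)$: write $U^+_\bullet=\check{C}(f)^+_\bullet$ for the \'{e}tale map $f\colon U_0\to U_{-1}$; its restriction to $\simpop$ is a groupoid object satisfying the hypothesis of $(5)$, so it has a colimit $X$ with $\check{C}(U_0\to X)^+_\bullet$ a colimit diagram, together with a canonical comparison $\phi\colon X\to U_{-1}$. On underlying spaces $\phi$ is the map from $q_{\sring}(U_0)/q_{\sring}(U_1)$, with the quotient topology, to $q_{\sring}(U_{-1})$, a homeomorphism because $f$ is a surjective open map; and since $\phi$ becomes \'{e}tale after precomposition with the \'{e}tale cover $U_0\to X$, it is \'{e}tale, hence — being a homeomorphism on spaces with an equivalence on structure sheaves — an equivalence. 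Therefore $U^+_\bullet\simeq\check{C}(U_0\to X)^+_\bullet$ is a colimit diagram, preserved by the inclusions as in $(5)$. I expect the main obstacle to be $(5)$ and $(6)$: pinning down exactly where the $1$-efficiency (injectivity) and surjectivity hypotheses enter, and identifying the colimit produced abstractly through $\infty$-topoi and Proposition \ref{prop:gluinglocringsp} with the concrete \v{C}ech nerve both on underlying spaces and on structure sheaves, while carefully tracking which inclusion preserves which class of colimits.
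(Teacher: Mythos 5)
Your proposal is correct and follows essentially the same route as the paper's proof: parts $(1)$–$(3)$ by refining \'{e}tale covers to open embeddings and using stability of $\LS$ (and of affineness) under open subspaces, and parts $(4)$–$(6)$ by importing the colimits of Proposition \ref{prop:gluinglocringsp}, checking membership in $\dsch_{\LS}$ via $(1)$, and reducing the \v{C}ech-nerve/colimit claims to the level of underlying spaces. The only difference is presentational: in $(5)$–$(6)$ the paper performs this last reduction formally, using that $q_{\sring}$ restricted to the \'{e}tale subcategory is conservative and preserves the relevant pullbacks and colimits (so everything follows from effectivity of $1$-efficient groupoids in $\mathsf{Set}$), whereas you identify the underlying quotient spaces and pulled-back structure sheaves explicitly and compare with the colimit produced by $(5)$ — the two verifications amount to the same thing.
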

\begin{proof}
For $(1)$, choose for each $i$ a covering family $\{(U_{ij},\Of_{X_i}|_{U_{ij}})\hookrightarrow (X_i,\Of_{X_i})\}_j$ of $(X_i,\Of_{X_i})$ such that each composition $(U_{ij},\Of_{X_i}|_{U_{ij}})\rightarrow (X,\Of_X)$ is an open embedding. Since $(X_i,\Of_{X_i})$ is a derived $\cinfty$-scheme having the property $\LS$ and $\LS$ is stable under open subspaces, we may assume that $(U_{ij},\Of_{X_i}|_{U_{ij}})$ is an affine derived $\cinfty$-scheme having the property $\LS$ for all $i,j$. For $(2)$, choose an open cover $\{(U_i,\Of_X|_{U_i})\rightarrow (X,\Of_X)\}_i$ by affine derived $\cinfty$-schemes having the property $\LS$ and a covering family $\{(V_j,\Of_Y|_{V_j})\hookrightarrow (Y,\Of_Y)\}_j$ of $(Y,\Of_Y)$ such that the composition $(V_j,\Of_Y|_{V_j})\rightarrow (X,\Of_X)$ is an open embedding for all $j$, then $(V_j,\Of_Y|_{V_j})\times_{(X,\Of_X)}(U_i,\Of_X|_{U_i})$ is an affine derived $\cinfty$-scheme locally having the property $\LS$ since $\LS$ is stable under open subspaces, and the family 
\[   \{(V_j,\Of_Y|_{V_j})\times_{(X,\Of_X)}(U_i,\Of_X|_{U_i}) \hooklongrightarrow (Y,\Of_Y)\}_{ij}\]
is a covering family exhibiting $(Y,\Of_Y)$ as a derived $\cinfty$-scheme locally having the property $\LS$. Note that $(3)$ follows immediately from $(2)$ and $(4)$ follows from $(1)$ and Proposition \ref{prop:gluinglocringsp}. The colimit described in $(5)$ exists in $\topo^{\loc}(\sring)^{\et}$ and the inclusion $\topo^{\loc}(\sring)\subset \topo^{\loc}(\sring)^{\et}$ preserves this colimit by Proposition \ref{prop:gluinglocringsp}, and the colimit lies in $\dsch_{\LS}^{\et}$ by $(1)$. It remains to argue that $U^+_{\bullet}$ is \v{C}ech nerve, but this follows from the proof of Proposition \ref{prop:gluinglocringsp}; more precisely, we just argued that the functor
\[\dsch_{\LS}^{\text{\'{e}t}}\subset \mathsf{Top}^{\mathrm{loc}}(\sring)^{\text{\'{e}t}}\overset{q_{\sring}}{\longrightarrow} \mathsf{Top}^{\text{\'{e}t}}\subset \mathsf{Top}\]
preserves our colimit. It also preserves pullbacks by $(3)$ and is conservative since $\mathsf{Top}^{\mathrm{loc}}(\sring)^{\text{\'{e}t}}\rightarrow \mathsf{Top}^{\text{\'{e}t}}$ is a right fibration, so it suffices to show that a groupoid object $V_{\bullet}$ in $\mathsf{Set}$ for which $V_1\rightarrow V_0\times V_0$ is injective is effective, which follows as $\mathsf{Set}$ is a topos (1-efficient groupoids are effective in a 1-topos). For $(6)$, the same reasoning as for $(5)$ reduces us to the assertion that a \v{C}ech nerve $V^+_{\bullet}$ in $\mathsf{Set}$ for which $V_1\rightarrow V_0\times V_0$ is injective and $V_0\rightarrow V_{-1}$ surjective is a colimit diagram. 
\end{proof}
\begin{rmk}
Note that the subcategory inclusion $\dsch_{\LS}^{\text{\'{e}t}}\subset \dsch_{\LS}$ is stable under pullbacks, but it is \emph{not} stable under products.  
\end{rmk}

\begin{defn}[Derived $\cinfty$-stacks]
Let $\LS$ be a property of affine derived $\cinfty$-schemes stable under open subspaces, then we let $\shv(\LS)\subset\pshv(\LS)$ denote the full subcategory spanned by those presheaves that are sheaves for the \'{e}tale topology; this is an \inftopt. For $\LS=\daff$, we write $\dstack$ for the \inftop $\shv(\daff)$; this is the \inftop of \emph{derived $\cinfty$-stacks}.
\end{defn}
Let $\LS\subset\daff$ be a property stable under open subspaces and let $\gamma_{\LS}:\LS\subset\dsch_{\LS}$ denote the full subcategory inclusion. Consider the composition 
\[ \dsch_{\LS}\overset{j}{\hooklongrightarrow}\pshv(\dsch_{\LS})\overset{\gamma^*_{\LS}}{\longrightarrow} \pshv(\LS), \]
then the functor $\gamma^*_{\LS}$ admits a left adjoint $\gamma_{\LS!}$ given by left Kan extension along the full subcategory inclusion $\gamma_{\LS}:\LS^{op}\subset\dsch^{op}_{\LS}$ (\cite[Proposition 4.3.2.17]{HTT}). We will let $j_{\sch_{\LS}}$ denote the composition $\gamma_{\LS}^*\circ j:\dsch_{\LS}\rightarrow \pshv(\LS)$. We let $\shv(\dsch_{\LS})\subset\pshv(\dsch_{\LS})$ denote the full subcategory spanned by sheaves for the \'{e}tale topology. Since the inclusion $\LS\subset\dsch_{\LS}$ preserves pullbacks along open embeddings and carries covering families to covering families, the functor $\gamma^*_{\LS}$ carries sheaves to sheaves.
\begin{rmk}\label{rmk:sheafificationschemes}
Since $\dsch_{\LS}$ is not a small \infcatt, the \infcat $\pshv(\dsch_{\LS})$ is not presentable and thus not an \inftopt, and we cannot apply the general theory of strongly reflective localizations of presentable \infcats (\cite[Section 5.5.4]{HTT}) to deduce that there is a sheafification functor left adjoint to the full subcategory inclusion $\shv(\dsch_{\LS})\subset\pshv(\dsch_{\LS})$ (this is nevertheless true, but we will not use or prove this here).
\end{rmk} 
\begin{prop}\label{prop:resyonedaschemes}
Let $\LS$ be a property of affine derived $\cinfty$-schemes stable under open subspaces, then the following hold true. 
\begin{enumerate}[$(1)$]
\item The functor $j_{\sch_{\LS}}$ carries $\dsch_{\LS}$ into the full subcategory of sheaves on $\LS$.
\item Let $U_{\bullet}$ be a groupoid object of $\dsch_{\LS}^{\et}$ satisfying the condition in $(2)$ of Proposition \ref{prop:gluinglocringsp} or $(5)$ of Proposition \ref{prop:schemescolimstable} (which is then also an effective groupoid object of $\dsch_{\LS}$ by $(5)$ of Proposition \ref{prop:schemescolimstable}). Then the induced functor $\dsch_{\LS}\rightarrow \shv(\LS)$ (which we also denote by $j_{\sch_{\LS}}$) preserves geometric realizations of $U_{\bullet}$.
\item The functor $j_{\sch_{\LS}}:\dsch_{\LS}\rightarrow\shv(\LS)$ preserves coproducts.
\item The functor $j_{\sch_{\LS}}:\dsch_{\LS}\rightarrow\shv(\LS)$ is fully faithful.
\end{enumerate}
\end{prop}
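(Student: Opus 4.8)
The plan is to prove the four assertions in order and then derive $(4)$ from $(1)$--$(3)$ together with parts $(5)$--$(6)$ of Proposition \ref{prop:schemescolimstable}. Throughout we use that $j_{\sch_{\LS}}(X,\Of_X)$ is the restriction to $\LS^{op}$ of $\Hom_{\topo^{\loc}(\sring)}(-,(X,\Of_X))$, that $j_{\sch_{\LS}}(U,\Of_U)=\Hom_{\LS}(-,U)$ for $(U,\Of_U)\in\LS$ by full faithfulness of $\gamma_{\LS}$, and that $\Hom_{\topo^{\loc}(\sring)}(V,-)$ preserves all limits.

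For $(1)$ it suffices to verify the sheaf condition against a basis of covers, and covering families in $\LS$ are jointly surjective families $\{(U_i,\Of_{U_i})\to (U,\Of_U)\}_i$ of open embeddings with each $U_i\in\LS$. Because $\LS$ is stable under open subspaces, the \v{C}ech nerve $\check{C}(\coprod_iU_i\to U)^+_\bullet$ takes values in $\dsch_{\LS}$, is levelwise a disjoint union of objects of $\LS$, has all structure maps \'{e}tale, and satisfies the hypotheses of part $(6)$ of Proposition \ref{prop:schemescolimstable}: the map $q_{\sring}(U^+_1)\to q_{\sring}(U^+_0)\times q_{\sring}(U^+_0)$ sends $(i,j,x)$ with $x\in U_i\cap U_j$ to $((i,x),(j,x))$, hence is injective, and $q_{\sring}(U^+_0)\to q_{\sring}(U)$ is surjective since the $U_i$ cover. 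Therefore this \v{C}ech nerve is a colimit diagram in $\dsch_{\LS}^{\et}$, and composing with Proposition \ref{prop:gluinglocringsp} a colimit diagram in $\topo^{\loc}(\sring)$; applying $\Hom_{\topo^{\loc}(\sring)}(-,(X,\Of_X))$ turns it into the limit expressing descent, so $j_{\sch_{\LS}}(X,\Of_X)$ is a sheaf. In particular the \'{e}tale topology on $\LS$ is subcanonical, $j_{\sch_{\LS}}|_{\LS}$ is the (corestricted) Yoneda embedding, and $\Hom_{\shv(\LS)}(j_{\sch_{\LS}}U,G)\simeq G(U)$ for every sheaf $G$ and $U\in\LS$.

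For $(2)$, write $X=\colim_{\simpop}U_\bullet$ in $\dsch_{\LS}^{\et}$; by part $(5)$ of Proposition \ref{prop:schemescolimstable} the augmented object $U^+_\bullet$ is an effective groupoid, so $U_n\simeq U_0\times_X\cdots\times_XU_0$ and $q_{\sring}(U_0)\to q_{\sring}(X)$ is surjective. Since pullbacks along \'{e}tale maps in $\dsch_{\LS}$ agree with those in $\topo^{\loc}(\sring)$ (part $(3)$ of Proposition \ref{prop:schemescolimstable}) and $\Hom_{\topo^{\loc}(\sring)}(V,-)$ preserves limits, the augmented simplicial diagram $j_{\sch_{\LS}}(U^+_\bullet)$ in $\shv(\LS)\subset\pshv(\LS)$ is the \v{C}ech nerve of the map $g\colon j_{\sch_{\LS}}(U_0)\to j_{\sch_{\LS}}(X)$; so it remains to see that $g$ is an effective epimorphism. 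Given $V\in\LS$ and a map $V\to X$, the base change $V\times_XU_0\to V$ is \'{e}tale with $V\times_XU_0$ a derived $\cinfty$-scheme locally having the property $\LS$ (part $(2)$ of Proposition \ref{prop:schemescolimstable}, using stability of \'{e}tale maps under pullback) and surjective since $q_{\sring}(U_0)\to q_{\sring}(X)$ is; as \'{e}tale maps are locally open embeddings on the source, we may cover $V\times_XU_0$ by affines in $\LS$ whose composites to $V$ are open embeddings, yielding a covering family of $V$ over which the section $V\to X$ lifts to $U_0$. Since effective epimorphisms in an \inftopt are the colimits of their \v{C}ech nerves, $j_{\sch_{\LS}}(X)\simeq|j_{\sch_{\LS}}(U_\bullet)|$, which is $(2)$. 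Part $(3)$ is proved the same way: $X=\coprod_iX_i$ exists in $\dsch_{\LS}$ (part $(4)$ of Proposition \ref{prop:schemescolimstable}), each $X_i\to X$ is an open embedding onto a clopen piece, hence a monomorphism; the $X_i$ are pairwise disjoint over $X$ while $j_{\sch_{\LS}}(\varnothing)$ is the initial object of $\shv(\LS)$; and for $V\in\LS$ with a map $V\to X$ the preimages $V_i=V\times_XX_i$ form a clopen covering family of $V$ over which the section lifts into the $X_i$. Since $j_{\sch_{\LS}}$ carries open embeddings to monomorphisms and preserves the relevant pullbacks, the family $\{j_{\sch_{\LS}}(X_i)\to j_{\sch_{\LS}}(X)\}_i$ consists of pairwise disjoint monomorphisms jointly an effective epimorphism, and such a family exhibits its target as a coproduct in the \inftopt $\shv(\LS)$; hence $j_{\sch_{\LS}}(X)\simeq\coprod_ij_{\sch_{\LS}}(X_i)$.

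Finally, for $(4)$ fix $(X,\Of_X),(Y,\Of_Y)\in\dsch_{\LS}$ and an open cover of $X$ by objects $U_i\in\LS$, and put $U_\bullet=\check{C}(\coprod_iU_i\to X)_\bullet$: a groupoid object of $\dsch_{\LS}^{\et}$ satisfying the hypothesis of $(2)$, with $\colim_{\simpop}U_\bullet\simeq X$ in $\dsch_{\LS}$ (parts $(5)$--$(6)$ of Proposition \ref{prop:schemescolimstable}) and each $U_n$ a disjoint union of objects of $\LS$. Combining $(2)$, $(3)$, the sheaf-valued Yoneda equivalence from $(1)$ applied levelwise (using that $\coprod$ is a coproduct in $\dsch_{\LS}$), and that mapping out of a colimit is a limit, we get natural equivalences
\begin{align*}
\Hom_{\shv(\LS)}(j_{\sch_{\LS}}X,j_{\sch_{\LS}}Y)
&\simeq \lim_{\simp}\Hom_{\shv(\LS)}(j_{\sch_{\LS}}U_\bullet,j_{\sch_{\LS}}Y)
\simeq \lim_{\simp}\Hom_{\dsch_{\LS}}(U_\bullet,Y)\\
&\simeq \Hom_{\dsch_{\LS}}\bigl(\colim_{\simpop}U_\bullet,\,Y\bigr)
\simeq \Hom_{\dsch_{\LS}}(X,Y),
\end{align*}
which is the assertion. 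The main obstacle is $(2)$: parts $(1)$, $(3)$ and $(4)$ are bookkeeping around the affine case, whereas $(2)$ is where the gluing theorem (Proposition \ref{prop:gluinglocringsp}, through Proposition \ref{prop:schemescolimstable}) really enters, the delicate points being that $j_{\sch_{\LS}}$ commutes with the pullbacks along \'{e}tale maps defining the groupoid object, and that the augmentation $g$ becomes an effective epimorphism of sheaves --- which uses that \'{e}tale maps are locally open embeddings on the source together with $\LS$-locality.
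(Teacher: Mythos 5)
Your proof is correct and follows essentially the same route as the paper: all four parts are reduced to the gluing statements of Propositions \ref{prop:gluinglocringsp} and \ref{prop:schemescolimstable} ((4)--(6)), with (1) verified by \v{C}ech descent along open covers, (2) by showing $j_{\sch_{\LS}}$ of the groupoid is a \v{C}ech nerve whose augmentation is an effective epimorphism via local lifting of sections, (3) by disjointness of the pieces (the paper phrases this as the \v{C}ech nerve being essentially constant), and (4) by reducing to the affine case through (2) and (3). The differences --- checking the sheaf condition directly on $\LS$ rather than first on $\pshv(\dsch_{\LS})$, testing the effective epimorphism against an arbitrary affine instead of a chosen refinement of the cover of $U_0^+$, and writing out the limit computation in (4) --- are presentational only.
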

\begin{proof}
For $(1)$, we invoke that $\gamma^*_{\LS}$ preserves sheaves to conclude that it suffices to argue that every representable presheaf on $\dsch_{\LS}$ is a sheaf. Let $\{U_i \rightarrow X\}_i$ be an open covering of derived $\cinfty$-schemes that locally have the property $\LS$ determining a map $h':\coprod_i U_i\rightarrow X$ of derived $\cinfty$-schemes that locally have the property $\LS$. Consider the diagram 
\[
\begin{tikzcd}
\coprod_i j(U_i)\ar[rr,"f"]\ar[dr,"h"] && j(\coprod_i U_i)\ar[dl,"j(h')"] \\
&j(X)
\end{tikzcd}
\]
in the \infcat $\pshv(\dsch_{\LS})$. We wish to show that for every $Y\in\dsch_{\LS}$, the functor 
\[ \Hom_{\pshv(\dsch_{\LS})}(\_,j(Y)):\pshv(\dsch_{\LS})^{op}\longrightarrow\spa \]
represented by $j(Y)$ carries the \v{C}ech nerve $\check{C}(h)^+_{\bullet}$ to a limit diagram in $\spa$. The functor represented by $Y$ carries $f$ to an equivalence by the last part of the proof of \cite[Proposition 5.3.6.2]{HTT}. The map $f$ induces a map of augmented simplicial objects $\check{C}(h)^+_{\bullet}\rightarrow\check{C}(j(h'))^{+}_{\bullet}$ which is carried to an equivalence by $\Hom_{\pshv(\dsch_{\LS})}(\_,j(Y))$ (as $f$ is carried to an equivalence by this functor) so it suffices to show that $\Hom_{\pshv(\dsch_{\LS})}(\_,j(Y))$ carries $\check{C}(j(h'))^+_{\bullet}$ to a colimit diagram. Since the Yoneda embedding preserves limits, we are reduced to proving that the functor
\[ \Hom_{\dsch_{\LS}}(\_,Y):\dsch_{\LS}^{op}\longrightarrow\spa  \]
represented by $Y$ carries the \v{C}ech nerve of $h'$ to a limit diagram, that is, that the \v{C}ech nerve $\check{C}(h')^+_{\bullet}$ is a colimit diagram in $\dsch_{\LS}$. The map $\check{C}(h')^+_{1}\rightarrow \check{C}(h')^+_{0}\times  \check{C}(h')^+_{0}$ is given by 
\[ \coprod_{ij}  U_i\times_XU_j\longrightarrow \coprod_{ij}U_i\times U_j \]
which is an injection on the underlying topological spaces since $U_i\rightarrow X$ is an open embedding for each $i$. Invoking $(6)$ of Proposition \ref{prop:schemescolimstable}, we deduce that $\check{C}(h')^+_{\bullet}$ is a colimit diagram. We proceed with $(2)$. Let $U_{\bullet}$ be a groupoid object of $\dsch^{\text{\'{e}t}}_{\LS}$ satisfying the condition of $(5)$ Proposition \ref{prop:schemescolimstable} and let $U_{\bullet}^+$ be a colimit diagram in $\dsch^{\text{\'{e}t}}_{\LS}$ (and $\dsch_{\LS}$) extending $U_{\bullet}$, then we wish to show that $j_{\sch_{\LS}}(U^+_{\bullet})$ is a colimit diagram. Since $j_{\sch_{\LS}}$ preserves limits and $U_{\bullet}$ is an effective groupoid, the diagram $j_{\sch_{\LS}}(U^+_{\bullet})$ is a \v{C}ech nerve. Since groupoids are effective in $\dstack_{\LS}$, it suffices to argue that the map $j_{\sch_{\LS}}(U_0^+)\rightarrow j_{\sch_{\LS}}(U_{-1}^+)$ is an effective epimorphism. Choose an open covering $\{\spec\,A_i\rightarrow U^+_0\}_i$ of the derived $\cinfty$-scheme $U^+_0$ by affines having the property $\LS$ such that the composition $\spec\,A_i\rightarrow U^+_{-1}$ is an open embedding for all $i$, then it suffices to argue that the composition $\coprod_i \spec\,A_i\rightarrow j_{\sch_{\LS}}(U^+_{-1})$ is an effective epimorphism, that is, we should show that for any map $\spec\,B\rightarrow j_{\sch_{\LS}}(U^+_{-1})$ of sheaves on $\LS$, the collection $\{\spec\,B\times_{j_{\sch_{\LS}}(U^+_{-1})}\spec\,A_{i}\rightarrow \spec\,B\}_i$ determines an effective epimorphism $\coprod_i \spec\,B\times_{j_{\sch_{\LS}}(U^+_{-1})}\spec\,A_{i}\rightarrow \spec\,B$. Since we can identify the map $\spec\,B\times_{j_{\sch_{\LS}}(U^+_{-1})}\spec\,A_{i}\rightarrow \spec\,B$ of sheaves with the open embedding $\spec\,B\times_{U^+_{-1}}\spec\,A_{i}\rightarrow \spec\,B$ in $\LS$ (after applying the Yoneda embedding), we will be done once we show that the collection $\{\spec\,B\times_{U^+_{-1}}\spec\,A_{i}\rightarrow \spec\,B\}_i$ is an open covering of $\spec\,B$, but this follows from the fact that $U_0^+\rightarrow U^+_{-1}$ is an \'{e}tale covering. Now let $\{U_i\}_i$ be small collection of derived $\cinfty$-schemes having the property $\LS$, then we wish to show that the map 
\[ h:\coprod j_{\sch_{\LS}}(U_i)\longrightarrow j_{\sch_{\LS}}(\coprod_iU_i) \]
is an equivalence. To see that this map is an effective epimorphism, we note that for any map $\spec\,A\rightarrow j_{\sch_{\LS}}(\coprod_iU_i)$, the collection $\{ \spec\,A\times_{\coprod_iU_i}U_j\rightarrow\spec\,A\}_j$ is an open covering. Since $U_j\times_{\coprod_{U_i}}U_k=U_j$ if $j=k$ and is empty otherwise, the \v{C}ech nerve of the map $h$ is essentially constant, that is, $h$ is an equivalence. For $(4)$, we wish to show that for every pair $(X,\Of_X),(Y,\Of_Y)\in\dsch_{\LS}$, the map
\[\Hom_{\dsch_{\LS}}((X,\Of_X),(Y,\Of_Y))\longrightarrow\Hom_{\shv(\LS)}(j_{\sch_{\LS}}(X,\Of_X),j_{\sch_{\LS}}(Y,\Of_Y)))\]
is an equivalence. Choose an open cover $\{(U_i,\Of_X|_{U_i})\hookrightarrow (X,\Of_X))\}_i$ of $(X,\Of_X)$ by affine derived $\cinfty$-schemes that have the property $\LS$, then the \v{C}ech nerve of the map 
\[ h:\coprod_i(U_i,\Of_X|_{U_i})\longrightarrow  (X,\Of_X)  \]
in $\dsch_{\LS}$ is a groupoid object in $\dsch_{\LS}^{\text{\'{e}t}}$ satisfying the condition in $(6)$ of Proposition \ref{prop:schemescolimstable}. It follows that $\check{C}(h)_{\bullet}^+$ is a colimit diagram. It follows from $(2)$ and $(3)$ that we may assume that $(X,\Of_X)$ is an affine derived $\cinfty$-scheme having the property $\LS$, in which case the assertion is obvious.
\end{proof}
\begin{cor}
Let $\LS\subset\daff$ be a property of affine derived $\cinfty$-schemes stable under open subspaces, then the \'{e}tale topology on $\LS$ is subcanonical.
\end{cor}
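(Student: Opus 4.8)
The plan is to unwind the definition of \emph{subcanonical} and then reduce immediately to part $(1)$ of Proposition \ref{prop:resyonedaschemes}. Recall that, by the conventions fixed above, the \'{e}tale topology on $\LS$ is subcanonical precisely when the Yoneda embedding $j:\LS\hookrightarrow\pshv(\LS)$ factors through the full subcategory $\shv(\LS)$ of \'{e}tale sheaves; equivalently, when every representable presheaf $\Hom_{\LS}(-,X):\LS^{op}\rightarrow\spa$, for $X\in\LS$, is an \'{e}tale sheaf. Note first that the \'{e}tale topology does restrict to a genuine Grothendieck topology on $\LS$, since stability under open subspaces implies stability under open pullbacks, by Remark \ref{rmk:subsite} and the subsequent definition; so the statement is meaningful.

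Next I would observe the purely formal point that restriction along a fully faithful functor carries representables to representables: since $\gamma_{\LS}:\LS\hookrightarrow\dsch_{\LS}$ is a full subcategory inclusion, for every $Y\in\LS$ we have $\Hom_{\LS}(Y,X)=\Hom_{\dsch_{\LS}}(Y,X)$, so the representable presheaf on $\LS$ at $X$ is exactly $\gamma_{\LS}^*\,j(X)$, which by the very definition $j_{\sch_{\LS}}=\gamma_{\LS}^*\circ j$ is $j_{\sch_{\LS}}(X)$. Now Proposition \ref{prop:resyonedaschemes}$(1)$ asserts precisely that $j_{\sch_{\LS}}$ carries every object of $\dsch_{\LS}$ — in particular every $X\in\LS\subset\dsch_{\LS}$ — into $\shv(\LS)$. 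Hence $\Hom_{\LS}(-,X)$ is an \'{e}tale sheaf for each $X\in\LS$, which is exactly the assertion that the \'{e}tale topology on $\LS$ is subcanonical.

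There is essentially no real obstacle here, since all the substantive work has already been carried out in the proof of Proposition \ref{prop:resyonedaschemes}$(1)$: there one checks the sheaf condition for a representable presheaf by producing, for an \'{e}tale cover $\{U_i\rightarrow X\}_i$, the \v{C}ech nerve $\check{C}(h')^+_{\bullet}$ of the induced map $h':\coprod_iU_i\rightarrow X$ as a colimit diagram in $\dsch_{\LS}$ — this is where $(6)$ of Proposition \ref{prop:schemescolimstable} is invoked, applicable because the relevant map of underlying topological spaces is injective, each $U_i\rightarrow X$ being an open embedding — and then uses that the Yoneda embedding of $\dsch_{\LS}$ turns colimits of this shape into limits. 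The only thing I would take care to spell out explicitly is the identification in the second paragraph, namely that $\Hom_{\LS}(-,X)$ and $j_{\sch_{\LS}}(X)$ literally coincide as presheaves on $\LS$; everything else is immediate from the cited results.
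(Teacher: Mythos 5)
Your proof is correct and is exactly the intended argument: since $\LS\subset\dsch_{\LS}$ is full, the representable presheaf $\Hom_{\LS}(-,X)$ coincides with $j_{\sch_{\LS}}(X)$ restricted to $\LS$, which is a sheaf by Proposition \ref{prop:resyonedaschemes}$(1)$. This is the same route the paper takes (the corollary is stated without proof precisely because it follows immediately from that proposition), so nothing further is needed.
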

\begin{defn}
In case a sheaf $X$ on affine derived $\cinfty$-schemes locally having the property $\LS$ lies in the image of $j_{\mathsf{Sch}_{\LS}}$, we will say that $X$ is \emph{representable by a derived $\cinfty$-scheme locally having the property $\LS$}.
\end{defn}
Because the sites $\LS$ for properties of affine derived $\cinfty$-schemes stable under open subspaces inherit their topology from the open cover topology on the category of topological spaces, they have some particular properties, some of which we now record.
\begin{lem}\label{lem:detectetiononspaces}
Let $\LS\subset\daff$ be a property of affine derived $\cinfty$-schemes stable under open subspaces. Let $(X,\Of_X)$ be an affine derived $\cinfty$-scheme having the property $\LS$ and let $\varphi_{(X,\Of_X)}$ denote the functor
\[ \mathsf{Open}(X) \simeq \mathsf{Top}^{\mathrm{loc}}(\sring)^{\mathsf{Open}}_{/(X,\Of_X)}= \LS^{\mathsf{Open}}_{/(X,\Of_X)} \longrightarrow \LS^{\mathsf{Open}} \subset \LS   \]
(note that the second identity follows from the fact that $\LS$ is assumed stable under open subspaces), then the following hold true.
\begin{enumerate}[$(1)$]
    \item For every $(X,\Of_X)$, the functor $\varphi_{(X,\Of_X)}^*:\pshv(\LS)\rightarrow \pshv(X)$ carries sheaves to sheaves.
    \item The collection of functors $\{\varphi^*_{(X,\Of_X)}\}_{(X,\Of_X)\subset \LS}$ detects sheaves.
    \item For each $(X,\Of_X)\in \LS$, the induced functor $\varphi_{(X,\Of_X)}^*:\shv(\LS)\rightarrow \shv(X)$ is an \emph{essential} geometric morphism (that is, $\varphi_{(X,\Of_X)}^*$ does not only preserve limits but colimits as well).
    \item The collection of functors $\{\varphi^*_{(X,\Of_X)}:\shv(\LS)\rightarrow\shv(X)\}_{(X,\Of_X)\in \LS}$ detects colimits. 
\end{enumerate}
\end{lem}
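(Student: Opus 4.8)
The strategy is to recognize each $\varphi:=\varphi_{(X,\Of_X)}\colon\mathsf{Open}(X)\to\LS$ as a morphism of sites that is simultaneously \emph{continuous} and \emph{cocontinuous}, and then to read off the four assertions from the formal consequences, exploiting throughout that the \'{e}tale topology on $\LS$ is pulled back from the open-cover topology on $\mathsf{Top}$ along $q_{\sring}$. I would begin by recording two properties of $\varphi$. Since $\LS$ is stable under open subspaces, $\varphi$ sends an open $U\subseteq X$ to $(U,\Of_X|_U)\in\LS$ and an inclusion to the evident open embedding; because open embeddings are stable under pullback (Definition~\ref{defn:etalemap} and the remarks following it) and the pullback of $(U,\Of_X|_U)\hookrightarrow(W,\Of_X|_W)$ along $(V,\Of_X|_V)\hookrightarrow(W,\Of_X|_W)$ is $(U\cap V,\Of_X|_{U\cap V})$, the functor $\varphi$ preserves all fibre products that exist in $\mathsf{Open}(X)$. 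And under the identification $\LS^{\mathsf{Open}}_{/(Z,\Of_Z)}\simeq\mathsf{Open}(Z)$ a covering family of $(Z,\Of_Z)$ by open embeddings is exactly an open cover of the space $Z$; such families generate the \'{e}tale topology, so $\varphi$ carries open covers to covering families and carries the iterated \v{C}ech nerve of an open cover of $V\subseteq X$ to the \v{C}ech nerve of $\{(W_i,\Of_X|_{W_i})\to(V,\Of_X|_V)\}$ in $\LS$.

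Granting this, $(1)$ is immediate: for $\mathscr{F}\in\shv(\LS)$, $V\subseteq X$ open and $\{W_i\}_i$ an open cover of $V$, the diagram obtained by evaluating $\varphi^*\mathscr{F}$ on the \v{C}ech nerve of $\{W_i\}$ coincides with the one obtained by evaluating $\mathscr{F}$ on the \v{C}ech nerve of the covering family $\{(W_i,\Of_X|_{W_i})\to(V,\Of_X|_V)\}$, whose limit is $(\varphi^*\mathscr{F})(V)$ because $\mathscr{F}$ is a sheaf; thus $\varphi$ is continuous. For $(2)$ the only thing left to prove is the converse implication, and here I would simply note that a presheaf on $\LS$ is a sheaf iff it satisfies descent along open-embedding covering families, that every object of $\LS$ equals $(V,\Of_X|_V)$ for some $(X,\Of_X)\in\LS$ and open $V$, and that every open-embedding covering family of $(V,\Of_X|_V)$ is $\varphi_{(X,\Of_X)}$ applied to an open cover of $V$ --- so requiring $\varphi^*_{(X,\Of_X)}\mathscr{F}\in\shv(X)$ for all $(X,\Of_X)\in\LS$ is exactly requiring this descent.

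For $(3)$ I would first observe that $\varphi^*_{(X,\Of_X)}\colon\shv(\LS)\to\shv(X)$ preserves all limits, being the restriction to sheaves of the (limit-preserving) precomposition functor $\pshv(\LS)\to\pshv(X)$ along inclusions of sheaves which preserve and reflect limits; in particular it is left exact and accessible, so the adjoint functor theorem equips it with a left adjoint. The substantive point is preservation of colimits, which I would deduce from $\varphi$ being cocontinuous: if $\varphi$ is cocontinuous then $\varphi^*\colon\pshv(\LS)\to\pshv(X)$ carries local equivalences to local equivalences (it is enough to verify this on the generating covering-sieve inclusions $S\hookrightarrow j(Z,\Of_Z)$, where $\varphi^*(S)\hookrightarrow\varphi^* j(Z,\Of_Z)$ is a monomorphism and, by the cocontinuity estimate, a local epimorphism, hence a local equivalence), so $\varphi^*$ commutes with sheafification and $\varphi^*_{(X,\Of_X)}$ preserves colimits. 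Cocontinuity in turn comes down to: given $(Z,\Of_Z)\in\LS$ and a covering sieve $S$ on it, pick an open-embedding covering family $\{(W_i,\Of_Z|_{W_i})\to(Z,\Of_Z)\}\subseteq S$ with $\{W_i\}$ an open cover of $Z$; for $U\in\mathsf{Open}(X)$ and $f\colon(U,\Of_X|_U)\to(Z,\Of_Z)$, the opens $f^{-1}(W_i)$ cover $U$ and the restriction of $f$ to each factors through $(W_i,\Of_Z|_{W_i})$ by the universal property of open subspaces of locally ringed spaces, so $\varphi^{-1}(S)$ is a covering sieve on $U$. As $\varphi^*_{(X,\Of_X)}$ then preserves both limits and colimits, it is the algebraic (inverse-image) morphism of an essential geometric morphism $\shv(X)\to\shv(\LS)$, which is the content of $(3)$.

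Finally, $(4)$: the ``only if'' part is $(3)$, and for the converse I would use that the $\varphi^*_{(X,\Of_X)}$ are jointly conservative --- the value of $\varphi^*_{(X,\Of_X)}\mathscr{F}$ at the terminal object $X\in\mathsf{Open}(X)$ is $\mathscr{F}(X,\Of_X)$, and the evaluation functors $\mathscr{F}\mapsto\mathscr{F}(X,\Of_X)$ are jointly conservative on $\pshv(\LS)$, hence on $\shv(\LS)$. Indeed, if $p\colon K^{\rhd}\to\shv(\LS)$ is such that $\varphi^*_{(X,\Of_X)}\circ p$ is a colimit diagram for every $(X,\Of_X)$, then applying the colimit-preserving functor $\varphi^*_{(X,\Of_X)}$ to the canonical comparison map $\colim_K(p|_K)\to p(\infty)$ gives an equivalence for all $(X,\Of_X)$, so this comparison map is an equivalence and $p$ is a colimit diagram. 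I expect the one genuinely non-formal step to be the cocontinuity of $\varphi$ invoked in $(3)$: it relies on the fact that whether a morphism of derived locally $\cinfty$-ringed spaces factors through a prescribed open subspace can be tested locally on the source; all the remaining work is bookkeeping with the \'{e}tale topology on $\LS$ being imported from $\mathsf{Top}$.
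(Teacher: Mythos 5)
Your proof is correct, and its overall skeleton matches the paper's: continuity of $\varphi_{(X,\Of_X)}$ gives $(1)$, the fact that $\varphi^*_{(X,\Of_X)}$ commutes with sheafification gives $(3)$, and joint conservativity (evaluation at the top open recovers evaluation at $(X,\Of_X)$) gives the detection statements. The genuine divergence is in how the one non-formal input is established. The paper proves the effective-epimorphism assertion $(*')$ by comparing with the underlying-space site: it uses the right adjoint $Z\mapsto (Z,\underline{\R})$ of $q_{\sring}$, the pullback square from \cite[Proposition 3.3.5]{cinftyI}, and the analysis of unit transformations in Lemma \ref{lem:unitpresheaves} to reduce to an effective epimorphism among the $\psi_X^*(U_i)$. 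You instead invoke directly that open embeddings are $q_{\sring}$-Cartesian, so that a morphism of derived locally $\cinfty$-ringed spaces factors (essentially uniquely) through an open subspace as soon as its underlying continuous map does; this is available in the framework and makes your cocontinuity/local-splitting step noticeably more elementary than the paper's detour. Two smaller differences: your $(2)$ is proved by matching descent conditions directly rather than via the sheafification statement plus conservativity, which silently relies on the standard equivalence between \v{C}ech descent for the generating pretopology (jointly surjective families of open embeddings, stable under pullback) and the sheaf condition for the generated topology --- a fact the paper also uses implicitly, e.g. in the proof of Proposition \ref{prop:resyonedaschemes}, so this is acceptable; and your parenthetical reduction of $(3)$ to the generating covering-sieve inclusions should be justified by noting that the maps carried by the colimit-preserving functor $\varphi^*:\pshv(\LS)\rightarrow\pshv(X)$ to local equivalences form a strongly saturated class, hence contain all local equivalences once they contain the generators; this is standard but worth a sentence.
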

For the proof, we recall the following fact.
\begin{lem}\label{lem:unitpresheaves}
Let $f:\icat\rightarrow\icatd$ be a functor among essentially small \infcats and consider the induced adjunction $(f_!\adj f^*)$ between their \infcats of presheaves. Let $X\rightarrow Y$ be a map in $\icat$. Then the equivalence $\mathsf{RFib}_{\icat}\rightarrow\pshv(\icat)$ carries the (strictly) commuting diagram 
\[
\begin{tikzcd}
\icat_{/X\rightarrow Y} \ar[d]\ar[r] & \icat_{/Y} \ar[d] \\
\icatd_{/f(X)\rightarrow f(Y)}\times_{\icatd}\icat \ar[r] &\icatd_{/f(Y)}\times_{\icatd}\icat
\end{tikzcd}
\]
of right fibrations over $\icat$ induced by the functor $f$ to the unit transformation of the map $j(X)\rightarrow j(Y)$. 
\end{lem}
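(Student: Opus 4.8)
The plan is to identify the square corner by corner and edge by edge, using the standard translation between right fibrations over a small \infcat and presheaves on it, and to reserve the only real content for the two vertical maps. I will use the following facts, drawn from Sections 2.1, 2.2, 3.2 and 5.2 of \cite{HTT}. Straightening $\mathsf{RFib}_{\icat}\simeq\pshv(\icat)$ carries the right fibration $\icat_{/C}\to\icat$ to the representable presheaf $j(C)$. For a morphism $C\to C'$ of $\icat$ the forgetful map $\icat_{/C\rightarrow C'}\to\icat_{/C}$ is a trivial fibration, so $\icat_{/C\rightarrow C'}$ also presents $j(C)$ and the residual map $\icat_{/C\rightarrow C'}\to\icat_{/C'}$ presents the morphism $j(C)\to j(C')$ induced by $C\to C'$ (functoriality of the Yoneda embedding). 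Pullback of right fibrations along $f:\icat\to\icatd$ corresponds, under straightening on both sides, to the restriction functor $f^*:\pshv(\icatd)\to\pshv(\icat)$, and the left adjoint of this pullback functor intertwines with the left adjoint $f_!$ on presheaves; moreover the canonical equivalence $f_!j(C)\simeq j_{\icatd}(f(C))$ — equivalently $f_!\,\icat_{/C}\simeq\icatd_{/f(C)}$ — is by construction the $(f_!\dashv f^*)$-adjunct of the map of slices $\icat_{/C}\to\icatd_{/f(C)}$ lying over $f$.

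Granting these, the four corners and the two horizontal maps are read off at once. The top row $\icat_{/X\rightarrow Y}\to\icat_{/Y}$ goes to $j(X)\to j(Y)$. The lower-left corner $\icatd_{/f(X)\rightarrow f(Y)}\times_{\icatd}\icat$ is the pullback along $f$ of $\icatd_{/f(X)\rightarrow f(Y)}\simeq\icatd_{/f(X)}$, so it goes to $f^*j_{\icatd}(f(X))\simeq f^*f_!j(X)$; the lower-right corner goes to $f^*f_!j(Y)$ in the same way. The bottom row is the pullback along $f$ of $\icatd_{/f(X)\rightarrow f(Y)}\to\icatd_{/f(Y)}$, which (by the argument for the top row applied in $\icatd$) presents $j_{\icatd}(f(X))\to j_{\icatd}(f(Y))$; applying $f^*$ and the equivalence $f_!j\simeq j_{\icatd}f$ identifies the bottom row with $f^*f_!$ applied to the morphism $j(X)\to j(Y)$.

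The substance is the two vertical maps. By the universal property of the pullback, the left vertical map $\icat_{/X\rightarrow Y}\to\icatd_{/f(X)\rightarrow f(Y)}\times_{\icatd}\icat$ is the same datum as the map $\icat_{/X\rightarrow Y}\to\icatd_{/f(X)\rightarrow f(Y)}$ over $f$ obtained by applying $f$ to slices; under $f_!\dashv f^*$ and the identification $\icat_{/X\rightarrow Y}\simeq\icat_{/X}$ this is adjunct to the canonical equivalence $f_!\,\icat_{/X}\simeq\icatd_{/f(X)}$. Transporting the lower-left corner to $f^*f_!j(X)$ along this equivalence turns the left vertical map into the morphism $j(X)\to f^*f_!j(X)$ adjunct to $\mathrm{id}_{f_!j(X)}$, which is, by definition of the adjunction, the unit $\eta_{j(X)}$ — and this identification is of the whole coherent morphism, not merely of its underlying edge. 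The same argument produces $\eta_{j(Y)}$ for the right vertical map. Hence the square, being the pullback along $f$ of the top row together with the canonical comparison from the top row to that pullback, is exactly the naturality square of $\eta$ at $j(X)\to j(Y)$; its image under straightening is therefore the unit transformation of $j(X)\to j(Y)$, as claimed. As a sanity check one can verify the vertical maps pointwise: the left vertical map sends the terminal object $\mathrm{id}_X\in\icat_{/X}$ to $\mathrm{id}_{f(X)}\in\icatd_{/f(X)}$, hence sends the universal element of $j(X)$ to $\mathrm{id}_{f(X)}\in(f^*f_!j(X))(X)=\Hom_{\icatd}(f(X),f(X))$, which is the familiar Yoneda description of $\eta_{j(X)}$.

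The step I expect to carry the weight is the bundle of compatibilities recalled in the first paragraph: that straightening intertwines pullback of right fibrations with $f^*$ and the corresponding left adjoint with $f_!$ on presheaves, and that the $f$-induced maps of slice \infcats represent the $(f_!\dashv f^*)$-adjunct of the Yoneda-compatibility equivalence $f_!j\simeq j_{\icatd}f$; to this one must add the (routine) passage from the strictly commuting diagram of simplicial sets in the statement to a homotopy-coherent square of presheaves. Each ingredient is standard, but assembling them is the real work; once they are granted, the bookkeeping above finishes the proof.
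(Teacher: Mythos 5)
Your argument is correct, but it reaches the conclusion by a different route than the paper. You identify the corners and horizontal edges as presheaf-level data ($j(X)$, $j(Y)$, $f^*f_!j(X)$, $f^*f_!j(Y)$, via the compatibility of straightening with pullback of right fibrations and of $f_!$ with the Yoneda embedding), and then use the purely formal characterization of the unit: a map $A\rightarrow f^*B$ whose $(f_!\adj f^*)$-adjunct is an equivalence becomes, after transporting $B$ along that equivalence, the unit at $A$; running this at the level of the arrow $j(X)\rightarrow j(Y)$ yields the naturality square. The paper instead stays model-categorical: it presents the adjunction by the simplicial Quillen adjunction between the contravariant model structures on $(\sset)_{/\icat}$ and $(\sset)_{/\icatd}$ (postcomposition with $f$, pullback along $f$) and invokes \cite[Proposition 5.2.4.6]{HTT}, which computes the derived unit at a map of fibrant-cofibrant objects as the point-set unit followed by the right adjoint applied to fibrant replacements; since the slice comparisons $\icat_{/X\rightarrow Y}\rightarrow\icatd_{/f(X)\rightarrow f(Y)}$ and $\icat_{/Y}\rightarrow\icatd_{/f(Y)}$ are fibrant replacements in the contravariant model structure, the displayed square is literally the output of that recipe. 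Both proofs ultimately lean on the same standard fact, asserted rather than proved in each case---that these slice comparisons model $f_!$ (equivalently, are contravariant equivalences, equivalently $f_!\circ j\simeq j_{\icatd}\circ f$)---which you flag as your weight-bearing step and the paper dispatches in one line. Your route buys a purely $\infty$-categorical argument with no explicit appeal to model structures, while the paper's single citation handles the homotopy-coherence bookkeeping (the passage from the strict square to a coherent square, and the simultaneous treatment of both vertical maps at the level of the arrow) in one stroke, which is precisely the part you had to assemble by hand.
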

\begin{proof}
The right adjoint functor $\_\times_{
\icat}\icatd:\mathsf{RFib}_{\icat}\rightarrow\mathsf{RFib}_{\icatd}$ is induced by the simplicial Quillen adjunction 
\[ \begin{tikzcd} (\sset)_{/\icat} \ar[r,shift left,"f\circ {\_}"] &[1.5em] (\sset)_{/\icatd} \ar[l,shift left,"{\_}\times_{\icat}\icatd"] \end{tikzcd}\]
(where both categories are equipped with the contravariant models structures). According to \cite[Proposition 5.2.4.6]{HTT}, given a simplicial Quillen adjunction
\[ \begin{tikzcd} \mathbf{A} \ar[r,shift left,"L"] & \mathbf{A}' \ar[l,shift left,"R"]\end{tikzcd}\]
a unit for the adjunction on the underlying \infcats at some map $A\rightarrow B$ of fibrant-cofibrant objects of $\mathbf{A}$ is given by the total vertical composition in the diagram
\[
\begin{tikzcd}
A \ar[r]\ar[d] & B\ar[d] \\
RL(A) \ar[r] \ar[d] & RL(B)\ar[d] \\
L(A')  \ar[r] & L(B')
\end{tikzcd}
\]
where the upper vertical maps are unit transformations and the lower vertical maps are induced by fibrant replacements $L(A)\rightarrow A'$ and $L(B)\rightarrow B'$ in $\mathbf{A}'$. Since the obvious maps $\icat_{/X\rightarrow Y}\rightarrow \icatd_{/(f(X)\rightarrow f(y))}$ and $\icat_{/Y}\rightarrow\icatd_{/f(Y)}$ are fibrant replacement in the contravariant model structure on $(\sset)_{/\icatd}$, we conclude.
\end{proof}
\begin{proof}[Proof of Lemma \ref{lem:detectetiononspaces}]
The proof of $(1)$ is nothing but the observation that $\varphi_{(X,\Of_X)}:\mathsf{Open}(X)\rightarrow \LS$ carries covering families to covering families and the fact that open embeddings of derived locally $\cinfty$-ringed spaces are stable under pullbacks. To prove $(2)$, $(3)$ and $(4)$, we assert the following.
\begin{enumerate}[$(*)$]
    \item Let $F\rightarrow F'$ be a map of presheaves on $\LS$ exhibiting a sheafification, then the map $\varphi_{(X,\Of_X)}^*(F)\rightarrow \varphi_{(X,\Of_X)}^*(F')$ exhibits a sheafification.
\end{enumerate}
Suppose $(*)$ holds. Let $F\in\pshv(\LS)$ and suppose that $\varphi_{(X,\Of_X)}^*(F)$ is a sheaf for all $(X,\Of_X)$. Let $\alpha:F\rightarrow LF$ be a sheafification map, then we wish to show that $\alpha$ is an equivalence. we note that the functor
\[ \Phi^*:\pshv(\LS)\longrightarrow \prod_{(X,\Of_X)\in \LS} \pshv(X)\]
is conservative, so it suffices to argue that $\Phi^*$ carries $\alpha$ to an equivalence, but this is guaranteed by $(*)$. For $(3)$ we note that $(*)$ implies that the diagram 
\[
\begin{tikzcd}
\pshv(\LS)\ar[r,"\varphi_{(X,\Of_X)}^*"] &[3em] \pshv(X) \\
\shv(\LS)\ar[r,"\varphi_{(X,\Of_X)}^*"]\ar[u,hook] &[3em] \shv(X)\ar[u,hook]
\end{tikzcd}
\]
is vertically left adjointable. It then follows that the composition
\[ \pshv(\LS)\overset{L}{\longrightarrow}\shv(\LS)\longrightarrow\shv(X)  \]
is equivalent to the composition 
\[ \pshv(\LS)\overset{\varphi_{(X,\Of_X)}^*}{\longrightarrow}\pshv(X)\longrightarrow\shv(X)  \]
which preserves colimits, so that the functor $\shv(\LS)\rightarrow \shv(X)$ also preserves colimits by the universal property of cocontinuous localizations \cite[Proposition 5.5.4.20]{HTT}. Now $(4)$ follows from $(3)$ and the fact that the functor 
\[ \Phi^*:\shv(\LS)\longrightarrow \prod_{(X,\Of_X)\in \LS} \shv(X)\]
is conservative. We are left with the proof of the assertion $(*)$. Unwinding the definitions, we are required to show the following.
\begin{enumerate}
\item[$(*')$] Let $(Y,\Of_Y) \in \LS$ and let $\{(U_i,\Of_Y|_{U_i})\rightarrow (Y,\Of_Y)\}_i$ be an open covering family, which determines a family $\{\varphi^*_{(X,\Of_X)}(U_i,\Of_Y|_{U_i})\rightarrow \varphi^*_{(X,\Of_X)}(Y,\Of_Y)\}_i$ in $\shv(X)$ since $\varphi^*_{(X,\Of_X)}$ carries sheaves to sheaves by $(1)$ and each representable presheaf on $\LS$ is a sheaf by $(4)$ of Proposition \ref{prop:resyonedaschemes}. Then the map 
    \[h:\coprod_i \varphi^*_{(X,\Of_X)}(U_i,\Of_Y|_{U_i})\longrightarrow \varphi^*_{(X,\Of_X)}(Y,\Of_Y)\] 
    of sheaves on $X$ is an effective epimorphism.
\end{enumerate}
Consider the (strictly) commuting diagram 
\[
\begin{tikzcd}
\LS_{/(X,\Of_X)}^{\mathsf{Open}}\ar[d,"\simeq"] \ar[r,"\varphi_{(X,\Of_X)}"] & \LS \ar[r,hook]& \topo^{\loc}(\sring) \ar[d,"q_{\sring}"] \\
\mathsf{Open}(X)\ar[rr,"\psi_X"] && \topo
\end{tikzcd}
\]
of \infcatst, where $\psi_X$ is the composition $\mathsf{Open}(X)\simeq \mathsf{Top}^{\mathsf{Open}}_{/X}\rightarrow\topo$. Let $\phi_{(X,\Of_X)}$ denote the composition $\mathsf{Open}(X)\rightarrow\LS\rightarrow \topo^{\loc}(\sring)$, then the map $\varphi^*_{(X,\Of_X)}(U_i,\Of|_{U_i})\rightarrow \varphi^*_{(X,\Of_X)}(Y,\Of_Y)$ is equivalent to the map $\phi^*_{(X,\Of_X)}(U_i,\Of|_{U_i})\rightarrow \phi^*_{(X,\Of_X)}(Y,\Of_Y)$. The diagram above determines a commuting diagram 
\[
\begin{tikzcd}
& \pshv(\topo^{\loc}(\sring))\ar[dr,"\phi^*_{(X,\Of_X)}"]\\  \pshv(\topo)\ar[rr,"\psi_X^*"] \ar[ur,"q_{\sring}^*"]&&   \pshv(\mathsf{Open}(X))
\end{tikzcd}
\]
of \infcatst. Note that the functor $q_{\sring}$ has a right adjoint that carries a topological space $Z$ to the derived locally $\cinfty$-ringed space $(Z,\underline{\R})$ equipped with the locally constant sheaf $\underline{\R}$ with stalk $\R$ at all points; it follows from \cite[Proposition 3.3.5]{cinftyI} that the diagram 
\[
\begin{tikzcd}
(U_i,\Of_{Y}|_{U_i})\ar[d]\ar[r] & (Y,\Of_Y)\ar[d] \\
(U_i,\underline{\R})\ar[r] & (Y,\underline{\R})
\end{tikzcd}
\]
induced by the unit transformation of this adjunction is a pullback of derived locally $\cinfty$-ringed spaces. Applying the Yoneda embedding and the functor $\phi_{(X,\Of_X)}^*$ yields in light of the commuting diagram of \infcats above a pullback diagram
\[
\begin{tikzcd}
\varphi^*_{(X,\Of_X)}(U_i,\Of_Y|_{U_i}) \ar[d]\ar[r] & \varphi^*_{(X,\Of_X)}(Y,\Of_Y)\ar[d] \\
\psi^*_X(U_i)\ar[r] & \psi^*_X(Y)
\end{tikzcd}
\]
in the \infcat $\pshv(\mathsf{Open}(X))$, and therefore also in the \infcat $\shv(X)$. It will thus suffice to show that the map $\coprod_i \psi^*_X(U_i)\rightarrow\psi^*_X(Y)$ is an effective epimorphism in $\shv(X)$. To see this, it suffices to show that for each $V\in \mathsf{Open}(X)$ and each map $j(V)\rightarrow \psi_X^*(Y)$, the maps $j(V)\times_{\psi_X^*(Y)}\psi_X^*(U_i)\rightarrow j(V)$ determine an effective epimorphism $\coprod_i j(V)\times_{\psi_X^*(Y)}\psi_X^*(U_i)\rightarrow j(V)$. The map $j(V)\rightarrow \psi_X^*(Y)$ factors via the unit map $j(V)\rightarrow \psi_X^*(V)\rightarrow\psi_X(Y)$ where the second map corresponds to some map $V\rightarrow Y$ of topological spaces. Set $W_i=U_i\times_YV\subset V$, then the diagram
\[
\begin{tikzcd}
\psi_X^*(W_i)\ar[d]\ar[r] & \psi_X^*(V)\ar[d] \\
\psi_X^*(U_i) \ar[r] & \psi_X^*(Y)
\end{tikzcd}
\]
is a pullback. Since the map $W_i\rightarrow V$ lies in the image of the functor $\mathsf{Open}(X)\rightarrow \mathsf{Top}$ and the collection $\{W_i\subset V\}_i$ is an open cover of $V$, it suffices to argue that the diagram 
\[
\begin{tikzcd}
\mathsf{Open}(X)_{/(W_i\rightarrow V)} \ar[d]\ar[r] & \mathsf{Open}(X)_{/V} \ar[d] \\
\topo_{/(W_i\rightarrow V)}\times_{\topo}\mathsf{Open}(X)\ar[r] & \topo_{/V}\times_{\topo}\mathsf{Open}(X)
\end{tikzcd}
\]
is a homotopy pullback diagram of right fibrations over $\mathsf{Open}(X)$, in view of Lemma \ref{lem:unitpresheaves}. This is the case since the diagram is evidently an ordinary pullback, all objects are fibrant and the horizontal maps are fibrations. 
\end{proof}
\begin{prop}\label{prop:schemeproperties}
Let $\iota_{\LS}:\LS\subset \daff$ be a property of affine derived $\cinfty$-schemes stable under open subspaces, then the following hold.
\begin{enumerate}[$(1)$]
    \item The pullback functor $\iota_{\LS}^*:\pshv(\daff)\rightarrow\pshv(\LS)$ carries $\dstack$ into $\shv(\LS)$.
    \item The induced functor $\iota_{\LS}^*:\dstack\rightarrow \shv(\LS)$ is an essential geometric morphisms (that is, $\iota_{\LS}^*$ preserves not only limits but colimits as well).
    \item The left adjoint $\iota_{\LS!}:\shv(\LS)\rightarrow\dstack$ is fully faithful. 
\end{enumerate}
\end{prop}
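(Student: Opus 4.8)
The plan is to transcribe the argument of Lemma~\ref{lem:detectetiononspaces}, now for the fully faithful inclusion $\iota_{\LS}:\LS\hookrightarrow\daff$ in place of $\varphi_{(X,\Of_X)}:\mathsf{Open}(X)\to\LS$. Write $\iota_{\LS}^*:\pshv(\daff)\to\pshv(\LS)$ for restriction; since $\iota_{\LS}$ is fully faithful, $\iota_{\LS}^*$ has a fully faithful left adjoint $\iota_{\LS!}$ (left Kan extension, with $\iota_{\LS}^*\iota_{\LS!}\simeq\mathrm{id}$) and also a right adjoint, so it preserves all limits and colimits of presheaves.

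First I would prove $(1)$ directly --- it is the statement that restriction along $\iota_{\LS}$ preserves sheaves. The point is that, because $\LS$ is stable under open subspaces, the \'{e}tale topology it carries is induced from that on $\daff$ in a strong sense: for $(Y,\Of_Y)\in\LS$ every covering family in $\daff$ consists of open embeddings, and all iterated fibre products $V_{i_0}\times_Y\cdots\times_YV_{i_n}$ entering the associated \v{C}ech data are open subspaces of $(Y,\Of_Y)$ --- hence objects of $\LS$ --- and agree whether computed in $\LS$ or in $\daff$. Consequently, for $F\in\dstack$, the \v{C}ech descent diagram of $\iota_{\LS}^*F$ along any covering family of $(Y,\Of_Y)$ in $\LS$ is literally equal to the corresponding \v{C}ech descent diagram of $F$ in $\daff$, which is a limit diagram because $F$ is a sheaf; hence $\iota_{\LS}^*F\in\shv(\LS)$.

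The heart of the proof is the claim $(*)$: the functor $\iota_{\LS}^*:\pshv(\daff)\to\pshv(\LS)$ carries every sheafification map to a sheafification map. Granting $(*)$, parts $(2)$ and $(3)$ are obtained exactly as the corresponding assertions are obtained from the analogous claim in the proof of Lemma~\ref{lem:detectetiononspaces}. Indeed, $(*)$ forces the square relating $\iota_{\LS}^*$ on presheaves and on sheaves to the two sheafification functors to commute, so the composite $\pshv(\daff)\xrightarrow{\iota_{\LS}^*}\pshv(\LS)\to\shv(\LS)$ is colimit preserving and inverts the local equivalences, hence factors through the localization $\pshv(\daff)\to\dstack$; by \cite[Proposition~5.5.4.20]{HTT} the induced functor $\iota_{\LS}^*:\dstack\to\shv(\LS)$ preserves colimits, and since it visibly preserves limits (being objectwise restriction) and is accessible, it admits both adjoints --- which is the essential geometric morphism of $(2)$. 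For $(3)$, the left adjoint $\iota_{\LS!}:\shv(\LS)\to\dstack$ is computed as the presheaf-level left Kan extension followed by sheafification on $\daff$, and $(*)$ together with $\iota_{\LS}^*\iota_{\LS!}\simeq\mathrm{id}$ on presheaves identifies the unit $\mathrm{id}_{\shv(\LS)}\to\iota_{\LS}^*\iota_{\LS!}$ with an equivalence; thus $\iota_{\LS!}$ is fully faithful.

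It remains to establish $(*)$. Since $\iota_{\LS}^*$ is cocontinuous and the local equivalences in $\pshv(\daff)$ form the strongly saturated class generated by the covering monomorphisms $T\hookrightarrow j(X)$, it suffices to show that each $\iota_{\LS}^*(T)\hookrightarrow\iota_{\LS}^*(j(X))$ is a local equivalence in $\pshv(\LS)$; as $\iota_{\LS}^*(j(X))$ is a sheaf by $(1)$ and sheafification preserves image factorizations, this reduces to: for every $(X,\Of_X)\in\daff$ and every open cover $\{(U_i,\Of_X|_{U_i})\hookrightarrow(X,\Of_X)\}_i$, the map $\coprod_i\iota_{\LS}^*(j(U_i))\to\iota_{\LS}^*(j(X))$ is an effective epimorphism of sheaves on $\LS$. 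This last statement is elementary and carries the only geometric content: $\iota_{\LS}^*(j(X))$ is the sheaf $(Y,\Of_Y)\mapsto\Hom_{\daff}((Y,\Of_Y),(X,\Of_X))$, so a map from a representable $j(Y)$ with $(Y,\Of_Y)\in\LS$ is just a morphism $f:(Y,\Of_Y)\to(X,\Of_X)$, and then $\{(f^{-1}(U_i),\Of_Y|_{f^{-1}(U_i)})\hookrightarrow(Y,\Of_Y)\}_i$ is an open cover of $(Y,\Of_Y)$ by objects of $\LS$ over which $f$ factors through the $U_i$, lifting $f$ locally along the displayed map. The main obstacle is therefore not a hard new input --- the sole genuinely geometric fact is that an open cover of $X$ pulls back along any morphism out of an object of $\LS$ to an open cover landing in $\LS$, an immediate consequence of stability under open subspaces --- but rather keeping straight the reduction of $(*)$ to this effective-epimorphism statement (and noticing that $(1)$ is needed inside it); the remaining steps are formal and parallel Lemma~\ref{lem:detectetiononspaces}.
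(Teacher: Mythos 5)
Your proof is correct, but it takes a different route from the paper for the substantive parts $(2)$ and $(3)$. The paper disposes of $(2)$ by factoring both $\dstack$ and $\shv(\LS)$ through the products of petit topoi $\prod_{(X,\Of_X)}\shv(X)$ and invoking Lemma \ref{lem:detectetiononspaces} (each $\varphi^*_{(X,\Of_X)}$ preserves colimits and the family jointly detects them), so no new sheaf-theoretic work is done at the level of $\iota_{\LS}$ itself; you instead re-prove the key commutation-with-sheafification statement directly for $\iota_{\LS}^*$, reducing via cocontinuity to the generating covering monomorphisms $T\hookrightarrow j(X)$ and then to the effective-epimorphism claim for $\coprod_i\iota_{\LS}^*j(U_i)\to\iota_{\LS}^*j(X)$, whose only geometric input is that an open cover of $(X,\Of_X)$ pulls back along any morphism from an object of $\LS$ to an open cover by objects of $\LS$ (stability under open subspaces). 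This is exactly the role the assertion $(*)$ plays in the paper's proof of Lemma \ref{lem:detectetiononspaces}, and your argument is simpler there because $\iota_{\LS}$ is fully faithful, whereas $\varphi_{(X,\Of_X)}$ is not, which is why the paper's lemma needs the detour through $\psi_X$ and Lemma \ref{lem:unitpresheaves}. For $(3)$ the paper argues that the full subcategory of sheaves on which the unit is an equivalence is stable under colimits and contains the representables, while you compute the unit directly from $\iota_{\LS}^*\iota_{\LS!}\simeq\mathrm{id}$ at the presheaf level together with your $(*)$; both are standard and correct. What each approach buys: yours is self-contained and isolates the single geometric fact being used, at the cost of redoing sieve-level bookkeeping; the paper's is shorter because it reuses the already-proven petit-topos lemma. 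Two minor points worth making explicit if you write this up: your \v{C}ech-descent formulation of the sheaf condition in $(1)$ tacitly uses that the topology is generated by a pretopology whose covers admit pullbacks that agree in $\LS$ and $\daff$ (which you do note), and in the reduction of $(*)$ you should record that restriction, being computed objectwise, preserves monomorphisms and effective epimorphisms of presheaves, so that the mono/effective-epi factorization argument goes through after sheafification.
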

\begin{proof}
$(1)$ follows easily from the assumption that that the inclusion $\LS\subset \daff$ carries covering families to covering families and that $\LS$ is stable under open pullbacks. To see that $\iota_{\LS}^*$ preserves colimits, consider the commuting diagram 
\[
\begin{tikzcd}
\dstack\ar[r,"\iota_{\LS}^*"] \ar[d]& \shv(\LS) \ar[d] \\
\prod_{(X,\Of_X)\in \daff} \shv(X)\ar[r] & \prod_{(X,\Of_X)\in \LS} \shv(X)
\end{tikzcd}
\]
and invoke Lemma \ref{lem:detectetiononspaces}. If $\iota_{\LS}^*$ preserves colimits, then the full subcategory $\icat\subset \shv(\LS)$ spanned by sheaves $F$ for which the unit map $F\rightarrow \iota_{\LS}^*\iota_{\LS!}F$ is an equivalence is stable under colimits, so fully faithfulness of $\iota_{\LS!}$ follows from the observation that the image of the Yoneda embedding lies in $\icat$.
\end{proof}
In view of Proposition \ref{prop:schemeproperties}, we may without loss replace the \infcats $\shv(\mathsf{\mathsf{Mfd}})$ and $\shv(\daff_{\fp})$ (and more generally $\shv(\LS)$ for any subcategory $\LS\subset\daff$ stable under open subspaces) with their essential images under the functors $\iota_{\mathsf{Sm!}}$ and $\iota_{\fp!}$.
\begin{defn}
Let $X$ be a derived $\cinfty$-stack.
\begin{enumerate}[$(1)$]
    \item We will say that $X$ is \emph{smooth} if $X$ lies in the essential image of the functor $\iota_{\mathsf{Sm}!}:\shv(\mathsf{Mfd})\rightarrow\dstack$. We let $\smst\subset \dstack$ denote the full subcategory spanned by smooth stacks.
    \item We will say that $X$ is \emph{locally of finite presentation} if $X$ lies in the essential image of the functor $\iota_{\fp!}:\shv(\daff_{\fp})\rightarrow\dstack$. We let $\dstack_{\fp}\subset\dstack$ denote the full subcategory spanned by derived $\cinfty$-stacks locally of finite presentation.
    \item More generally, for $\LS$ a property of affine derived $\cinfty$-schemes stable under open subspaces, we say that a stack $X$ \emph{locally has the property $\LS$} if $X$ lies in the essential image of the functor $\iota_{\LS!}:\shv(\LS)\rightarrow\dstack$. We will write $\dstack_{\LS}\subset\dstack$ for the full subcategory spanned by derived $\cinfty$-stacks that locally have the property $\LS$.
\end{enumerate}
Let $f:X\rightarrow Y$ be a morphism of derived $\cinfty$-stacks, then we say that $f$ is \emph{locally of finite presentation} if for any $Z\in \dstack_{\fp}$, the stack $X\times_YZ$ lies in $\dstack_{\fp}$.
\end{defn}
\begin{defn}\label{defn:representablemorphism}
Let $\LS$ be a property of affine derived $\cinfty$-schemes stable under open subspaces.
\begin{enumerate}[$(1)$]
    \item A map $X\rightarrow Y$ of derived $\cinfty$-stacks locally having the property $\LS$ is \emph{representable} if for any affine derived $\cinfty$-scheme $\spec\,A$ in $\LS$ and any map $\spec\,A\rightarrow Y$, the object $\spec\,A\times_YX$ (where the pullback is taken in $\dstack_{\LS}$ is representable.
    \item A map $X\rightarrow Y$ in $\dstack_{\LS}$ is \emph{locally representable} if it has the property defined in Proposition \ref{prop:sheafificationproperty}; that is, there exists an effective epimorphism $\coprod_i U_i\rightarrow Y$ such that $U_i\times_XY\rightarrow U_i$ is representable for all $i$.
    \item A map $X\rightarrow Y$ in $\dstack_{\LS}$ is \emph{representable by derived $\cinfty$-schemes that locally have the property $\LS$} if for any affine derived $\cinfty$-scheme $\spec\,A$ in $\LS$ and any map $\spec\,A\rightarrow Y$, the object $\spec\,A\times_YX$ is representable by a derived $\cinfty$-scheme that locally has the property $\LS$. 
\end{enumerate}  
\end{defn}
\begin{warn}\label{warn:representable}
For many properties $\LS\subset\daff$ stable under open subspaces, the inclusion $\dstack_{\LS}\subset\dstack$ is not an algebraic morphism of \inftopoi since finite limits may not be preserved. It is also generally not the case that a morphism $f:X\rightarrow Y$ in $\dstack_{\LS}$ that is representable as a morphism of $\dstack_{\LS}$ is representable as a morphism of $\dstack$ without imposing additional conditions on $f$ (like those discussed in Section 2.2 below).  
\end{warn}
\begin{rmk}
We may replace $(2)$ of Definition \ref{defn:representablemorphism} above by the following condition.
\begin{enumerate}
    \item[$(2')$] A map $X\rightarrow Y$ is locally representable if there exists an effective epimorphism $\coprod_i U_i\rightarrow Y$ where each $U_i$ is representable such that $U_i\times_XY\rightarrow U_i$ is representable for all $i$.
\end{enumerate}
This follows right away from the fact that every stack $Y$ is a canonical colimit $\colim_{\spec\,A \in\LS_{/Y}}\spec\,A\simeq Y$ and \cite[Lemma 6.2.3.13]{HTT}.
\end{rmk}
What follows is a useful criterion for being representable by a derived $\cinfty$-scheme. First, we will say that a map $f:X\rightarrow Y$ of derived $\cinfty$-stacks locally having the property $\LS$ is an \emph{open (substack) inclusion} if $f$ is representable and for any $\spec\,A\in \LS$, the map $\spec\,A\times_YX \rightarrow\spec\,A$ is (representable by) an open embedding of affine derived $\cinfty$-schemes that have the property $\LS$. Clearly, the property of being an open substack inclusion is a property of morphisms of $\dstack_{\LS}$ stable under pullbacks .
\begin{rmk}\label{rmk:opensubstackmono}
Note that an open inclusion $X\rightarrow Y$ in $\dstack_{\LS}$ is a $(-1)$-truncated morphism. To see this, note that being $n$-truncated for $n\geq -2$ is a local property of morphisms in any \inftopt, so by definition of an open substack, it suffices to show that open embeddings of objects in $\LS$ determine $(-1)$-truncated morphisms in $\dstack_{\LS}$, that is, for every $Z\in\dstack_{\LS}$ and every open embedding $\spec\,A\rightarrow\spec\,B$, the fibres of the map $\Hom_{\dstack_{\LS}}(Z,\spec\,A)\rightarrow\Hom_{\dstack_{\LS}}(Z,\spec\,B)$ are either empty or contractible. Since this last property is stable under limits of morphisms of spaces, we may suppose that $Z$ is also affine, in which case the assertion follows immediately from the definition of an open embedding.
\end{rmk}
\begin{rmk}
Note that in order for an open substack inclusion $U\rightarrow X$ to be an equivalence, it suffices to show that for each $\R$-point $*=\spec\,\R\rightarrow X$, the pullback $U\times_X*$ is not empty. If $X$ is representable, then this condition implies that the underlying map $q_{\sring}(U)\rightarrow q_{\sring}(X)$ is a homeomorphism so $U\rightarrow X$ must be an equivalence as this map is $q_{\sring}$-Cartesian. The case of a general open substack inclusion follows immediately.
\end{rmk}
\begin{rmk}
Let $f:X\rightarrow Y$ be a map of derived $\cinfty$-stacks locally having the property $\LS$. The phrase `$f$ is an open substack inclusion' is potentially ambiguous, since it could also mean that $f$ is an open substack inclusion as map of derived $\cinfty$-stacks having the property $\LS'$ for any $\LS\subset \LS'$ (see also Warning \ref{warn:representable}. In view of Proposition \ref{prop:criterion} below however, there is as it turns out no ambiguity possible.
\end{rmk}
\begin{prop}\label{prop:representablebyscheme}
Let $X$ be a derived $\cinfty$-stack locally having the property $\LS$, then the following are equivalent.
\begin{enumerate}[$(1)$]
    \item The stack $X$ is representable by a derived $\cinfty$-scheme that locally has the property $\LS$. 
    \item There exists a small collection of open inclusions $\{U_i\rightarrow X\}_i$ with each $U_i$ representable by a derived $\cinfty$-scheme that locally has the property $\LS$ which determines an effective epimorphism $\coprod_i U_i\rightarrow X$.
    \item There exists a small collection of open inclusions $\{U_i\rightarrow X\}_i$ with each $U_i$ representable by an affine derived $\cinfty$-scheme that has the property $\LS$ which determines an effective epimorphism $\coprod_i U_i\rightarrow X$. 
\end{enumerate}
\end{prop}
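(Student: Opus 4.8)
The plan is to prove the cycle of implications $(1)\Rightarrow (3)\Rightarrow (2)\Rightarrow (1)$. The implication $(3)\Rightarrow (2)$ is immediate, since an affine derived $\cinfty$-scheme having the property $\LS$ is in particular a derived $\cinfty$-scheme that locally has the property $\LS$, so the work lies in $(1)\Rightarrow (3)$ and $(2)\Rightarrow (1)$. Both will rest on three features of the functor $j_{\sch_{\LS}}:\dsch_{\LS}\rightarrow\shv(\LS)$: it is fully faithful (Proposition \ref{prop:resyonedaschemes}$(4)$), hence conservative; being the composite of the Yoneda embedding with the limit-preserving restriction $\gamma^*_{\LS}$, it preserves all small limits that exist in $\dsch_{\LS}$; and it preserves the geometric realizations of the groupoid objects appearing in Proposition \ref{prop:resyonedaschemes}$(2)$. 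I will also use that open inclusions of derived $\cinfty$-stacks are $(-1)$-truncated (Remark \ref{rmk:opensubstackmono}).

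For $(1)\Rightarrow (3)$ I would write $X\simeq j_{\sch_{\LS}}(X,\Of_X)$ with $(X,\Of_X)$ a derived $\cinfty$-scheme that locally has the property $\LS$, pick a covering family $\{(U_i,\Of_X|_{U_i})\hookrightarrow (X,\Of_X)\}_i$ by affine derived $\cinfty$-schemes having the property $\LS$, and check two things. First, each $j_{\sch_{\LS}}(U_i)\rightarrow X$ is an open inclusion: for $\spec\,A\in\LS$ and a map $\spec\,A\rightarrow (X,\Of_X)$ in $\dsch_{\LS}$, the pullback $\spec\,A\times_X U_i$ exists in $\dsch_{\LS}$ by Proposition \ref{prop:schemescolimstable}$(3)$ (the $U_i\rightarrow X$ being étale), is an open subscheme of $\spec\,A$ hence an affine having the property $\LS$, and is preserved by $j_{\sch_{\LS}}$. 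Second, $\coprod_i j_{\sch_{\LS}}(U_i)\rightarrow X$ is an effective epimorphism: the Čech nerve of $\coprod_i(U_i,\Of_X|_{U_i})\rightarrow (X,\Of_X)$ in $\dsch_{\LS}$ satisfies the hypotheses of Proposition \ref{prop:schemescolimstable}$(6)$ — injectivity on underlying spaces since the $U_i\rightarrow X$ are open embeddings, surjectivity since they cover $X$ — so it is a colimit diagram, which $j_{\sch_{\LS}}$ preserves by Proposition \ref{prop:resyonedaschemes}$(2)$.

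For $(2)\Rightarrow (1)$ I would take the given open inclusions $\{U_i\rightarrow X\}_i$ (with each $U_i$ representable by a derived $\cinfty$-scheme locally having the property $\LS$) and effective epimorphism $f:\coprod_i U_i\rightarrow X$, form the Čech nerve $U_\bullet$ of $f$ in the $\infty$-topos $\shv(\LS)$, and note $\colim_{\simpop}U_\bullet\simeq X$. Since the $U_i\rightarrow X$ are $(-1)$-truncated, every iterated fibre product $U_{i_0}\times_X\cdots\times_X U_{i_n}$ is a monomorphism into $U_{i_0}$ which restricts over any affine to an open embedding, hence is representable by an open subscheme of the derived $\cinfty$-scheme representing $U_{i_0}$ — in particular a derived $\cinfty$-scheme locally having the property $\LS$ by Proposition \ref{prop:schemescolimstable}$(2)$ — and all the structure maps of $U_\bullet$ are base changes of the $U_i\rightarrow X$ (or diagonals thereof), hence representable by étale morphisms of schemes. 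By full faithfulness, $U_\bullet$ lifts uniquely to a simplicial object $\tilde U_\bullet$ of $\dsch_{\LS}^{\et}$; since $j_{\sch_{\LS}}$ is conservative and preserves the pullbacks along étale maps that enter the Segal and groupoid conditions, $\tilde U_\bullet$ is a groupoid object. The injectivity of $q_{\sring}(\tilde U_1)\rightarrow q_{\sring}(\tilde U_0)\times q_{\sring}(\tilde U_0)$ follows because componentwise it is the map $q_{\sring}(\tilde U_i\times_X\tilde U_j)\rightarrow q_{\sring}(\tilde U_i)\times q_{\sring}(\tilde U_j)$, whose composite with the first projection exhibits $q_{\sring}(\tilde U_i\times_X\tilde U_j)$ as an open subspace of $q_{\sring}(\tilde U_i)$. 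Proposition \ref{prop:schemescolimstable}$(5)$ then gives a colimit $(X',\Of_{X'})$ of $\tilde U_\bullet$ in $\dsch_{\LS}^{\et}$, preserved by the inclusion into $\dsch_{\LS}$, and Proposition \ref{prop:resyonedaschemes}$(2)$ gives $j_{\sch_{\LS}}(X',\Of_{X'})\simeq\colim_{\simpop}j_{\sch_{\LS}}(\tilde U_\bullet)\simeq\colim_{\simpop}U_\bullet\simeq X$, which is the claim.

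The step I expect to be the main obstacle is the passage in $(2)\Rightarrow (1)$ from the Čech nerve $U_\bullet$ in $\shv(\LS)$ to a bona fide groupoid object of $\dsch_{\LS}^{\et}$: since $\dsch_{\LS}^{\et}$ carries only pullbacks along étale morphisms, one has to know in advance that every fibre product occurring in $U_\bullet$ and in its Segal and groupoid squares is of that kind, which is precisely where being $(-1)$-truncated for the maps $U_i\rightarrow X$ is used. Together with the verification of the injectivity hypothesis of Proposition \ref{prop:schemescolimstable}$(5)$, this is the only genuinely nonformal point; the remainder is an application of the gluing and descent statements already established in Propositions \ref{prop:gluinglocringsp}, \ref{prop:schemescolimstable} and \ref{prop:resyonedaschemes}.
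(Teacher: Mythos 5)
Your overall strategy is the paper's: produce a Čech nerve whose levels are representable, use full faithfulness of $j_{\sch_{\LS}}$ (Proposition \ref{prop:resyonedaschemes}$(4)$) to lift it to a groupoid object of $\dsch_{\LS}^{\et}$ with injective $q_{\sring}(U_1)\rightarrow q_{\sring}(U_0)\times q_{\sring}(U_0)$, glue via Proposition \ref{prop:schemescolimstable}$(5)$, and compare colimits via Proposition \ref{prop:resyonedaschemes}$(2)$--$(3)$. Your $(1)\Rightarrow(3)$ and your verification of the injectivity hypothesis are fine and match the paper's reasoning. The difference is that the paper runs the gluing argument as $(3)\Rightarrow(1)$, i.e.\ only after refining to a cover by \emph{affines} (that refinement is its $(2)\Rightarrow(3)$ step), whereas you run $(2)\Rightarrow(1)$ directly with a cover by scheme-representable stacks.

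That choice creates the one genuine gap in your write-up: the assertion that each $U_{i_0}\times_X\cdots\times_X U_{i_n}$, being an open inclusion into $U_{i_0}$, is ``hence representable by an open subscheme of the derived $\cinfty$-scheme representing $U_{i_0}$''. Nothing proved before this proposition gives you that an open substack of a stack representable by a possibly non-affine derived $\cinfty$-scheme is itself scheme-representable; the definition of an open inclusion only controls pullbacks to \emph{affines}, and assembling those affine opens into an open subscheme and identifying it with the given substack is precisely the kind of gluing statement the proposition is meant to package (compare Corollary \ref{cor:localyrepscheme}, which is deduced \emph{from} it). The claim is true and can be proved non-circularly -- e.g.\ cover $(Y,\Of_Y)$ by affine opens $V_a$, let $U\subset Y$ be the union of the open subsets underlying $W\times_YV_a$, and use that both $W$ and $j(U,\Of_Y|_U)$ are $(-1)$-truncated over $j(Y,\Of_Y)$ and have the same pullback over every affine (checked on $\R$-points of the underlying spaces) -- but as written the ``hence'' hides exactly this argument. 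The cheaper fix is the paper's: first refine the cover in $(2)$ by affine opens of the representing schemes to reduce to $(3)$; then every level of the Čech nerve is a coproduct of affines having the property $\LS$ directly from the definition of an open inclusion, and no auxiliary representability claim is needed.
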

\begin{proof}
The implication $(1)\Rightarrow (2)$ is obvious and for $(2)\Rightarrow (3)$ it suffices to choose for each $U_i$ an open cover by affines. To prove the implication $(3)\Rightarrow(1)$, take an effective epimorphism $h:\coprod_iU_i\rightarrow X$ as in $(3)$, then $X$ is a colimit of the \v{C}ech nerve $\check{C}(h)_{\bullet}$. Since each $U_i\rightarrow X$ is an open inclusion, the \v{C}ech nerve is in each level a coproduct of affines and every map in the \v{C}ech nerve is an open inclusion. From this and $(3)$ of Proposition \ref{prop:resyonedaschemes} it follows that $\check{C}(h)_{\bullet}$ is the image of some simplicial object $C_{\bullet}$ in $\mathsf{d}\cinfty\mathsf{Sch}^{\mathsf{Open}}_{\LS}$. The map $C_1\rightarrow C_0\times C_0$ is given by 
\[ \coprod_{ij} U_{i}\times_XU_j\longrightarrow\coprod_{ij}U_i\times U_j \]
which is an injection on the underlying topological spaces since each map $U_i\times_XU_j\rightarrow U_j$ is assumed an open embedding. Invoking $(5)$ of Proposition \ref{prop:schemescolimstable}, we deduce that $C_{\bullet}$ is an effective groupoid in $\dsch_{\LS}^{\text{\'{e}t}}$. Invoking $(2)$ of Proposition \ref{prop:resyonedaschemes}, we deduce that $X$ is equivalent to the geometric realization $|C_{\bullet}|$ taken in $\dsch_{\LS}$. 
\end{proof}
The representability criterion of \cite[Theorem 1.1.72]{cinftyI} is a particular case of the result above. We will use it repeatedly during the proof of the elliptic representability theorem. The preceding result also implies that the property of morphisms of being locally representable implies representability by (non-affine) derived $\cinfty$-schemes.

\begin{cor}\label{cor:localyrepscheme}
Let $\LS$ be a property of affine derived $\cinfty$-schemes stable under open subspaces. Let $X\rightarrow Y$ be a map of derived $\cinfty$-stacks locally having the property $\LS$. Suppose that the map $X\rightarrow Y$ is locally representable, then for each map $\spec\,A\rightarrow Y$ from a representable, the pullback $\spec\,A\times_YX$ is (representable by) a (possibly non-affine) derived $\cinfty$-scheme that locally has the property $\LS$. 
\end{cor}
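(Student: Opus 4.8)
The plan is to verify condition $(2)$ of Proposition \ref{prop:representablebyscheme} for the pullback $P:=\spec\,A\times_YX$ (formed, like all fibre products below, in the \inftop $\dstack_{\LS}$): it suffices to exhibit a small collection of open substack inclusions into $P$, each representable by a derived $\cinfty$-scheme locally having the property $\LS$, whose coproduct maps to $P$ by an effective epimorphism. The key point is that these should be produced by base-changing along the projection $P\to\spec\,A$ a \emph{genuine open cover} of the affine $\spec\,A$; pushing a cover of $Y$ forward to $X$ would only yield an effective-epimorphism cover, not a cover by open inclusions, and would lead one into fibre products of affines over $Y$, which need not be representable.

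First I would observe that $P\to\spec\,A$ is the base change of $X\to Y$ along $\spec\,A\to Y$, and is therefore again locally representable: the property of being representable in the sense of Definition \ref{defn:representablemorphism}$(1)$ is stable under base change, and the sheafification of a base-change-stable property of morphisms is again base-change-stable (Proposition \ref{prop:sheafificationproperty}), so local representability is inherited by pullbacks. Next, I would apply the characterization $(2')$ of local representability from the Remark following Definition \ref{defn:representablemorphism}, and then refine the cover it provides by covering each (representable) piece with open affines having the property $\LS$; since being representable is stable under base change and $\LS$ is stable under open subspaces, this produces an effective epimorphism $\coprod_i\spec\,B_i\to\spec\,A$ with each $\spec\,B_i$ an affine derived $\cinfty$-scheme having $\LS$ and each projection $\spec\,B_i\times_{\spec\,A}P\to\spec\,B_i$ representable.

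Now, because $\spec\,A$ is representable and the \'{e}tale topology on $\LS$ is subcanonical (the Corollary following Proposition \ref{prop:resyonedaschemes}), an effective epimorphism with target $\spec\,A$ whose source is a coproduct of affines is refined by an ordinary open covering family $\{\spec\,A_l\hookrightarrow\spec\,A\}_l$: each $\spec\,A_l$ is an open affine subscheme of $\spec\,A$ (hence lies in $\LS$) and factors through some $\spec\,B_{i(l)}$. For each such $l$ there is a canonical equivalence $\spec\,A_l\times_{\spec\,A}P\simeq\spec\,A_l\times_{\spec\,B_{i(l)}}\bigl(\spec\,B_{i(l)}\times_{\spec\,A}P\bigr)$, and the right-hand side is representable by a derived $\cinfty$-scheme locally having $\LS$ because $\spec\,B_{i(l)}\times_{\spec\,A}P\to\spec\,B_{i(l)}$ is representable and $\spec\,A_l\in\LS$. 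Moreover $\spec\,A_l\times_{\spec\,A}P\to P$ is an open substack inclusion, being the base change of the open embedding $\spec\,A_l\hookrightarrow\spec\,A$ (open embeddings are stable under pullback, by the Remark after Definition \ref{defn:etalemap}), and $\coprod_l\spec\,A_l\times_{\spec\,A}P\to P$ is an effective epimorphism, being the base change of $\coprod_l\spec\,A_l\to\spec\,A$ and using stability of effective epimorphisms under base change. Proposition \ref{prop:representablebyscheme} then applies and shows that $P$ is representable by a derived $\cinfty$-scheme locally having the property $\LS$.

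The step I expect to require the most care is the reduction, via subcanonicity, to a cover of $\spec\,A$ \emph{by open embeddings}: this is precisely what makes the hypotheses of Proposition \ref{prop:representablebyscheme} available, whereas the more naive cover of $X$ coming directly from local representability of $X\to Y$ does not consist of open inclusions. The hypothesis of local representability is itself used only at one point --- in checking that each pulled-back piece $\spec\,A_l\times_{\spec\,A}P$ is representable by a scheme locally having $\LS$, through the factorization $\spec\,A_l\to\spec\,B_{i(l)}$. Finally one must take care, in the spirit of Warning \ref{warn:representable}, that all of the fibre products above are formed in $\dstack_{\LS}$ rather than in $\dstack$.
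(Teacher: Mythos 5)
Your argument is correct and is essentially the paper's own proof: the paper likewise takes the effective epimorphism witnessing local representability, refines its pullback to $\spec\,A$ by an honest open cover $\{V_j\hookrightarrow\spec\,A\}$ factoring through the pieces $U_i\times_Y\spec\,A$, observes that each $V_j\times_YX\rightarrow V_j$ is a pullback of a representable map and hence representable, and concludes via Proposition \ref{prop:representablebyscheme} since the $V_j\times_YX\rightarrow \spec\,A\times_YX$ are open inclusions forming an effective epimorphism. Your preliminary step of first base-changing local representability to $\spec\,A\times_YX\rightarrow\spec\,A$ and invoking the remark $(2')$ is only a harmless reshuffling of the same idea (and the refinement of the effective epimorphism by an open cover rests on the local-lifting description of effective epimorphisms of sheaves rather than on subcanonicity per se, a point the paper also leaves implicit).
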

\begin{proof}
Suppose $X\rightarrow Y$ locally representable and let a morphism $\spec\,A\rightarrow Y$ be given. Choose an effective epimorphism $\coprod_i U_i\rightarrow Y$ such that $X\times_YU_i\rightarrow U_i$ is representable for all $i$. We may choose an open cover $\{V_j\hookrightarrow\spec\,A\}$ such that for each $j$, there exists an $i$ and a factorization 
\[
\begin{tikzcd}
& U_i\times_Y\spec\,A\ar[dr] \\
V_j\ar[ur]\ar[rr]  && \spec\,A.   
\end{tikzcd}
\]
It follows that the map $V_j\times_YX\rightarrow V_j$ is a pullback of the map $U_i\times_YX\rightarrow U_i$, which is a representable map. We conclude that there is an effective epimorphism $\coprod_j V_j\times_{Y}X\rightarrow \spec\,A\times_YX$ such that for each $j$, the map $V_j\times_{Y}X\rightarrow \spec\,A\times_YX$ is a pullback along the open inclusion $V_j\rightarrow\spec\,A$ and therefore an open inclusion itself. It follows from Proposition \ref{prop:representablebyscheme} that $\spec\,A\times_YX$ is representable by a derived $\cinfty$-scheme. 
\end{proof}

\begin{cor}\label{cor:localprop}
Let $\LS$ be a local property of objects of affine derived $\cinfty$-schemes. Let $f:X\rightarrow Y$ be a morphism in $\dstack_{\LS}$, then $f$ is locally representable if and only if $f$ is representable (in other words, if $\LS$ is a local property of objects, then the property of being representable is a local property of morphisms of $\dstack_{\LS}$).
\end{cor}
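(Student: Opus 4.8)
The plan is to prove the two implications separately; the converse direction (``representable'' $\Rightarrow$ ``locally representable'') is purely formal, while the forward direction is where the hypothesis that $\LS$ is local does its work. Note first that a local property of objects is in particular stable under open subspaces, so that Corollary \ref{cor:localyrepscheme} is available.

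For the formal direction I would observe that representability in the sense of Definition \ref{defn:representablemorphism}$(1)$ is stable under pullback: given a representable $f:X\to Y$ and an arbitrary map $Y'\to Y$ in $\dstack_{\LS}$, for any $\spec A\to Y'$ one has a canonical equivalence $\spec A\times_{Y'}(X\times_YY')\simeq\spec A\times_YX$, and the right-hand side being representable forces $X\times_YY'\to Y'$ to be representable as well. Consequently, pulling $f$ back along the canonical effective epimorphism $\coprod_{\spec A\in\LS_{/Y}}\spec A\to Y$ (using that $Y\simeq\colim_{\spec A\in\LS_{/Y}}\spec A$ together with \cite[Lemma 6.2.3.13]{HTT}) exhibits $f$ as satisfying Definition \ref{defn:representablemorphism}$(2)$; alternatively, this is the general observation from the appendix that a property of morphisms stable under pullback is implied by its sheafification. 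Either way, $f$ is locally representable.

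For the forward direction, suppose $f:X\to Y$ is locally representable and fix an arbitrary map $\spec A\to Y$ from an affine derived $\cinfty$-scheme having the property $\LS$. By Corollary \ref{cor:localyrepscheme}, the pullback $\spec A\times_YX$, formed in $\dstack_{\LS}$, is representable by a (possibly non-affine) derived $\cinfty$-scheme $(Z,\Of_Z)$ that locally has the property $\LS$; by definition this means $(Z,\Of_Z)$ admits an open cover by affine derived $\cinfty$-schemes having the property $\LS$. Here I would invoke the hypothesis that $\LS$ is local: it guarantees that any derived $\cinfty$-ringed space admitting such an open cover already lies in $\LS$, so $(Z,\Of_Z)$ is in fact an affine derived $\cinfty$-scheme having the property $\LS$. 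Hence $\spec A\times_YX$ is representable in the sense of Definition \ref{defn:representablemorphism}$(1)$, and since $\spec A\to Y$ was arbitrary, $f$ is representable. The parenthetical reformulation — that being representable is then a \emph{local} property of morphisms of $\dstack_{\LS}$ — follows immediately, since ``locally representable'' is by construction the localization of ``representable''.

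The only point requiring care, and hence the main obstacle, is the last step of the forward direction: Corollary \ref{cor:localyrepscheme} by itself yields merely a derived $\cinfty$-scheme that is \emph{covered by} affines in $\LS$, which need not itself be affine in the $\cinfty$-setting (derived $\cinfty$-schemes being strictly more general than affine ones — e.g. large disjoint unions of points), and it is precisely the locality of $\LS$ that collapses this distinction. No analytic or additional higher-categorical input beyond Corollary \ref{cor:localyrepscheme} is needed.
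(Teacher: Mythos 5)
Your backward direction is fine, and you have correctly isolated the crux of the forward direction; but the way you resolve that crux --- invoking locality of $\LS$ to conclude that \emph{any} derived $\cinfty$-ringed space admitting an open cover by objects of $\LS$ already lies in $\LS$ --- leans on the strongest literal reading of ``local'', and that reading cannot be the operative one here. Under it, neither $\daff$ itself (which the paper's proof treats as the first case of this very corollary) nor $\mathsf{Mfd}$ (declared local in Remark \ref{rmk:localprop} and the case the corollary is later applied to via Proposition \ref{prop:criterion}) would qualify: being an affine derived $\cinfty$-scheme, like being a manifold, involves global point-set conditions --- second countability, Hausdorffness, $\cinfty$-regularity, $\kappa$-compactness of $\Gamma(\Of_X)$ --- which are not detected by an open cover (an uncountable disjoint union of points, or a line with two origins, is covered by affines but is not affine). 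So under the reading of locality for which the corollary has any content, your key step ``covered by affines in $\LS$ $\Rightarrow$ lies in $\LS$'' is exactly the unproved assertion, and the corollary would otherwise be vacuous.

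What the paper actually proves, and what your proposal omits, is that in the situation produced by Corollary \ref{cor:localyrepscheme} the pullback is affine for reasons specific to the relative situation, namely that the base is affine and the covering opens of the total space are pulled back from an open cover of the base: since $\spec\,A$ is Lindel\"{o}f the cover may be taken countable, and an open surjection from a countable disjoint union of second countable spaces makes $\spec\,A\times_YX$ second countable; it is Hausdorff because separatedness of maps of topological spaces is local on the target, each piece $\spec\,A\times_YU_i$ is Hausdorff, and the base is Hausdorff; $\cinfty$-regularity is inherited similarly and $\Gamma$ of the structure sheaf is $\kappa$-compact because $\kappa\gg\omega_1$; Theorem \ref{thm:spectrumglobalsections} then yields affineness, and only at that point does locality of $\LS$ (applied to an object already known to be affine) give membership in $\LS$, hence representability in the sense of Definition \ref{defn:representablemorphism}. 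Your closing claim that no input beyond Corollary \ref{cor:localyrepscheme} is needed is therefore where the genuine gap sits: the point-set argument above (or some substitute for it) is the actual content of the forward direction.
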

\begin{proof}
We first show this for $\LS=\daff$. It follows from Corollary \ref{cor:localyrepscheme} that it suffices to argue the following: if $(X,\Of_X)\rightarrow (Y,\Of_Y)$ is a map of derived $\cinfty$-schemes with $(Y,\Of_Y)$ affine and there is an open cover $\{U_i\subset Y\}_{i\in I}$ such that $(X\times_YU_i,\Of_{X}|_{X\times_YU_i})$ is affine, then $(X,\Of_X)$ is affine. Since $Y$ is Lindel\"{o}f, we may choose the set $I$ countable; since a countable disjoint union of second countable spaces is second countable, it follows that $X$ admits an open surjection $\coprod_i X\times_YU_i\rightarrow X$ from a second countable space and is therefore second countable. The $\cinfty$-regularity similarly follows easily from the $\cinfty$-regularity of $(Y,\Of_Y)$ and $\{(X\times_YU_i,\Of_{X}|_{X\times_YU_i})\}_i$. To see that $X$ is Hausdorff, it suffices to show that the map $X\rightarrow Y$ is separated since $Y$ is Hausdorff, but being separated is a local property of morphisms of topologocal spaces, so we conclude since $X\times_YU_i$ is Hausdorff. It remains to observe that $\Gamma(\Of_X)$ is $\kappa$-compact since we have chosen a regular cardinal $\kappa>>\omega_1$. It follows from Theorem \ref{thm:spectrumglobalsections} that $(X,\Of_X)$ is an affine derived $\cinfty$-scheme. That $(X,\Of_X)$ has the property $\LS$ if $\LS$ is a local property and $(X,\Of_X)$ and $(Y,\Of_Y)$ have the property $\LS$ is immediate.
\end{proof}
\begin{rmk}\label{rmk:localprop}
The property of being a manifold is stable under open subspaces and local. The property of being an affine derived $\cinfty$-scheme of finite presentation is stable under open subspaces, but not local.
\end{rmk}
We now turn our attention to local properties of morphisms of derived $\cinfty$-schemes.
\begin{defn}\label{defn:localonthetarget}
Let $\LS$ be a property of affine derived $\cinfty$-schemes stable under open subspaces and let $P$ be a property of morphisms of $\LS$ (which we always assumes spans a full subcategory of the arrow category).
\begin{enumerate}[$(1)$]
\item We say that $P$ is \emph{stable under pullbacks} if pullbacks along any map $(X,\Of_X)\rightarrow (Y,\Of_Y)$ having the property $P$ exist in $\LS$ and for any pullback diagram
\[
\begin{tikzcd}
(Q,\Of_Q)\ar[d,"g"]\ar[r] & (X,\Of_X)\ar[d,"f"] \\
(Z,\Of_Z)\ar[r] & (Y,\Of_Y)
\end{tikzcd}
\]
in $\LS$, if $f$ has the property $P$, then so does $g$.
\item  We say that the property $P$ of morphisms of $\LS$ is \emph{local on the target} if it is stable under pullbacks and for any map $(X,\Of_X)\rightarrow(Y,\Of_Y)$ of derived $\cinfty$-schemes, should there exist an open cover $\{U_i\subset Y\}_i$ such that for all $i$, the map $(X\times_YU_i,\Of_X|_{X\times_YU_i})\rightarrow (U_i,\Of_Y|_{U_i})$ has the property $P$, then $(X,\Of_X)\rightarrow(Y,\Of_Y)$ has the property $P$.
\end{enumerate}
We will write $\fun(\Delta^1,\LS)^P\subset \fun(\Delta^1,\LS)$ for the full subcategory spanned by morphisms that have the property $P$. By stability under pullbacks, the evaluation functor $\ev_1:\fun(\Delta^1,\LS)^P\rightarrow\LS$ is a Cartesian fibration. We will let $\fun(\Delta^1,\LS)^{P\simeq}\subset \fun(\Delta^1,\LS)^{P}$ denote the maximal right subfibration, the subcategory on the Cartesian morphisms. 
\end{defn}
Of course, there are very many such properties. We have already seen a few: being an equivalence, an open inclusion and an \'{e}tale map are local on the target for the \'{e}tale topology. We now introduce a hierarchy of `smoothness' conditions on morphisms of derived $\cinfty$-schemes which plays a central role in any derived geometry. First, consider $*\overset{0}{\hookrightarrow}\R^n$ as a pointed affine derived $\cinfty$-scheme of finite presentation. Let $\Omega^n\R^k$ be the $n$-fold looping of $\R^k$ with respect to the point $0$, so that $\Omega^n\R^k$ fits into a pullback diagram 
\[
\begin{tikzcd}
\Omega^n\R^k\ar[d]\ar[r] & *\ar[d,"0"] \\
*\ar[r,"0"] & \Omega^{n-1}\R^k
\end{tikzcd}
\]
of affine derived $\cinfty$-schemes.
\begin{defn}
Let $f:(X,\Of_X)\rightarrow (Y,\Of_Y)$ be a morphism of derived $\cinfty$-schemes. 
\begin{enumerate}[$(1)$]
\item The morphism $f$ is \emph{submersive} if there exist a small collection of commuting diagrams 
\[
\begin{tikzcd}
(U_i,\Of_{U_i})\times (V_i,\Of_{V_i}) \ar[d]\ar[r] & (X,\Of_X) \ar[d] \\
(U_i,\Of_{U_i}) \ar[r] & (Y,\Of_Y)
\end{tikzcd}
\]
in $\dsch$ determining a covering family $\{(U_i,\Of_{U_i})\times (V_i,\Of_{V_i})\rightarrow (X,\Of_X)\}_i$ in which the horizontal maps are open embeddings, the left vertical map is the projection onto the first factor and $(V_i,\Of_{V_i})$ is equivalent as a derived $\cinfty$-scheme to an open submanifold of some Cartesian space. We say that a derived $\cinfty$-scheme $(X,\Of_X)$ is \emph{smooth} if the canonical map $(X,\Of_X)\rightarrow *$ is submersive (the smooth derived $\cinfty$-schemes are precisely smooth manifolds which are not necessarily paracompact Hausdorff). 
\item The morphism $f$ is a \emph{locally trivial bundle} if there exist a small collection of pullback diagrams 
\[
\begin{tikzcd}
(U_i,\Of_{U_i})\times (V_i,\Of_{V_i}) \ar[d]\ar[r] & (X,\Of_X) \ar[d] \\
(U_i,\Of_{U_i}) \ar[r] & (Y,\Of_Y)
\end{tikzcd}
\]
in $\dsch$ determining a covering family $\{(U_i,\Of_{U_i})\times (V_i,\Of_{V_i})\rightarrow (X,\Of_X)\}_i$ in which the horizontal maps are open embeddings and the left vertical map is the projection onto the first factor.
\item The morphism $f$ is \emph{quasi-smooth} if there exists a small collection of commuting diagrams
\[
\begin{tikzcd}
(U_i,\Of_{U_i})\ar[d]\ar[r] & (X,\Of_X) \ar[d] \\
(V_i,\Of_{V_i}) \ar[r] & (Y,\Of_Y)
\end{tikzcd}
\]
in $\dsch$ determining a covering family $\{(U_i,\Of_{U_i})\rightarrow (X,\Of_X)\}_i$ in which the top horizontal map is an open embedding, the lower horizontal map is submersive and the left vertical fits into a pullback diagram 
\[
\begin{tikzcd}
(U_i,\Of_{U_i}) \ar[d] \ar[r]& *\ar[d,"0"] \\
(V_i,\Of_{V_i}) \ar[r] & \R^k
\end{tikzcd}
\]
for some $k\geq 0$. We say that a derived $\cinfty$-scheme $(X,\Of_X)$ is \emph{quasi-smooth} if the canonical map $(X,\Of_X)\rightarrow *$ is quasi-smooth.
\item The morphism $f$ is \emph{$n$-quasi-smooth} for an integer $n>1$ if there exists a small collection of commuting diagrams
\[
\begin{tikzcd}
(U_i,\Of_{U_i})\ar[d]\ar[r] & (X,\Of_X) \ar[d] \\
(V_i,\Of_{V_i}) \ar[r] & (Y,\Of_Y)
\end{tikzcd}
\]
in $\dsch$ determining a covering family $\{(U_i,\Of_{U_i})\rightarrow (X,\Of_X)\}_i$ in which the top horizontal map is an open embedding, the lower horizontal map is $(n-1)$-quasi-smooth (quasi-smooth if $n=2$) and the left vertical fits into a pullback diagram 
\[
\begin{tikzcd}
(U_i,\Of_{U_i}) \ar[d] \ar[r]& *\ar[d,"0"] \\
(V_i,\Of_{V_i}) \ar[r] & \Omega^{n-1}\R^k
\end{tikzcd}
\]
We say that a derived $\cinfty$-scheme $(X,\Of_X)$ is \emph{$n$-quasi-smooth} if the canonical map $(X,\Of_X)\rightarrow *$ is $n$-quasi-smooth.
\end{enumerate}
\end{defn}
\begin{rmk}
Submersive, quasi-smooth and $n$-quasi-smooth morphisms are locally of finite presentation.
\end{rmk}
\begin{rmk}
The properties of being quasi-smooth and $n$-quasi-smooth for $n> 1$ are properties of affine derived $\cinfty$-schemes that are stable under open subspaces and local.
\end{rmk}
For the purposes of this work, we need to introduce one more property of morphisms: we say that a map $(X,\Of_X)\rightarrow (Y,\Of_Y)$ of derived $\cinfty$-schemes is \emph{proper} if $f:X\rightarrow Y$ is a proper map of topological spaces; that is, if $f^{-1}(K)$ is compact for any compact $K\subset Y$.
\begin{prop}\label{prop:localtarget}
The following properties of morphisms of affine derived $\cinfty$-schemes are stable under pullbacks and local on the target.
\begin{enumerate}[$(1)$]
    \item The property of being an equivalence.
    \item The property of being an open inclusion.
    \item The property of being \'{e}tale.
    \item The property of being submersive.
    \item The property of being proper. 
    \item The property of being locally of finite presentation.
    \item The property of being quasi-smooth.
    \item The property of being $n$-quasi-smooth for any $n>1$.
\end{enumerate}
\end{prop}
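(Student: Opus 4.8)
The plan is to verify the two requirements of Definition \ref{defn:localonthetarget} -- stability under pullbacks and locality on the target -- separately for each of the eight properties, in three groups: $(1)$, $(2)$, $(3)$ (which concern only local homeomorphisms and equivalences of structure sheaves), $(5)$ and $(6)$ (point-set topology, resp.\ finite presentation of rings), and $(4)$, $(7)$, $(8)$ (the ``smoothness'' hierarchy, which is handled uniformly and inductively).

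First I would dispose of stability under pullbacks. For $(1)$ this is automatic in any \infcatt with finite limits; for $(2)$ and $(3)$ it is exactly the Remark after Definition \ref{defn:etalemap}, and these pullbacks remain in $\daff$ because second countability and $\cinfty$-regularity are preserved under limits (as recorded after Definition \ref{defn:affinederived}). For $(6)$ one invokes that finite presentation of derived $\cinfty$-rings is stable under cobase change, so that locally a pullback of a ``$\spec$ of a finitely presented map of rings'' is again of that form. For $(5)$ one uses the point-set fact that the base change of a proper map of (sufficiently tame) topological spaces along a continuous map is proper. For $(4)$, $(7)$, $(8)$ the argument is uniform: each property asserts the existence of a covering family of the source equipped with a local model over $Y$ -- a product with an open submanifold of a Cartesian space in case $(4)$, and an open embedding into $X$ together with a submersive, resp.\ $(n-1)$-quasi-smooth, map $V\to Y$ exhibiting the source as a pullback of a zero section $*\to\Omega^{n-1}\R^k$ in cases $(7)$, $(8)$. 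Given $g:Z\to Y$ one base-changes the entire package along $g$: this works because open embeddings and covering families are stable under pullback, because the product local model of a submersive map is preserved under base change (the model factors through its own projection, so pulling back over $Y$ only affects the base factor), because a base change of a pullback square is again a pullback square, and because -- by induction on the quasi-smoothness level, starting from $(4)$ -- submersive and $(n-1)$-quasi-smooth maps are already known to be stable under pullback.

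For locality on the target, let $f:X\to Y$ be a morphism in $\daff$ and $\{U_\alpha\subset Y\}$ an open cover with each $X\times_Y U_\alpha\to U_\alpha$ having the property in question. For $(1)$, $(2)$, $(3)$ I would argue directly: being a local homeomorphism and being an equivalence of (Postnikov complete) structure sheaves can each be tested on an open cover of the source, and $f$ is injective on underlying spaces since any two points with a common image lie over a single $U_\alpha$; hence $f$ is étale (resp.\ an open embedding, resp.\ an equivalence) as soon as all of its restrictions are. For $(5)$ this is the locality on the target of properness of topological maps, again using local compactness of the base to reduce a compact $K\subset Y$ to finitely many compacta contained in members of the cover. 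For $(6)$ it is the statement that finite presentation of a relative derived $\cinfty$-ring can be tested étale-locally on the base, for which I would cite \cite{cinftyI}. The crux is again $(4)$, $(7)$, $(8)$: if $\{W_{\alpha j}\to X\times_Y U_\alpha\}_j$ witnesses the property over $U_\alpha$, then -- since $X\times_Y U_\alpha=f^{-1}(U_\alpha)$ is open in $X$ -- each $W_{\alpha j}$ is an open subscheme of $X$, these assemble into a covering family $\{W_{\alpha j}\to X\}_{\alpha,j}$, and one obtains the required local model of $W_{\alpha j}\to Y$ by composing the given local model over $U_\alpha$ with the open embedding $U_\alpha\hookrightarrow Y$; this is legitimate because an open subscheme of an open subscheme is an open subscheme, a product with an open submanifold of a Cartesian space composed with an open embedding is again such a product, and a composite of a submersive (resp.\ $(n-1)$-quasi-smooth) map with an open embedding is submersive (resp.\ $(n-1)$-quasi-smooth), open embeddings being in particular submersive.

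The main obstacle I anticipate is case $(5)$. Properness in the sense ``preimages of compacta are compact'' is well behaved under base change and under open restriction of the target only when the spaces involved are sufficiently tame -- e.g.\ locally compact Hausdorff -- so the first real step there is to verify that the underlying topological spaces occurring here have this property; this is where one invokes that affine derived $\cinfty$-schemes are $\cinfty$-regular (hence metrizable) and, in the situations relevant to the representability theorem, arise as closed subspaces of manifolds and are therefore locally compact Hausdorff. A secondary but pervasive technical point, which must be in place before the inductive arguments in $(4)$, $(7)$, $(8)$ go through, is the closure of submersive morphisms under composition together with the fact that open embeddings are both étale and submersive; these follow from the definitions by juxtaposing local product models, using that a product of two open submanifolds of Cartesian spaces is again one.
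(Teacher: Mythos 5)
The paper offers no argument to compare against here: it states the proposition and explicitly leaves the proof to the reader, so your proposal is exactly the kind of routine verification that was intended. Your treatment of $(1)$--$(4)$ and $(6)$--$(8)$ is correct in outline: pullback-stability of the submersive/quasi-smooth/$n$-quasi-smooth local models works just as you say (the base change only touches the $U_i$-factor, zero-section pullback squares base-change to zero-section pullback squares, and one inducts starting from $(4)$), and locality on the target follows by composing the local models over $f^{-1}(U_\alpha)$ with the open embedding $U_\alpha\hookrightarrow Y$, using that open embeddings are submersive and that submersive (resp.\ $(n-1)$-quasi-smooth) maps composed with open embeddings remain so. For $(6)$ your account is vaguer than it needs to be: with the paper's definition of ``locally of finite presentation'' (pullbacks against objects of $\dstack_{\fp}$ land in $\dstack_{\fp}$), pullback-stability is formal, and locality on the target follows because $\daff_{\fp}$ is stable under open subspaces and $\dstack_{\fp}$ is closed under open covers (stacky \'{e}tale maps and colimits); no appeal to relative finite presentation of rings is required.

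The one place where your justification, as written, would fail is $(5)$. You propose to get both halves from point-set facts about ``sufficiently tame'' spaces and then secure tameness by claiming the underlying spaces are closed subspaces of manifolds, hence locally compact. That claim is not available: a general affine derived $\cinfty$-scheme only has a second countable, Hausdorff, $\cinfty$-regular (hence separable metrizable) underlying space, which need not be locally compact, and the proposition is asserted for all of $\daff$, not just the finitely presented objects relevant to the representability theorem. Fortunately local compactness is also unnecessary. For pullback-stability, Hausdorffness of the target (built into the definition of affine derived $\cinfty$-schemes via Theorem \ref{thm:spectrumglobalsections}) suffices: if $K\subset Z$ is compact, then $f_Z^{-1}(K)$ is a closed subset of $K\times f^{-1}(g(K))$, which is compact. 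For locality on the target, do the shrinking inside $K$ rather than in $Y$: cover the compact Hausdorff (hence normal) set $K$ by finitely many of the $U_{i}\cap K$ and choose closed subsets $K_j\subset U_{i_j}\cap K$ covering $K$; then $f^{-1}(K)=\bigcup_j f^{-1}(K_j)$ is a finite union of compacta by the hypothesis over each $U_{i_j}$. With this repair the proposal is complete.
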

We leave the straightforward proof to the reader. Any property of morphisms of affine derived $\cinfty$-schemes that is stable under pullbacks and local on the target determines a local property of morphisms of derived stacks.
\begin{prop}\label{prop:extendprop}
Let $\LS\subset\daff$ be a local property of affine derived $\cinfty$-schemes and let $P$ be a property of morphisms of $\LS$ that is stable under pullbacks and local on the target. Let $P$ be the following property of morphisms of $\dstack_{\LS}$.
\begin{enumerate}
    \item[$(P)$] A map $X\rightarrow Y$ of derived $\cinfty$-stacks has the property $P$ just in case $X\rightarrow Y$ is representable and for each map $\spec\,A\rightarrow Y$, the map $\spec\,A\times_YX\rightarrow\spec\,A$ has the property $P$.
\end{enumerate}
Then $P$ is a local property of morphisms in the \inftop $\dstack_{\LS}$. Moreover, the right fibration $\fun(\Delta^1,\LS)^{P\simeq}\rightarrow\LS$ corresponds under unstraightening to a sheaf $\LS^{op}\rightarrow\spa$, and this sheaf is a classifying object for the property $P$ of morphisms of $\dstack_{\LS}$ defined above.
\end{prop}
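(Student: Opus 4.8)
The plan is to show that the property $(P)$ of morphisms of $\dstack_{\LS}$ fits into the framework for local properties of morphisms of an \inftop recalled in the appendix (a mild elaboration of \cite[Section 6.1.3]{HTT}): one must check that $(P)$ is stable under pullback and satisfies descent along effective epimorphisms, after which the classifying sheaf is produced and identified essentially formally. Stability under pullback is immediate: if $X\to Y$ has property $(P)$ and $Z\to Y$ is any morphism, then for every $\spec A\to Z$ the fibre product $\spec A\times_Z(Z\times_Y X)\simeq\spec A\times_Y X$ is an affine derived $\cinfty$-scheme in $\LS$ (because $X\to Y$ is representable) and $\spec A\times_Y X\to\spec A$ has property $P$ since $P$ is stable under pullback among morphisms of $\LS$; hence $Z\times_Y X\to Z$ has property $(P)$. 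The converse direction of descent (a morphism with property $(P)$ has it locally) is trivial, so the real content is the forward direction of descent.

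So suppose $f\colon X\to Y$ is a morphism of $\dstack_{\LS}$ and $\coprod_i U_i\to Y$ an effective epimorphism with each $U_i\times_Y X\to U_i$ having property $(P)$; I claim $f$ has property $(P)$. First, representability of $f$: since each $U_i\times_Y X\to U_i$ is representable, $f$ is locally representable in the sense of Definition \ref{defn:representablemorphism}, hence representable by Corollary \ref{cor:localprop}, using that $\LS$ is a \emph{local} property of affine schemes. It remains to see that for every $\spec A\to Y$ the morphism $\spec A\times_Y X\to\spec A$ (now a morphism of $\LS$) has property $P$. Because $\coprod_i U_i\to Y$ is an effective epimorphism of sheaves on the étale site $\LS$, the composite $\spec A\to Y$ admits an open cover $\{\spec B_\alpha\hookrightarrow\spec A\}_\alpha$ such that each $\spec B_\alpha\to Y$ factors through some $U_{i(\alpha)}$. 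Then
\[ \spec B_\alpha\times_Y X\;\simeq\;\spec B_\alpha\times_{U_{i(\alpha)}}\bigl(U_{i(\alpha)}\times_Y X\bigr) \]
is a pullback of a morphism with property $P$, hence has property $P$; and since $\spec B_\alpha\hookrightarrow\spec A$ is an open embedding, this morphism is the restriction of $\spec A\times_Y X\to\spec A$ over the open $\spec B_\alpha$. As the $\spec B_\alpha$ cover $\spec A$, Definition \ref{defn:localonthetarget} (that $P$ is local on the target) shows $\spec A\times_Y X\to\spec A$ has property $P$, so $f$ has property $(P)$. The delicate point — and the main obstacle — is that effective epimorphisms in $\dstack_{\LS}$ are not refined by open embeddings, but only \emph{étale-locally on each test affine}; reconciling this with the fact that $P$ is only assumed local for open covers is exactly what forces the two inputs above, Corollary \ref{cor:localprop} for representability and the covering property of effective epimorphisms of sheaves. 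This establishes that $(P)$ is a local property of morphisms of $\dstack_{\LS}$.

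Finally, the classifying object. Unstraightening the right fibration $\fun(\Delta^1,\LS)^{P\simeq}\to\LS$ yields a presheaf $F_P\colon\LS^{op}\to\spa$ with $F_P(\spec A)$ the space of morphisms $W\to\spec A$ of $\LS$ having property $P$, the only edges being equivalences over $\spec A$. I would check $F_P$ is a sheaf by hand: given an open cover $\{\spec B_i\hookrightarrow\spec A\}_i$ and a descent datum, the underlying derived $\cinfty$-schemes $W_i\to\spec B_i$ glue along the given equivalences, by Proposition \ref{prop:schemescolimstable} (or Proposition \ref{prop:gluinglocringsp}), to a derived $\cinfty$-scheme $W\to\spec A$ with $W\times_{\spec A}\spec B_i\simeq W_i$; the argument of Corollary \ref{cor:localprop} shows $W$ is an affine derived $\cinfty$-scheme (the base $\spec A$ is Lindelöf and $\cinfty$-regular, the $W_i$ second countable, etc.) lying in $\LS$ since $\LS$ is local, and $W\to\spec A$ has property $P$ because $P$ is local on the target; essential uniqueness of gluings makes the descent map an equivalence of spaces, so $F_P\in\shv(\LS)=\dstack_{\LS}$. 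To see it classifies $(P)$, note that for any $Y\in\dstack_{\LS}$, writing $Y\simeq\colim_{\spec A\in\LS_{/Y}}\spec A$ and using that $F_P$ is a sheaf,
\[ \Hom_{\dstack_{\LS}}(Y,F_P)\;\simeq\;\lim_{\spec A\in(\LS_{/Y})^{op}}F_P(\spec A), \]
the space of compatible families of property-$P$ morphisms $W_{\spec A}\to\spec A$ over the affines of $Y$. By the descent for $(P)$ established above (applied to the effective epimorphism $\coprod_{\spec A\in\LS_{/Y}}\spec A\to Y$), such a family glues to a unique morphism $X:=\colim_{\spec A}W_{\spec A}\to Y$ which is representable with fibres having property $P$, i.e.\ a morphism with property $(P)$. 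Hence $\Hom_{\dstack_{\LS}}(Y,F_P)$ is the space of $(P)$-morphisms into $Y$, which is precisely the universal property characterizing $F_P$ as the classifying object for the local property $(P)$, cf.\ \cite[Section 6.1.6]{HTT} and the appendix.
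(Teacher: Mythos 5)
Your proof is correct and takes essentially the same route as the paper's: representability is obtained from Corollary \ref{cor:localprop}, and the remaining descent step plus the classifying sheaf are exactly the content of Proposition \ref{prop:denselocal} (together with smallness of the property, inherited from smallness of $\LS$), which the paper simply cites while you re-derive it by refining the effective epimorphism through open covers of each test affine and gluing by hand.
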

\begin{proof}
The smallness of $P$ is a consequence of the smallness of $\LS$. The locality of $P$ follows by combining Corollary \ref{cor:localprop} and Proposition \ref{prop:denselocal}.
\end{proof}
For instance, if $P$ is the property of being an open embedding, we have already defined the corresponding property of morphism of derived stacks as the \emph{open substack inclusions}. If $P$ is the property of being \'{e}tale, then we say that a map $X\rightarrow Y$ of derived stacks having the corresponding property is \'{e}tale, and so on. \\
In case the full subcategory $\LS\subset\daff$ does not determine a local property (so that the property of being representable may not be a local property of morphisms of $\dstack_{\LS}$), we have to sheafify.
\begin{prop}
Let $P$ be a \emph{small} property of morphisms of $\dsch_{\LS}$ stable under pullback. Let $P$ be the property of morphisms of $\dstack_{\LS}$ defined in Proposition \ref{prop:extendprop}. Let $\widehat{P}$ be the following property of morphisms of $\dstack_{\LS}$.
\begin{enumerate}
    \item[$(\widehat{P})$] A map $X\rightarrow Y$ has the property $\widehat{P}$ just in case for every map $\spec\,A\rightarrow Y$ from a representable, there exists an open cover $\{(U_i,\Of_{U_i})\rightarrow \spec\,A\}_i$ such that for each $i$, the map $X\times_Y(U_i,\Of_{U_i})\rightarrow (U_i,\Of_{U_i})$ has the property $P$.
\end{enumerate}
Then $\widehat{P}$ is small, stable under pullbacks and local on the target. Moreover, the natural transformation of presheaves 
\[ \Of^{(P)}\longrightarrow \Of^{(\widetilde{P})}  \]
exhibits $\Of^{(\widetilde{P})}$ as a sheafification of $\Of^{(P)}$.
\end{prop}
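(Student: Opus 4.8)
The plan is to reduce the statement to the explicit-sheafification principle (the one advertised for the appendix): a property of morphisms which is merely stable under pullback admits a smallest refinement which is local on the target, and at the level of classifying objects this refinement \emph{is} the sheafification. \emph{Step 1 (closure properties of $\widehat P$).} Smallness of $\widehat P$ is inherited from the smallness of $P$ and of $\LS$. For stability under pullback, let $X\to Y$ have property $\widehat P$ and let $Y'\to Y$ be arbitrary; any map $\spec A\to Y'$ composes to a map $\spec A\to Y$, so the open cover $\{U_i\to\spec A\}$ witnessing $\widehat P$ for $X\to Y$ also witnesses it for $X\times_Y Y'\to Y'$, since $(X\times_Y Y')\times_{Y'}U_i\simeq X\times_Y U_i$. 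For locality on the target, if $\{U_i\subset Y\}$ is an open cover with each $X\times_Y U_i\to U_i$ having property $\widehat P$, then for $\spec A\to Y$ the pulled-back cover $\{U_i\times_Y\spec A\to\spec A\}$ is an open cover of $\spec A$, and refining each member by the cover witnessing $\widehat P$ there — using that a cover of a cover is again a cover in the open-cover topology — yields an open cover of $\spec A$ along which $X\times_Y\spec A$ restricts to morphisms with property $P$. Hence $\widehat P$ is small, stable under pullback, and local on the target (and is the smallest property local on the target containing $P$).

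\emph{Step 2 (the classifying presheaves).} Let $\Of^{(P)}$, resp. $\Of^{(\widehat P)}$, denote the presheaf on $\LS$ whose value at $\spec A$ is the space of morphisms into $\spec A$ having property $P$, resp. $\widehat P$, with restriction maps given by pullback; these are well defined since both properties are stable under pullback. A $\widehat P$-morphism $X\to\spec A$ is automatically representable by a derived $\cinfty$-scheme locally having the property $\LS$: on each member $U_i$ of the witnessing cover the pullback $X\times_{\spec A}U_i$ is representable, because property $P$ (as defined in Proposition \ref{prop:extendprop}) includes representability, whence $X$ is itself representable by a derived $\cinfty$-scheme by Corollary \ref{cor:localyrepscheme} applied to $\mathrm{id}\colon\spec A\to\spec A$. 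The tautological inclusion "every $P$-morphism is a $\widehat P$-morphism" (take the trivial cover) induces the natural transformation $\Of^{(P)}\to\Of^{(\widehat P)}$ of the statement; the claim to prove is that it is a sheafification map.

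\emph{Step 3 (the comparison).} I would establish three facts. (i) \emph{$\Of^{(\widehat P)}$ is a sheaf.} Given an open cover $\{U_i\to\spec A\}$ and a descent datum of $\widehat P$-morphisms $X_i\to U_i$, the $X_i$ glue to a derived $\cinfty$-scheme $X$ over $\spec A$: the descent datum assembles into a groupoid object in $\dsch_{\LS}^{\et}$ satisfying the injectivity hypothesis of Proposition \ref{prop:gluinglocringsp} (because the $U_i$ are open substacks of $\spec A$), its geometric realization exists in $\dsch_{\LS}$ by Proposition \ref{prop:schemescolimstable} and is preserved by $j_{\sch_{\LS}}$ by Proposition \ref{prop:resyonedaschemes}, and the compositions $X_i\to U_i\to\spec A$ glue to a map $X\to\spec A$; that $X\to\spec A$ has property $\widehat P$ follows by refining $\{U_i\}$ by the covers witnessing $\widehat P$ on each $X_i\to U_i$. (ii) \emph{$\Of^{(P)}\to\Of^{(\widehat P)}$ is a monomorphism of presheaves.} Since $P$ spans a full subcategory of the arrow category, $\Of^{(P)}(\spec A)\to\Of^{(\widehat P)}(\spec A)$ is the underlying map of spaces of a fully faithful inclusion of $\infty$-categories, hence $(-1)$-truncated, for every $\spec A$. (iii) \emph{$\Of^{(P)}\to\Of^{(\widehat P)}$ is a local epimorphism.} This is precisely the definition of $\widehat P$: every section of $\Of^{(\widehat P)}$ over $\spec A$ restricts, along a suitable open cover of $\spec A$, into the image of $\Of^{(P)}$. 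Since sheafification is left exact it preserves the monomorphism (ii) while turning the local epimorphism (iii) into an effective epimorphism, and a monic effective epimorphism in an $\infty$-topos is an equivalence (\cite{HTT}); hence $\Of^{(P)}\to\Of^{(\widehat P)}$ becomes an equivalence after sheafification. Combined with (i), this shows that $\Of^{(P)}\to\Of^{(\widehat P)}$ exhibits $\Of^{(\widehat P)}$ as the sheafification of $\Of^{(P)}$.

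The main obstacle I expect is fact (i): one has to run the étale-colimit gluing machinery for derived locally $\cinfty$-ringed spaces (Propositions \ref{prop:gluinglocringsp}, \ref{prop:schemescolimstable}, \ref{prop:resyonedaschemes}) to manufacture the representing derived $\cinfty$-scheme from a descent datum, and then perform the open-cover bookkeeping needed to see that the glued morphism inherits property $\widehat P$. Steps 1 and 2, and facts (ii)–(iii) of Step 3, are formal; it is only the verification of the sheaf condition that uses the substance of the preceding sections.
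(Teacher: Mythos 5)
Your proposal is correct in outline, but it takes a genuinely different route from the paper. The paper disposes of this proposition in one line by combining Proposition \ref{prop:denselocal} with the general topos-theoretic Proposition \ref{prop:sheafificationproperty}, whose proof verifies the universal property of sheafification directly: maps out of $\Of^{(\widehat{P})}$ into an arbitrary limit-preserving functor are analysed via representable right fibrations and relative left Kan extensions, and the whole thing reduces to showing that the full subcategory of $P$-morphisms over a fixed target is dense (generates under colimits) in the slice. You instead argue that $\Of^{(\widehat{P})}$ is already a sheaf, that $\Of^{(P)}\rightarrow\Of^{(\widehat{P})}$ is a levelwise monomorphism (inclusion of path components of cores of full subcategories of the slice) and a local epimorphism (this is literally the definition of $\widehat{P}$), and then conclude since sheafification is left exact and a monic effective epimorphism in an \inftop is an equivalence; this is a valid and arguably more concrete argument, at the price of having to establish the sheaf property of $\Of^{(\widehat{P})}$ by hand. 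Two loose ends in your write-up are worth flagging. First, ``local on the target'' for morphisms of $\dstack_{\LS}$ means locality with respect to arbitrary effective epimorphisms (Proposition \ref{prop:localpropertycharacterize}), not only open covers of $Y$; the missing step is that an effective epimorphism pulled back to $\spec\,A$ admits sections over an open cover of $\spec\,A$ (equivalently, one invokes Proposition \ref{prop:denselocal}), after which your refinement argument applies. Second, for the sheaf condition in Step 3(i) you only address essential surjectivity onto descent data; the morphism-space half of the limit comparison must also be checked, and both halves are obtained more cheaply from descent in the \inftop $\dstack_{\LS}$ (the core-of-slice presheaf is a sheaf, and $\Of^{(\widehat{P})}$ is the full subpresheaf cut out by a condition you have shown to be open-cover local) than by re-running the scheme-gluing machinery of Propositions \ref{prop:gluinglocringsp}, \ref{prop:schemescolimstable} and \ref{prop:resyonedaschemes}, which is not needed here. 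With these points patched, your argument goes through and is a legitimate alternative to the paper's Kan-extension/density proof; note also that smallness of $\widehat{P}$ requires a short cardinality estimate as in the appendix rather than being literally ``inherited''.
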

\begin{proof}
Combine Propositions \ref{prop:denselocal} and \ref{prop:sheafificationproperty}.
\end{proof}
\begin{rmk}
All of the properties of Proposition \ref{prop:localtarget} are evidently defined and are local for morphisms of arbitrary derived $\cinfty$-schemes, not just affine ones. However these properties, viewed as sheaves on $\daff$ via the restricted Yoneda embedding, are not small and thus do not admit classifying objects.      
\end{rmk}
We end this subsection by introducing some terminology for \emph{smooth} stacks. 
\begin{defn}
Let $S$ be a smooth stack.
\begin{enumerate}[$(1)$]
    \item We say that a map $M\rightarrow S$ of smooth stacks is an \emph{$S$-family of manifolds} if $M\rightarrow S$ is a submersive map of smooth stacks.
    \item We say that $M\rightarrow S$ is a \emph{locally trivial $S$-family of manifolds} if $M\rightarrow S$ is an $S$-family of manifolds which is also a locally trivial bundle.
    \item We say that $M\rightarrow S$ is a \emph{proper $S$-family of manifolds} if $M\rightarrow S$ is an $S$-family of manifolds and a proper map of smooth stacks.
    \item Let $f:Y\rightarrow M$ be a map over $S$, then we say that $f$ is an \emph{$S$-family of submersions} if $M\rightarrow S$ is an $S$-family of manifolds and $Y\rightarrow M$ is a submersive map of smooth stacks. More generally, if $P$ is a property of morphisms of $\mathsf{Mfd}$ stable under pullbacks and local on the target, we will say that a map $Y\rightarrow M$ of smooth stacks over $S$ is an \emph{$S$-family of $P$-morphisms} if $M\rightarrow S$ is an $S$-family of manifolds and $Y\rightarrow M$ is a $P$-morphism. 
\end{enumerate}    
\end{defn}
The property of arrows $Y\rightarrow M$ of being an $S$-family of $P$-morphisms is a local property of $\Delta^2$-diagrams in $\smst$ by Remark \ref{rmk:localdiagram}; we denote by $\Of^{\Delta^1,P}\subset\fun(\Delta^2,\smst)$ the full subcategory spanned by $S$-families of $P$-morphisms. By Proposition \ref{prop:diagramproperty}, the associated sheaf $\Of^{(\Delta^1,P)}$ coincides with the sheaf associated to the right fibration 
\[\fun(\Delta^2,\mathsf{Mfd})^{P\simeq}\overset{\ev_2}{\longrightarrow}\mathsf{Mfd},\]
the category $\fun(\Delta^2,\mathsf{Mfd})^{P}$ being the Cartesian fibration over $\mathsf{Mfd}$ whose objects are diagrams
\[
\begin{tikzcd}
Y\ar[dr]\ar[rr,"f"] && M\ar[dl,"q"] \\
& S 
\end{tikzcd}
\]
where $q$ is a submersion and $f$ has the property $P$.
\begin{prop}
Let $S$ be a smooth stack. A proper $S$-family of manifolds $M\rightarrow S$ is a locally trivial $S$-family of manifolds. 
\end{prop}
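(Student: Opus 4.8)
The plan is to reduce the assertion to Ehresmann's classical fibration theorem. Being a locally trivial bundle is a property of morphisms of (affine) derived $\cinfty$-schemes which is manifestly stable under pullbacks and local on the target, exactly as are the properties enumerated in Proposition \ref{prop:localtarget}; hence, by Proposition \ref{prop:extendprop} (applied with $\LS$ the property of being a manifold, which is local by Remark \ref{rmk:localprop}), it determines a local property of morphisms of smooth stacks, and by the discussion preceding the present proposition this property may be checked after pulling back along maps $N\to S$ out of ordinary manifolds. So the claim comes down to the following: for every manifold $N$ and every map $N\to S$, the projection $f_N\colon N\times_S M\to N$ is a locally trivial bundle of manifolds.

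First I would unwind the pullback. Since $M\to S$ is a proper $S$-family of manifolds, it is representable, submersive and proper, and all three of these conditions are stable under base change by Proposition \ref{prop:localtarget}. Therefore $N\times_S M$ is representable by a smooth derived $\cinfty$-scheme $X$ — in fact a manifold, a submersion over a manifold being locally a product of open subsets of Cartesian spaces — and $f_N\colon X\to N$ is a submersion which is moreover a proper map of topological spaces. (Second countability of $X$ is not automatic for an arbitrary submersion over a second countable base, but here properness of $f_N$ forces all its fibres to be compact while $N$ is covered by countably many charts, which is enough.)

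The engine of the proof is then Ehresmann's fibration theorem: a proper submersion between smooth manifolds without boundary is a locally trivial fibre bundle. Applying it to $f_N$ shows that $f_N$ is a locally trivial bundle of manifolds; letting $N\to S$ vary and invoking the reduction of the first paragraph, we conclude that $M\to S$ is a locally trivial $S$-family of manifolds.

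I expect the only genuinely delicate point to be the verification that the total space $X=N\times_S M$ is a bona fide manifold in the sense of our conventions (Hausdorff, second countable, of globally bounded dimension). Hausdorffness and the topological manifold structure are immediate from $f_N$ being a submersion together with properness; second countability follows from compactness of the fibres as indicated above; and the global bound on the dimension of $X$ is inherited from that of $N$ together with the local constancy (and, by compactness of the fibres, boundedness) of the fibre dimension. Everything else is either the classical statement of Ehresmann's theorem or the formal bookkeeping with local properties of morphisms set up in Section 2.2.
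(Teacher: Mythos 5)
Your proof is correct and follows essentially the paper's route: the paper's entire proof reads ``Apply Ehresmann's fibration theorem'', and your reduction to proper submersions $N\times_S M\to N$ over ordinary manifolds via the locality formalism of Section 2.2 is exactly the bookkeeping left implicit there. The only remark worth making is that your closing worry is superfluous — representability by an honest manifold is already built into the definition of a (proper) $S$-family of manifolds (Proposition \ref{prop:extendprop} with $\LS=\mathsf{Mfd}$) — which is just as well, since Hausdorffness of the total space does not in fact follow from properness plus submersiveness alone (e.g.\ a compact non-Hausdorff surface over a point), so nothing should be made to rest on that claim.
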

\begin{proof}
Apply Ehresmann's fibration theorem.    
\end{proof}
\subsection{Stacky \'{e}tale maps and stacky submersions}
The functor $\iota_{\fp}:\daff_{\fp}\hookrightarrow\daff$ preserves finite limits; it follows that the induced functor $\iota_{\fp!}:\shv(\daff_{\fp})\simeq \dstack_{\fp}\subset \dstack$ is an algebraic morphism. The functor $\iota_{\mathsf{Sm}!}:\shv(\mathsf{Mfd})\simeq\smst\subset\dstack$ is not an algebraic morphism because $\iota_{\mathsf{Sm}}$ only preserves \emph{transverse} pullbacks. To prove the representability of elliptic moduli problems, it will be of crucial importance to understand what kind of limits the functor $\iota_{\mathsf{Sm}!}$ \emph{does} preserve. We will show that the inclusion $\smst\subset\dstack$ has essentially the same properties as the inclusion $\iota_{\mathsf{Sm}}:\mathsf{Mfd}\subset\daff$: it preserves pullbacks along \'{e}tale (more generally submersive) maps for an appropriate notion of \emph{\'{e}tale maps} between stacks.
\begin{cons}\label{cons:fracture}
Let $\LS$ be a property of affine derived $\cinfty$-schemes stable under open subspaces and let $\LS^{\mathsf{Open}}\subset\LS$ be the nonfull subcategory on the open embeddings. Since the inclusion $y_{\LS}:\LS^{\mathsf{Open}}\subset\LS$ preserves pullbacks, declaring a family $\{(U_i,\Of_X|_{U_i})\rightarrow (X,\Of_X)\}_i$ of open embeddings to be covering just in case it is a covering family for the \'{e}tale topology on $\LS$ determines a Grothendieck topology for which the restriction functor
\[  y_{\LS}^*:\pshv(\LS)\longrightarrow\pshv(\LS^{\mathsf{Open}}) \]
carries sheaves to sheaves. 
\end{cons}
\begin{rmk}
Note that the Grothendieck topology on $\LS^{\mathsf{Open}}$ of Construction \ref{cons:fracture} is \emph{not} subcanonical. Accordingly, we will write $j(X,\Of_X)\in \shv(\LS^{\mathsf{Open}})$ for the image of $(X,\Of_X)$ under the \emph{sheafified} Yoneda embedding
\[ \LS^{\mathsf{Open}}\hooklongrightarrow\pshv(\LS^{\mathsf{Open}})\longrightarrow \shv(\LS^{\mathsf{Open}}).  \]
\end{rmk}
\begin{prop}\label{prop:fracture}
Let $\LS$ be a property of affine derived $\cinfty$-schemes stable under open subspaces, then the following hold true.
\begin{enumerate}[$(1)$]
    \item The functor $y_{\LS}^*:\shv(\LS)\rightarrow \shv(\LS^{\mathsf{Open}})$ is an essential geometric morphism. 
    \item The left adjoint $y_{\LS!}:\shv(\LS^{\mathsf{Open}})\rightarrow\shv(\LS)$ preserves pullbacks. 
    \item For $(X,\Of_X)\in \LS$ an affine derived $\cinfty$-scheme that has the property $\LS$, the functor \[\shv(\LS^{\mathsf{Open}})_{/j(X,\Of_X)}\longrightarrow\shv(\LS)_{/(X,\Of_X)}\]
    induced by $y_{\LS!}$ (note that $y_{\LS!}$ carries representable sheaves to representable sheaves) is fully faithful; moreover, a map $f:(Y,\Of_Y)\rightarrow (X,\Of_Y)$ of affine derived $\cinfty$-schemes that have the property $\LS$ lies in the image of this functor if and only if $f$ is an \'{e}tale morphism. 
    \item The left adjoint $y_{\LS!}$ is faithful and determines an equivalence of $\shv(\LS^{\mathsf{Open}})$ onto a subcategory of $\shv(\LS)$.
    \end{enumerate}
\end{prop}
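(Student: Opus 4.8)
The plan is to establish the four assertions in order, (1) being the engine.

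\textbf{Part (1).} By Construction~\ref{cons:fracture} the restriction functor $y_{\LS}^*\colon\pshv(\LS)\to\pshv(\LS^{\mathsf{Open}})$ preserves sheaves, and being a restriction functor it preserves all small limits and colimits at the presheaf level. Hence $y_{\LS}^*|_{\shv}$ preserves limits and, being accessible, admits a left adjoint $y_{\LS!}$. To upgrade this to an essential geometric morphism I would show that $y_{\LS}^*$ commutes with sheafification, $y_{\LS}^*L_{\LS}\simeq L_{\LS^{\mathsf{Open}}}y_{\LS}^*$, from which colimit-preservation of $y_{\LS}^*|_{\shv}$ (hence the existence of a right adjoint) follows formally. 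For the commutation, observe that $L_{\LS^{\mathsf{Open}}}\circ y_{\LS}^*\colon\pshv(\LS)\to\shv(\LS^{\mathsf{Open}})$ is colimit-preserving and left exact, so it is enough to check that it inverts the generating local equivalences of the \'{e}tale topology on $\LS$: for a covering family of open embeddings $\{U_i\hookrightarrow X\}$ the corresponding sieve inclusion is carried to $\coprod_i\Hom_{\LS}(-,U_i)|_{\LS^{\mathsf{Open}}}\to\Hom_{\LS}(-,X)|_{\LS^{\mathsf{Open}}}$, which is an effective epimorphism of sheaves on $\LS^{\mathsf{Open}}$ because any $f\colon T\to X$ with $T\in\LS^{\mathsf{Open}}$ is covered by the open embeddings $f^{-1}(U_i)\hookrightarrow T$ on which $f$ factors through $U_i$. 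The resulting colimit-preserving factorization through $\shv(\LS)$ agrees with $y_{\LS}^*|_{\shv}$ on the subcanonical site $\LS$, hence equals it.

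\textbf{Part (2).} Since $y_{\LS!}$ preserves colimits and base change preserves colimits in any $\infty$-topos, it suffices — by the standard fact that left exactness of a colimit-preserving functor between $\infty$-topoi is detected on sheafified representables \cite{HTT} — to check that $y_{\LS!}$ preserves pullbacks of sheafified representables. A map $j(U)\to j(W)$ in $\shv(\LS^{\mathsf{Open}})$ is an \'{e}tale morphism $U\to W$ of derived $\cinfty$-schemes; writing such a map as a colimit over its restriction to open embeddings $U_\alpha\hookrightarrow W$ (and similarly for the second leg) and invoking colimit-preservation again, one reduces to open embeddings $U\hookrightarrow W\hookleftarrow V$, where $j(U)\times_{j(W)}j(V)\simeq j(U\cap V)$ since pullbacks of opens are intersections (again in $\LS$). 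As $y_{\LS!}(j(U))\simeq j_{\LS}(U)$ for all $U$ and $\LS$ is subcanonical, $y_{\LS!}$ sends this to $j_{\LS}(U\cap V)\simeq j_{\LS}(U)\times_{j_{\LS}(W)}j_{\LS}(V)$, as required.

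\textbf{Part (3).} Here I would use the slice-topos identifications $\shv(\LS^{\mathsf{Open}})_{/j(X,\Of_X)}\simeq\shv(\mathsf{Open}(X))$ and $\shv(\LS)_{/(X,\Of_X)}\simeq\shv(\LS_{/(X,\Of_X)})$ with their induced topologies, under which the functor in the statement becomes the one induced by the inclusion of sites $i\colon\mathsf{Open}(X)\hookrightarrow\LS_{/(X,\Of_X)}$ (an open subset of $X$ maps to the corresponding open embedding over $X$). This $i$ is fully faithful — a morphism over $X$ between two opens of $X$ is automatically an open embedding — and cocontinuous — an \'{e}tale cover of an open $U\subseteq X$ pulls back to an open cover of $U$ — so $i^*$ preserves sheaves and commutes with sheafification and, by full faithfulness of $i$, the unit $\mathrm{id}\to i^*i_!$ is an equivalence on sheaves; hence $i_!$ is fully faithful. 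For the image: if $f\colon(Y,\Of_Y)\to(X,\Of_X)$ is \'{e}tale then $j(f)$ is a colimit of objects of the form $j(W\hookrightarrow X)\simeq i_!j_{\mathsf{Open}(X)}(W)$ (cover $Y$ by affine opens, each covered by opens embedding openly into $X$), hence lies in the colimit-closed essential image of $i_!$; conversely, if $j(f)$ lies in that image then the counit $i_!i^*j(f)\to j(f)$ is an equivalence, and evaluating at $(Y\xrightarrow{f}X)$ and unwinding the sheafification shows that locally on $Y$ the map $f$ is an open embedding, i.e.\ \'{e}tale.

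\textbf{Part (4).} Faithfulness of $y_{\LS!}$ amounts to the unit $\eta_F\colon F\to y_{\LS}^*y_{\LS!}F$ being a monomorphism for every sheaf $F$, i.e.\ (monomorphisms of sheaves being detected sectionwise) to each $F(U)\to(y_{\LS!}F)(U)$ being $(-1)$-truncated. I would prove this by descent: for $U$ representable the map is, after unwinding, the inclusion $\Hom^{\et}_{\LS}(U,V)\hookrightarrow\Hom_{\LS}(U,V)$, which is the inclusion of a union of path components (being an \'{e}tale morphism of derived $\cinfty$-schemes cuts out a union of components of the mapping space) and hence a monomorphism of spaces; for general $F$, cover $F$ by representables and pull back along $j(U)\to F$ to obtain, via affine opens of the resulting \'{e}tale $U$-schemes, an open cover of $U$ on which everything factors through representables, then conclude using the previous case and the sheaf condition on $F$. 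Faithfulness then exhibits $y_{\LS!}$ as an equivalence onto the (generally non-full) subcategory of $\shv(\LS)$ spanned by the objects and morphisms in its image. The main obstacle I anticipate is the bookkeeping around sheafification in Part (1) — everything downstream rests on $y_{\LS}^*$ commuting with $L_{\LS}$ — together with the concrete unwinding of the counit of $i_!$ in Part (3).
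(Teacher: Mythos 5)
Your parts (1)--(3) are workable, though by routes that differ from the paper's: for (1) the paper detects colimits through the family of functors to the petit topoi $\shv(X)$ (the $\LS^{\mathsf{Open}}$-analogue of Lemma \ref{lem:detectetiononspaces}), whereas you verify that restriction commutes with sheafification by checking covering sieves; for (3) the paper constructs the right adjoint of the slice functor explicitly and proves that the unit squares \eqref{eq:pullback} are pullbacks, whereas you argue at the level of sites via the fully faithful, cover-compatible inclusion $\mathsf{Open}(X)\hookrightarrow \LS_{/(X,\Of_X)}$; both of your arguments can be completed. In (2), however, the ``standard fact'' you invoke should be the precise extension principle the paper cites (\cite[Proposition 20.2.4.2]{sag}): what comes for free is pullback-preservation on cospans lying in the site $\LS^{\mathsf{Open}}$ (open embeddings), while your reduction to arbitrary maps of sheafified representables quietly identifies such maps with \'{e}tale morphisms, which is part of statement (3) and is proved only afterwards; the commutation of the pullback with the colimit decompositions you then perform is exactly what the cited proposition packages.

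The genuine gap is in (4). Reducing faithfulness to the unit $\eta_F\colon F\to y_{\LS}^*y_{\LS!}F$ being a monomorphism is correct, and the representable case is fine (the presheaf-level map $h_V\to y_{\LS}^*j(V)$ is an inclusion of components and sheafification is left exact). But the passage to general $F$ as you describe it does not go through: being $(-1)$-truncated is a local property on the \emph{target} of a morphism, so it can be tested after pulling back along an effective epimorphism onto $y_{\LS}^*y_{\LS!}F$, not along a cover of the source $F$; moreover $(y_{\LS!}F)(U)$ is a colimit over the category of elements of $F$, so it is not controlled sectionwise by the representable case, and monomorphisms are not stable under arbitrary colimits, so writing $F$ as a colimit of representables does not conclude either. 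The missing idea is the paper's: by descent the slice functors $\shv(\LS^{\mathsf{Open}})_{/F}\to\shv(\LS)_{/y_{\LS!}F}$ are fully faithful for all $F$, which factors $y_{\LS!}$ through $\shv(\LS)_{/y_{\LS!}(1^{\mathsf{Open}})}$ followed by a right fibration and reduces everything to the single unit map $1^{\mathsf{Open}}\to y_{\LS}^*y_{\LS!}(1^{\mathsf{Open}})$; this map is a colimit of the \emph{Cartesian} transformation $\{j(X,\Of_X)\to y_{\LS}^*(X,\Of_X)\}_{(X,\Of_X)\in\LS^{\mathsf{Open}}}$ --- Cartesianness being exactly the pullback square \eqref{eq:pullback} --- and one concludes because $(-1)$-truncatedness is a local property of morphisms. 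Your site-theoretic proof of (3) never records this Cartesianness, so the ingredient is absent from your outline. Finally, ``faithful, hence an equivalence onto a subcategory'' also requires knowing that $y_{\LS!}F\simeq y_{\LS!}G$ forces $F\simeq G$; the paper extracts this from the factorization through the slice, and your sketch omits it.
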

\begin{proof}
Repeating the argument in the proof of Lemma \ref{lem:detectetiononspaces} for $\LS^{\mathsf{Open}}$ in place of $\LS$ grants that the collection of functors 
\[ \{\varphi^*_{(X,\Of_X)}:\pshv(\LS^{\mathsf{Open}})\longrightarrow \pshv(X)\}_{(X,\Of_X)\in \LS} \]
induced by the collection of functors
\[ \{\varphi_{(X,\Of_X)}:\mathsf{Open}(X)\longrightarrow\LS^{\mathsf{Open}}_{/(X,\Of_X)}\}_{(X,\Of_X)\in\LS} \]
preserves and jointly detects sheaves and that the induced functor
\[ \Phi^*:\shv(\LS^{\mathrm{Open}})\longrightarrow \prod_{(X,\Of_X)\in\LS} \shv(X)  \]
is conservative and preserves and detects colimits. We have a commuting diagram 
\[
\begin{tikzcd}
    \shv(\LS)\ar[rr,"y_{\LS}^*"] \ar[dr] && \shv(\LS^{\mathsf{Open}}) \ar[dl] \\
    & \prod_{(X,\Of_X)\in\LS} \shv(X) 
\end{tikzcd}
\]
so we conclude that $y_{\LS}^*$ preserves (and detects) colimits. To see that the left adjoint $y_{\LS!}:\shv(\LS^{\mathsf{Open}})\rightarrow \shv(\LS)$ preserves pullbacks, we note that for formal reasons, the composition 
\[\LS^{\mathsf{Open}} \overset{j}{\hooklongrightarrow}\pshv(\LS^{\mathsf{Open}})\overset{L}{\longrightarrow} \shv(\LS^{\mathsf{Open}}) \overset{y_{\LS!}}{\longrightarrow} \shv(\LS)\]
is equivalent to the composition 
\[ \LS^{\mathsf{Open}} \overset{y_{\LS}}{\subset}\LS\overset{j}{\longrightarrow}\shv(\LS). \]
Since $y$ and $j$ preserve pullbacks, we conclude by invoking \cite[Proposition 20.2.4.2]{sag}. We proceed with $(3)$: note that the functor 
\[ \shv(\LS^{\mathsf{Open}})_{/j(X,\Of_X)}\longrightarrow\shv(\LS)_{/(X,\Of_X)} \]
induced by $y_{\LS!}$ has a right adjoint given by the formula 
\[ F\longmapsto y_{\LS}^*F\times_{y_{\LS}^*(X,\Of_X)}j(X,\Of_X)  \]
where the pullback is taken along the unit map $j(X,\Of_X)\rightarrow y_{\LS}^*(X,\Of_X)$. Since $y_{\LS!}$ and $y_{\LS}^*$ preserve colimits as just verified and colimits are universal in \inftopoit, it suffices to show that the for all representable sheaves of the form $j(Y,\Of_Y)\rightarrow j(X,\Of_X)$ induced by an open embedding $(Y,\Of_Y)\rightarrow (X,\Of_X)$, the diagram 
\begin{equation}\label{eq:pullback}
\begin{tikzcd}
 j(Y,\Of_Y)\ar[d]\ar[r]  & y_{\LS}^*(Y,\Of_Y)\ar[d] \\
 j(X,\Of_X)\ar[r] & y_{\LS}^*(X,\Of_X)
\end{tikzcd}
\end{equation}
is a pullback. Using Lemma \ref{lem:unitpresheaves} we see that this amounts to the assertion that a map $(Z,\Of_Z)\rightarrow (Y,\Of_Y)$ is an open embedding if and only if the composition $(Z,\Of_Z)\rightarrow  (X,\Of_X)$ is an open embedding. On the level of topological spaces, this is obvious and on the level of structure sheaves, this follows from \cite[Proposition 2.4.1.7]{HTT}. Now suppose that a morphism $g:(Y,\Of_Y)\rightarrow (X,\Of_X)$ of affine derived $\cinfty$-schemes that have the property $\LS$ is the image of some morphism $j(Y,\Of_Y)\rightarrow j(X,\Of_X)$, then we wish to show that this morphism is \'{e}tale. Let $\LS^{\et}\subset\LS$ be the subcategory on the \'{e}tale morphisms, then it follows from the same arguments used to prove $(1)$ of Proposition \ref{prop:resyonedaschemes} that the \'{e}tale topology on $\LS^{\et}$ is subcanonical. It follows that the functor $\shv(\LS^{\et})\rightarrow \shv(\LS)$ left adjoint to the restriction of sheaves induced by the subcategory inclusion $\LS^{\et}\subset \LS$ carries morphisms among representable sheaves to \'{e}tale morphisms among representable sheaves. Since the functor $y_{\LS!}$ factors through this left adjoint, we conclude that $g$ is an \'{e}tale morphism. Conversely, if $g$ is \'{e}tale, then we may choose a collection of open embeddings $\{(U_i,\Of_{U_i})\rightarrow (X,\Of_X)\}_i$ that cover $(X,\Of_X)$ such that each composition $(U_i,\Of_{U_i})\rightarrow (Y,\Of_Y)$ is an open embedding. We have an effective epimorphism
\[
\begin{tikzcd}
    \coprod_i (U_i,\Of_{U_i}) \ar[d,"h"]\ar[r] & (Y,\Of_Y)\ar[d,equal] \\
    (X,\Of_X)\ar[r,"g"] & (Y,\Of_Y)
\end{tikzcd}
\]
in the arrow \inftop $\fun(\Delta^1,\dstack_{\LS})$. Notice that the \v{C}ech nerve $\check{C}(h)_{\bullet}:\simpop\rightarrow (\dstack_{\LS})_{/(Y,\Of_Y)}$ factors through $\shv(\LS^{\mathsf{Open}})_{/j(Y,\Of_{Y})}$. Since $y_{\LS!}$ preserves colimits, we conclude that $g$ belongs to the essential image of $y_{\LS!}$. To show $(4)$, we note that by descent for \inftopoi and the fact that fully faithful functors are stable under limits, the collection of all objects $F\in \shv(\LS^{\mathsf{Open}})$ for which the functor
\[ \shv(\LS^{\mathsf{Open}})_{/F} \longrightarrow \shv(\LS)_{/y_{\LS!}(F)}\]
is fully faithful is stable under colimits, so we conclude that there is a fully faithful embedding $\shv(\LS^{\mathsf{Open}})\rightarrow \shv(\LS)_{/y_{\LS!}(1^{\mathsf{Open}})}$ with $1^{\mathsf{Open}}$ the final object of $\shv(\LS^{\mathsf{Open}})$ (note that $y_{\LS!}$ does \emph{not} carry $1^{\mathsf{Open}}$ to a final object in $\shv(\LS)$). Thus, to see that the composition 
\[ \shv(\LS^{\mathsf{Open}})\longrightarrow \shv(\LS)_{/y_{\LS!}(1^{\mathsf{Open}})} \longrightarrow\shv(\LS)\]
is faithful, it suffices to argue that the unit map $1^{\mathsf{Open}}\rightarrow y^*_{\LS}y_{\LS!}(1^{\mathsf{Open}})$ is $(-1)$-truncated. This unit map is a colimit of the natural transformation $\{j(X,\Of_X)
\rightarrow y_{\LS}^*(X,\Of_X)\}_{(X,\Of_X)\in \LS^{\mathsf{Open}}}$. The fact that the square diagram \eqref{eq:pullback} is a pullback diagram implies that this natural transformation is Cartesian. Since the property of being $n$-truncated for $n\geq -2$ is a local property of morphisms in any \inftopt, it suffices to show that the map $j(X,\Of_X)
\rightarrow y_{\LS}^*(X,\Of_X)$ is $(-1)$-truncated, but this map evaluates on each $(Z,\Of_Z)$ as the inclusion of connected components corresponding to \emph{open embeddings} $(Z,\Of_Z)\rightarrow(X,\Of_X)$. Since $y_{\LS!}$ is factor as a fully faithful functor followed by a right fibration, we conclude that in case there is an equivalence between the images of $F,G\in \shv(\LS^{\mathsf{Open}})$ under $y_{\LS!}$, then $F$ and $G$ are equivalent, so that $y_{\LS!}$ is an equivalence onto its essential image as desired.
\end{proof}
\begin{rmk}
The preceding proposition can be summarized by stating that the functor $y_{\LS!}:\shv(\LS^{\mathsf{Open}})\rightarrow\shv(\LS)$ determines a \emph{fracture subtopos} of $\shv(\LS)$ in the sense of \cite[Section 20.1]{sag}; see also the work of Carchedi on \'{e}tale $\infty$-stacks \cite{Car1}.     
\end{rmk}
\begin{rmk}\label{rmk:compareffepi}
Let $f:X\rightarrow Y$ a map in $\shv(\LS^{\mathsf{Open}})$ and consider the \v{C}ech nerve $\check{C}(f)_{\bullet}$ of $f$ in $\shv(\LS^{\mathsf{Open}})$. Since $y_{\LS!}$ preserves pullbacks, the diagram $y_{\LS!}(\check{C}(f)_{\bullet})$ is a \v{C}ech nerve of $y_{\LS!}(f)$. Since $y_{\LS!}$ preserves colimits, we deduce that $y_{\LS!}$ preserves and reflects effective epimorphisms.  
\end{rmk}
\begin{defn}
Let $\LS$ be a property of affine derived $\cinfty$-schemes stable under open subspaces. We denote by $\dstack_{\LS}^{\text{\'{e}t}}$ the essential image of the composition 
\[ \shv(\LS^{\mathsf{Open}})\overset{y_{\LS!}}{\longrightarrow}\shv(\LS)\overset{\iota_{\LS!}}{\hooklongrightarrow} \dstack. \]
For $\LS=\daff$, we will simply write $\dstack^{\text{\'{e}t}}$ for the essential image of the functor
\[ y_!:\shv(\daff^{\mathsf{Open}})\longrightarrow \dstack. \]
A derived $\cinfty$-stack is an \emph{\'{e}tale stack} if it lies in $\dstack^{\text{\'{e}t}}$. Let $f:X\rightarrow Y$ be a map of derived $\cinfty$-stacks. The map $f$ is \emph{stacky \'{e}tale} if for each map $Z\rightarrow Y$ with $Z\in \dstack^{\et}$, the map $X\times_YZ\rightarrow Z$ lies in the subcategory $\dstack^{\et}$.     
\end{defn}
\begin{rmk}
Let $f:X\rightarrow Y$ be a map in $\dstack$ and suppose that $X\in \dstack^{\et}$. Then $f$ is stacky \'{e}tale if and only if $f$ lies in the subcategory $\dstack^{\et}$.
\end{rmk}
\begin{rmk}
An \'{e}tale map of stacks is stacky \'{e}tale, but the converse is false. 
\end{rmk}
\begin{lem}\label{lem:stackyetalelocal}
The property of being stacky \'{e}tale is a local property of morphisms of $\dstack$.
\end{lem}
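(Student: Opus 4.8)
The statement to prove is Lemma~\ref{lem:stackyetalelocal}: being stacky étale is a local property of morphisms of $\dstack$. Let me think about what "local property of morphisms" means and how to establish it.

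A property $P$ of morphisms in an $\infty$-topos is local if: (i) it is stable under base change, and (ii) it can be detected on an effective-epimorphic cover of the target — i.e. if $\{V_j \to Y\}$ is a cover and each $X\times_Y V_j \to V_j$ has property $P$, then $X \to Y$ has property $P$. (There's also typically a closure-under-composition-with-covers clause, but stability + cover-detection is the crux; the appendix to this paper, referenced as about "local properties of morphisms," presumably spells this out.)

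So my plan has two parts.

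First, stability under base change. Suppose $f\colon X \to Y$ is stacky étale and $Y' \to Y$ is any map; I want $X' := X\times_Y Y' \to Y'$ stacky étale. Take any $Z \to Y'$ with $Z \in \dstack^{\et}$. Then $X'\times_{Y'} Z = X\times_Y Z$ (by pasting of pullback squares, using that the composite $Z \to Y' \to Y$ is a map into $Y$ from an étale stack), and this lies in $\dstack^{\et}$ because $f$ is stacky étale. Done; this part is essentially formal.

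Second, and this is the real content: cover-detection. Let $f\colon X \to Y$ in $\dstack$, and suppose $\{V_j \to Y\}$ is an effective-epimorphic family with each $f_j \colon X\times_Y V_j \to V_j$ stacky étale. I want $f$ stacky étale. So fix $Z \to Y$ with $Z$ an étale stack, set $W := X\times_Y Z$, and aim to show $W \to Z$ lies in $\dstack^{\et}$. Pull back the cover: $\{V_j\times_Y Z \to Z\}$ is effective-epimorphic (effective epis are stable under base change in an $\infty$-topos), and the base change $W\times_Z (V_j\times_Y Z) = (X\times_Y V_j)\times_{V_j}(V_j\times_Y Z) \to V_j\times_Y Z$ is stacky étale by the first part (since $f_j$ is stacky étale). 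Now I need two auxiliary facts: (a) $V_j \times_Y Z$ is an étale stack — I should reduce to this by first covering $V_j\times_Y Z$ by étale stacks, or better, observe that $\dstack^{\et}$ is closed under the relevant pullbacks. Actually the cleanest route: reduce the whole problem to the case where $Z$ itself is an \emph{affine} derived $\cinfty$-scheme having the property of being in $\shv(\daff^{\mathsf{Open}})$'s image, using that every étale stack is a colimit of affines via étale maps and that "being in $\dstack^{\et}$" can be checked after such a presentation — then further to the case where $Z$ is representable. (b) Membership in $\dstack^{\et}$ is itself detectable on an effective-epimorphic cover by étale maps: if $W' \to W$ is an effective epi with $W'$ étale and the map itself étale (equivalently in $\dstack^{\et}$ over $W$... careful, $\dstack^{\et}$ is not a slice), then $W$ is étale. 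This should follow from Proposition~\ref{prop:fracture}, specifically from the fact that $y_{\daff!}\colon \shv(\daff^{\mathsf{Open}}) \to \dstack$ preserves colimits and pullbacks and is fully faithful onto its essential image $\dstack^{\et}$ — so $\dstack^{\et}$ is closed under colimits in $\dstack$, and a stack $W$ with an effective-epi cover $\coprod W_i \to W$ by étale stacks with all transition maps étale is the colimit (geometric realization of a Čech nerve) of étale stacks, hence étale.

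Assembling: given the pulled-back cover $\{V_j\times_Y Z \to Z\}$ of the étale stack $Z$, each $V_j\times_Y Z$ is itself étale (as an étale-map target over the étale stack $Z$ — here I use that $\dstack^{\et}$ is closed under pullback of étale maps, or reduce $V_j$ to affines covering it and pull those back, landing in étale stacks by the fracture-topos description). Then $W\times_Z(V_j\times_Y Z) \to V_j\times_Y Z$ is stacky étale between stacks whose target is étale, so by the remark in the excerpt ("Let $f\colon X\to Y$ be a map in $\dstack$ and suppose $X\in\dstack^{\et}$; then $f$ is stacky étale if and only if $f$ lies in $\dstack^{\et}$" — applied with the roles arranged so the pulled-back $W$-pieces are seen to be étale) one gets that the $W$-pieces $W\times_Z(V_j\times_Y Z)$ are étale stacks, and they cover $W$ via an effective epi with étale structure maps (pullbacks of $V_j\times_Y Z \to Z$), whence $W \in \dstack^{\et}$ by the colimit-closure of $\dstack^{\et}$. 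Since $Z$ was an arbitrary étale stack mapping to $Y$, $f$ is stacky étale.

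\textbf{Main obstacle.} The delicate point is fact (a)/(b): showing that $\dstack^{\et}$ is closed under the geometric realizations of étale Čech nerves and that "étale-map over an étale stack" stays inside $\dstack^{\et}$. This is exactly where Proposition~\ref{prop:fracture} — cocontinuity of $y_{\daff!}$, its fully-faithfulness onto $\dstack^{\et}$, and preservation of pullbacks (hence of Čech nerves, cf. Remark~\ref{rmk:compareffepi}) — must be invoked carefully, together with the compatibility of $\iota_{\mathsf{Sm}!}$-type embeddings; one must be sure the "étale topology on $\daff^{\et}$ is subcanonical" bookkeeping from the proof of Proposition~\ref{prop:fracture}(3) transports correctly. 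Everything else (stability under base change, pasting of pullbacks, stability of effective epis) is formal $\infty$-topos theory.
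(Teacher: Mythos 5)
Your overall strategy — reduce to showing that for every $Z\in\dstack^{\et}$ over $Y$ the map $X\times_YZ\rightarrow Z$ lies in $\dstack^{\et}$, pull back the cover, and exploit closure of $\dstack^{\et}$ under colimits via Proposition \ref{prop:fracture} — is the same as the paper's, and the base-change half is fine. But there is a genuine gap at the decisive step. You assert (a) that $V_j\times_YZ$ is an étale stack, ``or better, that $\dstack^{\et}$ is closed under the relevant pullbacks.'' This is false: the covering maps $V_j\rightarrow Y$ are arbitrary morphisms of $\dstack$, so $V_j\times_YZ$ need not be an étale stack, and the projections $V_j\times_YZ\rightarrow Z$ are certainly not morphisms of the (non-full!) subcategory $\dstack^{\et}$; there is no closure of $\dstack^{\et}$ under pullback along arbitrary maps. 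Your fallback — covering $V_j\times_YZ$ by étale stacks, e.g.\ affines — repairs the object-level issue but not the morphism-level one: the composites $\spec A\rightarrow V_j\times_YZ\rightarrow Z$ are still arbitrary maps into $Z$ and need not lie in $\dstack^{\et}$. This matters because what must be proved is not merely that $W=X\times_YZ$ is an étale stack but that the \emph{morphism} $W\rightarrow Z$ lies in the subcategory $\dstack^{\et}$ (that is the definition of stacky étale, and $\dstack^{\et}$ is faithful but not full); your closing step only delivers object membership, and the claim that the cover of $W$ has ``étale structure maps'' is unjustified for the same reason as (a).

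The paper resolves exactly this point by replacing $U_i\times_YZ$ with $V_i:=Z\times_{y_!y^*(Z)}y_!y^*(U_i\times_YZ)$, the pullback along the unit $Z\rightarrow y_!y^*(Z)$ formed inside the fracture subtopos. Then $\coprod_iV_i\rightarrow Z$ is an effective epimorphism whose components are morphisms of $\dstack^{\et}$; the counit squares identify $V_i\times_Z(X\times_YZ)$ with $V_i\times_{U_i}(U_i\times_YX)$, so each $V_i\times_Z(X\times_YZ)\rightarrow V_i$ lies in $\dstack^{\et}$ by the hypothesis and base change; and finally one takes \v{C}ech nerves of the resulting effective epimorphism in the arrow $\infty$-topos $\fun(\Delta^1,\dstack)$: the whole Cartesian transformation lies in $\dstack^{\et}$ because $y_!$ preserves pullbacks, and its colimit, the arrow $X\times_YZ\rightarrow Z$, lies in $\dstack^{\et}$ because $y_!$ preserves colimits. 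Without a device of this kind (the unit/counit construction of the $V_i$, or an equivalent refinement of the cover by morphisms of $\dstack^{\et}$), your argument does not close.
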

\begin{proof}
It is clear that the property of being stacky \'{e}tale is stable under pullbacks and compositions (it follows from $(3)$ of Proposition \ref{prop:fracture} that for $f:X\rightarrow Y$ and $g:Y\rightarrow Z$ morphisms of $\dstack$, in case $g$ is stacky \'{e}tale, $f$ is stacky \'{e}tale if and only if $g\circ f$ is stacky \'{e}tale). Suppose that for a map $X\rightarrow Y$ of derived $\cinfty$-stacks, there exists a collection $\{U_i\rightarrow Y\}_{i\in I}$ determining an effective epimorphism $\coprod_iU_i\rightarrow Y$ such that $X\times_YU_i
\rightarrow U_i$ is stacky \'{e}tale. Let $Z\rightarrow Y$ with $Z\in \dstack^{\et}$ be any map, then we wish to show that $X\times_YZ\rightarrow Z$ lies in $\dstack^{\text{\'{e}t}}$. For every $i\in I$, let $V_i\in \dstack^{\text{\'{e}t}}$ be the cone in the pullback diagram 
\[
\begin{tikzcd}
 V_i\ar[d]\ar[r] & Z\ar[d] \\
 y_!y^*(U_i\times_Y Z)\ar[r] & y_!y^*(Z)
\end{tikzcd}
\]
taken in the \inftop $\dstack^{\text{\'{e}t}}$ where the right vertical map is (the image under $y_!$ of) a unit transformation, then the map $\coprod_i V_i\rightarrow Z$ is an effective epimorphism in $\dstack^{\text{\'{e}t}}$ and therefore also in $\dstack$ (Remark \ref{rmk:compareffepi}). We have for each $i\in I$ a commuting diagram 
\[
\begin{tikzcd}
 V_i\ar[d]\ar[r] & Z\ar[d] \\
 y_!y^*(U_i\times_Y Z)\ar[r]\ar[d] & y_!y^*(Z)\ar[d] \\
 U_i\times_Y Z\ar[r] & Z
\end{tikzcd}
\]
where the vertical maps in the lower square are counit transformations and the right vertical composition is an equivalence. It follows that for each $i\in I$, the map $V_i\times_Z(X\times_YZ)\rightarrow V_i$ is equivalent to the map $V_i\times_{U_i}(U_i\times_YX)\rightarrow V_i$, which lies in $\dstack^{\text{\'{e}t}}$ because $V_i\in\dstack^{\text{\'{e}t}}$ and $U_i\times_YX\rightarrow U_i$ was assumed stacky \'{e}tale. We have an effective epimorphism 
\[
\begin{tikzcd}
    \coprod_i V_i\times_Z(X\times_YZ)\ar[d,"h_0"]\ar[r,"f"] &\coprod_i V_i\ar[d,"h_1"] \\
    X\times_YZ\ar[r] & Z
\end{tikzcd}
\]
in the arrow \inftop $\fun(\Delta^1,\dstack)$ because $\coprod_iV_i\rightarrow Z$ is an effective epimorphism. Taking a \v{C}ech nerve of $h_0$ and $h_1$, we obtain a Cartesian transformation $C_{\bullet}:\simpop\times\Delta^1\rightarrow\dstack$ (see Definition \ref{defn:cartesiantrans}), by Lemma \ref{lem:cartesiantransfkanext}. Since $h_1$ lies in $\dstack^{\text{\'{e}t}}$, the diagram $C_{\bullet}|_{\simp\times\{1\}}=\check{C}(h_1)_{\bullet}$ lies in $\dstack^{\text{\'{e}t}}$ (Remark \ref{rmk:compareffepi}). Since $f$ lies in $\dstack^{\text{\'{e}t}}$, $y_!$ preserves pullbacks and $C_{\bullet}$ is a Cartesian transformation, the entire diagram $C_{\bullet}$ lies in $\dstack^{\text{\'{e}t}}$. Since $\dstack^{\text{\'{e}t}}$ is stable under colimits, we conclude.
\end{proof}

\begin{lem}\label{lem:stackyetaleproperty}
Let $\LS$ be a property of affine derived $\cinfty$-schemes stable under open subspaces. Let $f:X\rightarrow Y$ be a stacky \'{e}tale map and suppose that $Y$ locally has the property $\LS$. Then $X$ locally has the property $\LS$.    
\end{lem}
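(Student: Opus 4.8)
The statement to prove is Lemma \ref{lem:stackyetaleproperty}; recall that ``$Y$ locally has the property $\LS$'' means $Y\in\dstack_{\LS}$, the essential image of $\iota_{\LS!}\colon\shv(\LS)\to\dstack$. The plan is to reduce to the case of an affine base and then analyse \'{e}tale stacks over an affine via the fracture topos of Proposition \ref{prop:fracture}. I begin with two formal observations. First, $\dstack_{\LS}$ is stable under small colimits inside $\dstack$: the functor $\iota_{\LS!}$ is fully faithful by Proposition \ref{prop:schemeproperties} and preserves colimits (being a left adjoint), so its essential image is colimit-closed. Second, if $\spec A$ is an affine derived $\cinfty$-scheme having the property $\LS$, then for any $G\in\dstack$ the adjunction $(\iota_{\LS!}\dashv\iota_{\LS}^{*})$ and the Yoneda lemma give $\Hom_{\dstack}(\iota_{\LS!}j_{\LS}(\spec A),G)\simeq(\iota_{\LS}^{*}G)(\spec A)\simeq G(\spec A)\simeq\Hom_{\dstack}(j_{\daff}(\spec A),G)$, so $\iota_{\LS!}j_{\LS}(\spec A)\simeq j_{\daff}(\spec A)$; hence $\spec A$, viewed as a representable sheaf on $\daff$, lies in $\dstack_{\LS}$, and the same holds for every open subspace $(\spec A)|_{U}$, since being an affine derived $\cinfty$-scheme and the property $\LS$ are each stable under open subspaces.

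Next I would reduce to an affine base. Let $f\colon X\to Y$ be stacky \'{e}tale with $Y=\iota_{\LS!}(Y')\in\dstack_{\LS}$. Expressing $Y'$ as the canonical colimit $\colim_{(\spec A,u)\in\LS_{/Y'}}\spec A$ of representables in $\shv(\LS)$ (cf.\ \cite[Lemma 6.2.3.13]{HTT}) and applying $\iota_{\LS!}$ gives $Y\simeq\colim_{(\spec A,u)}\spec A$ in $\dstack$, a colimit indexed over affines having the property $\LS$. Since colimits are universal in the $\infty$-topos $\dstack$, pulling $f$ back along this cocone yields $X\simeq\colim_{(\spec A,u)}\bigl(X\times_{Y}\spec A\bigr)$ in $\dstack$. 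By the first observation it therefore suffices to prove that each $X\times_{Y}\spec A$ lies in $\dstack_{\LS}$; moreover, for each index the map $X\times_{Y}\spec A\to\spec A$ is again stacky \'{e}tale (stable under pullback, by the proof of Lemma \ref{lem:stackyetalelocal}), so we may assume outright that $Y=\spec A$ is affine with the property $\LS$.

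For the affine case: being affine, $\spec A$ lies in $\dstack^{\text{\'{e}t}}$, since $y_{!}$ carries the sheafified representable of $\spec A$ in $\shv(\daff^{\mathsf{Open}})$ to the representable $\spec A$ in $\dstack$ (this is the identity $y_{!}\circ L\circ j\simeq j$ established in the proof of Proposition \ref{prop:fracture}). Because $f$ is stacky \'{e}tale and $\spec A\in\dstack^{\text{\'{e}t}}$, the map $X\to\spec A$ lies in the subcategory $\dstack^{\text{\'{e}t}}$, and since $y_{!}$ is an equivalence onto its essential image (Proposition \ref{prop:fracture}(4)) this map is the image under $y_{!}$ of an object $F$ of the slice $\shv(\daff^{\mathsf{Open}})_{/j(\spec A)}$. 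Now $\daff^{\mathsf{Open}}_{/\spec A}\simeq\mathsf{Open}(|\spec A|)$ by Lemma \ref{lem:detectetiononspaces}, so the slice topos $\shv(\daff^{\mathsf{Open}})_{/j(\spec A)}\simeq\shv\bigl(\mathsf{Open}(|\spec A|)\bigr)$ is generated under colimits by the representables of open subsets $U\subset|\spec A|$; the induced functor $\shv(\daff^{\mathsf{Open}})_{/j(\spec A)}\to\dstack_{/\spec A}$ is fully faithful (Proposition \ref{prop:fracture}(3)), preserves colimits, and sends the representable of $U$ to the open substack $(\spec A)|_{U}\hookrightarrow\spec A$. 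Writing $F$ as a colimit of such representables and applying this functor exhibits $X\simeq X\times_{\spec A}\spec A$ as a colimit in $\dstack$ of open subspaces of $\spec A$, each of which lies in $\dstack_{\LS}$ by the first observation; hence $X\in\dstack_{\LS}$, and unwinding the reduction finishes the proof.

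The main obstacle is the affine case, and within it the identification of the fracture slice $\shv(\daff^{\mathsf{Open}})_{/j(\spec A)}$ with sheaves on the opens of $\spec A$ together with the verification that $y_{!}$ carries the generating representables to open substacks of $\spec A$. This is essentially a repackaging of Proposition \ref{prop:fracture}(3) and the small-site description of Lemma \ref{lem:detectetiononspaces}, and it is the only place where stability of $\LS$ under open subspaces is genuinely used; the reduction step and the colimit bookkeeping are purely formal consequences of universality of colimits in $\dstack$ and colimit-closure of $\dstack_{\LS}$.
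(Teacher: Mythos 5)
Your reduction to an affine base is exactly the paper's: write $Y$ as the canonical colimit of affines in $\LS$, use universality of colimits, pullback-stability of stacky \'{e}tale maps, and colimit-closure of $\dstack_{\LS}$; and your observation that $\iota_{\LS!}j_{\LS}(\spec A)\simeq j(\spec A)$, so affines in $\LS$ (and their open subspaces) lie in $\dstack_{\LS}$, is fine. The first move in the affine case also matches the paper: since $\spec A\in\dstack^{\text{\'{e}t}}$ and $f$ is stacky \'{e}tale, the map $X\to\spec A$ lies in the subcategory $\dstack^{\text{\'{e}t}}$, hence is $y_!$ of a map in $\shv(\daff^{\mathsf{Open}})$.

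The gap is in the step where you write ``$\daff^{\mathsf{Open}}_{/\spec A}\simeq\mathsf{Open}(|\spec A|)$ by Lemma \ref{lem:detectetiononspaces}, \emph{so} the slice topos $\shv(\daff^{\mathsf{Open}})_{/j(\spec A)}\simeq\shv(\mathsf{Open}(|\spec A|))$ is generated under colimits by the representables of opens of $|\spec A|$.'' The topology on $\daff^{\mathsf{Open}}$ is \emph{not} subcanonical (this is flagged in the remark after Construction \ref{cons:fracture}), so $j(\spec A)$ here is the \emph{sheafified} representable, and the slice $\shv(\daff^{\mathsf{Open}})_{/j(\spec A)}$ is a priori computed by the larger site of pairs $(\spec B,\eta)$ with $\eta$ a section of the sheafification $j(\spec A)$ over $\spec B$ — such sections are only \emph{locally} honest open embeddings into $\spec A$. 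Passing from that site to $\mathsf{Open}(|\spec A|)$ requires a comparison-of-sites argument (local essential surjectivity and local full faithfulness of $\mathsf{Open}(|\spec A|)\to\daff^{\mathsf{Open}}_{/j(\spec A)}$); the statement is true, and is the standard ``slice of the fracture topos over an affine is its petit topos'' fact, but it is not proved in the paper and does not follow from the site-level identification you cite, so as written the key generation claim is unsupported. The paper avoids this entirely by a further reduction: since $X\in\dstack^{\text{\'{e}t}}$, write $X$ itself as a colimit of affines $\spec B_i$ whose structure maps to $X$ lie in $\dstack^{\text{\'{e}t}}$; the composites $\spec B_i\to X\to\spec A$ are then stacky \'{e}tale maps \emph{between affines}, hence genuine \'{e}tale morphisms of derived $\cinfty$-schemes by Proposition \ref{prop:fracture}(3), so each $\spec B_i$ is covered by opens equivalent to open subspaces of $\spec A$, lies in $\dstack_{\LS}$ by your first observation, and colimit-closure finishes. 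Substituting this argument for your slice-topos step repairs the proof with tools already available in the paper.
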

\begin{proof}
Since $Y$ locally has the property $\LS$, we may write $Y$ as a canonical colimit $\colim_{\spec\,A\in \LS_{/Y}}\spec\,A\simeq Y$. Then $f$ is a colimit of the diagram of arrows $\{\spec\,A\times_YX\rightarrow\spec\,A\}_{\spec\,A\in\LS_{/Y}
}$. Since $\dstack_{\LS}$ is stable under colimits and the class of stacky \'{e}tale maps is stable under pullbacks, we may suppose that $Y$ is representable. Since the representable stacks lie in the subcategory $\dstack^{\text{\'{e}t}}\subset \dstack$, the object $X$ also lies $\dstack^{\text{\'{e}t}}$. Write $X$ as the canonical colimit $\colim_{\spec\,A\in \daff^{\mathsf{Open}}_{/X}}\spec\,A$, then we may also assume that $X$ is affine. Since a stacky \'{e}tale map among affines is an \'{e}tale map by $(3)$ of Proposition \ref{prop:fracture}, we deduce that $X$ locally has the property $\LS$.    
\end{proof}
\begin{rmk}
Let $\LS$ be a property of affine derived $\cinfty$-schemes stable under open subspaces, then there is an equality of subcategories $\dstack_{\LS}^{\text{\'{e}t}}=\dstack_{\LS}\cap \dstack^{\text{\'{e}t}}$. Indeed, the inclusion $\dstack_{\LS}^{\text{\'{e}t}}\subset\dstack_{\LS}\cap \dstack^{\text{\'{e}t}}$ is straightforward. For the other inclusion, let $X\rightarrow Y$ be a map in the image of $y_{!}$ and suppose that $X$ and $Y$ locally have the property $\LS$. This map is the colimit of the diagram 
\[ \{f:\spec\,A\longrightarrow \spec\,B\}_{f \in\fun(\Delta^1,\daff^{\mathsf{Open}})_{/X\rightarrow Y}}\]
By Lemma \ref{lem:stackyetaleproperty}, $\spec\,A$ and $\spec\,B$ locally have the property $\LS$. Since $\dstack_{\LS}^{\text{\'{e}t}}$ is stable under colimits, we may thus assume that $X$ and $Y$ are affine. Choosing open covers of $X$ and $Y$ by affines in $\LS$ and repeating the preceding argument, we may assume that $X$ and $Y$ are in $\LS$, in which case the map $X\rightarrow Y$ clearly lies in $\dstack_{\LS}^{\text{\'{e}t}}$.
\end{rmk}
\begin{prop}\label{prop:pbstackyetalepreserve}
Let $\LS$ be a property of affine derived $\cinfty$-schemes stable under open subspaces, then the full subcategory inclusion $\dstack_{\LS}\subset\dstack$ preserves pullbacks along stacky \'{e}tale maps.
\end{prop}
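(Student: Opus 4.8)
The plan is to reduce the statement to a single claim about essential images and then conclude by a standard property of fully faithful functors. Fix a stacky \'{e}tale morphism $f\colon X\to Y$ in $\dstack_{\LS}$ and an arbitrary morphism $g\colon Z\to Y$ in $\dstack_{\LS}$, and write $P$ for the pullback $X\times_Y Z$ formed in $\dstack$ (this exists, since $\dstack$ is an \inftopt) and $P'$ for the pullback formed in $\dstack_{\LS}\simeq\shv(\LS)$ (which likewise exists, being an \inftopt). There is a canonical comparison morphism $\iota_{\LS!}(P')\to P$ in $\dstack$, and what must be shown is that it is an equivalence.

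The key claim is that $P$ already lies in the essential image of $\iota_{\LS!}$, i.e. that it locally has the property $\LS$. To see this, note that the projection $P\to Z$ is the base change of the stacky \'{e}tale morphism $f$ along $g$, and the class of stacky \'{e}tale morphisms is stable under base change (as recorded in the proof of Lemma \ref{lem:stackyetalelocal}); hence $P\to Z$ is stacky \'{e}tale. Since $Z$ lies in $\dstack_{\LS}$, i.e. locally has the property $\LS$, Lemma \ref{lem:stackyetaleproperty} applies to the stacky \'{e}tale morphism $P\to Z$ and yields that $P$ locally has the property $\LS$, as claimed.

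It then remains to upgrade this to the assertion about the comparison map. Since $\iota_{\LS!}$ is fully faithful and all four vertices of the pullback square computing $P$ in $\dstack$ lie in its essential image, that square is the image under $\iota_{\LS!}$ of an essentially unique square in $\dstack_{\LS}$; because a fully faithful functor reflects limits and $\iota_{\LS!}$ of this square is a limit cone in $\dstack$, the square is a pullback in $\dstack_{\LS}$. By uniqueness of limits this identifies its vertex with $P'$ compatibly with the comparison morphism, so $\iota_{\LS!}(P')\to P$ is an equivalence, which is exactly the statement that $\dstack_{\LS}\subset\dstack$ preserves the pullback in question.

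Essentially all the real work has already been carried out in Lemmas \ref{lem:stackyetalelocal} and \ref{lem:stackyetaleproperty}, so this proposition is close to a formal corollary; the only point requiring minor care is the last paragraph, where one uses that a fully faithful embedding reflects precisely those limits whose image it computes. (Alternatively, one could resolve $Y$ by its canonical diagram of affines $\colim_{\spec\,A\in\LS_{/Y}}\spec\,A$, use that $\iota_{\LS!}$ preserves this colimit and that colimits are universal in an \inftopt to reduce to the case where $Y$ --- and then, pulling back further, also $X$ and $Z$ --- are affine, where a stacky \'{e}tale map is \'{e}tale by $(3)$ of Proposition \ref{prop:fracture} and the comparison is transparent; but the argument above avoids this bookkeeping.)
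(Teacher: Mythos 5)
Your proof is correct and follows essentially the same route as the paper: the paper's own argument also observes that $X\times_YZ\to Z$ is stacky \'{e}tale (stability under base change) and invokes Lemma \ref{lem:stackyetaleproperty} to conclude that the pullback taken in $\dstack$ already lies in $\dstack_{\LS}$. Your final paragraph merely makes explicit the formal step (full subcategory inclusions reflect limits whose cones they contain) that the paper leaves implicit in the phrase ``it suffices to show that the pullback lies in $\dstack_{\LS}$.''
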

\begin{proof}
Suppose $X\rightarrow Y$ is a stacky \'{e}tale map in $\LS$ and $Z\rightarrow Y$ is any morphism with $Z\in \dstack_{\LS}$, then it suffices to show that the pullback $X\times_YZ$ lies in $\dstack_{\LS}$. Since $X\times_YZ\rightarrow Z$ is stacky \'{e}tale, this follows from Lemma \ref{lem:stackyetaleproperty}.  
\end{proof}
\begin{prop}\label{prop:stackyetale}
 Let $\LS\subset\daff$ be a property of affine derived $\cinfty$-schemes stable under open subspaces and let $f:X\rightarrow Y$ a morphism of $\dstack_{\LS}$, then the following are equivalent.
 \begin{enumerate}[$(1)$]
     \item The map $f$ is stacky \'{e}tale.
     \item For any $Z\in\dstack_{\LS}^{\et}$ and any map $Z\rightarrow Y$ the map $Z\times_YX\rightarrow Z$ where the pullback is \emph{taken in $\dstack_{\LS}$} lies in $\dstack_{\LS}^{\text{\'{e}t}}$.
     \item For every representable $\spec\,A\in \LS$ and every map $\spec\,A\rightarrow Y$, the map $\spec\,A\times_YX\rightarrow \spec\,A$ where the pullback is \emph{taken in $\dstack_{\LS}$} lies in $\dstack_{\LS}^{\text{\'{e}t}}$.
\end{enumerate}
\end{prop}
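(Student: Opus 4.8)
The plan is to run the cycle $(1)\Rightarrow(2)\Rightarrow(3)\Rightarrow(1)$, with the first two implications essentially formal and all the real content concentrated in $(3)\Rightarrow(1)$.

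For $(1)\Rightarrow(2)$ I would start from a map $Z\rightarrow Y$ with $Z\in\dstack_{\LS}^{\et}$. Using the equality of subcategories $\dstack_{\LS}^{\et}=\dstack_{\LS}\cap\dstack^{\et}$, we have $Z\in\dstack^{\et}$, so by the definition of a stacky \'{e}tale map the projection $Z\times_YX\rightarrow Z$, computed in $\dstack$, lies in $\dstack^{\et}$. By Proposition~\ref{prop:pbstackyetalepreserve} the inclusion $\dstack_{\LS}\subset\dstack$ preserves the pullback of the stacky \'{e}tale map $f$ along $Z\rightarrow Y$, so the pullback computed in $\dstack_{\LS}$ agrees with the one in $\dstack$; alternatively, $Z\times_YX\rightarrow Z$ is a pullback of $f$ hence stacky \'{e}tale, whence $Z\times_YX\in\dstack_{\LS}$ by Lemma~\ref{lem:stackyetaleproperty}. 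Either way, $Z\times_YX\rightarrow Z$ is a morphism of $\dstack^{\et}$ with both ends in $\dstack_{\LS}$, hence a morphism of $\dstack_{\LS}\cap\dstack^{\et}=\dstack_{\LS}^{\et}$. The implication $(2)\Rightarrow(3)$ is immediate: any representable $\spec\,A$ with $\spec\,A\in\LS$ lies in $\dstack_{\LS}^{\et}$ (being the image of an affine under the fully faithful $y_{\LS!}$), so $(3)$ is the special case $Z=\spec\,A$ of $(2)$.

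For $(3)\Rightarrow(1)$: since being stacky \'{e}tale is a local property of morphisms of $\dstack$ (Lemma~\ref{lem:stackyetalelocal}) and is stable under small coproducts, and since $Y$ is the canonical colimit $\colim_{\spec\,A\in\LS_{/Y}}\spec\,A$ — so that $\coprod_{\spec\,A}\spec\,A\rightarrow Y$ is an effective epimorphism — it suffices to prove that for each $\spec\,A\in\LS$ over $Y$ the projection $\spec\,A\times_YX\rightarrow\spec\,A$, \emph{computed in $\dstack$}, is stacky \'{e}tale. Writing $P_{\LS}$ for the pullback computed in $\dstack_{\LS}$, hypothesis $(3)$ says $P_{\LS}\rightarrow\spec\,A$ lies in $\dstack_{\LS}^{\et}\subseteq\dstack^{\et}$; since its source $P_{\LS}$ then lies in $\dstack^{\et}$, this map is itself stacky \'{e}tale. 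So everything reduces to identifying the $\dstack$-pullback $P:=\spec\,A\times_YX$ with $P_{\LS}$. Here I would choose an effective epimorphism $h\colon\coprod_i\spec\,C_i\rightarrow P_{\LS}$ in $\dstack_{\LS}$ with each $\spec\,C_i\in\LS$ and each $\spec\,C_i\rightarrow P_{\LS}$ an open embedding (possible since $P_{\LS}\in\dstack_{\LS}^{\et}$); as $\iota_{\LS!}$ preserves colimits (Proposition~\ref{prop:schemeproperties}), $h$ remains an effective epimorphism in $\dstack$, and its \v{C}ech nerve is levelwise a coproduct of fiber products of the $\spec\,C_i$ along open embeddings, which by Proposition~\ref{prop:pbstackyetalepreserve} are computed identically in $\dstack_{\LS}$ and $\dstack$ and remain affines in $\LS$. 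Thus $P_{\LS}$ is the geometric realization of this \v{C}ech nerve both in $\dstack_{\LS}$ and in $\dstack$. One then checks that the composite $\coprod_i\spec\,C_i\rightarrow P_{\LS}\rightarrow P$ is again an effective epimorphism with the same \v{C}ech nerve — equivalently, that the \'{e}tale stack $P_{\LS}$ over $\spec\,A$ is already seen by the $\LS$-affines — so that $P\simeq P_{\LS}$, and hence $P\rightarrow\spec\,A$ is stacky \'{e}tale, which completes the reduction.

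The hard part is exactly that last identification $P\simeq P_{\LS}$: a priori the inclusion $\dstack_{\LS}\subset\dstack$ need not preserve the pullback of $f$ along $\spec\,A\rightarrow Y$ — this is precisely the pathology flagged in Warning~\ref{warn:representable} — and the only extra leverage is the information, coming from $(3)$, that the $\dstack_{\LS}$-pullback $P_{\LS}\rightarrow\spec\,A$ is stacky \'{e}tale. The argument must therefore show that a stacky \'{e}tale stack over an affine in $\LS$ is detected by its restriction to $\LS$; I expect this to follow from the \v{C}ech descent above once one verifies that the comparison map $P_{\LS}\rightarrow P$ is both $(-1)$-truncated and an effective epimorphism, using universality of colimits in $\dstack$, stability of $\dstack^{\et}$ under colimits, and Proposition~\ref{prop:pbstackyetalepreserve} applied to the open embeddings $\spec\,C_i\hookrightarrow P_{\LS}$.
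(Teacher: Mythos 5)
Your directions $(1)\Rightarrow(2)$ and $(2)\Rightarrow(3)$ are fine and match the paper. The gap is in $(3)\Rightarrow(1)$, and it sits exactly where you flag it: the identification $P_{\LS}\simeq P$ of the pullback taken in $\dstack_{\LS}$ with the pullback taken in $\dstack$ is never actually proved. Note that this identification is equivalent to the assertion that the counit $\iota_{\LS!}\iota_{\LS}^*P\rightarrow P$ is an equivalence, i.e.\ that the $\dstack$-pullback already lies in $\dstack_{\LS}$ — which is precisely the failure mode recorded in Warning \ref{warn:representable}, and is what the proposition is supposed to deliver, not what one may feed into it. Your proposed route to it has two concrete problems. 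First, the claim that any $P_{\LS}\in\dstack_{\LS}^{\et}$ admits an effective epimorphism from $\LS$-affines by \emph{open embeddings} is false: objects in the image of $y_{\LS!}$ can be genuinely stacky (already the quotient of an affine by a group acting through open embeddings, e.g.\ a trivial $\Z/2$-gerbe over $\R$, lies in $\dstack^{\et}$), and such objects admit \'etale atlases but no atlas by monomorphisms, since no open substack of a gerbe is $0$-truncated. Second, and more seriously, the decisive step — that the composite $\coprod_i\spec C_i\rightarrow P_{\LS}\rightarrow P$ is an effective epimorphism with the same \v{C}ech nerve — is only asserted. None of the ingredients you invoke (universality of colimits, stability of $\dstack^{\et}$ under colimits, Proposition \ref{prop:pbstackyetalepreserve} for open embeddings into $P_{\LS}$) says anything about maps into $P$ from affines \emph{not} in $\LS$, and that is exactly where $P$ and the left Kan extension $P_{\LS}$ can disagree: $\Hom(\spec B,P)$ is computed as a fibre product of mapping spaces for every affine $\spec B$, while $P_{\LS}$ only knows its values on $\LS$. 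So the sketch begs the question at its crux.

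The fix is the paper's argument, which never compares the two pullbacks over a single affine. Write $Y\simeq\colim_{\spec A\in\LS_{/Y}}\spec A$; by universality of colimits in $\dstack_{\LS}$ and colimit-preservation of $\iota_{\LS!}$, the map $X\rightarrow Y$ is the colimit in $\dstack$ of the transformation $\{(X\times_Y\spec A)_{\LS}\rightarrow\spec A\}$, which is Cartesian in $\dstack_{\LS}$. By hypothesis $(3)$ each $(X\times_Y\spec B)_{\LS}\rightarrow\spec B$ lies in $\dstack_{\LS}^{\et}$, hence is stacky \'etale; applying Proposition \ref{prop:pbstackyetalepreserve} only to pullbacks of these maps along morphisms $\spec A\rightarrow\spec B$ \emph{between affines in $\LS$} shows the transformation is Cartesian in $\dstack$ as well. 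Then Lemma \ref{lem:stackyetalelocal} together with the descent characterization of local properties (Proposition \ref{prop:localpropertycharacterize}) applied to this Cartesian transformation of colimit diagrams yields that $X\rightarrow Y$ is stacky \'etale; the identification $\spec A\times_YX\simeq(X\times_Y\spec A)_{\LS}$ you wanted then follows a posteriori, rather than being an input.
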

\begin{proof}
For the proof of $(1)\Rightarrow (2)$ we invoke Proposition \ref{prop:pbstackyetalepreserve} to conclude that the pullbacks $X\times_YZ$ and $(X\times_YZ)_{\LS}$ (the pullback taken in $\dstack_{\LS}$) coincide. Since stacky \'{e}tale maps are stable under pullbacks in $\dstack$, we are done. The implication $(2)\Rightarrow (3)$ is obvious. For $(3)\Rightarrow (1)$, we suppose that the map $X\rightarrow Y$ has the property specified in $(3)$. Write $Y\simeq \colim_{\spec\,A\in\LS_{/Y}}\spec\,A$ as a canonical colimit, then $X\rightarrow Y$ is a colimit of the Cartesian (in $\dstack_{\LS}$) natural transformation $\{(X\times_{Y}\spec\,A)_{\LS}\rightarrow\spec\,A\}_{\spec\,A\in \LS_{/Y}}$, where $(X\times_{Y}\spec\,A)_{\LS}$ denotes the pullback taken in $\dstack_{\LS}$. It follows from $(3)$ that each of the maps $(X\times_{Y}\spec\,A)_{\LS}\rightarrow\spec\,A$ lies in $\dstack^{\text{\'{e}t}}_{\LS}$. Invoking Proposition \ref{prop:pbstackyetalepreserve}, we deduce that the natural transformation $\{(X\times_{Y}\spec\,A)_{\LS}\rightarrow\spec\,A\}_{\spec\,A\in \LS_{/Y}}$ is also a Cartesian transformation in $\dstack$. Now we conclude by invoking Proposition \ref{prop:localpropertycharacterize} and the fact that being stacky \'{e}tale is a local property of morphisms in $\dstack$.
\end{proof}
We will also require a stacky generalization of submersive maps. For this, we need to restrict our attention to properties of of affine derived $\cinfty$-schemes stable under products.
\begin{lem}\label{lem:propertyproductstable}
Let $\LS$ be a property of affine derived $\cinfty$-schemes stable under open subspaces. Suppose that $\LS\subset \daff$ is stable under finite products, then the full subcategory inclusion $\dstack_{\LS}\subset\dstack$ is stable under finite products as well. 
\end{lem}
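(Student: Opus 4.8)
The plan is to exploit the description of $\dstack_{\LS}$ as the essential image of the fully faithful left adjoint $\iota_{\LS!}:\shv(\LS)\rightarrow\dstack$ furnished by Proposition \ref{prop:schemeproperties}. The first thing I would record is that, being the essential image of a \emph{colimit-preserving} fully faithful functor, $\dstack_{\LS}$ is stable under all colimits formed in $\dstack$: a diagram $F:K\rightarrow\dstack$ valued in $\dstack_{\LS}$ lifts along $\iota_{\LS!}$ to a diagram $\widetilde{F}:K\rightarrow\shv(\LS)$ with $\iota_{\LS!}\widetilde{F}\simeq F$, and then $\colim_K F\simeq\iota_{\LS!}(\colim_K\widetilde{F})$ again lies in $\dstack_{\LS}$. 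With this in hand the strategy is to express a product $X\times Y$ of objects of $\dstack_{\LS}$ as a colimit of products of representables lying in $\LS$, and to invoke the hypothesis that $\LS$ is closed under finite products.

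Concretely, for $X,Y\in\dstack_{\LS}$ I would write each as the canonical colimit of the representables in $\LS$ lying over it, $X\simeq\colim_{\spec\,A\in\LS_{/X}}\spec\,A$ and $Y\simeq\colim_{\spec\,B\in\LS_{/Y}}\spec\,B$ (using that $\iota_{\LS!}$ preserves colimits and sends representable sheaves on $\LS$ to representable stacks). Since $\dstack$ is an \inftopt, its colimits are universal, so binary products distribute over colimits in each variable; applying this twice identifies $X\times Y$ with $\colim_{\spec\,A,\spec\,B}(\spec\,A\times\spec\,B)$, the inner products taken in $\dstack$. Each such product is represented by the affine derived $\cinfty$-scheme $\spec\,A\times\spec\,B$, because the (subcanonical) Yoneda embedding $\daff\hookrightarrow\dstack$ preserves finite products, and $\spec\,A\times\spec\,B$ lies in $\LS$ by the hypothesis that $\LS$ is stable under binary products. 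Hence $X\times Y$ is a colimit of a diagram valued in $\dstack_{\LS}$, so it lies in $\dstack_{\LS}$ by the first step. For the empty product I would separately observe that $\spec\,\R=(*,\cinfty_{*})$ is the terminal object of $\daff$, lies in $\LS$ (being the empty product), and is carried to the terminal stack by Yoneda, so the terminal object of $\dstack$ belongs to $\dstack_{\LS}$ as well.

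The one genuinely delicate point — and the one I would flag as the main obstacle — is that $\iota_{\LS!}$ is \emph{not} left exact in general (this is exactly the failure that Section 2.2 is built to handle for $\LS=\mathsf{Mfd}$), so one cannot simply say $\iota_{\LS!}$ preserves products and be done. The whole content of the argument is to trade the product for a colimit of products of affines, where closure under products is an assumption on $\LS$, and to commute that colimit past the product using descent (universality of colimits) in $\dstack$. Everything else is routine: checking that Yoneda preserves the finite limits in sight, that $\LS_{/X}$ and $\LS_{/Y}$ index the relevant canonical colimits, and that the resulting colimit diagram lands in $\dstack_{\LS}$.
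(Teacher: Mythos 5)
Your argument is correct and is essentially the paper's own proof spelled out: the paper likewise reduces to the case of representables via universality of colimits and the stability of $\dstack_{\LS}\subset\dstack$ under colimits, then uses that $\LS$ is closed under products and that the (subcanonical) Yoneda embedding preserves them. Your extra remarks on the canonical colimit decomposition and the terminal object are just the details the paper leaves implicit.
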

\begin{proof}
Let $X,Y\in\dstack_{\LS}$. To see that $X\times Y$, the product in $\dstack$, lies in $\dstack_{\LS}$ it suffices to consider the case that $X$ and $Y$ are representable, by universality of colimits and the fact that $\dstack_{\LS}\subset\dstack$ is stable under colimits.   
\end{proof}
\begin{rmk}
In virtue of the preceding result, we will in the sequel freely use that products of smooth stacks are taken in derived stacks.
\end{rmk}
\begin{defn}
A map $f:X\rightarrow Y$ of derived $\cinfty$-stacks is \emph{stacky submersive} if $f$ is stacky \'{e}tale locally a projection away from a smooth stack. More precisely, $f$ is stacky submersive if there exists a small collection of commuting diagrams
\[
\begin{tikzcd}
  U_i\times E_i\ar[d]\ar[r] & X\ar[d] \\
  U_i\ar[r] & Y
\end{tikzcd}
\]
that determines an effective epimorphism $\coprod_i U_i\times E_i\rightarrow X$ in which the horizontal maps are stacky \'{e}tale, the left vertical map is the projection onto the first factor and $E_i$ is smooth. 
\end{defn}
Here are some basic properties of stacky submersive maps.
\begin{prop}\label{prop:stackysubmersive}
Let $\mathsf{Mfd}\subset \LS\subset\daff$ be a property of affine derived $\cinfty$-schemes stable under open subspaces and stable under products, then the following hold true.
\begin{enumerate}[$(1)$]
    \item The property of being stacky submersive is a local property of morphisms of $\dstack$.
    \item If $X\rightarrow Y$ is a stacky submersive map and $Y$ locally has the property $\LS$, then $X$ locally has the property $\LS$.
    \item The full subcategory inclusion $\dstack_{\LS}\subset\dstack$ preserves pullbacks along stacky submersive maps.
    \item Let $f:X\rightarrow Y$ a map in $\dstack_{\LS}$, then $f$ is stacky submersive if and only if for each $\spec\,A\in \LS$, the pullback $\spec\,A\times_YX\rightarrow\spec\,A$ \emph{taken in $\dstack_{\LS}$} is stacky submersive. 
\end{enumerate}
\end{prop}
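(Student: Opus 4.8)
The plan is to transport, essentially verbatim, the machinery already developed for stacky \'{e}tale maps --- Lemmas \ref{lem:stackyetalelocal} and \ref{lem:stackyetaleproperty}, Proposition \ref{prop:pbstackyetalepreserve}, and the proof of Proposition \ref{prop:stackyetale} --- to the present setting, the only new feature being a smooth factor in the local models. I would begin by isolating the two structural facts on which everything rests. \emph{Stability under pullback}: if $f\colon X\to Y$ is stacky submersive, presented by commuting squares $U_i\times E_i\to X$ over $U_i\to Y$ with $\coprod_i U_i\times E_i\to X$ an effective epimorphism, the horizontal maps stacky \'{e}tale, the left verticals projections and $E_i$ smooth, then for any $Z\to Y$ I base-change each square; using the identity $A\times_X(X\times_Y Z)\simeq A\times_Y Z$ one gets $(U_i\times E_i)\times_Y Z\simeq (U_i\times_Y Z)\times E_i$, and since effective epimorphisms and stacky \'{e}tale maps are stable under pullback (Lemma \ref{lem:stackyetalelocal}), the squares $(U_i\times_Y Z)\times E_i\to X\times_Y Z$ over $U_i\times_Y Z\to Z$ exhibit $X\times_Y Z\to Z$ as stacky submersive. \emph{Stability under composition}: given stacky submersive $X\to Y$ and $Y\to Z$, I pull the presentation of $Y\to Z$ back along the maps $U_i\to Y$ of the presentation of $X\to Y$, refine the covers, and reassemble, using that $E\times F$ is smooth when $E$ and $F$ are and that stacky \'{e}tale maps compose (Proposition \ref{prop:fracture}$(3)$).

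Granting these, assertion $(2)$ is quick: with $f$ and a presentation as above and $Y$ locally having property $\LS$, each $U_i$ locally has $\LS$ by Lemma \ref{lem:stackyetaleproperty}; since $\mathsf{Mfd}\subset\LS$ we have $\smst\subset\dstack_{\LS}$, and since $\LS$ is stable under products so is $\dstack_{\LS}$ (Lemma \ref{lem:propertyproductstable}), whence $U_i\times E_i\in\dstack_{\LS}$. The \v{C}ech nerve of the effective epimorphism $\coprod_i U_i\times E_i\to X$ has in each degree a coproduct of iterated fibre products of the stacky \'{e}tale maps $U_i\times E_i\to X$, each of which is stacky \'{e}tale over some $U_j\times E_j\in\dstack_{\LS}$ and hence lies in $\dstack_{\LS}$ by Lemma \ref{lem:stackyetaleproperty}; as $\dstack_{\LS}\subset\dstack$ is closed under colimits and $X$ is the geometric realization of this \v{C}ech nerve, $X\in\dstack_{\LS}$.

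For $(1)$ it remains to establish descent, and by the appendix's characterization of local properties (Proposition \ref{prop:localpropertycharacterize}) it suffices to reproduce the argument of Lemma \ref{lem:stackyetalelocal}: given an effective epimorphism $\coprod_i U_i\to Y$ with each $X\times_Y U_i\to U_i$ stacky submersive and a test map $Z\to Y$ with $Z$ an \'{e}tale stack, one manufactures from the fracture adjunction $y_!\dashv y^*$ of Proposition \ref{prop:fracture} an effective epimorphism $\coprod_i V_i\to Z$ by \emph{\'{e}tale} stacks, with each $V_i\to Z$ stacky \'{e}tale and $V_i\times_Z(X\times_Y Z)\simeq V_i\times_{U_i}(X\times_Y U_i)$ stacky submersive over $V_i$ by stability under pullback; composing a presentation of $V_i\times_Z(X\times_Y Z)\to V_i$ with the stacky \'{e}tale maps $V_i\to Z$ and $V_i\times_Z(X\times_Y Z)\to X\times_Y Z$ then directly produces squares exhibiting $X\times_Y Z\to Z$ as stacky submersive. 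I expect this descent step to be the main obstacle: exactly as for stacky \'{e}tale maps, a covering $\coprod_i U_i\to Y$ of a stack need not itself be stacky \'{e}tale, so the local product-presentations cannot be glued directly, and one must first pass through the \'{e}tale cover $\{V_i\to Z\}$ extracted from $y_!\dashv y^*$.

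Parts $(3)$ and $(4)$ are then formal. For $(3)$: if $f\colon X\to Y$ is stacky submersive and $Z\to Y$ is any map with $X,Y,Z\in\dstack_{\LS}$, then $X\times_Y Z\to Z$ is stacky submersive by stability under pullback, so $X\times_Y Z\in\dstack_{\LS}$ by $(2)$, and since $\dstack_{\LS}\subset\dstack$ is a full subcategory a pullback computed in $\dstack$ that lands in $\dstack_{\LS}$ is a pullback there too (compare Proposition \ref{prop:pbstackyetalepreserve}). For $(4)$, the forward direction is immediate from $(3)$ together with stability under pullback; for the converse I would write $Y\simeq\colim_{\spec\,A\in\LS_{/Y}}\spec\,A$ as a canonical colimit, note that $f$ is the colimit of the transformation $\{(\spec\,A\times_Y X)_{\LS}\to\spec\,A\}$, which by $(3)$ is a Cartesian transformation in $\dstack$ all of whose components are stacky submersive, and conclude that $f$ is stacky submersive from $(1)$ and the Cartesian-transformation characterization of local properties (Proposition \ref{prop:localpropertycharacterize}), exactly as in the proof of $(3)\Rightarrow(1)$ in Proposition \ref{prop:stackyetale}.
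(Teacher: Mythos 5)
Your treatment of $(2)$, $(3)$, the pullback-stability argument, and the deduction of $(4)$ from $(1)$ all track the paper's proof closely and are fine. The genuine gap is exactly where you anticipated trouble: the descent step in $(1)$. Your plan is to "reproduce the argument of Lemma \ref{lem:stackyetalelocal}", i.e.\ to take a test map $Z\rightarrow Y$ with $Z$ an \'{e}tale stack, extract via the fracture adjunction an \'{e}tale cover $\{V_i\rightarrow Z\}$, and conclude that $X\times_YZ\rightarrow Z$ is stacky submersive. But that argument structure is tied to the \emph{definition} of stacky \'{e}tale, which is a test condition against \'{e}tale stacks; stacky submersive is not defined that way. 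It is defined by the existence of a \emph{global} presentation: an effective epimorphism $\coprod_a W_a\times E_a\rightarrow X$ with $W_a\times E_a\rightarrow X$ and $W_a\rightarrow Y$ stacky \'{e}tale. So what condition $(3)$ of Proposition \ref{prop:localpropertycharacterize} requires you to produce is such a presentation of $X\rightarrow Y$ itself, and your argument never produces any stacky \'{e}tale map to $Y$ (or to $X$): it only yields presentations of the pullbacks $X\times_YZ\rightarrow Z$ for \'{e}tale $Z$ over $Y$. That conclusion is essentially the hypothesis in disguise — taking $Z$ to range over a cover of $Y$ by representables (which are \'{e}tale stacks) puts you right back at the original gluing problem — so the loop is not closed.

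The missing idea, and the way the paper proceeds, is to manufacture global stacky \'{e}tale charts out of the local data. Given the cover $h:\coprod_jV_j\rightarrow Y$ with each $X\times_YV_j\rightarrow V_j$ stacky submersive, presented by squares $U_{ij}\times E_{ij}\rightarrow X\times_YV_j$ over $U_{ij}\rightarrow V_j$, one forms for each pair $(i,j)$ the simplicial objects $C_{\bullet}=\check{C}(h)_{\bullet}\times_{\coprod_jV_j}U_{ij}$ and $C'_{\bullet}=\check{C}(h')_{\bullet}\times_{\coprod_jX\times_YV_j}(U_{ij}\times E_{ij})$, where $h':\coprod_jX\times_YV_j\rightarrow X$. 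In each simplicial degree the comparison maps to $\check{C}(h)_{\bullet}$ and $\check{C}(h')_{\bullet}$ are stacky \'{e}tale, so by the already-established locality of stacky \'{e}tale maps (Lemma \ref{lem:stackyetalelocal}) the realizations give stacky \'{e}tale maps $|C_{\bullet}|\rightarrow Y$ and $|C'_{\bullet}|\rightarrow X$; by universality of colimits $|C'_{\bullet}|\rightarrow|C_{\bullet}|$ is identified with the projection $|C_{\bullet}|\times E_{ij}\rightarrow|C_{\bullet}|$, and letting $(i,j)$ vary these squares cover $X$ and exhibit $X\rightarrow Y$ as stacky submersive. Without some construction of this kind your proof of $(1)$ does not go through, and since your $(4)$ invokes $(1)$, that part inherits the gap as well. (Your one-line claim that stacky submersive maps compose is also glossed — the composite of ``projection, then stacky \'{e}tale, then projection, then stacky \'{e}tale'' is not literally of the required shape and needs a refinement argument — but this is peripheral, since only pullback stability is needed for Proposition \ref{prop:localpropertycharacterize}.)
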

\begin{proof}
It is easy to see that the property of being stacky submersive is stable under pullbacks and compositions. Now suppose that for $X\rightarrow Y$ a map of derived $\cinfty$-stacks, there exists a small collection $\{V_j\rightarrow Y\}_j$ determining an effective epimorphism $\coprod_jV_j\rightarrow Y$ such that $X\times_YV_j\rightarrow V_j$ is stacky submersive for each $j$. Choose for each $j$ a small collection of diagrams
\[
\begin{tikzcd}
  U_{ij}\times E_{ij}\ar[d]\ar[r] & X\times_YV_j\ar[d] \\
  U_{ij}\ar[r] & V_j
\end{tikzcd}
\]
that exhibit the map $X\times_YV_j\rightarrow V_j$ as stacky submersive. We have effective epimorphisms $h:\coprod_j V_j\rightarrow Y$ and $h':\coprod_j X\times_YV_j\rightarrow X$. Consider for each pair $j,i$ the diagrams
\[C_{\bullet}:=\check{C}(h)_{\bullet}\times_{\coprod_j V_j}U_{ij},\quad\quad C'_{\bullet}:=\check{C}(h')_{\bullet}\times_{\coprod_j X\times_YV_j} U_{ij}\times E_{ij},  \]
then we have a diagram 
\[
\begin{tikzcd}
 {|}C'_{\bullet}{|} \ar[d]\ar[r] & {|}C_{\bullet}{|} \ar[d] \\
 X\ar[r] &  Y.
\end{tikzcd}
\]
The right vertical map is a colimit of the Cartesian transformation $\check{C}(h)_{\bullet}\times_{\coprod V_j}U_{ij}\rightarrow \check{C}(h)_{\bullet}$. Since the map $U_{ij}\rightarrow \coprod_j V_j$ is stacky \'{e}tale, the map $\check{C}(h)_{n}\times_{\coprod V_j}U_{ij}\rightarrow \check{C}(h)_{n}$ is stacky \'{e}tale for all $[n]\in \simp$, so we conclude that $|C_{\bullet}|\rightarrow Y$ is stacky \'{e}tale as being stacky \'{e}tale is a local property. The same holds for the map $|C'_{\bullet}|\rightarrow X$; we now complete the proof by remarking that by universality of colimits, the map $|C'_{\bullet}|\rightarrow |C_{\bullet}|$ is equivalent to the projection $ |C_{\bullet}|\times E_{ij}\rightarrow |C_{\bullet}|$. To prove $(2)$, we assume $X\rightarrow Y$ is stacky submersive, so there exists a covering $h:\coprod_iU_i\times E_i\rightarrow X$ by stacky \'{e}tale maps such that the composition $U_i\times E_i\rightarrow Y$ factors through the projection to $U_i$ as stacky \'{e}tale map $U_i\rightarrow Y$. It follows from Lemma \ref{lem:stackyetaleproperty} that $U_i$ locally has the property $\LS$. We see from Lemma \ref{lem:propertyproductstable} that $U_i\times E_i$ locally has the property $\LS$. Since stacky \'{e}tale maps are stable under pullbacks along any morphism, every map in the \v{C}ech nerve $\check{C}(h)_{\bullet}$ is stacky \'{e}tale. Invoking Lemma \ref{lem:stackyetaleproperty} again, we deduce that the simplicial diagram $\check{C}(h)_{\bullet}$ lies in $\dstack_{\LS}$ so its geometric realization does as well. Now $(3)$ is an immediate consequence of $(2)$ as in the proof of Proposition \ref{prop:pbstackyetalepreserve}. For $(4)$, we note that the `only if' direction follows from the fact that stacky submersive maps are stable under pullbacks and $(3)$. For the `if' direction, write $Y\simeq \colim_{\spec\,A\in\LS_{/Y}}\spec\,A$ as a canonical colimit, then $X\rightarrow Y$ is a colimit of the Cartesian (in $\dstack_{\LS}$) natural transformation $\{(X\times_{Y}\spec\,A)_{\LS}\rightarrow\spec\,A\}_{\spec\,A\in \LS_{/Y}}$, where $(X\times_{Y}\spec\,A)_{\LS}$ denotes the pullback taken in $\dstack_{\LS}$. By assumption, each of the maps $(X\times_{Y}\spec\,A)_{\LS}\rightarrow\spec\,A$ is stacky submersive. Invoking $(3)$, we deduce that the natural transformation $\{(X\times_{Y}\spec\,A)_{\LS}\rightarrow\spec\,A\}_{\spec\,A\in \LS_{/Y}}$ is also a Cartesian transformation in $\dstack$. Now we conclude by invoking $(1)$. 
\end{proof}
Using the ideas of this section, we can now give a criterion for a map $X\rightarrow Y$ among stacks locally having the property $\LS$ being an open inclusion (\'{e}tale map, submersive map) as a map of derived $\cinfty$-stacks. 
\begin{prop}\label{prop:criterion}
Let $\LS$ be a property of affine derived $\cinfty$-schemes stable under open subspaces. Let $f:X\rightarrow Y$ be a map of derived $\cinfty$-stacks that locally have the property $\LS$, then the following hold true.
\begin{enumerate}[$(1)$]
\item The map $f$ is an open inclusion as a map of derived $\cinfty$-stacks if and only if for each representable $\spec\,A$ having the property $\LS$, the map $(\spec\,A\times_YX)_{\LS}\rightarrow\spec\,A$ \emph{where the pullback is taken in $\dstack_{\LS}$} is an open inclusion.
\item Suppose that $\LS$ is a local property. The map $f$ is an \'{e}tale map of derived $\cinfty$-stacks if and only if for each representable $\spec\,A$ having the property $\LS$, the map $(\spec\,A\times_YX)_{\LS}\rightarrow\spec\,A$ \emph{where the pullback is taken in $\dstack_{\LS}$} is \'{e}tale.
\item Suppose that $\LS$ is a local property and stable under finite products. The map $f$ is a submersive map of derived $\cinfty$-stacks if and only if for each representable $\spec\,A$ having the property $\LS$, the map $(\spec\,A\times_YX)_{\LS}\rightarrow\spec\,A$ \emph{where the pullback is taken in $\dstack_{\LS}$} is submersive.
\end{enumerate}
\end{prop}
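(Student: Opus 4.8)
The plan is to prove all three statements by the same mechanism, the point being that an open inclusion, an \'{e}tale map, and a submersive map of derived $\cinfty$-stacks are, in particular, stacky \'{e}tale resp.\ stacky submersive, and that the inclusion $\dstack_{\LS}\subset\dstack$ preserves pullbacks along such maps by Propositions \ref{prop:pbstackyetalepreserve} and \ref{prop:stackysubmersive}$(3)$. Thus, as soon as one knows that a pullback $(\spec\,A\times_YX)_{\LS}\to\spec\,A$ is stacky \'{e}tale (resp.\ stacky submersive), the pullback computed in $\dstack_{\LS}$ \emph{coincides} with the one computed in $\dstack$, and then the desired equivalence is forced because being an open inclusion / \'{e}tale / submersive is a local property of morphisms of $\dstack$ (Propositions \ref{prop:localtarget} and \ref{prop:extendprop}), hence may be checked after base change along the effective epimorphism $\coprod_{\spec\,A\in\LS_{/Y}}\spec\,A\to Y$ arising from the canonical colimit presentation $Y\simeq\colim_{\spec\,A\in\LS_{/Y}}\spec\,A$, with $f$ realized as the colimit of the corresponding Cartesian natural transformation $\{(\spec\,A\times_YX)\to\spec\,A\}_{\spec\,A\in\LS_{/Y}}$.

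For the ``if'' direction of $(1)$: assuming each $(\spec\,A\times_YX)_{\LS}\to\spec\,A$ is an open inclusion, taking the pullback along the identity shows it is an open embedding of affine derived $\cinfty$-schemes, with $(\spec\,A\times_YX)_{\LS}$ an open subspace of $\spec\,A$ and hence in $\LS$; an open embedding is an \'{e}tale morphism (Definition \ref{defn:etalemap}), and an \'{e}tale morphism between affines in $\LS$ lies in $\dstack_{\LS}^{\text{\'{e}t}}$ by Proposition \ref{prop:fracture}$(3)$. Proposition \ref{prop:stackyetale} then yields that $f$ is stacky \'{e}tale, Proposition \ref{prop:pbstackyetalepreserve} identifies $(\spec\,A\times_YX)_{\LS}$ with the pullback $\spec\,A\times_YX$ taken in $\dstack$, and locality of ``open inclusion'' in $\dstack$ upgrades this, via the colimit presentation above, to $f$ being an open inclusion in $\dstack$. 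The ``only if'' direction is the same argument run backwards: an open inclusion in $\dstack$ is an \'{e}tale, hence stacky \'{e}tale, map, so Proposition \ref{prop:pbstackyetalepreserve} again computes the $\dstack_{\LS}$-pullback as the $\dstack$-pullback, where it is manifestly an open embedding of affines in $\LS$, and therefore an open inclusion in $\dstack_{\LS}$.

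Parts $(2)$ and $(3)$ follow identically with ``\'{e}tale morphism'' resp.\ ``submersive morphism'' in place of ``open embedding'': an \'{e}tale map of stacks is stacky \'{e}tale and a submersive map of stacks is stacky submersive, a submersive morphism of affine derived $\cinfty$-schemes is stacky submersive since it is locally a projection off a smooth manifold glued along open (hence stacky \'{e}tale) embeddings, and Propositions \ref{prop:stackyetale}$(3)$ resp.\ \ref{prop:stackysubmersive}$(4)$ supply the passage from the pullback hypothesis to stacky \'{e}taleness resp.\ stacky submersiveness of $f$. The hypotheses that $\LS$ be a local property (and, for $(3)$, stable under finite products) are precisely what is needed to invoke Proposition \ref{prop:extendprop} (so that ``\'{e}tale'' and ``submersive'' are genuine local properties of morphisms of $\dstack_{\LS}$ and of $\dstack$) and Proposition \ref{prop:stackysubmersive}, whose statements presuppose stability under products.

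The step I expect to carry the actual content, as opposed to bookkeeping, is exactly this translation between pullbacks in $\dstack_{\LS}$ and in $\dstack$: by Warning \ref{warn:representable} the inclusion $\dstack_{\LS}\subset\dstack$ need not preserve finite limits, so everything hinges on first establishing that the relevant pullbacks are stacky \'{e}tale or stacky submersive so that Propositions \ref{prop:pbstackyetalepreserve} and \ref{prop:stackysubmersive}$(3)$ become applicable; once this is in hand, the remainder is a routine application of the fact that the three properties are local on the target and hence detectable after base change along a suitable effective epimorphism onto $Y$.
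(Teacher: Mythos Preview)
Your proposal is correct and follows essentially the same approach as the paper. The only organizational difference is that the paper, in the ``if'' direction, verifies the defining condition for the property in $\dstack$ by taking an arbitrary $\spec\,B\in\daff$ with a map to $Y$ and then reducing via the canonical colimit $Y\simeq\colim_{\spec\,A\in\LS_{/Y}}\spec\,A$ to the map $\spec\,A\times_YX\to\spec\,A$, whereas you invoke locality of the property in $\dstack$ directly on $f$ using the effective epimorphism $\coprod_{\spec\,A\in\LS_{/Y}}\spec\,A\to Y$; these are the same argument phrased slightly differently. The paper also makes explicit one small point you leave implicit in the ``only if'' direction: for (2) and (3) it cites Corollary~\ref{cor:localprop} to conclude that the affine pullback $\spec\,A\times_YX$ (computed in $\dstack$, which agrees with the $\dstack_{\LS}$-pullback once $f$ is known to be stacky \'{e}tale/submersive) actually lies in $\LS$, which is where the hypothesis that $\LS$ be local is used.
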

\begin{proof}
In case $f$ is an open inclusion (\'{e}tale map, submersive map) of derived $\cinfty$-stacks, then $f$ is in particular stacky \'{e}tale (stacky submersive), so the `only if' direction of the statements above follow from Proposition \ref{prop:pbstackyetalepreserve}, $(3)$ of Proposition \ref{prop:stackysubmersive} and Corollary \ref{cor:localprop}. We prove the converse statements. It follows from Proposition \ref{prop:stackyetale} that in case $f$ satisfies the condition in $(1)$ then $f$ is stacky \'{e}tale, and similarly for condition $(2)$. It follows from $(4)$ of Proposition \ref{prop:stackysubmersive} that in case $f$ satisfies the condition in $(3)$, then $f$ is stacky submersive. We have to verify that for any $\spec\,B\in \daff$ and any map $\spec\,B\rightarrow Y$, the map $\spec\,B\times_YX\rightarrow\spec\,B$ is an open inclusion (\'{e}tale, submersive). Write $Y$ as a canonical colimit $\colim_{\spec\,A\in \LS_{/Y}}\spec\,A$, then, since being an open inclusion (\'{e}tale, submersive) is a local property of morphisms of $\dstack$ and the collection $\{\spec\,B\times_Y\spec\,A\rightarrow \spec\,B\}_{\spec\,A\in \LS_{/Y}}$ determines an effective epimorphism to $\spec\,B$, it suffices to check that for every $\spec\,A\rightarrow Y$ in $\dstack_{\LS}$, the map $\spec\,B\times_YX\rightarrow \spec\,B\times_Y\spec\,A$ is an open inclusion (\'{e}tale map, submersive map) of derived $\cinfty$-stacks. This map is a pullback of the map $\spec\,A\times_YX \rightarrow \spec\,A$. Since $f$ is stacky \'{e}tale (stacky submersive), we may take the pullback in $\dstack_{\LS}$ (Proposition \ref{prop:pbstackyetalepreserve} or $(3)$ of Proposition \ref{prop:stackysubmersive}), in which case it follows from $(1)$ ($(2)$, $(3)$).
\end{proof}
\begin{cor}
A map $f:X\rightarrow S$ of smooth stacks is an open inclusion (\'{e}tale, submersive) \emph{as a map of smooth stacks} if and only if $f$ is an open inclusion (\'{e}tale, submersive) \emph{as a map of derived $\cinfty$-stacks}.
\end{cor}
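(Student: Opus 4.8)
The plan is to obtain the corollary as the special case $\LS=\mathsf{Mfd}$ of Proposition \ref{prop:criterion}. Recall that $\smst=\dstack_{\mathsf{Mfd}}$, so a map $f:X\to S$ of smooth stacks is, by definition (Definition \ref{defn:representablemorphism}, Proposition \ref{prop:extendprop}, and the discussion preceding Proposition \ref{prop:representablebyscheme}), an open inclusion (resp. \'{e}tale, resp. submersive) \emph{as a map of smooth stacks} precisely when $f$ is representable in $\smst$ and, for every manifold $N$ and every map $N\to S$, the pullback $(N\times_S X)_{\mathsf{Mfd}}$ formed in $\smst$ is a manifold and the induced map $(N\times_S X)_{\mathsf{Mfd}}\to N$ is an open embedding (resp. an \'{e}tale map, resp. a submersion) of manifolds. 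Since every affine derived $\cinfty$-scheme having the property $\mathsf{Mfd}$ is a manifold, this is \emph{verbatim} the right-hand side of the equivalence in part $(1)$ (resp. $(2)$, resp. $(3)$) of Proposition \ref{prop:criterion}, while its left-hand side is exactly the assertion that $f$ is an open inclusion (resp. \'{e}tale, resp. submersive) \emph{as a map of derived $\cinfty$-stacks}.

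To invoke Proposition \ref{prop:criterion} we must verify its standing hypotheses for $\LS=\mathsf{Mfd}$: by Remark \ref{rmk:localprop} the property $\mathsf{Mfd}$ is stable under open subspaces and is local, and manifolds are closed under finite products (the global dimension bound in our definition of manifold is preserved by finite products), which supplies the extra product-stability hypothesis needed in part $(3)$. With these in hand, Proposition \ref{prop:criterion} yields the desired equivalence in each of the three cases simultaneously, and the result follows.

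I do not expect any genuine obstacle here: the whole substance of the statement already resides in Propositions \ref{prop:stackyetale}, \ref{prop:stackysubmersive} and \ref{prop:criterion}, which upgrade representability in $\smst$ to representability in $\dstack$ for maps that are stacky \'{e}tale or stacky submersive (cf. Warning \ref{warn:representable}, where the failure of such an upgrade in general is recorded). The only care required is the bookkeeping above, namely matching the internal notion of ``open inclusion / \'{e}tale map / submersion of smooth stacks'', with all fibre products taken in $\smst$, against the hypotheses appearing on the right of the equivalences in Proposition \ref{prop:criterion}, and confirming that $\mathsf{Mfd}$ satisfies the relevant closure conditions.
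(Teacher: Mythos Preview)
Your proposal is correct and is exactly the argument the paper intends: the corollary is stated immediately after Proposition~\ref{prop:criterion} with no separate proof because it is the specialization $\LS=\mathsf{Mfd}$, using the locality and finite-product stability of $\mathsf{Mfd}$ (Remark~\ref{rmk:localprop}) to meet the hypotheses of parts $(1)$--$(3)$. Your bookkeeping matching the internal notion of open inclusion/\'etale/submersive map in $\smst$ with the right-hand side of Proposition~\ref{prop:criterion} is accurate.
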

In particular, $f$ is an $S$-family of manifolds if and only if $f$ is submersive as a map of derived stacks.
\begin{rmk}
In this work, we are primarily concerned with \'{e}tale and submersive maps but the arguments in this section readily imply the following: let $\LS\subset \LS'$ be properties of affine derived $\cinfty$-schemes that are stable under open subspaces and local. Let $P'$ be a property of morphisms of $\LS'$ that is stable under pullback and local on the target. Suppose that the inclusion $\LS\subset \LS'$ preserves pullbacks along $P'$-morphisms, so that $P'$ also determines a property $P$ of morphisms of $\LS$ stable under pullbacks and local on the target. Then the following holds.
\begin{enumerate}
\item[$(*)$] Let $f:X\rightarrow Y$ be a map of derived $\cinfty$-stacks locally having the property $\LS$. Suppose that $f$ has the property $P$, that is, $f$ is representable (as a map in $\dstack_{\LS}$) and for any $\spec\,A\in \LS$ and any map $\spec\,A\rightarrow Y$, the map $\spec\,A\times_YX\rightarrow\spec\,A$ (with pullback taken in $\dstack_{\LS}$) in $\LS$ has the property $P$. Then $f$ has the property $P'$ as map of derived $\cinfty$-stacks locally having the property $\LS'$. 
\end{enumerate}
In the situation above, $f$ is in particular representable as map in $\dstack_{\LS'}$; note that it is not the case in general that $f$ being representable as a map in $\dstack_{\LS}$ implies that it is representable as map in $\dstack_{\LS'}$.
\end{rmk}

\section{Differential and elliptic moduli problems}
In this section we introduce $S$-families of differential moduli problems and prove the relative elliptic representability theorem for proper families. Roughly speaking, a differential moduli problem takes, for $Y_1\rightarrow Y$ and $Y_2\rightarrow X$ two $S$-families of submersions, the \emph{zeroes of an $S$-family of nonlinear differential operators acting between the $S$-family of sections of $Y_1$ and the $S$-family of sections of $Y_2$}. To make this precise, there are several challenges to overcome.
\begin{enumerate}[$(1)$]
\item For $Y\rightarrow X$ an $S$-family of submersions, we need to construct a derived $\cinfty$-stack $\mathsf{Sections}_{/S}(Y\rightarrow X)$ over $S$, which is suitably functorial in $S$.
\item We need to be able to recognize maps of the form $\mathsf{Sections}_{/S}(Y_1\rightarrow X)\rightarrow \mathsf{Sections}_{/S}(Y_2\rightarrow X)$ as $S$-families of partial differential equations. This amounts to the construction of $S$-families of jet bundles and $S$-families of jet prolongation maps. 
\item To prove the representability theorem, we have to appeal to some basic techniques and results from nonlinear global analysis of elliptic PDEs. For this, we will at some point need to represent mapping stacks as infinite dimensional manifolds modelled on some class of well behaved locally convex topological vector spaces. 
\end{enumerate}
We address $(1)$ in section 3.2 below, where we also develop some tools to understand the local structure of parametrized mapping stacks of sections. We achieve the construction of relative jet stacks for families of maps of stacks $X\rightarrow Y$ in section 3.3 by adhering to the algebro-geometric point of view on jets in the affine setting and applying the machinery of descent of diagrams exposed in the appendix to handle the general case. First, in section 3.1 just below, we compare stacky and functional analytic approaches to infinite dimensional manifolds and show how the \emph{convenient calculus} of Fr\"{o}licher-Kriegl-Michor \cite{frolicherkriegl,krieglmichor} can be treated within the framework of derived $\cinfty$-geometry. 
\subsection{Convenient manifolds}
Our first order of business is to recall several notions of infinite dimensional manifolds and show that under certain circumstances, they represent smooth mapping stacks. Let $E$ be a locally convex (Hausdorff topological vector) space, then recall that a curve $c:\R\rightarrow E$ is \emph{differentiable} if the derivative 
\[ \underset{s\rightarrow 0}{\lim} \frac{c(t+s)-c(t)}{s}  \]
exists for all $t \in \R$, and \emph{smooth} if all iterated derivatives of $c$ exist. The final topology on $E$ induced by all smooth curves $\R\rightarrow E$ is called the \emph{$c^{\infty}$-topology} (also known as the \emph{Mackey-closure} topology). Let $U\subset E$ be a $c^{\infty}$-open subset of a locally convex space $E$, then a map $f:(U\subset E)\rightarrow F$ to another locally convex space $F$ is \emph{smooth} if for every smooth curve $c:\R\rightarrow U$, the composition $f\circ c$ is a smooth curve in $F$.
\begin{rmk}
The collection of smooth curves into a locally convex space $E$ does not change if we replace the locally convex topology on $E$ with its bornologification (the finest locally convex topology on the vector space $E$ that has the same bounded sets). Moreover, a \emph{linear} map $E\rightarrow F$ among locally convex spaces is smooth if and only if it is bounded. It follows that the category $\mathsf{LCVS}_{c^{\infty}}$ whose objects are locally convex spaces and whose morphisms are \emph{smooth} linear maps is equivalent to the category $\mathsf{bLCVS}$ of bornological locally convex spaces. Since we will only be interested in smooth maps, we could thus without cost replace every locally convex space with its bornologification. 
\end{rmk}
\begin{defn}[Convenient spaces]\label{defn:convspace}
Let $E$ be a locally convex topological vector space, then $E$ is \emph{convenient} if any one of the following equivalent conditions are satisfied.
\begin{enumerate}[$(1)$]
\item Any smooth curve $c:\R\rightarrow E$ admits an antiderivative (a curve $\gamma:\R\rightarrow E$ such that $\gamma'=c$).
\item A curve $c:\R\rightarrow E$ is smooth if and only if $l\circ c:\R\rightarrow\R$ is smooth for any continuous linear functional $l\in E^{\vee}$.
\item Let $E\subset F$ be a topological linear subspace, then $E$ is closed in the $c^{\infty}$-topology on $F$.
\item $E$ is \emph{locally complete}: for any absolutely convex closed bounded set $B\subset E$, the auxiliary normed space $E_B$ spanned by $B$ and normed by means of the Minkowski functional associated with $B$ is complete (i.e. a Banach space).
\end{enumerate}    
We let $\mathsf{ConVS}\subset \mathsf{LCVS}_{c^{\infty}}$ be the full subcategory spanned by convenient vector spaces (so that morphisms of convenient vector spaces are smooth linear maps).
\end{defn}
\begin{rmk}
The characterization $(4)$ above shows that $E$ being convenient is also a property of the bounded sets of $E$, that is, being convenient depends only on the bornologification of $E$. Thus, the category of convenient vector spaces and smooth linear maps among them is equivalent to the category of bornological convenient vector spaces and continuous linear maps among them. 
\end{rmk}
\begin{ex}
Since completeness implies local completeness, Fr\'{e}chet spaces and in particular Banach spaces are convenient. Since being metrizable implies being bornological, the standard categories of Fre\'{e}chet and Banach spaces and continuous maps among them are full subcategories of the category $\mathsf{ConVS}$ of convenient vector spaces and smooth linear maps among them.  
\end{ex}
\begin{rmk}
On arbitrary locally convex spaces, there exist a plethora of notions of differentiability, many of which collapse to a single reasonable definition for well behaved spaces like Fr\'{e}chet and Banach spaces when one passes to \emph{infinitely} differentiable, that is, smooth maps. First, recall that the standard notion of differentiability for finite dimensional spaces extends without issues to Banach spaces: let $E,F$ be Banach spaces and let $U\subset E$ be an open set, then a function $f:(U\subset E)\rightarrow F$ is \emph{Fr\'{e}chet differentiable} if the directional derivative 
\[  Df:(U\subset E)\times E\rightarrow F,\quad\quad  Df(x,h):=\underset{s\rightarrow 0}{\lim} \frac{f(x+sh)-f(x)}{s}\]
exists, the function $Df(x,\_)$ is a bounded linear map for all $x\in U$ and the induced map $Df:U\rightarrow B(E,F)$ to the space of bounded linear maps is continuous for the operator norm. It can be shown that this condition is equivalent to the demand that for each $x\in U$, there is a bounded linear map $Df_x:E\rightarrow F$ such that $\lim_{||h||\rightarrow 0}\frac{||f(x+h)-f(x)-Df_xh||}{||h||}=0$. \\
In situations where norms are not available, one can take recourse to differentiability in the sense of \emph{Michal-Bastiani}: let $E,F$ be locally convex spaces, then a function $f:(U\subset E)\rightarrow F$ is \emph{Michal-Bastiani differentiable} if the directional derivative 
\[  Df:(U\subset E)\times E\rightarrow F,\quad\quad  Df(x,h):=\underset{s\rightarrow 0}{\lim} \frac{f(x+sh)-f(x)}{s}\]
exists, is linear in its second variable and is \emph{jointly} continuous (in fact, it is not hard to show that joint continuity implies linearity in the second variable). If $E$ and $F$ are Banach spaces, Fr\'{e}chet differentiability is stronger than Michal-Bastiani differentiability. Like Fr\'{e}chet differentiability, MB differentiability satisfies the chain rule; it turns out there is a precise sense in which MB differentiability asks the weakest continuity condition on the map $U\times E\rightarrow F$ such that the chain rule is satisfied. Let us say that a map $f:(U\subset E)\rightarrow F$ from an open subset of a locally convex space $E$ to a locally convex space $F$ is \emph{MB $C^n$} if for all $k\leq n$, the (inductively defined) directional G\^{a}teaux derivative
\[  Df_k:(U\subset E)\times E\times\ldots \times E \longrightarrow F,\quad\quad Df_k(x,y_1,\ldots,y_k):=\underset{s\rightarrow 0}{\lim} \frac{Df_{k-1}(x+sy_k,y_1,\ldots,y_{k-1})-Df_{k-1}(x,y_1,\ldots,y_{k-1})}{s} \]  
exists and is \emph{jointly} continuous (again it is not hard to show that the joint continuity of this map implies that it is multilinear in its last $k$ variables). It is this notion of smoothness that is used, for instance in Hamilton's seminal review article on the Nash-Moser inverse function theorem \cite{Hamilton}. Similarly, if $E,F$ are Banach spaces, we say that $f$ is \emph{Fr\'{e}chet $C^n$} if the directional G\^{a}teaux derivative $Df_k$ exists and determines a continuous map 
\[ Df_k: (U\subset E)\longrightarrow B(E\times\ldots \times E,F) \]
for $k\leq n$ where the space $B(E\times\ldots \times E,F)$ of bounded linear maps is equipped with the operator norm. It can be shown that (in case one works with Banach spaces) being a MB $C^{n+1}$ map implies being a Fr\'{e}chet $C^n$ map \cite{Kellerdiff}. It follows that the two notions possible notions of smoothness coincide.
\end{rmk}
\begin{ex}
Let $E$ be a Fr\'{e}chet space, then $E$ is convenient. Furthermore, the locally convex topology on $E$ coincides with the $c^{\infty}$-topology (this is another consequence of metrizability, see \cite[Theorem 4.11]{krieglmichor}). This property implies that the category of Fr\'{e}chet spaces and MB smooth maps coincides with the category of Fr\'{e}chet spaces and smooth maps (this is false for non-metrizable locally convex spaces). Indeed, the fact that MB differentiability satisfies the chain rule immediately implies that smoothness is weaker than MB smoothness. Conversely, smoothness of $f$ mostly formally implies that the directional derivatives 
\[  Df_k:(U\subset E)\times E\times\ldots \times E \longrightarrow F   \]
exist and are \emph{smooth} maps \cite[Theorem 3.18]{krieglmichor}. As Fr\'{e}chet spaces carry the $c^{\infty}$-topology, smooth maps among them are continuous; applying this observation to the smooth map $Df_k$ yields MB smoothness. 
\end{ex}
As in ordinary differential geometry, a \emph{convenient $\cinfty$-atlas} on a topological space $M$ is a cover $\{U_i\subset M\}$ (the \emph{charts}) together with homeomorphisms $\phi_i:V_i\rightarrow U_i$ (\emph{chart maps}) with each $V_i\subset E_i$ a $c^{\infty}$-open subset of a convenient vector space such that the transition functions are smooth. Two atlases on a topological space $M$ are \emph{equivalent} if their union is again an atlas. This determines an equivalence relation on the set of convenient $\cinfty$-atlases on $M$ whose equivalences classes are convenient $\cinfty$-structures; a \emph{convenient manifold} is a paracompact Hausdorff topological space together with a choice of a with a convenient $\cinfty$-structure. A map $f:M\rightarrow N$ is declared smooth if it restricts to a smooth map on charts; this is equivalent to demanding that $f$ carries smooth curves in $M$ to smooth curves in $N$. We let $\mathsf{ConMfd}$ denote the category of convenient manifolds and let $\iota_{\mathsf{Con}}:\mathsf{Mfd}\subset\mathsf{ConMfd}$ denote the subcategory spanned by ordinary finite dimensional manifolds. We have full subcategories $\mathsf{BanMfd}\subset \mathsf{FrMfd}\subset \mathsf{ConMfd}$ spanned by convenient manifolds modelled on Banach spaces and Fr\'{e}chet spaces respectively.

\begin{prop}
The inclusion $\mathsf{Mfd}\subset\mathsf{ConMfd}$ is dense.  \end{prop}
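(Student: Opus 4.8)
The plan is to unwind density of $\iota_{\mathsf{Con}}:\mathsf{Mfd}\subset\mathsf{ConMfd}$ into the full faithfulness of the associated nerve functor
\[ \nu:\mathsf{ConMfd}\longrightarrow\pshv(\mathsf{Mfd}),\qquad \nu(N):P\longmapsto \Hom_{\mathsf{ConMfd}}(\iota_{\mathsf{Con}}(P),N), \]
and to verify this directly, rather than trying to exhibit each convenient manifold as an explicit colimit of finite-dimensional ones (which would force one to identify colimits in $\mathsf{ConMfd}$, an unnecessary detour). The one non-formal ingredient is the curve-detection principle of the convenient calculus: a map of sets $f:M\rightarrow N$ between convenient manifolds is smooth if and only if $f\circ c:\R\rightarrow N$ is a smooth curve for every smooth curve $c:\R\rightarrow M$ \cite{krieglmichor,frolicherkriegl}. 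Everything else will be a diagram chase using only the two objects $*=\R^0$ and $\R$ of $\mathsf{Mfd}$.

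Faithfulness is immediate: evaluating $\nu(N)$ at $*=\R^0$ returns the underlying set of $N$, since any map of sets out of a point is automatically a smooth map $*\rightarrow N$. Hence for a morphism $f:M\rightarrow N$ of convenient manifolds the component $\nu(f)_{*}:\Hom_{\mathsf{ConMfd}}(*,M)\rightarrow\Hom_{\mathsf{ConMfd}}(*,N)$ is precisely the underlying map of sets of $f$; since a morphism of convenient manifolds is determined by its underlying map of sets, $\nu(f)=\nu(g)$ forces $f=g$.

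For fullness I would, given a natural transformation $\alpha:\nu(M)\rightarrow\nu(N)$, set $f:=\alpha_{*}:|M|\rightarrow|N|$ and check (i) that $f$ is smooth and (ii) that $\nu(f)=\alpha$. For (i): fixing a smooth curve $c:\R\rightarrow M$, the curve $\tilde c:=\alpha_{\R}(c)$ lies in $\Hom_{\mathsf{ConMfd}}(\R,N)$, hence is smooth, and naturality of $\alpha$ along each point inclusion $\{t\}\hookrightarrow\R$ gives $f(c(t))=\alpha_{*}(c(t))=\big(\alpha_{\R}(c)\big)(t)=\tilde c(t)$, so $f\circ c=\tilde c$ is smooth; as $c$ was arbitrary, the curve-detection principle shows $f$ is a morphism of $\mathsf{ConMfd}$. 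For (ii): for any $P\in\mathsf{Mfd}$ and any $g\in\nu(M)(P)=\Hom_{\mathsf{ConMfd}}(P,M)$, both $f\circ g$ and $\alpha_{P}(g)$ are smooth maps $P\rightarrow N$, and naturality of $\alpha$ along each $\{p\}\hookrightarrow P$ yields $(f\circ g)(p)=\alpha_{*}(g(p))=\big(\alpha_{P}(g)\big)(p)$; since a smooth map is determined by its underlying function, $f\circ g=\alpha_{P}(g)$, i.e. $\nu(f)_{P}=\alpha_{P}$ for all $P$. Thus $\nu$ is full.

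I expect no genuine obstacle here: the content is entirely in invoking the curve-detection principle — the same fact (specialised to $M$ finite-dimensional) that makes $\iota_{\mathsf{Con}}$ a full embedding, and that will resurface in the next subsection when smooth mapping stacks are identified with convenient manifolds of mappings. I will note in passing that the argument above only ever uses the objects $*$ and $\R$ together with the point inclusions $\{t\}\hookrightarrow\R$ and $\{p\}\hookrightarrow P$, so the full subcategory of $\mathsf{ConMfd}$ on $\{*,\R\}$ is already dense; the statement is phrased for $\mathsf{Mfd}$ since that is the site underlying the embedding $\shv(\mathsf{Mfd})\hookrightarrow\dstack$.
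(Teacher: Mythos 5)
Your proposal is correct and follows essentially the same route as the paper: density is unwound into full faithfulness of the restricted Yoneda (nerve) functor to $\pshv(\mathsf{Mfd})$, faithfulness/injectivity is checked by evaluating at the point, and fullness/surjectivity by defining the candidate map from the component at $*$ and verifying smoothness via the curve-detection principle together with naturality along point inclusions. The only (cosmetic) difference is that you test smoothness only against $\R$, whereas the paper phrases the check against arbitrary finite-dimensional test manifolds $Q$ — the underlying principle is identical.
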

\begin{proof}
Let $j_{\mathsf{Con}}$ denote the restricted Yoneda embedding
\[ \mathsf{ConMfd}\overset{j}{\hooklongrightarrow} \pshv(\mathsf{ConMfd}) \overset{\iota_{\mathsf{Con}}^*}{\longrightarrow}\pshv(\mathsf{Mfd}), \]
then we wish to show that the map 
\[ \Hom_{\mathsf{ConMfd}}(M,N)\longrightarrow  \Hom_{\pshv(\mathsf{Mfd})}(j_{\mathsf{Con}}(M),j_{\mathsf{Con}}(N)) \]
is a bijection. To see it is injective, let $f,g:M\rightarrow N$ be smooth maps that are equalized by any smooth map $Q\rightarrow M$ from a finite dimensional manifold. Letting $Q\rightarrow M$ range over the points $*\rightarrow M$, we deduce that $f=g$. To see it is surjective, let $f:j_{\mathsf{Con}}(M)\rightarrow j_{\mathsf{Con}}(N)$ be given and define a map of sets $\tilde{f}:M\rightarrow N$ by the map 
\[ j_{\mathsf{Con}}(M)(*)\overset{f(*)}{\longrightarrow} j_{\mathsf{Con}}(N)(*) .\]
To see that this is smooth, it suffices to check that for any smooth map $h:Q\rightarrow M$ from a finite dimensional manifold, the composition $Q\rightarrow M\overset{\tilde{f}}{\rightarrow} N$ is smooth. For each point $q\in Q$ we have a commuting diagram 
\[
\begin{tikzcd}
    j_{\mathsf{Con}}(M)(Q)\ar[d]\ar[r,"f(Q)"] & j_{\mathsf{Con}}(N)(Q) \ar[d] \\
    j_{\mathsf{Con}}(M)(*) \ar[r,"\tilde{f}"] & j_{\mathsf{Con}}(N)(*)
\end{tikzcd}
\]
so $f(Q)(h):Q\rightarrow N$ carries each point $q\in Q$ to $\tilde{f}(h(q))$, so that $f(Q)(h)=\tilde{f}\circ h$ which is smooth. The same argument shows that the maps $f$ and $j_{\mathsf{Con}}(\tilde{f})$ coincide. 
\end{proof}
We will also denote by $j_{\mathsf{Con}}$ the fully faithful functor $\mathsf{ConMfd}\hookrightarrow\shv(\mathsf{Mfd})\simeq \smst$. Note that this functor preserves all limits that exist in $\mathsf{ConMfd}$ since the Yoneda embedding and the restriction functor $\iota_{\mathsf{Con}}^*$ have this property. Let $\icat\subset\icatd\subset\icate$ be fully faithful functors, then in case $\icat$ is dense in $\icate$, the other two inclusions are dense as well; thus, the inclusions $\mathsf{Mfd} \subset\mathsf{BanMfd}\subset \mathsf{FrMfd} \subset \mathsf{ConMfd}$ are all dense. It follows that the induced fully faithful functors 
\[ \mathsf{BanMfd} \subset  \mathsf{FrMfd} \subset \mathsf{ConMfd}\overset{j_{\mathsf{Con}}}{\hooklongrightarrow} \smst  \]
all preserve limits; in particular, the inclusions $\mathsf{BanMfd} \subset  \mathsf{FrMfd} \subset \mathsf{ConMfd}$ preserve limits. 
\begin{warn}
Let $M$ be a convenient manifold, then the sheaf of smooth $\R$-valued functions on $M$ has a natural structure of a sheaf of $\cinfty$-rings which makes the pair $(M,\cinfty_M)$ a (derived) locally $\cinfty$-ringed space. If this pair is $\cinfty$-regular and $M$ is second countable, then $(M,\cinfty_M)$ is an affine (derived) $\cinfty$-scheme (this is the case, for instance, if $M$ is a paracompact convenient manifold modelled on nuclear Fr\'{e}chet spaces; for example $M$ may be a manifold of mappings $\cinfty(K,N)$ between finite dimensional manifolds with compact source). Beware however that the affine derived $\cinfty$-scheme $(M,\cinfty_M)$ usually \emph{does not} represent the derived $\cinfty$-stack $j_{\mathsf{Con}}(M)$, except when $M$ is finite dimensional. 
\end{warn}
The power of the convenient calculus comes from the fact that it allows for efficient comparison of infinite dimensional manifolds of smooth mappings modelled on well behaved topological vector spaces and mapping sheaves. Let $f:Y\rightarrow X$ be a map of smooth stacks, then we define a smooth stack $\map_X(X,Y)_{\mathsf{Sm}}$, the \emph{smooth mapping stack of sections of $f$} together with a map $\ev:\map_X(X,Y)\times X\rightarrow Y$ such that the following universal property is satisfied: for any smooth stack $Z$, composition with $\ev$ induces a homotopy equivalence of Kan complexes
\[ \Hom_{\mathsf{SmSt}}(Z,\map_X(X,Y)_{\mathsf{Sm}}) \overset{\simeq}{\longrightarrow} \Hom_{\mathsf{SmSt}}(Z\times X,Y). \]
The mapping stack of sections $\map_X(X,Y)$ fits into a pullback diagram
\[
\begin{tikzcd}
\map_X(X,Y)_{\mathsf{Sm}} \ar[d]\ar[r] & \map(X,Y)_{\mathsf{Sm}}\ar[d,"f\circ \_"] \\
*\ar[r,"\mathrm{id}_X"] & \map(X,X)_{\mathsf{Sm}}
\end{tikzcd}
\] 
where $\map(X,\_)_{\mathsf{Sm}}$ denotes the internal hom in $\mathsf{SmSt}$, the right adjoint to the functor $X \times\_:\mathsf{SmSt}\rightarrow\mathsf{SmSt}$. For $E\rightarrow M$ a vector bundle over a manifold, let $\Gamma(E;M)$ denote the convenient vector space of sections of $M$. This is a nuclear Fr\'{e}chet space; moreover, the exponential law holds: a curve $\R\rightarrow \Gamma(E;M)$ is smooth if and only if the induced horizontal map in the commuting diagram
\[  \begin{tikzcd}\R\times E \ar[rr]\ar[dr]&& E\ar[dl] \\&M \end{tikzcd} \]
is smooth \cite[Lemma 30.4,Lemma 30.8]{krieglmichor}. 
\begin{prop}[Exponential Law]
Let $M$ be a manifold and let $E\rightarrow M$ be a finite rank vector bundle over $M$. Let $\Gamma(M;E)$ be the nuclear Fr\'{e}chet space of smooth sections of $E$, viewed as a convenient vector space. Then the map
\[j_{\mathsf{Con}}(\Gamma(M;E))\longrightarrow \map_M(M.E)_{\mathsf{Sm}}  \]
induced by the evaluation mapping $\Gamma(M;E)\times M\rightarrow E$ over $M$ is an equivalence of smooth stacks.
\end{prop}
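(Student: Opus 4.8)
The plan is to test the comparison map on manifold-valued points and reduce to the exponential law of the convenient calculus.

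\textbf{First}, I would observe that both sides are $0$-truncated objects of $\smst=\shv(\mathsf{Mfd})$: the right-hand side is a finite limit (its defining pullback square) of internal homs into the ordinary manifolds $E$ and $M$, hence $0$-truncated, since $\map(M,-)_{\mathsf{Sm}}$, being a right adjoint, preserves $n$-truncatedness; and $j_{\mathsf{Con}}(\Gamma(M;E))$ is $0$-truncated because $j_{\mathsf{Con}}$ carries a convenient manifold to a sheaf of \emph{sets}. Since the representables generate $\smst$ under colimits, it therefore suffices to show that the map induces a bijection on $N$-points for every $N\in\mathsf{Mfd}$.

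\textbf{Second}, I would identify the two sides on $N$-points. On the left, $j_{\mathsf{Con}}$ restricts along $\iota_{\mathsf{Con}}$ to the canonical embedding $\mathsf{Mfd}\hookrightarrow\smst$, so $\Hom_{\smst}\bigl(N,j_{\mathsf{Con}}(\Gamma(M;E))\bigr)=\Hom_{\mathsf{ConMfd}}(N,\Gamma(M;E))=\cinfty(N,\Gamma(M;E))$. On the right, the defining pullback square, the adjunction characterising $\map(M,-)_{\mathsf{Sm}}$, and the full faithfulness of $\mathsf{Mfd}\hookrightarrow\smst$ (applied to the finite-dimensional manifolds $N\times M$, $E$, $M$) together identify $\Hom_{\smst}\bigl(N,\map_M(M,E)_{\mathsf{Sm}}\bigr)$ with the set of smooth maps $\sigma\colon N\times M\to E$ satisfying $p_E\circ\sigma=\mathrm{pr}_M$, where $p_E\colon E\to M$ is the bundle projection; these are exactly the $N$-parametrised families of sections of $E\to M$. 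Unwinding the comparison map — it is obtained by applying $j_{\mathsf{Con}}$ to the (convenient) smooth evaluation $\mathrm{ev}\colon\Gamma(M;E)\times M\to E$ and currying along the universal property of $\map(M,-)_{\mathsf{Sm}}$, the result landing in the fibre over $\mathrm{id}_M$ because $p_E\circ\mathrm{ev}$ is the projection to $M$ — one sees that on $N$-points it is the assignment
\[ \cinfty(N,\Gamma(M;E))\;\xrightarrow{\,\sim\,}\;\{\,\sigma\in\cinfty(N\times M,E)\ :\ p_E\circ\sigma=\mathrm{pr}_M\,\},\qquad g\longmapsto\bigl((n,m)\mapsto g(n)(m)\bigr). \]
So the proposition amounts to the claim that this assignment is a bijection, naturally in $N$.

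\textbf{Third}, this is precisely the exponential law for spaces of sections of a vector bundle in the Fr\"{o}licher--Kriegl--Michor calculus \cite{krieglmichor}; the case $N=\R$ is Lemmas 30.4 and 30.8 cited above, and I would deduce the general case from it. Injectivity is immediate by evaluating at points of $N$. Since $\Gamma(M;E)$ is a convenient (indeed nuclear Fr\'{e}chet) vector space, a map $g\colon N\to\Gamma(M;E)$ is smooth iff $g\circ c$ is smooth for every smooth curve $c\colon\R\to N$; by the curve exponential law this holds iff the adjoint curve $\R\times M\to E$, which equals $\hat g\circ(c\times\mathrm{id}_M)$ with $\hat g(n,m):=g(n)(m)$, is smooth; and smoothness of a map out of the finite-dimensional manifold $N\times M$ is detected by precomposition with the $c\times\mathrm{id}_M$ for smooth curves $c$ in $N$ (reduce to charts, then use Boman's theorem together with closure of smoothness under composition), so this is equivalent to smoothness of $\hat g$. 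Reading the equivalences backwards shows every section-valued smooth $\sigma$ is $\hat g$ for a unique smooth $g$, giving surjectivity. \textbf{The point needing care}, rather than a real obstacle, is exactly this bookkeeping: confirming $\map_M(M,E)_{\mathsf{Sm}}$ is $0$-truncated (so that testing on generating manifolds suffices), that the curried evaluation map is convenient-smooth, and importing the convenient exponential law for an arbitrary smooth parameter manifold $N$ and not merely $N=\R$. No further analytic input is required — this is the "easy", smooth half of the comparison of mapping stacks, the derived refinement being handled separately.
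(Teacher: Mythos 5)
Your proposal is correct and follows essentially the same route as the paper, whose entire proof is the observation that the statement "follows at once from the exponential law of the Fr\"{o}licher--Kriegl--Michor calculus" (the curve-level law cited from \cite[Lemmas 30.4, 30.8]{krieglmichor} just before the proposition). You simply make explicit the bookkeeping the paper leaves implicit --- testing on $N$-points, identifying both sides there, and bootstrapping the parametrized exponential law for arbitrary finite-dimensional $N$ from the curve case via Boman's theorem --- all of which is sound.
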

\begin{proof}
This follows at once from the exponential law of the Fr\"{o}licher-Kriegl-Michor calculus.
\end{proof}
\begin{defn}
Let $f:M\rightarrow N$ be a smooth map between convenient manifolds.
\begin{enumerate}[$(1)$]
\item $f$ is an \emph{open embedding} if $f$ is an open topological embedding. 
\item $f$ is a \emph{local diffeomorphism} if $f$ is locally on the source an open embedding: there exists an open covering $\{U_i\hookrightarrow M\}_i$ such that the composition $U_i\rightarrow N$ is an open embedding for all $i$.
\item $f$ is a \emph{submersion} if there exists a small collection of commuting diagrams
\[
\begin{tikzcd}
  U_i\times V_i\ar[d]\ar[r] & M\ar[d] \\
  U_i\ar[r] & N
\end{tikzcd}
\]
that determines an open cover $\{U_i\times V_i\rightarrow M\}_i$ where $V_i\subset E_i$ a $c^{\infty}$-open subset of a convenient vector space and the left vertical map is the projection onto the first factor.  
\end{enumerate}
\end{defn}
The following result is useful for working with convenient manifolds incarnated as derived $\cinfty$-stacks.
\begin{prop}\label{prop:convmfdetale}
Let $f:M\rightarrow N$ be a smooth map between convenient manifolds, then the following hold true.
\begin{enumerate}[$(1)$]
\item $f$ is an open embedding if and only if $j_{\mathsf{Con}}(f)$ is an open substack inclusion of derived $\cinfty$-stacks.
\item $f$ is a local diffeomorphism if and only if $j_{\mathsf{Con}}(f)$ is an \'{e}tale morhpism of derived $\cinfty$-stacks. 
\item If $f$ is a submersion, then $j_{\mathsf{Con}}(f)$ is stacky submersive. 
\end{enumerate}
\end{prop}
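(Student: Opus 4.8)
The plan is to treat $(1)$ as the engine driving $(2)$ and $(3)$, and to prove $(1)$ by reducing, via the criterion of Proposition~\ref{prop:criterion} (applied with $\LS=\mathsf{Mfd}$, which is local by Remark~\ref{rmk:localprop}), to the already-understood case of open embeddings of finite-dimensional manifolds. Throughout one uses that $j_{\mathsf{Con}}$ is fully faithful and preserves every limit existing in $\mathsf{ConMfd}$, and that on $\mathsf{Mfd}\subset\mathsf{ConMfd}$ it restricts to the standard embedding $\mathsf{Mfd}\hookrightarrow\smst$. For $(1)$, ($\Rightarrow$): if $f$ is an open embedding, then for any manifold $Q$ and smooth $g\colon Q\to N$ the fibre product $M\times_N Q$ exists in $\mathsf{ConMfd}$ — it is the open submanifold $g^{-1}(f(M))\subseteq Q$, since open topological embeddings pull back along continuous maps and $f^{-1}\colon f(M)\to M$ is smooth — and $j_{\mathsf{Con}}$ identifies $(Q\times_{j_{\mathsf{Con}}(N)}j_{\mathsf{Con}}(M))_{\smst}\to Q$ with an open embedding of manifolds, hence an open substack inclusion. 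By Proposition~\ref{prop:criterion}$(1)$, $j_{\mathsf{Con}}(f)$ is an open substack inclusion.

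For $(1)$, ($\Leftarrow$): an open substack inclusion is $(-1)$-truncated (Remark~\ref{rmk:opensubstackmono}), so $j_{\mathsf{Con}}(f)$ is a monomorphism and $f$ is injective on points. To see $f(M)$ is open in $N$, probe by smooth curves: for $c\colon\R\to N$ the pullback of $j_{\mathsf{Con}}(f)$ along $c$ is an open substack inclusion into $\R$, i.e.\ $j_{\mathsf{Con}}$ of an open $U_c\subseteq\R$, and on $\R$-points this gives $U_c=c^{-1}(f(M))$; since the $c^{\infty}$-topology on $N$ is detected by smooth curves, $f(M)$ is open. Then $N':=f(M)$ with its induced structure is a convenient manifold, $N'\hookrightarrow N$ is an open embedding, $f$ factors as a smooth bijection $f'\colon M\to N'$ followed by this inclusion, and a two-out-of-three argument for open substack inclusions shows $j_{\mathsf{Con}}(f')$ is an open substack inclusion. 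It is surjective on $\R$-points (as $f'$ is a bijection), hence an equivalence by the equivalence-criterion for open substack inclusions; full faithfulness of $j_{\mathsf{Con}}$ gives $f'\cong\mathrm{id}$, so $f$ is an open embedding.

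Granting $(1)$, parts $(3)$ and $(2)\Rightarrow$ are bookkeeping. A submersion $f$ carries an open cover $\{U_i\times V_i\hookrightarrow M\}$ over open embeddings $U_i\to N$, with $V_i\subseteq E_i$ a $c^{\infty}$-open in a convenient space; by $(1)$ the maps $j_{\mathsf{Con}}(U_i)\times j_{\mathsf{Con}}(V_i)\to j_{\mathsf{Con}}(M)$ and $j_{\mathsf{Con}}(U_i)\to j_{\mathsf{Con}}(N)$ are open substack inclusions, hence stacky étale, the former form an effective epimorphism (a plot $g\colon Q\to M$ is covered by the preimages $g^{-1}(U_i\times V_i)$, on which it factors through $U_i\times V_i$), and $j_{\mathsf{Con}}(V_i)$ is a smooth stack — so these data realise $j_{\mathsf{Con}}(f)$ as stacky submersive. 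Likewise, if $f$ is a local diffeomorphism, an open cover $\{U_i\hookrightarrow M\}$ with $f|_{U_i}$ open embeddings yields, via $(1)$, a cover of $j_{\mathsf{Con}}(M)$ by open substack inclusions $j_{\mathsf{Con}}(U_i)$ for which $j_{\mathsf{Con}}(U_i)\to j_{\mathsf{Con}}(N)$ is again an open substack inclusion, hence étale (open embeddings of schemes are étale by Definition~\ref{defn:etalemap}); since being étale is a local property of morphisms (Propositions~\ref{prop:localtarget} and~\ref{prop:extendprop}) and the pullback of this cover along any $\spec A\to j_{\mathsf{Con}}(N)$ exhibits $\spec A\times_{j_{\mathsf{Con}}(N)}j_{\mathsf{Con}}(M)$ as a scheme (Proposition~\ref{prop:representablebyscheme}) étale over $\spec A$, the map $j_{\mathsf{Con}}(f)$ is étale.

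The one genuinely hard direction is $(2)$, ($\Leftarrow$): extracting from the ``derived-stacky'' hypothesis that $j_{\mathsf{Con}}(f)$ is étale the concrete conclusion that $f$ is a local diffeomorphism of convenient manifolds. The plan is to first record the characterisation that a smooth map of convenient manifolds is a local diffeomorphism precisely if it is a local homeomorphism and the comparison map $f^{*}\cinfty_N\to\cinfty_M$ of structure sheaves is an isomorphism — this relies on the convenient-calculus fact that smoothness of a map into a convenient manifold is detected by pullback of smooth functions. Next, Proposition~\ref{prop:criterion}$(2)$ gives that $j_{\mathsf{Con}}(f)$ étale means that for every smooth $g\colon Q\to N$ from a finite-dimensional manifold the fibre product $Q\times_N M$ exists in $\mathsf{ConMfd}$, is a finite-dimensional manifold, and is étale over $Q$. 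Probing by points shows $f$ has discrete fibres, probing by curves (whose étale pullbacks are open maps) shows $f$ is open, and probing by finite-dimensional chart-slices yields, near any $m\in M$, compatible smooth local sections of $f$; pushing a germ of a smooth function near $m$ through these sections and reassembling — using that smooth functions are determined by their compositions with finite-dimensional probes — produces a preimage germ near $f(m)$, giving surjectivity, and with openness bijectivity, of $f^{*}\cinfty_N\to\cinfty_M$ on stalks, whence also local injectivity and the local homeomorphism property. I expect this reconciliation of the stack-theoretic notion of ``étale'' with the pointwise and structure-sheaf content of ``local diffeomorphism'' — made possible by the fact that finite-dimensional probes see all of a convenient manifold — to be the main obstacle; everything else is routine manipulation with the machinery of Section~2.2.
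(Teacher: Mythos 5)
Your parts (1) and (3) are essentially sound and stay close to the paper's route: (1)($\Rightarrow$) is exactly the paper's argument via Proposition \ref{prop:criterion}, your (1)($\Leftarrow$) fleshes out what the paper disposes of in one sentence (it amounts to re-deriving the neighbouring Proposition \ref{prop:sourceconvenient} by curve-probing), and your (3) verifies the definition of stacky submersive directly where the paper instead reduces via $(4)$ of Proposition \ref{prop:stackysubmersive}; the only looseness there is that effective epimorphisms in $\dstack$ must be tested against affine probes rather than manifold plots (the same argument goes through, since an open substack inclusion into $\spec\,A$ containing every $\R$-point is an equivalence).

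The genuine gaps are in (2). For ($\Rightarrow$): your cover $\{U_i\}$ covers the \emph{source} $j_{\mathsf{Con}}(M)$, but \'etale-ness is local with respect to covers of the \emph{target}, so Propositions \ref{prop:localtarget} and \ref{prop:extendprop} do not apply as invoked. What your argument actually yields is that $\spec\,A\times_{j_{\mathsf{Con}}(N)}j_{\mathsf{Con}}(M)$ is a (possibly non-affine) derived $\cinfty$-scheme which is \'etale over $\spec\,A$ in the scheme-level sense; but an \'etale morphism of derived stacks in the sense of Proposition \ref{prop:extendprop} is in particular representable, i.e.\ this pullback must be \emph{affine}, and you never address that. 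The repair is the paper's own: run (2)($\Rightarrow$) through Proposition \ref{prop:criterion} simultaneously with (1), observing that $Q\times_NM\rightarrow Q$ is an \'etale map of ordinary manifolds for every finite-dimensional probe $Q\rightarrow N$. For ($\Leftarrow$): what you give is a plan, and the step you yourself flag as the crux --- assembling the local sections obtained from finite-dimensional probes into an actual smooth local inverse of $f$ near a given point, equivalently establishing local injectivity and openness of $f$ --- is not carried out. Curve probes essentially deliver bijectivity of $T_mf$, which in the convenient setting does not imply local invertibility (there is no inverse function theorem available), and your claim that local injectivity of $f$ follows from bijectivity of $f^{*}\cinfty_N\rightarrow\cinfty_M$ on stalks is unjustified: a stalkwise statement at a single point does not separate nearby points without smooth regularity of the model spaces. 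The paper handles this direction by asserting that being a local diffeomorphism of convenient manifolds is detected by pullback to ordinary manifolds; your proposal neither proves that assertion nor replaces it, so as written part (2) is established in neither direction.
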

\begin{proof}
For th `only if' direction of $(1)$ and $(2)$, we will use the criterion of Proposition \ref{prop:criterion}. We show that for each manifold $Q$ and each map $Q\rightarrow j_{\mathsf{Con}}(N)$, the map $Q\times_{j_{\mathsf{Con}}(N)}j_{\mathsf{Con}}(M)
\rightarrow Q$ where the pullback is taken in $\smst$ is an open inclusion (\'{e}tale map). Since $j_{\mathsf{Con}}$ preserves limits, it suffices to show that the map $Q\times_NM\rightarrow Q$ of convenient manifolds becomes an open inclusion (\'{e}tale map) after applying $j_{\mathsf{Con}}$, but this is obvious as $Q\times_NM\rightarrow Q$ is an open inclusion (\'{e}tale map) of ordinary manifolds. For the `if' direction of $(1)$ and $(2)$, it suffices to observe that being an open embedding or local diffeomorphism of convenient manifolds can be tested by pulling back to an ordinary manifold. We proceed with $(3)$. Using $(4)$ of Proposition \ref{prop:stackysubmersive} and the fact that $j_{\mathsf{Con}}$ preserves limits, we show that for each manifold $Q$ and each map $Q\rightarrow N$, the map $Q\times_NM\rightarrow Q$ of convenient manifolds becomes stacky submersive after applying $j_{\mathsf{Con}}$. Since submersions of convenient manifolds are \'{e}tale locally projections away from a convenient manifold, this follows from $(2)$.
\end{proof}
\begin{prop}\label{prop:sourceconvenient}
Let $N$ be a convenient manifold and let $f:X\rightarrow j_{\mathsf{Con}}(N)$ be a map of derived $\cinfty$-stacks. If $f$ is an open substack inclusion, then $f$ is representable by an open embedding $V\rightarrow N$ of convenient manifolds.
\end{prop}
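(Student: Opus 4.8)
The plan is to identify the open subset $V\subseteq N$ that $X$ ``ought'' to be as the set of $\R$-points of $X$, show it is $c^{\infty}$-open, and then check that $f$ is indeed the inclusion of $j_{\mathsf{Con}}(V)$. Since an open substack inclusion is $(-1)$-truncated (Remark~\ref{rmk:opensubstackmono}), the map on $\R$-points $X(\R)\to j_{\mathsf{Con}}(N)(\R)$ is a monomorphism of sets; using $j_{\mathsf{Con}}(N)(\R)=\Hom_{\mathsf{ConMfd}}(*,N)=N$, I would set $V:=\mathrm{image}\bigl(X(\R)\to N\bigr)$, so that $X(\R)$ is identified with $V\subseteq N$.

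First I would prove $V$ is open. By the very definition of a convenient manifold its topology is the one detected by smooth curves: a subset of $N$ is open iff its preimage under every smooth curve $c\colon\R\to N$ is open in $\R$ (this reduces, inside charts, to the definition of the $c^{\infty}$-topology on the model convenient vector spaces, and openness is local). So fix a smooth curve $c$, regarded as a map $\R\to j_{\mathsf{Con}}(N)$ of derived $\cinfty$-stacks. As $\R$ is a finite-dimensional manifold, hence an affine derived $\cinfty$-scheme, the defining property of an open substack inclusion (the definition preceding Proposition~\ref{prop:criterion}, together with Definition~\ref{defn:representablemorphism}) gives that $\R\times_{j_{\mathsf{Con}}(N)}X\to\R$ is representable by an open embedding of affine derived $\cinfty$-schemes; its source is therefore an open subscheme $\mathcal{U}_c\subseteq\R$. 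Because the functor $(-)(\R)=\Hom_{\dstack}(\spec\R,-)$ preserves limits, the underlying set of $\mathcal{U}_c$ is $\R\times_{N}X(\R)=c^{-1}(V)$, and an open subscheme of a manifold is determined by its underlying set, so $c^{-1}(V)=\mathcal{U}_c$ is open. Hence $V$ is $c^{\infty}$-open, $\iota_V\colon V\hookrightarrow N$ is an open embedding of convenient manifolds, and by $(1)$ of Proposition~\ref{prop:convmfdetale} the map $j_{\mathsf{Con}}(\iota_V)\colon j_{\mathsf{Con}}(V)\hookrightarrow j_{\mathsf{Con}}(N)$ is itself an open substack inclusion.

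Next I would identify $X$ with $j_{\mathsf{Con}}(V)$ over $j_{\mathsf{Con}}(N)$. Both are subobjects of $j_{\mathsf{Con}}(N)$, so it suffices to exhibit factorizations in each direction. Writing $X\simeq\colim_{\spec A\in\daff_{/X}}\spec A$ as the canonical colimit of representables, for every $\spec A\to X$ the base change $\spec A\times_{j_{\mathsf{Con}}(N)}X\to\spec A$ admits a section (the graph of $\spec A\to X$) and is $(-1)$-truncated, hence is an equivalence; passing to $\R$-points shows $\spec A\to j_{\mathsf{Con}}(N)$ lands in $V=X(\R)$ on points, whence $\spec A\times_{j_{\mathsf{Con}}(N)}j_{\mathsf{Con}}(V)\to\spec A$ is an open substack inclusion with all $\R$-fibres inhabited and so, $\spec A$ being representable, an equivalence; thus $\spec A\to j_{\mathsf{Con}}(N)$ factors through $j_{\mathsf{Con}}(V)$. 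Since factoring through the monomorphism $j_{\mathsf{Con}}(V)\to j_{\mathsf{Con}}(N)$ is a property and these factorizations are compatible, they assemble to a map $X\to j_{\mathsf{Con}}(V)$ over $j_{\mathsf{Con}}(N)$. The symmetric argument — now writing $j_{\mathsf{Con}}(V)$ as a colimit of affines and using that every $\R$-point of $j_{\mathsf{Con}}(V)$ lies in $V=X(\R)$ — produces a map $j_{\mathsf{Con}}(V)\to X$ over $j_{\mathsf{Con}}(N)$; both composites are endomorphisms of subobjects of $j_{\mathsf{Con}}(N)$, hence identities, so $X\simeq j_{\mathsf{Con}}(V)$ and $f$ is represented by $\iota_V\colon V\hookrightarrow N$.

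I expect the genuine content to be concentrated in the second paragraph: carefully matching the topology of the convenient manifold $N$ (and of its chart models) with what is seen by pulling $f$ back along smooth curves $\R\to N$, and pinning down that the underlying set of the resulting open subscheme of $\R$ is exactly $c^{-1}(V)$. Once that is secured, the remaining steps are formal manipulations with $(-1)$-truncated morphisms, the representability criterion for an open substack inclusion to be an equivalence, and canonical colimits of representables.
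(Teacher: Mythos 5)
Your proposal is correct and follows essentially the same route as the paper: define $V$ as the set of points of $N$ over which the fibre of $f$ is inhabited, verify $c^{\infty}$-openness by pulling $f$ back along smooth curves $\R\to N$, and then identify $X$ with $j_{\mathsf{Con}}(V)$ using that both are subobjects together with the criterion that an open substack inclusion with all $\R$-point fibres inhabited is an equivalence. The only difference is organizational — the paper applies that criterion once to $j_{\mathsf{Con}}(V)\times_{j_{\mathsf{Con}}(N)}X\to j_{\mathsf{Con}}(V)$, whereas you run the same argument affine-locally over canonical colimits of representables in both directions — which is a matter of bookkeeping, not of substance.
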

\begin{proof}
Let $X\rightarrow j_{\mathsf{Con}}(N)$ be an open substack, then the collection of all points $*\rightarrow N$ for which $*\times_{j_{\mathsf{Con}}(N)}X$ is not initial determines a subset $V\subset N$ and testing against all maps $\R\rightarrow N$ shows that $V$ is open. It follows that $j_{\mathsf{Con}}(V)\rightarrow j_{\mathsf{Con}}(N)$ is an open substack inclusion, so it suffices to show that the map $j_{\mathsf{Con}}(V) \times_{j_{\mathsf{Con}}(N)} X \rightarrow j_{\mathsf{Con}}(V)$ is an equivalence. Since this map is an open substack, it is enough to argue that for each point $*\rightarrow V$, the pullback $*\times_{j_{\mathsf{Con}}(V)}j_{\mathsf{Con}}(V) \times_{j_{\mathsf{Con}}(N)} X \simeq *\times_{j_{\mathsf{Con}}(N)}X$ is not initial which is the case by construction.
\end{proof}
\begin{rmk}\label{rmk:gluingconmfd}
As we can glue affine derived $\cinfty$-schemes in the \inftop of derived $\cinfty$-stacks, so can we glue convenient vector spaces in this \inftopt. Indeed, the essential image of the fully faithful functor $j_{\mathsf{Con}}:\mathsf{ConMfd}\hookrightarrow\dstack$ admits the following characterization. Let $X$ be a derived $\cinfty$-stack, then $X$ is representable by a convenient manifold if and only if the following conditions are satisfied.
\begin{enumerate}[$(1)$]
\item There exists a collection of $c^{\infty}$-open subsets of convenient vector spaces $\{V_i\subset E_i\}_i$ and a collection of open substack inclusions $j_{\mathsf{Con}}(V_i)\rightarrow X$. 
\item Let $U_{ij}\hookrightarrow V_i$ and $U_{ij}\hookrightarrow V_j$ be the open embeddings corresponding to the maps $j_{\mathsf{Con}}(V_i)\times_Xj_{\mathsf{Con}}(V_j)\rightarrow j_{\mathsf{Con}}(V_i)$ and $j_{\mathsf{Con}}(V_i)\times_Xj_{\mathsf{Con}}(V_j)\rightarrow j_{\mathsf{Con}}(V_j)$ (Proposition \ref{prop:sourceconvenient}). Then the topological space $\coprod_i V_i/\sim$ equipped with the identification topology induced by the embeddings $U_{ij}\hookrightarrow V_i$ and $U_{ij}\hookrightarrow V_j$ is paracompact Hausdorff. 
\end{enumerate}
The proof of this result rests on arguments formally identical to those used to prove Proposition \ref{prop:representablebyscheme}.
\end{rmk}
\begin{rmk}
For the purposes of this article -namely the construction of PDE moduli spaces- we will only need to work with the category of $c^{\infty}$-open subsets of convenient vector spaces and smooth maps between them; we introduce convenient manifolds in this section mostly for the sake of completeness. In fact, we need only open subsets of Fr\'{e}chet spaces, and because the Fr\'{e}chet spaces of interest to us are $\cinfty$-regular (either because they are nuclear or because they are separable Hilbert spaces) it suffices to work only with Fr\'{e}chet spaces and smooth maps between them. We may avoid working with convenient \emph{manifolds} because the mapping spaces introduced in the next section on which PDEs will act will be defined via universal properties in the \infcat $\dstack$, and we will only need to establish that such objects are \emph{locally} equivalent to a convenient vector space (which we hasten to add holds only for mapping spaces with compact source). In fact, once local representability has been established, it follows from the general gluing principles of Remark \ref{rmk:gluingconmfd} that these mapping spaces are representable by convenient manifolds (a fact which we will not use in this paper). Reasoning like this leads to an alternative proof of, for instance, \cite[Theorem 42.1]{krieglmichor} on the existence of convenient Fr\'{e}chet manifold structures on mapping spaces with compact source, one that does not require checking that transition functions on overlaps are smooth. This is in line with the paradigm to which we adhere in this series \cite{cinftyI,cinftyII}: once a `smooth' space of any kind has been given the structure of a derived $\cinfty$-stack, its representability (by a derived $\cinfty$-scheme, a derived Artin $\cinfty$-stack, a convenient manifold...) is purely a local question, and the existence of a presentation of this space as a gluing of local models with transition functions preserving the smooth structure is a formal consequence of scheme theoretic principles.
\end{rmk}

\subsection{Parametrized mapping stacks: Weil restriction}
Let $S$ be a smooth stack and let $M\rightarrow S$ be a locally trivial $S$-family of manifolds. Suppose we are given a differential moduli problem $\icate=(X,Y_1,Y_2,P_1,P_2)$ over $M$, then to define the derived solution stack as in the introduction, we need to understand \emph{$S$-families of derived stacks of sections}. Moreover, to prove the representability theorem, we have to leverage classical results from geometric analysis and elliptic theory; to do so, we will need to demonstrate that these families of derived stacks can (locally) be represented by infinite dimensional manifolds of mappings. We have seen that \emph{smooth} stacks of mappings with compact source are representable by infinite dimensional manifolds modelled on nuclear Fr\'{e}chet spaces. Thus, our goals in this section are as follows.
\begin{enumerate}
    \item[$(G1)$] Construct for an $S$-family of derived $\cinfty$-stacks $X\rightarrow S$ and a map $Y\rightarrow X$ a stack $\mathsf{Sections}_{/S}(Y\rightarrow X)$ equipped with a map to $S$ such that for each point $s:*\rightarrow S$ we have a pullback diagram
    \[ \begin{tikzcd}
     \mathsf{Sections}(Y_s\rightarrow X_s)\ar[d]\ar[r] &  \mathsf{Sections}_{/S}(Y\rightarrow X)\ar[d] \\  
     *\ar[r,"s"] & S,   
    \end{tikzcd} \]
    where $X_s=*\times_SX$ and $Y_s=*\times_SY$.
    \item[$(G2)$] Show that under reasonable assumptions, the stack $\mathsf{Sections}_{/S}(Y\rightarrow X)$ is smooth. 
\end{enumerate}
We achieve $G1$ via the formation of \emph{Weil restrictions}, using that \inftopoi are locally Cartesian closed.
\begin{defn}[Weil restriction]\label{defn:weilrestriction}
Let 
\[
\begin{tikzcd}
Y\ar[dr]\ar[rr] && X\ar[dl,"q"] \\
& S
\end{tikzcd}
\]
be a commuting diagram of derived $\cinfty$-stacks. We define a stack $\mathsf{Res}_{X/S}(Y)\rightarrow S$ together with a map
\[ \begin{tikzcd}
    \mathsf{Res}_{X/S}(Y)\times_S X \ar[dr]\ar[rr,"\ev"] && Y \ar[dl] \\
    & X
\end{tikzcd} \]
over $X$ such that the following universal property is satisfied: for any derived $\cinfty$-stack $Z\rightarrow S$ over $S$, composition with $\ev$ induces a homotopy equivalence of Kan complexes
\[  \Hom_{\dstack_{/S}}(Z,\mathsf{Res}_{X/S}(Y)) \overset{\simeq}{\longrightarrow} \Hom_{\dstack_{/X}}(Z\times_SX,Y).  \]
In other words, the map $\ev:\mathsf{Res}_{X/S}(Y)\times_SX\rightarrow Y$ over $X$ exhibits a counit transformation at $Y$ for the functor 
\[ \_\times_S X :\dstack_{/S}\longrightarrow \dstack_{/X}.   \]
We will call the object $\mathsf{Res}_{X/S}(Y)\in \dstack_{/S}$ a \emph{Weil restriction of $Y$ along $q$}.
\end{defn}
\begin{lem}\label{lem:weilrespb}
Let 
\[
\begin{tikzcd}
Y\ar[dr]\ar[rr] && X\ar[dl,"q"] \\
& S
\end{tikzcd}
\]
be a commuting diagram of derived $\cinfty$-stacks. Then the Weil restriction $\mathsf{Res}_{X/S}(Y)$ fits into a pullback diagram 
\[
\begin{tikzcd}
\mathsf{Res}_{X/S}(Y)\ar[d]\ar[r]& \map_{/S}(X,Y)\ar[d] \\
S\ar[r] & \map_{/S}(X,X)
\end{tikzcd}
\]
($\map_{/S}(\_,\_)$ denotes the internal mapping object in $\dstack_{/S}$), where the right vertical map is induced by $Y\rightarrow X$ and the lower horizontal map is obtained by adjunction from the diagram 
\[
\begin{tikzcd}
    X\simeq S\times_SX\ar[dr,"q"]\ar[rr,"\mathrm{id}"] && X\ar[dl,"q"] \\
    & S.
\end{tikzcd}
\]
\end{lem}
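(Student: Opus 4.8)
The statement is essentially a formal consequence of the definitions and the fact that $\dstack$ is an $\infty$-topos, hence locally Cartesian closed, so that for each object $B$ the base change functor $B\times(-)$ along $B\to *$ admits a right adjoint, and more generally each functor $f_*\colon \dstack_{/X}\to\dstack_{/S}$ (right adjoint to $f^*=-\times_S X$) exists. The plan is to unwind the universal property of $\mathsf{Res}_{X/S}(Y)$ stated in Definition \ref{defn:weilrestriction} and to identify the claimed pullback with the same universal property. First I would observe that the candidate pullback
\[
\begin{tikzcd}
P\ar[d]\ar[r] & \map_{/S}(X,Y)\ar[d,"g_*"] \\
S\ar[r,"\sigma"] & \map_{/S}(X,X)
\end{tikzcd}
\]
exists in $\dstack_{/S}$ (pullbacks exist in any $\infty$-topos), where $g\colon Y\to X$ is the given map over $S$, $g_*$ is the map on internal hom-objects it induces, and $\sigma$ is the map classifying $\mathrm{id}_X\colon X\simeq S\times_S X\to X$ obtained by adjunction from the identity square displayed in the statement. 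I would then show $P$ satisfies the defining universal property of $\mathsf{Res}_{X/S}(Y)$, whence $P\simeq\mathsf{Res}_{X/S}(Y)$ by essential uniqueness of adjoints/representing objects.

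The core computation is a chain of natural equivalences of mapping spaces in $\dstack_{/S}$. Given $Z\to S$, the universal property of the pullback gives
\[
\Hom_{\dstack_{/S}}(Z,P)\simeq \Hom_{\dstack_{/S}}(Z,\map_{/S}(X,Y))\times_{\Hom_{\dstack_{/S}}(Z,\map_{/S}(X,X))}\Hom_{\dstack_{/S}}(Z,S).
\]
The last factor is contractible since $S$ is terminal in $\dstack_{/S}$, and the adjunction defining the internal hom $\map_{/S}(-,-)$ in $\dstack_{/S}$ (i.e. $\map_{/S}(A,-)$ right adjoint to $A\times_S(-)$) identifies $\Hom_{\dstack_{/S}}(Z,\map_{/S}(X,W))\simeq \Hom_{\dstack_{/S}}(Z\times_S X,W)$ naturally in $W$ over $S$. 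Under this identification the map to $\Hom_{\dstack_{/S}}(Z,\map_{/S}(X,X))\simeq\Hom_{\dstack_{/S}}(Z\times_S X,X)$ induced by $g_*$ becomes post-composition with $g$, and the point $\sigma$ becomes the structure map $Z\times_S X\to X$ (the second projection), because $\sigma$ was built precisely to classify $\mathrm{id}_X$. Hence the fibre product of mapping spaces is the space of lifts of $Z\times_S X\to X$ through $g\colon Y\to X$, which is by definition $\Hom_{\dstack_{/X}}(Z\times_S X,Y)$. Comparing with Definition \ref{defn:weilrestriction}, this is exactly $\Hom_{\dstack_{/X}}(Z\times_S X,Y)$, so $P$ corepresents the same functor as $\mathsf{Res}_{X/S}(Y)$ and the two are canonically equivalent; tracking the equivalences shows the comparison is compatible with the evaluation maps and with the maps down to $S$, so the square in the statement is the claimed pullback.

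The only genuinely delicate point — not hard, but the place where care is needed — is making the identifications natural in $Z$ and checking that the two maps into $\Hom_{\dstack_{/S}}(Z\times_S X,X)$ really do agree: one coming from $g_*$ on internal homs, the other from the structure map, and one must verify that the adjunction equivalence carries $g_*\colon \map_{/S}(X,Y)\to\map_{/S}(X,X)$ to post-composition by $g$ and carries the composite $Z\to S\xrightarrow{\sigma}\map_{/S}(X,X)$ to the projection $Z\times_S X\to X$. Both are instances of the counit/triangle identities for the $(A\times_S-)\dashv\map_{/S}(A,-)$ adjunction with $A=X$, together with the explicit description of $\sigma$ as the transpose of $\mathrm{id}_X$ under this adjunction applied to $Z=S$; I would spell this out using the naturality square of the counit and the functoriality of internal hom in the second variable. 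Everything else is formal manipulation of $\infty$-categorical adjunctions and limits, valid because $\dstack$ is an $\infty$-topos and hence all the relevant internal homs and base-change right adjoints exist.
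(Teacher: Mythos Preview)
Your proof is correct. You verify the universal property of the pullback directly via Yoneda: for each $Z\to S$ you identify $\Hom_{/S}(Z,P)$ with the fibre of post-composition by $g$ over the projection $Z\times_S X\to X$, which is $\Hom_{/X}(Z\times_S X,Y)$. This is exactly the defining property of $\mathsf{Res}_{X/S}(Y)$.

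The paper takes a slightly different, more categorical route. It observes that the base-change functor $\_\times_S X:\dstack_{/S}\to\dstack_{/X}$ factors as the slice-level functor induced by the internal product $X\times_S\_:\dstack_{/S}\to\dstack_{/S}$, and then invokes \cite[Proposition 5.2.5.1]{HTT}, which computes the right adjoint of a slice functor induced by a left adjoint $F$ as ``apply the right adjoint $G$, then pull back along the unit.'' Applied here with $F=X\times_S\_$ and $G=\map_{/S}(X,\_)$, this immediately yields the factorization
\[
\dstack_{/X}\xrightarrow{\map_{/S}(X,\_)}\dstack_{/\map_{/S}(X,X)}\xrightarrow{\text{pullback along }S\to\map_{/S}(X,X)}\dstack_{/S},
\]
which is the pullback square in the statement. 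Your argument is essentially the content of HTT 5.2.5.1 unwound by hand in this instance; it is more self-contained and explicit about the identification of the maps in the square, while the paper's proof is terser but relies on the reader knowing that reference. Neither approach has any real advantage over the other for a lemma this formal.
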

\begin{proof}
The functor $\_\times_SX:\dstack_{/S}\rightarrow\dstack_{/X}$ is induced by the functor $\dstack_{/S}\rightarrow\dstack_{/S}$ taking products with $X$ over $S$; it follows from \cite[Proposition 5.2.5.1]{HTT} that the right adjoint to $\_\times_SX$ factors as a composition 
\[ \dstack_{/X}{\longrightarrow}\dstack_{/\map_{/S}(X,X)}\longrightarrow \dstack_{/S} \]
where the first functor is induced by $\map_{/S}(X,\_)$ and the second functor is induced by pullback along the unit map $S\rightarrow\map_{/S}(X,X)$.
\end{proof}
\begin{ex}[Mapping stacks of sections]
Let $S=*$ a point, then the Weil restriction $\mathsf{Res}_{X/*}(Y)$ fits into a pullback diagram
\[
\begin{tikzcd}
\mathsf{Res}_{X/*}(Y)\ar[d]\ar[r]& \map(X,Y)\ar[d] \\
*\ar[r] & \map(X,X),
\end{tikzcd}
\]
that is, $\mathsf{Res}_{X/*}(Y)$ is the mapping stack of sections $\map_X(X,Y)$.
\end{ex}
\begin{rmk}\label{rmk:weilrestrictionpullback}
Let $\sigma:\Delta^1\times\Delta^1\rightarrow\dstack$ given by
\[
\begin{tikzcd}
X\ar[r]\ar[d] & Y \ar[d] \\
X' \ar[r] & Y' 
\end{tikzcd}
\]
be a pullback diagram of derived $\cinfty$-stacks, then the diagram 
\[
\begin{tikzcd}
\dstack_{/Y'}\ar[r]\ar[d] & \dstack_{/Y} \ar[d] \\
\dstack_{/X'} \ar[r] & \dstack_{/X} 
\end{tikzcd}
\]
of \infcats obtained by pulling back along the maps in the diagram, is vertically right adjointable; it follows that for $S'\rightarrow S$ any map of stacks and $Y\rightarrow X$ a map over $S$, the canonical map 
\[ \mathsf{Res}_{X\times_SS'/S'}(Y\times_SS')\longrightarrow \mathsf{Res}_{X/S}(Y)\times_SS' \]
over $S'$ is an equivalence. To see this, it suffices to show that the diagram is horizontally left adjointable. Recall that the square diagram of \infcats above is obtained by the composition $\Delta^1\times\Delta^1\overset{\sigma}{\rightarrow}\dstack\rightarrow\catinfh^{op}$ where the second map is the unstraightening of the Cartesian fibration $\ev_1:\fun(\Delta^1,\dstack)\rightarrow\dstack$. Now suppose we are given a map $f':X''\rightarrow X'$ and an $\ev_1$-coCartesian lift $\alpha':f'\rightarrow g'$ of the map $X'\rightarrow Y'$, then it suffices to show that given a diagram 
\[
\begin{tikzcd}
f\ar[r,"\alpha"] \ar[d]& g\ar[d] \\
f'\ar[r,"\alpha'"] & g'
\end{tikzcd}
\]
where the left vertical map is an $\ev_1$-Cartesian lift of the map $X\rightarrow X'$ and the right vertical map is an $\ev_1$-Cartesian lift of the map $Y\rightarrow Y'$, the transformation $\alpha$ is $\ev_1$-coCartesian. This amounts to the assertion that in the commuting cube
\[
\begin{tikzcd}
& X \arrow[rr] \arrow[dd] & & Y \arrow[dd] \\
 X\times_{X'}X'' \arrow[ur ]\arrow[rr, crossing over,"\beta"{xshift=12pt}] \arrow[dd] & & Y\times_{Y'}X''\arrow[ur]  \\
& X'  \arrow[rr] & & Y'\\
X''\arrow[rr,equal] \arrow[ur]& & X''\arrow[ur] \arrow[from=uu, crossing over]\\
\end{tikzcd}
\]
 where the squares on the side are Cartesian, the indicated map $\beta$ is an equivalence. Since the square $\sigma$ is assumed Cartesian, the front square is also Cartesian which guarantees that $\beta$ is indeed an equivalence.
\end{rmk}
We conclude from the previous remarks that functor of Weil restriction has the properties we desire of the relative mapping stack of sections. Accordingly, for an $S$-family of submersions $f:Y\rightarrow M$, we will define the stack of $S$-parametrized sections of $f$ as the Weil restriction $\mathsf{Res}_{M/S}(Y)$. In the next section, we will apply the formation of Weil restrictions to construct parametrized jet spaces. 
\begin{rmk}
Let $\LS$ be a property of affine derived $\cinfty$-schemes stable under open subspaces. Let $S$ be derived $\cinfty$-stack that locally has the property $\LS$, and let $f:Y\rightarrow X$ be a map of derived $\cinfty$-stacks that locally have the property $\LS$. We will write $\mathsf{Res}_{X/S}(Y)_{\LS}$ for the Weil restriction of $f$ along $X\rightarrow S$ taken in $\LS$. Clearly, the stack $\mathsf{Res}_{X/S}(Y)_{\LS}$ is simply the restriction of the sheaf $\mathsf{Res}_{X/S}(Y)$ to $\LS\subset \daff$.
\end{rmk}
We achieve our second goal $G2$ with the following result.
\begin{prop}\label{prop:weilressmooth}
Let $S$ be a smooth stack, let $q:M\rightarrow S$ be a proper $S$-family of manifolds and let $f:Y\rightarrow M$ be an $S$-family of submersions. Then the counit transformation
\[  \iota_{\mathsf{Sm}!}\mathsf{Res}_{M/S}(Y)_{\mathsf{Sm}}\simeq\iota_{\mathsf{Sm}!}\iota_{\mathsf{Sm}}^{*}\mathsf{Res}_{M/S}(Y)\longrightarrow\mathsf{Res}_{M/S}(Y) \]
is an equivalence.
\end{prop}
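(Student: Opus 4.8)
The plan is to reduce the statement to a local assertion and then apply the machinery of Section 2.2 on stacky submersions. The key point is that the counit map $\iota_{\mathsf{Sm}!}\iota_{\mathsf{Sm}}^*\mathsf{Res}_{M/S}(Y)\to \mathsf{Res}_{M/S}(Y)$ being an equivalence is precisely the statement that a certain pullback in $\dstack$ computing $\mathsf{Res}_{M/S}(Y)$ is already computed in $\smst$; by Proposition \ref{prop:stackysubmersive}$(3)$ and Proposition \ref{prop:criterion}$(3)$, it suffices to exhibit the relevant map as stacky submersive, which (since $\mathsf{Mfd}$ is a local property stable under finite products) can be checked after pulling back to an arbitrary manifold $N$. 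Because Weil restriction commutes with base change (Remark \ref{rmk:weilrestrictionpullback}), pulling back along $N\to S$ replaces $(S, M\to S, Y\to M)$ with $(N, M\times_S N\to N, Y\times_S N \to M\times_S N)$, an honest proper family of manifolds over a manifold with a submersion on top. So I am reduced to proving the result for $S=N$ an ordinary manifold.

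First I would establish that in the absolute case $S=\ast$, with $M$ a compact manifold and $Y\to M$ a surjective submersion, the smooth mapping stack $\map_M(M,Y)_{\mathsf{Sm}}$ is already representable by a convenient (indeed nuclear Fréchet) manifold, namely the manifold $\Gamma(Y;M)$ of sections, and that $\iota_{\mathsf{Sm}!}$ applied to it agrees with $\map_M(M,Y)$ computed in $\dstack$. The smooth statement is \cite[Theorem 42.1]{krieglmichor} (or can be derived from the gluing principle of Remark \ref{rmk:gluingconmfd} together with the exponential law, choosing a tubular-neighbourhood-type cover of $\Gamma(Y;M)$ by charts locally modelled on $\Gamma(\sigma^*TY/M;M)$, a nuclear Fréchet space). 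The nontrivial content is that this convenient manifold \emph{as a derived $\cinfty$-stack} also satisfies the universal property of the Weil restriction/mapping stack of sections computed in $\dstack$: for this I invoke Proposition \ref{prop:convmfdetale} and the gluing machinery of Section 2.1, noting that the local charts of $\Gamma(Y;M)$ are nuclear Fréchet spaces hence affine derived $\cinfty$-schemes, and that $j_{\mathsf{Con}}$ preserves the limits defining the mapping stack of sections (Lemma \ref{lem:weilrespb}).

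The heart of the argument is the relative, local structure statement. Working now with $N$ a manifold, $q:M\to N$ a proper family (hence locally trivial by Ehresmann, Proposition after Remark \ref{rmk:localprop}) and $f:Y\to M$ a family of submersions, I want to show $\mathsf{Res}_{M/N}(Y)\to N$ is stacky submersive, i.e. locally over $\mathsf{Res}_{M/N}(Y)$ it looks like $U\times E$ with $E$ smooth. Around a point $(s,\sigma)$, with $\sigma: M_s\to Y_s$ a section, local triviality of $M\to N$ and a parametrized tubular neighbourhood of $\sigma(M_s)$ in $Y$ (constructed by the ``hand-crafted tubular neighbourhood'' methods the paper alludes to in Section 3.2, i.e. a fibrewise exponential map for a connection on the vertical bundle $TY/M$) give an identification of an open substack of $\mathsf{Res}_{M/N}(Y)$ around $(s,\sigma)$ with $U\times \map_{M_s}(M_s, \sigma^*TY_s/M_s)_{\mathsf{Sm}}$ for $U\subseteq N$ open, where the second factor is (by the absolute case) a nuclear Fréchet space, hence smooth. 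Crucially, since the chart identification is an equivalence of smooth stacks, and since being stacky submersive and its compatibility with $\iota_{\mathsf{Sm}!}$ are local, the result follows by assembling these charts and invoking Proposition \ref{prop:criterion}$(3)$ together with Proposition \ref{prop:stackysubmersive}.

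The main obstacle is the construction of the parametrized tubular neighbourhood and the verification that it trivializes $\mathsf{Res}_{M/N}(Y)$ locally as a \emph{stack}, not merely pointwise on $N$. Concretely: I must produce, on an open $U\subseteq N$ over which $M\cong U\times M_s$ trivializes and $\sigma$ extends to a $U$-family of sections, a fibrewise open embedding of a neighbourhood of the zero section of $\sigma^*TY/M$ into $Y$ over $M$, depending smoothly on $U$; then transport along it to get the chart. The analytic subtlety — openness of the chart image in the $c^\infty$-topology on the section space, and checking the transition maps between overlapping charts are smooth maps of convenient manifolds — is exactly what the paper sidesteps via Remark \ref{rmk:gluingconmfd}: once local representability by a convenient vector space is in hand, representability by a convenient manifold and hence smoothness is formal. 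So the real work is the (classical but fiddly) tubular neighbourhood construction in families, which I would carry out using the exponential law of \cite{krieglmichor} to pass between fibrewise-smooth data on $U\times M_s\times_{M} TY/M$ and smooth curves into the section space, together with properness of $q$ to ensure the neighbourhood can be taken uniform in the compact fibres.
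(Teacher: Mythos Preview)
Your overall strategy---reduce to a manifold base via base-change of Weil restriction, then exhibit the relevant map as stacky submersive so that Proposition~\ref{prop:stackysubmersive}(3) yields smoothness---is correct and is essentially the paper's approach. But there is a genuine gap, and the organization differs in an instructive way.

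\textbf{The gap.} Your absolute case argument does not go through as written. You want to know that the \emph{derived} section stack $\map_M(M,Y)$ agrees with $\iota_{\mathsf{Sm}!}$ of the smooth one. The reasons you offer---that the charts of $\Gamma(Y;M)$ are nuclear Fr\'echet hence affine derived $\cinfty$-schemes, and that $j_{\mathsf{Con}}$ preserves limits---do not address this: the paper explicitly warns that the affine scheme $(M,\cinfty_M)$ of an infinite-dimensional $M$ does \emph{not} represent $j_{\mathsf{Con}}(M)$, and preservation of limits by $j_{\mathsf{Con}}$ only tells you about limits in $\smst$, not that the pullback in $\dstack$ defining $\map_M(M,Y)$ lands in $\smst$. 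The missing input is that for a \emph{vector bundle} $E\to M$ the derived stack $\map_M(M,E)$ is already smooth. The paper proves this (Lemma~\ref{lem:manifoldsmoothmapping}) by a genuinely derived argument: $\cinfty(\R^n)$ is compact projective in $\sring$, so $\Hom_{\sring}(\cinfty(\R^n),\cinfty(M)\oinfty\_)$ is left Kan extended from $\cartsp^{op}$. Without this, your relative charts $U\times \map_{M_s}(M_s,\sigma^*TY_s/M_s)$ are not known to be smooth in the derived sense, and the argument collapses.

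\textbf{The organizational difference.} Rather than directly charting $\mathsf{Res}_{M/N}(Y)$ via parametrized tubular neighbourhoods, the paper reduces further to a trivial family $M\simeq N\times S$ and uses Lemma~\ref{lem:weilresttrivbundle} to write $\mathsf{Res}_{M/S}(Y)$ as a pullback of $S\times\map(N,Y)\to S\times\map(N,M)$. It then proves separately that (i) $\map(N,Y)$ and $\map(N,M)$ are smooth (Lemma~\ref{lem:manifoldsmoothmapping}) and (ii) $\map(N,Y)\to\map(N,M)$ is stacky submersive when $Y\to M$ is a submersion (Lemma~\ref{lem:etaletarget}), the latter resting on the Stacey--Roberts construction of local additions compatible with a submersion (Lemma~\ref{lem:staceyroberts}). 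Your ``parametrized tubular neighbourhood'' is effectively Stacey--Roberts pulled back along a section; the paper does prove exactly this statement (Lemma~\ref{lem:localstructureweil}), but deploys it later for the finer local structure needed in the elliptic representability proof, not here. Factoring through absolute mapping stacks cleanly separates the derived input (compact projectivity) from the analytic input (compatible local additions), whereas your direct approach entangles them.
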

In light of this result, we will, for proper $S$-families $M\rightarrow S$ and $S$-families of submersions $Y\rightarrow M$, denote the smooth Weil restriction $\mathsf{Res}_{M/S}(Y)_{\mathsf{Sm}}$ simply by $\mathsf{Res}_{M/S}(Y)$. 
\begin{rmk}
Proposition \ref{prop:weilressmooth} holds more generally with the condition that $q$ be a proper family replaced with the condition that $q$ is a locally trivial family, but we will not need this result in this paper.    
\end{rmk}
The proof of Proposition \ref{prop:weilressmooth} requires some preparation.
\begin{lem}\label{lem:weilresttrivbundle}
Let $S$ be a derived $\cinfty$-stack and let $Y\rightarrow X\times S$ be any map. Then the Weil restriction $\mathsf{Res}_{X\times S/S}(S)$ fits into a pullback diagram
\[
\begin{tikzcd}
\mathsf{Res}_{X\times S/S}(Y) \ar[d]\ar[r] & S\times \map(X,Y)\ar[d] \\
S\ar[r] &S\times \map(X,X\times S)
\end{tikzcd}
\]
in $\dstack_{/S}$.  
\end{lem}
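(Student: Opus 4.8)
The plan is to exploit the fact that for a \emph{product} family $X\times S\to S$, the pullback functor $\_\times_S(X\times S):\dstack_{/S}\to\dstack_{/X\times S}$ factors in a particularly simple way, which will let us compute its right adjoint explicitly. First I would observe that, under the canonical equivalence $\dstack_{/X\times S}\simeq(\dstack_{/S})_{/X\times S}$ where we regard $X\times S$ as the object $X\times S\to S$ of $\dstack_{/S}$, the functor $\_\times_S(X\times S)$ is identified with the functor $(\dstack_{/S})\to(\dstack_{/S})_{/X\times S}$ sending $Z\to S$ to the projection $Z\times_S(X\times S)\to X\times S$; but $Z\times_S(X\times S)\simeq Z\times X$ (product taken in $\dstack$, which by Lemma \ref{lem:propertyproductstable} agrees with the product in $\smst$ when everything is smooth, though we do not even need that here), so this functor is simply $\_\times X$ followed by the structure map to $X\times S$. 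Consequently, by \cite[Proposition 5.2.5.1]{HTT} exactly as in the proof of Lemma \ref{lem:weilrespb}, its right adjoint factors as
\[
\dstack_{/X\times S}\longrightarrow \dstack_{/\map_{/S}(X\times S,\,X\times S)}\longrightarrow\dstack_{/S},
\]
and here, because the family is the product $X\times S\to S$, the internal mapping object $\map_{/S}(X\times S,X\times S)$ over $S$ is just $S\times\map(X,X)$ (the constant $S$-stack on the ordinary, non-relative mapping stack), and similarly $\map_{/S}(X\times S,Y)\simeq S\times\map(X,Y)$ whenever $Y\to X\times S$ is a map; this last identification is the usual compatibility of internal homs with base change to a product, which follows formally from the fact that $\dstack$ is an \inftopt\ (Cartesian closedness) together with the explicit description of the slice $\dstack_{/S}$.

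With these identifications in hand, the second step is just to transcribe the pullback square from Lemma \ref{lem:weilrespb}: that lemma exhibits $\mathsf{Res}_{X\times S/S}(Y)$ as the fibre product of $\map_{/S}(X\times S,Y)\to\map_{/S}(X\times S,X\times S)$ along the unit $S\to\map_{/S}(X\times S,X\times S)$. Substituting $\map_{/S}(X\times S,Y)\simeq S\times\map(X,Y)$ and $\map_{/S}(X\times S,X\times S)\simeq S\times\map(X,X)$, and noting that under this identification the unit section $S\to S\times\map(X,X)$ is $\mathrm{id}_S\times(\text{name of }\mathrm{id}_X)$ while the right vertical map is $\mathrm{id}_S\times(Y\to X\text{ composition})$, we get precisely the asserted pullback square
\[
\begin{tikzcd}
\mathsf{Res}_{X\times S/S}(Y) \ar[d]\ar[r] & S\times \map(X,Y)\ar[d] \\
S\ar[r] &S\times \map(X,X\times S)
\end{tikzcd}
\]
once we further note $\map(X,X\times S)\simeq\map(X,X)\times\map(X,S)$ and the relevant square only sees the $\map(X,X)$ factor through the given maps, so the stated square is equivalent to the one we have produced.

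The step I expect to require the most care — and the only genuine content beyond bookkeeping — is the compatibility $\map_{/S}(X\times S,Y)\simeq S\times\map(X,Y)$ of internal homs with the base change along $S\to *$ for a \emph{product} family; everything else is a formal consequence of local Cartesian closedness of $\dstack$ and the adjunction machinery already used in Lemma \ref{lem:weilrespb}. I would prove this compatibility by the Yoneda argument: for $T\to S$, maps $T\to\map_{/S}(X\times S,Y)$ over $S$ correspond to maps $T\times_S(X\times S)\simeq T\times X\to Y$ over $X\times S$, which (projecting $Y\to X\times S\to S$) is the same as a map $T\times X\to Y$ over $S$, i.e.\ a map $T\to S\times\map(X,Y)$ over $S$; naturality in $T$ is clear, so the two objects of $\dstack_{/S}$ are equivalent. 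Alternatively, one can deduce it directly from Remark \ref{rmk:weilrestrictionpullback} applied to the pullback square expressing $X\times S\to S$ as a base change of $X\to *$, which is perhaps the cleanest route and avoids re-deriving the internal-hom identity from scratch.
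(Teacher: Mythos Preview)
Your approach has a genuine gap: the identification $\map_{/S}(X\times S,Y)\simeq S\times\map(X,Y)$ is false for a general $Y\to X\times S$. Your Yoneda argument is confused: for $T\to S$, maps $T\to\map_{/S}(X\times S,Y)$ over $S$ correspond to maps $T\times_S(X\times S)\simeq T\times X\to Y$ \emph{over $S$} (this is what the internal hom in the slice topos means), not over $X\times S$ as you write. Maps $T\times X\to Y$ over $S$ are \emph{not} the same as arbitrary maps $T\times X\to Y$, which is what the $T$-points of $S\times\map(X,Y)$ over $S$ are. A small sanity check: take $X=*$, $S=\R$, and $Y=\R$ with $Y\to X\times S=\R$ the identity; then $\map_{/\R}(\R,\R)\simeq\R$ while $\R\times\map(*,\R)\simeq\R\times\R$, so they differ. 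The identification $\map_{/S}(X\times S,X\times S)\simeq S\times\map(X,X)$ that you also use \emph{is} correct, because the ``over $S$'' condition is automatic there; but the analogous argument breaks down for general $Y$. Your alternative via Remark~\ref{rmk:weilrestrictionpullback} does not help either: that remark only treats $Y$ of the form $Y'\times_S S'$, i.e.\ pulled back from a stack over the original base, which for the square $X\times S\to S$ over $X\to *$ would force $Y=Y'\times S$ for some $Y'\to X$.

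The paper's proof avoids this by factoring differently: instead of going through Lemma~\ref{lem:weilrespb} (which applies Proposition~5.2.5.1 of \cite{HTT} to the endofunctor $\_\times_S(X\times S)$ of $\dstack_{/S}$), it observes that the pullback functor $\_\times_S(X\times S):\dstack_{/S}\to\dstack_{/X\times S}$ is induced by the composite $\dstack_{/S}\to\dstack\xrightarrow{\_\times X}\dstack$ (forget the structure map, then take the absolute product with $X$). The right adjoint of this composite is $Z\mapsto S\times\map(X,Z)$, and applying Proposition~5.2.5.1 directly yields the asserted pullback square with $S\times\map(X,X\times S)$ in the lower right corner. The ``over $S$'' condition you lost is exactly recovered by the $\map(X,S)$ factor hiding inside $\map(X,X\times S)\simeq\map(X,X)\times\map(X,S)$. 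You had the right first sentence---the key is indeed the simple factorization of the pullback functor for a product family---but then took a detour through the slice internal hom that introduces the error.
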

\begin{proof}
The functor $\_\times_{S}S\times X:\dstack_{/S}\rightarrow\dstack_{/X\times S}$ is induced by the functor
\[ \ \dstack_{/S}\longrightarrow\dstack \overset{\_\times X}{\longrightarrow}\dstack \]
where the first functor is the obvious projection. This functor admits a right adjoint carrying $Z\in \dstack$ to $S\times\map(X,Z)$. It follows from \cite[Proposition 5.2.5.1]{HTT} that the right adjoint to $\_\times_{S}S\times X$ factors as a composition
\[ \dstack_{/X\times S}\longrightarrow \dstack_{/S\times \map(X,X\times S)}\longrightarrow\dstack_{/S}  \]
where the first functor is induced by $S\times\map(X,\_)$ and the second functor is induced by pulling back along the unit map $S\rightarrow S\times\map(X,X\times S)$.
\end{proof}
We now establish that the formation of mapping stacks with compact source preserves open embeddings; this is a formal consequence of the tube lemma.
\begin{lem}\label{lem:mappingstackopen}
Let $M$ be a compact manifold and let $f:Q\rightarrow N$ be an open embedding of manifolds, then the map $\map(M,Q)\rightarrow \map(M,N)$ is an open substack inclusion.
\end{lem}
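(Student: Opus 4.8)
The plan is to verify directly the two conditions in the definition of an open substack inclusion, using the exponential adjunction that defines $\map(M,-)$ together with the tube lemma. Let $\spec\,A\to\map(M,N)$ be an arbitrary map from an affine derived $\cinfty$-scheme. By the defining universal property of the mapping stack, together with the fact that the restricted Yoneda embedding $\dsch\hookrightarrow\dstack$ preserves the product $\spec\,A\times M$ (immediate from the pointwise formula, since $M$ is a manifold and products of derived $\cinfty$-schemes are derived $\cinfty$-schemes), this map corresponds to a map $\phi:\spec\,A\times M\to N$ of derived $\cinfty$-schemes. Write $X:=q_{\sring}(\spec\,A)$ for the underlying space and $\Phi:X\times M\to N$ for the underlying continuous map of $\phi$, and set
\[
U:=\{\,x\in X\ :\ \Phi(\{x\}\times M)\subseteq Q\,\}\subseteq X .
\]

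First I would show $U$ is open. Since $Q\subseteq N$ is open and $\Phi$ is continuous, $\Phi^{-1}(Q)$ is open in $X\times M$; for $x_{0}\in U$ the slice $\{x_{0}\}\times M$ is contained in $\Phi^{-1}(Q)$, so because $M$ is compact the tube lemma supplies an open $W\ni x_{0}$ with $W\times M\subseteq\Phi^{-1}(Q)$, i.e.\ $W\subseteq U$. Hence $U$ is open, and the open subspace $(U,\Of_{\spec\,A}|_{U})$ is again an affine derived $\cinfty$-scheme, since being an affine derived $\cinfty$-scheme is stable under open subspaces.

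Next I would identify the pullback $\spec\,A\times_{\map(M,N)}\map(M,Q)$. For $T\in\daff$, a $T$-point of this pullback is a map $g:T\to\spec\,A$ together with a factorization of the induced map $\Phi\circ(g\times\mathrm{id}_{M}):T\times M\to N$ through the open embedding $Q\hookrightarrow N$. Because an open embedding of manifolds is a monomorphism of derived $\cinfty$-stacks (a map of derived locally $\cinfty$-ringed spaces factors through $(U_{Q},\cinfty_{N}|_{U_{Q}})$ precisely when its underlying map lands in $U_{Q}$, and then uniquely), such a factorization is unique when it exists and exists exactly when the underlying continuous map $q_{\sring}(T)\times M\to q_{\sring}(N)$ has image in $Q$, equivalently when $q_{\sring}(g)$ factors through $U$, equivalently when $g$ factors through the open subscheme $(U,\Of_{\spec\,A}|_{U})\hookrightarrow\spec\,A$. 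Naturally in $T$, this identifies $\spec\,A\times_{\map(M,N)}\map(M,Q)$ with $(U,\Of_{\spec\,A}|_{U})$ over $\spec\,A$. Thus the pullback is representable by an affine derived $\cinfty$-scheme and the map to $\spec\,A$ is an open embedding; since $\spec\,A\to\map(M,N)$ was arbitrary, $\map(M,Q)\to\map(M,N)$ is representable, and hence an open substack inclusion.

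The only genuinely analytic ingredient is the tube lemma, which is exactly where compactness of $M$ enters; everything else is bookkeeping with the mapping-stack adjunction and the pointwise description of the Yoneda embedding. The one point that needs a little care is the naturality in $T$ of the identification of points — in particular, checking that the condition ``factors through $Q$'' is detected on underlying topological spaces, which I would spell out using that $Q$ carries the restricted structure sheaf $\cinfty_{N}|_{U_{Q}}$.
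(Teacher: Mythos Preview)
Your proposal is correct and follows essentially the same approach as the paper: both define the same open set $U\subset X$ via the tube lemma and identify the pullback $\spec\,A\times_{\map(M,N)}\map(M,Q)$ with the open subscheme $(U,\Of_{\spec\,A}|_U)$. The only difference is packaging: you carry out the identification via a direct functor-of-points argument with $T$-points, while the paper phrases it in terms of homotopy pullbacks of slice categories $\daff_{/(-)}$ and the fact that $q_{\sring}$-Cartesian morphisms yield pullback squares of slices; your remark that ``factors through $Q$'' is detected on underlying spaces because $Q$ carries the restricted structure sheaf is exactly the content of the paper's use of $q_{\sring}$-Cartesianness.
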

\begin{proof}
Let $(X,\Of_X)=\spec\,A$ be an affine derived $\cinfty$ scheme and suppose we are given a map $(X,\Of_X)\times M\rightarrow N$ determining a map $f:X\times M\rightarrow N$ of topological spaces. We have a pullback diagram 
\[
\begin{tikzcd}
\dstack_{/\map(M,Q)\rightarrow\map(M,N)} \ar[r]\ar[d] &\dstack_{/\map(M,N)}\ar[d] \\
\dstack_{/Q\rightarrow N} \ar[r] & \dstack_{/N}
\end{tikzcd}
\]
of \infcats where the vertical functors take products with $M$. Given the map $(X,\Of_X)\rightarrow\map(M,N)$ determining a map $(X,\Of_X)\times M\rightarrow N$, the functor $\dstack_{/(X,\Of_X)} \rightarrow \dstack_{/N}$ factors as 
\[  \dstack_{/(X,\Of_X)} \overset{\_\times M}{\longrightarrow} \dstack_{/(X,\Of_X)\times M} \longrightarrow \dstack_{/N}. \]
It suffices to show that that the right adjoint to this functor carries $Q\hookrightarrow N$ to an open substack of $(X,\Of_X)$. For formal reasons, it suffices to show that the functor
\[  \daff_{/(X,\Of_X)} \overset{\_\times M}{\longrightarrow} \daff_{/(X,\Of_X)\times M} \longrightarrow \daff_{/N}. \]
admits a counit transformation at $Q$ which determines an open embedding of $(X,\Of_X)$. Let $U\subset X$ be the set $\{x\in X;\, f(x,m)\in Q\,\forall m\in M\}$. This set is open in $X$: if $x\in U$, then the compact set $\{x\}\times M$ is contained in $f^{-1}(Q)$. Choose a finite collection of opens $\{V_i\times W_i\subset f^{-1}(Q)\subset X\times M\}$, then the open set $\cap_i V_i$ is an open neighbourhood of $x$ in $U$. Note that a map $Z\rightarrow X$ of topological spaces factors through $U$ if and only if the induced map $Z\times M\rightarrow N$ factors through $Q$; that is, $U\rightarrow X$ together with the map $U\times N\rightarrow Q$ is a counit at $Q$ for the functor 
\[ \mathsf{Top}_{/X}\overset{\_\times M}{\longrightarrow}\topo_{/X\times M}\longrightarrow \topo_{/N}, \]
so that we have a homotopy pullback diagram 
\[
\begin{tikzcd}
\topo_{/U}\ar[d]\ar[r] &\topo_{/X}\ar[d] \\
\topo_{/Q}\ar[r] & \topo_{/N}
\end{tikzcd}
\]
Since $q_{\sring}:\mathsf{Top}^{\mathrm{loc}}(\sring)\rightarrow\mathsf{Top}$ is a Cartesian fibration and $Q\rightarrow N$ and $(U,\Of_X|_U)\rightarrow (X,\Of_X)$ are $q_{\sring}$-Cartesian morphisms, we have homotopy pullback diagrams 
\[
\begin{tikzcd}
\daff_{/(U,\Of_X|_U)}\ar[d]\ar[r] &\daff_{/(X,\Of_X)}\ar[d] \\
\topo_{/U}\ar[r] & \topo_{/X},
\end{tikzcd}\quad\quad\quad 
\begin{tikzcd}
\daff_{/Q}\ar[d]\ar[r] &\daff_{/N}\ar[d] \\
\topo_{/Q}\ar[r] & \topo_{/N},
\end{tikzcd}
\]
and we conclude that $\daff_{/(U,\Of_X|_U)}$ fits into a homotopy pullback diagram
\[
\begin{tikzcd}
\daff_{/(U,\Of_X|_U)}\ar[d]\ar[r] &\daff_{/(X,\Of_X)}\ar[d] \\
\daff_{/Q}\ar[r] & \daff_{/N},
\end{tikzcd}
\]
as desired.
\end{proof}
\begin{rmk}
Note that this proof continues to hold more generally when $M$ is a compact derived $\cinfty$-scheme and $Q\hookrightarrow N$ an open embedding of derived $\cinfty$-schemes.
\end{rmk}
To proceed, we will improve our understanding of the local structure of mapping stacks $\map(M,N)$ for $M$ and $N$ manifolds with $M$ compact, and more generally parametrized versions of these. This is the same problem one faces when one attempts to give the set of smooth maps $\cinfty(M,N)$ the structure of a smooth (convenient) manifold. A local chart for the manifold of mappings $\cinfty(M,N)$ at some $g:M\rightarrow N$ is constructed by means of a \emph{local addition}. This is a choice of an open neighbourhood $V_g$ of the zero section of $g^*TN$ for which the projection $V_g\rightarrow M$ admits an equivalence to the vector bundle $g^*TN\rightarrow M$, together with an open embedding $V_g\hookrightarrow M\times N$ onto a neighbourhood of the graph of $g$ fitting into a commuting diagram 
\[
\begin{tikzcd}
M \ar[d,"0",hook] \ar[r,"\mathrm{id}\times g",hook]   & M\times N\ar[d] \\
V_g \ar[r]\ar[ur,"\phi",hook] & M
\end{tikzcd}
\]
The map $V_g\hookrightarrow M\times N$ determines an open embedding $(\Gamma(V_g;M)\subset \Gamma(g^*TN;M))\rightarrow\cinfty(M,N)$ (where both sets are given the Whitney $\cinfty$-topology), this construction determines a chart modelled on an open subset of the convenient vector space $\Gamma(g^*TN;M)$, and the collection of all such charts clearly covers $\cinfty(M,N)$. The existence of local additions is a straightforward tubular neighbourhood argument using the exponential map for a connection; see for instance \cite[Theorem 42.1]{krieglmichor} or \cite[Theorem 12.10]{Palais}. Local additions can similarly be used to understand the local structure of derived mapping stacks of manifolds, and more generally of Weil restrictions. Below, we will also want to describe the local structure of maps of mapping stacks induced by maps of manifolds, This will require the construction of local additions with some additional properties, for which we will have to do a bit of work. 
\begin{lem}\label{lem:manifoldsmoothmapping}
Let $M$ be a manifold and let $F\rightarrow M$ be a finite rank vector bundle, then $\map_M(M,F)$ is smooth. Let $M$ and $N$ be manifolds and suppose that $M$ is compact, then the mapping stack $\map(M,N)$ is smooth.    
\end{lem}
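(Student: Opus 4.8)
The statement has two parts: (i) for a finite-rank vector bundle $F\rightarrow M$, the stack $\map_M(M,F)$ is smooth; (ii) for $M$ compact and $N$ arbitrary, $\map(M,N)$ is smooth. The plan is to prove (i) first and bootstrap (ii) from it using a local addition together with Lemma \ref{lem:mappingstackopen}. For (i), the idea is to reduce to the case where $M=\R^n$ and $F=\R^n\times\R^k$ is trivial. Indeed, smoothness is a local property of derived $\cinfty$-stacks (by Proposition \ref{prop:localtarget} and the results of Section 2.2; more precisely, being submersive as a map of derived stacks can be checked locally on the source and target, cf. Proposition \ref{prop:criterion}(3) and Lemma \ref{lem:manifoldsmoothmapping}'s context), so we may choose a trivializing open cover $\{U_i\hookrightarrow M\}$ of $F$ with each $U_i$ diffeomorphic to a Cartesian space, which by Lemma \ref{lem:mappingstackopen} (or rather its analogue for non-compact source, which holds here since the source $M$ is fixed and we restrict on the target of sections) yields open substack inclusions $\map_{U_i}(U_i,F|_{U_i})\hookrightarrow \map_M(M,F)$ jointly covering. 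Wait — the subtlety is that sections over $M$ do not restrict to give an open cover of $\map_M(M,F)$ in the naive way; rather one uses that $\map_M(M,F)=\mathsf{Res}_{M/\ast}(F)$ and that $\map_M(M,F)$ is the limit of sections over the pieces of a good cover glued along a \v{C}ech nerve. The cleanest route: when $F$ is trivial, $\map_M(M,\R^n\times\R^k)=\map(M,\R^k)=\map(M,\R)^{\times k}$, and $\map(M,\R)$ represents the functor $Z\mapsto \Hom_{\dstack}(Z\times M,\R)=\Gamma(\Of_{Z\times M})=$ global functions, which one checks directly is representable by the nuclear Fr\'echet space $\cinfty(M)$ (for $M$ compact) or more generally is a filtered limit/appropriate locally convex space — in any case, by the Exponential Law Proposition it is $j_{\mathsf{Con}}(\cinfty(M))$, a convenient vector space, hence smooth since $c^\infty$-open subsets of convenient vector spaces are by definition smooth derived $\cinfty$-stacks. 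Globalizing via the \v{C}ech descent for a good cover of $M$ (each intersection contractible, sections over it a convenient vector space) and using that smoothness is stable under the relevant colimits gives (i).

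For (ii): fix $g\in\map(M,N)(\ast)$, i.e. a smooth map $g:M\rightarrow N$. Choose a local addition for $N$ relative to $g$, that is (as recalled in the excerpt just before the lemma) an open neighbourhood $V_g$ of the zero section of $g^*TN\rightarrow M$ which projects isomorphically onto $g^*TN$, together with an open embedding $\phi:V_g\hookrightarrow M\times N$ over $M$ onto a neighbourhood of the graph of $g$. Such $\phi$ exists by the standard tubular neighbourhood/exponential-map argument (e.g. \cite[Theorem 42.1]{krieglmichor}, \cite[Theorem 12.10]{Palais}). Then $\phi$ induces, on mapping stacks of sections over $M$, an open substack inclusion
\[
\map_M(M,V_g)\hooklongrightarrow \map_M(M,M\times N)\simeq \map(M,N),
\]
where the open substack inclusion is exactly Lemma \ref{lem:mappingstackopen} applied to the open embedding $\phi$ with compact source $M$. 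Next, $V_g\cong g^*TN$ as bundles over $M$ (via the projection, using the defining property of the local addition), so $\map_M(M,V_g)\simeq \map_M(M,g^*TN)$, which is smooth by part (i) since $g^*TN$ is a finite-rank vector bundle over $M$. Finally, the $\R$-point $g$ itself lies in the image of $\map_M(M,V_g)\hookrightarrow\map(M,N)$ (it corresponds to the zero section of $g^*TN$, sent by $\phi$ to the graph of $g$), so as $g$ ranges over all $\R$-points of $\map(M,N)$ these open substacks cover $\map(M,N)$. Hence $\map(M,N)$ admits an open cover by smooth stacks, and since smoothness is a local property (Proposition \ref{prop:localtarget}, Corollary \ref{cor:localprop}), $\map(M,N)$ is smooth.

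\textbf{Where the work is.} The genuinely nontrivial input is the existence of the local addition $\phi$ with the property that $V_g\rightarrow M$ is bundle-isomorphic to $g^*TN$ — but this is classical tubular-neighbourhood theory and I would simply cite it. The second potential obstacle is the passage from part (i)'s trivial-bundle case to the general finite-rank bundle case: one must know that $\map_M(M,-)$ sends an open cover of the \emph{target bundle} $F$ to data that assembles $\map_M(M,F)$ as a colimit (\v{C}ech descent) of smooth pieces, and that smoothness is preserved under this colimit. This is where the $\mathsf{Res}$-formalism and the fact (Proposition \ref{prop:schemescolimstable}, Proposition \ref{prop:resyonedaschemes}) that the relevant geometric realizations are computed compatibly is used; the key point is that $\map_M(M,-)=\mathsf{Res}_{M/\ast}(-)$ preserves the \v{C}ech-nerve colimit associated to a good cover of $M$ when everything in sight has compact (or at least the fixed, paracompact) source, combined with Lemma \ref{lem:mappingstackopen} to see the pieces sit inside as open substacks. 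I expect the bookkeeping of this descent — matching the topology on $\map(M,N)$ with the Whitney topology and verifying the cocycle data glues — to be the main place care is needed, though each individual verification is routine given the machinery already set up. The base case $\map(M,\R)\simeq j_{\mathsf{Con}}(\cinfty(M))$ follows immediately from the Exponential Law proposition together with $\Hom_{\dstack}(Z\times M,\R)=\Gamma(\Of_{Z\times M})$ and the identification of global sections over a product with the completed tensor product / convenient-space of functions.
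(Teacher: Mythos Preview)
Your argument for part (ii) is correct and essentially identical to the paper's: local additions give open substack inclusions $\map_M(M,V_g)\hookrightarrow\map(M,N)$ which cover as $g$ varies, and each $\map_M(M,V_g)$ is smooth by part (i).

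Part (i), however, has a genuine gap in both steps. For the trivial bundle case, invoking the Exponential Law is circular: that proposition only identifies $j_{\mathsf{Con}}(\Gamma(F;M))$ with the \emph{smooth} mapping stack $\map_M(M,F)_{\mathsf{Sm}}$, not with the derived one. The entire content of the lemma is that the canonical map $\iota_{\mathsf{Sm}!}\map_M(M,F)_{\mathsf{Sm}}\to\map_M(M,F)$ is an equivalence, and your argument assumes this. The paper instead argues directly that the functor $\spec A\mapsto\Hom_{\sring}(\cinfty(\R^n),A\oinfty\cinfty(M))$ (which computes $\map(M,\R^n)$ on affines) is a left Kan extension of its restriction to $\cartsp^{op}$, using that $\cinfty(\R^n)$ is compact projective in $\sring$; by transitivity it is then a left Kan extension from $\mathsf{Mfd}^{op}$, which is exactly the statement that the derived stack lies in the image of $\iota_{\mathsf{Sm}!}$.

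Your globalization to nontrivial $F$ via \v{C}ech descent over a cover of $M$ is also problematic: the functor $\map_M(M,-)$ applied to sections over pieces of a cover of the \emph{source} assembles $\map_M(M,F)$ as a \emph{limit}, not a colimit, and the inclusion $\smst\subset\dstack$ does not preserve limits (this is precisely the obstruction the paper is working around). The paper's fix is one line: any finite rank vector bundle over a manifold is a retract of a trivial one, and $\smst\subset\dstack$ is closed under retracts (being closed under colimits).
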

\begin{proof}
We will first demonstrate this for trivial vector bundles with fibre $\R^n$. Consider the functor
\[ \sring\longrightarrow \spa,\quad\quad A\longmapsto \Hom_{\sring}(\cinfty(\R^n),A\oinfty\cinfty(M)).  \]
It suffices to show that 
\begin{enumerate}[$(1)$]
\item The functor $\Hom_{\sring}(\cinfty(\R^n),\cinfty(M)\oinfty\_):\sring\rightarrow \spa$ given by composing the functor $\_\oinfty \cinfty(M)$ with the functor corepresented by $\cinfty(\R^n)$ is a left Kan extension along the inclusion $\cartsp^{op}\subset \sring$.
\item The mapping stack functor 
\[ \map(M,\R^n):\daff^{op}\longrightarrow \spa,\quad\quad \spec\,A\longmapsto \Hom_{\daff}(M\times \spec\,A,\R^n)  \]
coincides with the functor
\[ \daff^{op}\longrightarrow \spa,\quad\quad \spec\,A\longmapsto \Hom_{\sring}(\cinfty(\R^n),\cinfty(M)\oinfty A), \]
 the restriction to $\daff^{op}\simeq \sring_{\gmt}$ of the functor $\Hom_{\sring}(\cinfty(\R^n),\cinfty(M)\oinfty\_)$. 
\item The mapping stack functor \[ \map(M,\R^n)_{\mathsf{Sm}}:\mathsf{Mfd}^{op}\longrightarrow \spa,\quad\quad W\longmapsto \Hom_{\mathsf{Mfd}}(M\times W,\R^n)  \]
is a restriction of the functor $\Hom_{\sring}(\cinfty(\R^n),\cinfty(M)\oinfty\_)$ to $\mathsf{Mfd}^{op}$
\end{enumerate}
Indeed, by transitivity of left Kan extensions (\cite[Proposition 4.3.2.8]{HTT}), it follows that the sheaf $\map(M,\R^n)$ on $\daff^{op}$ is a left Kan extension of its restriction to $\mathsf{Mfd}^{op}$. Since this operation coincides with the functor $\iota_{\mathsf{Sm}!}:\shv(\mathsf{Mfd})\rightarrow\dstack$, this implies that $\map(M,\R^n)$ is smooth. Note that $(1)$ follows from \cite[Proposition 5.5.8.15]{HTT} and the fact that $\cinfty(\R^n)$ is a compact projective object of $\sring$, and the assertions $(2)$ and $(3)$ are straightforward, using that $\cinfty(\R^ n)$ is a geometric derived $\cinfty$-ring. By taking retracts, we find that $\map_M(M,F)$ is also smooth for any finite rank vector bundle $F\rightarrow M$. Now for the case of a general manifold $N$, choose for each map $f:M\rightarrow N$ an open neighbourhood $V_f$ of zero section of $f^*TN$ and a diagram 
\[
\begin{tikzcd}
V_f\ar[dr] \ar[rr,hook,"\phi"] && M\times N\ar[dl] \\
& M
\end{tikzcd}
\]
where the map $\phi$ is an open embedding onto a subset containing of the graph of $f$ (we will produce such data in a more general setting in Construction \ref{cons:famlocaladdition}), then we have a map of stacks 
\[ \map_M(M,V_f) \longrightarrow \map(M,N)  \] 
which is pulled back from the map of stacks
\[ \map(M,V_f) \overset{\phi_!}{\longrightarrow} \map(M,M\times N).  \]
It follows from Lemma \ref{lem:mappingstackopen} that the map $\phi_!$ is an open substack inclusion. By the same argument, the map of stacks $\map_M(M,V_f)\rightarrow \map_M(M,f^*TN)$ is an open substack inclusion so that $\map_M(M,V_f)$ is smooth since $\map_M(M,f^*TN)$ is, as just verified. Consider the map \[h:\coprod_{f:M\rightarrow N}\map_M(M,V_f)\longrightarrow \map(M,N),\] then for any affine derived $\cinfty$-scheme $\spec\,A$, the collection $\{\spec\,A\times_{\map(M,N)}\map_M(M,V_f)\rightarrow\spec\,A\}_{f:M\rightarrow N}$ is (the image under Yoneda of) an open covering of $\spec\,A$ so that $h$ is an effective epimorphism. Since open substack inclusions are stacky \'{e}tale, it follows from Lemma \ref{lem:stackyetaleproperty} that each object in the \v{C}ech nerve of $h$ is smooth. Since $\smst\subset\dstack$ is stable under colimits, we conclude. 
\end{proof}
Our next result ensures that the formation of mapping stacks of manifolds with compact source preserves \'{e}tale maps and submersive maps.
\begin{lem}\label{lem:etaletarget}
Let $M$ be a compact manifold and let $f:Q\rightarrow N$ be a map between manifolds. Then the following hold true. 
\begin{enumerate}[$(1)$]
    \item If $f$ is a local diffeomorphism, then the map $\map(M,Q)\rightarrow \map(M,N)$ is \'{e}tale.
    \item If $f$ is a submersion, then $\map(M,Q)\rightarrow \map(M,N)$ is stacky submersive.
\end{enumerate}
\end{lem}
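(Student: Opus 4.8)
The strategy is to reduce both statements to the local structure of mapping stacks via local additions, using the previously established machinery. For $(1)$: if $f:Q\to N$ is a local diffeomorphism, then by Proposition \ref{prop:criterion}$(2)$ (applicable since $\mathsf{Mfd}$ is a local property stable under open subspaces) it suffices to check that for every affine derived $\cinfty$-scheme $\spec\,A$ and every map $\spec\,A\to\map(M,N)$, the pullback $\spec\,A\times_{\map(M,N)}\map(M,Q)$ is \'{e}tale over $\spec\,A$. The key observation is that being \'{e}tale is local on the source: since $\map(M,Q)\to\map(M,N)$ and $\map(M,N)$ are smooth (Lemma \ref{lem:manifoldsmoothmapping}), we may work locally. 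Choosing a cover of $Q$ by opens $\{Q_i\hookrightarrow Q\}$ such that each composition $Q_i\hookrightarrow N$ is an open embedding, we get $\map(M,Q_i)\to\map(M,Q)$ and $\map(M,Q_i)\to\map(M,N)$, and by Lemma \ref{lem:mappingstackopen} the latter is an open substack inclusion. The family $\{\map(M,Q_i)\to\map(M,Q)\}$ is an effective epimorphism (test against affines and use that $\{Q_i\times_Q\spec\,A\times M\to\spec\,A\times M\}$... more precisely, that sections of $Q$ over $\spec\,A\times M$ locally land in some $Q_i$ by compactness of $M$, exactly as in the proof of Lemma \ref{lem:mappingstackopen}). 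Since $\map(M,Q)$ has an \'{e}tale cover by the $\map(M,Q_i)$ each of which maps by an open inclusion to $\map(M,N)$, and being \'{e}tale is a local property of morphisms (Proposition \ref{prop:localtarget}$(3)$), we conclude $\map(M,Q)\to\map(M,N)$ is \'{e}tale.

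\textbf{The submersion case.} For $(2)$, if $f:Q\to N$ is a submersion, cover $Q$ by opens $\{U_i\times V_i\hookrightarrow Q\}$ with the composition to $N$ factoring as the projection $U_i\times V_i\to U_i$ followed by an open embedding $U_i\hookrightarrow N$ (using the local normal form of a submersion, with $V_i$ an open subset of a Cartesian space). Then $\map(M,U_i\times V_i)\simeq\map(M,U_i)\times\map(M,V_i)$ since $\map(M,\_)$ preserves products (it is a right adjoint, and products of smooth stacks are computed in $\dstack$ by Lemma \ref{lem:propertyproductstable}). The composition $\map(M,U_i\times V_i)\to\map(M,N)$ thus factors as the projection $\map(M,U_i)\times\map(M,V_i)\to\map(M,U_i)$ followed by the open substack inclusion $\map(M,U_i)\to\map(M,N)$ (Lemma \ref{lem:mappingstackopen}), and $\map(M,V_i)$ is smooth by Lemma \ref{lem:manifoldsmoothmapping}. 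Finally one checks the family $\{\map(M,U_i\times V_i)\to\map(M,Q)\}$ is an effective epimorphism, again by the compactness-of-$M$ argument: a section of $Q$ over $\spec\,A\times M$ locally (on $\spec\,A$) lands in one of the $U_i\times V_i$. This exhibits $\map(M,Q)\to\map(M,N)$ as stacky \'{e}tale-locally a projection away from a smooth stack, i.e. stacky submersive, and we conclude using that being stacky submersive is a local property (Proposition \ref{prop:stackysubmersive}$(1)$).

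\textbf{Main obstacle.} The delicate point in both parts is verifying that the relevant families of maps into $\map(M,Q)$ (resp. into $\map(M,N)$ pulled back) are effective epimorphisms. This is where the compactness of $M$ is essential and where one must unwind the argument of Lemma \ref{lem:mappingstackopen}: given $\spec\,A\times M\to Q$ (or to $N$), one wants an open cover $\{W_\alpha\hookrightarrow\spec\,A\}$ such that $W_\alpha\times M$ maps into one of the distinguished opens of $Q$. For a fixed point $x\in\spec\,A$, the compact set $\{x\}\times M$ is covered by preimages of the $U_i\times V_i$ (resp. $Q_i$), and by a tube-lemma argument one finds a neighbourhood $W$ of $x$ with $W\times M$ contained in a single such open — but one must be slightly careful: $M$ compact means the tube lemma applies to $\spec\,A\times M\to\spec\,A$, and the finitely many pieces $\{V_i^{(k)}\times W_i^{(k)}\}$ covering $\{x\}\times M$ yield the neighbourhood $\bigcap_k V_i^{(k)}$ of $x$. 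This is routine given Lemma \ref{lem:mappingstackopen} but should be stated carefully. Everything else is a formal assembly of the local-property machinery of Section 2.2 together with the smoothness input of Lemma \ref{lem:manifoldsmoothmapping}.
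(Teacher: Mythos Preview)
Your approach has a genuine gap at precisely the point you flagged as delicate: the claim that $\{\map(M,Q_i)\to\map(M,Q)\}$ (and similarly $\{\map(M,U_i\times V_i)\to\map(M,Q)\}$) is an effective epimorphism is false in general. The tube lemma argument you sketch does not prove what you need. Given a point $x\in\spec A$ with associated map $g:M\to Q$, compactness of $M$ guarantees that $g(M)$ is covered by \emph{finitely many} of the $Q_i$, but it does not guarantee that $g(M)$ lies in a \emph{single} $Q_i$. The tube lemma only tells you that if $\{x\}\times M$ is contained in some open set $U$, then a tube $W\times M$ is contained in $U$; it does not let you replace a finite subcover by a single element of the cover. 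Concretely: take $f:\R\to S^1$ the universal cover, cover $\R$ by intervals of length $<2\pi$ on which $f$ is an open embedding, and let $g:M\to\R$ have image of diameter $\geq 2\pi$. Then $g$ factors through no single $Q_i$, so the point $g\in\map(M,Q)$ is not hit by any $\map(M,Q_i)$.

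This is why the paper takes a different route: instead of covering $Q$, it covers $\map(M,Q)$ directly by charts indexed by the maps $g:M\to Q$ themselves. Using Lemma~\ref{lem:staceyroberts} (local additions compatible with $f$), one gets for each $g$ an open substack $\map_M(M,g^*V)\hookrightarrow\map(M,Q)$ containing $g$ by construction, together with a commuting square over $\map(M,N)$ whose left side is $\map_M(M,g^*V)\to\map_M(M,(f\circ g)^*W)$. The compatibility of the local additions with $f$ is what makes this square commute and lets one read off the local structure of $f_!$ (an open inclusion if $f$ is a local diffeomorphism, a projection after choosing a connection if $f$ is a submersion). The effective epimorphism is then tautological, since every $g$ lies in its own chart. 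Your strategy of covering $Q$ and hoping this induces a cover of $\map(M,Q)$ cannot be salvaged without some such ingredient.
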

The proof of this lemma will use an auxiliary result asserting that given a submersion $f:X\rightarrow Y$, there exist local additions on $X$ and $Y$ compatible with $f$ in a suitable sense. To achieve this, we first establish the existence of \emph{families} of local additions. We start with some basic definitions. 
\begin{defn}
Let $X\rightarrow S$ be a submersion and let $f:S\rightarrow X$ be a section. A \emph{vector bundle neighbourhood} of $f(S)$ is an open neighbourhood $f(S)\subset V$ such that the map $V\rightarrow S$ admits the structure of vector bundle for which $f:S\rightarrow V$ is the zero section. 
\end{defn}
Here are some basic properties of vector bundle neighbourhoods. 
\begin{lem}\label{lem:vbnbhd}
Let $q:X\rightarrow S$ be a submersion and let $f:S\rightarrow X$ be a section.
\begin{enumerate}[$(1)$]
\item Let $f(S)\subset V\subset X$ be a vector bundle neighbourhood, then there is a canonical equivalence $f^*TX/S\simeq V$ of vector bundles over $S$.
\item Let $f(S)\subset V\subset X$ and $f(S)\subset W\subset X$ be vector bundle neighbourhoods, then there is a canonical equivalence $V\simeq W$ of vector bundles over $S$.
\item Suppose that $q:X\rightarrow S$ is a vector bundle and $f(S)\subset V\subset X$ a vector bundle neighbourhood, then there is a canonical equivalence $X\simeq V$ of vector bundles over $S$.
\item Suppose that $q:X\rightarrow S$ is a vector bundle. For any open neighbourhood $W$ of the zero section $S_0\subset X$, there exists a vector bundle neighbourhood $S_0\subset V\subset W$.
\end{enumerate}
\end{lem}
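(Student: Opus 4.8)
The plan is to deduce parts $(1)$, $(2)$ and $(3)$ from the single standard fact that for any vector bundle $p\colon E\to S$ with zero section $z\colon S\to E$ there is a canonical isomorphism $z^*TE/S\xrightarrow{\ \sim\ }E$ of vector bundles over $S$: the element $e\in E_s$ is sent to $\frac{d}{dt}\big|_{t=0}(te)\in T_{z(s)}(E/S)$, a construction which is fibrewise linear, a fibrewise isomorphism, and involves no choices. First I would prove $(1)$: since $V\subset X$ is open, $TV/S$ is the restriction of $TX/S$ to $V$, and because the section $f$ factors through $V$ we have $f^*TV/S=f^*TX/S$; applying the standard fact to the vector bundle $V\to S$ with zero section $f$ then gives the canonical isomorphism $f^*TX/S=f^*TV/S\simeq V$. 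Part $(2)$ is obtained by composing the two canonical isomorphisms of $(1)$, namely $V\simeq f^*TX/S\simeq W$; since no choices enter, the composite is canonical. Part $(3)$ is the special case of $(2)$ with $W=X$, as $X$ is trivially a vector bundle neighbourhood of $f(S)$ in itself.

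The real content is part $(4)$. I would fix a fibre metric $g$ on the vector bundle $q\colon X\to S$ (available from paracompactness of $S$ via a partition of unity), and then carry out a tube-lemma-with-parameters argument to produce a smooth positive function $\rho\colon S\to(0,\infty)$ such that the open ball bundle $B_\rho:=\{v\in X:|v|_g<\rho(q(v))\}$ is contained in $W$. For each $s\in S$, local triviality of $X$ near $s$ together with the fact that $W\cap X_s$ is an open neighbourhood of $0$ in the fibre provides an open $U_s\ni s$ and a constant $\delta_s>0$ with $\{v\in X|_{U_s}:|v|_g<\delta_s\}\subset W$; choosing a partition of unity $\{\chi_\alpha\}$ subordinate to the cover $\{U_s\}_{s\in S}$ (so $\mathrm{supp}\,\chi_\alpha\subset U_{s_\alpha}$ for some $s_\alpha$) and setting $\rho=\sum_\alpha\chi_\alpha\delta_{s_\alpha}$ does the job, because $\rho(q(v))$ is a convex combination of the $\delta_{s_\alpha}$ for which $q(v)\in U_{s_\alpha}$, so $|v|_g<\rho(q(v))$ places $v$ inside one of the chosen sub-ball-bundles lying in $W$.

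It then remains to check that $V:=B_\rho$ is a vector bundle neighbourhood of $S_0$: it is open, contains $S_0$, and lies in $W$ by construction, and the fibrewise radial compression
\[ \Psi\colon X\longrightarrow B_\rho,\qquad \Psi(v)=\frac{\rho(q(v))}{\sqrt{\rho(q(v))^2+|v|_g^2}}\,v \]
is a diffeomorphism over $S$ fixing $S_0$, with smooth inverse $w\mapsto\frac{\rho(q(w))}{\sqrt{\rho(q(w))^2-|w|_g^2}}\,w$; transporting the vector bundle structure of $X$ along $\Psi$ exhibits $V\to S$ as a vector bundle with zero section $S_0$.

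The only genuine obstacle is the construction of the smooth radius function $\rho$ over a base that need not be compact, and this is exactly the standard parametrised tube lemma; everything else is bookkeeping around the canonical isomorphism $z^*TE/S\simeq E$. If one wishes to allow $S$ to be a smooth stack rather than a manifold, all of the above is local and natural over manifold charts for $S$, so the statement descends; alternatively, the applications below only require the case in which $X\to S$ is a submersion, resp.\ vector bundle, of finite-dimensional manifolds.
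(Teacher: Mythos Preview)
Your proof is correct and follows essentially the same approach as the paper: parts $(1)$--$(3)$ are reduced to the canonical identification $z^*TE/S\simeq E$ for a vector bundle with zero section $z$ (the paper phrases this as $V\times V\simeq TV/S$ and then pulls back along $f$), and part $(4)$ is handled by choosing a fibre metric, producing a smooth radius function via partition of unity, and writing down an explicit radial compression diffeomorphism onto the resulting ball bundle. Your write-up is in fact more detailed than the paper's on the construction of $\rho$, but the strategy is identical.
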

\begin{proof}
Note that $(2)$ follows from $(1)$ and $(3)$ follows from $(2)$. For $(1)$, note that since $V\subset X$ is open, the canonical map $TV/S\rightarrow V\times_XTX/S$ is an equivalence of vector bundles over $V$, and because $V\rightarrow S$ is a vector bundle, there is a canonical equivalence $V\times V\simeq  TV/S$ of vector bundles over $V$ so that pulling back along $f:S\hookrightarrow V$ yields an equivalence $V\simeq S\times_VTV/S$. Composing these, we have an equivalence $V\simeq f^*TX/S$. For $(4)$, choose a Riemannian metric $g$ on the vector bundle $X$ and choose a smooth function $\epsilon:S\rightarrow \R_{>0}$ such that the open set 
\[  V:= \{(s,e_s)\in X;\, g(e_s,e_s)<\epsilon(s) \}\subset X \]
is contained in $W$ (the existence of such a function is clear in case $X\rightarrow S$ is a trivial bundle and the general case follows using a partition of unity). Then 
\[  X\longrightarrow V,\quad\quad (s,e_s)\longmapsto (s,(\epsilon(s)/\sqrt{1+g(e_s,e_s)})e_s) \]
is a diffeomorphism over $S$ preserving the zero section.
\end{proof}
We will strengthen $(4)$ considerably below the next construction.
\begin{defn}
Let $q:X\rightarrow S$ be a submersion of manifolds. A \emph{local addition on $q$} is a pair of an open subset $V\subset TX/S$ in the relative tangent containing the zero section together with an open embedding $\phi:V\hookrightarrow X\times_SX$ fitting into a commuting diagram
\[
\begin{tikzcd}
X\ar[d,"0",hook]\ar[r,"\Delta",hook]& X\times_SX\ar[d,"\pi_2"] \\
V\ar[r,"p"] \ar[ur,"\phi",hook]& X
\end{tikzcd}
\]
where $\pi_2$ denotes the projection onto the second coordinate and $p$ is the projection $V\subset TX/S\rightarrow X$. We will write $(V,\phi)$ for a local addition on $q$, leaving the required commutativity implicit.
\end{defn}
\begin{cons}\label{cons:famlocaladdition}
Let $q:X\rightarrow S$ be a submersion and let $\nabla$ be a connection on the relative tangent bundle $TX/S$. We will construct a local addition on $q$ using the induced smoothly varying family of exponential maps in the fibres of $q$. This is a special case of the exponential map associated to an integrable Lie algebroid (see \cite[Section 2.3]{CrainicFernandez}), as the Lie algebroid $TX/S$ with tautological injective anchor map $TX/S\subset TX$ is the Lie algebroid associated to the Lie groupoid 
\[ \begin{tikzcd}  X\times_SX\ar[r,shift left]\ar[r,shift right] & X. \end{tikzcd}  \]
More precisely, let $p:TX/S\rightarrow X$ be the projection, then the connnection $\nabla$ is a vector bundle splitting $p^*TX\rightarrow TTX/S$ of the differential $TTX/S\rightarrow p^*TX$ of $p$ (the connection need not be linear, that is, the induced map $d_{\nabla}:\Gamma(TX/S;X)\rightarrow\Omega^1(TX/S;X)$ need not satisfy the Leibniz rule); by pulling back the connection $\nabla$ along the differential of the fibre inclusion $i_s:X_s\subset X$, we have for each $s\in q(S)$ a connection $\nabla_s=i_s^*\nabla$ on $TX_s=i_s^*TX/S$ such that the diagram
\[
\begin{tikzcd}
TTX_s \ar[d,"TTi_s"] & p_s^*TX_s \ar[l,"\nabla_s"'] \ar[d,"Ti_s"] \\
TTX & p^*TX\ar[l,"\nabla"']
\end{tikzcd}
\]
commutes, where $p_s$ is the projection $TX_s\rightarrow X_s$. The subbundle inclusion $TX/S\subset TX$ determines a section $F:TX/S\rightarrow p^*TX$ and therefore a horizontal \emph{geodesic} vector field $G=\nabla\circ F$, the vector field that carries $v_x\in T_xX/S\simeq T_xX_s$ to $(0,v_x)\in T_{(x,v_x)}(TX/S)/X \oplus T_xX/S$ using the splitting $T_{(x,v_x)}TX/S\simeq T_{(x,v_x)}(TX/S)/X \oplus T_xX/S$ induced by the connection. For each $s\in q(X)$, we similarly have a horizontal geodesic field $G_s:TX_s\rightarrow TTX_s$ and the commutativity of the diagram above guarantees that for each $s\in q(X)$, the vector fields $G_s$ and $G$ are $Ti_s$-related. Since there exists an integral curve for $G_s$ at each point in $TX_s$ for all $s \in q(S)$, integral curves are unique and we have a bijection of sets $\coprod_{s\in q(X)}TX_s\cong TX/S$, we deduce that the integral curves of $G$ are precisely the integral curves of all the vector fields $G_s$ as $s$ varies over $S$, that is, the integral curves of $G$ are the geodesics in the fibres associated with the connections $\nabla_s$. Let $\mathcal{D}\subset \R\times TX/S$ be the maximal flow domain of $G$ on which the flow $\mathrm{Fl}_G:\mathcal{D}\rightarrow TX/S$ is defined, then the map $(p\circ \mathrm{Fl}_G)\times p:\mathcal{D}\rightarrow X\times X$ factors through the closed submanifold $X\times_SX$ since every integral curve of $G$ lies wholly in some fibre. Let $V:=\mathcal{D}\times_{\R\times TX/S}\{1\}\times TX/S$, then $V\subset TX/S$ is an open neighbourhood of the zero section and the flow restricts to the exponential map $\phi:V\rightarrow X\times_SX$, which fits into a commuting diagram 
\[
\begin{tikzcd}
X\ar[d,"0",hook]\ar[r,"\Delta",hook]& X\times_SX\ar[d,"\pi_2"] \\
V\ar[r] \ar[ur,"\phi",hook]& X
\end{tikzcd}
\]
The commutativity of the right triangle is immediate and that of the left triangle is simply the fact that geodesics with initial velocity 0 are constant paths. At each fibre over $x$, $\phi$ restricts to the exponential map $(V_x\subset TX_{q(x)})\rightarrow X_{q(x)}$, the derivative of which at 0 is the identity. It follows that after shrinking $V$, we may assume that $\phi_x=\phi|_{V_x}:V_x\rightarrow X_{q(x)}$ is an open embedding, so that $V\rightarrow X\times_SX$ is an open embedding. By shrinking $V$ further and using Lemma \ref{lem:vbnbhd}, we may assume that $V\rightarrow X$ is the vector bundle $TX/S$ itself.
\end{cons}
\begin{cor}
Let $q:X\rightarrow S$ be a submersion, let $f:S\rightarrow X$ be a section and let $f(S)\subset U\subset X$ be an open neighbourhood of image of $f$. Then there exists a vector bundle neighbourhood $f(S)\subset W\subset U$. 
\end{cor}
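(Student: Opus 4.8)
The plan is to obtain $W$ by transporting, via a local addition on $q$, a vector bundle neighbourhood of the zero section of the pullback bundle $f^*TX/S\to S$ into $X$.

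First I would invoke Construction \ref{cons:famlocaladdition}: choosing a connection on the relative tangent bundle $TX/S$ produces a local addition $(V,\phi)$ on $q$, so $V\subset TX/S$ is an open neighbourhood of the zero section, $\phi\colon V\hookrightarrow X\times_SX$ is an open embedding, $\pi_2\circ\phi$ equals the bundle projection $p\colon V\to X$, and $\phi$ restricted to the zero section is the diagonal $\Delta\colon X\to X\times_SX$. Since $\pi_2\colon X\times_SX\to X$ is a submersion (a base change of $q$), one may pull back along the section $f\colon S\to X$: the transversal pullback $f^*V:=S\times_XV$ is an open neighbourhood of the zero section $0_S$ of the vector bundle $f^*TX/S\to S$, and, writing $\theta\colon f^*(X\times_SX)=S\times_X(X\times_SX)\xrightarrow{\ \sim\ }X$, $(s,(x_1,x_2))\mapsto x_1$, for the canonical diffeomorphism (with inverse $x\mapsto(q(x),(x,f(q(x))))$), the composite
\[ \psi\ :=\ \bigl(f^*V\xrightarrow{\ \mathrm{id}\times_X\phi\ }f^*\phi(V)\hookrightarrow f^*(X\times_SX)\xrightarrow{\ \theta\ }X\bigr),\qquad \psi(s,v)=\pi_1\bigl(\phi(v)\bigr), \]
is an open embedding into $X$ (a diffeomorphism followed by an open inclusion followed by a diffeomorphism), it commutes with the projections to $S$ (since $q\circ\psi(s,v)=q(\pi_2\phi(v))=q(p(v))=q(f(s))=s$), and $\psi\circ 0_S=f$ because $\phi(0_{f(s)})=\Delta(f(s))=(f(s),f(s))$. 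Thus $\psi$ identifies $f^*V$ with an open neighbourhood of $f(S)$ in $X$, carrying $0_S$ to $f$.

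Next I would shrink into $U$: set $N:=\psi^{-1}(U)\subset f^*V$, which is open and contains $0_S(S)$ since $\psi(0_S(S))=f(S)\subset U$. Now apply Lemma \ref{lem:vbnbhd}$(4)$ to the vector bundle $f^*TX/S\to S$, its zero section, and the open neighbourhood $N$: this yields a vector bundle neighbourhood $0_S(S)\subset V'\subset N$, i.e.\ an open $V'$ such that $V'\to S$ is a vector bundle with zero section $0_S$. Finally put $W:=\psi(V')$. Then $W$ is open in $X$ (the image of an open set under the open embedding $\psi$), $W\subset\psi(N)\subset U$, and $f(S)=\psi(0_S(S))\subset\psi(V')=W$; transporting the vector bundle structure of $V'\to S$ along the diffeomorphism $\psi|_{V'}\colon V'\xrightarrow{\ \sim\ }W$ (which lies over $S$) makes $q|_W\colon W\to S$ a vector bundle whose zero section is $\psi\circ 0_S=f$. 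Hence $W$ is a vector bundle neighbourhood of $f(S)$ contained in $U$, as required.

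I do not expect a genuine obstacle here: the statement is essentially a corollary of Construction \ref{cons:famlocaladdition} together with Lemma \ref{lem:vbnbhd}. The one point deserving a moment's care is that $\psi$ is a \emph{global} open embedding on all of $f^*V$, which is precisely what the final shrinking step in Construction \ref{cons:famlocaladdition} (arranging $\phi$ to be an open embedding, not merely a fibrewise one) provides; should one prefer to argue more hands-on, one checks instead that $d\psi$ is an isomorphism along $0_S$ — because $df$ is a linear splitting of $dq$ and the fibrewise exponential maps $\phi_x$ have derivative the identity at $0$ — whence $\psi$ is a local diffeomorphism near $0_S$, and combining this with its injectivity (which follows from $\phi$ being injective and $\psi$ lying over $S$) one shrinks to get an open embedding as above.
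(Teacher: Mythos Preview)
Your proof is correct and follows essentially the same approach as the paper: both use the local addition from Construction~\ref{cons:famlocaladdition} and pull it back along the section $f$. The only difference is cosmetic: the paper applies the construction directly to the restricted submersion $U\to S$ (so that the resulting open embedding already lands in $U$, avoiding the need for a separate shrinking step), whereas you apply it to $X\to S$ and then invoke Lemma~\ref{lem:vbnbhd}$(4)$ to shrink into $U$ afterward.
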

\begin{proof}
Apply Construction \ref{cons:famlocaladdition} to the submersion $U\rightarrow S$ and pull back the resulting map $\phi:TX/S\simeq V\rightarrow U\times_SU$ over $U$ along the map $f:S\rightarrow U$.
\end{proof}
The preceding corollary suffices for showing that mapping stacks of sections of submersions with compact base are locally mapping stacks of sections of vector bundles (and are therefore smooth). For more general Weil restrictions, we will have to work a bit harder. Our next result, which was proposed by Andrew Stacey \cite[Theorem 5.1]{staceylocaladdition}, guarantees the existence of local additions compatible with a submersion. 
\begin{lem}\label{lem:staceyroberts}
Let $f:Y\rightarrow X$ be a submersion of manifolds and let $\nabla_X$ be an affine connection on $X$. Let $(W,\phi_X)$ be the associated local addition provided by Construction \ref{cons:famlocaladdition}. Then there exists an open neighbourhood $V_X\subset f^*TX$ of the zero section and a local addition $(V,\phi_Y)$ defined on the open neighbourhood $V=TY\times_{f^*TX} V_X\subset TY$ of the zero section of $TY$ such that the differential $Tf:TY\rightarrow TX$ carries $V$ into $W$ and the diagram
\[
\begin{tikzcd}
V\ar[d] \ar[r,"\phi_Y",hook] & Y\times Y\ar[d] \\
W\ar[r,"\phi_X",hook] & X\times X
\end{tikzcd}
\]
commutes.
\end{lem}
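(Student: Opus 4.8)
The plan is to build the local addition $\phi_Y$ on $Y$ fibrewise over $X$, using the connection $\nabla_X$ on $X$ only to supply a "horizontal lift" so that the exponential maps in the fibres of $f$ assemble compatibly with $\phi_X$. Concretely, first I would choose a connection $\nabla_Y$ on $TY$ that is \emph{compatible with $f$} in the sense that the differential $Tf\colon TY\to TX$ intertwines the geodesic spray $G_Y$ of $\nabla_Y$ with the geodesic spray $G_X$ of $\nabla_X$; equivalently, $\nabla_Y$ restricts on the subbundle $\ker(Tf)=TY/X\subset TY$ to a connection $\nabla_{Y/X}$ on the relative tangent bundle and $f$ is affine. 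Such a $\nabla_Y$ exists: pick any connection $\widetilde\nabla$ on $TY/X$, pick the pullback connection $f^*\nabla_X$ on $f^*TX$, and use a splitting $TY\simeq TY/X\oplus f^*TX$ (coming from a choice of horizontal distribution for the submersion $f$, itself obtained from an auxiliary Riemannian metric on $Y$) to glue these into a connection on $TY$; one checks that with this choice $Tf$ is affine because the $f^*TX$-component of $\nabla_Y$ is by construction $f^*\nabla_X$.

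\textbf{Key steps.} Having fixed such a $\nabla_Y$, I would run Construction \ref{cons:famlocaladdition} for the \emph{absolute} submersions $Y\to *$ and $X\to *$ (i.e.\ for the identity base), obtaining maximal flow domains $\mathcal{D}_Y\subset\R\times TY$, $\mathcal{D}_X\subset \R\times TX$ for the sprays $G_Y$, $G_X$ and exponential maps $\phi_Y^{0}\colon V^{0}\to Y\times Y$, $\phi_X\colon W\to X\times X$ where $V^0=\mathcal{D}_Y\times_{\R\times TY}\{1\}\times TY$ and similarly $W$. Because $G_Y$ and $G_X$ are $Tf$-related, uniqueness of integral curves gives $(1\times Tf)(\mathcal{D}_Y)\subset\mathcal{D}_X$ and hence $Tf(V^0)\subset W$, and the square
\[
\begin{tikzcd}
V^{0}\ar[d,"Tf"]\ar[r,"\phi_Y^{0}"] & Y\times Y\ar[d,"f\times f"] \\
W\ar[r,"\phi_X"] & X\times X
\end{tikzcd}
\]
commutes, since applying $f$ to a $G_Y$-geodesic yields the corresponding $G_X$-geodesic with initial data $Tf$ of the original. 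Next I would take $V_X\subset f^*TX$ to be an open neighbourhood of the zero section small enough that its image in $TX$ lands in $W$ (this uses that $f^*TX\to TX$ is the obvious bundle map and $W$ is a neighbourhood of the zero section of $TX$), and set $V:=V^{0}\cap\bigl(TY\times_{f^*TX}V_X\bigr)$, shrinking $V_X$ if necessary so that $V\to f^*TX$ still surjects onto $V_X$; then $\phi_Y:=\phi_Y^{0}|_V$ and the restricted square is the one in the statement. Finally, as in Construction \ref{cons:famlocaladdition}, the fibrewise derivative of $\phi_Y$ at the zero section is the identity, so after shrinking $V$ further (compatibly with $V_X$, invoking Lemma \ref{lem:vbnbhd} relative to $f^*TX$) one arranges that $\phi_Y\colon V\hookrightarrow Y\times Y$ is an open embedding and that $V\to f^*TX$ has the structure of the vector bundle $TY$ over $f^*TX$ — i.e.\ one may take $V=TY\times_{f^*TX}V_X$ as asserted.

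\textbf{Main obstacle.} The delicate point is the simultaneous shrinking: one must shrink $V\subset TY$ to get an open embedding while keeping the compatibility $V=TY\times_{f^*TX}V_X$ and keeping $Tf(V)\subset W$, and these requirements interact along the fibres of $f$, which need not be compact. The clean way around this is to do the argument first in the fibres of $f$ — where $\phi_Y$ restricts to the genuine (fibrewise) exponential map and is a local diffeomorphism near the zero section by the usual argument — and then use that "being an open embedding onto a neighbourhood of the diagonal, fibrewise over $X$" is an open condition that propagates: pick a fibrewise-small neighbourhood using a metric on $TY/X$ and a positive function on $Y$ as in Lemma \ref{lem:vbnbhd}(4), then enlarge in the $f^*TX$-directions only as far as $V_X$ permits. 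One also has to check the resulting $V$ is genuinely a vector bundle neighbourhood of the zero section of $TY$ over $f^*TX$, which is exactly Lemma \ref{lem:vbnbhd}(4) applied to the vector bundle $TY\to f^*TX$ (note $TY\to f^*TX$ is indeed a vector bundle, with fibre $TY/X$), so no new input is needed there. I do not anticipate difficulty with the connection-construction step; it is a standard partition-of-unity argument, and the only subtlety is insisting on $f$-compatibility, which is arranged by building $\nabla_Y$ out of $f^*\nabla_X$ plus an arbitrary piece on $TY/X$.
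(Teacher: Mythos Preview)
Your approach is correct and genuinely different from the paper's. Both use the same raw data (a splitting $TY\simeq TY/X\oplus f^*TX$ and a connection $\nabla_{Y/X}$ on $TY/X$), but assemble them differently. The paper does \emph{not} form a single connection on $Y$; instead it proceeds in two decoupled steps: horizontally lift $\nabla_X$-geodesics to $Y$ via a vector field $\widetilde G_X$ on $f^*TX$ (not on $TY$), obtaining $\widetilde\exp_X:V_X\to Y$; then for a vertical vector $w\in T_yY/X$, parallel-transport $w$ along that horizontal lift using the linear $\nabla_{Y/X}$ and apply the fibrewise local addition $\phi_{Y/X}$ at the endpoint. The resulting $\phi_Y$ is not the exponential map of any connection on $Y$ (for mixed input the horizontal path is traversed first, independently of $w$), but the domain $TY/X\times_Y V_X$ falls out directly since linear parallel transport is globally defined and $\phi_{Y/X}$ was already arranged on all of $TY/X$. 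Your direct-sum connection $\nabla_Y=\nabla^V\oplus f^*\nabla_X$ makes the commutativity of the square immediate from $Tf$-relatedness of the sprays, but the domain of $\exp_Y$ is not saturated in the vertical direction, hence your reparametrisation via Lemma~\ref{lem:vbnbhd}(4) applied to the bundle $TY\times_{f^*TX}V_X\to V_X$. That step is sound: the rescaling diffeomorphism is over $V_X$, so both $Tf$ and the projection $TY\to Y$ are preserved, and the zero section of $TY\to Y$ lands in the zero section $V_X$ of this bundle, so the local-addition axioms survive precomposition. The tradeoff is conceptual cleanness of the square versus directness of the domain; both routes work.
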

An informal explanation of the geometric idea behind the proof below runs as follows. A connection on $f:Y\rightarrow X$ splits the tangent $TY$ into pairs $(e^v_y,e^h_y)\in T_yY_{f(x)}\oplus T_{f(y)}X$ of vertical and horizontal vectors over any $y\in Y$, and we have to assign a point in $Y$ to any such pair in a neighbourhood of 0. Let $x=f(y)\in X$, so that $e^h_y\in T_xX$. A choice of connection $\nabla_X$ on $X$ yields for small times a unique geodesic in $X$ passing through $x$ with initial velocity $e^h_y$, so we can (smoothly on $TX$) assign to $e^h_y$ a point $x'=\exp_{X,x}(e^h_y)$ by following this geodesic until time 1 (if $e^h_y$ is close enough to 0). We can pull back the connection $\nabla_{X}$ to the bundle $f^*TX$ which yields integral curves on $f^*TX$ that project to the geodesics of $X$. Thus, we can assign (smoothly on $f^*TX$) to the vector $e^h_y$ also a point $y'$ in $Y$ such that $p(y')=x'$. To achieve the commuting diagram above, we have to modify the point $y'$ to some $y''$ so as to also take the vertical vector $e^v_y\in T_yY_x$ into account, subject to the requirement that the underlying point in $X$ remains the same, that is, we require that $p(y')=p(y'')=x'$. We achieve this by choosing a complete connection $\nabla_{Y/X}$ on $TY/X$ to first parallel transport $e^v_y$ along the path used to define $y'$ to some $e^v_{y'}\in T_{y'}Y_{x'}$, and then we use the local addition of Construction \ref{cons:famlocaladdition} to exponentiate $e^v_{y'}$ to some point $y''$. Since this local addition respects the fibres of $f$, we indeed have $f(y'')=x'$. 
\begin{proof}[Proof of Lemma \ref{lem:staceyroberts}]
Choose 
\begin{enumerate}[$(1)$]
\item a \emph{linear} connection $\nabla_{Y/X}$ on $TY/X$, which yields by Construction \ref{cons:famlocaladdition} and Lemma \ref{lem:vbnbhd} a local addition $\phi_{Y/X}:TY/X \hookrightarrow Y\times_XY$ over $Y$.
\item a connection on $Y \rightarrow X$; that is, a splitting of the short exact sequence 
\[0\longrightarrow TY/X\longrightarrow TY\longrightarrow f^*TX \longrightarrow 0  \]
of vector bundles over $Y$. 
\end{enumerate}
We will first establish the following assertion.
\begin{enumerate}
\item[$(*)$] Let $G_X\in \Gamma(TTX;TX)$ be the geodesic field associated to the connection $\nabla_X$ (as in Construction \ref{cons:famlocaladdition}). There exists a vector field $\widetilde{G}_X$ on $f^*TX$ that is $f^*$-related to $G_X$ where $f^*$ is the map $f^*TX\rightarrow TX$.  
\end{enumerate}
Let $p_Y:f^*TX\rightarrow Y$ denote the projection and let $f^*\nabla_{X}:p_Y^*TY\rightarrow Tf^*TX$ be the pullback connection on $f^*TX$; this determines a splitting $Tf^*TX\simeq T(f^*TX)/Y\oplus p_Y^*TY$ of vector bundles over $f^*TX$ such that the diagram
\[
\begin{tikzcd}
Tf^*TX \ar[d,"Tf^*"] &[2em] p_Y^*TY \ar[l,"f^*\nabla_{X}"'] \ar[d,"Tf"] \\
TTX  &[2em]  p_X^*TX \ar[l,"\nabla_X"']
\end{tikzcd}
\]
commutes, where $p_X$ is the projection $p_X:TX\rightarrow X$. To produce a horizontal vector field on $f^*TX$ is to give a section $s:f^*TX\rightarrow p_Y^*TY$, that is, a map $F:f^*TX\rightarrow TY$ over $Y$, but such a map is exactly what the connection $(2)$ provides. Because $G_X$ is also a horizontal vector field and $Tf^*$ preserves the horizontal subbundles by the diagram above, the assertion that $\widetilde{G}_X$ and $G_X$ are $f^*$-related amounts to the commutativity of the diagram 
\[
\begin{tikzcd}
f^*TX\ar[dr,"f^*"]\ar[rr,"F"] && TY\ar[dl,"Tf"] \\
 & TX
\end{tikzcd}
\]
which is manifest. We will refer to integral curves for the vector field $\widetilde{G}_X$ as \emph{$X$-geodesics}. Let $\exp_X:W\rightarrow X$ be the composition $\pi_1\circ \phi_X$ of the local addition with the first projection. Let $V_X\subset f^*TX$ be an open neighbourhood of the zero section for which the time 1 flow $\mathrm{Fl}^1_{\widetilde{G}_X}$ of the vector field $\widetilde{G}_X$ exists. Since $f^*$ is an open map, we may shrink $V_X$ so that $f^*(V_X)\subset W$. Define
\[\widetilde{\exp}_{X}:V_X\overset{\mathrm{Fl}^1_{\widetilde{G}_X}}\longrightarrow f^*TX \longrightarrow Y,\]
then $f\circ \widetilde{\exp}_{X}=\exp_X\circ f$ by $(*)$, so the diagram 
\[
\begin{tikzcd}
V_X \ar[d]\ar[r,"\widetilde{\exp}_{X}\times p_Y"] &[2em] Y\times Y\ar[d] \\
W\ar[r,"\phi_X",hook] &[2em] X\times X
\end{tikzcd}
\] 
commutes. Now use the splitting $(2)$ again to realize an equivalence $TY\simeq TY/X\times_Y f^*TX$ and consider the \emph{parallel transport map} 
\[ \mathsf{Pal}:  (TY/X\times_Y V_X \subset TY) \longrightarrow TY/X \]
that takes a pair $(e^{v}_y,e^h_y)\in T_yY_{f(y)}\times T_{f(y)}X$ to the vector $\mathsf{Pal}_{e^h_y}(e^v_y)\in T_{\widetilde{\exp}_{X}(e^h_y)}Y_{\exp_X(e^h_y)}$ defined by parallel transporting $e^v_y$ along the $X$-geodesic with initial position $y$ and initial velocity $e^h_y$ using the connection $\nabla_{Y/X}$; since $\nabla_{Y/X}$ is a linear connection, it is complete so that the parallel transport exists for all curves in $Y$. From smooth dependence of ODE solutions on initial conditions, it follows in a standard manner that the map $\mathsf{Pal}$ is smooth (see for instance \cite[Section 37.5]{krieglmichor}, \cite[Theorem 17.8]{michordiffgeo}), and by construction, the diagram 
\[
\begin{tikzcd}
TY/X\times_Y  V_X  \ar[d]\ar[r,"\mathsf{Pal}\times p"] &[2em] TY/X\times Y \ar[d] \\
W \ar[r,"\phi_X",hook] &[2em] X\times X 
\end{tikzcd}
\]
commutes, where $p$ is the projection $TY\rightarrow Y$. Now consider the composition 
\[
\begin{tikzcd} TY/X\times Y\ar[r,"\phi_{Y/X}\times\mathrm{id}"]&[2em] Y\times_XY\times Y\ar[r] & Y\times Y   \end{tikzcd}
\]
where the first map applies the local addition $\phi_{Y/X}$ and the second projects away the middle coordinate. Since the local addition $\phi_{Y/X}$ restricts to a local addition in all the fibres of $f$, this composition commutes with the projection to $X\times X$. Composing $\mathsf{Pal}$ with the composition above yields a commuting diagram 
\[
\begin{tikzcd}
TY/X\times_Y V_X\ar[d]\ar[r,"\phi_{Y}"] & Y\times Y \ar[d] \\
W \ar[r,hook,"\phi_X"] & X\times X 
\end{tikzcd}
\]
which we claim is the desired local addition. Using that $\phi_{Y/X}$ is a local addition on $f$ and that parallel transport along an $X$-geodesic with initial velocity 0 is the identity, the composition $\phi_Y\circ 0:Y\rightarrow Y\times Y$ is the diagonal embedding, and it is immediate from the construction that $\phi_{Y}$ commutes with both projections $TY/X\times_Y V_X\rightarrow Y$ and $\pi_2:Y\times Y\rightarrow Y$. To see that $\phi_Y$ is an open embedding, it suffices to argue that $\phi_Y$ restricts to an open embedding after pulling back to the fibres of these projections to $Y$ (since both are submersions). For each $y\in Y$, restricting along $\{y\}\hookrightarrow Y$ yields a commuting diagram 
\[
\begin{tikzcd}
T_yY/X \times (V_X\cap T_{f(y)}X) \ar[dr]\ar[rr,"\phi_{Y,y}"] && Y \ar[dl,"f"] \\
& X
\end{tikzcd}
\]
where the left diagonal map factors as 
\[\begin{tikzcd}T_yY/X \times (V_X\cap T_{f(y)}X)\ar[r] &V_X\cap T_{f(y)}X \subset W\cap T_{f(y)}X \ar[r,"\exp_{X,f(y)}"]&[2em] X. \end{tikzcd}\]
Since the diagonal maps are submersion, it suffices to show that $\phi_{Y,y}$ is an open embedding restricted to the fibre of each $x\in X$ that lies in the image of the left diagonal map. Each such $x$ is the image under $\exp_{X,f(y)}$ of a unique $e^h_{y}\in V_X\cap T_{f(y)}X$ and restricting along $\{x\}\hookrightarrow X$ yields the map 
\[\begin{tikzcd} T_yY/X \ar[r,"\mathsf{Pal}_{e^h_y}"] & T_{\widetilde{\exp}_X(e^h_y)}Y_{\exp_X(e^h_y)} \ar[r,"\phi_{Y/X,\widetilde{\exp}_X(e^h_y)}"] &[4em] Y_{\exp_X(e^h_y)} \end{tikzcd}  \]
and we conclude by observing that the first map is a linear isomorphism and the second is an open embedding.
\end{proof}
\begin{proof}[Proof of Lemma \ref{lem:etaletarget}]
Choose local additions $(V,\phi_Q)$ and $(W,\phi_N)$ satisfying the conclusion of Lemma \ref{lem:staceyroberts}, then we deduce for each map $g:M\rightarrow Q$ a commuting diagram 
\[
\begin{tikzcd}
\map_M(M,g^*V) \ar[d,"Tf_!"]\ar[r] & \map(M,Q)\ar[d,"f_!"]\\
\map_M(M,(f\circ g)^*W) \ar[r] &\map(M,N)
\end{tikzcd}
\]
wherein the horizontal maps are open substack inclusions by Lemma \ref{lem:mappingstackopen}. If $f$ is a local diffeomorphism, then the left vertical map is an open substack inclusion as well. Varying $g$ over all smooth maps, we have an effective epimorphism $\coprod_{g:M\rightarrow Q}\map_M(M,g^*V)\rightarrow \map(M,Q)$, so we conclude that $f$ is \'{e}tale. If $f$ is a submersion, then choosing a connection on $f$ determines an equivalence $g^*V\simeq g^*TQ/N\times_M g^*V_X$ with $V_X$ as in Lemma \ref{lem:staceyroberts} over $(f\circ g)^*W$, which in turn yields an equivalence $\map_M(M,g^*V)\simeq \map_M(M,g^*V_X)\times \map_M(M, g^*TQ/N)$. It follows from Lemma \ref{lem:manifoldsmoothmapping} that $f$ is stacky submersive. 
\end{proof}

\begin{proof}[Proof of Proposition \ref{prop:weilressmooth}]
By assumption, for each morphism $S'\rightarrow S$ where $S'$ is a manifold, the map $Y\times_{S}S'\rightarrow X\times_SS'$ is an $S'$-family of submersions between manifolds. We invoke the universality of colimits in $\dstack$, Remark \ref{rmk:weilrestrictionpullback} and the fact that the collection of smooth objects is stable under colimits in $\dstack$ to suppose that $S$ is a manifold and $Y\rightarrow X$ an $S$-family of submersions. Since $M\rightarrow S$ is a locally trivial family of manifolds, we can choose a (countable) open cover $\{U_i\rightarrow M\}$ and suppose that $S$ is a countable disjoint union of manifolds and $M\rightarrow S$ equivalent to a trivial bundle $N\times S\rightarrow S$ (using that coproducts are disjoint and colimits are universal in $\dstack$, we may suppose that $S$ is a Cartesian space, but this is not required for this argument). It follows from Lemma \ref{lem:weilresttrivbundle} that the Weil restriction fits into a pullback diagram 
\[
\begin{tikzcd}
\mathsf{Res}_{M/S}(Y) \ar[d] \ar[r] & S\times \map(N,Y) \ar[d] \\
S \ar[r] & S\times \map(N,M).    
\end{tikzcd}
\]
Since $Y\rightarrow M$ is a submersion, the right vertical map is stacky submersive by Proposition \ref{lem:etaletarget}. By Lemma \ref{lem:manifoldsmoothmapping}, the stacks $ S\times \map(N,Y)$ and $S\times \map(N,M)$ are smooth, so we conclude by invoking $(3)$ of Proposition \ref{prop:stackysubmersive}.
\end{proof}
We now generalize Lemmas \ref{lem:mappingstackopen} and \ref{lem:etaletarget} to Weil restrictions.
\begin{prop}\label{prop:etaletarget}
Let $S$ be a smooth stack, let $M\rightarrow S$ be a proper $S$-family of manifolds, let $Y\rightarrow M$ be an $S$-family of submersions and let $f:Q\rightarrow Y$ be a map of stacks. 
\begin{enumerate}[$(1)$]
\item If $f$ is an open substack inclusion, then the induced map $\mathsf{Res}_{M/S}(Q)\rightarrow \mathsf{Res}_{M/S}(Y)$ is an open substack inclusion.
\item If $f$ is \'{e}tale, then $\mathsf{Res}_{M/S}(Q)\rightarrow \mathsf{Res}_{M/S}(Y)$ is \'{e}tale.
\item If $f$ is submersive, then $\mathsf{Res}_{M/S}(Q)\rightarrow \mathsf{Res}_{M/S}(Y)$ is stacky submersive.
\end{enumerate}
\end{prop}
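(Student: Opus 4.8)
The plan is to follow the strategy of the proof of Proposition \ref{prop:weilressmooth}: reduce to a trivial family by localizing, and then deduce the statement from the corresponding non-parametrized results, Lemmas \ref{lem:mappingstackopen} and \ref{lem:etaletarget}.

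First I would record that the three properties at issue — being an open substack inclusion, being \'{e}tale, and being stacky submersive — are all \emph{local on the target} in $\dstack$ (by Proposition \ref{prop:extendprop} applied to the target-local, pullback-stable properties of Proposition \ref{prop:localtarget}, and by Proposition \ref{prop:stackysubmersive}$(1)$ for stacky submersiveness) and \emph{stable under arbitrary base change}. Since the formation of Weil restrictions commutes with base change along any $S'\to S$ (Remark \ref{rmk:weilrestrictionpullback}), and since the pullback of a proper $S$-family of manifolds (resp.\ of an $S$-family of submersions, resp.\ of an open/\'{e}tale/submersive $f$) along $S'\to S$ is again of the same kind (properness, submersiveness, \'{e}taleness and openness all being stable under base change), these reductions are legitimate: the pullback of $\mathsf{Res}_{M/S}(Q)\to\mathsf{Res}_{M/S}(Y)$ along $N_i\times_S\mathsf{Res}_{M/S}(Y)\to\mathsf{Res}_{M/S}(Y)$ is, by Remark \ref{rmk:weilrestrictionpullback}, the map $\mathsf{Res}_{M\times_SN_i/N_i}(Q\times_SN_i)\to\mathsf{Res}_{M\times_SN_i/N_i}(Y\times_SN_i)$. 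Choosing an effective epimorphism $\coprod_i N_i\to S$ from a coproduct of manifolds and using target-locality, we may thus assume $S$ is a manifold; since $M\to S$ is proper it is locally trivial by Ehresmann's theorem (the final proposition of Section 2.1), so localizing on $S$ once more we may assume $M=N\times S$ with $N$ a compact manifold.

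Next I would observe that in each of the three cases the composite $Q\to Y\to M$ is again an $S$-family of submersions (a composite of submersive maps; open and \'{e}tale maps are submersive), so that $\mathsf{Res}_{M/S}(Q)$ makes sense and, by Proposition \ref{prop:weilressmooth}, the smooth and derived Weil restrictions agree. Applying Lemma \ref{lem:weilresttrivbundle} to both $Y\to N\times S$ and $Q\to N\times S$ and pasting pullback squares, the map $\mathsf{Res}_{N\times S/S}(Q)\to\mathsf{Res}_{N\times S/S}(Y)$ is identified with the base change of $\mathrm{id}_S\times\bigl(\map(N,Q)\to\map(N,Y)\bigr)$ along $\mathsf{Res}_{N\times S/S}(Y)\to S\times\map(N,Y)$. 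Since the three properties are stable under base change, it therefore suffices to prove that $\map(N,Q)\to\map(N,Y)$ is an open substack inclusion (resp.\ \'{e}tale, resp.\ stacky submersive) whenever $Q\to Y$ is. Here $Y$, and hence $Q$, are smooth derived $\cinfty$-schemes (being submersive, \'{e}tale, or open substacks over the smooth stack $Y$ lying submersively over the manifold $N\times S$), and this is exactly the content of Lemma \ref{lem:mappingstackopen} together with the remark following it — which allows the target to be an arbitrary (not necessarily paracompact Hausdorff) smooth derived $\cinfty$-scheme — in case $(1)$, and of Lemma \ref{lem:etaletarget} in cases $(2)$ and $(3)$. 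The proofs of the latter use only the tube lemma and the existence of compatible families of local additions (Construction \ref{cons:famlocaladdition}, Lemma \ref{lem:staceyroberts}), constructions which are geometrically local and go through unchanged for the schemes at hand; if one prefers to stay literally within $\mathsf{Mfd}$, one may first cover $Y$ by charts and argue with target-locality once more.

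The main obstacle, as in the proof of Proposition \ref{prop:weilressmooth}, is bookkeeping rather than new geometric input: one must verify that each reduction preserves \emph{all} of the hypotheses simultaneously (in particular that after trivializing $M$ one is genuinely in the situation of Lemma \ref{lem:weilresttrivbundle}, and that $Y$ and $Q$ are smooth schemes to which Lemmas \ref{lem:mappingstackopen} and \ref{lem:etaletarget} — or their evident extensions — apply), and then correctly match the base-changed map against the map produced by functoriality of $\mathsf{Res}_{N\times S/S}(-)$. All of the analytic content has already been dispatched in the construction of compatible families of local additions.
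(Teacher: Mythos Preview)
Your proposal is correct and follows essentially the same route as the paper's proof: reduce by locality and base change to a trivial proper family $M=N\times S$ over a manifold $S$, then identify the map of Weil restrictions as a pullback of $S\times(\map(N,Q)\to\map(N,Y))$ via Lemma \ref{lem:weilresttrivbundle} and conclude with Lemmas \ref{lem:mappingstackopen} and \ref{lem:etaletarget}. Your caution about $Y$ and $Q$ being merely ``smooth derived $\cinfty$-schemes'' is unneeded: after the reduction, $Y\to M$ is a representable submersive map with manifold target (so $Y$ is a genuine manifold), and likewise $Q$ is a manifold, so the cited lemmas apply as stated.
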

\begin{proof}
Using Remark \ref{rmk:weilrestrictionpullback} and the fact that the property of being an open substack inclusion (\'{e}tale, stacky submersive) is a local property of morphisms, we may suppose that $S$ is a manifold, $M\rightarrow S$ is a trivial bundle $N\times S\rightarrow S$ with $N$ compact, $Y\rightarrow M$ is a submersion and $Q\rightarrow Y$ is an open embedding (\'{e}tale, submersive). It follows from Lemma \ref{lem:weilresttrivbundle} that there is a pullback diagram
\[
\begin{tikzcd}
\mathsf{Res}_{M/S}(Q) \ar[d] \ar[r] & S\times \map(N,Q) \ar[d] \\
\mathsf{Res}_{M/S}(Y)  \ar[r] & S\times \map(N,Y). 
\end{tikzcd}
\]
Now we conclude by invoking Lemmas \ref{lem:mappingstackopen} and \ref{lem:etaletarget}.
\end{proof}
We now give a result on the local structure of the Weil restrictions of interest to us that we will use later.
\begin{lem}\label{lem:localstructureweil}
Let $S$ be a manifold, let $q:M\simeq N\times S\rightarrow S$ be a trivial $S$-family of manifolds with fibre $N$ and let $f:Y\rightarrow M$ be an $S$-family of submersions. Write $i_s:Y_s\hookrightarrow Y$ for the inclusion of the fibre. For each $s\in q(S)$ and each section $\sigma:N\simeq M_s\rightarrow Y_s$ of $f_s$, there exists
\begin{enumerate}[$(1)$]
    \item An open neighbourhood $s\in S'\subset S$.
    \item A vector bundle neighbourhood $V_{s,\sigma}$ of the zero section of $\sigma^*TY_s/N$.
    \item An open embedding $S'\times V_{s,\sigma}\hookrightarrow Y$ fitting into a commuting diagram 
\[
\begin{tikzcd}
N \ar[r,hook,"\iota_s \circ \sigma"]\ar[d,"\{s\}\times 0"'] & Y\ar[d] \\
 S'\times V_{s,\sigma}\ar[ur]\ar[r]  & S\times N
\end{tikzcd}
\]
\end{enumerate}
\end{lem}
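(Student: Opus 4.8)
The plan is to reduce the statement to the classical tubular-neighbourhood constructions already established in the excerpt, applied fibrewise with a parameter. First I would work near the given point $s\in q(S)$; since the whole statement is local in $S$, I will freely shrink $S$ to a chart around $s$ whenever convenient, so we may as well assume $S$ is a Cartesian space. The section $\sigma:N\simeq M_s\to Y_s$ of $f_s$ is a section over the fibre $M_s$ only; since $M=N\times S$ is a trivial bundle, a first step is to extend $\sigma$ to a section $\widetilde\sigma:M\simeq N\times S\to Y$ of $f$, at least after shrinking $S$ to a smaller open $S'\ni s$. This is possible because $f:Y\to M$ is a submersion, hence locally (around the compact image $\sigma(N)\subset Y_s$) admits the structure of a projection off a manifold; using compactness of $N$ one covers $\sigma(N)$ by finitely many such product charts and patches the local extensions together with a partition of unity on $N$, keeping the parameter $s$ varying in a small ball $S'$. (Alternatively: $f$ restricted to a neighbourhood of $\sigma(N)$ is a locally trivial $S$-family of submersions, so sections over $M_s$ extend to sections over $M|_{S'}$ by a standard obstruction-free argument since $S'$ is contractible.)

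Next I would apply Construction \ref{cons:famlocaladdition} to the submersion $f:Y\to M$ (or rather to its restriction to a neighbourhood of $\widetilde\sigma(M|_{S'})$): choosing a connection on the relative tangent bundle $TY/M$ produces a local addition $\phi:TY/M\supset V\hookrightarrow Y\times_M Y$ over $Y$, where after shrinking via Lemma \ref{lem:vbnbhd} we may take $V=TY/M$ itself. Pulling $\phi$ back along the section $\widetilde\sigma:M\to Y$ yields an open embedding
\[
\Phi:\widetilde\sigma^*TY/M\;\hooklongrightarrow\;Y
\]
over $M$, carrying the zero section to $\widetilde\sigma(M)$. Restricting the bundle $\widetilde\sigma^*TY/M$ over $M\simeq N\times S$ to $N\times S'$ and noting that $\widetilde\sigma|_{M_s}=\sigma$, its restriction to $N\times\{s\}$ is $\sigma^*TY_s/N$. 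By Lemma \ref{lem:vbnbhd}$(4)$ applied over $N\times\{s\}$ together with a compactness argument on $N$, I can shrink the image of $\Phi$ so that, after further shrinking $S'$ around $s$, the open set $\Phi^{-1}$ of a suitable neighbourhood of $\sigma(N)$ has the form $S'\times V_{s,\sigma}$ with $V_{s,\sigma}\subset\sigma^*TY_s/N$ a vector bundle neighbourhood of the zero section — here one uses the trivial product structure $M\simeq N\times S$ to split off the $S'$-factor, and a fibrewise Riemannian-metric/cutoff argument (as in the proof of Lemma \ref{lem:vbnbhd}$(4)$) to produce the uniform radius function over the compact $N$. The resulting open embedding $S'\times V_{s,\sigma}\hookrightarrow Y$ fits into the required commuting square, the left vertical map $\{s\}\times 0$ landing on $\widetilde\sigma(N\times\{s\})=\iota_s\circ\sigma(N)$ by construction.

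The main obstacle I expect is the patching in the first step — extending the fibrewise section $\sigma$ over $M_s$ to a section $\widetilde\sigma$ over $M|_{S'}$ in a way that is genuinely smooth in the parameter and whose image stays inside the domain where the local addition is defined. The compactness of $N$ is essential here (to pass from local product charts to a finite cover and a global partition of unity), which is exactly why the hypothesis that $M\to S$ is a \emph{proper} — hence by Ehresmann locally trivial — family of manifolds was imposed earlier; in the present lemma this is already built in since $q:M\simeq N\times S\to S$ is assumed trivial with fibre $N$, and properness of $q$ forces $N$ compact. Once the extension is in hand, everything else is a parametrized rerun of Construction \ref{cons:famlocaladdition} and Lemma \ref{lem:vbnbhd}, with the only care needed being to shrink $S'$ finitely many times so that all the open conditions (image of $\Phi$ containing $\widetilde\sigma(N\times S')$, the vector-bundle-neighbourhood radius function being positive over all of $N$) hold simultaneously on a single neighbourhood of $s$.
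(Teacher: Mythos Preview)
Your approach is valid but takes a genuinely different route from the paper's. You first extend the fibrewise section $\sigma:N\to Y_s$ to a section $\widetilde\sigma:N\times S'\to Y$ of $f$ over a small $S'$, then apply the \emph{relative} local addition of Construction~\ref{cons:famlocaladdition} to $f:Y\to M$ and pull back along $\widetilde\sigma$. The paper instead avoids the section-extension step entirely by invoking Lemma~\ref{lem:staceyroberts}: one fixes product local additions $\phi_S,\phi_N$ on $S$ and $N$ (so $\phi_S\times\phi_N$ on $M=S\times N$) and uses Stacey--Roberts to produce a compatible local addition on all of $TY$; pulling this back along $i_s\circ\sigma:N\to Y$ already gives a tubular neighbourhood of $\sigma(N)$ in $Y$ (not just in $Y_s$), and the compatibility with $\phi_S\times\phi_N$ lets one cleanly split off the $S$-factor by restricting to the zero section in $W_N$. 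Your route is more elementary in that it only uses Construction~\ref{cons:famlocaladdition}, at the cost of the extension step; the paper's route pays the price of Lemma~\ref{lem:staceyroberts} up front but then needs no extension and handles the $S$-splitting more intrinsically.

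Two small points on your write-up. First, your justification that $N$ is compact (``properness of $q$ forces $N$ compact'') is not licensed by the lemma as stated: the hypothesis is only that $q$ is a \emph{trivial} $S$-family, not a proper one. In every application in the paper $N$ is indeed compact, and arguably both arguments tacitly use this to pass to a product-form neighbourhood, but you should not claim it follows from the hypotheses given. Second, after obtaining $\Phi:\widetilde\sigma^{*}TY/M\hookrightarrow Y$ you need to identify the bundle $\widetilde\sigma^{*}TY/M$ over $N\times S'$ with $S'\times(\sigma^{*}TY_s/N)$ \emph{over} $N\times S'$; this uses contractibility of $S'$ (e.g.\ parallel transport in the $S'$-direction), and you should say so explicitly rather than folding it into ``split off the $S'$-factor''.
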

\begin{proof}
Choose connections $\nabla_S$ and $\nabla_N$ on $TS$ and $TN$ respectively and let $\phi_S:(W_S\subset TS)\rightarrow S\times S$ and $\phi_N:(W_N\subset TN)\rightarrow N\times N$ be the associated local additions, then the connection $\nabla_S\oplus \nabla_N$ on $S\times N$ has associated local addition $\phi_N\times\phi_N :W_S\times W_N\rightarrow M\times M$. Using Lemma \ref{lem:staceyroberts}, we find some neighbourhood $V \subset TY$ of the zero section which the differential $Tf$ carries into $W_S\times W_N$ and a commuting diagram 
\[
\begin{tikzcd}
(i_s \circ \sigma)^*V \ar[d]\ar[r] & Y\times N\ar[d] \\
W_{S,s} \times W_N \ar[r] & S\times N \times N
\end{tikzcd}
\]
where $W_{S,s}=W_S\cap T_sS\subset T_sS$ and the lower horizontal map is the product of the exponential map $\exp_{S,s}:W_{S,s}\rightarrow S$ at $s$ associated with the connection $\nabla_S$ with the local addition $\phi_X$. Note that this exponential map $\exp_{S,s}$ is an open embedding whose image contains $s$. Now we pull back the square above along the diagonal maps in the diagram
\[
\begin{tikzcd}
& W_{S,s} \times N\ar[dl,"\mathrm{id}\times 0"'] \ar[dr,"\mathrm{id}\times\Delta"] \\
W_{S,s}\times W_N \ar[rr] && S\times N\times N 
\end{tikzcd}
\]
to obtain open sets $W'_{S,s}\subset W_{S,s}$ and $V_{s,\sigma}\subset \sigma^*TY/N$ containing the zero section and open embedding $W'_{S,s}\times V_{s,\sigma}\rightarrow Y\times_SW_{S,s}$ over $W_{S,s}\times N$. We complete the proof by shrinking $V_{s,\sigma}$ to a vector bundle neighbourhood of the zero section and letting $S'$ be the image of $W'_{S,s}\hookrightarrow S$.
\end{proof}
\begin{lem}\label{lem:weilreslocal2}
Let $S$ be a derived $\cinfty$-stack and let $X\rightarrow Y$ be any map of derived $\cinfty$-stacks. Then the Weil restriction $\mathsf{Res}_{X\times S/S}(Y\times S)$ fits into a pullback diagram    
\[
\begin{tikzcd}
\mathsf{Res}_{X\times S/S}(Y\times S) \ar[d]\ar[r] & S\times \map(X,Y)\ar[d] \\
S\ar[r] &S\times \map(X,X)
\end{tikzcd}
\]
in $\dstack_{/S}$, that is, we have a canonical equivalence $\mathsf{Res}_{X\times S/S}(Y\times S) \simeq S\times\map_X(X,Y)$. 
\end{lem}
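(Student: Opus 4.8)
The plan is to deduce this from Lemma~\ref{lem:weilresttrivbundle}, which already expresses a Weil restriction over $S$ in terms of unparametrized mapping stacks, together with the fact that $\map(X,-)$ preserves products. Write $f\colon Y\to X$ for the given map (this is the morphism one needs in order for $Y\times S\to X\times S$, hence for $\mathsf{Res}_{X\times S/S}(Y\times S)$ and for $\map_X(X,Y)$, to be defined). First I would apply Lemma~\ref{lem:weilresttrivbundle} to the morphism $f\times\mathrm{id}_S\colon Y\times S\to X\times S$. This yields a pullback square in $\dstack_{/S}$
\[
\begin{tikzcd}
\mathsf{Res}_{X\times S/S}(Y\times S)\ar[d]\ar[r] & S\times\map(X,Y\times S)\ar[d] \\
S\ar[r] & S\times\map(X,X\times S),
\end{tikzcd}
\]
in which the right vertical arrow is $\mathrm{id}_S\times(f\times\mathrm{id}_S)_*$ and the lower horizontal arrow is the section classifying $\mathrm{id}_{X\times S}$ (the one described, via a unit transformation, in the proof of Lemma~\ref{lem:weilrespb}).

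Next I would identify the corner terms. Since $\map(X,-)\colon\dstack\to\dstack$ is a right adjoint it preserves finite products, so there are canonical equivalences $\map(X,Z\times S)\simeq\map(X,Z)\times\map(X,S)$, natural in $Z$. Under these the right vertical arrow of the square becomes $\mathrm{id}_S\times f_*\times\mathrm{id}_{\map(X,S)}$, and I would unwind the adjunction-theoretic description of the lower horizontal arrow to see that it splits as $(\mathrm{id}_S,e,\delta)\colon S\to S\times\map(X,X)\times\map(X,S)$, where $e\colon S\to\map(X,X)$ is adjoint to the projection $X\times S\to X$ (the constant family at $\mathrm{id}_X$) and $\delta\colon S\to\map(X,S)$ is adjoint to the projection $X\times S\to S$. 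Then I would observe that the $\map(X,S)$-coordinate is redundant when the pullback is formed: the square projects compatibly onto the square in which every vertex is replaced by its $\map(X,S)$-coordinate and the horizontal arrows become the identity of $\map(X,S)$, so in the limit that coordinate is pinned, up to contractible choice, to $\delta$ of the $S$-coordinate. Hence $\mathsf{Res}_{X\times S/S}(Y\times S)$ is equivalent to the pullback of $\mathrm{id}_S\times f_*\colon S\times\map(X,Y)\to S\times\map(X,X)$ along $(\mathrm{id}_S,e)$, and since the functor $S\times(-)$ preserves pullbacks, this last pullback is $S$ times the pullback of $f_*\colon\map(X,Y)\to\map(X,X)$ along $\mathrm{id}_X\colon *\to\map(X,X)$, i.e.\ $S\times\map_X(X,Y)$. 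Reading off the outer square then gives precisely the displayed pullback diagram.

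I expect the main obstacle to be the bookkeeping in the second step: unwinding the unit-transformation description of the bottom arrow of Lemma~\ref{lem:weilresttrivbundle} to obtain the splitting $(\mathrm{id}_S,e,\delta)$, and making the cancellation of the $\map(X,S)$-factor rigorous at the level of $\infty$-categorical limits rather than on functor-of-points data. This is routine but somewhat fiddly. Should it become cumbersome, I would instead argue straight from the universal property of the Weil restriction: for $Z\to S$ one has $Z\times_S(X\times S)\simeq Z\times X$ over $X\times S$, and a morphism $Z\times X\to Y\times S$ over $X\times S$ is the same datum as a morphism $Z\times X\to Y$ over $X$ (its $S$-component being forced by the structure map $Z\to S$); thus $\mathsf{Res}_{X\times S/S}(Y\times S)$ and $S\times\map_X(X,Y)$ corepresent the same functor on $\dstack_{/S}$, and the compatibility of the resulting equivalence with the square in the statement then follows from Lemma~\ref{lem:weilrespb}.
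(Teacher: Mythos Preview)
Your proposal is correct and follows essentially the same approach as the paper. The paper's proof is the terse version of your first argument: it applies Lemma~\ref{lem:weilresttrivbundle}, splits $\map(X,Y\times S)\simeq\map(X,Y)\times\map(X,S)$ and $\map(X,X\times S)\simeq\map(X,X)\times\map(X,S)$, and then simply says ``we conclude after projecting away $\map(X,S)$'' --- exactly the cancellation you spell out in detail (note that it is the \emph{right vertical} arrow that becomes the identity on the $\map(X,S)$-factor, not the horizontal ones, but your conclusion is correct regardless).
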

\begin{proof}
It follows from Lemma \ref{lem:weilresttrivbundle} that we have a pullback diagram 
\[
\begin{tikzcd}
\mathsf{Res}_{X\times S/S}(Y\times S) \ar[d]\ar[r] & S\times \map(X,Y)\times\map(X,S)\ar[d] \\
S\ar[r] &S\times \map(X,X)\times \map(X, S)
\end{tikzcd}
\]
and we conclude after projecting away $\map(X, S)$.
\end{proof}
\begin{prop}
Let $S$ be a manifold and let $q:N\times S\rightarrow S$ be a trivial $S$-family of manifolds with fibre $N$ and let $f:Y\rightarrow N$ be an $S$-family of submersions. Let $\sigma:N\rightarrow Y_s$ be a section at $s\in S$. Suppose that $N$ is compact, then composition with an open embedding $S'\times V_{s,\sigma}\hookrightarrow Y$ provided by Lemma \ref{lem:localstructureweil} induces an open substack inclusion 
\[\begin{tikzcd} S'\times\map_N(N,TY_s/N) \ar[rr,hook] \ar[dr] && \mathsf{Res}_{M/S}(Y) \ar[dl] \\
& S
\end{tikzcd} \]
whose image contains the section $\sigma$.
\end{prop}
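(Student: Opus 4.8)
The plan is to chain three inputs: compatibility of Weil restriction with open substack inclusions over a proper family (Proposition \ref{prop:etaletarget}$(1)$), the identification of a Weil restriction along a trivial family with a product of a mapping stack of sections (Lemma \ref{lem:weilreslocal2}), and base change for Weil restriction (Remark \ref{rmk:weilrestrictionpullback}). Since $N$ is compact, $q:M\simeq N\times S\to S$ is a proper $S$-family of manifolds, so by Proposition \ref{prop:weilressmooth} the smooth and derived Weil restrictions $\mathsf{Res}_{M/S}(Y)$ agree and we may argue throughout in $\dstack$. First I would observe that the open embedding $j:S'\times V_{s,\sigma}\hookrightarrow Y$ furnished by Lemma \ref{lem:localstructureweil} is a map over $M$: by the commuting square there, its composite with $Y\to M\simeq S\times N$ is the inclusion $S'\hookrightarrow S$ times the bundle projection $V_{s,\sigma}\to N$. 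As $Y$ is a manifold and $j$ is an open embedding of manifolds, $j$ is an open substack inclusion (Proposition \ref{prop:convmfdetale}$(1)$), so Proposition \ref{prop:etaletarget}$(1)$ produces an open substack inclusion $\mathsf{Res}_{M/S}(j):\mathsf{Res}_{M/S}(S'\times V_{s,\sigma})\hookrightarrow\mathsf{Res}_{M/S}(Y)$ over $S$.

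The remaining task is to identify the source of this map with $S'\times\map_N(N,\sigma^*TY_s/N)$. The structure map $S'\times V_{s,\sigma}\to M$ factors through $M_{S'}:=M\times_S S'\simeq N\times S'$; consequently every $Z$-point of $\mathsf{Res}_{M/S}(S'\times V_{s,\sigma})$---a section $Z\times_S M\to S'\times V_{s,\sigma}$ over $M$---forces $Z\to S$ to factor through $S'$, so this Weil restriction is concentrated over $S'$, and by the base-change equivalence of Remark \ref{rmk:weilrestrictionpullback} it is canonically equivalent, as a stack over $S'\subset S$, to $\mathsf{Res}_{M_{S'}/S'}(S'\times V_{s,\sigma})$. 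Since $M_{S'}\simeq N\times S'$ is a trivial family and $S'\times V_{s,\sigma}\simeq V_{s,\sigma}\times S'$ is, over $M_{S'}$, pulled back from the map $V_{s,\sigma}\to N$, Lemma \ref{lem:weilreslocal2} gives $\mathsf{Res}_{M_{S'}/S'}(V_{s,\sigma}\times S')\simeq S'\times\map_N(N,V_{s,\sigma})$. Finally $V_{s,\sigma}$ is a vector bundle neighbourhood of the zero section of $\sigma^*TY_s/N$, so Lemma \ref{lem:vbnbhd}$(3)$ supplies a canonical equivalence $V_{s,\sigma}\simeq\sigma^*TY_s/N$ of vector bundles over $N$, and hence $\map_N(N,V_{s,\sigma})\simeq\map_N(N,\sigma^*TY_s/N)$ (which is smooth by Lemma \ref{lem:manifoldsmoothmapping}). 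Concatenating these equivalences with $\mathsf{Res}_{M/S}(j)$ gives the asserted open substack inclusion over $S$, which by naturality of all the equivalences involved is induced by composition with $j:S'\times V_{s,\sigma}\hookrightarrow Y$.

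That $\sigma$ lies in the image is then straightforward: the section $\sigma:N\to Y_s$ of $f_s$ defines a point of $\mathsf{Sections}(Y_s\to N)\simeq\{s\}\times_S\mathsf{Res}_{M/S}(Y)$ (using Remark \ref{rmk:weilrestrictionpullback} again), corresponding to the map $M_s\simeq N\to Y$ equal to $\iota_s\circ\sigma$; by the commuting square of Lemma \ref{lem:localstructureweil} this map factors through $j$ via the zero section $\{s\}\times 0:N\to S'\times V_{s,\sigma}$, and since $s\in S'$ this exhibits $\sigma$ as the image under $\mathsf{Res}_{M/S}(j)$ of the point of $\mathsf{Res}_{M/S}(S'\times V_{s,\sigma})$ over $s$ corresponding to the zero section of $V_{s,\sigma}$, so the fibre of the open substack over $\sigma$ is inhabited. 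The only delicate point is the base-change bookkeeping in the second paragraph---checking that the Weil restriction of a family concentrated over $S'\subset S$ is computed by $M_{S'}\to S'$---but this follows directly from Remark \ref{rmk:weilrestrictionpullback} and the universality of colimits already in use throughout this section.
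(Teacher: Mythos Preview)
Your proof is correct and follows essentially the same route as the paper's one-line argument (``pull back the Weil restriction along $S'\subset S$ and combine Lemmas \ref{lem:localstructureweil}, \ref{lem:weilreslocal2} and Proposition \ref{prop:etaletarget}''), with the order of operations slightly rearranged: you apply Proposition \ref{prop:etaletarget} over $S$ first and then identify the source via base change, whereas the paper base-changes to $S'$ first. Your added detail---the concentration-over-$S'$ argument, the invocation of Lemma \ref{lem:vbnbhd}$(3)$ for the canonical identification $V_{s,\sigma}\simeq\sigma^*TY_s/N$, and the explicit check that $\sigma$ lies in the image---faithfully fills in what the paper leaves implicit.
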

\begin{proof}
Pull back the Weil restriction along the open embedding $S'\subset S$ and combine Lemmas \ref{lem:localstructureweil}, \ref{lem:weilreslocal2} and Proposition \ref{prop:etaletarget}.
\end{proof}

\subsection{Jet spaces}
We will define elliptic moduli problems as a subclass of \emph{differential} moduli problems. For $S=*$, this notion is not hard to grasp: recall that a map 
\[ P:\Gamma(Y_1;M)\longrightarrow\Gamma(Y_2;M) \]
between sections of submersions over a manifold $M$ is a $k$'th order nonlinear partial differential equation just in case there exists a smooth map $\mathcal{P}:J^k_M(Y_1)\rightarrow Y_2$ from the $k$'th jet space of the map $Y_1\rightarrow M$ fitting as the top horizontal map in a commuting diagram 
\[
\begin{tikzcd}
   J^k_M(Y_1)\ar[rr,"\mathcal{P}"] \ar[dr] && Y_2 \ar[dl]\\
   & M
\end{tikzcd}
\]
such that $P=\mathcal{P}\circ j^k$, where $j^k$ is the $k$'th jet prolongation $\Gamma(Y_1;M)\rightarrow \Gamma(J^k_M(Y_1);M)$. For our purposes, we will need to understand jet bundles smoothly parametrized by smooth stacks. Below, we will first treat the case of $S$ a manifold and then generalize to arbitrary smooth stacks by descent.\\
Let $S$ be a smooth manifold and let 
\[
\begin{tikzcd}
   Y\ar[dr]\ar[rr,"f"] && X\ar[dl] \\
   & S
\end{tikzcd}
\]
be an $S$-family of submersions. For $k\geq 1$ an integer, the \emph{relative $k$'th order jet space of sections of $f$} is the manifold $J^k_{X/S}(Y)$ whose points are triples $(s,x,\sigma_{x})$ where $s\in S$, $x$ is a point in the fibre $X_s$ and $\sigma_{x}$ is the $k$'th order jet of a section of $f_s:Y_s\rightarrow X_s$ at $x$. What follows is a rigorous construction of the manifold $J^k_{X/S}(Y)$.
\begin{cons}[Infinitesimal diagonal neighbourhood]
Let $q:X\rightarrow S$ be a submersion of manifolds. Let the ideal $I_{\Delta}\subset\cinfty(X\times_SX)$ be the kernel of the map 
\[ \Delta^*:\cinfty(X\times_SX)\longrightarrow \cinfty(X) \]
of $\cinfty$-rings induced by the diagonal $\Delta:X\rightarrow X\times_SX$. We will write $q^{(k)}$ or $(X/S)^{(k)}$ for the spectrum (Theorem \ref{thm:spectrumglobalsections}) of the $\cinfty$-ring $\cinfty(X\times_SX)/{I^{k+1}_{\Delta}}$, where $I^r_{\Delta}$ is the $r$'th power of the ideal $I_{\Delta}$. We have a commuting diagram
\[
\begin{tikzcd}
 (X/S)^{(k)}\ar[d,"\pi_2"]\ar[r,"\pi_1"] & X\ar[d] \\
 X \ar[r] & S
\end{tikzcd}
\]
of affine (derived) $\cinfty$-schemes. When the base $S$ is a point, we will write $X^{(k)}$ for the $k$'th order infinitesimal diagonal neighbourhood $(X/*)^{(k)}$. Viewing the square 
\[
\begin{tikzcd}
 (X/S)^{(k)}\ar[d,"\pi_2"]\ar[r,"\pi_1"] & X\ar[d] \\
 X \ar[r] & S
\end{tikzcd}
\]
as a square of derived $\cinfty$-stacks, the $k$'th order infinitesimal diagonal neighbourhood determines a functor 
\[  \fun(\Delta^1,\mathsf{Mfd})^{\mathsf{Sub}} \longrightarrow \fun(\Delta^1\times\Delta^1,\dstack).  \]
\end{cons}
\begin{rmk}
Since the kernel $I_{\Delta}$ of the map $\cinfty(X\times_SX)\rightarrow \cinfty(X)$ is finitely generated, its powers are finitely generated as well. Since finitely generated ideals of finitely presented $\cinfty$-rings are germ determined, the $\cinfty$-rings $\cinfty(X\times_SX)/I_{\Delta}^k$ are geometric as derived $\cinfty$-rings. Consequently, by the equivalence $\daff\simeq \sring_{\gmt}^{op}$ the functor
\[  \mathsf{Mfd}\longrightarrow \daff,\quad\quad  (X\rightarrow S)\longmapsto (X/S)^{(k)}\]
coincides with the functor $(X\rightarrow S)\mapsto \cinfty(X\times_SX)/I_{\Delta}^{k+1}$. 
\end{rmk}
Before we introduce the jet space functor we have to recall some concepts from commutative algebra. Recall that a map $f:\tilde{A}\rightarrow A$ in the category $\calg^{\heartsuit}_{B/}$ of commutative algebras over a commutative ring $B$ is a \emph{square-zero extension} if it is a surjection and in the ideal $\ker f$ the product of any two elements vanishes. In this case, one defines an $A$-module structure on $\ker f$ by  letting an element $a\in A$ act on $I$ by any element in the preimage $f^{-1}(a)$. A square-zero extension $f:\tilde{A}\rightarrow A$ in $\calg^{\heartsuit}_{B/}$ is \emph{trivial} if $f$ admits a section (in the category of commutative $B$-algebras) in which case the inclusion $I\subset\tilde{A}$ and the section $A\rightarrow\tilde{A}$ determine an isomorphism $A\oplus I\cong \tilde{A}$ of $A$-algebras where the $A$-module $A\oplus I$ is an algebra with multiplication defined via the formula $(a,m)\cdot (a',m'):=(aa',am'+a'm)$. In the tower of commutative algebras
\[ \ldots\overset{f_{k+1}}{\longrightarrow} \cinfty(X\times_SX)/{I^{k+1}_{\Delta}}\overset{f_{k}}{\longrightarrow} \cinfty(X\times_SX)/{I^{k}_{\Delta}} \overset{f_{k-1}}{\longrightarrow}\ldots \overset{f_{2}}{\longrightarrow} \cinfty(X\times_SX)/{I^{2}_{\Delta}} \overset{f_{1}}{\longrightarrow}\cinfty(X\times_SX)/{I_{\Delta}}\cong \cinfty(X)  \]
every consecutive map is a square-zero extension: the kernel of the map $\cinfty(X\times_SX)/{I^{K+1}_{\Delta}}\rightarrow \cinfty(X\times_SX)/{I^{K}_{\Delta}} $ is the ideal $I^{k}_{\Delta}/I^{k+1}_{\Delta}$ which obviously has the property that the product of any two elements vanishes. Just as for square-zero extensions, for each $k\geq 0$ the $\cinfty(X\times_SX)$-module structure on $I^{k+1}_{\Delta}/I_{\Delta}^k$ comes from an $\cinfty(X)$-module structure by letting $g\in \cinfty(X)$ act by any element in the preimage of $g$ by the restriction $\Delta^*:\cinfty(X\times_SX)\rightarrow(X)$ along the diagonal. This $\cinfty(X)$-module structure coincides with the one induced by the restriction $\pi_i^*:\cinfty(X)\rightarrow\cinfty(X\times_SX)/I^{k+1}$ along either projection $\{\pi_i\}_{i=1,2}$. It follows that in the abelian category $\Mod_{\cinfty(X)}^{\heartsuit}$, we have a short exact sequence 
\begin{align}\label{eq:jetbundleseq}  0\longrightarrow I^{k}_{\Delta}/I^{k+1}_{\Delta} \longrightarrow \cinfty(X\times_SX)/{I^{k+1}_{\Delta}}\longrightarrow \cinfty(X\times_SX)/{I^{k}_{\Delta}} \longrightarrow 0.    \end{align}
\begin{lem}\label{lem:jebundleseq}
The short exact sequence \eqref{eq:jetbundleseq} is a sequence of finitely generated projective $\cinfty(X)$-modules. Moreover, there is a canonical equivalence $I^{k}_{\Delta}/I^{k+1}_{\Delta} \simeq \sym_{\cinfty(X)}^k(\Gamma(T^{\vee}X/S))$, the $k$'th symmetric power of the global sections of the vertical cotangent bundle of $X$ with respect to the projection $X\rightarrow S$.
\end{lem}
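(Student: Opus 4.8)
The plan is to reduce to a purely local computation by covering $X$ with charts adapted to the submersion $q\colon X\to S$, and then to recognize the symbol of the jet bundle as a symmetric power of the conormal. First I would observe that both claims are local on $X$, hence on $S$, since $\cinfty(X)$-modules glue along partitions of unity (available because the relevant $\cinfty$-rings are geometric and admit partitions of unity by Remark~\ref{rmk:lindelofunity}) and the formation of the ideal $I_\Delta$ and its powers commutes with restriction to opens. So I would choose submersion charts in which $q$ looks like the projection $\R^{n}\times\R^{m}\to\R^{m}$, with $\R^{n}$ the fibre direction. In such a chart $X\times_SX$ is $\R^{n}\times\R^{n}\times\R^{m}$ with coordinates $(x,y,t)$, the diagonal is $\{x=y\}$, and $I_\Delta$ is the ideal generated by the $n$ functions $u_i:=x_i-y_i$, which form a regular sequence (they are part of a coordinate system). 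Over such a chart one has an explicit $\cinfty$-ring isomorphism $\cinfty(X\times_SX)\cong \cinfty(X)\{u_1,\dots,u_n\}$ (a ``completed'' or rather jet-truncated version; what we really need is only that $\cinfty(X\times_SX)/I_\Delta^{k+1}\cong \cinfty(X)\otimes (\text{polynomials in }u\text{ of degree}\le k)$ as $\cinfty(X)$-modules via $\pi_1^*$, which follows from Hadamard's lemma / Taylor expansion in the fibre variables).

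Given that chart description, the short exact sequence \eqref{eq:jetbundleseq} becomes, after the identification above, the obvious split sequence
\[
0\longrightarrow \cinfty(X)\cdot\{u^{\alpha}:|\alpha|=k\}\longrightarrow \cinfty(X)\cdot\{u^{\alpha}:|\alpha|\le k\}\longrightarrow \cinfty(X)\cdot\{u^{\alpha}:|\alpha|\le k-1\}\longrightarrow 0,
\]
which exhibits all three terms as finitely generated free $\cinfty(X)$-modules, with the subobject a free module on monomials of degree exactly $k$, i.e.\ a copy of $\sym^{k}_{\cinfty(X)}$ of the free module on $u_1,\dots,u_n$. This gives local freeness, hence (by gluing) finite generation and projectivity of the modules in \eqref{eq:jetbundleseq} globally. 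For the identification of the symbol, I would note that $I_\Delta/I_\Delta^{2}$ is by definition the module of relative K\"ahler differentials $\Omega^{1}_{\cinfty(X)/\cinfty(S)}$ in the $\cinfty$-sense, which is $\Gamma(T^{\vee}X/S)$ by the $\cinfty$-cotangent complex computations recalled (or citable) from \cite{cinftyI}; concretely, $du_i\mapsto dx_i$ under $\pi_1^*$, matching the coordinate differentials of the fibre. The multiplication maps $I_\Delta\otimes_{\cinfty(X)}\cdots\otimes_{\cinfty(X)}I_\Delta\to I_\Delta^{k}$ then factor through the symmetric power and induce, after passing to the associated graded, a map $\sym^{k}_{\cinfty(X)}(I_\Delta/I_\Delta^{2})\to I_\Delta^{k}/I_\Delta^{k+1}$; in the chart this is the tautological isomorphism sending $du_{i_1}\cdots du_{i_k}$ to the class of $u_{i_1}\cdots u_{i_k}$, and since it is an isomorphism locally and the construction is natural (hence patches), it is a global equivalence. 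Combining, $I_\Delta^{k}/I_\Delta^{k+1}\simeq \sym^{k}_{\cinfty(X)}\Gamma(T^{\vee}X/S)$ canonically.

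I expect the main obstacle to be bookkeeping rather than anything deep: one must check that the chart-wise isomorphism $\cinfty(X\times_SX)/I_\Delta^{k+1}\cong \cinfty(X)\otimes(\text{deg}\le k\text{ polynomials})$ is really $\cinfty(X)$-linear for the module structure coming from $\pi_1^*$ (it is, but one should verify that the $\cinfty(X)$-module structures induced by $\pi_1^*$ and $\pi_2^*$ agree modulo $I_\Delta^{k+1}$ — which is exactly the ``square-zero'' remark already made in the text for each graded piece, and extends to the whole truncated ring because its associated graded is generated in degrees $\le k$ with compatible module structures). The other point requiring a little care is that the surjection $\sym^{k}(I_\Delta/I_\Delta^{2})\to I_\Delta^{k}/I_\Delta^{k+1}$ is injective and not merely surjective; this is where regularity of the sequence $u_1,\dots,u_n$ (equivalently, that $X\to S$ is a submersion, so the fibre direction is a genuine coordinate system) is used, and it is precisely the statement that the Rees/associated-graded algebra $\bigoplus_k I_\Delta^{k}/I_\Delta^{k+1}$ is the symmetric algebra on $I_\Delta/I_\Delta^{2}$ — a standard fact for regular immersions, here verified by the explicit chart. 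Everything else is formal naturality and a gluing argument.
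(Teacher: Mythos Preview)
Your proof is correct and complete, but it takes a different route from the paper's. The paper argues by induction on $k$ and works \emph{globally} throughout: for the base case $k=1$ it uses the conormal sequence --- the map $I_\Delta \hookrightarrow \cinfty(X\times_S X) \xrightarrow{d_{\dR}} \Gamma(T^\vee(X\times_S X)) \to \Gamma(X\times_{X\times_S X}T^\vee(X\times_S X))$ has kernel $I_\Delta^2$, and a comparison of two short exact sequences involving $\Gamma(T^\vee X)$ identifies $I_\Delta/I_\Delta^2$ with $\Gamma(T^\vee X/S)$ without choosing coordinates. The inductive step then invokes the symmetric-power identification $I_\Delta^k/I_\Delta^{k+1}\simeq \sym^k(I_\Delta/I_\Delta^2)$ and closes under extensions. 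By contrast, you localize from the outset into submersion charts and compute everything via Hadamard/Taylor expansion in the fibre variables $u_i = x_i - y_i$, which gives local freeness and the symbol identification in one stroke. Your approach is more elementary and in fact more explicit on the point where the paper is tersest (the isomorphism $\sym^k(I_\Delta/I_\Delta^2)\to I_\Delta^k/I_\Delta^{k+1}$, which you justify via regularity of the sequence $u_1,\dots,u_n$), while the paper's global argument for $k=1$ avoids the bookkeeping of patching charts and yields the canonical map directly.

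One small correction: your parenthetical claim that the $\cinfty(X)$-module structures via $\pi_1^*$ and $\pi_2^*$ ``agree modulo $I_\Delta^{k+1}$'' on the whole truncated ring is false for $k\geq 1$ (they agree only on each graded piece $I_\Delta^j/I_\Delta^{j+1}$). Fortunately you do not need this: your Taylor-expansion isomorphism is manifestly $\cinfty(X)$-linear for \emph{one} of the two structures (say $\pi_1^*$), which is all that is required. Simply delete that parenthetical.
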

\begin{proof}
We proceed by induction on $k$. For $k=1$, the short exact sequence \eqref{eq:jetbundleseq} yields the sequence 
\[0\longrightarrow I_{\Delta}/I^{2}_{\Delta} \longrightarrow \cinfty(X\times_SX)/{I^{k+1}_{\Delta}}\longrightarrow \cinfty(X) \longrightarrow 0. \]
Since $\cinfty(X)$ is a free $\cinfty(X)$-module of rank 1, it suffices to show that there is a canonical equivalence of $ I_{\Delta}/I^{2}_{\Delta}$ with the global sections of the vertical cotangent sheaf of the submersion $X\rightarrow S$, as the category of finitely projective modules is stable under extensions. Consider the map 
\begin{align}\label{eq:conormal}
I\subset\cinfty(X\times_SX)\overset{d_{\dR}}{\longrightarrow}\Gamma(T^{\vee}(X\times_SX)) \longrightarrow \Gamma(X\times_{X\times_SX}T^{\vee}(X\times_SX))
\end{align}   
that takes a function $g$ in $I_{\Delta}$ to the restriction to the diagonal of the 1-form $d_{\dR}g$. One readily verifies using that $X$ is a closed submanifold of $X\times_SX$ that the kernel of the map \eqref{eq:conormal} coincides with $I^{2}_{\Delta}$ and that the sequence 
\begin{align}\label{eq:con1} 0\longrightarrow I_{\Delta}/I_{\Delta}^2\longrightarrow \Gamma(X\times_{X\times_SX}T^{\vee}(X\times_SX)) \overset{\Delta^*}{\longrightarrow}\Gamma(T^{\vee}X)\longrightarrow 0  \end{align}
is exact (this is nothing but a differential geometric analogue of the conormal sequence from algebraic geometry). The cotangent of $X\times_SX$ also fits into a short exact sequence 
\[0\longrightarrow X\times_SX\times_X T^{\vee}X \longrightarrow T^{\vee}(X\times_SX) \longrightarrow X\times_SX\times_X T^{\vee}X/S \longrightarrow 0     \]
of vector bundles over $X\times_SX$. Pulling back along the diagonal $X\hookrightarrow X\times_SX$ and taking global sections, we have a short exact sequence 
\begin{align}\label{eq:con2} 0\longrightarrow \Gamma(T^{\vee}X) \longrightarrow \Gamma(X\times_{X\times_SX}T^{\vee}(X\times_SX))\longrightarrow \Gamma(T^{\vee}X/S)\longrightarrow 0  \end{align}
of $\cinfty(X)$-modules. The first map is split by the map $\Delta^*$, so it follows that the composition 
\[  I_{\Delta}/I^2_{\Delta}\longrightarrow \Gamma(X\times_{X\times_SX}T^{\vee}(X\times_SX))\longrightarrow \Gamma(T^{\vee}X/S) \]
where the first map comes from the exact sequence \eqref{eq:con1} and the second from the exact sequence \eqref{eq:con2} is an isomorphism. For the inductive step, it suffices to observe an equivalence $ \sym_{\cinfty(X)}^k(\Gamma(T^{\vee}X/S))\simeq I^{k}_{\Delta}/I^{k+1}_{\Delta}$ of $\cinfty(X)$-modules and invoke the fact that finitely generated projective modules are stable under retracts again.
\end{proof}
\begin{rmk}\label{rmk:splitjetspace}
The map $\pi_2^*:\cinfty(X)\rightarrow \cinfty(X\times_SX)/I^{2}_{\Delta}$ induced by the second projection provides a section of the square-zero extension $f_1:\cinfty(X\times_SX)/I^{2}_{\Delta}\rightarrow \cinfty(X)$, so $\cinfty(X\times_SX)/I^{2}_{\Delta}$ can be identified with the trivial square zero extension $\cinfty(X)\oplus I_{\Delta}/I^{2}_{\Delta}$ in the category of commutative $\R$-algebras. It follows that for $k=1$, the short exact sequence \eqref{eq:jetbundleseq} splits \emph{canonically} via the section $\cinfty(X)\rightarrow \cinfty(X\times_SX)/I^{2}_{\Delta}$. For $k>1$, the square-zero extension $\cinfty(X\times_SX)/{I^{k+1}_{\Delta}}{\rightarrow} \cinfty(X\times_SX)/{I^{k}_{\Delta}}$ is in general not trivial and splits only (noncanonically) at the level of $\cinfty(X)$-modules. Also note that while the square-zero extension $f_1$ is trivialized via the section $\pi_2^*$ in the category of commutative $\R$-algebras, the square-zero extension $f_1:\cinfty(X\times_SX)/I^{2}_{\Delta}\rightarrow
\cinfty(X)$ viewed as a map of commutative $\cinfty(X)$-algebras via the first projection $\pi_1^*$ is \emph{not} trivial, as the map $\pi_2^*$ does not yield a section for $f_1$ over $\cinfty(X)$. In fact one readily verifies that, using the section $\pi_2$ to trivialize the square-zero extension $f_1$, the map 
\[  \cinfty(X)\overset{\pi_1^*}{\longrightarrow} \cinfty(X)\oplus T^{\vee}X/S \]
carries $f$ to $(f,d_{\dR}f|_{TX/S})$. 
\end{rmk}
\begin{lem}\label{lem:ntruncinfnbhd}
Let $X\rightarrow S$ be a submersion of manifolds and let $M$ be a $\cinfty(X\times_SX)$-module for some $n\geq 0$. Then for all $k\geq 0$ the following holds: should $M\otimes_{\cinfty(X\times_SX)}\cinfty(X)$ be $n$-truncated for some $n\geq 0$, then $M\otimes_{\cinfty(X\times_SX)}\cinfty(X\times_SX)/I^{k+1}$ is $n$-truncated as well. 
\end{lem}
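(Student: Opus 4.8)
The plan is to induct on $k$, using the filtration of $\cinfty(X\times_SX)/I^{k+1}_{\Delta}$ by powers of the diagonal ideal together with the identification of the associated graded pieces furnished by Lemma \ref{lem:jebundleseq}. Throughout, ``$n$-truncated'' is understood to mean that all homotopy groups in degrees $>n$ vanish, with no connectivity hypothesis on $M$ needed. The base case $k=0$ is immediate: since $\cinfty(X\times_SX)/I^{1}_{\Delta}\cong\cinfty(X)$, we have $M\otimes_{\cinfty(X\times_SX)}\cinfty(X\times_SX)/I^{1}_{\Delta}\simeq M\otimes_{\cinfty(X\times_SX)}\cinfty(X)$, which is $n$-truncated by hypothesis.

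For the inductive step I would start from the short exact sequence \eqref{eq:jetbundleseq} of $\cinfty(X\times_SX)$-modules
\[ 0\longrightarrow I^{k}_{\Delta}/I^{k+1}_{\Delta}\longrightarrow \cinfty(X\times_SX)/I^{k+1}_{\Delta}\longrightarrow \cinfty(X\times_SX)/I^{k}_{\Delta}\longrightarrow 0, \]
which, being exact in the abelian category $\Mod^{\heartsuit}_{\cinfty(X\times_SX)}$, is a cofibre sequence in $\Mod_{\cinfty(X\times_SX)}$. Applying the exact (colimit-preserving) functor $M\otimes_{\cinfty(X\times_SX)}(-)$ yields a cofibre sequence in which the right-hand term $M\otimes_{\cinfty(X\times_SX)}\cinfty(X\times_SX)/I^{k}_{\Delta}$ is $n$-truncated by the inductive hypothesis. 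Since $n$-truncated objects are closed under extensions in $\Mod_{\cinfty(X\times_SX)}$ (immediate from the long exact sequence on homotopy groups), it then suffices to prove that the left-hand term $M\otimes_{\cinfty(X\times_SX)}(I^{k}_{\Delta}/I^{k+1}_{\Delta})$ is $n$-truncated.

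To handle this associated-graded term, I would observe that the $\cinfty(X\times_SX)$-module structure on $I^{k}_{\Delta}/I^{k+1}_{\Delta}$ factors through $\cinfty(X)=\cinfty(X\times_SX)/I_{\Delta}$, since $I_{\Delta}\cdot I^{k}_{\Delta}\subseteq I^{k+1}_{\Delta}$. Hence, by associativity of the relative tensor product,
\[ M\otimes_{\cinfty(X\times_SX)}(I^{k}_{\Delta}/I^{k+1}_{\Delta})\simeq \bigl(M\otimes_{\cinfty(X\times_SX)}\cinfty(X)\bigr)\otimes_{\cinfty(X)}(I^{k}_{\Delta}/I^{k+1}_{\Delta}). \]
By Lemma \ref{lem:jebundleseq}, $I^{k}_{\Delta}/I^{k+1}_{\Delta}\simeq\sym^{k}_{\cinfty(X)}(\Gamma(T^{\vee}X/S))$ is a finitely generated projective $\cinfty(X)$-module, hence a retract of a finite free $\cinfty(X)$-module. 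Since $M\otimes_{\cinfty(X\times_SX)}\cinfty(X)$ is $n$-truncated by hypothesis, and the property of being $n$-truncated is stable under finite direct sums and retracts, the displayed tensor product is $n$-truncated, which closes the induction.

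There is no serious obstacle here; the argument is formal once one has Lemma \ref{lem:jebundleseq} in hand. The only points demanding a little care are the reduction of the graded piece via the projection formula (associativity of the relative tensor product, so that the $\cinfty(X)$-linear structure can be exploited) and the invocation that $n$-truncated modules form a subcategory closed under extensions and retracts; both are standard consequences of the $t$-structure on $\Mod_{\cinfty(X\times_SX)}$.
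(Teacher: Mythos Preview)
Your proof is correct and follows essentially the same approach as the paper: both induct on $k$ via the short exact sequence \eqref{eq:jetbundleseq}, apply $M\otimes_{\cinfty(X\times_SX)}(-)$, invoke the inductive hypothesis on the right-hand term, and handle the graded piece by reducing along $\cinfty(X\times_SX)\to\cinfty(X)$ and using that $\sym^{k}_{\cinfty(X)}(\Gamma(T^{\vee}X/S))$ is a retract of a finite free $\cinfty(X)$-module. The only cosmetic difference is that you phrase the reduction via associativity of the relative tensor product, whereas the paper argues directly that the retract passes to $\cinfty(X\times_SX)$-modules.
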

\begin{proof}
We proceed by induction on $k$. For $k=0$, there is nothing to prove. We have a short exact sequence
\[0\longrightarrow \sym^{k}_{\cinfty(X)}(\Gamma(T^{\vee}{X/S})) \longrightarrow \cinfty(X\times_SX)/{I^{k+1}_{\Delta}}\longrightarrow \cinfty(X\times_SX)/{I^{k}_{\Delta}} \longrightarrow 0\]
of discrete $\cinfty(X\times_SX)$-modules resulting in a fibre sequence 
\[  M\otimes_{\cinfty(X\times_SX)}\sym^{k}_{\cinfty(X)}(\Gamma(T^{\vee}{X/S})) \longrightarrow M\otimes_{\cinfty(X\times_SX)}\cinfty(X\times_SX)/I_{\Delta}^{k+1}\longrightarrow M\otimes_{\cinfty(X\times_SX)}\cinfty(X\times_SX)/I_{\Delta}^{k}  \]
of $\cinfty(X\times_SX)$-modules. By the inductive hypothesis it suffices to show that $M\otimes_{\cinfty(X\times_SX)}\sym^{k}_{\cinfty(X)}(\Gamma(T^{\vee}{X/S}))$ is $n$-truncated. The $\cinfty(X\times_SX)$-module structure on $\sym^{k}_{\cinfty(X)}(\Gamma(T^{\vee}{X/S}))$ is obtained via the functor \[\Mod_{\cinfty(X\times_SX)}\longrightarrow\Mod_{\cinfty(X)}\]
induced by the map $\cinfty(X\times_SX)\rightarrow \cinfty(X)$. Since $\sym^{k}_{\cinfty(X)}(\Gamma(T^{\vee}{X/S}))$ is finitely generated and projective as a $\cinfty(X)$-module, it is a retract of a finitely generated and free $\cinfty(X)$-module (as $\Q\subset\R$); it follows that also in the \infcat of $\cinfty(X\times_SX)$-modules, $\sym^{k}_{\cinfty(X)}(\Gamma(T^{\vee}{X/S}))$ is a retract of a finite sum of the module $\cinfty(X)$ and we reduce to the assumption that $M\otimes_{\cinfty(X\times_SX)}\cinfty(X)$ is $n$-truncated.
\end{proof}
\begin{lem}\label{lem:infnbhpb}
Suppose we are given a pullback diagram
\[
\begin{tikzcd}
  X'\ar[d,"q'"]\ar[r] &  X\ar[d,"q"] \\
  S'\ar[r] & S
\end{tikzcd}
\]
of manifolds where $q$ (and $q'$) is a submersion (thus by transversality this is a pullback of affine derived $\cinfty$-schemes). Then the induced diagram
\[
\begin{tikzcd}
  (X'/S')^{(k)}\ar[d]\ar[r] &  (X/S)^{(k)}\ar[d] \\
  S'\ar[r] & S
\end{tikzcd}
\]
is a pullback of affine derived $\cinfty$-schemes as well.
\end{lem}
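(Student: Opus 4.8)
The plan is to reduce the assertion to a statement about pushouts of $\cinfty$-rings and then settle that statement using Lemma~\ref{lem:ntruncinfnbhd}. Throughout, write $A=\cinfty(X)$, $A'=\cinfty(X')$, $B=\cinfty(X\times_SX)$, $B'=\cinfty(X'\times_{S'}X')$, and let $I=I_\Delta\subset B$, $I'=I_{\Delta'}\subset B'$ be the diagonal ideals. Since $\spec$ is right adjoint to $\Gamma:\topo(\sring)\to\sring^{op}$ (Theorem~\ref{thm:spectrumglobalsections}) it preserves limits, and limits in $\sring^{op}$ are colimits in $\sring$; as $\daff\subset\topo^{\loc}(\sring)\subset\topo(\sring)$ is stable under countable limits, the square in the statement is a pullback of affine derived $\cinfty$-schemes if and only if the canonical comparison map
\[ \big(B/I^{k+1}\big)\oinfty_{\cinfty(S)}\cinfty(S')\longrightarrow B'/(I')^{k+1} \]
is an equivalence of derived $\cinfty$-rings, where the structure map $\cinfty(S)\to B/I^{k+1}$ is $q^*$ followed by either projection (these agree on $X\times_SX$).

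First I would collect the manifold-level input. Because $q$ is a submersion, so are $X\times_SX\to S$ and $\pi_1\colon X\times_SX\to X$; hence $X'=X\times_SS'$, $X'\times_{S'}X'=(X\times_SX)\times_{X,\pi_1}X'$ and $X'=(X'\times_{S'}X')\times_{X\times_SX,\Delta}X$ are transverse pullbacks (for the last one, $\Delta$ is transverse to any map whose image contains the fibrewise tangent directions, which holds here since $q'$ is a submersion). Applying $\cinfty$ therefore gives $A'\simeq A\oinfty_{\cinfty(S)}\cinfty(S')$, $B'\simeq B\oinfty_A A'$ (via $\pi_1^*$), and $B'\oinfty_B(B/I)\simeq\cinfty(X')$, the latter being discrete. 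The identification $X'\simeq(X'\times_{S'}X')\times_{X\times_SX}X$ exhibits $\Delta'$ as the base change of the closed immersion $\Delta$, so the conormal ideal satisfies $I'=I\cdot B'$ and hence $(I')^{k+1}=I^{k+1}B'$.

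The heart of the proof is to identify the left-hand side of the comparison map with $B'/(I')^{k+1}$. Using $B'\simeq B\oinfty_A A'$ and $A'\simeq A\oinfty_{\cinfty(S)}\cinfty(S')$, the pushout in question becomes
\[ \big(B/I^{k+1}\big)\oinfty_{\cinfty(S)}\cinfty(S')\simeq\big(B/I^{k+1}\big)\oinfty_A A'\simeq B'\oinfty_B\big(B/I^{k+1}\big), \]
so it suffices to compute this last $\cinfty$-tensor product, with $B'$ regarded as a $B$-module via the restriction map $B\to B'$. Since $B'\oinfty_B(B/I)\simeq\cinfty(X')$ is discrete, Lemma~\ref{lem:ntruncinfnbhd} (applied with $M=B'$) shows that $B'\oinfty_B(B/I^{k+1})$ is discrete as well. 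As $\pi_0$ preserves colimits and the quotient of a $\cinfty$-ring by an ideal is stable under base change, $\pi_0\big(B'\oinfty_B(B/I^{k+1})\big)\simeq B'/(I^{k+1}B')=B'/(I')^{k+1}$, and discreteness makes this the whole answer; unwinding the equivalences identifies the comparison map with this computation.

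I expect the only step with genuine content to be this last discreteness claim: a priori the derived tensor product $(B/I^{k+1})\oinfty_A A'$ could carry higher homotopy coming from $\cinfty$-Tor, and it is precisely Lemma~\ref{lem:ntruncinfnbhd} — set up for this purpose — that rules this out. The remainder is bookkeeping (tracking which projection or restriction induces each module structure) together with standard facts: transverse pullbacks of manifolds map to $\cinfty$-ring pushouts, closed immersions and their conormal ideals base-change as expected, and $\pi_0$ of a $\cinfty$-ring pushout is the pushout of the $\pi_0$'s.
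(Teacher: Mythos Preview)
Your strategy matches the paper's: reduce to showing that
\[
\big(B/I^{k+1}\big)\oinfty_{\cinfty(S)}\cinfty(S')\;\simeq\; B'/(I')^{k+1}
\]
and use Lemma~\ref{lem:ntruncinfnbhd} to kill the higher homotopy. The paper organises the same computation as a three-square factorisation rather than your direct rewriting, but the heart of both arguments is identical.

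There is, however, one step you pass over. Lemma~\ref{lem:ntruncinfnbhd} is a statement about the \emph{module} tensor product $M\otimes_{B}(B/I^{k+1})$, not about the $\cinfty$-ring pushout $B'\oinfty_{B}(B/I^{k+1})$; you apply it directly to the latter. The paper invokes \cite[Corollary~3.1.8]{cinftyI} (twice) precisely to bridge this gap: since $B\to B/I^{k+1}$ is an effective epimorphism, the underlying derived commutative $\R$-algebra of the $\cinfty$-pushout agrees with the algebraic pushout, so the module-theoretic Lemma~\ref{lem:ntruncinfnbhd} becomes applicable. The same issue arises already in your hypothesis for that lemma: your transversality argument yields $B'\oinfty_{B}(B/I)\simeq A'$, but what the lemma requires is that $B'\otimes_{B}(B/I)$ be discrete, and these coincide only after the same reduction. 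Once you insert this comparison, your proof is correct and essentially the paper's own.
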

\begin{proof}
We will show the stronger assertion that the diagram
\[
\begin{tikzcd}
\cinfty(S)\ar[d]\ar[r] & \cinfty(X\times_SX)/I^{k+1}_{\Delta}\ar[d] \\
\cinfty(S') \ar[r] & \cinfty(X'\times_{S'}X')/I^{k+1}_{\Delta}
\end{tikzcd}
\]
is a pushout of derived $\cinfty$-rings (we abuse notation by letting $I^{k+1}_{\Delta}$ denote both the $k$'th power of the kernel of $\cinfty(X\times_SX)\rightarrow \cinfty(X)$ and of $\cinfty(X'\times_{S'}X')\rightarrow \cinfty(X')$). Consider the larger diagram
\[
\begin{tikzcd}
\cinfty(S)\ar[d]\ar[r] &\cinfty(X\times_SX) \ar[d]\ar[r]&\cinfty(X\times_SX)/I^{k+1}_{\Delta}\ar[r]\ar[d] & 
\cinfty(X)\ar[d] \\
\cinfty(S') \ar[r] &\cinfty(X'\times_{S'}X')\ar[r] &\cinfty(X'\times_{S'}X')/I^{k+1}_{\Delta} \ar[r]& \cinfty(X')
\end{tikzcd}
\]
then it suffices to argue that the left and middle square are both pushouts. The left square is obviously a pushout. For the square on the middle, we invoke \cite[Corollary 3.1.8]{cinftyI} to reduce to the assertion that the underlying square of derived commutative $\R$-algebras is a pushout. Since the left square is a pushout and the total rectangle is a pushout, we deduce that the right rectangle obtained by composing the middle and right square is a pushout. Invoking \cite[Corollary 3.1.8]{cinftyI} again, we deduce that that we have an equivalence $
\cinfty(X'\times_{S'}X')\otimes_{\cinfty(X\times_SX)}\cinfty(X)\simeq \cinfty(X')$; in particular, the derived tensor product is 0-truncated. It follows from Lemma \ref{lem:ntruncinfnbhd} that it suffices to argue that the middle square is a pushout of \emph{ordinary} commutative $\R$-algebras. This is elementary commutative algebra, using that the rectangle consisting of the middle and right square is a pushout and that the ideals $I_{\Delta}$ are kernels of the horizontal maps in this rectangle.
\end{proof}
It follows that the $k$'th order infinitesimal diagonal neighbourhood
\[
\begin{tikzcd}
\fun(\Delta^1,\mathsf{Mfd})^{\mathsf{Sub}}\ar[rr]\ar[dr,"\ev_1"] && \fun(\Delta^1\times\Delta^1,\dstack)\times_{\dstack}\mathsf{Mfd} \ar[dl,"\ev_{\infty}"] \\
& \mathsf{Mfd}
\end{tikzcd}
\]
carries $\ev_1$-Cartesian edges to $\ev_{\infty}$-Cartesian edges, where the right diagonal functor evaluates at the cone point of $\Delta^1\times\Delta^1\cong (\Lambda^2_0)^{\rhd}$. Since both diagonal maps unstraighten to sheaves, we can expect a natural transformation carrying every submersive map $q:X\rightarrow S$ of stacks to a square diagram of derived $\cinfty$-stacks we can interpret as the $k$'th order infinitesimal diagonal neighbourhood of $q$. The only obstruction to carrying out this argument is the fact that the fibres of $\ev_{\infty}$ are not small, but this is easily fixed by imposing a cardinality bound. Let $\kappa$ be an uncountable regular cardinal and let $\Of^{\Lambda^2_0,\kappa} \subset \fun(\Delta^1\times\Delta^1,\dstack)$ be the full subcategory spanned by those diagrams for which the morphism $\Delta^1\cong (\Delta^0)^{\rhd}\hookrightarrow (\Lambda^2_0)^{\rhd}$ is \emph{relatively $\kappa$-compact} in the sense of \cite[Definition 6.1.6.4]{HTT}. Since being a relatively $\kappa$-compact morphism is a local property of morphisms of an \inftop for $\kappa$ sufficiently large, we deduce from Remark \ref{rmk:localdiagram} that the property determined by $\Of^{\Lambda^2_0,\kappa}$ is a local property of $\Delta^1\times\Delta^1$-diagrams. Now choose a regular cardinal $\kappa$ large enough to contain the essential image of upper horizontal functor in the diagram above, then we have diagram
\[
\begin{tikzcd}
\fun(\Delta^1,\mathsf{Mfd})^{\mathsf{Sub}}\ar[rr]\ar[dr,"\ev_1"] && \Of^{\Lambda^2_0,\kappa}\times_{\dstack}\mathsf{Mfd} \ar[dl,"\ev_{\infty}"] \\
& \mathsf{Mfd}
\end{tikzcd}
\]
Passing to subcategories spanned by Cartesian edges, we have a functor of right fibrations with essentially small fibres. Both these functors determine via unstraightening sheaves for the \'{e}tale topology on $\mathsf{Mfd}$. It follows that we have an induced functor  
\[
\begin{tikzcd}
\Of^{(\mathsf{Sub})} \ar[dr,"\ev_1"]\ar[rr] && \Of^{(\Lambda^ 2_0,\kappa)}\ar[dl,"\ev_{\infty}"] \\
&\mathsf{SmSt}
\end{tikzcd}
\]
of right fibrations over $\mathsf{SmSt}$ and thus a functor $\Of^{(\mathsf{Sub})}\rightarrow \Of^{(\Lambda^2_0,\kappa)}$ over the fully faithful inclusion $\mathsf{SmSt}\subset\dstack$. We will also refer to this functor as the \emph{$k$'th order infinitesimal diagonal neighbourhood}. It carries a submersive map of smooth stacks $X\rightarrow S$ to a square 
\[
\begin{tikzcd}
 (X/S)^{(k)}\ar[d,"\pi_2"]\ar[r,"\pi_1"] & X\ar[d] \\
 X \ar[r] & S
\end{tikzcd}
\]
in such a manner that for each manifold $M$ and each map $M\rightarrow S$ determining a submersion $M\times_SX\rightarrow M$, there is a map $(M\times_SX/ M)^{(k)}\rightarrow (X/S)^{(k)}$ fitting into a pullback diagram 
\[
\begin{tikzcd}
 (M\times_SX/M)^{(k)}\ar[d]\ar[r] & (X/S)^{(k)}\ar[d] \\
 M \ar[r] & S
\end{tikzcd}
\]
of derived $\cinfty$-stacks.
\begin{defn}
Let $S$ be a smooth stack, let $q:X\rightarrow S$ be an $S$-family of manifolds and let $f:Y\rightarrow X$ be an arbitrary map of derived $\cinfty$-stack. The \emph{relative $k$'th order jet space of sections of $f$}, denoted $J^k_{X/S}(Y)$, is the Weil restriction $\mathsf{Res}_{(X/S)^{(k)}/X}(Y\times_{X}(X/S)^{(k)})\in\dstack$, where the pullback $Y\times_{X}(X/S)^{(k)}$ is taken along $\pi_1$ and the Weil restriction along $\pi_2$. For $X\rightarrow S$ an $S$-family of manifolds, we let 
\[ J^k_{(X/S)}(\_):\dstack_{/X} \longrightarrow \dstack_{/X}  \]
denote the relative $k$'th order jet space functor. For $S=*$ a final object (so that $X$ is a manifold) and $Y\rightarrow X$ a map of derived $\cinfty$-stacks, we write $J^k_X(Y)$ for the derived $\cinfty$-stack $J^k_{X/*}(Y)$.
\end{defn}
\begin{rmk}
Unwinding the definition of Weil restriction, the relative $k$'th  order jet space of sections of $f:Y\rightarrow X$ over $S$ is defined by the following universal property: for every map $\spec\,A\rightarrow X$ from an affine derived $\cinfty$-scheme, there is a canonical equivalence between the space $\Hom_{\dstack_{/X}}(\spec\,A,J^k_{X/S}(Y))$ and the space of commuting diagrams
\[
\begin{tikzcd}
  \spec\,A\times_X (X/S)^{(k)}\ar[r]\ar[d] & Y\ar[d,"f"] \\
  (X/S)^{(k)}\ar[r,"\pi_1"] & X.
\end{tikzcd}
\]
where the pullback in the upper left corner is taken along $\pi_2$.
\end{rmk}
We have a tower of relative jet spaces
\[ \ldots\longrightarrow  J^{k+1}_{X/S}(Y)\longrightarrow J^{k}_{X/S}(Y)\longrightarrow \ldots J^1_{X/S}(Y) \longrightarrow Y   \]
over $X$, where the map $J^{k+1}_{X/S}(Y)\rightarrow J^{k-1}_{X/S}(Y)$ is induced by the composition $J^{k+1}_{X/S}(Y)\times_{X}(X/S)^{(k)}\rightarrow J^{k+1}_{X/S}(Y)\times_{X}(X/S)^{(k+1)}\rightarrow Y$. For $k>l$ will denote the map $J^k_{X/S}(Y)\rightarrow J^l_{X/S}(Y)$ by $p_{k,l}$, the map $J^k_{X/S}(Y)\rightarrow Y$ by $p_{k,0}$ and the map $J^k_{X/S}(Y)\rightarrow X$ by $p$. We collect some facts about the relative jet space construction.
\begin{prop}\label{prop:jetbundleproperties}
Let $X\rightarrow S$ be an $S$-family of manifolds over a smooth stack and $Y\rightarrow X$ a map of derived $\cinfty$-stacks. Then the following hold true.
\begin{enumerate}[$(1)$]
\item The relative jet space functor $J_{X/S}(\_):\dstack_{/X}\rightarrow\dstack_{/X}$ preserves limits. 
\item Let $S'\rightarrow S$ be a map of smooth stacks, then the canonical morphism
\[ J^k_{X\times_SS'/S}(Y\times_SS')\longrightarrow J^k_{X/S}(Y)\times_SS'  \]
of derived $\cinfty$-stacks over $X\times_SS'$ is an equivalence.
\item Let $U\rightarrow Y$ be an open substack inclusion. Then the diagram
\[
\begin{tikzcd}
J^k_{X/S}(U)\ar[d]\ar[r] &J^k_{X/S}(Y)\ar[d] \\
U\ar[r] & Y
\end{tikzcd}
\] 
is a pullback diagram. 
\item Let $S$ be (representable by) a manifold and $Y\rightarrow X$ a vector bundle of rank $r$, then the derived $\cinfty$-stack $J^k_{X/S}(Y)$ is (representable by) a manifold and the map $J^k_{X/S}(Y)\rightarrow X$ is a vector bundle over $X$ of rank . Moreover, for $k\geq 1$, there is canonical map of vector bundles $\sym^k(T^{\vee}{X/S})\rightarrow J^k_{X/S}(Y)$ where $T^{\vee}X/S$ is the vertical cotangent bundle of $X$ with respect to the submersion $X\rightarrow S$ which fits into a short exact sequence 
\[   0\longrightarrow \sym^k(T^{\vee}X/S)\otimes Y \longrightarrow J^k_{X/S}(Y) \longrightarrow J^{k-1}_{X/S}(Y)\longrightarrow 0  \]
of vector bundles over $X$.
\item Let $Y\rightarrow X$ be an $S$-family of submersions, then the jet space $J^k_{X/S}(Y)$ is smooth and the map $J^k_{X/S}(Y)\rightarrow J^{k-1}_{X/S}(Y)$ is submersive.
\end{enumerate}
\end{prop}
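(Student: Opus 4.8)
The plan is to establish the five assertions in the order given, leaning on two structural facts already set up: that $J^k_{X/S}(\_)$ is the composite $\dstack_{/X}\xrightarrow{\pi_1^*}\dstack_{/(X/S)^{(k)}}\xrightarrow{\mathsf{Res}_{(X/S)^{(k)}/X}}\dstack_{/X}$, and the commutative algebra of the infinitesimal diagonal neighbourhood codified in Lemma~\ref{lem:jebundleseq}. Part $(1)$ is then immediate: pullback functors preserve limits and $\mathsf{Res}_{(X/S)^{(k)}/X}$ is a right adjoint by Definition~\ref{defn:weilrestriction}, so the composite preserves limits. For $(2)$ I would combine Remark~\ref{rmk:weilrestrictionpullback} (Weil restriction commutes with base change) with the stability of the infinitesimal diagonal neighbourhood square under arbitrary base change $S'\to S$, which is built into the descent construction of $(X/S)^{(k)}$ extending Lemma~\ref{lem:infnbhpb}: one has $(X\times_SS'/S')^{(k)}\simeq (X/S)^{(k)}\times_SS'$ over $X\times_SS'$, and the canonical comparison of Weil restrictions is then the equivalence of Remark~\ref{rmk:weilrestrictionpullback}.

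For $(3)$ I would test against affine $\spec A\to X$. Unwinding the universal property, a point of $J^k_{X/S}(Y)$ over $\spec A$ is a commuting square $\spec A\times_X(X/S)^{(k)}\to Y$ over $\pi_1$, and the map $p_{k,0}$ to $Y$ restricts such a square along the diagonal section $X\to (X/S)^{(k)}$. Since $\pi_2\colon (X/S)^{(k)}\to X$ induces a homeomorphism on underlying topological spaces — the quotient $\cinfty(X\times_SX)/I^{k+1}_\Delta$ has the same $\R$-points as $\cinfty(X\times_SX)/I_\Delta\cong\cinfty(X)$ — the map $\spec A\to \spec A\times_X(X/S)^{(k)}$ is a homeomorphism on underlying spaces; and since factoring a map of derived locally $\cinfty$-ringed spaces through an open substack inclusion is detected on underlying topological spaces (open embeddings are $q_{\sring}$-Cartesian), the square factors through $U\hookrightarrow Y$ if and only if its $0$-jet does. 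This gives $J^k_{X/S}(U)(\spec A)\simeq J^k_{X/S}(Y)(\spec A)\times_{Y(\spec A)}U(\spec A)$ naturally in $\spec A$, which is precisely the asserted pullback square.

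For $(4)$, the computational heart, I would reduce to the module algebra of Lemma~\ref{lem:jebundleseq}. When $Y\to X$ is a vector bundle of rank $r$, the pullback $\pi_1^*Y\to (X/S)^{(k)}$ is a vector bundle, and the Weil restriction of a vector bundle along the affine morphism $\pi_2$ — whose associated $\cinfty(X)$-algebra $\cinfty(X\times_SX)/I^{k+1}_\Delta$ is finitely generated projective as a $\cinfty(X)$-module by Lemma~\ref{lem:jebundleseq} — is again a vector bundle over $X$, with module of sections $\Gamma(X;Y)\otimes_{\cinfty(X),\pi_1^*}\cinfty(X\times_SX)/I^{k+1}_\Delta$, regarded as a $\cinfty(X)$-module via $\pi_2^*$. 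Tensoring the short exact sequence~\eqref{eq:jetbundleseq} with $\Gamma(X;Y)$ — using that the $\cinfty(X)$-module structures via $\pi_1^*$ and $\pi_2^*$ agree on the subquotient $I^k_\Delta/I^{k+1}_\Delta$, as noted in the discussion preceding~\eqref{eq:jetbundleseq} — together with the identification $I^k_\Delta/I^{k+1}_\Delta\simeq \sym^k_{\cinfty(X)}\Gamma(T^{\vee}X/S)$ of Lemma~\ref{lem:jebundleseq}, yields the exact sequence $0\to \sym^k(T^{\vee}X/S)\otimes Y\to J^k_{X/S}(Y)\to J^{k-1}_{X/S}(Y)\to 0$; that each term is a genuine vector bundle over $X$ follows, as in Lemma~\ref{lem:jebundleseq}, from finite generation and projectivity being stable under extensions, and an induction on $k$ pins down the rank. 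The canonical map $\sym^k(T^{\vee}X/S)\otimes Y\to J^k_{X/S}(Y)$ is the inclusion of this kernel.

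For $(5)$ I would first use $(2)$: since smoothness is stable under colimits in $\dstack$ and submersiveness is local on the target (Propositions~\ref{prop:localtarget}, \ref{prop:stackysubmersive}), writing $S$ as a colimit of manifolds reduces us to $S$ a manifold. Then, using the local normal form for submersions, I would cover $Y$ by open substacks $W_i$ each of which, as a manifold over $X$, is a projection $U_i\times\R^{n_i}\to U_i$ onto an open $U_i\subseteq X$, hence a trivial vector bundle. By $(3)$, the maps $J^k_{X/S}(W_i)\to J^k_{X/S}(Y)$ and $J^{k-1}_{X/S}(W_i)\to J^{k-1}_{X/S}(Y)$ are open substack inclusions, jointly effective-epic (being pulled back from the cover $\{W_i\to Y\}$ along $p_{k,0}$, resp. $p_{k-1,0}$), and compatible with $J^k\to J^{k-1}$; moreover $J^k_{X/S}(W_i)\simeq J^k_{U_i/S}(W_i)$ since the infinitesimal neighbourhood and the Weil restriction are local on the base. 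Part $(4)$ now shows each $J^k_{U_i/S}(W_i)$ is a manifold, so $J^k_{X/S}(Y)$ admits an open cover by representables that are manifolds and is therefore itself a smooth manifold by Proposition~\ref{prop:representablebyscheme} and Remark~\ref{rmk:localprop}; and $(4)$ exhibits $J^k_{U_i/S}(W_i)\to J^{k-1}_{U_i/S}(W_i)$ as the surjection of a short exact sequence of vector bundles over $U_i$, in particular submersive, whence $J^k_{X/S}(Y)\to J^{k-1}_{X/S}(Y)$ is submersive by locality on the target. The step I expect to be the main obstacle is $(4)$: correctly identifying the Weil restriction along the square-zero tower $\{(X/S)^{(k)}\}$ with the classical filtered jet bundle and verifying carefully that the two $\cinfty(X)$-module structures line up so that~\eqref{eq:jetbundleseq} really produces a short exact sequence of vector bundles over $X$ after tensoring with $\Gamma(X;Y)$.
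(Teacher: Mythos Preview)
Your proposal is correct and follows essentially the same route as the paper. Two places are worth noting where the paper is more careful. In $(3)$, the paper first reduces by descent to $X\to S$ a submersion of ordinary manifolds and $Y$ representable before invoking the ``detected on underlying topological spaces'' argument; your version of that step implicitly assumes $Y$ is a derived locally $\cinfty$-ringed space, so the reduction is needed. In $(4)$, the paper justifies precisely your claim that the Weil restriction of $\pi_1^*Y$ along $\pi_2$ is the vector bundle with sections $\Gamma(Y)\otimes_{\cinfty(X)}\cinfty(X\times_SX)/I^{k+1}_\Delta$: it uses Lemma~\ref{lem:infnbhdtensor} to match the $\cinfty$-tensor product with the algebraic one, and then, since $\cinfty(X\times_SX)/I^{k+1}_\Delta$ is finitely generated projective over $\cinfty(X)$ and hence dualizable, identifies the left adjoint to $\_\otimes_{\cinfty(X)}\cinfty(X\times_SX)/I^{k+1}_\Delta$ as tensoring with the dual bimodule; this computes the jet functor on free $\cinfty$-rings over $\cinfty(X)$ and hence on vector bundles. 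The identification of $p_{k,k-1}$ with the mate (via Proposition~\ref{prop:mate}) is what pins down the map in the short exact sequence. You correctly flagged this as the main obstacle; the rest of your outline matches the paper.
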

We will need the following lemma.
\begin{lem}\label{lem:infnbhdtensor}
Let $X\rightarrow S$ be a submersion of manifolds, then for any $k\geq 0$, the natural transformation
\[ \cinfty(X\times_SX)/I^{k+1}_{\Delta}\otimes_{\cinfty(X)}(\_)^{\rmalg} \overset{\alpha}{\longrightarrow} (\cinfty(X\times_SX)/I^{k+1}_{\Delta}\oinfty_{\cinfty(X)}(\_))^{\rmalg}  \]
of functors $\sring_{\cinfty(X)/}\rightarrow \mathsf{d}\calg_{\R}$ is an equivalence.
\end{lem}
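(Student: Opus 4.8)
The plan is to reduce the statement to the compact projective generators of $\sring_{\cinfty(X)/}$ and there to exploit the finiteness of $\cinfty(X\times_SX)/I^{k+1}_{\Delta}$ over $\cinfty(X)$ established in Lemma \ref{lem:jebundleseq}. Write $M:=\cinfty(X\times_SX)/I^{k+1}_{\Delta}$ throughout.

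First I would observe that both functors involved preserve sifted colimits. The underlying algebra functor $(\_)^{\rmalg}\colon\sring\to\scring_{\R}$ is restriction along the product-preserving inclusion of Lawvere theories $\mathsf{Poly}_{\R}\subset\cartsp$, and such restriction functors preserve sifted colimits, since sifted colimits in $\fun^{\pi}(\_,\spa)$ are computed objectwise; the relative tensor product $M^{\rmalg}\otimes_{\cinfty(X)^{\rmalg}}(\_)$ over $\scring_{\R}$ and the $\cinfty$-pushout $M\oinfty_{\cinfty(X)}(\_)$ preserve all colimits. Since $\alpha$ is a natural transformation between two sifted-colimit-preserving functors and $\sring_{\cinfty(X)/}$ is generated under sifted colimits by its compact projective objects --- the retracts of the free algebras $\cinfty(X)\oinfty\cinfty(\R^m)\simeq\cinfty(X\times\R^m)$, $m\geq 0$ --- it then suffices to check that $\alpha_B$ is an equivalence for $B=\cinfty(X\times\R^m)$.

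For such $B$ one has $M\oinfty_{\cinfty(X)}B\simeq M\oinfty\cinfty(\R^m)$ and $B^{\rmalg}=\cinfty(X\times\R^m)^{\rmalg}$, so the assertion becomes that $(\_)^{\rmalg}$ carries the pushout $M\oinfty_{\cinfty(X)}\bigl(\cinfty(X)\oinfty\cinfty(\R^m)\bigr)$ to the corresponding pushout of underlying $\R$-algebras. This would \emph{fail} for an arbitrary $\cinfty(X)$-algebra in place of $M$, because of the usual discrepancy between $\cinfty$- and algebraic tensor products (``completion''), and it is exactly here that Lemma \ref{lem:jebundleseq} is used: $M$ is finitely generated projective as a $\cinfty(X)$-module, hence locally on $X$ finitely generated \emph{free} over $\cinfty(X)$. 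Since the claim that $\alpha_B$ is an equivalence can be tested locally on $X$ (restriction of $\cinfty$-rings and of modules along an open cover of $\spec\cinfty(X)=X$ is conservative and is compatible with both $\oinfty_{\cinfty(X)}(\_)$ and $\otimes_{\cinfty(X)^{\rmalg}}(\_)^{\rmalg}$), I would pass to a chart where $M$ is finitely generated free over $\cinfty(X)$ with a $\cinfty(X)$-basis $e_1,\ldots,e_n$ and structure constants $c^{\ell}_{ij}\in\cinfty(X)$ --- concretely, where the $k$'th order diagonal neighbourhood trivialises as a bundle of Weil algebras, so that $M\simeq\cinfty(X)\otimes_{\R}\R[t_1,\ldots,t_d]/\mathfrak{m}^{k+1}$ as a $\cinfty(X)$-algebra. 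Then one invokes the standard fact that for a finite-dimensional local $\R$-algebra $W$ (equivalently, for a $\cinfty$-$A$-algebra that is finitely generated free as an $A$-module, $A$ any $\cinfty$-ring) the canonical map $A\otimes_{\R}W\to A\oinfty_{\R}W$ is an equivalence; this identifies $M\oinfty_{\cinfty(X)}B$ with the finitely generated free $B^{\rmalg}$-algebra on $e_1,\ldots,e_n$ with the base-changed multiplication table, i.e.\ with $M^{\rmalg}\otimes_{\cinfty(X)^{\rmalg}}B^{\rmalg}$, and $\alpha_B$ is the evident identification. (Alternatively, one can route this last step through \cite[Corollary 3.1.8]{cinftyI}.)

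The main obstacle is precisely this free base case: one must be certain that the finiteness of $M$ over $\cinfty(X)$ genuinely suppresses the completion discrepancy between $M\oinfty_{\cinfty(X)}(\_)$ and $M^{\rmalg}\otimes_{\cinfty(X)^{\rmalg}}(\_)^{\rmalg}$ --- this is where Lemma \ref{lem:jebundleseq} and the local triviality of the jet/diagonal-neighbourhood bundle do the real work. By contrast the sifted-colimit reduction, the locality on $X$, and the bookkeeping identifications $B^{\rmalg}=\cinfty(X\times\R^m)^{\rmalg}$ and $M\oinfty_{\cinfty(X)}(\cinfty(X)\oinfty\cinfty(\R^m))\simeq M\oinfty\cinfty(\R^m)$, together with the naturality of $\alpha$, are routine.
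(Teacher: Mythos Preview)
Your argument is correct and shares the paper's opening move---the sifted-colimit reduction to the free generators $B=\cinfty(X\times\R^m)$---but then takes a different route. The paper works globally and by induction on $k$: it applies \cite[Corollary 3.1.8]{cinftyI} to the effective epimorphism $\cinfty(X\times_SX)\twoheadrightarrow M$ and invokes Lemma~\ref{lem:ntruncinfnbhd} to show that $M\oinfty\cinfty(\R^m)$ is $0$-truncated, identifies it with $\cinfty(X\times_SX\times\R^m)/J^{k+1}$, and then matches the successive quotients $I^k_\Delta/I^{k+1}_\Delta$ against $J^k/J^{k+1}$ explicitly via the symmetric-power description of Lemma~\ref{lem:jebundleseq}. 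You instead localise on $X$ so that $M$ trivialises as a bundle of Weil algebras and appeal directly to the comparison $A\otimes_{\R}W\simeq A\oinfty_{\R}W$ for $W$ finite-dimensional local.

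Your route is geometrically cleaner, but be aware that the ``standard fact'' you invoke is not entirely free in the \emph{derived} setting: one still needs to know that the $\infty$-categorical coproduct $A\oinfty_{\R}W$ is $0$-truncated when $A$ is ordinary (so that it agrees with the $1$-categorical coproduct, where the Weil-algebra comparison is indeed classical). The quickest way to see this is precisely the filtration by powers of $\mathfrak m$ together with transversality of $N\times\R^d\to\R^d$ over $\{0\}$---which is exactly the mechanism behind Lemma~\ref{lem:ntruncinfnbhd}---and then \cite[Corollary 3.1.8]{cinftyI}, as you note parenthetically. So the two proofs are really two packagings of the same mechanism: yours trades the explicit induction and global bookkeeping for a more conceptual local picture, at the cost of hiding the $0$-truncatedness verification inside the cited fact.
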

\begin{proof}
We proceed by induction on $k$, the case $k=0$ being trivial. Since both functor preserve sifted colimits, it suffices to show that $\alpha$ is an equivalence on the full subcategory spanned by finitely generated free derived $\cinfty$-rings over $\cinfty(X)$ (see \cite[Proposition 2.4.7]{cinftyI}). We will first show that for any $n\geq 0$, the derived $\cinfty$-ring $\cinfty(X\times_SX)/I^{k+1}_{\Delta}\oinfty_{\cinfty(X)}\cinfty(X\times\R^n)\simeq \cinfty(X\times_SX)/I^{k+1}_{\Delta}\oinfty\cinfty(\R^n)$ is $0$-truncated. The coproduct fits into a pushout diagram 
\[
\begin{tikzcd}
\cinfty(X\times_SX)\ar[d]\ar[r] & \cinfty(X\times_SX)/I^{k+1}_{\Delta} \ar[d] \\
\cinfty(X\times_SX\times\R^n)\ar[r] &  \cinfty(X\times_SX)/I^{k+1}_{\Delta}\oinfty\cinfty(\R^n)
\end{tikzcd}
\]
of derived $\cinfty$-rings. Since the upper horizontal map is an effective epimorphism, the underlying diagram of derived commutative $\R$-algebras is also a pushout (\cite[Corollary 3.1.8]{cinftyI}), so we deduce from Lemma \ref{lem:ntruncinfnbhd} that $ \cinfty(X\times_SX)/I^{k+1}_{\Delta}\oinfty\cinfty(\R^n)$ is 0-truncated. It follows that we can identify the coproduct $\cinfty(X\times_SX)/I^{k+1}_{\Delta}\oinfty\cinfty(\R^n)$ with the $\cinfty$-ring $\cinfty(X\times_SX\times\R^n)/J^{k+1}$ where $J$ is the kernel of the map $\cinfty(X\times_SX\times\R^n)\rightarrow \cinfty(X\times\R^n)$. We have a commuting diagram 
\[
\begin{tikzcd}
\cinfty(X\times_SX)/I^{k+1}_{\Delta}\otimes_{\R}\cinfty(\R^n)\ar[d]\ar[r] & \cinfty(X\times_SX\times\R^n)/J^{k+1}\ar[d] \\
\cinfty(X\times_SX)/I^{k}_{\Delta}\otimes_{\R}\cinfty(\R^n)\ar[r] & \cinfty(X\times_SX\times\R^n)/J^{k}
\end{tikzcd}
\]
of commutative $\R$-algebras; by what we have just shown and by the inductive hypothesis, it suffices to show that the upper horizontal map induces an equivalence on the kernels of the vertical maps. The kernel of the left vertical map is identified with $I^k_{\Delta}/I^{k+1}_{\Delta}\otimes\cinfty(\R^n)$ and the same argument as the one employed in Lemma \ref{lem:jebundleseq} shows that the map 
\[J/J^2\longrightarrow \Gamma(T^{\vee}(X\times_SX\times\R^n)\times_{X\times_SX\times\R^n}X\times\R^n)\longrightarrow \Gamma(T^{\vee}X/S)\otimes_{\R} \cinfty(\R^n)\]
 carrying a function $g$ on $X\times_SX\times\R^n$ to the restriction of $d_{\dR}g$ to $X\times\R^n$ is an isomorphism and induces an isomorphism $J^k/J^{k+1}\cong \sym_{\cinfty(X)}^k(\Gamma(T^{\vee}X/S))\otimes_{\R}\cinfty(\R)$ for all $k\geq 1$. It is straightforward to verify that the map 
\[  I^k_{\Delta}/I^{k+1}_{\Delta}\otimes_{\R}\cinfty(\R^n)\longrightarrow J^k/J^{k+1}\cong \sym_{\cinfty(X)}^k(\Gamma(T^{\vee}X/S))\otimes_{\R}\cinfty(\R) \]
coincides with the isomorphism $I^k_{\Delta}/I^{k+1}_{\Delta}\otimes_{\R}\cinfty(\R^n)\cong  \sym_{\cinfty(X)}^k(\Gamma(T^{\vee}X/S))\otimes_{\R}\cinfty(\R)$ induced by the isomorphism $I^k_{\Delta}/I^{k+1}_{\Delta}\cong \sym_{\cinfty(X)}^k(\Gamma(T^{\vee}X/S))$.
\end{proof}
Let $X\rightarrow S$ be an $S$-family of manifolds over a smooth stack and let $Y\rightarrow X$ be an arbitrary map of derived $\cinfty$-stacks. By definition, to lift a map $Z\rightarrow X$ to a map $Z\rightarrow J^k_{X/S}(Y)$, one must provide a map $Z\times_X(X/S)^{(k)}\rightarrow Y$ over $X$, characterizing $J^k_{X/S}(Y)$ by a universal property. We will now establish a similar universal property for the jet space $J^k_{X/S}(Y)$ as an object \emph{over $J^{k-1}_{X/S}(Y)$}. For this, we will use the following categorical result.
\begin{prop}\label{prop:mate}
Let $f_0,f_1:\icat\rightarrow \icatd$ be functors admitting right adjoints $g_0$ and $g_1$ and suppose we have a natural transformation $\alpha:f_0\rightarrow f_1$. The transformation $\alpha$ has a \emph{mate} $\beta:g_1\rightarrow g_0$ defined as 
\[  g_1\longrightarrow g_0f_0g_1\overset{g_0\alpha g_1}{\longrightarrow} g_0f_1g_1\longrightarrow g_0   \]
where the first and last map are induced by the unit of $f_0\adj g_0$ and the counit of $f_1\adj g_1$ respectively. Suppose that for each map $D\rightarrow D'$ of $\icatd$, there is a pullback diagram
\[ 
\begin{tikzcd}
G(D\rightarrow D') \ar[d]\ar[r] & g_1(D')\ar[d,"\beta(D')"] \\
g_0(D)\ar[r] & g_0(D')
\end{tikzcd}
\]
in $\icat$. Let $F:\icat\rightarrow\fun(\Delta^1,\icatd)$ be the functor determined by the transformation $\alpha$, then $F$ admits a right adjoint $G$ which carries a map $D\rightarrow D'$ of $\icatd$ to $G(D\rightarrow D')$. Moreover, there is a commuting diagram
\[
\begin{tikzcd}
f_0(G(D\rightarrow D')) \ar[r,"\alpha"] \ar[d]& f_1(G(D\rightarrow D'))\ar[d] \\
f_0g_0(D) \ar[d] & f_1g_1(D')\ar[d] \\
D\ar[r] & D'  
\end{tikzcd}
\]
in $\icatd$ where the lower vertical maps are counits of $f_0\adj g_0$ and $f_1\adj g_1$ respectively, which exhibits a counit transformation for the object $D\rightarrow D'$.
\end{prop}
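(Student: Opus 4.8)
The plan is to verify that the assignment $u=(D\rightarrow D')\mapsto G(u):=G(D\rightarrow D')$, which by hypothesis is a well-defined object of $\icat$ sitting in the pullback square
\[
\begin{tikzcd}
G(u)\ar[d]\ar[r] & g_1(D')\ar[d,"\beta(D')"] \\
g_0(D)\ar[r,"g_0(u)"] & g_0(D'),
\end{tikzcd}
\]
represents the functor $C\mapsto\Hom_{\fun(\Delta^1,\icatd)}(F(C),u)$, and then to invoke the pointwise criterion for the existence of an adjoint functor \cite[Proposition 5.2.4.2]{HTT} to conclude that $F$ admits a right adjoint with the stated value on objects, functorial in $u$. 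Here $F:\icat\rightarrow\fun(\Delta^1,\icatd)$ is the functor $C\mapsto(f_0(C)\rightarrow f_1(C))$ classified by $\alpha$, so that $\ev_0\circ F=f_0$ and $\ev_1\circ F=f_1$.

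First I would record the standard description of mapping spaces in an arrow \infcatt: for objects $a=(A_0\rightarrow A_1)$ and $b=(B_0\rightarrow B_1)$ of $\fun(\Delta^1,\icatd)$, the Kan complex $\Hom_{\fun(\Delta^1,\icatd)}(a,b)$ is the fibre product $\Hom_{\icatd}(A_0,B_0)\times_{\Hom_{\icatd}(A_0,B_1)}\Hom_{\icatd}(A_1,B_1)$, where the two structure maps are post-composition with $b$ and pre-composition with $a$. Applied to $a=F(C)$ and $b=u$ this identifies $\Hom_{\fun(\Delta^1,\icatd)}(F(C),u)$ with $\Hom_{\icatd}(f_0(C),D)\times_{\Hom_{\icatd}(f_0(C),D')}\Hom_{\icatd}(f_1(C),D')$. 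Transposing the three corners along $f_0\adj g_0$ (used twice) and $f_1\adj g_1$ turns this into $\Hom_{\icat}(C,g_0(D))\times_{\Hom_{\icat}(C,g_0(D'))}\Hom_{\icat}(C,g_1(D'))$: the map out of $\Hom_{\icat}(C,g_0(D))$ becomes post-composition with $g_0(u)$ by naturality of the unit/counit of $f_0\adj g_0$, while the map out of $\Hom_{\icat}(C,g_1(D'))$ becomes post-composition with $\beta(D')$. This last identification is exactly the defining property of the mate, namely that pre-composition with $\alpha_C$ on $\Hom_{\icatd}(f_1(C),D')$ corresponds, under the two transposition equivalences $\Hom_{\icatd}(f_i(C),D')\simeq\Hom_{\icat}(C,g_i(D'))$, to post-composition with $\beta(D')$, naturally in $C$ and $D'$; it follows from the triangle identities and the explicit formula for $\beta$. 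Since $\Hom_{\icat}(C,-)$ preserves the displayed pullback, we obtain a natural equivalence $\Hom_{\fun(\Delta^1,\icatd)}(F(C),u)\simeq\Hom_{\icat}(C,G(u))$, which gives the adjunction via \cite[Proposition 5.2.4.2]{HTT}.

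Finally I would identify the counit. It is the morphism $F(G(u))\rightarrow u$ of $\fun(\Delta^1,\icatd)$ corresponding to $\mathrm{id}_{G(u)}$ under the equivalence just established; unwinding the transpositions, its component at $0$ is the transpose along $f_0\adj g_0$ of the projection $G(u)\rightarrow g_0(D)$, i.e. the composite $f_0(G(u))\rightarrow f_0g_0(D)\rightarrow D$ of $f_0$ applied to the projection with the counit of $f_0\adj g_0$, and its component at $1$ is the transpose along $f_1\adj g_1$ of the projection $G(u)\rightarrow g_1(D')$, i.e. $f_1(G(u))\rightarrow f_1g_1(D')\rightarrow D'$. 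Together with the naturality square of $\alpha$ evaluated on the projections of the pullback, this is precisely the diagram displayed in the statement, and the required commutativity — the coincidence of the two composites $f_0(G(u))\rightarrow D'$ — is the transpose of the defining pullback square of $G(u)$ along the mate identification above. Compatibility with the unit is then automatic. \emph{The main obstacle} is the coherent, $\infty$-categorical version of the mate identification in the previous paragraph: one must produce a homotopy, natural in $C$ and $D'$, exhibiting the commutation of transposition with pre-/post-composition along $\alpha$ and $\beta$; I would handle this by working throughout with the units and counits supplied by the adjunctions as in \cite[Section 5.2.2]{HTT}, reducing everything to the zig-zag identities, rather than appealing to an ambient $(\infty,2)$-categorical calculus of mates.
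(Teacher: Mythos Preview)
Your proof is correct and follows essentially the same strategy as the paper: both show pointwise representability of the presheaf $C\mapsto\Hom_{\fun(\Delta^1,\icatd)}(F(C),u)$ by identifying it with a pullback of representables via the adjunctions and the mate identification, then conclude by the pointwise adjunction criterion.

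The only difference is packaging. You work directly with mapping spaces, using the standard pullback description of $\Hom$ in an arrow \infcat and transposing termwise. The paper instead works with the associated right fibrations: it uses that $\ev_0:\fun(\Delta^1,\icatd)\rightarrow\icatd$ is a Cartesian fibration to obtain a homotopy pullback square of slice \infcats over $\fun(\Delta^1,\icatd)$, pulls this back along $F$, and then identifies three of the four corners as $\icat_{/g_0(D)}$, $\icat_{/g_0(D')}$, $\icat_{/g_1(D')}$ using the adjunctions (and identifies the connecting map as the mate by ``unwinding the definitions''). Representability of the remaining corner then follows since the Yoneda embedding preserves limits. Your Hom-space argument is the unstraightened version of exactly this square; what you call ``the main obstacle'' (the coherent mate identification) is the same step the paper dispatches by unwinding. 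The paper's fibrational phrasing arguably makes the construction of the counit slightly more automatic (it emerges from the structure of the slice diagrams), while your presentation is more self-contained and avoids appealing to Cartesian fibration machinery.
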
 
\begin{proof}
To show the existence of a functor right adjoint to $F$, we are required to show that for every map $D\rightarrow D'$ in $\icatd$, the right fibration $\icat\times_{\fun(\Delta^1,\icatd)}\fun(\Delta^1,\icatd)_{/D\rightarrow D'}$ is representable. The domain projection $\ev_0:\fun(\Delta^1,\icatd)\rightarrow\icatd$ is a Cartesian fibration, so the commuting square 
\[
\begin{tikzcd}
D\ar[d] \ar[r] & D'\ar[d] \\
D'\ar[r,equal] & D'
\end{tikzcd}
\]
induces a homotopy pullback square
\[
\begin{tikzcd}
\fun(\Delta^1,\icatd)_{/D\rightarrow D'}\ar[d] \ar[r] & \fun(\Delta^1,\icatd)_{/D'\rightarrow D'}\ar[d] \\ \icatd_{/D} \ar[r] & \icatd_{/D'}
\end{tikzcd}
\]
of \infcatst. Pulling back along the map $F$, we deduce a homotopy pullback square
\[
\begin{tikzcd}
\icat\times_{\fun(\Delta^1,\icatd)}\fun(\Delta^1,\icatd)_{/D\rightarrow D'}\ar[d] \ar[r] & \icat\times_{\fun(\Delta^1,\icatd)}\fun(\Delta^1,\icatd)_{/D'\rightarrow D'}\ar[d] \\ \icat\times_{\icatd}\icatd_{/D} \ar[r] & \icat\times_{\icatd}\icatd_{/D'}
\end{tikzcd}
\]
of right fibrations over $\icat$, where the lower pullbacks are taken along the composition $\icat\overset{F}{\longrightarrow}\fun(\Delta^1,\icatd)\overset{\ev_0}{\longrightarrow}\icatd$ which coincides with $f_0$. Since $f_0$ admits a right adjoint $g_0$, we have equivalences $\icat_{/g_0(D)}\simeq \icat\times_{\icatd}\icatd_{/D}$ and $\icat_{/g_0(D')}\simeq \icat\times_{\icatd}\icatd_{/D'}$ and we may identify the resulting functor $\icat_{/g_0(D)}\rightarrow \icat_{/g_0(D')}$ as the one induced by the map $g_0(D)\rightarrow g_0(D')$. The upper right \infcat fits into a diagram 
\[
\begin{tikzcd}
\icat\times_{\fun(\Delta^1,\icatd)}\fun(\Delta^1,\icatd)_{/D'\rightarrow D'} \ar[d] \ar[r] & \fun(\Delta^1,\icatd)_{/D'\rightarrow D'}\ar[d] \ar[r] & \icatd_{/D'} \ar[d]\\ \icat \ar[r,"F"] & \fun(\Delta^1,\icatd)\ar[r,"\ev_1"] & \icatd
\end{tikzcd}
\]
Both squares are homotopy pullbacks (as the diagonal functor $\icatd\rightarrow\fun(\Delta^1,\icatd)$ is right adjoint to $\ev_1$) so we have an equivalence $\icat\times_{\fun(\Delta^1,\icatd)}\fun(\Delta^1,\icatd)_{/D'\rightarrow D'}\simeq \icat_{/g_1(D')}$ of right fibrations over $\icat$. Unwinding the definitions, the induced map of right fibrations $\icat_{/g_1(D')}\rightarrow \icat_{/g_0(D')}$ corresponds to the mate $\beta(D')$. As the Yoneda embedding $j:\icat\hookrightarrow\mathsf{RFib}_{\icat}$ preserves all limits that exist in $\icat$, we deduce that, under our assumption on $\icat$, the \infcat $\icat\times_{\fun(\Delta^1,\icatd)}\fun(\Delta^1,\icatd)_{/D\rightarrow D'}$ admits a final object which is carried to $G(D\rightarrow D')$ by the projection to $\icat$. By construction, we have commuting diagrams
\[
\begin{tikzcd}
\icat_{/G(D\rightarrow D')} \ar[d]\ar[r] & \fun(\Delta^1,\icatd)_{/D\rightarrow D'}\ar[d,"\ev_0"]\ar[d] \\
\icat_{/g_0(D)} \ar[r,"f_0"] &\icatd_{/D},
\end{tikzcd} \quad\quad\quad
\begin{tikzcd}
\icat_{/G(D\rightarrow D')} \ar[d]\ar[r] & \fun(\Delta^1,\icatd)_{/D\rightarrow D'}\ar[d,"\ev_1"]\ar[d] \\
\icat_{/g_1(D')} \ar[r,"f_1"] &\icatd_{/D'}
\end{tikzcd}
\]
of \infcats which imply the assertion regarding the counit of $F\adj G$.
\end{proof}
\begin{rmk}
Let $k\geq 0$ and apply Proposition \ref{prop:mate} to the natural transformation $\alpha_{k}:\_\times_X(X/S)^{(k)}\rightarrow \_\times_X(X/S)^{(k+1)}$ of functors $\dstack_{/X}\rightarrow \dstack_{/X}$, then we deduce that the functor $F:\dstack_{/X}\rightarrow \fun(\Delta^1,\dstack_{/X})$ determined by $\alpha_k$ admits a right adjoint $G$ which carries the map $Y\rightarrow X$ to $J^k_{X/S}(Y)\times_{J^k_{X/S}(X)}J^{k+1}_{X/S}(X)\simeq J^k_{X/S}(Y)$ (as $X$ is final in $\dstack_{/X}$ and the jet functors are right adjoints) and the map $Y\rightarrow Y$ to $J^k_{X/S}(Y)\times_{J^k_{X/S}(Y)}J^{k+1}_{X/S}(Y)\simeq J^{k+1}_{X/S}(Y)$. It follows that the commuting diagram
\[
\begin{tikzcd}
Y\ar[d] \ar[r,equal] & Y\ar[d] \\
Y\ar[r] & X
\end{tikzcd}
\]
determines a map $\beta_k(Y):J^{k+1}_{X/S}(Y)\rightarrow J^k_{X/S}(Y)$, the mate of the transformation $\alpha_k$. Applying the counit of the adjunction $F\adj G$ yields a commuting diagram 
\[
\begin{tikzcd}
J^{k+1}_{X/S}(Y)\times_X(X/S)^{(k)} \ar[d]\ar[r] & J^{k}_{X/S}(Y)\times_X(X/S)^{(k)}\ar[d] \\
J^{k+1}_{X/S}(Y)\times_X(X/S)^{(k+1)} \ar[r] & Y.
\end{tikzcd}
\]
Since the map $p_{k+1,k}:J^{k+1}_{X/S}(Y)\rightarrow J^k_{X/S}(Y)$ defined above is characterized up to a contractible space of choices by the existence of this diagram, the maps $p_{k+1,k}$ and $\beta_k$ are equivalent. From the induced adjunction 
\[ \dstack_{/J^k_{X/S}(Y)} \longrightarrow \fun(\Delta^1,\dstack_{/X})_{/X\rightarrow Y}  \]
we deduce an equivalence of spaces $\Hom_{\dstack_{/J^k_{X/S}}}(Z,J_{X/S}^{k+1}(Y))\simeq\Hom_{\dstack_{Z\times_X(X/S)^{(k)}//X}}(Z\times_X(X/S)^{(k+1)},Y)$. 
\end{rmk}
The last preliminary we require before we can prove Proposition \ref{prop:jetbundleproperties} is an observation concerning vector bundles and modules of derived $\cinfty$-rings. Let $A$ be a derived $\cinfty$-ring and consider the functor
\[ (\_)^{\rmalg}:\sring_{A/}\longrightarrow (\scring_{\R})_{A^{\rmalg}/}  \]
induced by $(\_)^{\rmalg}$. Since $(\_)^{\rmalg}:\sring\rightarrow \scring_{\R}$ has a left adjoint $F^{\cinfty}$ taking free derived $\cinfty$-rings, the forgetful functor on derived $\cinfty$-rings under $A$ also admits a left adjoint. Denote this left adjoint by $F_A^{\cinfty}$. According to (the proof of) \cite[Proposition 5.2.5.1]{HTT}, for $B$ a derived commutative $\R$-algebra under $A^{\rmalg}$, the unit map $B\rightarrow F^{\cinfty}_A(B)^{\rmalg}$ is the right vertical composition in the diagram 
\[
\begin{tikzcd}
A^{\rmalg} \ar[d] \ar[r] & B\ar[d] \\
F^{\cinfty}(A^{\rmalg})^{\rmalg} \ar[r] \ar[d,"\epsilon^{\rmalg}"]& F^{\cinfty}(B)^{\rmalg} \ar[d] \\
A^{\rmalg}\ar[r] &  F^{\cinfty}_A(B)^{\rmalg}
\end{tikzcd}
\]
where the upper vertical maps are unit transformations for the adjunction $(F^{\infty}\adj (\_)^{\rmalg})$ and the lower square is the image under $(\_)^{\rmalg}$ of a pushout diagram in $\sring$ where $\epsilon$ is the counit map at $A$. We have a composition of adjunctions
\[ \begin{tikzcd} \Mod^{\geq0}_A \ar[r,shift left,"\sym^{\bullet}_A"] &[2em] (\mathsf{dCAlg}_{\R})_{A^{\rmalg}/} \ar[l,shift left]\ar[r,shift left,"F^{\cinfty}_A"] &[2em] \sring_{A/}.\ar[l,shift left,"(\_)^{\rmalg}"] \end{tikzcd}  \]
Let $M$ be a manifold, then the category of vector bundles and vector bundle maps among them is equivalent to the full subcategory of $\Mod_{\cinfty(M)}$ spanned by finitely presented and projective $\cinfty(M)$-modules. This equivalence is implemented by the composition of left adjoints above: if $E\rightarrow M$ is a vector bundle, then the affine derived $\cinfty$-scheme of finite presentation $(E,\cinfty_E)$ is obtained by applying $\spec\,$ to the (derived) $\cinfty$-ring $F^{\cinfty}_{\cinfty(M)}\circ \sym^{\bullet}_{\cinfty(M)}(\Gamma(E;N)^{\vee})$ (note we take the $\cinfty(M)$-linear \emph{dual} of the module of global sections). We will employ this adjunction in the proof below.
\begin{proof}[Proof of Proposition \ref{prop:jetbundleproperties}]
Since the functor $J_{X/S}(\_)$ is a composition of the functors $\_\times_X(X/S)^{(k)}$ and $\mathsf{Res}_{(X/S)^{(k)}/X}(\_)$ which are both right adjoints, $(1)$ is immediate. Note that $(2)$ is an immediate consequence of the construction of the $k$'th order infinitesimal neighbourhood and Remark \ref{rmk:weilrestrictionpullback}. By descent, it suffices to show that $(3)$ holds for $X\rightarrow S$ a submersion of manifolds and $Y$ a representable stack. We will show that for any map $\spec\,A\rightarrow X$, the diagram 
\[
\begin{tikzcd}
\Hom_{\dstack_{/X}}(\spec\,A,J^k_{X/S}(U))\ar[d]\ar[r] & \Hom_{\dstack_{/X}}(\spec\,A,J^k_{X/S}(Y))\ar[d] \\
\Hom_{\dstack_{/X}}(\spec\,A,U) \ar[r] & \Hom_{\dstack_{/X}}(\spec\,A,Y) 
\end{tikzcd}
\]
of spaces is a pullback. Apply Proposition \ref{prop:mate} to the natural transformation $\mathrm{id}\rightarrow \_\times_X(X/S)^{(k)}$ induced by the map $X\rightarrow (X/S)^{(k)}$, then suffices to show that the diagram 
\[
\begin{tikzcd}
\Hom_{\dstack_{/X}}(\spec\,A\times_X(X/S)^{(k)},U)\ar[d]\ar[r] & \Hom_{\dstack_{/X}}(\spec\,A\times_X(X/S)^{(k)},Y)\ar[d] \\
\Hom_{\dstack_{/X}}(\spec\,A,U) \ar[r] & \Hom_{\dstack_{/X}}(\spec\,A,Y) 
\end{tikzcd}
\]
is a pullback, where the horizontal maps are induced by composition with $U\rightarrow Y$ and the vertical maps are induced by composition with $X\rightarrow (X/S)^{(k)}$. Both horizontal maps are inclusions onto the connected components spanned by the maps $\spec\,A\times_X(X/S)^{(k)}\rightarrow Y$ and $\spec\,A\rightarrow Y$ whose underlying map of topological spaces factors through $q_{\sring}(U)$ (as $U\rightarrow Y$ is represented by a $q_{\sring}$-Cartesian morphism in $\dsch$). The result thus follows from the observation that the map $X\rightarrow (X/S)^{(k)}$ induces a homeomorphism on the underlying topological spaces and that $q_{\sring}$ preserves limits. To prove the remaining assertions, we consider the composition of functors
\[  \sring_{\cinfty(X)/} \longrightarrow \sring_{\cinfty(X\times_SX)/I^{k+1}_{\Delta}/} \longrightarrow \sring_{\cinfty(X)/},\quad\quad A\longmapsto A\oinfty_{\cinfty(X)}\cinfty(X\times_SX)/I^{k+1}_{\Delta},   \]
where the first functor base changes along $\pi_2^*$ and the second functor restricts along $\pi_1^*$. Suppose for a geometric derived $\cinfty$-ring $B$ over $\cinfty(X)$, we are given a geometric derived $\cinfty$-ring $A$ and a unit transformation $ B\rightarrow A\oinfty_{\cinfty(X)}\cinfty(X\times_SX)/I^{k+1}_{\Delta}$ over $\cinfty(X)$, then it is formal that the corresponding map 
\[  \spec\,A\times_X(X/S)^{(k)} \longrightarrow \spec\,B  \]
of affine derived $\cinfty$-schemes $X$ exhibits the object $\spec\,A$ as the relative $k$'th order jet space of the map $\spec\,B\rightarrow X$. It follows from Lemma \ref{lem:infnbhdtensor} that we have a commuting diagram 
\[
\begin{tikzcd}
\sring_{\cinfty(X)/} \ar[d,"(\_)^{\rmalg}"]\ar[r,"\_\oinfty_{\cinfty(X)}\cinfty(X\times_SX)/{I^{k+1}_{\Delta}}"]&[8em] \sring_{\cinfty(X)/} \ar[d,"(\_)^{\rmalg}"]\\
(\scring_{\R})_{\cinfty(X)/} \ar[r,"\_\otimes_{\cinfty(X)}\cinfty(X\times_SX)/{I^{k+1}_{\Delta}}"] \ar[d]&[8em] (\scring_{\R})_{\cinfty(X)/}\ar[d] \\
\Mod^{\geq 0}_{\cinfty(X)} \ar[r,"\_\otimes_{\cinfty(X)}\cinfty(X\times_SX)/{I^{k+1}_{\Delta}}"] &[8em] \Mod^{\geq 0}_{\cinfty(X)} 
\end{tikzcd}
\]
of \infcatst. We will first show that the lower horizontal map admits a left adjoint $L$. Since the \infcat $\sring_{\cinfty(X)/}$ is generated under colimits by the essential image of the functor $F^{\cinfty}_{\cinfty(X)}\circ \sym^{\bullet}_{\cinfty(X)}$ left adjoint to the right vertical composition, this will demonstrate that the upper horizontal functor admits a left adjoint characterized by the property that for any connective $\cinfty(X)$-module $M$, the derived $\cinfty$-ring $F^{\cinfty}_{\cinfty(X)}\sym^{\bullet}_{\cinfty(X)}(M)$ is carried to the derived $\cinfty$-ring $F^{\cinfty}_{\cinfty(X)} \sym^{\bullet}_{\cinfty(X)}L(M)$. The lower horizontal functor in the diagram above may be described as follows: the two maps $\{\pi_i^*:\cinfty(X)\rightarrow \cinfty(X\times_SX)/{I^{k+1}_{\Delta}}\}_{i=1,2}$ furnish on $\cinfty(X\times_SX)/{I^{k+1}_{\Delta}}$ the structure of a $\cinfty(X)$-$\cinfty(X)$-bimodule and the lower horizontal functor above is the functor
\[\begin{tikzcd} \Mod^{\geq0}_{\cinfty(X)} \ar[r,"\mathrm{id}\times \{\cinfty(X\times_SX)/{I^{k+1}_{\Delta}}\}"] &[8em]  \Mod^{\geq0}_{\cinfty(X)} \times {}_{\cinfty(X)}\mathsf{BMod}^{\geq0}_{\cinfty(X)} \ar[r,"\_\otimes_{\cinfty(X)}\_"] &[3em] \Mod^{\geq0}_{\cinfty(X)} \end{tikzcd}\] 
where the second functor is the relative tensor product. Since $\cinfty(X\times_SX)/I^{k+1}_{\Delta}$ is a finitely generated projective $\cinfty(X)$-module by Lemma \ref{lem:jebundleseq}, it is dualizable as a connective $\cinfty(X)$-module, that is, the functor $\_\otimes_{\cinfty(X)} \cinfty(X\times_SX)/{I^{k+1}_{\Delta}}$ admits a left adjoint given by the functor 
\[\begin{tikzcd} \Mod^{\geq 0}_{\cinfty(X)} \ar[r,"\mathrm{id}\times \{(\cinfty(X\times_SX)/{I^{k+1}_{\Delta}})^{\vee}\}"] &[8em]  \Mod^{\geq 0}_{\cinfty(X)} \times {}_{\cinfty(X)}\mathsf{BMod}^{\geq 0}_{\cinfty(X)} \ar[r,"\_\otimes_{\cinfty(X)}\_"] &[3em] \Mod^{\geq 0}_{\cinfty(X)} \end{tikzcd}\] 
where we view the dual $(\cinfty(X\times_SX)/{I^{k+1}_{\Delta}})^{\vee}$ as a $\cinfty(X)$-$\cinfty(X)$-bimodule where the actions have switched sides (\cite[Proposition 4.6.2.1]{HA}). We deduce that in case $Y\rightarrow X$ is a vector bundle (so that we have an equivalence $Y\simeq \spec\,F^{\cinfty}_{\cinfty(X)}\sym^{\bullet}_{\cinfty(X)}(\Gamma(Y)^{\vee})$ over $X$), then the jet space $J^k_{X/S}(Y)$ may be identified with the vector bundle over $X$ whose global sections are given the module $\Gamma(Y)\otimes_{\cinfty(X)}\cinfty(X\times_SX)/I^{k+1}_{\Delta}$. It follows from Proposition \ref{prop:mate} that the map $J^k_{X/S}(Y)\rightarrow J^{k-1}_{X/S}(Y)$ is induced by the mate of the transformation 
\[ \_\otimes_{\cinfty(X)}\cinfty(X\times_SX)/{I^{k+1}_{\Delta}} \longrightarrow \_\otimes_{\cinfty(X)}\cinfty(X\times_SX)/{I^{k}_{\Delta}},  \]
which is the transformation  
\[ \_\otimes_{\cinfty(X)}(\cinfty(X\times_SX)/{I^{k}_{\Delta}})^{\vee} \longrightarrow \_\otimes_{\cinfty(X)}(\cinfty(X\times_SX)/{I^{k+1}_{\Delta}})^{\vee}  \]
that takes the tensor product with the dual map $(\cinfty(X\times_SX)/{I^{k}_{\Delta}})^{\vee}\rightarrow (\cinfty(X\times_SX)/{I^{k+1}_{\Delta}})^{\vee} $. From Lemma \ref{lem:jebundleseq} we deduce a fibre sequence 
\[   \sym^k(T^{\vee}X/S)\otimes Y \longrightarrow J^k_{X/S}(Y) \longrightarrow J^{k-1}_{X/S}(Y)\]
of vector bundles. We proceed with the proof of $(5)$. By descent, we may suppose that $S$ is a manifold and $X\rightarrow S$ and $Y\rightarrow X$ submersions of manifolds. Observing for each open set $U\subset X$ an equivalence $(X/S)^{(k)}\times_XU\simeq (U/S)^{(k)}$ of affine derived $\cinfty$-schemes, it follows from Remark \ref{rmk:weilrestrictionpullback} that we may choose an open cover for $X$ and replace $X$ by an element of that cover. It follows from $(3)$ that we may also choose an open cover of $Y$ and replace $Y$ by an element of that cover. Thus we may suppose that $Y\rightarrow X$ is a projection $\R^n\times Z\rightarrow Z$, in which case the assertion follows from $(4)$.
\end{proof}
\begin{rmk}
Applying $(2)$ of Proposition \ref{prop:jetbundleproperties} to an arbitrary map $Y\rightarrow X$ of derived $\cinfty$-stacks with target a manifold and an arbitrary smooth stack $S$ shows that the canonical morphism
\[ J^k_{X\times S/S}(Y\times S) \longrightarrow J^k_X(Y)\times S  \]
of derived $\cinfty$-stacks over $X\times S$ is an equivalence.
\end{rmk}
\begin{rmk}
Suppose that $X\rightarrow S$ a submersion of ordinary manifolds and that $Y\rightarrow  X$ is a submersion. It can be shown that for $k\geq 0$ the manifold $J^{k+1}_{X/S}(Y)$ admits a canonical structure of a \emph{torsor} for the vector bundle $p^*_{k,0}TY/X\otimes p^*\sym^{k+1} (T^{\vee}X/S)$ on $J^k(X/S)(Y)$, that is, $J^{k+1}_{X/S}(Y)$ is an affine bundle modelled on the vector bundle $p^*_{k,0}TY/X\otimes p^*\sym^{k+1} (T^{\vee}X/S)$. This is a standard fact about jet spaces; we will indicate how it follows from the geometry of infinitesimal neighbourhoods (we do not wish to develop all technology necessary for a full proof here; see \cite{cinftyII}). As a square-zero extension, the map $\cinfty(X\times_SX)/{I_{\Delta}^{k+2}}\rightarrow \cinfty(X\times_SX)/{I_{\Delta}^{k+1}}$ is a torsor for the trivial square zero extension $\cinfty(X\times_SX)/{I_{\Delta}^{k+1}}\oplus \sym^{k+1}(\Gamma(T^{\vee}X/S))\rightarrow \cinfty(X\times_SX)/{I_{\Delta}^{k+1}}$ in the \infcat $\sring_{\cinfty(X)/}$. Let $(X/S)^{(k)}\oplus \sym^{k+1}(T^{\vee}X/S)$ denote the spectrum of $\cinfty(X\times_SX)/I_{\Delta}^{k+1}\oplus \sym^{k+1}(\Gamma(T^{\vee}X/S))$, then the map 
\[(X/S)^{(k)}\longrightarrow (X/S)^{(k+1)}\]
has a canonical structure of a \emph{cotorsor} for the map 
\[(X/S)^{(k)}\longrightarrow (X/S)^{(k)}\oplus \sym^{k+1}(T^{\vee}X/S)\]
in the \infcat $\dstack_{/X}$. It then follows that the transformation $\alpha_{k}:\_\times_X(X/S)^{(k)}\rightarrow \_\times_X(X/S)^{(k+1)}$ viewed as a functor \[\dstack_{/J^k_{X/S}(Y)}\longrightarrow\fun(\Delta^1,\dstack_{/X})_{/Y\rightarrow X}\] admits the structure of a cotorsor for the transformation $\_\times_X(X/S)^{(k)}\rightarrow \_\times_X (X/S)^{(k)}\oplus \sym^{k+1}(T^{\vee}X/S)$. For any two \infcats $\icat,\icatd$ passing to adjoints extends to yield a canonical equivalence $\fun^{\mathrm{L}}(\icat,\icatd)\simeq \fun^{\mathrm{R}}(\icatd,\icat)^{op}$, so we conclude that the jet space $J^{k+1}_{X/S}(Y)$ admits the structure of a torsor for the stack obtained by applying the right adjoint to the functor $\_\times_X (X/S)^{(k)}\oplus \sym^{k+1}(T^{\vee}X/S)$ on the identity map $Y\rightarrow Y$. Now we use the following fact.
\begin{enumerate}
\item[$(*)$] Suppose that $\cinfty(X\times_SX)/I^{k+1}\oplus M\rightarrow \cinfty(X\times_SX)/I^{k+1}$ is a trivial square-zero extension by an (ordinary) $\cinfty(X\times_SX)/I^{k+1}$-module $M$, then the left adjoint to the functor
\[  \sring_{/\cinfty(X)}\longrightarrow \fun(\Delta^1,\sring_{\cinfty(X)/}) \]
taking the $\cinfty$-tensor product with the map $\cinfty(X\times_SX)/I^{k+1}\oplus M\rightarrow \cinfty(X\times_SX)/I^{k+1}$ carries the identity map $\cinfty(Y)\rightarrow \cinfty(Y)$ to the free $\cinfty$-ring under $\cinfty(J^k_{X/S}(Y))$ on the dual of the $\cinfty(J^k_{X/S}(Y))$-module $\Gamma(p_{k,0}^*TY/X)\otimes p^*M$.
\end{enumerate} 
We do not wish to develop the theory of square-zero extensions of derived $\cinfty$-rings in this work so we do not prove this.
\end{rmk}
\begin{rmk}
Suppose that $X\rightarrow S$ a submersion of ordinary manifolds and that $Y\rightarrow  X$ is a submersion. It can be shown that the manifold $J^1_{X/S}(Y)$ fits into a pullback diagram 
\[
\begin{tikzcd}
J^1_{X/S}(Y)\ar[d]\ar[r] & \mathsf{Hom}(TY/X,\pi^*_{1}TX/S)\ar[d] \\
Y \ar[r,"\mathrm{id}"] & \mathsf{End}(\pi^*_{1}TX/S)
\end{tikzcd}
\]
of submersions over $Y$, where $\mathsf{Hom}(\_,\_)$ and $\mathsf{End}(\_)$ denote the bundle of homomorphisms between two vector bundles and the bundle of endomorphisms of a vector bundle respectively, and the lower horizontal map is the identity section. In other words, the first jet space may be identified with the bundle over $Y$ whose sections are $S$-parametrized connections on the map $Y\rightarrow X$. This is a standard fact about the first jet space and it can be deduced from a variant of the assertion $(*)$ of the previous remark and the fact that the map $\cinfty(X\times_SX)/I^2_{\Delta}\rightarrow\cinfty(X)$ is a trivial square-zero extension in the category of $\R$-algebras but not in the category of $\cinfty(X)$-algebras (see Remark \ref{rmk:splitjetspace}). Alternatively, since we have already shown that $J^1_{X/S}(Y)$ is a manifold by $(4)$ of Proposition \ref{prop:jetbundleproperties}, one only has to verify that the pullback above satisfies the universal property of the jet space in the ordinary category of $\cinfty$-rings, which is not difficult.
\end{rmk}
For $Y\rightarrow X$ a map of derived $\cinfty$-stacks, we let $j^k$ denote the top horizontal map in the commuting diagram
\[  \begin{tikzcd}
    \mathsf{Res}_{X/S}(Y) \ar[rr,"j^k"] \ar[dr]&& \mathsf{Res}_{X/S}(J^k_{X/S}(Y))\ar[dl]\\
    & S
\end{tikzcd}  \]
defined by the commuting diagram 
\[  \begin{tikzcd}
    \mathsf{Res}_{X/S}(Y)\times_S (X/S)^{(k)} \ar[rr] \ar[dr]&& Y\ar[dl]\\
    & X
\end{tikzcd}  \]
wherein the top horizontal map is the composition 
\[  \begin{tikzcd} \mathsf{Res}_{X/S}(Y)\times_S (X/S)^{(k)} \ar[r,"\mathrm{id}\times \pi_1"] &[1.5em]\mathsf{Res}_{X/S}(Y)\times_S X\ar[r,"\ev"]& Y.\end{tikzcd}\]
The map $j^k$ is the \emph{$k$'th jet prolongation map}. 
\subsection{The elliptic representability theorem}
In this subsection we state and prove the main result of this paper.
\begin{defn}
Let $S$ be a smooth stack and let $Y_1\rightarrow M$ and $Y_2\rightarrow M$ be $S$-families of submersions. An \emph{$S$-family of $k$'th order nonlinear partial differential equations (PDEs) from $Y_1$ to $Y_2$} is a map $J^k_{M/S}(Y_1)\rightarrow Y_2$ over $M$.  
\end{defn}
Recall that a nonlinear PDE $\mathcal{P}:J^k_M(Y_1)\rightarrow Y_2$ between submersions over a manifold $M$ is \emph{elliptic} if for each $\sigma\in\Gamma(Y_1)$, the linear differential operator
\[ T\mathcal{P}_{\sigma}:J_M^k(\sigma^*TY_1/M)\cong j^k\sigma^*TJ^k_M(Y_1)/M\longrightarrow \mathcal{P}(j^k\sigma)^*TY_2/M  \]
induced by the differential
\[ T\mathcal{P}|_{TY_1/M}:TJ^k_{M}(Y_1)/M\longrightarrow TY_2/M \]
is elliptic. 
\begin{defn}
Let $S$ be a smooth stack and let $M\rightarrow S$ be an $S$-family of manifolds. A \emph{differential moduli problem over $M$} is a quintuple $(X\rightarrow M,Y_1\rightarrow M,Y_2\rightarrow M,\mathcal{P}_1,\mathcal{P}_2)$ where
\begin{enumerate}[$(1)$]
    \item The maps $X\rightarrow M$, $Y_1\rightarrow M$ and $Y_2\rightarrow M$ are $S$-families of submersions.
    \item $\mathcal{P}_1$ and $\mathcal{P}_2$ are $S$-families of $k$'th order nonlinear partial differential equations $J^k_{M/S}(Y_1)\rightarrow X$ and $J^k_{M/S}(Y_2)\rightarrow X$.
\end{enumerate}
We let $P_1$ and $P_2$ denote the compositions \[P_1:\mathsf{Res}_{M/S}(Y_1)\overset{j^k}{\longrightarrow}\mathsf{Res}_{M/S}(J^k_{M/S}(Y_1))\overset{\mathcal{P}_1}{\longrightarrow }\mathsf{Res}_{M/S}(X)\]
and 
\[P_2:\mathsf{Res}_{M/S}(Y_2)\overset{j^k}{\longrightarrow}\mathsf{Res}_{M/S}(J^k_{M/S}(Y_2))\overset{\mathcal{P}_2}{\longrightarrow }\mathsf{Res}_{M/S}(X).\]
The differential moduli problem is \emph{elliptic} if the data above satisfies the following condition.
\begin{enumerate}
    \item[$(Ell)$] For each point $s:*\rightarrow S$ and each pair of sections $(\sigma_1,\sigma_2):*\rightarrow \map_{M_s}(M_s,Y_{1s})\times \map_{M_s}(M_s,Y_{2s})$ for which $P_1(\sigma_1)$ and $P_2(\sigma_2)$ determine the same section $\sigma_X:M_s\rightarrow X_s$, the linearized PDE
    \[   J^k_M(\sigma_1^*TY_{1s}/M_s)\times_{M_s}J^k_M(\sigma_2^*TY_{2s}/M_s)\longrightarrow \sigma_X^*TX_s/M_s\times_{M_s} \sigma_X^*TX_s/M_s\longrightarrow \sigma_X^*TX_s/M_s   \]
    at $s$ and the pair $(\sigma_1,\sigma_2)$, where the first map is induced by $\mathcal{P}_1$ and $\mathcal{P}_2$ and the second is the map of vector bundles $(v,w)\mapsto v-w$, is a linear elliptic PDE.
\end{enumerate}
\end{defn}
Let $M\rightarrow S$ be an $S$-family of manifolds over a smooth stack $S$ and let $\icate=(X,Y_1,Y_2,\mathcal{P}_1,\mathcal{P}_2)$ be an elliptic moduli problem over $M$, then we will write $\mathsf{Sol}(\icate)$ for the cone in the pullback diagram 
\[
\begin{tikzcd}
\mathsf{Sol}(\icate)\ar[d]\ar[r] & \mathsf{Res}_{M/S}(Y_1)\ar[d,"P_1"] \\
\mathsf{Res}_{M/S}(Y_2) \ar[r,"P_2"] & \mathsf{Res}_{M/S}(X) 
\end{tikzcd}
\]
among derived $\cinfty$-stacks over $S$.
\begin{thm}[Relative elliptic representability]\label{thm:relellipticrepresentability}
Let $S$ be a smooth stack and $M\rightarrow S$ a proper $S$-family of manifolds. Let $\icate=(X,Y_1,Y_2,\mathcal{P}_1,\mathcal{P}_2)$ be an $S$-family of elliptic moduli problems over $M$. Then the solution stack $\mathsf{Sol}(\icate)\rightarrow S$ is representable by quasi-smooth derived $\cinfty$-schemes locally of finite presentation.
\end{thm}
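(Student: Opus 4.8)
The plan is to follow the outline given in the introduction, reducing step by step to a situation where the classical implicit function theorem for Banach manifolds applies, and then using the topos-theoretic tools of Section 2.2 to transport the conclusion from smooth stacks to derived $\cinfty$-stacks. First I would invoke descent: the notion of an $S$-family of elliptic moduli problems is stable under base change along maps $S'\to S$ (Proposition \ref{prop:jetbundleproperties}(2) for the jet-space data, Remark \ref{rmk:weilrestrictionpullback} for the Weil restrictions), and since the solution stack is defined as a pullback it too is stable under base change; moreover, by Proposition \ref{prop:criterion} together with Lemma \ref{lem:detectetiononspaces}, being representable by a quasi-smooth derived $\cinfty$-scheme locally of finite presentation is a local property of morphisms. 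Hence it suffices to prove the theorem when $S$ is an ordinary manifold, which by Ehresmann (Proposition just before Section 2.2) we may further take to make $M\to S$ a locally trivial family; shrinking $S$ we may assume $M\simeq N\times S\to S$ with $N$ a compact manifold.

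\textbf{Local reduction via tubular neighbourhoods.} Next I would use the results of Section 3.2 on the local structure of Weil restrictions. By Proposition \ref{prop:weilressmooth}, each stack $\mathsf{Res}_{M/S}(Y_i)$ and $\mathsf{Res}_{M/S}(X)$ is smooth, and by the last Proposition of Section 3.2 (combining Lemmas \ref{lem:localstructureweil}, \ref{lem:weilreslocal2} and Proposition \ref{prop:etaletarget}) it is covered by open substacks of the form $S'\times\map_N(N,\sigma^*TY_{is}/N)$, i.e. products of $S'$ with smooth mapping stacks of sections of finite-rank vector bundles over the compact manifold $N$, which by the Exponential Law are representable by nuclear Fr\'echet spaces $\Gamma(F_i;N)$. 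Working on such a chart around a solution $(s,\sigma_1,\sigma_2)$, the map $P:=P_1 - P_2$ (more precisely the pair $(P_1,P_2)$ composed with the difference map, matching condition $(Ell)$) becomes, after these identifications, an $S'$-parametrized nonlinear elliptic operator between sections of vector bundles over $N$. Choosing a linear section $\iota$ of $\Gamma(E;N)\to\coker T_{(s,\sigma)}P$ (finite-dimensional by ellipticity), one forms the stabilized problem $P+\iota$; as explained in the introduction, $\mathsf{Sol}(\icate)$ is locally the pullback $Z\times_{S'\times\coker T_{(s,\sigma)}P}S'$, so it suffices to show $Z$ — the zero locus of $P+\iota$ against the zero section — is a finite-dimensional manifold.

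\textbf{The analytic core.} The map $P+\iota$ is a Fr\'echet-space map, so one cannot apply the Banach implicit function theorem directly (problem $(P1)$). Here I would run the standard elliptic package: $P+\iota$ is the inverse limit over $l\gg 1$ of completions $P_l+\iota\colon S'\times H^{k+l}(F;N)\times\coker T_{(s,\sigma)}P\to S'\times H^l(E;N)$ to Sobolev scales; at each finite stage the differential at $(s,\sigma)$ is surjective with finite-dimensional kernel, so by the Banach implicit function theorem (\cite{Lang}) the pullback $Z_l$ against the zero section is a finite-dimensional manifold submersing onto $S'$, and by elliptic bootstrapping (\cite{taylorpseudopde}, Theorem 1.2.D) the tower $\{Z_l\}$ stabilizes. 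This settles problem $(P1)$ but only shows the pullback $Z_{\mathsf{Sm}}$ \emph{taken in smooth stacks} is a manifold (problem $(P2)$). To resolve $(P2)$ I would verify that the map $P+\iota$, as a map of smooth stacks, has the property that for \emph{any} manifold $T$ and any map $T\to S'\times H^l(E;N)$ the pullback $T\times_{(S'\times H^l(E;N))}(S'\times H^{k+l}(F;N)\times\coker T_{(s,\sigma)}P)\to T$ is (representably by) a submersion of smooth stacks — the same bootstrapping argument applies verbatim with an arbitrary map in place of the zero section. By Proposition \ref{prop:criterion}(3) together with the Corollary following it, such a map is submersive \emph{as a map of derived $\cinfty$-stacks}, so pullbacks along it are preserved by $\iota_{\mathsf{Sm}!}:\smst\hookrightarrow\dstack$ (Proposition \ref{prop:stackysubmersive}(3)); consequently $Z$ computed in $\dstack$ agrees with $Z_{\mathsf{Sm}}=Z_l$ and is a finite-dimensional manifold. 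Then $\mathsf{Sol}(\icate)$ is, locally, the pullback of $Z$ along the zero section $S'\to S'\times\coker T_{(s,\sigma)}P$, i.e. the derived zero locus of a smooth function from a manifold into a finite-dimensional vector space — an affine quasi-smooth derived $\cinfty$-scheme of finite presentation. Since this holds on a cover, Proposition \ref{prop:representablebyscheme} shows $\mathsf{Sol}(\icate)$ is representable by a quasi-smooth derived $\cinfty$-scheme locally of finite presentation, and relative representability over $S$ follows from the descent reduction. The main obstacle is the bookkeeping in the local reduction step — tracking how the difference map, the jet prolongations, the stabilization $\iota$, and the tubular-neighbourhood identifications interact so that the $(Ell)$ hypothesis really produces a \emph{surjective} differential with finite-dimensional kernel on each chart — together with the verification that the submersivity criterion of Section 2.2 applies uniformly in the base $T$, not just against the zero section.
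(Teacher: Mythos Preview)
Your proposal is correct and follows essentially the same route as the paper: descent to a manifold base with trivial proper family, local reduction via the tubular-neighbourhood results of Section 3.2 to an $S$-parametrized PDE between vector bundles over a compact $N$, stabilization by a finite-dimensional cokernel, Sobolev completion plus Banach implicit function theorem and elliptic bootstrapping, and finally the key topos-theoretic step (the paper isolates this as claim $(*)$) that $P+\iota$ is submersive as a map of \emph{derived} stacks via Proposition \ref{prop:criterion}, so that Proposition \ref{prop:representablebyscheme} applies. The paper's write-up is more careful in two places you gloss over: it explicitly tracks the open loci $O\subset S\times Q$ and $U_{s,\sigma}\subset O\times\mathrm{coker}$ on which the linearization is Fredholm respectively surjective (using upper semicontinuity of the cokernel dimension), and in proving $(*)$ it tests against an arbitrary map $Z\to S\times\Gamma(E;N)$ into the Fr\'echet level (not into a fixed $H^l$), then uses that $j_{\mathsf{Con}}$ preserves limits to identify the smooth pullback with the limit of the Sobolev-level pullbacks, which is eventually constant by regularity.
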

We will use the following straightforward lemma.
\begin{lem}\label{lem:pullbackopeninclusion}
Let $\icat$ be an \infcat that admits finite limits. Let $P$ be a property of morphisms of $\icat$ that satisfies the following conditions.
\begin{enumerate}[$(a)$]
\item The property $P$ is stable under pullbacks.
\item The property $P$ is stable under products of morphisms: if $f$ and $g$ have the property $P$, then $f\times g$ has the property $P$.
\item In a commuting triangle
\[
\begin{tikzcd}
A\ar[rr,"f"]\ar[dr,"h"]&& B\ar[dl,"g"] \\
& C,
\end{tikzcd}
\]
should $g$ have the property $P$, then $f$ has the property $P$ if and only if $h$ has the property $P$.
\end{enumerate}
Suppose we are given a transformation $\alpha:\Lambda^2_2\times\Delta^1\rightarrow\icat$ from a diagram 
\[
\begin{tikzcd}
& X\ar[d] \\
Y\ar[r] & Z
\end{tikzcd}
\]
to a diagram 
\[
\begin{tikzcd}
& X'\ar[d] \\
Y'\ar[r] & Z'.
\end{tikzcd}
\]
If each of the maps $X\rightarrow X'$, $Y\rightarrow Y'$ and $Z\rightarrow Z'$ have the property $P$, then the induced map $X\times_ZY\rightarrow X'\times_{Z'}Y'$ has the property $P$.
\end{lem}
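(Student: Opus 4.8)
The plan is to factor the map $X\times_Z Y\rightarrow X'\times_{Z'}Y'$ as a composition of two maps, each of which visibly inherits the property $P$ from the hypotheses via the closure conditions $(a)$--$(c)$. Concretely, write the map as
\[
X\times_Z Y\longrightarrow X\times_{Z'}Y'\longrightarrow X'\times_{Z'}Y',
\]
where the first map is obtained by pushing out $Y\rightarrow Y'$ and $Z\rightarrow Z'$ (keeping $X$ fixed, using the composite $X\rightarrow Z\rightarrow Z'$), and the second map changes $X$ to $X'$ over the fixed cospan $Y'\rightarrow Z'\leftarrow X'$. For the second map, I would observe that it is a pullback: there is a Cartesian square with vertices $X\times_{Z'}Y'$, $X$, $X'\times_{Z'}Y'$, $X'$, whose bottom edge is $X\rightarrow X'$. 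Since $X\rightarrow X'$ has property $P$ by assumption, stability under pullbacks $(a)$ gives that $X\times_{Z'}Y'\rightarrow X'\times_{Z'}Y'$ has property $P$.

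For the first map $X\times_Z Y\rightarrow X\times_{Z'}Y'$, the cleanest route is to identify it as a pullback of $Y\times_{Z'}Z\rightarrow Y'$ along the map $X\rightarrow Z'$, or — more symmetrically — to exhibit $X\times_Z Y$ as $(X\times_{Z'}Y')\times_{Y'\times_{Z'}Z'}(Y\times_{Z'}Z)$-type fibre product and reduce to a pullback statement. Actually the most transparent factorization: $X\times_Z Y$ is the pullback of $Y\rightarrow Z\leftarrow X$, and $X\times_{Z'}Y'$ restricted along... Let me instead use condition $(c)$: in the triangle with $X\times_Z Y\rightarrow X\times_{Z'}Y'\rightarrow X$ (both maps being projections), the map $X\times_{Z'}Y'\rightarrow X$ is a base change of $Y'\rightarrow Z'$, and $X\times_Z Y\rightarrow X$ is a base change of $Y\rightarrow Z$; so it suffices to compare these over $X$. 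Over $X$, the map in question is the base change along $X\rightarrow Z$ of the comparison map $Y\rightarrow Y'\times_{Z'}Z$ (the map into the pullback induced by $Y\rightarrow Y'$, $Y\rightarrow Z$, and the commuting square). Thus by $(a)$ it is enough to show $Y\rightarrow Y'\times_{Z'}Z$ has property $P$. In the triangle
\[
\begin{tikzcd}
Y\ar[rr]\ar[dr] && Y'\times_{Z'}Z\ar[dl] \\
& Z,
\end{tikzcd}
\]
the right edge $Y'\times_{Z'}Z\rightarrow Z$ is a pullback of $Y'\rightarrow Z'$, hence has $P$; and composing $Y\rightarrow Y'\times_{Z'}Z\rightarrow Y'$ equals $Y\rightarrow Y'$, which has $P$. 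Hence by condition $(c)$ the map $Y\rightarrow Y'\times_{Z'}Z$ has property $P$, as needed. Note condition $(b)$ is in fact not required for this argument in this form, but it provides an alternative proof: $X\times_Z Y\rightarrow X'\times_{Z'}Y'$ is the base change along the diagonal of $Z'$ of the product map $X\times Y\rightarrow X'\times Y'$ within the appropriate slice, so $(a)$ and $(b)$ together suffice.

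The main obstacle is purely bookkeeping: making the factorizations and the identifications of various maps as base changes precise at the level of $\infty$-categorical diagrams (i.e.\ constructing the relevant commuting cubes and verifying the indicated squares are Cartesian), rather than any genuine difficulty. I would carry this out by invoking the standard pasting lemma for pullback squares (\cite[Lemma 4.4.2.1]{HTT}) repeatedly: each claimed ``is a pullback of'' statement follows by pasting the outer rectangle from two smaller Cartesian squares. Once the factorization $X\times_Z Y\rightarrow X\times_{Z'}Y'\rightarrow X'\times_{Z'}Y'$ is in place and both factors are shown to have property $P$ (the first by reduction to $Y\rightarrow Y'\times_{Z'}Z$ via $(a)$ and $(c)$, the second directly by $(a)$), composition-stability of $P$ — which follows formally from $(c)$ by taking $C$ to be a final object, or is immediate since a class stable under pullback and satisfying $(c)$ is closed under composition — finishes the proof.
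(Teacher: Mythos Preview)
Your overall strategy is sound and in fact cleaner than the paper's, but there is a genuine slip in the key step. In the triangle you display,
\[
\begin{tikzcd}
Y\ar[rr]\ar[dr] && Y'\times_{Z'}Z\ar[dl] \\
& Z,
\end{tikzcd}
\]
you claim that the right edge $Y'\times_{Z'}Z\rightarrow Z$ has $P$ because it is a pullback of $Y'\rightarrow Z'$. But $Y'\rightarrow Z'$ is \emph{not} among the maps assumed to have $P$; only $X\rightarrow X'$, $Y\rightarrow Y'$, $Z\rightarrow Z'$ are. Moreover, the composite you then invoke, $Y\rightarrow Y'\times_{Z'}Z\rightarrow Y'$, is not the composite in the triangle you drew. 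The fix is to replace the bottom vertex $Z$ by $Y'$: in the triangle
\[
\begin{tikzcd}
Y\ar[rr]\ar[dr] && Y'\times_{Z'}Z\ar[dl] \\
& Y',
\end{tikzcd}
\]
the right edge $Y'\times_{Z'}Z\rightarrow Y'$ is a pullback of $Z\rightarrow Z'$, which \emph{does} have $P$, and the composite is $Y\rightarrow Y'$, which also has $P$; now condition $(c)$ applies. (Also, composition-stability is immediate from $(c)$ as stated, not from ``taking $C$ final''.) With these corrections your argument goes through.

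Once repaired, your route is genuinely different from the paper's. The paper factors $X\times_ZY\rightarrow X'\times_{Z'}Y'$ through the auxiliary object $W=(X'\times_{Z'}Y')\times_{X'\times Y'}(X\times Y)$ sitting in a cube over $V=Z'\times_{Z'\times Z'}(Z\times Z)$; the map $W\rightarrow X'\times_{Z'}Y'$ is a pullback of $X\times Y\rightarrow X'\times Y'$ (so condition $(b)$ is used here), and then one reduces to showing $Z\rightarrow V$ has $P$ via $(c)$ applied to $Z\rightarrow V\rightarrow Z'$, where $V\rightarrow Z'$ is a pullback of $Z\times Z\rightarrow Z'\times Z'$ (using $(b)$ again). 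Your factorization through $X\times_{Z'}Y'$ avoids products entirely and shows, as you observe, that hypothesis $(b)$ is in fact superfluous. The paper's approach has the virtue of being more symmetric in $X$ and $Y$, but yours is more economical in assumptions.
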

\begin{proof}
Consider the commuting diagram 
\[
\begin{tikzcd}
&&& X'\times_{Z'}Y' \arrow[rr] \arrow[dd] & & X'\times Y'\arrow[dd] \\
X\times_YZ \ar[dd ]\ar[rr,"f"] \ar[urrr,"h",bend left] && W  \arrow[ur,"g"]\arrow[rr, crossing over]  & & X\times Y\arrow[ur]  \\
 &&& Z'  \arrow[rr] & & Z'\times Z'\\
Z \ar[rr,"\alpha"]\ar[urrr,"\gamma",bend left]&& V \arrow[rr]\ \arrow[ur,"\beta"]\arrow[from=uu,crossing over]& & Z\times Z\arrow[ur] \arrow[from=uu, crossing over]\\
\end{tikzcd}
\]
where the bottom face and the front face of the cube are defined to be pullbacks. Since the back face is a pullback, the top face is a pullback as well. By $(a)$, $(b)$ and the assumption that $X\rightarrow X'$ and $Y\rightarrow Y'$ have the property $P$, the map $g$ has the property $P$. By $(c)$, it suffices to show that the map $f$ has the property $P$. Because the front rectangle is a pullback, the front square on the left is also a pullback, so it suffices to show that the map $\alpha$ has the property $P$. The map $\gamma$ has the property $P$ by assumption, and since the bottom face is a pullback, the map $\beta$ has the property $P$ as well by $(b)$, so we conclude using $(c)$.
\end{proof}
The preceding lemma applies in particular to the class of open substack inclusions and the classes of \'{e}tale and stacky \'{e}tale maps of $\dstack$ (it also applies, for example, to the class of $n$-truncated maps in any \infcat that admits finite limits). Before we give the proof of Theorem \ref{thm:relellipticrepresentability}, we recall the following basic facts about $L^2$ Sobolev spaces on manifolds, which are all consequences of the Sobolev embedding and multiplication theorems.
\begin{lem}[Functorial Sobolev scale]\label{thm:sobolevscale}
Let $M$ be a compact manifold and let $\mathsf{Vb}(M)\subset\mathsf{Mfd}_{/M}$ be the full subcategory spanned by finite rank vector bundles (so that morphisms are not necessarily vector bundle maps). Then there is a functor 
\[ H^{\bullet}(\_;M) :\mathsf{Vb}(M) \longrightarrow 
\fun((\Z^{op}_{>\mathrm{dim}\,M/2})^{\lhd},\mathsf{FrMfd})\times_{\fun((\Z^{op}_{>\mathrm{dim}\,M/2}),\mathsf{FrMfd})}\fun((\Z^{op}_{>\mathrm{dim}\,M/2}),\mathsf{BanMfd}) \]
with the following properties.
\begin{enumerate}[$(1)$]
\item The functor $H^{\bullet}(\_;M)$ carries each vector bundle $E\rightarrow M$ to the tower
\[  \Gamma(E;N) \longrightarrow \ldots \longrightarrow H^{l+1}(E;N)\longrightarrow H^{l}(E;N)\longrightarrow \ldots, \]
where $H^{l}(E;N)$ for $l>\mathrm{dim}\,N/2$ is the Sobolev Hilbert space of $L^2$ sections of $E$ that are $l$-times weakly differentiable and the value at the cone point $\Gamma(E;N)$ is the Fr\'{e}chet space of smooth sections of $E$. Each map in this diagram is an injective compact linear operator with dense image. Moreover, the tower is a limit diagram in the category of Fr\'{e}chet manifolds.
\item The evaluation of $H^{\bullet}(\_;M)$ at the cone point is the functor 
\[  \mathsf{Vb}(M) \longrightarrow \mathsf{FrMfd},\quad\quad E\longmapsto \Gamma(E;N) \]
that carries a smooth map $f:E\rightarrow F$ over $M$ to the composition $f \circ \_$.
\item Let $C(\_;M):\mathsf{Vb}(M)\rightarrow\mathsf{FrMan}$ be the functor that carries a vector bundle to the Banach space of continuous sections equipped with the compact open topology (induced by any norm coming from a Riemannian metric). Regard this functor as a constant diagram $(\Z^{op}_{>\mathrm{dim}\,M/2})^{\lhd}\rightarrow\mathsf{FrMan}$, then the functor $H^{\bullet}$ admits a lift to 
\[  \fun((\Z^{op}_{>\mathrm{dim}\,M/2})^{\lhd},\mathsf{FrMfd})_{/C(\_;M)}\times_{\fun((\Z^{op}_{>\mathrm{dim}\,M/2}),\mathsf{FrMfd})
_{/C(\_;M)}}\fun((\Z^{op}_{>\mathrm{dim}\,M/2}),\mathsf{BanMfd})_{/C(\_;M)}    \]
such that evaluation at the cone point yields the obvious natural transformation $\Gamma(\_;M)\subset C(\_;M)$ of smooth sections embedded into continuous sections.
\item For $E\rightarrow F$ an open embedding over $M$, the diagram 
\[
\begin{tikzcd}
H^l(E;N) \ar[d] \ar[r] & C(E;N) \ar[d] \\
H^l(F;N) \ar[r] & C(F;N)
\end{tikzcd}
\]
provided by the lift of $(3)$ is a pullback for all $l>\mathrm{dim}\,N/2$. In particular, the map $H^l(E;N)\rightarrow H^l(F,N)$ is an open embedding of Banach manifolds (as $C(E;N)$ and $C(F;N)$ carry the compact open topology). 
\item Let $k\geq 0$ and let $H^{\bullet+k}(\_;M)$ be the restriction to $(\Z^{op}_{>\mathrm{dim}N/2})^{\lhd}$ of the composition of $H^{\bullet}(\_;M)$ with the shift functor 
\[(\Z^{op}_{>\mathrm{dim}N/2-k})^{\lhd}\longrightarrow (\Z^{op}_{>\mathrm{dim}N/2})^{\lhd},\quad\quad n\longmapsto n+k. \]
Then for each $k\geq 0$, there exists a natural transformation $j^k:H^{\bullet+k}(\_;M)\rightarrow H^{\bullet}(J^k_M(\_);M)$, the \emph{jet prolongation}. At the cone point, this natural transformation restricts to the jet prolongation transformation $j^k:\Gamma(\_;M)\rightarrow \Gamma(J^k_M(\_);M)$ on smooth sections.
\end{enumerate}
\end{lem}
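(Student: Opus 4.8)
The plan is to construct the functor $H^{\bullet}(\_;M)$ essentially by hand, since almost everything reduces to the classical theory of $L^2$ Sobolev spaces on a closed manifold once one is careful about functoriality. First I would set up the target category: for a compact manifold $N$ and a finite rank vector bundle $E\to N$, fix an auxiliary Riemannian metric on $N$ and a metric and connection on $E$ (the resulting Sobolev norms for different choices are equivalent, so the topological vector space $H^l(E;N)$ is independent of these choices, and a smooth map $f:E\to F$ over $N$ which is \emph{not} a vector bundle map can still be postcomposed on sections). The key analytic inputs are: (i) the Sobolev embedding theorem, which gives for $l>\dim N/2$ a bounded inclusion $H^l(E;N)\hookrightarrow C(E;N)$ (continuous sections) and more generally continuity of all $l$-fold weak derivatives into $C$ when $l$ is large; (ii) the Rellich--Kondrachov theorem, giving that $H^{l+1}(E;N)\to H^l(E;N)$ is compact with dense image; (iii) the Sobolev multiplication theorem $H^l\cdot H^l\subset H^l$ for $l>\dim N/2$, together with the $\Omega$-lemma (the fact that postcomposition by a smooth fibre-preserving map $f:E\to F$ is a smooth map of Banach manifolds $H^l(E;N)\to H^l(F;N)$ for $l>\dim N/2$, and is continuous into $C(E;N)\to C(F;N)$). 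These three facts are exactly the ``Sobolev embedding and multiplication theorems'' flagged in the statement, and each is standard (see e.g. Palais, or \cite{taylorpseudopde}).

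With these in hand I would proceed as follows. For $(1)$: define $H^{\bullet}(E;N)$ to be the tower $\{H^l(E;N)\}_{l>\dim N/2}$ with cone point $\Gamma(E;N)$; injectivity, density and compactness of the transition maps is (ii), and the fact that $\Gamma(E;N)=\varprojlim_l H^l(E;N)$ as Fréchet spaces is the classical characterization of smooth sections as those lying in every Sobolev space, with matching topologies. For functoriality — assigning to a smooth $f:E\to F$ over $N$ the induced tower map — I would invoke the $\Omega$-lemma at each finite level $l>\dim N/2$ and at the cone point; naturality in $E$ and compatibility with composition are then automatic because postcomposition is strictly functorial on the level of maps. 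For $(2)$: this is just the statement that the cone-point evaluation is $E\mapsto\Gamma(E;N)$ with $f\mapsto f\circ\_$, which is how the functor was defined. For $(3)$: the functor $C(\_;N)$ is again functorial via postcomposition and carries no derivatives, so the inclusions $H^l(E;N)\hookrightarrow C(E;N)$ assemble into a natural transformation by (i), compatible with $f\circ\_$ because postcomposition commutes with the inclusion of smooth/Sobolev into continuous sections; regarding $C(\_;N)$ as a constant diagram over $(\Z^{op}_{>\dim N/2})^{\lhd}$ then gives the desired lift to the slice category, with cone-point evaluation $\Gamma(\_;N)\subset C(\_;N)$.

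For $(4)$: given an open embedding $\iota:E\hookrightarrow F$ over $N$ (an open subset of the total space meeting each fibre in an open set), a continuous section of $F$ lands in $E$ iff its image lies in the open set $\iota(E)$; by compactness of $N$ this is an open condition on $C(F;N)$, so $C(E;N)\to C(F;N)$ is an open embedding of Banach spaces and $C(E;N)=C(F;N)\times_{?}$... more precisely the square with $C$'s on the bottom is a pullback of topological spaces, and pulling back along $H^l(F;N)\to C(F;N)$ one checks the square of Banach manifolds is a pullback because a section in $H^l(F;N)$ whose underlying continuous section lies in $C(E;N)$ automatically lies in $H^l(E;N)$ (weak differentiability is a local condition, unaffected by restricting the target to an open subbundle). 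This also shows $H^l(E;N)\to H^l(F;N)$ is an open embedding. For $(5)$: the $k$-th jet prolongation $j^k:\Gamma(E;N)\to\Gamma(J^k_N(E);N)$ is a continuous linear map that, for a section of Sobolev class $l+k$, produces a section of $J^k_N(E)$ of Sobolev class $l$ (one loses exactly $k$ derivatives), so it extends to bounded linear maps $H^{l+k}(E;N)\to H^l(J^k_N(E);N)$ for $l>\dim N/2$; naturality in $E$ follows from naturality of the algebraic jet prolongation (Proposition \ref{prop:jetbundleproperties}) and the fact that postcomposition commutes with taking jets of sections, and the shift in indices is encoded precisely by precomposing $H^{\bullet}$ with $n\mapsto n+k$ and restricting back to $(\Z^{op}_{>\dim N/2})^{\lhd}$.

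The main obstacle, and the only place requiring genuine care rather than bookkeeping, is $(3)$ and especially $(4)$ together with the smoothness claims in $(1)$: one must verify that postcomposition by a smooth \emph{nonlinear} fibre map is a smooth map of \emph{Banach} manifolds on the finite Sobolev levels (the $\Omega$-lemma), and that open embeddings of vector bundles induce honest open embeddings of Sobolev manifolds with the asserted pullback square — the subtlety being that $H^l(E;N)$ for $E$ an open subbundle is not a linear space, so one must work with it as an open submanifold of $H^l(F;N)$ and check that the analytic (weak-differentiability) structure is genuinely local on the target. All of this is classical, but it is the analytic heart of the lemma; the remaining items are formal consequences of functoriality of postcomposition and the results of Section 3.3 on jet spaces.
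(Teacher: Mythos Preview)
Your outline is correct and is essentially the classical construction one finds in Palais. Note however that the paper does not supply a proof of this lemma at all: immediately after the statement it simply writes ``The construction of the Sobolev Hilbert scale functor is done carefully by Palais \cite{Palais}'' and moves on. So there is nothing to compare against beyond your outline being a faithful summary of what the cited reference does; the analytic inputs you isolate (Sobolev embedding, Rellich--Kondrachov, the $\Omega$-lemma/Sobolev multiplication) are precisely the ones the paper flags in the sentence preceding the lemma.
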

The construction of the Sobolev Hilbert scale functor is done carefully by Palais \cite{Palais}. The last preliminary we require is the following (elementary) version of elliptic bootstrapping.
\begin{lem}[Regularizing property of nonlinear elliptic PDEs]
Let $F,E\rightarrow M$ be vector bundles over a compact manifold, let $\sigma\in \Gamma(F;M)$ be a section and let $V\subset J^k_M(F)$ be vector bundle neighbourhood of the image of the prolongation $j^k(\sigma)$. Let $\mathcal{P}:V\rightarrow E$ be a smooth map over $M$ and let 
\[Q:= \{\sigma\in  \Gamma(F;M);\,j^k(\sigma)(M)\subset V\}\subset \Gamma(F;M)
\] 
be the (open) subset of those sections $\sigma$ whose $k$'th prolongation carries $M$ into $V$, and define $Q_l\subset H^l(F;M)$ for $l>\mathrm{dim}\,M/2$ similarly using the jet prolongation map of Sobolev sections of Lemma \ref{thm:sobolevscale}. Let 
\[P:Q\longrightarrow \Gamma(V;M)\overset{\mathcal{P}}{\longrightarrow}\Gamma(E,M)\]
 be the \emph{partially defined} nonlinear PDE (acting on sections of $F$ that are close to $\sigma$ in the Whitney $C^k$-topology on $\Gamma(E;M)$), and define $P_l:Q_{l+k}\rightarrow H^l(E;M)$ for $l>\mathrm{dim}\,M/2$ similarly using the functoriality of the Sobolev space construction of Lemma \ref{thm:sobolevscale}. Suppose that for all $\sigma \in Q$, the linearization $T\mathcal{P}_{\sigma}:J^k_M(F)\rightarrow E$ of $\mathcal{P}$ at $\sigma$ is a linear elliptic operator. Then there exists an $l'>\mathrm{dim}\,M/2$ depending only on $\mathrm{dim}\,M$ and $k$ (thus independent of $\mathcal{P}$) such that for all $l>l'$ the following holds: let $\tau\in \Gamma(E;M)$ and suppose that $s\in Q_{l+k}$ satisfies $P_l(s)=\tau$. Then $s$ lies in the image of the map $Q \hookrightarrow Q_{l+k}$.  
\end{lem}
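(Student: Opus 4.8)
The statement is the standard elliptic bootstrapping (regularity) theorem for nonlinear elliptic PDEs, packaged so that the regularity threshold $l'$ depends only on $\dim M$ and $k$. The plan is to reduce the nonlinear statement to the linear one by differentiating along a path of solutions, and then to invoke the linear elliptic regularity estimate on $L^2$-Sobolev scales. First I would fix $s\in Q_{l+k}$ with $P_l(s)=\tau$ and $\tau$ smooth. The key observation is a bootstrap: if $s\in H^{m}(F;M)$ for some $m$ with $\dim M/2 < m \leq l+k$, I want to show $s\in H^{m+1}(F;M)$, and iterate. To do this, write the equation $\mathcal{P}(j^k s)=\tau$ and apply a first-order derivative (a vector field on $M$, working in local coordinates and trivializations via a partition of unity from Remark \ref{rmk:lindelofunity}). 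Differentiating $\mathcal{P}(j^k s)=\tau$ gives, by the chain rule, $T\mathcal{P}_s(j^k(\nabla s)) = \nabla\tau - (\text{terms involving} \leq k\text{-jets of } s \text{ and derivatives of the coefficients of }\mathcal{P})$, where $T\mathcal{P}_s$ is the linearization, which is elliptic by hypothesis for all $\sigma\in Q$ (hence in particular for $s$, after checking the coefficients of $T\mathcal{P}_s$ are of regularity dictated by that of $s$, which requires the Sobolev multiplication theorem and the hypothesis $m>\dim M/2$).

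The second step is to invoke the linear elliptic estimate: for a linear elliptic operator $L$ of order $k$ with coefficients in $H^{m}$ acting between sections of vector bundles over the compact manifold $M$, one has $\|u\|_{H^{m+1}} \leq C\big(\|Lu\|_{H^{m+1-k}} + \|u\|_{H^{m+1-k}}\big)$ provided $m+1-k>\dim M/2$ (and $m$ large enough that the coefficients multiply $H^{\bullet}$ appropriately); this is precisely the content of the standard pseudodifferential calculus on $L^2$-scales, e.g. Theorem 1.2.D of \cite{taylorpseudopde}. Applying this to $u=\nabla s$ and $L=T\mathcal{P}_s$ and using that the right-hand side of the differentiated equation lies in $H^{m+1-k}$ (which follows from the multiplication theorem once $s\in H^{m}$ and $m>\dim M/2$, using that $\tau$ is smooth) shows $\nabla s\in H^{m+1-k}$ for every coordinate vector field, hence $s\in H^{m+2-k}$; feeding this back in, after finitely many iterations one gains a full derivative, i.e. $s\in H^{m+1}$. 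Iterating the bootstrap from the base case $m = l+k$ upward, $s\in H^m(F;M)$ for all $m$, so $s\in \bigcap_m H^m(F;M) = \Gamma(F;M)$ by $(1)$ of Lemma \ref{thm:sobolevscale} (the tower is a limit diagram), i.e. $s$ lies in the image of $Q\hookrightarrow Q_{l+k}$. One must also check $s\in Q$, not merely $s\in\Gamma(F;M)$: but $Q_{l+k}$ is cut out by the open condition $j^{k}(s)(M)\subset V$, and by the Sobolev embedding $H^{l+k}\hookrightarrow C^{\ell}$ (for $l$ large) the $C^0$-topology on jets is controlled by the $H^{l+k}$-topology, so membership in $Q$ is automatic once $s$ is smooth and $j^k(s)(M)\subset V$, which holds since $s\in Q_{l+k}$.

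The only point requiring care for the uniformity of $l'$ is to track where the bound $\dim M/2$ enters: the base of the induction needs $m=l+k > \dim M/2$ so that $H^{l+k}$ is a topological algebra under the relevant products, and the linear estimate needs $m+1-k>\dim M/2$ at the first bootstrap step, i.e. $l+1>\dim M/2$, but at intermediate steps of gaining a single derivative one works with $u=\nabla s\in H^{l+k-1}$ and needs the coefficients of $T\mathcal{P}_s$, which lie in $H^{l}$, to multiply into $H^{l+1-k}$, requiring $l>\dim M/2$ and $l+1-k$ controlled — so taking $l' = \max(\dim M/2 + k, \dim M/2 + 1)$, say, or any fixed explicit function of $\dim M$ and $k$ that makes all the Sobolev multiplication inequalities used in the finitely many bootstrap sub-steps valid, suffices. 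Crucially $l'$ does not see $\mathcal{P}$ because the number of bootstrap sub-steps needed to gain one derivative is bounded by $k$, and the multiplication theorems invoked depend only on $\dim M$ and the orders of the operators involved.

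\textbf{Main obstacle.} The genuine difficulty is the careful accounting of regularity through the chain rule applied to the nonlinear map $\mathcal{P}$: one must verify that differentiating $\mathcal{P}(j^k s)$ produces a linear equation for $\nabla s$ whose inhomogeneous term and variable coefficients have exactly the Sobolev regularity needed to close the bootstrap, uniformly in $\mathcal{P}$. This is the standard but somewhat technical combination of the Sobolev multiplication theorem with the linear elliptic estimate, and the bookkeeping of constants to extract the $\mathcal{P}$-independence of $l'$; since (as the authors note in the remark following the proof sketch) we are free to assume $l$ arbitrarily large, the crudest available linear regularity result suffices and I would not optimize the threshold.
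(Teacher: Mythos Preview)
Your proposal is correct and follows the same route the paper gestures at: the paper's entire proof is the single sentence ``This is an easy exercise using parametrices for elliptic operators with limited regularity as in \cite[Theorem 1.2.D]{taylorpseudopde},'' and your bootstrap via differentiating the equation and applying the linear elliptic estimate with rough coefficients is precisely how one unpacks that reference. One small bookkeeping slip: with $s\in H^{m}$ you have $j^k s\in H^{m-k}$, so the coefficients of $T\mathcal{P}_s$ and the inhomogeneous term $g=\nabla\tau-(\partial_x\mathcal{P})(j^k s)$ lie in $H^{m-k}$, and the elliptic estimate then gives $\nabla s\in H^{m}$ (not $H^{m+1-k}$ as you wrote), hence $s\in H^{m+1}$ directly---each step gains a full derivative and no sub-iteration is needed.
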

This is an easy exercise using parametrices for elliptic operators with limited regularity as in \cite[Theorem 1.2.D]{taylorpseudopde}.
\begin{proof}[Proof of Theorem \ref{thm:relellipticrepresentability}]
By descent, we may suppose that $S$ is a manifold and that $Y_1\rightarrow M, Y_2\rightarrow M$ and $X\rightarrow M$ are $S$-families of submersions and $M\rightarrow S$ is a trivial $S$-family with compact fibre $N$. We will show that the conditions of Proposition \ref{prop:representablebyscheme} are satisfied for $\mathsf{Sol}(\icate)$. We will first demonstrate that we may assume that all three submersions $Y_1\rightarrow M$, $Y_2\rightarrow M$ and $X\rightarrow M$ are trivial $S$-families of vector bundles (note that the $S$-families are trivial, not the vector bundles) while $P_1$ and $P_2$ are partially defined on sections close in the Whitney $C^k$-topology to the respective zero sections. Choose a solution $t:*\rightarrow \mathsf{Sol}(\icate)$ determining a point $s\in S$ and sections $\sigma_1:N\cong M_s\rightarrow Y_{1s}$, $\sigma_2:N\rightarrow Y_{2s}$ and $\sigma_X:N\rightarrow X_s$. Applying Lemma \ref{lem:localstructureweil} (three times), we can choose 
\begin{enumerate}[$(1)$]
    \item an open neighbourhood $s\in S'\subset S$.
    \item vector bundle neighbourhoods 
    \[V_{s,\sigma_X}\subset \sigma_X^*TX_s/N,\quad\quad
    V_{x,\sigma_1}\subset\sigma_{1}^*TY_{1s}/N,\quad\quad
    V_{x,\sigma_2}\subset\sigma_{2}^*TY_{2s}/N\]
     of the respective zero sections. 
    \item commuting diagrams
\[
\begin{tikzcd}
N \ar[r,hook,"\iota_{s} \circ \sigma_X"]\ar[d,"\{\sigma_X\}\times 0"'] & X\ar[d] \\
 S'\times V_{s,\sigma_X}\ar[ur,hook]\ar[r]  & S\times N,
\end{tikzcd}\quad\quad 
\begin{tikzcd}
N \ar[r,hook,"\iota_{s} \circ \sigma_1"]\ar[d,"\{\sigma_1\}\times 0"'] & Y_1\ar[d] \\
 S'\times V_{s,\sigma_1}\ar[ur,hook]\ar[r]  & S\times N,
\end{tikzcd}
\quad\quad 
\begin{tikzcd}
N \ar[r,hook,"\iota_{s} \circ \sigma_2"]\ar[d,"\{\sigma_2\}\times 0"'] & Y_2\ar[d] \\
 S'\times V_{s,\sigma_2}\ar[ur,hook]\ar[r]  & S\times N,
\end{tikzcd}
\]
where the diagonal maps are open embeddings.
\end{enumerate}
Pulling back the elliptic moduli problem back along the open embedding $S'\subset S$, we may assume that $S=S'$. By Proposition \ref{prop:jetbundleproperties}, the data above provides open embeddings $S\times J^k_N(\sigma^*_1TY_{1s}/N)\rightarrow J^k_{M/S}(Y_1)$ and $S\times J^k_N(\sigma^*_2TY_{2s}/N)\rightarrow J^k_{M/S}(Y_2)$ over $S\times N$. Shrinking $S$ again if necessary, we may find vector bundle neighbourhoods $W_1\subset \sigma^*_1TY_{1s}/N$ and $W_2\subset  \sigma^*_2TY_{2s}/N$ of the respective zero sections fitting into commuting diagrams
\[
\begin{tikzcd}
    S\times W_1\ar[d]\ar[r,hook] & S\times J^k_N(\sigma^*_1TY_{1s}/N)\ar[r,hook] & J^k_{M/S}(Y_1) \ar[d,"P_1"] \\
    S\times V_{s,\sigma_X} \ar[rr,hook] && X,
\end{tikzcd}\quad\quad\quad 
\begin{tikzcd}
    S\times W_2\ar[d]\ar[r,hook] &S\times J^k_N(\sigma^*_2TY_{2s}/N) \ar[r,hook]& J^k_{M/S}(Y_2) \ar[d,"P_2"] \\
    S\times V_{s,\sigma_X} \ar[rr,hook] && X.
\end{tikzcd}
\]
Let $Q_1$ and $Q_2$ be defined by the pullbacks
\[
\begin{tikzcd}
Q_1\ar[d]\ar[r] & \map_N(N,\sigma^*_1TY_{1s}/N) \ar[d,"j^k"] \\
\map_N(N,W_1) \ar[r] & \map_N(N,J^k_N(\sigma^*_1TY_{1s}/N)),
\end{tikzcd}\quad\quad\quad
\begin{tikzcd}
Q_2\ar[d]\ar[r] & \map_N(N,\sigma^*_2TY_{2s}/N) \ar[d,"j^k"] \\
\map_N(N,W_2) \ar[r] & \map_N(N,J^k_N(\sigma^*_2TY_{2s}/N)),
\end{tikzcd}
\]
among derived $\cinfty$-stacks, then we have by construction commuting diagrams 
\[
\begin{tikzcd}
S\times Q_1\ar[d]\ar[r] & \mathsf{Res}_{M/S}(Y_1) \ar[d,"P_1"] \\
 S\times \map_N(N,\sigma_X^*TX_s/N) \ar[r] & \mathsf{Res}_{M/S}(X),
\end{tikzcd}\quad\quad\quad
\begin{tikzcd}
S\times Q_2\ar[d]\ar[r] & \mathsf{Res}_{M/S}(Y_2) \ar[d,"P_1"] \\
 S\times \map_N(N,\sigma_X^*TX_s/N) \ar[r] & \mathsf{Res}_{M/S}(X),
\end{tikzcd}
\]
where the horizontal maps are open substack inclusions by Proposition \ref{prop:etaletarget}. Let $\mathsf{Sol}(\icate)_t$ be the cone in the pullback diagram 
\[
\begin{tikzcd}
    \mathsf{Sol}(\icate)_t\ar[d]\ar[r] & S \times Q_1\ar[d] \\
    S \times Q_2 \ar[r] & S\times\map_N(N,\sigma_X^*TX_s/N)
\end{tikzcd}
\]
among derived $\cinfty$-stacks, then the map $\mathsf{Sol}(\icate)_t\rightarrow \mathsf{Sol}(\icate)$ is an open substack inclusion whose image contains the section $t$, by Lemma \ref{lem:pullbackopeninclusion}. Varying $t$ over all solutions, we have an effective epimorphism
\[ \coprod_{t\in \mathsf{Sol}(\icate)}\mathsf{Sol}(\icate)_t\longrightarrow \mathsf{Sol}(\icate)\]
so invoking Proposition \ref{prop:representablebyscheme}, it suffices to argue that $\mathsf{Sol}(\icate)_t$ is representable by a quasi-smooth derived $\cinfty$-scheme locally of finite presentation. Consequently, we may suppose that we are in the following situation: we have vector bundles $E\rightarrow N$, $F_1\rightarrow N$ and $F_2\rightarrow N$, and we are given $S$-families of nonlinear PDEs 
\[\mathcal{P}_1:S\times(W_1\subset J^k_N(F_1))\longrightarrow E,\quad\quad \mathcal{P}_2:S\times(W_1\subset J^k_N(F_2))\longrightarrow E\]
over $N$ defined on (vector bundle) neighbourhoods of the respective zero sections. Define the open substack $Q_1\hookrightarrow\map_N(N,F_1)$ as the pullback $Q_1=\map_N(N,F_1)\times_{\map_N(N,J^k_N(F_1))}\map_N(N,W_1)$ and define $Q_2$ similarly, then we have a commuting diagram
\[
\begin{tikzcd}
    \mathsf{Sol}(\icate)\ar[d]\ar[r] & S \times Q_1\times Q_2\ar[d] \\
    S \times \map_N(N,E)\ar[r,"\mathrm{id}_S\times \Delta"]\ar[d] & S\times\map_N(N,E\times_NE) \ar[d]\\
    S\ar[r,"\mathrm{id}_S\times 0"]  & S\times\map_N(N,E)
\end{tikzcd}
\]
in which both squares are pullbacks, where the lower horizontal map is the ($S$-parametrized) zero section of $E$ and the lower right vertical map is induced by the map $E\times_NE\rightarrow E$ carrying $(v,w)$ to $v-w$. Write $F$ for the vector bundle $F_1\times_NF_2$, $W$ for $W_1\times_NW_2\subset J^k_N(F)$ and $Q$ for $Q_1\times Q_2$ (so that $Q\hookrightarrow \map_N(N,F)$ is the open substack defined by the pullback $\map_N(N,F)\times_{\map_N(N,J^k_N(F))}\map_N(N,W)$). By Proposition \ref{prop:sourceconvenient}, the open substack $Q\subset \map_M(M,F)$ is representable by the open subset of the convenient vector space $\Gamma(F;M)$ of those sections whose $k$'th prolongation lies in $W$; we will abusively also denote this open set $Q$. The right vertical composition in the diagram above is then an $S$-family of PDEs representable by the map 
\[ P:S\times Q\longrightarrow S\times\Gamma(E;N) \]
of Fr\'{e}chet manifolds (which are simply open subsets of Fr\'{e}chet spaces) induced by the map $\mathcal{P}:S\times (W\subset J^k(F))\rightarrow S\times E$ obtained from $\mathcal{P}_1$ and $\mathcal{P}_2$. We will let $P_s$ denote the PDE
\[  P_s:\{s\}\times Q\subset S\times Q\overset{P}{\longrightarrow} S\times \Gamma(E;N)\longrightarrow \Gamma(E;N)  \]
for all $s\in S$, where the last map projects away $S$ (note that the composition of the first two maps already takes values in $\{s\}\times\Gamma(E;N)$). Let $Q_{k+l}\subset H^{k+l}(F;N)$ be the pullback $H^{k+l}(F;N)\times_{H^l(J^k_N(F);N)}H^l(W;N)$ along the jet prolongation $H^{k+l}(F;N)\rightarrow H^l(J^k(F;N))$ provided by Lemma \ref{thm:sobolevscale}, then we have a commuting tower 
\[
\begin{tikzcd}
   Q \ar[d,hook]\ar[r] & \ldots\ar[r] &  Q_{k+l+1}\ar[d,hook]\ar[r] & Q_{k+l}\ar[r]\ar[d,hook] & \ldots \\
  \Gamma(F;N) \ar[r] &\ldots \ar[r] &  H^{l+1}(F;N)\ar[r] &  H^l(F;N)\ar[r] & \ldots
\end{tikzcd}
\]
in the category of Fr\'{e}chet manifolds and smooth maps among them. It follows from Lemma \ref{thm:sobolevscale} that the lower tower is a limit diagram and that each square in the tower is a pullback, so that the upper tower is also a limit. As a minor variation of $(1)$ and $(2)$ of Lemma \ref{thm:sobolevscale}, Sobolev's embedding theorem guarantees that for $l\in\Z_{> \mathrm{dim}\,N/2}$, the composition 
\[S\times Q\longrightarrow  S\times\Gamma(E;N)\longrightarrow S\times H^l(E;N)  \]
admits for all $k'\geq k$ a unique smooth extension 
\[ S\times Q_{k'+l}\longrightarrow S\times H^l(E;N) \]
over $S$. Write $P_l$ for the map $S\times Q_{k+l}\rightarrow S\times H^l(E;N)$ and write $P_{s,l}$ for the composition 
\[   P_{s,l}:\{s\}\times Q_{k+l}\subset S\times Q_{k+l}\overset{P_l}{\longrightarrow} S\times H^k(E;N)\longrightarrow H^k(E;N)     \]
for all $s\in S$, where the last map projects away $S$ (and the composition of the first two maps already takes values in $\{s\}\times H^k(E;N)$), so that $P_{s,l}$ is the unique smooth extension of $P_s:Q\rightarrow\Gamma(E;N)$ provided by Lemma \ref{thm:sobolevscale}. We have a commuting tower 
\[
\begin{tikzcd}
    S\times Q \ar[d,"P"]\ar[r] & \ldots\ar[r] &  S\times Q_{k+l+1}\ar[d,"P_{l+1}"]\ar[r] & S\times Q_{k+l}\ar[d,"P_l"]\ar[r] & \ldots \\
    S\times\Gamma(E;N) \ar[r] &\ldots \ar[r] & S\times H^{l+1}(E;N)\ar[r] & S\times H^l(E;N)\ar[r] & \ldots
\end{tikzcd}
\]
in the category of Fr\'{e}chet manifolds and smooth maps among them, which is a limit in the category of convenient manifolds, by Lemma \ref{thm:sobolevscale}. Since $j_{\mathsf{Con}}$ preserves limits, the tower 
\[
\begin{tikzcd}
    S\times Q\ar[d,"P"]\ar[r] & \ldots\ar[r] &  S\times j_{\mathsf{Con}}(Q_{k+l+1})\ar[d,"P_{l+1}"]\ar[r] & S\times j_{\mathsf{Con}}(Q_{k+l})\ar[d,"P_l"]\ar[r] & \ldots \\
    S\times \map_N(N,E) \ar[r] &\ldots \ar[r] & S\times j_{\mathsf{Con}}(H^{l+1}(E;N))\ar[r] & S\times j_{\mathsf{Con}}(H^l(E;N))\ar[r] & \ldots
\end{tikzcd}
\]
is a limit in the \infcat $\smst$ (note however that this tower \emph{need not be} a limit in the \infcat $\dstack$, the inclusion $\smst\subset \dstack$ does not preserve limits of towers!). By assumption, for each solution $t$ of $P$ determining a point $(s,\sigma)\in S\times Q$, the linearization of $P_s:Q\rightarrow \Gamma(E;N)$ at $\sigma$ is a linear elliptic PDE. We will use the following standard Fredholm and linear elliptic theory.
\begin{enumerate}[$(1)$]
\item For all $l\in \Z_{\geq 0}$, a linear PDE $P:\Gamma(F;N)\rightarrow\Gamma(E;N)$ is elliptic if and only if its continuous extension $P_l:H^{k+l}(F;N)\rightarrow H^l(E;N)$ is a Fredholm operator \cite[Theorem 19.5.1, Theorem 19.5.2]{HormanderIII}. 
\item Let $B_0,B_1$ Banach spaces and let $B(B_0,B_1)$ the Banach space of bounded linear operators among them equipped with the operator norm, then the space of Fredholm operators $\mathsf{Fred}(B_0,B_1)\subset B(B_0,B_1)$ is open.
\item For $B_0,B_1$ Banach spaces, the assignment
\[  \mathsf{Fred}(B_0,B_1) \longrightarrow \Z_{\geq 0},\quad\quad P\longmapsto \mathrm{dim}\,\mathrm{coker}\,P \]
is upper semicontinuous. 
\end{enumerate}
For each $l>\mathrm{dim}\,N/2$ consider the smooth map 
\[   S\times Q_{k+l}l\times H^{k+l}(F;N)\overset{0\times\mathrm{id}}{\hooklongrightarrow} TS\times Q_{k+l} \times  H^{k+l}(F;N) \overset{TP_l}{\longrightarrow} TS\times H^l(E;N)    \longrightarrow H^l(E;N)\]
where the first map embeds into the zero section of $TS$, the second maps is the differential of $P_l$ and the last map projects away $TS$. This composition is adjoint to a smooth (and thus continuous since Banach spaces carry the $c^{\infty}$-topology) map 
\[  TP_{l}^0: S\times Q_{k+l}\longrightarrow B(H^{k+l}(F;N),H^l(E;N)), \]
which carries a point $(s',\sigma')$ to the differential $T_{\sigma'}P_{s',l}$ of the map $P_{s',l}:Q_{k+l}\rightarrow H^l(E;N)$ at $\sigma'$. Let $O_l\subset S\times Q_l$ be the set obtained by taking the inverse image of the set of Fredholm operators along the map $TP_{l}^0$; this is open by $(2)$ above. For all $(s',\sigma')\in S\times Q_{k+l}$, the diagram 
\[
\begin{tikzcd}
H^{k+l}(F;N)\ar[d,"0\times\mathrm{id}"]\ar[r,"T_{\sigma'}P_{s',l}"] &[2em] H^k(E;N) \ar[d,"0\times\mathrm{id}"] \\
T_{s'}S\times H^{k+l}(F;N)\ar[r,"T_{(s',\sigma')}P_l"] &[2em] T_{s'}S\times H^k(E;N)
\end{tikzcd}
\]
of Banach spaces and linear maps between them identifies the kernel and cokernel of $T_{\sigma'}P_{s',l}$ with the kernel and cokernel of $T_{(s',\sigma')}P_l$, and the same holds for $l=\infty$, that is, the kernel and cokernel of $T_{\sigma'}P_{s'}$ are identified with the kernel and cokernel of $T_{(s',\sigma')}P$. By this observation and $(1)$ and $(2)$ above, the sets $\{O_{l+k}\}_{l>\mathrm{dim}\,N/2}$ have the following properties.
\begin{enumerate}[$(i)$]
\item For every $l> \mathrm{dim}\,N/2$, the subset $O_{l+k}\subset S\times Q_{l+k}$ consists of those points $(s',\sigma')$ for which the differential 
\[ T_{(s',\sigma')}P_{l} : T_{s'}S \times H^{k+l}(F;N) \longrightarrow T_{s'}S\times H^l(E;N)  \]
is a Fredholm operator. 
\item For every $l> \mathrm{dim}\,N/2$ and every $(s',\sigma')\in S\times Q \cap O_{l+k}$, the diagram 
\[
\begin{tikzcd}
T_{s'}S \times \Gamma(F;N) \ar[r,"T_{(s',\sigma')}P"] \ar[d]&[2em] T_{s'}S \times \Gamma(E;N)
 \ar[d]\\ T_{s'}S \times H^{k+l}(F;N)  \ar[r,"T_{(s',\sigma')}P_{l}"] &[2em] T_{s'}S\times H^l(E;N)  
\end{tikzcd}
\]
identifies the kernel and cokernel of $T_{(s',\sigma')}P$ with the kernel and cokernel of $T_{(s',\sigma')}P_l$ (in particular, the kernel and cokernel of the latter linear operator consist of smooth sections).
\item For any $l>\mathrm{dim}\,N/2$, we have the equality of open subsets
\[ S\times Q \cap O_{l+k}= \{(s',\sigma')\in S\times Q;\, T_{\sigma'}P_{s'}:\Gamma(F;N)\rightarrow \Gamma(E;N)\text{ is elliptic}\} \]
of $S\times Q$. In particular this set, that we will denote $O\subset S\times Q$, is independent of $l$.
\end{enumerate}
Thus, to study solutions of $P$, it suffices to restrict to $O\subset S\times Q$. We will deduce the local representability of the solution stack of $P$ by way of an implicit function theorem that allows us to replace the maps $P_l$ with maps among finite dimensional manifolds. To implement this strategy, we add `obstruction bundles'  to the spaces $O_{l+k}$. Let $t$ be a solution of $P$ determining a pair $(s,\sigma)\in O$. Choose a continuous linear section $\iota$ for the projection $\Gamma(E;N)\rightarrow \mathrm{coker}\,T_{(s,\sigma)}P$ and consider for any $l>\mathrm{dim}\,N/2$ the smooth maps 
\[ P_l+\iota : O_{l+k}\times\mathrm{coker}\,T_{(s,\sigma)}P\longrightarrow H^l(E;N).  \]
These maps and sets have the following properties.
\begin{enumerate}[$(i')$]
\item For every $l>\mathrm{dim}\,N/2$ and every $(s',\sigma',e)\in O_{l+k}\times\mathrm{coker}\,T_{(s,\sigma)}P$, the differential $T_{(s',\sigma',e)}(P_l+\iota)=T_{(s',\sigma')}+\iota:$
\[ T_{s'}S\times H^{k+l}(F;N)\times\mathrm{coker}\,T_{(s,\sigma)}P \longrightarrow  T_{s'}S\times H^{k+l}(E;N) \]
is a Fredholm operator.
\item For every $l> \mathrm{dim}\,N/2$ and every $(s',\sigma',e)\in O \times\mathrm{coker}\,T_{(s,\sigma)}P \cap O_{l+k}\times\mathrm{coker}\,T_{(s,\sigma)}P $, the diagram 
\[
\begin{tikzcd}
T_{s'}S \times \Gamma(F;N) \times \mathrm{coker}\,T_{(s,\sigma)}P \ar[r,"T_{(s',\sigma')}P+\iota"] \ar[d]&[3em] T_{s'}S \times \Gamma(E;N)
 \ar[d]\\ T_{s'}S \times H^{k+l}(F;N) \times \mathrm{coker}\,T_{(s,\sigma)}P  \ar[r,"T_{(s',\sigma')}P_{l}+\iota"] &[3em] T_{s'}S\times H^l(E;N)  
\end{tikzcd}
\]
identifies the kernel and cokernel of $T_{(s',\sigma')}P+\iota$ with the kernel and cokernel of $T_{(s',\sigma')}P_l+\iota$ (in particular, the kernel and cokernel of the latter linear operator consist of smooth sections).
\end{enumerate}
Using $(ii')$ and $(3)$ above, the subset 
\[  U_{s,\sigma,l}:=\{(s',\sigma',e)\in O_{l+k}\times \mathrm{coker}\,T_{(s,\sigma)}P;\, T_{(s',\sigma',e)}(P_l+\iota)\text{ is onto}\}\subset O_{l+k}\times\mathrm{coker}\,T_{(s,\sigma)}P  \] 
is an open neighbourhood of $\{(s,\sigma)\}\times\mathrm{coker}\,T_{(s,\sigma)}P\subset  O_{l+k}\times\mathrm{coker}\,T_{(s,\sigma)}P$ and we have an equality of subsets 
\[ O\times\mathrm{coker}\,T_{(s,\sigma)}P \cap U_{s,\sigma,l} = \{(s',\sigma',e)\in O\times \mathrm{coker}\,T_{(s,\sigma)}P;\, T_{(s',\sigma',e)}(P+\iota)\text{ is onto}\} \]
of $O\times\mathrm{coker}\,T_{(s,\sigma)}P$. In particular this set, that we will denote $U_{s,\sigma}$, is independent of $l$ and is an open neighbourhood of $\{(s,\sigma)\}\times\mathrm{coker}\,T_{(s,\sigma)}P \subset O\times \mathrm{coker}\,T_{(s,\sigma)}P$. Inductively redefining $U_{s,\sigma,l}$ as $U_{s,\sigma,l}\cap U_{s,\sigma,l-1}$ (starting at the first integer larger than $\mathrm{dim}\,N/2$), we may assume that $U_{s,\sigma,l}\subset U_{s,\sigma,l-1}$. We now have a limit diagram 
\[
\begin{tikzcd}
    U_{s,\sigma}\ar[d,"P+\iota|_{U_{s{,}\sigma}}"]\ar[r] & \ldots\ar[r] & U_{s,\sigma,l+1}\ar[d,"P_{l+1}+\iota|_{U_{s{,}t{,}l+1}}"]\ar[r] &  U_{s,\sigma,l}\ar[d,"P_l+\iota|_{U_{s{,}t{,}l}}"]\ar[r] & \ldots \\
    S\times\Gamma(E;N) \ar[r] &\ldots \ar[r] & S\times H^{l+1}(E;N)\ar[r] & S\times H^l(E;N)\ar[r] & \ldots
\end{tikzcd}
\]
of convenient manifolds indexed by $(\Z^{\rhd}_{>\mathrm{dim}\,N/2})^{op}$ where each vertical map has everywhere surjective differential. The core of the argument is the following claim.
\begin{enumerate}
    \item[$(*)$] The map $P+\iota|_{U_{s,\sigma}}: j_{\mathsf{Con}}(U_{s,\sigma})\rightarrow S\times j_{\mathsf{Con}}(\Gamma(E;N))$ is a submersive map in $\dstack$.  
\end{enumerate}
To prove this, we invoke the criterion of Proposition \ref{prop:criterion} to conclude that we have to show that for any smooth manifold $Z$ and any map $Z\rightarrow S\times j_{\mathsf{Con}}(\Gamma(E;N))$ of smooth stacks, the map
\[ g:Z\times_{S\times j_{\mathsf{Con}}(\Gamma(E;N))}  j_{\mathsf{Con}}(U_{s,\sigma})\longrightarrow Z\]
where the pullback is \emph{taken in $\smst$}, is (representable by) a submersion of ordinary manifolds. Since the functor $j_{\mathsf{Con}}:\mathsf{ConMfd}\hookrightarrow\smst$ preserves limits, it suffices to show that the limit $Z\times_{S\times \Gamma(E;N)} U_{s,\sigma}$ exists in the category of convenient manifolds and that the map $ Z\times_{S\times \Gamma(E;N)}  U_{s,\sigma}\rightarrow Z$ is a submersion of ordinary manifolds. Since for each $l>l'$, the map $P_l+\iota|_{U_{s,\sigma,l}}$ is a Fredholm submersion of Banach manifolds, applying the implicit function theorem for Banach manifolds shows that the limit $Z\times_{S\times H^l(E;N)} U_{s,\sigma,l}$ exists in the category of convenient manifolds and that the map $Z\times_{S\times H^l(E;N)}U_{s,\sigma,l}\rightarrow Z$ is a submersion of ordinary manifolds. We thus have a diagram 
\[
\begin{tikzcd}
\ldots\ar[d]\ar[r] & Z\times_{S\times H^{l+1}(E;N)} U_{s,\sigma,l+1}\ar[r] \ar[d]& 
    Z\times_{S\times H^{l}(E;N)} U_{s,\sigma,l}\ar[r] \ar[d] &\ldots \ar[d] \\
\ldots \ar[r,equal] & Z \ar[r,equal] & Z\ar[r,equal] & \ldots    
\end{tikzcd}
\]
of submersions in the category $\mathsf{Mfd}$ whose limit in $\smst$ after applying the Yoneda embedding (which coincides with $j_{\mathsf{Con}}$ on the full subcategory $\mathsf{Mfd}\subset\mathsf{ConMfd}$) is equivalent to $ Z\times_{S\times j_{\mathsf{Con}}(\Gamma(E;N))}  j_{\mathsf{Con}}(U_{s,\sigma})$, so we are reduced to proving that the limit of the diagram above exists in the category of manifolds and that the resulting map $Z'\rightarrow Z$ is a submersion. It will suffice to show that the diagram is eventually essentially constant. Write $Z'_l=Z\times_{S\times H^{l}(E;N)} U_{s,\sigma,l}$, then it follows immediately from the regularizing property of elliptic PDEs and the assumption that the maps $Z\rightarrow S\times H^l(E;N)$ factor through the smooth sections as $Z\rightarrow S\times \Gamma(E;N)\rightarrow S\times H^l(E;N)$ that the maps $Z'_{l+1}\rightarrow Z'_{l}$ are eventually bijections on the underlying sets. It now suffices to argue that the maps $Z'_{l+1}\rightarrow Z'_l$ are eventually local diffeomorphisms. Since $Z'_l\rightarrow Z$ is a submersion for each $l>l'$, it suffices to show that the map $Z'_{l+1}\rightarrow Z'_l$ induces an isomorphism on each tangent space of each fibre over $Z$. As we have just argued, we may assume that $Z'_{l+1}$ and $Z'_l$ consist entirely of \emph{smooth} points, that is, the maps $Z'_l\rightarrow U_{s,\sigma,l}$ factors set-theoretically through $U_{s,\sigma}$, and similarly for $Z'_{l+1}$. For all $l>\mathrm{dim}\,N/2$, the map $Z'_l\rightarrow U_{s,\sigma,l}$ identifies the tangent relative to $Z$ with the kernel of the tangent map $T(P_l+\iota|_{U_{s,\sigma,l}})$. By ellipticity (more precisely, by $(ii')$ above), the maps of the Sobolev scale become isomorphisms on this kernel at each smooth point (points in the image of $U_{s,\sigma}\subset U_{s,\sigma,l}$), which concludes the proof of $(*)$\footnote{For an alternative argument, it is possible to apply the Nash-Moser inverse function theorem directly to the map $P+\iota|_{U_{s,\sigma}}$}. The map $ j_{\mathsf{Con}}(U_{s,\sigma})\rightarrow  S\times Q \times\mathrm{coker}\,T_{(s,\sigma)}P$ is an open substack inclusion by Proposition \ref{prop:convmfdetale}. Let $H_{s,\sigma}\hookrightarrow Q\times S$ be the open substack defined as the cone in the pullback diagram 
\[
\begin{tikzcd}
H_{s,\sigma} \ar[d] \ar[r] & S\times Q \ar[d,"\mathrm{id}\times 0"]\\
j_{\mathsf{Con}}(U_{s,\sigma})\ar[r]&S\times Q \times\mathrm{coker}\,T_{(s,\sigma)}P.
\end{tikzcd}\]
Now consider the diagram
\[
\begin{tikzcd}
Z'_t\ar[d]\ar[r] & H_{s,\sigma}\ar[d]\ar[r] & S\ar[d,"\mathrm{id}\times 0"] \\
Z_t\ar[d]\ar[r] & j_{\mathsf{Con}}(U_{s,\sigma}) \ar[d,"P+\iota|_{U_{s{,}t}}"] \ar[r,"\pi"]&S\times \mathrm{coker}\,T_{(s,\sigma)}P \\  
S\ar[r,"\mathrm{id}\times 0"] & S\times \map_N(N,E).
\end{tikzcd}
\]
where the two left squares are defined to be pullbacks in the \infcat $\dstack$ and the upper right square is a pullback by construction. By assertion $(*)$, the object $Z_t$ is (representable by) an ordinary manifold. Since the upper rectangle is a pullback, it follows that $Z'_t$ is representable by a quasi-smooth affine derived $\cinfty$-scheme of finite presentation (the lower composition in this rectangle is the \emph{Kuranishi map}). However, by Lemma \ref{lem:pullbackopeninclusion} there is an open substack inclusion $Z'_t\rightarrow \mathsf{Sol}(\icate)$ containing the section $t$. Letting $t$ vary over all solutions, we have an effective epimorphism 
\[  \coprod_{t\in\mathsf{Sol(\icate)}}Z'_t\longrightarrow \mathsf{Sol}(\icate)\]
so we conclude by invoking Proposition \ref{prop:representablebyscheme} once more.
\end{proof}
\begin{rmk}
Most of the arguments above are standard techniques in nonlinear Fredholm analysis (passing to Sobolev Banach spaces to have the inverse function theorem available, adding an obstruction bundle and using (nonlinear) elliptic regularity to recover smoothness), reinterpreted to fit our derived and stacky setup. We emphasize the interpretation of the Sobolev scale as a \emph{limit diagram} to deduce results at the level of universal solution stacks, independent of any choice of Banach space completion. There is but one technical problem with this strategy obstructing the most straightforward implementation of the aforementioned analytic methods in the derived setting: the Sobolev scale is a limit diagram of smooth stacks \emph{but not of derived $\cinfty$-stacks}. One might attempt to circumvent this problem by simply `truncating the regularity', that is, forgoing to work with the universal solution stack in favour of Sobolev spaces with some fixed regularity. This will not suffice for most nontrivial application however. Aside from aesthetic considerations (going this route would mean defining the solution stack as a pullback of arbitrarily chosen Sobolev completions), this crutch suffers serious technical disadvantages: the diffeomorphism group of a manifold $M$ of dimension $>0$ does not act smoothly on $H^l(F;M)$ for any $l$ (the same is true for other types of completions like H\"{o}lder spaces), which entails that we cannot straightforwardly formulate the notion of a \emph{smooth family} of moduli problems at finite regularity. The example concerning pseudo-holomorphic curves in the next subsection for instance would be out of reach. This analytic nuisance is well documented in symplectic topology where working up to holomorphic automorphisms cannot be avoided. The problem of describing families of moduli problems at (varying but) finite regularity was systematically addressed relatively recently by Hofer-Wysocki-Zehnder's \emph{scale calculus} \cite{HWZbook}, which introduces a weaker notion of differentiability on a scale of Banach spaces for which diffeomorphism groups do act smoothly on the source. We have no need of such sophisticated analytic technology since we never glue Banach mapping spaces of finite regularity together along automorphisms on the source; by design, gluing in our setup always happens at infinite regularity. Indeed, the global $\cinfty$-structure of the solution stack is evident as a pullback of stacks of sections, and Banach spaces only come in to describe its local properties. On the other hand, we are faced with the issue that the \emph{derived} mapping stack is not a limit of the Sobolev scale. The main insight at the interface of derived geometry and nonlinear analysis we advance in this work that solves this problem (and also solves the same problem for any extension of the basic representability theorem of this paper to more complicated and interesting geometric situations) is that basic features of topos theory (those exposed in Section 2.2) allow one to perform all the nonlinear analysis necessary to reduce to a finite dimensional situation in the \infcat of smooth stacks (in the proof above specifically, to see that a map of smooth stacks is submersive as a map of \emph{derived} stacks, it suffices to show it is submersive as a map of smooth stacks), after which may perform a (possibly nontransverse) intersection of finite dimensional manifolds in the \inftop $\dstack$ to obtain the desired local model for the solution stack.
\end{rmk}

\section{Application: nonsingular pseudo-holomorphic curves}
Armed with the elliptic representability theorem, we can give a simple proof of the representability of the moduli space of $J$-pseudo-holomorphic curves in a symplectic manifold.
\begin{cons}
Define a category $\Of_{\mathsf{Surf}_{\C}}$ as follows.
\begin{enumerate}
\item[$(O)$] Objects are tuples $(p:C\rightarrow S,j)$ with $p:C\rightarrow S$ a proper submersion with 2-dimensional fibres and $j$ a section of the endomorphism bundle $\mathsf{End}(TC/S)\rightarrow S$ of the vertical tangent bundle with respect to $p$ that satisfies $j^2=-\mathrm{id}$. For each $s\in S$, we let $j_s:TC_s\rightarrow TC_s$ be the restriction of $j$ to the fibre $C_s:=p^{-1}(s)$; this is a complex structure on the compact surface $C_s$.
\item[$(M)$] Morphisms between tuples $(p':C'\rightarrow S',j')$ and $(p:C\rightarrow S,j)$ are commuting diagrams
\[
\begin{tikzcd}
C'\ar[d]\ar[r] & C\ar[d] \\
S'\ar[r,"f"] & S
\end{tikzcd}
\]
such that 
\begin{enumerate}[$(i)$]
\item The diagram is a pullback. Note that given this condition, the diagram above provides for each $s\in S'$ a diffeomorphism $C_s'\rightarrow C_{f(s)}$ and an isomorphism $TC'_s\rightarrow TC_{f(s)}$ of vector bundles over the map $C_s'\rightarrow C_{f(s)}$.
\item For each $s\in S'$, the diagram 
\[
\begin{tikzcd}
TC'_s\ar[d,"\cong"]\ar[r,"j'_s"]& TC'_{s} \ar[d,"\cong"] \\
 TC_{f(s)}\ar[r,"j_{f(s)}"] & TC_{f(s)}
\end{tikzcd}
\]
commutes.
\end{enumerate}
\end{enumerate}
Let $\Of_{\mathsf{Surf}}\subset \fun(\Delta^1,\mathsf{Mfd})$ be the full subcategory spanned by proper submersions with 2-dimensional fibres. We have an obvious functor $\Of_{\mathsf{Surf}_{\C}}\rightarrow \Of_{\mathsf{Surf}}$ which is a map of right fibrations (with small fibres) over the category $\mathsf{Mfd}$. We let $\mathsf{Surf}_{\C}$ and $\mathsf{Surf}$ be the presheaves on $\mathsf{Mfd}$ obtained from the right fibration $\Of_{\mathsf{Surf}_{\C}}$ respectively the right fibration $\Of_{\mathsf{Surf}}$ via unstraightening. For each proper submersion $C\rightarrow S$, the fibre of $\mathsf{Surf}_{\C}(S)\rightarrow \mathsf{Surf}(S)$ is equivalent to the set $\{j\in \mathsf{End}(TC/S);\,j^2=-1\}\subset \mathsf{End}(TC/S)$. As the presheaf $\mathsf{Surf}$ is a sheaf (since being a proper submersion with 2-dimensional fibres is a local property) and the assignment 
\[  (U\subset S)\longmapsto \{j\in \mathsf{End}(T(C\times_SU)/U);\,j^2=-1\}\subset \mathsf{End}(T(C\times_SU)/U))  \] 
is a sheaf on $S$, we deduce that $\mathsf{Surf}_{\C}$ is a sheaf. Finally, consider the universal proper submersive map with surface fibres $\widetilde{\mathsf{Surf}}\rightarrow\mathsf{Surf}$, then we define a proper submersive map with surface fibres $\widetilde{\mathsf{Surf}}_{\C}\rightarrow \mathsf{Surf}_{\C}$ as the pullback $\widetilde{\mathsf{Surf}}\times_{\mathsf{Surf}}\mathsf{Surf}_{\C}$. More informally, the map $\widetilde{\mathsf{Surf}}_{\C}\rightarrow \mathsf{Surf}_{\C}$ is characterized by the property that for every proper submersion with surface fibres $C\rightarrow S$ equipped with a smoothly varying fibrewise complex structure with base a manifold $S$ determining a map $S\rightarrow \mathsf{Surf}_{\C}$, there is a map $C\rightarrow \widetilde{\mathsf{Surf}}_{\C}$ which is unique up to contractible ambiguity fitting into a pullback diagram 
\[
\begin{tikzcd}
C \ar[d] \ar[r]& \widetilde{\mathsf{Surf}}_{\C}\ar[d] \\
S\ar[r] &  \mathsf{Surf}_{\C}
\end{tikzcd}
\]
of smooth stacks (and also of derived $\cinfty$-stacks by $(3)$ of Proposition \ref{prop:stackysubmersive}).
\end{cons}
Let $(X,\omega)$ be a symplectic manifold equipped with an $\omega$-tame almost complex structure $J$ and consider the surjective submersive map $X\times\widetilde{\mathsf{Surf}}_{\C}\rightarrow \widetilde{\mathsf{Surf}}_{\C}$ of smooth stacks. To define the elliptic moduli problem whose solution stack parametrizes $J$-pseudo-holomorphic curves, we define the \emph{Cauchy-Riemann map}, a $\mathsf{Surf}_{\C}$-family of nonlinear PDEs from $X\times \widetilde{\mathsf{Surf}}_{\C}$ to $J^1_{\widetilde{\mathsf{Surf}}_{\C}/\mathsf{Surf}_{\C}}(X\times \widetilde{\mathsf{Surf}}_{\C})$, that is, a map 
\[  CR:J^1_{\widetilde{\mathsf{Surf}}_{\C}/\mathsf{Surf}_{\C}}(X\times \widetilde{\mathsf{Surf}}_{\C}) \longrightarrow J^1_{\widetilde{\mathsf{Surf}}_{\C}/\mathsf{Surf}_{\C}}(X\times \widetilde{\mathsf{Surf}}_{\C})    \]
over $\widetilde{\mathsf{Surf}}_{\C}$ as follows. The functor $\Of_{\mathsf{Surf}_{\C}}\rightarrow \mathsf{Mfd}\hookrightarrow \smst$ has colimit $\mathsf{Surf}_{\C}$ so we have a diagram $\Of_{\mathsf{Surf}_{\C}}^{\rhd}\rightarrow\mathsf{SmSt}$ carrying the cone point to $\mathsf{Surf}_{\C}$. By descent for \inftopoit, the composition $\Of_{\mathsf{Surf}_{\C}}^{\rhd}\rightarrow\mathsf{SmSt}\rightarrow \catinfh^{op}$ where the second functor is the unstraightening of the codomain fibration, is a colimit diagram. It follows that both functors in the diagram
\[  \fun_{\smst}^{\mathrm{Cart}}(\Of_{\mathsf{Surf}_{\C}},\fun(\Delta^1,\smst)) \longleftarrow  \fun^{\mathrm{Cart}}_{\smst}(\Of_{\mathsf{Surf}_{\C}}^{\rhd},\fun(\Delta^1,\smst)) \overset{\ev_{\infty}}{\longrightarrow} \smst_{/\mathsf{Surf}_{\C}}  \]
are equivalences, so that the map $J^1_{\widetilde{\mathsf{Surf}}_{\C}/\mathsf{Surf}_{\C}}(X\times \widetilde{\mathsf{Surf}}_{\C})\rightarrow \widetilde{\mathsf{Surf}}_{\C}$ is determined by a natural transformation $\alpha$ of Cartesian sections $\Of_{\mathsf{Surf}_{\C}}\rightarrow\fun(\Delta^1,\mathsf{SmSt})$. It follows from $(2)$ of Proposition \ref{prop:jetbundleproperties} that $\alpha$ carries a tuple $(p:C\rightarrow S,j)\in \Of_{\mathsf{Surf}_{\C}}$ to the map 
\[ J^1_{C/S}(X\times C) \longrightarrow  C  \]
over $S$. By $(4)$ of Proposition \ref{prop:jetbundleproperties}, the natural transformation $\alpha$ lies in the full subcategory $\fun_{\mathsf{Mfd}}(\Of_{\mathsf{Surf}_{\C}},\fun(\Delta^1,\mathsf{Mfd}))$. We have an equivalence 
\[J^1_{C/S}(X\times C)\simeq \mathsf{Hom}(T(X\times C)/S,T(X\times C)/C)\times_{\mathsf{End}(T(X\times C)/C)}X\times C\]
over $X\times C$. This is the bundle over $X\times C$ whose fibre at $(x,c)$ is the space of linear maps $T_{c}C_{p(c)}\rightarrow T_xX$. Under this identification, the map $J^1_{C'/S'}(X\times C')\rightarrow J^1_{C/S}(X\times C)$ induced by a pullback diagram
\[
\begin{tikzcd}
C'\ar[d,"p'"] \ar[r,"g"] & C\ar[d,"p"] \\
S'\ar[r,"f"] & S
\end{tikzcd}
\]
is simply the map that carries a linear map $T_{c}C'_{p'(c)}\rightarrow T_xX$ in the fibre over $(x,c)$ to the linear map $T_{g(c)}C_{fp'(c)}\rightarrow T_xX$ induced by the linear isomorphism $T_{c}C'_{p'(c)}\rightarrow T_{g(c)}C_{fp'(c)}$; to see this, it suffices to observe that this map fits as the upper horizontal map in a pullback diagram
\[
\begin{tikzcd}
 J^1_{C'/S'}(X\times C') \ar[d]\ar[r] &  J^1_{C/S}(X\times C) \ar[d] \\
 S'\ar[r] & S
\end{tikzcd}
\]
of manifolds. The \infcat $\fun_{\mathsf{Mfd}}(\Of_{\mathsf{Surf}_{\C}},\fun(\Delta^1,\mathsf{Mfd}))$ is (the nerve of) an ordinary category so to produce the Cauchy-Riemann map it suffices to provide for each $(C\rightarrow S,j)$ a map 
\[ CR:J^1_{C/S}(X\times C)\longrightarrow J^1_{C/S}(X\times C)  \]
such that the diagram 
\[
\begin{tikzcd}
J^1_{C/S}(X\times C)\ar[dr]\ar[rr,"CR"] && J^1_{C/S}(X\times C)\ar[dl]\\
& C
\end{tikzcd}
\]
commutes, and for each morphism $(C'\rightarrow S',j')\rightarrow (C\rightarrow S,j)$ in the category $\Of_{\mathsf{Surf}_{\C}}$, the diagram 
\[
\begin{tikzcd}
J^1_{C'/S'}(X\times C')\ar[d]\ar[r,"CR"] & J^1_{C'/S'}(X\times C') \ar[d]\\
J^1_{C/S}(X\times C)\ar[r,"CR"] & J^1_{C/S}(X\times C)
\end{tikzcd}
\]
commutes. Under the identifications above, we define the Cauchy-Riemann map as the map that carries a linear map $L:T_{c}C_{p(c)}\rightarrow T_xX$ over a pair $(x,c)$ to the linear map 
\[1/2(L +J|_{T_xX}\circ L\circ j_{p(c)}|_{T_cC_{p(c)}}):T_{c}C_{p(c)}\rightarrow T_xX\]
It is trivial to check that the two diagrams above commute. Similarly, we may define the \emph{zero map} as the map that carries a linear map $L:T_{c}C_{p(c)}\rightarrow T_xX$ over a pair $(x,c)$ to the zero map $0:T_{c}C_{p(c)}\rightarrow T_xX$. Let us now  define a differential moduli problem over $\mathsf{Surf}_{\C}$ via the quintuple
\[ \icate:=(J^1_{\widetilde{\mathsf{Surf}}_{\C}/\mathsf{Surf}_{\C}}(X\times \widetilde{\mathsf{Surf}}_{\C}),X\times \widetilde{\mathsf{Surf}}_{\C},X\times \widetilde{\mathsf{Surf}}_{\C},CR,0).  \]
It is standard that this is an elliptic moduli problem (recall that this is a pointwise condition on $\mathsf{Surf}_{\C}$). Since $\widetilde{\mathsf{Surf}}_{\C}\rightarrow\mathsf{Surf}_{\C}$ is proper, the elliptic representability theorem guarantees that the derived $\cinfty$-stack $\mathcal{M}(X,J):=\mathsf{Sol}(\icate)$ is relatively representable by quasi-smooth derived $\cinfty$-schemes over $\mathsf{Surf}_{\C}$.
\begin{rmk}
It is straightforward to incorporate families of almost complex structures on $X$, or even families of almost complex manifolds by replacing $\mathsf{Surf}_{\C}$ with a suitable moduli stack $S$ that has the property that a map $M\rightarrow S$ corresponds to a pair of an $S$-family of compact complex surfaces and an $S$-family of almost complex manifolds. We can also add marked points by adding to objects $(C\rightarrow S,j)$ of $\Of_{\mathsf{Surf}_{\C}}$ a number of sections $S\rightarrow C$ that the morphisms are required to intertwine with.
\end{rmk}
\begin{rmk}
It follows from Kodaira-Spencer theory \cite{KodairaSpencer} that $\mathsf{Surf}_{\C}$ is a smooth Artin $\cinfty$-stack. As a consequence $\mathcal{M}(X,J)$ is a quasi-smooth derived Artin $\cinfty$-stack.
\end{rmk}

\appendix

\section{Local properties of morphisms and diagrams}
Several key arguments in this work use in a crucial manner that certain properties of morphisms (of smooth or derived stacks) are \emph{local (on the target)}. This appendix offers some technical tools surrounding locality (in particular pertaining to the notion of a \emph{Cartesian transformation}) that come in handy in the main text. We summarizes various characterizations of local properties of morphisms in general \inftopoi and finally generalize to local properties of \emph{diagrams}, which feature in our construction of the jet stack functor of Section 3.3.
\begin{defn}(\cite[Definition 6.1.3.8]{HTT})
Let $\xtop$ be an \inftop and let $P$ be a property of morphisms of $\xtop$ spanning a full subcategory $\Of^P\subset \fun(\Delta^1,\xtop)$. We say that the property $P$ is a \emph{local} property if the following conditions are satisfied.
\begin{enumerate}[$(1)$]
    \item The property $P$ is stable under pullbacks by arbitrary morphisms of $\xtop$ so that the composition 
    \[\Of^P\subset \fun(\Delta^1,\xtop)\overset{\ev_1}{\longrightarrow} \xtop \]
    is a Cartesian fibration. 
    \item The functor 
    \[  \Of^P:\xtop^{op} \longrightarrow \catinfh \]
    obtained as the straightening of the fibration $\ev_1|_{\Of^P}$ preserves small limits. 
\end{enumerate}
\end{defn}
\begin{rmk}
Note that a for a property $P$ of morphisms stable under pullbacks, should $f$ have the property $P$, then any map equivalent to $f$ has the property $P$ as well.
\end{rmk}
Up to technical boundedness assumptions, the class of local properties of morphisms is precisely the class of properties that admit a \emph{classifying object} in $\xtop$. For the following definition, we will write
\[ \Of^{(P)}\subset \Of^{P} \]
for the subcategory on the Cartesian morphisms of $\Of^P$.
\begin{defn}
Let $\xtop$ be an \inftop and let $P$ be a property of morphisms of $\xtop$ stable under pullback. We say that an object $\Omega(P)$ together with a morphism $f:U(P)\rightarrow \Omega(P)$ having the property $P$ is a \emph{classifying object for $P$} if $f$ is a final object in the \infcat $\Of^{(P)}$.    
\end{defn}
\begin{rmk}
Unraveling the definition, a morphism $U(P)\rightarrow \Omega(P)$ is a classifying object for the property $P$ if for each $X\rightarrow Y$ that has the property $P$, there is a unique (up to a contractible space of choices) pullback diagram 
\[
\begin{tikzcd}
X\ar[d]\ar[r] & U(P)\ar[d] \\
Y\ar[r] & \Omega(P).
\end{tikzcd}
\]
Moreover, for each object $Y\in \xtop$, the functors 
\[  
\begin{tikzcd}
\Of^{(P)}\ar[dr,"q"'] & \Of^{(P)}_{/U(P)\rightarrow \Omega(P)}\ar[d]\ar[l,"\simeq"] \ar[r,"\simeq"] & \xtop_{/\Omega(P)} \ar[dl]\\
& \xtop    
\end{tikzcd}
\]
over $\xtop$ induces an equivalence $q^{-1}(Y)\simeq \Hom_{\xtop}(Y,\Omega(P))$. Note that the upper right functor is an equivalence because $\Of^{(P)}$ is a right fibration with final object $U(P)\rightarrow \Omega(P)$ \cite[Proposition 4.4.4.5]{HTT}.
\end{rmk}
\begin{prop}\label{prop:localmorphismclassifyingobject}
Let $P$ be a property of morphisms of an \inftop $\xtop$ stable under pullbacks, then $P$ admits a classifying object if and only if $P$ is a local property and for each $X\in\xtop$, the full subcategory of $\xtop_{/X}$ spanned by morphisms having the property $P$ is essentially small.
\end{prop}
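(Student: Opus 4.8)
This is a standard-type statement about classifying objects for local properties of morphisms in an \inftopt; the strategy is to deduce it from the universal case $P = \text{all morphisms}$ and the characterization of local properties via descent already available in \cite[Section 6.1.3]{HTT}. The plan is as follows.

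For the ``if'' direction, suppose $P$ is local and each $\xtop_{/X}^{P}$ is essentially small. Recall that an \inftop $\xtop$ is locally presentable, hence admits object classifiers up to a bounded cardinal: for $\kappa$ sufficiently large there is a morphism $\widetilde{U}_\kappa \to \mathrm{Obj}_\kappa$ classifying relatively $\kappa$-compact morphisms (this is \cite[Theorem 6.1.6.8]{HTT}). First I would choose $\kappa$ large enough that every morphism having the property $P$ is relatively $\kappa$-compact; this uses precisely the essential smallness hypothesis on the full subcategories $\xtop_{/X}^{P}$ together with the fact that $P$ is stable under pullback (so that whether a morphism has property $P$ is detected on the essentially small slices). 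Now consider the subfunctor of $\mathrm{Obj}_\kappa$ cut out pointwise by $P$: for each $X$, let $\Omega(P)(X) \subset \Hom_{\xtop}(X, \mathrm{Obj}_\kappa)$ be the union of components corresponding (under the object classifier equivalence $\Hom_\xtop(X,\mathrm{Obj}_\kappa) \simeq (\xtop_{/X}^{\kappa\text{-cpt}})^{\simeq}$) to morphisms with property $P$. The content is that this presheaf $\Omega(P)$ is representable and that the classifying map restricts correctly. Representability follows from locality: condition $(2)$ in the definition of a local property says exactly that $\Of^{P}: \xtop^{op} \to \catinfh$ preserves small limits, and since $\xtop$ is presentable, a limit-preserving presheaf valued in spaces (note $\Of^{(P)} \to \xtop$ being a right fibration makes the relevant slices into spaces, not arbitrary \infcats) is representable by the adjoint functor theorem — this is where I would invoke \cite[Proposition 4.4.4.5]{HTT} or \cite[Proposition 5.5.2.7]{HTT}. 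Then $\Omega(P)$ is an object of $\xtop$, and pulling back $\widetilde{U}_\kappa$ along $\Omega(P) \hookrightarrow \mathrm{Obj}_\kappa$ produces $U(P) \to \Omega(P)$; one checks it has property $P$ (because $P$ is stable under pullback and the universal $\kappa$-compact morphism restricted to $\Omega(P)$ is ``generic'' with property $P$), and that it is terminal in $\Of^{(P)}$ by unwinding the pointwise description above — a morphism $f: X \to Y$ with property $P$ corresponds to a point of $\Omega(P)(Y)$, hence a map $Y \to \Omega(P)$, unique up to contractible choice, fitting $f$ into a pullback square.

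For the ``only if'' direction, assume $P$ admits a classifying object $U(P) \to \Omega(P)$. Essential smallness of $\xtop_{/X}^{P}$ is immediate from the remark following the definition of classifying object: $q^{-1}(Y) \simeq \Hom_{\xtop}(Y, \Omega(P))$ is a space (small since $\xtop$ is locally small), and this space is equivalent to the maximal subgroupoid of $\xtop_{/Y}^{P}$, which forces $\xtop_{/Y}^{P}$ to be essentially small once one knows its mapping spaces are small (the latter follows since $\fun(\Delta^1,\xtop)$ is locally small). Locality has two parts: stability under pullback is a hypothesis, and the limit-preservation of $\Of^{P}$ follows because, via the chain of equivalences $\Of^{(P)} \simeq \xtop_{/\Omega(P)}$, the functor $\Of^{(P)}: \xtop^{op} \to \widehat{\spa}$ is corepresented by $\Omega(P)$, hence continuous; one then notes that $\Of^{P}$ and $\Of^{(P)}$ have the same straightening as functors to $\catinfh$ because the Cartesian fibration $\ev_1|_{\Of^{P}} \to \xtop$ has the property that every morphism over an equivalence is Cartesian and, more to the point, its straightening factors through spaces precisely because pullback squares with property-$P$ legs are classified — this is the observation already used implicitly in the ``if'' direction.

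The main obstacle is bookkeeping around the cardinal bound: one must verify that a single regular cardinal $\kappa$ can be chosen simultaneously large enough that (a) $\xtop$ has a $\kappa$-object classifier and (b) every $P$-morphism is relatively $\kappa$-compact, and that this choice does not interact badly with the locality condition (relatively $\kappa$-compact morphisms are themselves a local property for $\kappa$ sufficiently large, by \cite[Remark 6.1.6.5]{HTT}, which is the compatibility one needs). Everything else is a diagram chase through the object-classifier formalism. I would structure the write-up to isolate this cardinal-selection lemma first, then treat the two implications as essentially formal consequences. Since the statement is close to \cite[Proposition 6.1.6.7]{HTT} and its surrounding discussion, much of this can be cited rather than reproven.
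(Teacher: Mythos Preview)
The paper does not supply a proof of this proposition; it is stated immediately after the definition of a classifying object and followed only by a remark introducing the terminology ``small'', with the appendix's opening sentence indicating that the material is meant as ``a minor elaboration of certain results from \cite[Section 6.1.3]{HTT}''. In other words, the author treats this as a known fact from Lurie's object-classifier formalism (the content of \cite[Section 6.1.6]{HTT}, especially Proposition 6.1.6.7 and Theorem 6.1.6.8) rather than something requiring an argument here.

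Your proposal is therefore not a deviation from the paper's proof but rather a fleshed-out version of what the paper leaves implicit. The sketch is essentially sound. One point worth tightening: in the ``only if'' direction you assert that $\Of^{P}$ and $\Of^{(P)}$ ``have the same straightening'', which is not literally true---the former straightens to $X\mapsto \Of^{P}_X$ (an \infcat) while the latter to $X\mapsto (\Of^{P}_X)^{\simeq}$ (a space). What you need instead is the observation that the ambient Cartesian fibration $\fun(\Delta^1,\xtop)\to\xtop$ already satisfies descent (this is precisely the Giraud-type axiom for \inftopoit), so the functor $\Of^{P}_Y\to \lim_K\Of^{P}_{f(k)}$ is always fully faithful as a restriction of an equivalence; essential surjectivity is then a statement about objects, hence detected on cores, and that is what representability of $\Of^{(P)}$ gives you. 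With this clarification the argument goes through.
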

\begin{rmk}
We will say that a property $P$ of morphisms in an \inftop is \emph{small} if for each $X\in\xtop$, the full subcategory $\Of^P_X\subset \xtop_{/X}$ spanned by morphisms having the property $P$ is essentially small.    
\end{rmk}
We have introduced local properties of morphisms in terms of a sheaf condition on the functor 
\[ \xtop^{op}\longrightarrow \catinfh,\quad\quad Y\longmapsto \{\text{Morphisms }X\rightarrow Y\text{ having the property }P\}.  \]
A more pedestrian way to define local properties is to demand that the verification that a morphism has the required property can be done `locally', that is, $X\rightarrow Y$ is a $P$-morphism precisely if we can find a cover $\{U_i\rightarrow Y\}_i$ such that $U_i\times_YX\rightarrow U_i$ is a $P$-morphism. We verify that this latter definition coincides with our initial one. For this, we need the following important notion.
\begin{defn}\label{defn:cartesiantrans}
Let $\icat$ be an \infcatt, $K$ a simplicial set and let $\alpha:f\rightarrow g$ be a natural transformation of diagrams $K\rightarrow \icat$. The natural transformation $\alpha$ is \emph{Cartesian} if for each arrow $k\rightarrow k'$, the diagram
\[
\begin{tikzcd}
f(k)\ar[d]\ar[r,"\alpha(k)"] &g(k) \ar[d] \\
f(k') \ar[r,"\alpha(k')"] & g(k')
\end{tikzcd}
\]   
is a pullback square.
\end{defn}
\begin{rmk}\label{rmk:cartesian}
The class of \inftopoi is characterized as the class of presentable \infcats satisfying the following condition: for each small simplicial set $K$ and each natural transformation $\alpha:f\rightarrow g$ of diagrams $f,g:K^{\rhd}\rightarrow \xtop$, in case $\alpha|_{K}$ is a Cartesian transformation and $g$ is a colimit diagram, then $\alpha$ is a Cartesian transformation if and only if $f$ is a colimit diagram (\cite[Theorem 6.1.3.9, Proposition 6.1.3.10]{HTT}).
\end{rmk}

\begin{prop}[Descent]\label{prop:localpropertycharacterize} 
Let $\xtop$ be an \inftop and let $P$ be a property of morphisms of $\xtop$ stable under pullbacks.
Then the following are equivalent.
\begin{enumerate}[$(1)$]
    \item The property $P$ is a local property.
    \item For each small simplicial set $K$ and each natural transformation $\alpha:f\rightarrow g$ of colimit diagrams $f,g:K^{\rhd}\rightarrow \xtop$, in case $\alpha|_{K}$ is a Cartesian transformation and $\alpha(k)\in \Of^P$ for each $k\in K$, then $\alpha(\infty)\in \Of^P$ (note that then $\alpha$ is a Cartesian transformation by the previous remark). 
    \item For each map $X\rightarrow Y$, should there exist a collection of maps $\{U_i\rightarrow Y\}_{i\in I}$ determining an effective epimorphism $\coprod_iU_i\rightarrow Y$ such that for all $i\in I$, the map $X\times_YU_i\rightarrow U_i$ has the property $P$, then $X\rightarrow Y$ has the property $P$. 
\end{enumerate}
\end{prop}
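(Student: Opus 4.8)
The plan is to establish the cycle of implications $(1)\Rightarrow(2)\Rightarrow(3)\Rightarrow(1)$, leaning on the $\infty$-categorical descent characterization of $\infty$-topoi recalled in Remark \ref{rmk:cartesian} and on the straightening of the Cartesian fibration $\ev_1|_{\Of^P}$.

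First I would prove $(1)\Rightarrow(2)$. Assume $P$ is local, i.e. the functor $\Of^P:\xtop^{op}\rightarrow\catinfh$ obtained by straightening $\ev_1|_{\Of^P}$ preserves small limits. Given a Cartesian transformation $\alpha\colon f\rightarrow g$ of colimit diagrams $f,g\colon K^{\rhd}\rightarrow\xtop$ with $\alpha(k)\in\Of^P$ for each $k\in K$, note that since $\alpha|_K$ is Cartesian and $g$ is a colimit diagram, $\alpha$ is itself a Cartesian transformation by Remark \ref{rmk:cartesian}; in particular each $\alpha(k)$ for $k\in K$ is a pullback of $\alpha(\infty)$ along $g(k)\rightarrow g(\infty)$, and $\alpha|_K$ is a diagram $K\rightarrow\Of^P$ whose image under $\ev_1$ is $g|_K$. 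Now $g$ being a colimit diagram means $g(\infty)\simeq\colim_{k\in K} g(k)$, so $g(\infty)$ (viewed via $\xtop\simeq (\xtop^{op})^{op}$) is a limit of the $g(k)$ in $\xtop^{op}$; applying the limit-preserving functor $\Of^P$ and using that $\Of^P(g(k))$ is the space of $P$-morphisms over $g(k)$, the element of $\Of^P(g(\infty))$ determined by the compatible family $\{\alpha(k)\}_{k\in K}$ — which exists because $\alpha|_K$ is Cartesian — is precisely $\alpha(\infty)$, proving $\alpha(\infty)\in\Of^P$. (Technically one should phrase this using that $\Of^{(P)}\rightarrow\xtop$ is a right fibration so that $\{\alpha(k)\}$ and the pullback data assemble into a unique lift; I would spell this out using \cite[Proposition 4.4.4.5]{HTT} and the limit formula.) The hard part here will be keeping the variance straight and ensuring the "compatible family determines a unique $P$-morphism over the colimit" step really uses nothing beyond limit-preservation of $\Of^P$; this is where the argument has its only real content.

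Next, $(2)\Rightarrow(3)$ is a direct application of effective descent. Suppose $\{U_i\rightarrow Y\}_{i\in I}$ determines an effective epimorphism $u\colon\coprod_i U_i\rightarrow Y$ with each $X\times_Y U_i\rightarrow U_i$ in $\Of^P$. Take $K=\simpop$ and let $g\colon K^{\rhd}\rightarrow\xtop$ be the \v{C}ech nerve $\check C(u)^+_\bullet$, a colimit diagram since $u$ is an effective epimorphism, and let $f$ be $\check C(X\times_Y\coprod_i U_i\rightarrow X)^+_\bullet$; the transformation $\alpha\colon f\rightarrow g$ induced by $X\rightarrow\coprod_i U_i$ over $X\rightarrow Y$ is a Cartesian transformation on $\simpop$ (\v{C}ech nerves of a pullback square form a pullback of simplicial diagrams, cf.\ Lemma \ref{lem:cartesiantransfkanext}), and $f$ is a colimit diagram because $f|_{\simpop}$ is the \v{C}ech nerve of a map obtained by base change and colimits are universal. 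Each $\alpha([n])$ for $[n]\in\simpop$ is a coproduct over $(i_0,\dots,i_n)$ of pullbacks of the maps $X\times_Y U_{i}\rightarrow U_{i}$ along $U_{i_0}\times_Y\cdots\times_Y U_{i_n}\rightarrow U_i$, hence lies in $\Of^P$ (using that $P$ is stable under pullback and under the coproducts occurring here — I would note coproduct-stability follows from pullback-stability plus the coproduct decomposition in an $\infty$-topos, or invoke that this is automatic). By $(2)$, $\alpha(\infty)\colon X\rightarrow Y$ is in $\Of^P$.

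Finally $(3)\Rightarrow(1)$: I must show $\Of^P\colon\xtop^{op}\rightarrow\catinfh$ preserves small limits, given that $\ev_1|_{\Of^P}$ is a Cartesian fibration (stability under pullback). Since $\xtop$ is presentable, it suffices to check $\Of^P$ sends small coproducts in $\xtop$ to products and sends \v{C}ech nerves of effective epimorphisms to limits (every object is a colimit of a \v{C}ech-type diagram of a cover). The coproduct case is immediate from pullback-stability and the disjointness of coproducts in an $\infty$-topos, which gives $\xtop_{/\coprod Y_i}\simeq\prod\xtop_{/Y_i}$ restricting to $\Of^P$. For the \v{C}ech nerve case: given an effective epimorphism $u\colon U\rightarrow Y$, we need $\Of^P(Y)\simeq\lim_{[n]\in\simp}\Of^P(\check C(u)_n)$. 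The map from $\Of^P(Y)$ to the limit is an equivalence on the nose for the ambient functor $\xtop_{/-}$ (descent for the slices of an $\infty$-topos), so it remains to check it restricts to the full subcategories of $P$-morphisms: given a compatible family of $P$-morphisms over the $\check C(u)_n$, the corresponding morphism $X\rightarrow Y$ obtained by descent pulls back over $U=\check C(u)_0$ to a $P$-morphism, and $\{U\rightarrow Y\}$ is an effective epimorphic cover, so $(3)$ forces $X\rightarrow Y\in\Of^P$; conversely restriction of a $P$-morphism is a $P$-morphism by pullback-stability. Assembling the coproduct and \v{C}ech cases via the standard presentation of limits over $\xtop^{op}$ concludes. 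I expect the main obstacle to be the bookkeeping in $(3)\Rightarrow(1)$ — precisely, articulating which limits suffice to check and transferring the descent equivalence for $\xtop_{/-}$ to its full subcategory $\Of^P\subset\fun(\Delta^1,\xtop)$ — rather than any genuinely new mathematical input; the cited \cite[Theorem 6.1.3.9]{HTT} does essentially all the work.
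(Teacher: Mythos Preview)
Your overall structure matches the paper's, which delegates $(1)\Leftrightarrow(2)$ to \cite[Lemma 6.1.3.5]{HTT} and then proves $(2)\Leftrightarrow(3)$; you instead run a cycle $(1)\Rightarrow(2)\Rightarrow(3)\Rightarrow(1)$. Two points need attention.

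In $(2)\Rightarrow(3)$, your parenthetical that coproduct-stability of $P$ ``follows from pullback-stability plus the coproduct decomposition in an $\infty$-topos, or \ldots\ is automatic'' is incorrect: disjointness of coproducts only yields the reverse implication (if $\coprod_i f_i$ has $P$ then each $f_i$ does, by pulling back along a summand inclusion). The paper fixes this by first applying $(2)$ to \emph{discrete} simplicial sets $K$ to deduce that $P$ is stable under small coproducts, and only then runs the \v{C}ech-nerve argument. This is an easy repair but a genuine gap as written.

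In $(3)\Rightarrow(1)$, the reduction ``it suffices to check $\Of^P$ sends small coproducts to products and \v{C}ech nerves of effective epimorphisms to limits'' is unjustified; the parenthetical about every object being a \v{C}ech-type colimit does not imply that limit-preservation for $\Of^P$ can be tested on these two shapes alone. Your underlying idea---use descent for the ambient slice functor $\xtop_{/-}$ and then invoke $(3)$ to show the equivalence restricts to the full subcategories $\Of^P$---is sound, and in fact applies directly to an arbitrary colimit diagram $g:K^{\rhd}\rightarrow\xtop$ without any reduction step. The paper sidesteps the issue entirely by proving $(3)\Rightarrow(2)$ in one stroke: given $\alpha$ as in $(2)$, the family $\{g(k)\rightarrow g(\infty)\}_{k\in K}$ determines an effective epimorphism $\coprod_{k}g(k)\rightarrow g(\infty)$ by \cite[Lemma 6.2.3.13]{HTT}, and Cartesianness of $\alpha$ identifies $g(k)\times_{g(\infty)}f(\infty)\rightarrow g(k)$ with $\alpha(k)\in\Of^P$, so $(3)$ gives $\alpha(\infty)\in\Of^P$. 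This is both shorter and avoids the unjustified reduction.
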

\begin{rmk}
Notice that $(3)$ implies that a local property of morphisms is stable under small coproducts.    
\end{rmk}
Before we give the proof, we provide the following useful characterization of Cartesian transformations on cones, which we employ a few times in this work.
\begin{lem}\label{lem:cartesiantransfkanext}
Let $\icat$ be an \infcatt, let $K$ be a small \infcatt. Let $\ev_{\infty}:\fun(K^{\rhd},\icat)\rightarrow\icat$ be the functor evaluating at the cone point, then the following hold true.
\begin{enumerate}[$(1)$]
\item let $\alpha:f\rightarrow g$ be a natural transformation of functors $f,g:K^{\rhd}\rightarrow\icat$, then $\alpha$ is a Cartesian transformation if and only if $\alpha$ is $\ev_{\infty}$-Cartesian.
\item Suppose that $\icat$ admits pullbacks. Let $g:K^{\rhd}\rightarrow \icat$ be a diagram and let $X\rightarrow g(\infty)$ be morphism in $\icat$ to the value of $g$ on the cone point. Then there exists a diagram $f:K^{\rhd}\rightarrow \icat$ and a  Cartesian transformation $\alpha:f\rightarrow g$ such that $\alpha(\infty)$ is the map $X\rightarrow g(\infty)$ which is unique up to a contractible space of choices. 
\item Let $\alpha:f\rightarrow g$ be a natural transformation between augmented simplicial objects $\simpopplus\rightarrow\icat$. Suppose that $g$ is a \v{C}ech nerve, then $\alpha$ is Cartesian if and only if $f$ is a \v{C}ech nerve and the diagram
\[
\begin{tikzcd}
f(0)\ar[d]\ar[r] & f(-1)\ar[d]\\
g(0)\ar[r] & g(-1)
\end{tikzcd}
\] 
is a pullback.
\end{enumerate}
\end{lem}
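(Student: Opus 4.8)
The plan is to prove the three parts in order, each reducing to a statement about coCartesian/Cartesian edges of the evaluation fibration $\ev_\infty:\fun(K^\rhd,\icat)\to\icat$, whose basic properties are recorded in \cite[Section 3.1.2, 4.3.2]{HTT}. First I would recall that since $K^\rhd = K\star\Delta^0$ has $\infty$ (the cone point) as a final object, the evaluation functor $\ev_\infty$ is well-behaved: a morphism in $\fun(K^\rhd,\icat)$ is $\ev_\infty$-Cartesian precisely when, for every object $k\in K^\rhd$, the naturality square against the (essentially unique) arrow $k\to\infty$ is a pullback. For $(1)$, the statement is then essentially a rephrasing once one unwinds what an $\ev_\infty$-Cartesian morphism is: I would argue that a transformation $\alpha\colon f\to g$ is $\ev_\infty$-Cartesian if and only if every square
\[
\begin{tikzcd}
f(k)\ar[d]\ar[r] & g(k)\ar[d]\\
f(\infty)\ar[r] & g(\infty)
\end{tikzcd}
\]
is a pullback, and then that the collection of these squares being pullbacks is equivalent to \emph{all} naturality squares of $\alpha$ (over arbitrary arrows $k\to k'$ of $K^\rhd$, not just $k\to\infty$) being pullbacks, by the pasting lemma for pullbacks applied to the composite $k\to k'\to\infty$. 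This is the Cartesianness of $\alpha$ in the sense of Definition \ref{defn:cartesiantrans}. The main subtlety here, and I expect it to be the one genuinely fiddly point, is that the pasting argument requires knowing the square over $k'\to\infty$ is a pullback in order to deduce equivalence of the square over $k\to k'$ with the square over $k\to\infty$; this is fine because $\infty$ is terminal in $K^\rhd$ so \emph{every} object maps to it, but one must check the direction of the equivalences in the pasting lemma carefully (given outer + right squares pullback implies left square pullback, and conversely).

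For $(2)$, I would invoke $(1)$ to translate the problem: constructing a Cartesian transformation $\alpha\colon f\to g$ with $\alpha(\infty)$ equal to a prescribed map $X\to g(\infty)$ is the same as constructing an $\ev_\infty$-Cartesian lift of the morphism $X\to g(\infty)$ in $\icat$ to a morphism in $\fun(K^\rhd,\icat)$ terminating at $g$. Since $\icat$ admits pullbacks, $\fun(K^\rhd,\icat)$ admits pullbacks computed pointwise, and in particular $\ev_\infty$ is a Cartesian fibration: the existence and essential uniqueness of the $\ev_\infty$-Cartesian lift is then \cite[Proposition 4.3.1.9 or Corollary 4.3.1.11]{HTT} — more simply, one can observe directly that $f(k):=g(k)\times_{g(\infty)}X$ (pullback along $g(k)\to g(\infty)$) assembles into a functor $K^\rhd\to\icat$ via the universal property of pullbacks, and the projection to $g$ is Cartesian by $(1)$, while the space of such lifts is contractible because $\ev_\infty$-Cartesian lifts over a fixed base morphism with fixed target form a contractible space. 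The only care needed is the passage from a pointwise formula to an actual functor out of $K^\rhd$, which is standard (e.g.\ realize it as a right Kan extension, or cite that $\ev_\infty$ being a Cartesian fibration supplies the lift).

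For $(3)$, the plan is to combine $(1)$ and $(2)$ with Remark \ref{rmk:cartesian} (the descent characterization of $\infty$-topoi) and the characterization of \v{C}ech nerves as colimit diagrams of their restrictions to $\simpop$ that are moreover groupoid objects, i.e.\ with $g(0)\to g(-1)$ playing the role of an effective epimorphism and the $\simpop$-part recovering the iterated fibre products. Concretely: suppose $g\colon\simpopplus\to\icat$ is a \v{C}ech nerve. If $\alpha\colon f\to g$ is Cartesian, then restricting to $\simpop$ and using that $g|_{\simpop}$ is a groupoid object whose simplices are computed as fibre products over $g(-1)$, a diagram chase with pullback squares shows $f|_{\simpop}$ is again a groupoid object with $f(n)\simeq f(0)\times_{f(-1)}\cdots\times_{f(-1)}f(0)$, and the square in the statement is pullback by definition of Cartesianness on the edge $[0]\to[-1]$; that $f$ is the \v{C}ech nerve of $f(0)\to f(-1)$ then follows because a Cartesian transformation of augmented simplicial objects with effective-epimorphic base edge has effective-epimorphic total edge (stability of effective epimorphisms under pullback) together with $(1)$ identifying the whole transformation as $\ev_\infty$-Cartesian over $\simpopplus\cong(\simpop)^\rhd$ — wait, that isn't quite the cone, so instead I would use directly that a \v{C}ech nerve is determined by its edge $[0]\to[-1]$ and that pullback of a \v{C}ech nerve along a map to its base is again a \v{C}ech nerve \cite[Lemma 6.1.3.18]{HTT}. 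Conversely, if $f$ is a \v{C}ech nerve and the displayed square is a pullback, then comparing the iterated-fibre-product descriptions of $f(n)$ and $g(n)$ and using the pasting lemma shows every naturality square of $\alpha$ is a pullback, hence $\alpha$ is Cartesian. The main obstacle across all three parts is purely bookkeeping: keeping straight which squares are assumed pullback and invoking the pasting lemma in the correct direction; there is no deep input beyond the elementary theory of Cartesian fibrations and the pullback-pasting lemma.
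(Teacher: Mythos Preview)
Your proposal for parts $(1)$ and $(2)$ matches the paper's argument essentially verbatim: the paper uses the categorical equivalence $K\times\Delta^1\coprod_{K\times\{1\}}\Delta^0\simeq K^{\rhd}$ together with \cite[Lemma 6.1.1.1]{HTT} to identify $\ev_\infty$-Cartesian morphisms as those for which every square over $k\to\infty$ is a pullback, and then applies pullback-pasting exactly as you describe; for $(2)$ it observes $\ev_\infty$ is a Cartesian fibration when $\icat$ has pullbacks, again via the same lemma.

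For $(3)$, your forward direction is also the same. A small correction: your hesitation that $\simpopplus$ ``isn't quite the cone'' is unfounded --- since $[-1]$ is initial in $\simp_+$, it is terminal in $\simpop_+$, so $\simpopplus\cong(\simpop)^{\rhd}$ with $[-1]$ as the cone point, and parts $(1)$--$(2)$ apply directly. For the converse, your direct comparison of iterated fibre products is a legitimate alternative, but the paper takes a slightly cleaner route: assuming first that $\icat$ has finite limits, it uses $(2)$ to construct a Cartesian $\alpha':f'\to g$ with $\alpha'(-1)=\alpha(-1)$, obtains a factorisation $\alpha=\alpha'\circ\beta$, checks that $\beta$ is an equivalence on $(\simpopplus)^{\leq 0}$, and then invokes that \v{C}ech nerves are right Kan extensions of their restrictions to $(\simpopplus)^{\leq 0}$ to conclude $\beta$ is an equivalence everywhere; the general case follows by embedding $\icat$ in $\pshv(\icat)$. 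This avoids having to verify by hand that the maps in your iterated-pullback identification agree with those coming from $\alpha$, which is the bookkeeping cost of your approach. Both work.
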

\begin{proof}
The categorical equivalence $K\times\Delta^1\coprod_{K\times\{1\}}\Delta^0\overset{\simeq}{\rightarrow}K^{\rhd}$ shows that the functor $\ev_{\infty}$ fits as the left vertical map in a homotopy pullback diagram 
\[
\begin{tikzcd}
\fun(K^{\rhd},\icat)\ar[d]\ar[r] & \fun(\Delta^1,\fun(K,\icat))\ar[d,"\ev_1"] \\
\icat \ar[r] & \fun(K,\icat)
\end{tikzcd}
\]
among \infcatst. It follows from \cite[Lemma 6.1.1.1]{HTT} that a morphism $f\rightarrow g$ in $\fun(K^{\rhd},\icat)$ is $\ev_{\infty}$-Cartesian precisely if for all $k\in K$, the diagram 
\[
\begin{tikzcd}
f(k)\ar[d]\ar[r,"\alpha(k)"] &g(k) \ar[d] \\
f(\infty) \ar[r,"\alpha(k)"] & g(\infty)
\end{tikzcd}
\]   
is a pullback square, so $(1)$ follows from the pasting property of pullback squares. Should $\icat$ admit pullbacks, then $\fun(K,\icat)$ admits pullbacks taken objectwise. It follows from \cite[Lemma 6.1.1.1]{HTT} that the functor $\ev_{\infty}$ is a Cartesian fibration, so that the map $X\rightarrow g(\infty)$ has a Cartesian lift terminating at $g$ unique up to contractible ambiguity which by $(1)$ yields the desired Cartesian transformation. We now prove $(3)$. Suppose that $\alpha:f\rightarrow g$ is a Cartesian transformation and that $g:\simpopplus\rightarrow\icat$ is a \v{C}ech nerve, then one readily verifies using pasting of pullback squares that $f|_{\simpop}$ is a groupoid object and that 
\[
\begin{tikzcd}
f(1)\ar[r]\ar[d] & f(0)\ar[d] \\
f(0)\ar[r] & f(-1)
\end{tikzcd}
\]
is a pullback square, that is $f$ is a \v{C}ech nerve. For the converse, we first assume that $\icat$ admits finite limits. Then by $(2)$ we may choose a Cartesian transformation $\alpha':f'\rightarrow g$ which fits into a diagram 
\[
\begin{tikzcd}
& f'\ar[dr,"\alpha'"] \\
f\ar[rr,"\alpha"]\ar[ur,"\beta"] && g.
\end{tikzcd}
\]  
Since $\alpha'$ is Cartesian and we have assumed that $\alpha$ induces an equivalence $f(0)\simeq g(0)\times_{g(-1)}f(-1)$, $\beta$ induces an equivalence between $\alpha'|_{(\simpopplus)^{\leq 0}\times\Delta^1}$ and $\alpha|_{(\simpopplus)^{\leq 0}\times\Delta^1}$. By assumption, $f$ is a \v{C}ech nerve and we have just shown that $f'$ is a \v{C}ech nerve. Since \v{C}ech nerves are right Kan extensions of their restriction to $(\simpopplus)^{\leq 0}$, we see that $\beta$ is an equivalence. The case of a general \infcat follows by embedding $\icat$ in $\pshv(\icat)$.
\end{proof}
\begin{proof}[Proof of Proposition \ref{prop:localpropertycharacterize}]
The equivalence of $(1)$ and $(2)$ is the content of \cite[Lemma 6.1.3.5]{HTT} so we are reduced to proving that $(2)$ and $(3)$ are equivalent. Suppose $(2)$ holds. First, we apply $(2)$ to the collection of small \emph{discrete} simplicial sets to deduce that the collection of morphisms having the property $P$ is stable under small coproducts. Now let $X\rightarrow Y$ be any map and suppose we are given a small collection $\{U_i\rightarrow Y\}_i$ determining an effective epimorphism $h:\coprod_iU_i\rightarrow Y$ such that $U_i\times_YX\rightarrow U_i$ has the property $P$ for all $i$. Let $\check{C}(h)_{\bullet}:\simpop_{+}\rightarrow\xtop$ be a \v{C}ech nerve of $h$, then by using Lemma \ref{lem:cartesiantransfkanext}, we can construct a Cartesian transformation 
\[  \alpha:\simpop_{+}\times\Delta^1\longrightarrow\xtop \]
such that $\alpha|_{\{\infty\}\times\Delta^1}$ is the morphism $X\rightarrow Y$. Since $\xtop$ is an \inftop and $\alpha$ is a Cartesian transformation, the map $\alpha:\simpop_{+}\rightarrow\fun(\Delta^1,\xtop)$ is a colimit diagram. For each $[n]\in \simp$, the map $\alpha|_{\{[n]\}\times\Delta^1}$ is a pullback of the map
\[  \coprod_i X\times_YU_i\longrightarrow \coprod_i U_i,  \]
which has the property $P$ as every cofactor has the property $P$ by assumption and the property $P$ is stable under small coproducts as we have just verified. Since the property $P$ is stable under pullbacks, we deduce that $\alpha$ is a diagram satisfying the conditions specified in $(2)$ so we conclude. For the converse, we suppose we are given a natural transformation $\alpha:f\rightarrow g$ of diagrams $f,g:K^{\rhd}\rightarrow \xtop$ as in $(2)$, then we wish to prove that the map $\colim_Kf\rightarrow \colim_Kg$ has the property $P$. The result now follows from $(3)$ since \cite[Lemma 6.2.3.13]{HTT} guarantees that the collection $\{g(k)\rightarrow \colim_Kg\}_{k\in K}$ determines an effective epimorphism $\coprod_{k\in K}g(k)\rightarrow \colim_Kg$, and the assumption that $\alpha$ is a Cartesian transformation ensures that the map $g(k)\times_{\colim_{K}g}\colim_{K}f\rightarrow g(k)$ can be identified with the map $f(k)\rightarrow g(k)$, which has the property $P$ by assumption.
\end{proof}
Any \inftop $\xtop$ is a left exact localization of an \infcat of presheaves $\pshv(\icat)$. We now investigate how properties of morphisms of $\xtop$ relate to presheaves on $\icat$.
\begin{prop}\label{prop:denselocal}
Let $\xtop$ be an \inftop and let $\icat$ be a small \infcat equipped with a fully faithful dense embedding $f:\icat\hookrightarrow \xtop$, so that the induced functor $\xtop\rightarrow\pshv(\icat)$ is fully faithful (for instance, if $\kappa$ is a regular cardinal for which $\xtop$ is $\kappa$ compactly generated, then $\icat\subset\xtop$ can be the full subcategory spanned by $\kappa$-compact objects). Let $P$ be a property of morphisms of $\xtop$ stable under pullbacks and let $F_P$ be the (possibly large) presheaf on $\icat$ associated to the right fibration $\Of^{(P)}\times_{\xtop}\icat$. Then the following are equivalent.
\begin{enumerate}[$(1)$]
\item The property $P$ is a local property. 
\item For each map $X\rightarrow Y$, should there exist a collection of morphisms $\{Z_i\rightarrow Y\}_i$ with $Z_i$ in the essential image of $f$ determining an effective epimorphism $\coprod_iZ_i\rightarrow Y$ such that $Z_i\times_YX\rightarrow Z_i$ has the property $P$ for all $i$, then $X\rightarrow Y$ has the property $P$.
\end{enumerate}
If either of these conditions are satisfied and $P$ is small, then the object $F_P$ lies in the essential image of the functor $\xtop\hookrightarrow\pshv(\icat)$ and is a classifying object for the property $P$.  
\end{prop}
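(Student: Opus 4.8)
The statement combines three assertions: the equivalence $(1) \Leftrightarrow (2)$, and — under the assumption that $P$ is local and small — that $F_P$ is representable by an object of $\xtop$ and that this object is a classifying object for $P$. The strategy is to deduce everything from the descent characterization of local properties (Proposition \ref{prop:localpropertycharacterize}), from Proposition \ref{prop:localmorphismclassifyingobject}, and from the elementary behavior of effective epimorphisms under the dense embedding $f$.

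\textbf{Step 1: $(1) \Rightarrow (2)$.} This is immediate from $(3)$ of Proposition \ref{prop:localpropertycharacterize}: a local property of morphisms is tested against \emph{any} effective epimorphism into the target, in particular against one whose source is a coproduct of objects in the essential image of $f$. No density is needed here.

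\textbf{Step 2: $(2) \Rightarrow (1)$, via condition $(3)$ of Proposition \ref{prop:localpropertycharacterize}.} Given $X \to Y$ and an effective epimorphism $\coprod_i U_i \to Y$ with each $U_i \times_Y X \to U_i$ having property $P$, I would first use that $\xtop$ is compactly generated to write each $U_i$ as a colimit $U_i \simeq \colim_{Z \in \icat_{/U_i}} Z$ of objects $Z$ in the essential image of $f$; by \cite[Lemma 6.2.3.13]{HTT} the resulting collection $\{Z \to U_i \to Y\}$ (ranging over all $i$ and all such $Z$) still determines an effective epimorphism $\coprod Z \to Y$ — since effective epimorphisms compose and are stable under coproducts. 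For each such $Z \to Y$, the map $Z \times_Y X \to Z$ is a pullback of $U_i \times_Y X \to U_i$ along $Z \to U_i$, hence has property $P$ since $P$ is stable under pullbacks. Applying hypothesis $(2)$ now shows $X \to Y$ has property $P$, which is exactly condition $(3)$ of Proposition \ref{prop:localpropertycharacterize}; that proposition then gives locality. I expect Step 2 to be the main (though still routine) obstacle, in that one must carefully track the bookkeeping of iterated effective epimorphisms.

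\textbf{Step 3: representability and classification of $F_P$.} Assume now $P$ is local and small. Smallness says the fibres of $\Of^{(P)} \times_{\xtop} \icat \to \icat$ are essentially small, so $F_P: \icat^{op} \to \spa$ is a genuine (small) presheaf. To show $F_P$ lies in the essential image of $\xtop \hookrightarrow \pshv(\icat)$, i.e. that it is a sheaf for the canonical/induced topology, I would invoke that being a local property is equivalent to the descent property of Proposition \ref{prop:localpropertycharacterize}(2): a Cartesian transformation over a colimit diagram with all fibre-maps in $\Of^P$ has its colimit-map in $\Of^P$. Concretely, by Proposition \ref{prop:localmorphismclassifyingobject}, $P$ admits a classifying object $U(P) \to \Omega(P)$ in $\xtop$, with the property that for each $Y \in \xtop$ one has $\Of^P_Y \simeq \Hom_{\xtop}(Y, \Omega(P))$ naturally in $Y$; restricting this natural equivalence along $f: \icat \hookrightarrow \xtop$ identifies $F_P$ with the presheaf $Z \mapsto \Hom_{\xtop}(f(Z), \Omega(P))$, which is precisely the image of $\Omega(P)$ under $\xtop \hookrightarrow \pshv(\icat)$. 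Thus $F_P \simeq \Omega(P)$ as objects of $\pshv(\icat)$, which is in the essential image and is a classifying object for $P$ by construction. This last step is essentially a translation between the two formulations (via the Yoneda-type identification of $\Of^{(P)}$ with a right fibration), so the only thing to be careful about is that the restriction functor $\pshv(\xtop') \to \pshv(\icat)$ — or rather the equivalence $\xtop \simeq $ (sheaves on $\icat$) coming from density — carries the classifying object $\Omega(P)$ to the presheaf $F_P$ on the nose; this follows since $\Of^{(P)}$ is the right fibration classified by $Y \mapsto \Of^P_Y$ and restriction of right fibrations along $f$ corresponds to restriction of the classifying functor.
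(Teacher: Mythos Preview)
Your proposal is correct and follows essentially the same approach as the paper's proof: $(1)\Rightarrow(2)$ is immediate from Proposition~\ref{prop:localpropertycharacterize}, $(2)\Rightarrow(1)$ refines an arbitrary covering family $\{U_i\to Y\}$ to one coming from $\icat$ using density (the paper phrases this as choosing effective epimorphisms $\coprod_j Z_{ij}\to U_i$ rather than writing out the canonical colimit, but the content is identical), and the identification of $F_P$ with the classifying object $\Omega(P)$ goes via the compatibility of unstraightening with restriction along $f$. One small terminological slip: in Step~2 you write ``use that $\xtop$ is compactly generated,'' but compact generation is only the parenthetical example in the statement---the operative hypothesis is density of $f$, which is exactly what licenses the canonical colimit $U_i\simeq\colim_{Z\in\icat_{/U_i}}Z$ you invoke.
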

\begin{proof}
Replacing $\icat$ with a minimal model of the essential image of $f$, we may assume that $\icat$ is a full subcategory. The implication $(1)\Rightarrow (2)$ is obvious from Proposition \ref{prop:localpropertycharacterize}. Conversely, given a map $X\rightarrow Y$ in $\xtop$ and an effective epimorphism $\coprod_iU_i\rightarrow Y$ such that $U_i\times_YX\rightarrow U_i$ lies in $\Of^P$ for all $i$, then choosing by density of $\icat$ in $\xtop$ for each $i$ an effective epimorphism $\coprod_jZ_{ij}\rightarrow U_i$ with $Z_{ij}\in \icat$ and invoking $(2)$ yields the result. Since unstraightening is functorial, there is a commuting diagram 
\[
\begin{tikzcd}
\mathsf{RFib}'_{\xtop}\ar[r,hook] \ar[d,"\simeq"]& \mathsf{RFib}_{\xtop}\ar[d,"\simeq"]\ar[r,"\_\times_{\xtop}\icat"] & \mathsf{RFib}_{\icat} \ar[d,"\simeq"] \\
\xtop\ar[r,hook,"j"]&\pshv(\xtop) \ar[r,"f^*"] & \pshv(\icat)
\end{tikzcd}
\]
where $\mathsf{RFib}'_{\xtop}\subset\mathsf{RFib}_{\xtop}$ is the full subcategory spanned by representable right fibrations. Since $P$ is local and small if and only if the right fibration $\Of^{(P)}$ is representable, we deduce that $F_P$ is a classifying object for $P$.
\end{proof}

Let $P$ be a property of morphisms of an \inftop stable under pullbacks. If $P$ is not local, we can consider the smallest property $\widehat{P}$ stable under pullbacks containing $P$ that is local: let us say that a morphism $X\rightarrow Y$ \emph{locally has the property $P$} if there is a covering $\{U_i\rightarrow Y\}_i$ such that $U_i\times_YX\rightarrow U_i$ has the property $P$. As it turns out, the property $\widehat{P}$ of locally having the property $P$ viewed as a sheaf $\Of^{(\widehat{P})}$ on $\xtop$ is universal among sheaves admitting a map from $\Of^{(P)}$.  
\begin{prop}\label{prop:sheafificationproperty}
Let $\xtop$ be an \inftop and let $P$ be a small property of morphisms stable under pullbacks. Let $\widehat{P}$ be the following property of morphisms of $\xtop$.
\begin{enumerate}
    \item[$(\widehat{P})$] A morphism $X\rightarrow Y$ belongs to $\widehat{P}$ just in case there exists a small family $\{U_i\rightarrow Y\}_{i\in I}$ of morphisms determining an effective epimorphism $\coprod_i U_i\rightarrow Y$ such that for each $i\in I$, the morphism $U_i\times_YX\rightarrow U_i$ has the property $P$.
\end{enumerate}
Then the property $\widehat{P}$ is small and local and the morphism 
\[  \Of^{(P)} \longrightarrow \Of^{(\widehat{P})}  \]
of functors $\xtop^{op}\rightarrow \spa$ exhibits $\Of^{(\widehat{P})}$ as a sheafification of $\Of^{(P)}$. 
\end{prop}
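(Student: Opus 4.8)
The plan is to verify three things: that $\widehat{P}$ is small, that $\widehat{P}$ is local, and that the canonical map $\Of^{(P)}\to\Of^{(\widehat P)}$ is a sheafification. The first two are routine given the machinery already in place; the third is the real content.

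\emph{Smallness.} For each $Y\in\xtop$ I would argue that the full subcategory $\Of^{\widehat P}_Y\subset\xtop_{/Y}$ spanned by morphisms into $Y$ that belong to $\widehat P$ is essentially small. Indeed, such a morphism $X\to Y$ is determined up to equivalence by an effective epimorphism $\coprod_i U_i\to Y$ together with the descent datum given by the $P$-morphisms $U_i\times_YX\to U_i$; since $P$ is small, for a fixed choice of covering the collection of such descent data forms an essentially small space, and the collection of relevant coverings (indexed by small families of maps into $Y$) is itself controlled by the presentability of $\xtop$. A cleaner route, which I would actually take: once locality is established (next paragraph), smallness of $\widehat P$ follows from smallness of $P$ by the standard argument, because every $\widehat P$-morphism into $Y$ is a colimit of a Cartesian transformation over a \v{C}ech nerve whose levels are coproducts of pullbacks of $P$-morphisms, and coproducts and pullbacks preserve essential smallness in a presentable \inftopt.

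\emph{Locality.} Here I would invoke Proposition \ref{prop:localpropertycharacterize}: it suffices to check that $\widehat P$ is stable under pullbacks and that if $X\to Y$ admits an effective epimorphism $\coprod_j V_j\to Y$ with each $V_j\times_YX\to V_j$ in $\widehat P$, then $X\to Y\in\widehat P$. Stability under pullback is immediate from stability of $P$ under pullback and the fact that effective epimorphisms are stable under pullback (base change a $\widehat P$-covering of $Y$ along $Y'\to Y$). For the locality condition, for each $j$ choose by definition of $\widehat P$ a covering $\{U_{ji}\to V_j\}_i$ with $U_{ji}\times_{V_j}(V_j\times_YX)\to U_{ji}$, i.e. $U_{ji}\times_YX\to U_{ji}$, having the property $P$; then the composite family $\{U_{ji}\to V_j\to Y\}_{j,i}$ is an effective epimorphism onto $Y$ (composites of effective epimorphisms are effective epimorphisms, and a coproduct of effective epimorphisms mapping to a common target through an effective epimorphism is one) witnessing $X\to Y\in\widehat P$. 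This is exactly the "two-step covering" bookkeeping already used in the proof of Proposition \ref{prop:denselocal} and in Corollary \ref{cor:localprop}.

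\emph{The sheafification claim.} This is the step I expect to be the main obstacle, though it is more a matter of assembling the right universal properties than of hard work. First observe that $\Of^{(P)}\to\Of^{(\widehat P)}$ is a monomorphism of presheaves on $\xtop$ levelwise: over $Y$, the space $\Of^{(P)}(Y)\simeq\Hom$-like space of $P$-morphisms into $Y$ includes into that of $\widehat P$-morphisms, and this inclusion is $(-1)$-truncated onto the components that happen to lie in $P$. Next, $\Of^{(\widehat P)}$ is a sheaf for the canonical topology on $\xtop$ precisely because $\widehat P$ is local (by Proposition \ref{prop:localpropertycharacterize} again, a local property of morphisms unstraightens to a sheaf). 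So it remains to show the map is a \emph{local equivalence}, i.e. becomes an equivalence after sheafification; equivalently, that every section of $\Of^{(\widehat P)}$ over $Y$ is locally in the image of $\Of^{(P)}$ and that two sections of $\Of^{(P)}$ over $Y$ that agree in $\Of^{(\widehat P)}$ agree locally in $\Of^{(P)}$. The first is the definition of $\widehat P$: a $\widehat P$-morphism $X\to Y$ restricts, along some covering $\{U_i\to Y\}$, to the $P$-morphisms $U_i\times_YX\to U_i$, which are honest sections of $\Of^{(P)}$. The second follows because $\Of^{(P)}\to\Of^{(\widehat P)}$ is a levelwise monomorphism: if it is a mono, "agreeing in the target" forces "agreeing in the source", so no further descent is needed. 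Since sheafification is the universal functor inverting local equivalences \cite[Proposition 5.5.4.15]{HTT}, and since any map from $\Of^{(P)}$ to a sheaf factors through the sheafification, the map $\Of^{(P)}\to\Of^{(\widehat P)}$ with $\Of^{(\widehat P)}$ a sheaf and the map a local equivalence is precisely a sheafification. To make "local equivalence" rigorous I would phrase it via the comparison $\Of^{(P)}\to L\Of^{(P)}$ and show directly that $L\Of^{(P)}\to\Of^{(\widehat P)}$ is an equivalence of sheaves by checking it is an equivalence on stalks / after evaluating on a generating set of objects, using that on objects $Z$ belonging to a dense subcategory every $\widehat P$-morphism into $Z$ is, locally on $Z$, a $P$-morphism — which is exactly the content of Proposition \ref{prop:denselocal}.
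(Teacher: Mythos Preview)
Your locality argument is fine and matches the paper's. Your smallness sketch is in the right direction but vague; the paper pins it down by fixing a regular cardinal $\kappa$ for which $\xtop$ is $\kappa$-compactly generated, then bounding the equivalence classes of $\widehat P$-morphisms into $Y$ by the set $\prod_{I\in P(S)}\prod_{Z\to Y\in I}\pi_0(\Of^P_Z)$ where $S=\pi_0(\xtop^{\kappa}_{/Y})$. Neither of your two smallness arguments quite delivers this: the first needs the cardinal bound to control ``the collection of relevant coverings'', and the second (``smallness follows from locality'') is not correct as stated.

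The genuine issue is your sheafification step. You argue ``sheaf target $+$ levelwise mono $+$ locally surjective $\Rightarrow$ sheafification'', appealing to a sheafification functor $L$ and to \cite[Proposition~5.5.4.15]{HTT}. But $\xtop$ is a \emph{large} \infcatt, so $\pshv(\xtop)$ is not presentable and that reference does not apply; in particular you cannot simply invoke $L\Of^{(P)}$ (compare Remark~\ref{rmk:sheafificationschemes}, where the paper flags exactly this size obstruction). Your closing suggestion to restrict to a dense small subcategory is the right instinct, but it is not carried out, and the appeal to Proposition~\ref{prop:denselocal} does not do the work you claim.

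The paper sidesteps all of this by proving the universal property directly: for each representable sheaf $j(Z)$, it shows that $\fun_{\xtop}(\Of^{(\widehat P)},\xtop_{/Z})\to\fun_{\xtop}(\Of^{(P)},\xtop_{/Z})$ is a trivial Kan fibration via \cite[Proposition~4.3.2.15]{HTT}. This reduces to showing that for every $\widehat P$-morphism $X\to Y$, the cocone $(\Of^{(P)}_{/X\to Y})^{\rhd}\to\xtop$ with apex $Y$ is a colimit diagram. Identifying $\Of^{(P)}_{/X\to Y}$ with the full subcategory $\xtop^0_{/Y}\subset\xtop_{/Y}$ of those $U\to Y$ for which $U\times_YX\to U$ has property $P$, this becomes a \emph{density} statement: $\xtop^0_{/Y}\subset\xtop_{/Y}$ is dense. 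The paper proves density by showing the inclusion generates under colimits (using the defining cover for $\widehat P$) and is stable under colimits (using that $P$ is pullback-stable). This route never needs a sheafification functor on $\pshv(\xtop)$. Your ``mono $+$ locally surjective'' heuristic is morally the same density statement, but you would need to unwind it into exactly this argument to make it rigorous.
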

\begin{proof}
First, we show that $\widehat{P}$ is local. For stability under pullbacks, suppose we are given a morphism $X\rightarrow Y$ having the property $P$ and $Z\rightarrow Y$ any morphism. Choose a collection $\{U_i\rightarrow Y\}_i$ determining an effective epimorphism $\coprod_iU_i\rightarrow Y$ such that $U_i\times_{Y}X\rightarrow U_i$ has the property $P$, then the collection $\{U_i\times_YZ\rightarrow Z\}_i$ determines an effective epimorphism $\coprod_iU_i\times_YZ\rightarrow Z$ and for each $i$, the map $U_i\times_YZ\times_Z (Z\times_YX)\rightarrow U_i\times_YZ$ is a pullback of the map $U_i\times_{Y}X\rightarrow U_i$ and thus has the property $P$. The locality is an obvious consequence of $(3)$ of Proposition \ref{prop:localpropertycharacterize}. To see that the property $\widehat{P}$ is small, we first choose a regular cardinal $\kappa$ such that $\xtop$ is $\kappa$-compactly generated. It follows from Proposition \ref{prop:denselocal} that for any map $X\rightarrow Y$ having the property $P$, we can choose a pullback diagram 
\[
\begin{tikzcd}
\coprod_{i\in I} X\times_YU_i\ar[d]\ar[r] &\coprod_{i\in I}U_i\ar[d] \\
X\ar[r] & Y
\end{tikzcd}
\] 
where both vertical maps are effective epimorphisms, $U_i\in \xtop^{\kappa}$, the full subcategory spanned by $\kappa$-compact objects for each $i\in I$, and $X\times_YU_i\rightarrow U_i$ lies in $P$ for each $i\in I$. Since $X\rightarrow Y$ is a colimit of the \v{C}ech nerve of the diagram above, this map is determined up to equivalence by the maps $\{U_i\rightarrow Y\}_{i\in I}$ and $\{X\times_YU_i\rightarrow U_i\}_{i\in I}$. We may assume that $I$ is a subset of the set $S:=\pi_0(\xtop^{\kappa}_{/Y})$, which is a small set. Since $P$ is small, there is for each $Z\in \xtop^{\kappa}$ up to equivalence a bounded number of $P$-morphisms $Z'\rightarrow Z$. It follows that the cardinality of the set of equivalence classes of $P$-morphisms to $Y$ is bounded by that of the set $\prod_{I\in P(S)}\prod_{Z\rightarrow Y\in I}\pi_0(\Of^P_{Z})$ where $P(S)$ is the power set of $S$, which is small.\\
Now we prove that the map $f:\Of^{(P)}\rightarrow \Of^{(\widehat{P})}$ is a sheafification map. We are required to show that for any limit preserving functor $G:\xtop^{op}\rightarrow \spa$, composition with $f$ induces a homotopy equivalence
\[  \Hom_{\pshv(\xtop)}(\Of^{(\widehat{P})},G)\longrightarrow   \Hom_{\pshv(\xtop)}(\Of^{(P)},G). \]
Via the straightening-unstraightening equivalence, we may alternatively show that the map
\[  \Hom_{\mathsf{RFib}_{\xtop}}(\Of^{(\widehat{P})},\icat_G)\longrightarrow   \Hom_{\mathsf{RFib}{\xtop}}(\Of^{(P)},\icat_G) \]
is an equivalence, where $\icat_G\rightarrow \xtop$ is a right fibration associated to $G$ via unstraightening. Since the limit preserving functors $G:\xtop^{op}\rightarrow \spa$ correspond precisely to representable right fibrations, we may suppose that $\icat_{G}\rightarrow \xtop$ is of the form $\xtop_{/Z}\rightarrow \xtop$ for some $Z\in \xtop$. We complete the proof by showing that the functor
\[ \fun_{\xtop}(\Of^{(\widehat{P})},\xtop_{/Z})\longrightarrow \fun_{\xtop}(\Of^{(P)},\xtop_{/Z}) \]
is a trivial Kan fibration. In view of \cite[Proposition 4.3.2.15]{HTT}, it suffices to argue that every functor $\Of^{(P)}\rightarrow\xtop_{/Z}$ admits a $q$-left Kan extension along the full subcategory inclusion $\Of^{(P)}\subset \Of^{(\widehat{P})}$, and that every functor $\Of^{(\widehat{P})}\rightarrow \xtop_{/Z}$ is a $q$-left Kan extension of its restriction to $\Of^{(P)}$. Since the projection $q:\xtop_{/Z}\rightarrow\xtop$ preserves and reflects all relative colimit diagrams, we are reduced to proving that for every object $X\rightarrow Y$ of $\Of^{(\widehat{P})}$, the induced functor $(\Of^{(P)}_{/X\rightarrow Y})^{\rhd}\rightarrow \xtop$ carrying the cone point to $Y$ is a colimit diagram. Using pasting of pullback squares, we can identify $\Of^{(P)}_{/X\rightarrow Y}$ with the \emph{full} subcategory of $\fun(\Delta^1,\xtop)_{/X\rightarrow Y}$ spanned by pullback squares
\[
\begin{tikzcd}
V\ar[d]\ar[r] & U\ar[d]\\
X \ar[r] & Y
\end{tikzcd}
\]
for which $V\rightarrow U$ has the property $P$. It follows from \cite[Proposition 4.3.2.15]{HTT} that the forgetful functor $\Of^{(P)}_{/X\rightarrow Y}\rightarrow\xtop_{/Y}$ induces a trivial Kan fibration onto the full subcategory $\xtop^0_{/Y}\subset\xtop_{/Y}$ spanned by objects $U\rightarrow Y$ for which $U\times_YX\rightarrow U$ has the property $P$. In view of \cite[Lemma 4.3.2.7]{HTT} and the fact that the inclusion of the final object $\{Y\}\hookrightarrow \xtop_{/Y}$ is left cofinal, it suffices to argue that the projection $\xtop_{/Y}\rightarrow \xtop$ is a left Kan extension of the composition $\xtop^0_{/Y}\subset\xtop_{/Y}\rightarrow \xtop$ along the inclusion $\xtop^0_{/Y}\subset\xtop_{/Y}$. Since the projection $\xtop_{/Y}\rightarrow\xtop$ preserves and reflects colimits, it suffices to show that the identity functor $\xtop_{/Y}\rightarrow\xtop_{/Y}$ is left Kan extension of the inclusion $\xtop^0_{/Y}\subset\xtop_{/Y}$ along the inclusion $\xtop^0_{/Y}\subset\xtop_{/Y}$, that is, the inclusion $\xtop^0_{/Y}\subset\xtop_{/Y}$ is \emph{dense}. For notational convenience, write $\ytop$ for the slice \inftop $\xtop_{/Y}$, so that we may suppose that $Y$ is a final object and $\ytop^0=\xtop^0_{/Y}$ the full subcategory spanned by those $U\in \ytop$ such that $U\times X\rightarrow U$ has the property $P$. Let $\ytop^0\subset\ytop^1\subset\ytop$ be the largest full subcategory containing $\ytop^0$ for which the inclusion $\ytop^0\subset \ytop^1$ is dense. It suffices to argue the following.
\begin{enumerate}[$(1)$]
    \item The inclusion $\ytop^1\subset\ytop$ is stable under colimits.
    \item The inclusion $\ytop^0\subset\ytop$ generates $\ytop$ under colimits.
\end{enumerate}
We first prove $(2)$. It suffices to show that $\ytop^0$ generates the final object of $\ytop$. By assumption on the map $X\rightarrow Y$, there exists a small collection  $\{U_i\rightarrow Y\}_i$ determining an effective epimorphism $h:\coprod_i U_i\rightarrow 1_{\ytop}$. The \v{C}ech nerve $\check{C}(h)_{\bullet}$ of this map is in each level a small coproduct of objects in $\ytop^0$ (note that $\ytop^0\subset\ytop$ is stable under pullbacks) so we conclude. For the proof of $(1)$, consider a colimit diagram $\mathcal{J}:K^{\rhd}\rightarrow \ytop$ with colimit $T$, then we wish to show that the map $\colim_{Z\in \ytop^0_{/T}}Z\rightarrow T$ from the canonical colimit is an equivalence. Since being an equivalence is a local property, it suffices to show that for each $k\in K$, the map $\colim_{Z\in \ytop^0_{/T}}Z\times_T\mathcal{J}(k)\rightarrow \mathcal{J}(k)$ is an equivalence, that is, that the functor
\[  \ytop_{/T}^0\subset \ytop_{/T}\overset{\_\times_T\mathcal{J}(k)}{\longrightarrow}\ytop_{/\mathcal{J}(k)}   \]
determines a colimit diagram $(\ytop_{/T}^0)^{\rhd}\rightarrow \ytop$. Since the property $P$ is stable under pullbacks, this functor factors as 
\[ \ytop_{/T}^0 \overset{\_\times_T\mathcal{J}(k)}{\longrightarrow} \ytop^0_{/\mathcal{J}(k)}\subset  \ytop_{/\mathcal{J}(k)}.\]
The first functor in this composition has a left adjoint given by composition with $\mathcal{J}(k)\rightarrow T$ and is thus left cofinal, and the second functor determines a colimit diagram $(\ytop^0_{/\mathcal{J}(k)})^{\rhd}\rightarrow \ytop$ since $\mathcal{J}(k)\in \ytop^1$.
\end{proof}
\begin{defn}
For $P$ a small property of morphisms stable under pullbacks in an \inftopt, we will call the property $\widehat{P}$ defined in Proposition \ref{prop:sheafificationproperty} the \emph{sheafification} of the property $P$.     
\end{defn}
\begin{rmk}
The preceding result also holds for properties that are not small, but proving this requires switching to \inftopoi in a larger universe.    
\end{rmk}
\begin{cor}
Let $P$ be a small property of morphisms of an \inftop $\xtop$ stable under pullbacks and let $\widehat{P}$ be the sheafification of $P$. Then for each sheaf $F:\xtop^{op}\rightarrow\spa$ and each map $\Of^P\rightarrow F$, there exist a commuting triangle 
\[
\begin{tikzcd}
& \Of^{\widehat{P}} \ar[dr] \\
\Of^{P}\ar[rr] \ar[ur]  && F   
\end{tikzcd}
\]
of presheaves on $\xtop$ unique up to contractible ambiguity.
\end{cor}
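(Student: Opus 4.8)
The plan is to obtain the corollary as a purely formal consequence of Proposition \ref{prop:sheafificationproperty}, which already does the substantive work. Recall that that proposition asserts that the canonical map $\Of^{(P)}\to\Of^{(\widehat P)}$ of presheaves $\xtop^{op}\to\spa$ — these are the space-valued presheaves obtained by straightening the right fibrations $\ev_1\colon\Of^{(P)}\to\xtop$ and $\ev_1\colon\Of^{(\widehat P)}\to\xtop$, and are what the corollary denotes $\Of^P$ and $\Of^{\widehat P}$ — exhibits $\Of^{(\widehat P)}$ as a \emph{sheafification} of $\Of^{(P)}$. In particular $\Of^{(\widehat P)}$ lies in $\shv(\xtop)\subset\pshv(\xtop)$ (since $\widehat P$ is a small local property this also follows from Proposition \ref{prop:localmorphismclassifyingobject} or Proposition \ref{prop:denselocal}), and the map in question is equivalent to the unit $\Of^{(P)}\to L\Of^{(P)}$ of the sheafification localization $L\colon\pshv(\xtop)\rightleftarrows\shv(\xtop)$ left adjoint to the full inclusion, which exists as a strongly reflective accessible localization by \cite[Section 5.5.4]{HTT}.

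First I would invoke the universal property of a strongly reflective localization (\cite[Proposition 5.5.4.20]{HTT}, or simply the defining adjunction of $L$): for any local object, i.e.\ any sheaf $F\colon\xtop^{op}\to\spa$, precomposition with the unit induces a homotopy equivalence of mapping spaces $\Hom_{\pshv(\xtop)}(\Of^{(\widehat P)},F)\xrightarrow{\ \simeq\ }\Hom_{\pshv(\xtop)}(\Of^{(P)},F)$. Given a map $\Of^{(P)}\to F$ as in the statement, the space of dotted lifts $\Of^{(\widehat P)}\to F$ together with a homotopy rendering the displayed triangle commutative is, by definition of the mapping space in a functor category, the fiber of this equivalence over the chosen point of the right-hand side; a fiber of an equivalence of spaces over any point is contractible, which is exactly the assertion that the commuting triangle exists and is unique up to contractible ambiguity. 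This would be presented in one short paragraph.

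The only point requiring any care — and it is the mildest imaginable obstacle — is bookkeeping: matching the notation $\Of^P,\Of^{\widehat P}$ of the corollary with the space-valued presheaves $\Of^{(P)},\Of^{(\widehat P)}$ of Proposition \ref{prop:sheafificationproperty}, and recalling that the phrase ``exhibits as a sheafification'' means by definition that the target is a sheaf and the map is a reflection of the source into $\shv(\xtop)$, which is precisely what licenses the mapping-space equivalence above. No genuinely new argument is needed beyond Proposition \ref{prop:sheafificationproperty}.
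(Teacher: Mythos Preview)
Your approach is correct and matches the paper's: the corollary is stated without proof precisely because Proposition~\ref{prop:sheafificationproperty} already establishes the mapping-space equivalence $\Hom_{\pshv(\xtop)}(\Of^{(\widehat P)},F)\simeq\Hom_{\pshv(\xtop)}(\Of^{(P)},F)$ for every limit-preserving $F$, and the triangle is the fiber of this equivalence over the given map.

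One small caution: your detour through the existence of a global sheafification adjunction $L\colon\pshv(\xtop)\rightleftarrows\shv(\xtop)$ via \cite[Section 5.5.4]{HTT} is not quite justified as written, since $\xtop$ is a large \infcat and $\pshv(\xtop)$ is therefore not presentable (cf.\ Remark~\ref{rmk:sheafificationschemes}). The paper sidesteps this by taking ``exhibits a sheafification'' to mean exactly the mapping-space equivalence, proven directly in the proof of Proposition~\ref{prop:sheafificationproperty}. You already note this in your final paragraph, so the detour is harmless---just drop the appeal to \cite[Section 5.5.4]{HTT} and go straight from the proposition's conclusion to the fiber.
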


We will now briefly consider properties of arbitrary small diagrams $L^{\rhd}\rightarrow \xtop$ in \inftopoi (the case $L=\Delta^0$ being the one considered until now). We will call diagrams with domain $L^{\rhd}$ in an \inftop $\xtop$ \emph{$L^{\rhd}$-diagrams in $\xtop$}. For $P$ a property of $L^{\rhd}$-diagrams, we will denote by $\Of^{L,P}\subset \fun(L^{\rhd},\xtop)$ the full subcategory determined by $P$. We say that a property $P$ of $L^{\rhd}$-diagrams is \emph{stable under pullback} if for every natural transformation $\alpha:f\rightarrow g$ for which $g$ has the property $P$ and $\alpha$ is a Cartesian transformation, the diagram $f$ also has the property $P$. It follows from Lemma \ref{lem:cartesiantransfkanext} that in case $P$ is stable under pullback, evaluation at the cone point determines a Cartesian fibration $\Of^{L,P}\rightarrow \xtop$. We will write $\Of^{(L,P)}$ for the subcategory on the Cartesian morphisms. 
\begin{prop}\label{prop:diagramproperty}
Let $P$ be a property of $L^{\rhd}$-diagrams of an \inftop $\xtop$, then the following are equivalent.
\begin{enumerate}[$(1)$]
\item The functor $\Of^{L,P}:\xtop^{op}\rightarrow\catinfh$ preserves small limits.
\item For each small simplicial set $K$ and each bifunctor $F:K^{\rhd}\times L^{\rhd}\rightarrow \xtop$ determining a colimit diagram $K^{\rhd}\rightarrow \fun(L^{\rhd},\xtop)$, in case the restriction $F|_{K\times L^{\rhd}}$ is Cartesian and $F|_{\{k\}\times L^{\rhd}}\rightarrow \xtop$ has the property $P$, then $F$ is Cartesian and $F_{\{\infty\}\times L^{\rhd}}$ has the property $P$.
\item For $f:L^{\rhd}\rightarrow \xtop$ with $f(\infty)=Y$, should there exist an effective epimorphism $\coprod_iU_i\rightarrow Y$ such that $U_i\times_Yf$ has the property $P$, then $f$ has the property $P$, where $U_i\times_Yf$ denotes the pullback of the diagram $f$ along $U_i\rightarrow Y$ (obtained from an $\ev_{\infty}$-Cartesian lift of $U_i\rightarrow Y$ terminating at $f$).
\end{enumerate} 
\end{prop}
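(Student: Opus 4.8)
The plan is to reduce Proposition \ref{prop:diagramproperty} to the case $L=\Delta^0$, which is exactly Proposition \ref{prop:localpropertycharacterize}, by exploiting the evaluation functor $\ev_\infty\colon\fun(L^\rhd,\xtop)\to\xtop$ and the fact (Lemma \ref{lem:cartesiantransfkanext}(1)) that a natural transformation of $L^\rhd$-diagrams is Cartesian if and only if it is $\ev_\infty$-Cartesian. Concretely, a property $P$ of $L^\rhd$-diagrams stable under pullback is the same thing as a full subcategory $\Of^{L,P}\subset\fun(L^\rhd,\xtop)$ closed under $\ev_\infty$-Cartesian morphisms, and by Lemma \ref{lem:cartesiantransfkanext}(2) the functor $\ev_\infty$ is itself a Cartesian fibration. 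The main point is that $\Of^{(L,P)}\to\xtop$, the restriction to $\ev_\infty$-Cartesian edges, is a right fibration, and the straightening of the composite $\Of^{(L,P)}\to\xtop$ is precisely the functor $\Of^{L,P}\colon\xtop^{op}\to\catinfh$ appearing in $(1)$; this identification is the bookkeeping one needs to make before quoting the $L=\Delta^0$ results.

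First I would establish the equivalence of $(1)$ and $(2)$. Given the identification above, this is a verbatim adaptation of \cite[Lemma 6.1.3.5]{HTT}: condition $(1)$ says $\Of^{L,P}$ is a sheaf on $\xtop$ (with respect to the canonical topology), and by Lurie's descent characterization of \inftopoi (see Remark \ref{rmk:cartesian}) a colimit-preserving-into-$\catinfh$ condition on the classifying functor translates into the statement that the relevant Cartesian transformations of colimit diagrams stay in the subcategory. The only wrinkle compared to the point case is that the diagrams here live in $\fun(L^\rhd,\xtop)$ rather than $\xtop$, but since colimits in functor categories are computed pointwise and $\ev_\infty$ preserves them, the bifunctor $F\colon K^\rhd\times L^\rhd\to\xtop$ in $(2)$ gives a colimit diagram $K^\rhd\to\fun(L^\rhd,\xtop)$ precisely when it does after applying $\ev_\infty$, and Lemma \ref{lem:cartesiantransfkanext}(1) guarantees that "$F|_{K\times L^\rhd}$ Cartesian" is equivalent to the corresponding statement for the single arrows $F|_{\{k\}\times\Delta^1}$ into the cone point. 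I would spell out this translation carefully and then cite the point-case argument.

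Next I would prove $(2)\Leftrightarrow(3)$, again by mimicking the proof of Proposition \ref{prop:localpropertycharacterize} (the implication $(2)\Rightarrow(3)$ via \v{C}ech nerves, $(3)\Rightarrow(2)$ via \cite[Lemma 6.2.3.13]{HTT}). Here the key move is: given $f\colon L^\rhd\to\xtop$ with $f(\infty)=Y$ and an effective epimorphism $h\colon\coprod_iU_i\to Y$, form the \v{C}ech nerve $\check C(h)_\bullet\colon\simpop_+\to\xtop$, and use Lemma \ref{lem:cartesiantransfkanext}(2) to lift it to an $\ev_\infty$-Cartesian transformation $\simpop_+\to\fun(L^\rhd,\xtop)$ whose value at the cone object is $f$; because $\xtop$ is an \inftop this is a colimit diagram in $\fun(L^\rhd,\xtop)$, each of whose terms is (up to the pullback-stability of $P$ plus closure under small coproducts, which $(2)$ applied to discrete $K$ delivers) in $\Of^{L,P}$. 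Conversely, given a Cartesian transformation $F$ of colimit diagrams $K^\rhd\to\fun(L^\rhd,\xtop)$ with all $F|_{\{k\}\times L^\rhd}$ having $P$, one identifies the pullback of the cone diagram $F|_{\{\infty\}\times L^\rhd}$ along $F|_{\{k\}\times\{\infty\}}\to F|_{\{\infty\}\times\{\infty\}}$ with $F|_{\{k\}\times L^\rhd}$ (Cartesianness of $F$), and the collection $\{F(k,\infty)\to\colim_kF(k,\infty)\}$ is an effective epimorphism by \cite[Lemma 6.2.3.13]{HTT}, so $(3)$ applies. I expect the main obstacle to be purely organizational rather than mathematical: making the $\ev_\infty$-Cartesian-fibration bookkeeping (that $\Of^{(L,P)}$ straightens to the functor in $(1)$, and that pullback of a diagram $U_i\times_Y f$ in the sense of $(3)$ agrees with the $\ev_\infty$-Cartesian lift used in the \v{C}ech-nerve argument) sufficiently precise that the citations to the $L=\Delta^0$ proofs really do go through without re-deriving anything; once that dictionary is in place, the proof is a routine transcription.
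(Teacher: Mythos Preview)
Your overall strategy matches the paper's: both argue $(2)\Leftrightarrow(3)$ by the \v{C}ech-nerve/effective-epimorphism argument you sketch, and both treat $(1)\Leftrightarrow(2)$ by unwinding what limit-preservation of the straightening of the Cartesian fibration $\ev_\infty|_{\Of^{L,P}}\to\xtop$ means. However, there is a genuine imprecision in your $(1)\Leftrightarrow(2)$ sketch that the paper handles with an auxiliary lemma you do not mention.

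You assert that ``the bifunctor $F\colon K^\rhd\times L^\rhd\to\xtop$ in $(2)$ gives a colimit diagram $K^\rhd\to\fun(L^\rhd,\xtop)$ precisely when it does after applying $\ev_\infty$''. This is false as stated: colimits in $\fun(L^\rhd,\xtop)$ are computed pointwise, so $\ev_\infty$ preserves but does not detect them. The hypothesis in $(2)$ is that $F$ is a colimit diagram in $\fun(L^\rhd,\xtop)$, whereas what one extracts directly from \cite[Lemma 6.1.3.5]{HTT} applied to the Cartesian fibration $\Of^{L,P}\to\xtop$ is a condition involving colimit diagrams $K^\rhd\to\xtop$ together with $\ev_\infty$-Cartesian lifts. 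The paper bridges this with Lemma~\ref{lem:cartesianbifunctor}: under the standing assumption that $F|_{K\times L^\rhd}$ is Cartesian and $F|_{K^\rhd\times\{\infty\}}$ is a colimit, $F$ is a colimit diagram in $\fun(L^\rhd,\xtop)$ if and only if $F$ is a Cartesian bifunctor (i.e.\ every edge of $K^\rhd$ maps to an $\ev_\infty$-Cartesian edge). This is exactly the dictionary you need, and once it is in place your reduction to \cite[Lemma 6.1.3.5]{HTT} goes through; without it the claimed ``precisely when'' is simply wrong. (A minor side point: the straightening of the right fibration $\Of^{(L,P)}\to\xtop$ lands in $\spa$, not $\catinfh$; the $\catinfh$-valued functor in $(1)$ is the straightening of the Cartesian fibration $\Of^{L,P}\to\xtop$.) Your $(2)\Leftrightarrow(3)$ argument is correct and essentially identical to the paper's.
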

\begin{rmk}\label{rmk:localdiagram}
From the characterization $(3)$ we immediately deduce the following: let $\xtop$ be an \inftopt, let $L$ be small simplicial set and suppose we are given a collection $\{P_l\}_{l\in L}$ of local properties of morphisms of $\xtop$ indexed by the vertices of $L$, then the property of $L^{\rhd}$-diagrams that for any vertex $l:\Delta^0\rightarrow L$, the morphism 
\[\Delta^1\cong (\Delta^0)^{\rhd}\overset{l}{\hooklongrightarrow} L^{\rhd}\longrightarrow\xtop\]
has the property $P_l$ is a local property. Suppose we are also given for each edge $e:\Delta^1\rightarrow L$ a local property $P_e$, then the property of $L^{\rhd}$-diagrams that for each $e:\Delta^1\rightarrow L$, the morphism 
\[\Delta^1\overset{e}{\longrightarrow} L\subset L^{\rhd}\longrightarrow\xtop\] has the property $P_e$ is a local property of $L^{\rhd}$-diagrams.
\end{rmk}
We will use the following terminology: let $F:K\times L\rightarrow\xtop$ be a bifunctor, then we say that $F$ is \emph{Cartesian} if for every edge $\Delta^1\rightarrow K$, the natural transformation $F|_{\Delta^1\times L}$ is Cartesian. Note that this condition is symmetric in $K$ and $L$. 
\begin{lem}\label{lem:cartesianbifunctor}
Let $\xtop$ be an \inftopt. For each small simplicial set $K$ and each bifunctor $F:K^{\rhd}\times L^{\rhd}\rightarrow \xtop$ such that the restriction $F|_{K\times L^{\rhd}}$ is Cartesian and $F|_{K^{\rhd}\times \{\infty\}}$ is a colimit diagram, the restriction $F|_{K^{\rhd}\times \{l\}}$ is a colimit diagram for all $l\in L$ if and only if $F$ is a Cartesian bifunctor.
\end{lem}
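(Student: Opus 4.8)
The statement is a two-variable version of Remark \ref{rmk:cartesian}, the defining characterization of \inftopoi among presentable \infcats. The plan is to reduce the bifunctor assertion to the one-variable statement applied twice, along the two ``axis'' directions of $K^{\rhd}\times L^{\rhd}$. First I would recall the basic compatibility of Cartesianness with pasting: a bifunctor $F:K^{\rhd}\times L^{\rhd}\to\xtop$ is Cartesian iff for each edge $\Delta^1\to K^{\rhd}$ the transformation $F|_{\Delta^1\times L^{\rhd}}$ is a Cartesian transformation of $L^{\rhd}$-diagrams, and (by the pasting law for pullback squares, cf. Lemma \ref{lem:cartesiantransfkanext}(1)) this is in turn equivalent to the condition that for each edge $\Delta^1\to K^{\rhd}$ and each edge $\Delta^1\to L^{\rhd}$ the corresponding square in $\xtop$ is a pullback. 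In particular Cartesianness of $F$ is symmetric in the two variables, and $F$ is Cartesian iff $F|_{K^{\rhd}\times\Delta^1}$ is a Cartesian transformation of $K^{\rhd}$-diagrams for every edge $\Delta^1\to L^{\rhd}$.

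For the ``only if'' direction, suppose $F$ is a Cartesian bifunctor. Fix $l\in L$ and consider the unique edge $e:\Delta^1\to L^{\rhd}$ from $l$ to the cone point $\infty$. Then $F|_{K^{\rhd}\times e}$ is a natural transformation $\alpha:F|_{K^{\rhd}\times\{l\}}\to F|_{K^{\rhd}\times\{\infty\}}$ of $K^{\rhd}$-diagrams whose restriction $\alpha|_{K}$ is a Cartesian transformation (this is exactly the hypothesis that $F|_{K\times L^{\rhd}}$ is Cartesian, read along the $l\to\infty$ edge) and whose target $F|_{K^{\rhd}\times\{\infty\}}$ is a colimit diagram by assumption. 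By Remark \ref{rmk:cartesian}, applied in the direction of $K^{\rhd}$, the source $F|_{K^{\rhd}\times\{l\}}$ is a colimit diagram.

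For the ``if'' direction, suppose $F|_{K^{\rhd}\times\{l\}}$ is a colimit diagram for all $l\in L$; I must show $F$ is Cartesian, i.e. (using the symmetric reformulation above) that for each edge $e:\Delta^1\to L^{\rhd}$ the transformation $\beta_e:=F|_{K^{\rhd}\times e}$ of $K^{\rhd}$-diagrams is Cartesian. There are two cases. If $e$ lies in $L$ (both endpoints in $L$), then by hypothesis both $F|_{K^{\rhd}\times\{l\}}$ and $F|_{K^{\rhd}\times\{l'\}}$ are colimit diagrams, $\beta_e|_K$ is Cartesian (hypothesis $F|_{K\times L^{\rhd}}$ Cartesian), so by Remark \ref{rmk:cartesian} again $\beta_e$ is Cartesian. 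If $e$ runs from some $l\in L$ to $\infty$, the same argument works once we know $F|_{K^{\rhd}\times\{\infty\}}$ is a colimit diagram --- which is a standing hypothesis --- and that $\beta_e|_K$ is Cartesian, which is again the hypothesis that $F|_{K\times L^{\rhd}}$ is Cartesian. Since every edge of $L^{\rhd}$ is of one of these two types, $F$ is Cartesian.

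The only mild subtlety --- and the step I'd expect to need the most care --- is the bookkeeping identifying ``$F$ Cartesian'' with ``$\beta_e$ Cartesian for all edges $e$ of $L^{\rhd}$'': one must check that the family of pullback-square conditions indexed by pairs of edges is genuinely symmetric and that restricting a Cartesian bifunctor along $K\hookrightarrow K^{\rhd}$ or along $L\hookrightarrow L^{\rhd}$ preserves the relevant Cartesianness, so that Remark \ref{rmk:cartesian} can be invoked in whichever direction is convenient. All of this is formal consequence of the pasting law for pullback squares and Lemma \ref{lem:cartesiantransfkanext}(1); no new input about $\xtop$ beyond its being an \inftop is needed. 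Once the symmetric reformulation is in hand, both directions are a direct two-fold application of the one-variable descent property of \inftopoi.
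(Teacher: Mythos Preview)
Your argument is correct, with one small slip in the ``only if'' direction. You verify that $\alpha|_K$ is Cartesian and that the target $g=F|_{K^{\rhd}\times\{\infty\}}$ is a colimit diagram, then invoke Remark \ref{rmk:cartesian} to conclude the source is a colimit. But Remark \ref{rmk:cartesian} only gives the biconditional ``$\alpha$ Cartesian $\Leftrightarrow$ $f$ colimit''; to deduce that $f$ is a colimit you must also use that $\alpha$ itself (on all of $K^{\rhd}$, not just $K$) is Cartesian. This follows immediately from your hypothesis that $F$ is a Cartesian bifunctor --- so $F|_{K^{\rhd}\times e}$ is Cartesian for the edge $e:l\to\infty$ --- but you should say so. With that one line added, both directions go through.

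Your route differs from the paper's. The paper regards $F$ as a functor $K^{\rhd}\to\fun(L^{\rhd},\xtop)$, postcomposes with the map $\fun(L^{\rhd},\xtop)\to\fun(\Delta^1,\fun(L,\xtop))$ coming from the homotopy pullback square describing $\fun(L^{\rhd},\xtop)$, and then applies Remark \ref{rmk:cartesian} \emph{once} in the \inftop $\fun(L,\xtop)$: the two sides of the lemma become the two sides of the descent biconditional for a single natural transformation of $K^{\rhd}$-diagrams in $\fun(L,\xtop)$. By contrast, you stay in $\xtop$ and apply Remark \ref{rmk:cartesian} separately for every edge of $L^{\rhd}$. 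Your approach is more elementary in that it avoids invoking that $\fun(L,\xtop)$ is again an \inftopt; the paper's is more economical in that it collapses the edge-by-edge case analysis into a single instance of descent at the cost of that extra structural input.
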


\begin{proof}
Consider the composition 
\[\widehat{F}:K^{\rhd}\longrightarrow \fun(L^{\rhd},\xtop)\longrightarrow\fun(L\times\Delta^1,\xtop)\cong\fun(\Delta^1,\fun(L,\xtop)).\]
Since $\fun(L^{\rhd},\xtop)$ fits into a homotopy pullback diagram
\[
\begin{tikzcd}
\fun(L^{\rhd},\xtop)\ar[d]\ar[r] & \fun(\Delta^1,\fun(L,\xtop))\ar[d,"\ev_1"] \\
\xtop \ar[r] & \fun(L,\xtop)
\end{tikzcd}
\]
it follows that $F$ is a Cartesian bifunctor if and only if and only if $\widehat{F}$ carries every edge to an $\ev_1$-Cartesian edge of $\fun(\Delta^1,\fun(L,\xtop))$, and similarly for the restriction $\widehat{F}|_{K}$. Also, $F|_{K^{\rhd}\times \{\infty\}}$ is a colimit diagram if and only if $\ev_1\circ \widehat{F}:K\rightarrow \fun(L,\xtop)$ is a colimit diagram and $F|_{K^{\rhd}\times \{l\}}$ is a colimit diagram for all $l\in L$ if and only if $\ev_0\circ\widehat{F}:K\rightarrow\fun(L,\xtop)$ is a colimit diagram. We conclude using Remark \ref{rmk:cartesian}, since $\fun(L,\xtop)$ is an \inftopt.
\end{proof}
\begin{proof}[Proof of Proposition \ref{prop:diagramproperty}]
The condition $(1)$ is equivalent to the assertion that the restriction
\[   \fun^{\mathrm{Cart}}_{\xtop}(K^{\rhd},\Of^{L,P}) \longrightarrow  \fun^{\mathrm{Cart}}_{\xtop}(K,\Of^{L,P}) \]
is an equivalence by \cite[Proposition 3.3.3.1]{HTT} (here `Cart' denotes we are taking full subcategories spanned by Cartesian sections). Let $\fun^{\mathrm{Cart}}_{\xtop}(K^{\rhd},\fun(\Delta^1,\xtop))_0\subset \fun_{\xtop}(K^{\rhd},\fun(\Delta^1,\xtop))$ be the full subcategory spanned by functors $F$ such that
\begin{enumerate}[$(a)$]
\item the restriction to $K$ determines a Cartesian bifunctor $K\times L^{\rhd} \rightarrow \Of^{L,P}$.
\item the functor $F:K^{\rhd}\rightarrow\fun(L^{\rhd},\xtop)$ is a colimit diagram.
\end{enumerate}
By Lemma \ref{lem:cartesianbifunctor} and the assumption that $K^{\rhd}\rightarrow\xtop$ is a colimit diagram, we have an inclusion $\fun^{\mathrm{Cart}}_{\xtop}(K^{\rhd},\Of^{L,P})\subset \fun^{\mathrm{Cart}}_{\xtop}(K^{\rhd},\fun(\Delta^1,\xtop))_0$  of full subcategories. A functor $K^{\rhd}\rightarrow \fun(L^{\rhd},\xtop)$ satisfies condition $(b)$ above if and only if it is a $\ev_{\infty}$-colimit since $K^{\rhd}\rightarrow\xtop$ is a colimit (\cite[Proposition 4.3.1.5]{HTT}) so the restriction 
\[\fun^{\mathrm{Cart}}_{\xtop}(K^{\rhd},\fun(\Delta^1,\xtop))_0 \longrightarrow \fun^{\mathrm{Cart}}_{\xtop}(K,\Of^{L,P})\]
is a trivial fibration (\cite[Proposition 4.3.2.15]{HTT}). It follows that $(1)$ is equivalent to the condition that the inclusion $\fun^{\mathrm{Cart}}_{\xtop}(K^{\rhd},\Of^{L,P})\subset \fun^{\mathrm{Cart}}_{\xtop}(K^{\rhd},\fun(\Delta^1,\xtop))_0$ is an equality, which, in view of Lemma \ref{lem:cartesianbifunctor}, is a reformulation of $(2)$. We now show $(2)\Rightarrow (3)$. We first apply $(2)$ to the collection of small discrete simplicial set to conclude that the property $P$ is stable under small coproducts. Now suppose we are given a diagram $f:L^{\rhd}\rightarrow\xtop$ with $f(\infty)=Y$ and an effective epimorphism $h:\coprod_iU_i\rightarrow Y$. Let $\check{C}(h)_{\bullet}^+:\simpopplus\rightarrow \xtop$ be a \v{C}ech nerve of $h$. Since the inclusion of the cone point $\{\infty\}\subset \simpopplus$ is Cartesian marked anodyne (where we mark all morphisms of $\simpopplus$), it follows from Lemma \ref{lem:cartesiantransfkanext} that we may choose a Cartesian bifunctor $F:\simpopplus\times L^{\rhd}\rightarrow\xtop$ that restricts to the diagram $f$ on $\{\infty\}\times L^{\rhd}$ and to $\check{C}(h)_{\bullet}$ on $\simpopplus\times\{\infty\}$. By Lemma \ref{lem:cartesianbifunctor}, the functor $\simpopplus\rightarrow\fun(L^{\rhd},\xtop)$ is a colimit diagram. For every $[n]\in \simp$, the restriction $F|_{\Delta^1 \times L}$ to any map $[0]\rightarrow [n]$ of $\simpop$ is a Cartesian transformation, so we conclude using $(2)$ that $F_{\{[n]\}\times L^{\rhd}}$ has the property $P$ as $F|_{\{[0]\} \times L^{\rhd}}$ is equivalent to $\coprod_iU_i\times_Yf$, a coproduct of $L^{\rhd}$-diagrams that have the property $P$. For the implication $(3)\Rightarrow (2)$, we are given a Cartesian bifunctor $K^{\rhd}\times L^{\rhd}\rightarrow\xtop$ determining a colimit diagram $K^{\rhd}\rightarrow \fun(L^{\rhd},\xtop)$ such that $F|_{K\times L^{\rhd}}$ is Cartesian and $F_{\{k\}\times L^{\rhd}}$ has the property $P$. Then the collection $\{F(k,\infty)\rightarrow F(\infty,\infty)\}_{k\in K}$ determines an effective epimorphism $\coprod_{k\in K}F(k,\infty)\rightarrow F(\infty,\infty)$ (\cite[Lemma 6.2.3.13]{HTT}) satisfying the condition of $(3)$, so $F|_{\{\infty\}\times L^{\rhd}}$ has the property $P$.
\end{proof}

Proposition \ref{prop:sheafificationproperty} admits a straightforward generalization to properties of $L^{\rhd}$-diagrams, whose formulation and proof we leave to the reader.
\newpage

\printbibliography
\end{document}